\DeclareFontFamily{T1}{cbgreek}{}
\DeclareFontShape{T1}{cbgreek}{m}{n}{<-6>  grmn0500 <6-7> grmn0600 <7-8> grmn0700 <8-9> grmn0800 <9-10> grmn0900 <10-12> grmn1000 <12-17> grmn1200 <17-> grmn1728}{}
\DeclareSymbolFont{quadratics}{T1}{cbgreek}{m}{n}
\DeclareMathSymbol{\qoppa}{\mathord}{quadratics}{19}
\DeclareMathSymbol{\Qoppa}{\mathord}{quadratics}{21}
\definecolor{darkred}{RGB}{128,0,0}
\definecolor{darkgreen}{RGB}{0,128,0}
\definecolor{darkblue}{RGB}{0,0,128}
\DeclareSymbolFontAlphabet{\mathbb}{AMSb}
\DeclareSymbolFontAlphabet{\mathbbl}{bbold}
\DeclareMathOperator{\Alg}{Alg} 
\DeclareMathOperator{\Brp}{\mathfrak{br}^\mathrm{p}} 
\DeclareMathOperator{\Spec}{Spec}
\DeclareMathOperator{\CAlg}{CAlg} 
\DeclareMathOperator{\CAlgp}{CAlg^\mathrm{p}} 
\DeclareMathOperator{\Cat}{\mathcal{C}at} 
\NewDocumentCommand{\CAT}{O{}}{%
	\IfValueTF{#1}{%
		C\textsc{at}^{#1}_\infty
	}{%
		C\textsc{at}_\infty
	}%
} 
\DeclareMathOperator{\Catex}{\Cat_\infty^{ex}} 
\DeclareMathOperator{\Catperf}{\Cat_{\infty,idem}} 
\DeclareMathOperator{\Cath}{Cat^\mathrm{h}_\infty} 
\DeclareMathOperator{\Cathidem}{Cat^\mathrm{h}_{\infty,idem}} 
\DeclareMathOperator{\Catp}{Cat^\mathrm{p}_\infty} 
\DeclareMathOperator{\Catpidem}{Cat^\mathrm{p}_{\infty, idem}} 
\DeclareMathOperator{\Einfty}{\mathbb{E}_\infty} 
\DeclareMathOperator{\ex}{ex} 
\DeclareMathOperator*{\fiberproduct}{\times}
\DeclareMathOperator{\Fun}{Fun} 
\DeclareMathOperator{\gp}{gp} 
\DeclareMathOperator{\id}{id} 
\DeclareMathOperator{\Mod}{Mod} 
\DeclareMathOperator{\LMod}{Mod} 
\DeclareMathOperator{\Mon}{Mon} 
\DeclareMathOperator{\pic}{\mathfrak{pic}} 
\DeclareMathOperator{\Picp}{\mathfrak{pic}^\mathrm{p}} 
\DeclareMathOperator{\Pn}{Pn} 
\DeclareMathOperator{\Spectra}{Sp} 
\DeclareMathOperator{\Spaces}{\mathcal{S}} 
\DeclareMathOperator{\gmq}{\mathrm{gl}_1^{\mathrm{p}}}
\renewcommand{\epsilon}{\varepsilon}
\renewcommand{\rho}{\varrho}
\renewcommand{\phi}{\varphi}
\newcommand{\ZZ}{\mathbb{Z}}
\newcommand{\CC}{\mathbb{C}}
\newcommand{\EE}{\mathbb{E}}
\newcommand{\op}{\mathrm{op}} 
\newcommand{\sphere}{\mathbb{S}}
\newcommand{\pnpic}{\ensuremath{\mathfrak{pic}^\mathrm{p}}}
\newcommand{\pnbr}{\ensuremath{\mathfrak{br}^\mathrm{p}}}
\newcommand{\Pic}{\ensuremath{\mathrm{Pic}}}
\newcommand{\br}{\ensuremath{\mathrm{Br}}}
\newcommand{\perf}{\mathrm{Perf}}
\newcommand{\kr}{\mathrm{K}\mathbb{R}}
\newcommand{\Fin}{\mathrm{Fin}} 
\newcommand{\Assoc}{\mathrm{Assoc}}
\newcommand{\ev}{\mathrm{ev}}
\newcommand{\morphism}[3]{\mathop{\rotatebox{270}{$\xrightarrow{\rotatebox{90}{$\scriptstyle#2$}}$}}\limits^{#1}_{#3}}
\newcommand{\Ar}{\mathrm{Ar}}
\newcommand{\rlarrows}{\mathrel{\substack{\textstyle\longrightarrow\\[-0.6ex]
			\textstyle\longleftarrow}}}
\numberwithin{equation}{subsection}
\newtheorem{theorem}[equation]{Theorem}
\newtheorem{proposition}[equation]{Proposition}
\newtheorem{lemma}[equation]{Lemma}
\newtheorem{corollary}[equation]{Corollary}
\theoremstyle{definition}
\newtheorem{definition}[equation]{Definition}
\newtheorem{construction}[equation]{Construction}
\newtheorem{remark}[equation]{Remark}
\newtheorem{observation}[equation]{Observation}
\newtheorem{notation}[equation]{Notation}
\newtheorem{example}[equation]{Example}
\newtheorem{examples}[equation]{Examples}
\newtheorem{recollection}[equation]{Recollection}
\newtheorem{variant}[equation]{Variant}
\title{Involutive Brauer groups and Poincar\'e rings}
\author{Viktor Burghardt, Noah Riggenbach, Lucy Yang}
\date{}
\begin{document}

\maketitle
\begin{abstract}
    We use the formalism of Poincar{\'e} $ \infty $-categories, as developed by \cite{CDHHLMNNSI}, to define and study moduli stacks of line bundles with $ \lambda $-hermitian pairings and of Morita equivalence classes of Azumaya algebras equipped with an involution.
    Our moduli spaces give rise to enhancements of the ordinary Picard and Brauer groups which incorporate the data of an involution on the base; we will refer to these new invariants as the Poincar{\'e} Picard group and the Poincar{\'e} Brauer group. 
    We show that we can recover the involutive Brauer group of Parimala-Srinivas in \cite{Parimala_Srinivas} from the Poincar\'e Brauer group when the former is defined; however, they no longer agree even for closed points due to the existence of shifted perfect pairings. 
    A natural context for Poincar\'e Picard and Poincar\'e Brauer groups is a category of highly structured ring spectra which we refer to as Poincar\'e rings. We also compute these invariants for the sphere spectrum and other examples. 
    As a consequence, we deduce a derived enhancement of a classical theorem of Saltman. 
\end{abstract}
\tableofcontents
	
\section{Introduction}\label{section:introduction} 	
	
\subsection{Motivation}\label{subsection:Azumaya_motivation}
A fundamental invariant in arithmetic geometry and algebraic geometry is the Brauer group. 
This is a functor $\mathrm{Br}(-)$ which sends a scheme $X$ to the group of Morita equivalence classes of Azumaya algebras over $X$. 
By work of Grothendieck, there is an injective transformation
    \[
\mathrm{Br}(-) \to \mathrm{Br}'(-):= \mathrm{H}^2_{\acute{e}t}(-; \mathbb{G}_m)_{\mathrm{tors}};
\] 
the latter group is often referred to as the \emph{cohomological Brauer group}. 
This map is known to be an isomorphism in many cases. Unpublished work of Gabber shows that $\mathrm{Br}(X)\simeq\mathrm{Br}'(X)$, when $X$ is quasi-compact, separated, and admits an ample line bundle; see \cite{deJong_Gabber}.
However, it is also known that $\mathrm{Br}(X)\not\simeq \mathrm{Br}'(X)$ in general; see \cite{br_neq_br_prime}.
	
Given a scheme $X$ and an Azumaya algebra $\mathcal{A}$ on $X$, an involution of $\mathcal{A}$ is an isomorphism\footnote{Here we do not assume that $\sigma$ is $\mathcal{O}_X$-linear.} $\sigma:\mathcal{A}\to \mathcal{A}^\op$ such that $\sigma^\op\circ \sigma =\mathrm{id}_\mathcal{A}$. 
When $2\in \mathcal{O}_X^\times$, such objects are very closely connected to adjoint group schemes of types A, B, C, and D; see \cite{Srimathy}. 
The presence of an involution on $ \mathcal{A} $ also constrains the behavior of its Brauer class $ [\mathcal{A}] $: there exists an Azumaya algebra $ \mathcal{A}' $ in the Brauer class $ [\mathcal{A}] $ admitting an involution if and only if $ [\mathcal{A}] \in H^2_{\acute{e}t}(X;\mathbb{G}_m) $ is sent to the trivial class under the norm \cites[\S9 Theorem 19]{MR123587}[Theorem 3.1(a)]{MR495234}[Theorem 1]{Parimala_Srinivas}. 
    
Assume that $2 \in \mathcal{O}_X^\times$.  
In the cases where either $\lambda := \sigma|_{Z(\mathcal{A})} $ is the identity or $ \lambda $ is the canonical morphism $ \mathcal{O}_X \to \overline{\lambda}_*\mathcal{O}_X $ associated to a $ C_2 $-Galois action $ \overline{\lambda}: X \xrightarrow{\sim} X $, Parimala-Srinivas introduced an \emph{involutive Brauer group} consisting of Morita equivalence classes of Azumaya algebras with involution \cite{Parimala_Srinivas}. 
Such Azumaya algebras with involution are classified by \'etale cohomology with nonabelian coefficients, and the involutive Brauer group sits in a long exact sequence involving the norm map. 
Fr\"ohlich-Wall and Wall have constructed extensions of the involutive Brauer group \cite{MR1803361}.\footnote{We thank Ben Williams for bringing this to our attention.} 
First and Williams laid the groundwork for extending the involutive Brauer group under fewer restrictions on $\lambda$ in \cite{azumaya_involution}. 
However, by \cite{first2025counterexamplesinvolutionsazumayaalgebras}, the group scheme whose torsors classify isomorphism classes of Azumaya algebras with $\lambda$-involution will be subtle (also see discussion on \cite[p.531]{azumaya_involution}). 
For example, as pointed out to us by Uriya First, this group scheme will not always be reductive. 
    
This work extends the theory of involutive Brauer groups to more general involutions, albeit using a different approach than that suggested by \cite[530]{azumaya_involution}. 
Let $\mathrm{Sch}^{C_2-\textrm{aff}, \mathrm{qcqs}}$ be the category of pairs $(X,\lambda)$ where $X$ is a quasicompact quasiseparated scheme and $\lambda: X \xrightarrow{\sim} X$ is a $C_2$-action on $X$ such that all orbits lie in affine open subschemes.
	
\begin{theorem}\label{thm:intro_inv_br_and_inv_br_prime}
    There exist contravariant functors
    \[
    \mathrm{Br}(-,-), \mathrm{Br}'(-,-): \mathrm{Sch}^{C_2-\mathrm{aff},\mathrm{ qcqs}} \to \mathrm{Ab},
    \]
    called the involutive Brauer group and the cohomological Brauer group, respectively, and an injective natural transformation
    \[
    \mathrm{Br}(-,-) \to \mathrm{Br}'(-,-)
    \]
    such that
    \begin{enumerate}
        \item (Theorem~\ref{thm: PS comparison in text}) $\mathrm{Br}(X,\lambda)$ agrees with the functor constructed by Parimala and Srinivas when the latter is defined;
        \item(Definition~\ref{defn:cohomological_involutive_Brauer_group}) there are maps
        \[
        \mathrm{Br}(X,\lambda) \to \mathrm{Br}(X) \quad \text{and} \quad \mathrm{Br}'(X,\lambda) \to \mathrm{Br}'(X)
        \]
        which have the same kernel given by the Picard group of a certain category related to $X$ and the action $\lambda$ (see Theorem~\ref{thm: omnibus theorem for intro}(4) for the precise statement);
        \item (Corollary~\ref{cor: br=br' implies involutive version}) the map
        \[
        \mathrm{Br}(X,\lambda) \to \mathrm{Br}'(X,\lambda)
        \]
        is an isomorphism whenever the map
        \(
        \mathrm{Br}(X) \to \mathrm{Br}'(X)
        \)
        is.
    \end{enumerate}
\end{theorem}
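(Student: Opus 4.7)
The plan is to build both functors from the formalism of Poincaré $\infty$-categories alluded to in the abstract. To a pair $(X,\lambda) \in \mathrm{Sch}^{C_2-\mathrm{aff},\mathrm{qcqs}}$ I would associate a Poincaré $\infty$-category $\perf(X)^\lambda$ whose underlying stable $\infty$-category is $\perf(X)$ and whose duality is the composition of the usual $\mathcal{O}_X$-linear duality with pullback along $\lambda$. Since pullback along $\lambda$ is contravariantly functorial in $(X,\lambda)$, this yields a contravariant functor to $\Catp$. I would then define $\mathrm{Br}(X,\lambda)$ as the Poincaré Brauer group of $\perf(X)^\lambda$, i.e.\ $\pi_0$ of the group of Morita equivalence classes of Poincaré-Azumaya algebras over this base; concretely such objects are Azumaya algebras $\mathcal{A}$ on $X$ equipped with an isomorphism $\mathcal{A} \xrightarrow{\sim} \lambda_*\mathcal{A}^\op$ squaring to the identity, which recovers Parimala--Srinivas's definition when $\lambda$ is either the identity or a $C_2$-action, realizing (1).

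For $\mathrm{Br}'(-)$ I would proceed cohomologically: the moduli stacks of hermitian line bundles and of involutive Azumaya algebras constructed in the body assemble into sheaves on the small étale site of $X$ (or, more precisely, of a quotient stack incorporating $\lambda$), so one defines $\mathrm{Br}'(X,\lambda)$ as a suitable torsion étale cohomology group of this sheaf. The natural transformation $\mathrm{Br}(X,\lambda) \to \mathrm{Br}'(X,\lambda)$ is then the universal cocycle map classifying Poincaré-Azumaya algebras by the torsors of their automorphism group schemes.

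With this setup in place, the three claims follow from a comparison diagram
\[
\begin{tikzcd}
0 \ar[r] & \Pic^{\mathrm{herm}}(X,\lambda) \ar[r] \ar[d,"="] & \mathrm{Br}(X,\lambda) \ar[r] \ar[d] & \mathrm{Br}(X) \ar[d] \\
0 \ar[r] & \Pic^{\mathrm{herm}}(X,\lambda) \ar[r]           & \mathrm{Br}'(X,\lambda) \ar[r]          & \mathrm{Br}'(X),
\end{tikzcd}
\]
where the left-hand Picard groups parametrize Poincaré structures on the trivial Azumaya algebra up to unitary conjugation. Exactness of the rows at the middle terms (the forgetful maps in (2)) reflects the fact that the Morita trivial Poincaré-Azumaya algebras are exactly the endomorphism algebras of hermitian line bundles; once this is established, (3) is a diagram chase, and injectivity of the middle vertical map follows from injectivity on both sides of it by the five-lemma-style argument. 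The Parimala--Srinivas comparison in (1) reduces to identifying their Morita relation with equivalence of Poincaré-Azumaya algebras, which should be straightforward once the standard generators of hermitian Morita equivalence are shown to come from the Poincaré formalism.

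I expect the main obstacle to be injectivity of $\mathrm{Br}(X,\lambda) \to \mathrm{Br}'(X,\lambda)$ — equivalently, exactness of the top row above. The subtle point is that, unlike in the classical case where étale-local triviality of an Azumaya algebra $\mathcal{A}$ implies $\mathcal{A} \simeq \End(\mathcal{E})$ Zariski-locally on a refinement, one must now descend \emph{Poincaré} structures as well; in view of the counterexamples of \cite{first2025counterexamplesinvolutionsazumayaalgebras} to naive descent for involutions, the argument has to invoke the additional flexibility afforded by shifted perfect pairings, which is precisely why Poincaré $\infty$-categories (rather than 1-categorical involutions) are the right framework here.
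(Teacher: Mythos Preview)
Your overall strategy---build everything from Poincar\'e $\infty$-categories---matches the paper's, but the specific definitions you propose for $\mathrm{Br}(X,\lambda)$ and $\mathrm{Br}'(X,\lambda)$ are not the ones the paper uses, and yours would not yield part (1). Your $\mathrm{Br}(X,\lambda)$, defined as $\pi_0$ of Morita classes of Poincar\'e-Azumaya algebras over $\perf(X)^\lambda$, is exactly the paper's \emph{Poincar\'e Brauer group} $\mathrm{Br}^{\mathrm{p}}(X,\lambda,Y,p)$, and this is strictly larger than Parimala--Srinivas's group: for an algebraically closed field $k$ of characteristic $\neq 2$ with trivial involution one has $\mathrm{Br}^{\mathrm{p}}(\underline{k}) \cong \mathbb{Z}/4$ while $\mathrm{Br}_{\mathrm{PS}}(k,\id) \cong \mathbb{Z}/2$ (Example~\ref{ex:pnbr_closed_point_ramified}). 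The extra classes are represented by shifted Poincar\'e structures such as $(\Mod_k^\omega,\Qoppa^{[1]})$, whose generating Poincar\'e objects are never concentrated in a single degree, so the associated Azumaya algebra with genuine involution is not discrete (Remark~\ref{rmk:non_classical_PnBr_classes}). Your ``concretely such objects are Azumaya algebras $\mathcal{A}$ on $X$ with $\mathcal{A}\simeq\lambda_*\mathcal{A}^{\op}$'' is therefore too optimistic: derived Azumaya algebras with genuine involution carry strictly more information, and restricting to discrete ones runs straight into the First--Williams obstructions you mention.

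The paper instead defines both functors by \emph{cutting down} $\mathrm{Br}^{\mathrm{p}}$. One first constructs a map $\mathrm{Br}^{\mathrm{p}}(X,\lambda,Y,p) \to H^1\bigl(R\Gamma_{\acute{e}t}(X;\mathbb{Z})^{\mathrm{h}C_2}\bigr)$ (Construction~\ref{const: map from poincare brauer to homotopy fixed points}) that detects precisely the exotic shifted classes, and sets $\mathrm{Br}'(X,\lambda)$ to be its kernel; then $\mathrm{Br}(X,\lambda)$ is the pullback of $\mathrm{Br}'(X,\lambda)\to\mathrm{Br}'(X)$ along the classical injection $\mathrm{Br}(X)\hookrightarrow\mathrm{Br}'(X)$ (Definition~\ref{defn:cohomological_involutive_Brauer_group}). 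With these definitions, injectivity of $\mathrm{Br}(X,\lambda)\to\mathrm{Br}'(X,\lambda)$ is automatic (pullback along an injection), and part (3) is the one-line observation that pulling back along an isomorphism is an isomorphism (Corollary~\ref{cor: br=br' implies involutive version}). So what you flag as the main obstacle is in fact trivial in the paper's setup; the actual work lies in building the map to $H^1$ via the norm fiber sequence of Theorem~\ref{thm: extended fiber sequence of pnbr to br} and then proving the Parimala--Srinivas comparison (Theorem~\ref{thm: PS comparison in text}) by matching exact sequences and invoking the five-lemma---closer in spirit to your comparison diagram, but applied after the cutting-down step rather than before.
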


We construct the group $\mathrm{Br}(X,\lambda)$ as a subgroup of a larger group $\mathrm{Br}^\mathrm{p}(X,\lambda)$, as described in the next section. 
The elements of $\mathrm{Br}^\mathrm{p}(X,\lambda)$ may be regarded as Morita equivalence classes of certain generalized Azumaya algebras with involution. 
There is a homomorphism $ g $ defined on $ \mathrm{Br}^\mathrm{p}(X,\lambda, Y,p) $; the invariant $ g $ detects whether the underlying derived Brauer class of $ \alpha $ and/or the involution associated to $ \alpha $ are exotic (Construction \ref{const: map from poincare brauer to homotopy fixed points} \& Remark \ref{rmk:pnbr_to_H1_fxpt_interpretation}). 
We define $\mathrm{Br}'(X,\lambda)$ to be the kernel of $g$, and $\mathrm{Br}(X,\lambda)$ to be the subgroup of $\mathrm{Br}'(X,\lambda)$ whose underlying cohomological Brauer classes are representable by ordinary Azumaya algebras. 

Just as ordinary Brauer classes give rise to twisted forms of K-theory, classes in $\mathrm{Br}^\mathrm{p}(X,\lambda)$ give rise to twisted forms of $ \lambda $-hermitian K-theory and Grothendieck--Witt theory. 
Understanding these twists is crucial for applications and computations, as they arise frequently in the construction of invariants in enriched enumerative geometry \cite{MR4198841}. 
\begin{remark}
    One might wonder if Fröhlich--Wall's involutive Brauer group agrees with ours whenever both are defined (compare, for example, the bottom row of \cite[Theorem 4.2]{MR1803361} with the fiber sequence of Corollary~\ref{cor:Poincare_Brauer_to_Brauer_fiber}). 
    We define a comparison map from Fr\"ohlich--Wall's equivariant Brauer group to an invariant related to our Brauer group in Proposition~\ref{prop:Frohlich_Wall_eqvt_Brauer_comparison}, but do not pursue this question further. 
\end{remark}

\subsection{Deeper results} \label{subsection:deeperresults}
To motivate our approach, recall that Toën and Antieau--Gepner \cites{MR3190610,MR2957304} posed a variant of the Br = Br' question and proved that equality holds. 
This variation replaces $ \mathrm{Br}(X) $ with invertible $ \mathcal{O}_X $-linear categories satisfying certain compact generation hypotheses. 
Any class $ \alpha \in H^2_{\mathrm{\acute{e}t}}(X; \mathbb{G}_m) $ gives rise to such a category by taking a derived version of the categories of twisted sheaves considered by C\u ald\u araru and Lieblich \cites{MR1887894}{MR2579390}. 
Toën shows that any such category $ \mathcal{C} $ arises as modules over a sheaf $ \mathcal{A} $ of \emph{derived Azumaya algebras} when $ X $ is qcqs by constructing a suitable generator $ G \in \mathcal{C} $ and taking $ \mathcal{A} $ to be the algebra of derived endomorphisms of $ G $ \cites[Theorem 0.2]{MR2957304}[Theorem 6.1]{MR3190610}. 
On the other hand, such invertible $ \mathcal{O}_X $-linear categories can be realized as the connected components $ \pi_0 \mathfrak{br}(X) $ of a spectrum $ \mathfrak{br}(X) $. 
Unlike $ \pi_0 \mathfrak{br} $, the assignment $ X \mapsto \mathfrak{br}(X) $ satisfies étale (in fact fppf) descent. 
A computation of the higher homotopy sheaves of $ \mathfrak{br} $ and geometricity of a certain stack of Morita equivalences imply that $ \pi_0 \mathfrak{br} (X) \simeq \mathrm{H}^1_{\mathrm{\acute{e}t}}(X;\ZZ) \times \mathrm{H}^2_{\mathrm{\acute{e}t}}(X; \mathbb{G}_m) $. 

This work extends To\"en and Antieau--Gepner's program to the involutive setting. 
For simplicity, let $ \lambda = \id $. 
A key technique used to study Azumaya algebras with involution is to replace the involution $\sigma:\mathcal{A}\to \mathcal{A}^{\op}$ with the associated trace bilinear form $\mathcal{A}\otimes \mathcal{A}\to \mathcal{O}_X$, $a\otimes b\mapsto \mathrm{Tr}(a\sigma(b))$. 
We work with derived categories equipped with the data of `allowable quadratic forms.' 
Such a theory of form data on higher categories has recently been developed by Lurie and Calmès--Dotto--Harpaz--Hebestreit--Land--Moi--Nardin--Nikolaus--Steimle under the name of \emph{Poincaré $ \infty $-categories}, see \cites{CDHHLMNNSI,Lurie_Ltheory_notes}. 

We define a category of algebras which we refer to as \emph{Poincar\'e rings} $\mathrm{CAlg}^\mathrm{p}$ (Definition~\ref{definition:poincare_ring_spectrum}). 
A Poincaré ring $ R $ has an underlying ring with involution $ R^e $; roughly, a lift of $ R^e $ to $ \CAlgp $ is the minimal algebraic structure needed to both specify $\lambda$-symmetric bilinear forms on objects of $ \Mod_{R^e}^\omega $ and be able to tensor such bilinear forms together (Theorem \ref{thm:calgp_to_poincare_cat}). 
In this more general setting, we prove the following results. 	
\begin{theorem}~\label{thm: omnibus theorem for intro}
    There are functors
    \[
    \mathfrak{br}^\mathrm{p}, \mathfrak{pic}^\mathrm{p}, \gmq: \mathrm{CAlg}^\mathrm{p} \to \mathrm{Sp}_{\geq 0}
    \]
    referred to as the \emph{Poincaré Brauer} spectrum, \emph{Poincaré Picard} spectrum, and Poincar\'e units such that
    \begin{enumerate}[label=(\arabic*)]
        \item(Theorem~\ref{theorem:loops_Poincare_pic_is_Gm_Qoppa}, Proposition~\ref{prop:loops_pnbr_is_pnpic}) there are natural equivalences
        \[
        \mathfrak{pic}^\mathrm{p} \simeq \Omega \mathfrak{br}^\mathrm{p} \quad \text{and} \quad \gmq \simeq \Omega \mathfrak{pic}^\mathrm{p};
        \]
        \item(Proposition~\ref{prop:gmq_underlying_calg}) $\gmq$ is corepresented by a Poincar\'e ring with underlying Borel equivariant ring spectrum
        \(
        \mathbb{S}\{x^{\pm 1}\}
        \)
        where $\mathbb{S}\{-\}$ denotes the free $\mathbb{E}_\infty$ algebra on the input and the $ C_2 $-action is given by $\lambda(x) = x^{-1}$; 
        \item \label{omnibus_thmitem:fiber_of_pnbr_to_br}(Corollary~\ref{cor:Poincare_Brauer_to_Brauer_fiber}) there is a natural forgetful map
        \[
        \mathfrak{br}^\mathrm{p} \to \mathfrak{br}
        \]
        whose fiber on $R \in \mathrm{CAlg}^\mathrm{p}$ is naturally identified with
        \(
        \mathfrak{pic}\left(\mathrm{Mod}_{R^L}\left(\mathrm{Sp}^{C_2}\right)\right);
        \) here $ \mathfrak{br} $ is the derived Brauer space of \cite{MR3190610} and $ (-)^L $ denotes the forgetful functor $ \mathrm{CAlg}^\mathrm{p} \xrightarrow{} \mathrm{CAlg}(\mathrm{Sp}^{C_2}) $ of Remark \ref{rmk:Poincare_ring_has_underlying_C2_spectrum_alg}.\footnote{The reader does not need to be familiar with the objects $ \Spectra^{C_2} $ or $ R^L $ appearing in Theorem \ref{thm: omnibus theorem for intro}\ref{omnibus_thmitem:fiber_of_pnbr_to_br} to read the rest of the introduction; the main point is that the fiber admits a tractable description.} 
        \item(Corollary~\ref{cor:pnbr_as_etale_sheaf_affine_spectral}) All three of these invariants are sheaves for an analogue of the {\'e}tale topology of Definition~\ref{defn:C2_etale_topology}.
    \end{enumerate}
\end{theorem}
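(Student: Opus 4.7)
The plan is to build all three functors from a single source: a symmetric monoidal structure on a suitable $\infty$-category of ``Poincaré module categories'' over a Poincaré ring $R$, and then to extract $\gmq$, $\pnpic$, and $\pnbr$ as the connective spectra of automorphisms of the unit, of invertible objects, and of invertible linear categories respectively. The symmetric monoidal structure should come from the tensor product of Poincaré $\infty$-categories of \cite{CDHHLMNNSI}, applied to the category of modules $\Mod_{R^L}$ equipped with the quadratic functor determined by the lift of $R^e$ to $\CAlgp$ (using the correspondence of Theorem~\ref{thm:calgp_to_poincare_cat}). Given this setup, (1) becomes formal: for any presentably symmetric monoidal $\infty$-category $\mathcal{C}$, the Picard spectrum is the loops of the Brauer spectrum at the trivial class (by definition of ``Brauer'' as invertible $\mathcal{C}$-linear categories), and the units are the loops of the Picard spectrum at the unit object. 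So one just needs to identify $\gmq$ with the endomorphism space of the unit Poincaré line and $\pnpic$ with the groupoid of invertible Poincaré modules, both inside this monoidal framework.

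For (2), I would characterize $\gmq(R)$ as the space of Poincaré-ring maps out of a universal example. Unwinding definitions, a hermitian unit of $R$ is an invertible element $x \in \pi_0 R^L$ together with a trivialization of $\lambda(x) x$ (equivalently, $\lambda(x) = x^{-1}$). This is exactly the moduli problem corepresented by the free $\Einfty$-algebra on an invertible element in which the $C_2$-action sends $x$ to $x^{-1}$; calling this algebra $\sphere\{x^{\pm 1}\}$ and upgrading it to a Poincaré ring is the content of Proposition~\ref{prop:gmq_underlying_calg}. The verification reduces to unwinding the universal property of free $\Einfty$-algebras on invertibles together with the $C_2$-action encoded by the Poincaré structure.

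For (3), construct the forgetful $\pnbr \to \br$ by sending a Poincaré $R$-linear Azumaya category to its underlying stable $R^L$-linear Azumaya category (forgetting the quadratic functor). The fiber over the trivial class then consists of Poincaré structures on a Morita-trivial category, i.e.\ on $\Mod_{R^L}$ itself; such structures are torsors under the automorphisms of the standard Poincaré structure, and a direct calculation (using that quadratic functors form a stable category with a natural action by invertible modules in genuine $C_2$-spectra) identifies this with $\pic(\Mod_{R^L}(\Spectra^{C_2}))$. This is the main technical obstacle: one must show that the space of Poincaré enhancements of a fixed $R^L$-linear category is torsorial for this Picard group, and in particular that no higher obstructions appear. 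Finally, (4) follows by reducing $C_2$-étale descent for $\pnbr$ to ordinary étale descent for $\br$ (due to \cite{MR3190610}) along the fiber sequence of (3), combined with descent for $\pic$ of $R^L$-modules in $\Spectra^{C_2}$, which in turn follows from descent of the underlying module category and the compatibility of the $C_2$-action with étale base change.
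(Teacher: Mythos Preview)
Your overall strategy matches the paper's: $\pnbr$ is defined as the Picard spectrum of $\Mod_{(\Mod_{R^e}^\omega,\Qoppa_R)}(\Catpidem)$, $\pnpic$ as the Picard spectrum of Poincar\'e objects, and the delooping statements in (1) are proved essentially as you suggest (for $\Omega\pnbr\simeq\pnpic$ via tensoring with Poincar\'e line bundles and using that $(\Mod_{R^e}^\omega,\Qoppa_R)$ corepresents $\mathrm{Pn}$; for $\Omega\pnpic\simeq\gmq$ via the right-fibration description of $\mathrm{He}$). Part (4) is also done exactly as you say: bootstrap from the fiber sequence in (3) together with known descent for $\mathfrak{br}$ and for $\mathfrak{pic}$ of $R^L$-modules.

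However, you have systematically conflated $R^e$ and $R^L$, and this matters. The forgetful map in (3) lands in $\mathfrak{br}(R^e)$, not $\mathfrak{br}(R^L)$: the underlying stable category of $(\Mod_{R^e}^\omega,\Qoppa_R)$ is $\Mod_{R^e}^\omega$, and an invertible $R$-linear Poincar\'e category has underlying $R^e$-linear (not $R^L$-linear) Azumaya category. The fiber over the trivial class then consists of $R$-linear Poincar\'e structures on $\Mod_{R^e}^\omega$, which by the classification of Proposition~\ref{prop:relative_poincare_cats_basic_properties}\ref{propitem:classify_R_lin_hermitian_struct_mod_cat} are modules over $N^{C_2}_{R^e}R^e\otimes_{N^{C_2}R^e}R^L\simeq R^L$ in $\Spectra^{C_2}$ with invertible underlying; this is where $\pic(\Mod_{R^L}(\Spectra^{C_2}))$ enters. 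Similarly, in (2) a hermitian unit lives in $\pi_0 R^e$ (not $\pi_0 R^L$), with its norm to $(R^L)^{C_2}$ trivialized. The paper also does not produce the corepresenting Poincar\'e ring by ``upgrading'' $\sphere\{x^{\pm1}\}$ directly; rather it is built as a pushout $\sphere\{x,y\}^t\otimes_{R_\Qoppa}\sphere^u$ in $\CAlgp$, and only afterward is its underlying Borel ring identified. Your heuristic for (2) is correct in spirit but skips the nontrivial step of exhibiting a Poincar\'e ring structure with the right universal property.
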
 
The Poincar\'e Brauer group $ \mathrm{Br}^\mathrm{p} $ is defined as $\pi_0(\mathfrak{br}^\mathrm{p})$, and similarly for the Poincar\'e Picard group.

An ordinary ring with involution $ (R, \lambda)$ can be regarded as an object of $\mathrm{CAlg}^{\mathrm{p}}$ in a canonical way (Example \ref{ex:fixpt_Mackey_functor}). 
Given a scheme with involution $ (X,\lambda) $ admitting a good quotient $ p : X \to Y $ (Definition \ref{defn:Category of good quotients}), we show that $ Y $ admits a structure sheaf $ \underline{\mathcal{O}} $ valued in $ \CAlgp $ (Construction \ref{cons:structure_sheaf_of_Green_functors} \& Lemma \ref{lemma:identify_structure_sheaf_of_Green_func}). 
We will use the notation $ (X,\lambda,Y,p) $ throughout the introduction and take all schemes to be qcqs. 
Compatibility of the étale topology on $ \CAlgp $ and the ordinary étale topology on schemes and the descent results of Theorem \ref{thm: omnibus theorem for intro} allow us to extend the Poincaré Brauer spectrum and Poincaré Picard spectrum to schemes with involution. 

The Poincaré Picard group of $ (X,\lambda,Y,p) $ is an involutive enhancement of the Picard group of the \emph{derived category}\footnote{Here we refer to the derived category of quasicoherent sheaves.} of $ X $. 
We give a classical interpretation for the Poincaré Picard group; to motivate our result, consider its non-involutive counterpart. 
The functor $ \mathcal{L} \mapsto \mathcal{L}[0] $ exhibits the classical Picard group of a scheme $X$ as a subgroup of the Picard group of $\mathcal{D}(X)$. 
The inclusion is always proper, since $ \mathcal{O}_X[n] $ defines a $\otimes$-invertible object in $\mathcal{D}(X)$ for all $n\in \mathbb{Z}$ which is not equivalent to a classical line bundle except when $n=0$.
Nevertheless, a theorem of Fausk says that these cohomological shifts are essentially the only reason that the inclusion fails to be an isomorphism \cite{MR1966659}. 
\begin{theorem}
    [{Theorem~\ref{thm: Fausk for pnpic and schemes with good quotient}}] Let $(X,\lambda, Y, p)$ be a scheme with involution and good quotient. 
    Then there is a split short exact sequence of abelian groups \[0\to \mathrm{Pic}^\mathrm{h}(X, \lambda)\to \pi_0(\pnpic(X,\lambda, Y,p))\to C_{C_2}(X,\ZZ^-)\to 0\]
    where $\mathrm{Pic}^{\mathrm{h}}$ is the \emph{hermitian Picard group of $ X $}, i.e. the group of isomorphism classes of pairs $(\mathcal{L}, q)$ where $\mathcal{L} \in \mathrm{Pic}(X)$ and
    \(
    q : \mathcal{L} \otimes_{\mathcal{O}_X} \lambda^* \mathcal{L} \to \mathcal{O}_X 
    \)
    is a nondegenerate $ \lambda $-hermitian pairing, and $C_{C_2}(-,\mathbb{Z}^-)$ denotes continuous $C_2$-equivariant maps where $\mathbb{Z}^-$ is the discrete topological space $\mathbb{Z}$ with $C_2$-action given by $n \mapsto -n$. 
    This splitting is compatible with the splitting in \cite[Proposition 4.4]{MR1966659}. 
\end{theorem}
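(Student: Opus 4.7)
The plan is to reduce to Fausk's classical theorem on $\mathrm{Pic}(\mathcal{D}(X))$ and then exhibit how the hermitian structure cuts out a $C_2$-equivariant subgroup. Recall that Fausk's result \cite{MR1966659} produces a split short exact sequence
\[
0 \to \mathrm{Pic}(X) \to \mathrm{Pic}(\mathcal{D}(X)) \to C(X,\ZZ) \to 0,
\]
where the right-hand map sends a derived invertible sheaf to its locally constant shift function $f_\mathcal{L}: X \to \ZZ$, with $\mathrm{rank}_x(\mathcal{L}) = [\mathcal{O}_X[f_\mathcal{L}(x)]]$.

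First, I would construct the shift map $\pi_0(\pnpic(X,\lambda,Y,p)) \to C_{C_2}(X,\ZZ^-)$ by composing the forgetful map to $\mathrm{Pic}(\mathcal{D}(X))$ with Fausk's shift map. The key observation is that if $(\mathcal{L},q)$ is a Poincar\'e invertible object, then the hermitian isomorphism
\[
q: \mathcal{L} \xrightarrow{\cong} \mathrm{Hom}_{\mathcal{O}_X}(\lambda^*\mathcal{L}, \mathcal{O}_X)
\]
forces $f_\mathcal{L}(x) = -f_\mathcal{L}(\lambda(x))$, because pullback along $\lambda$ preserves the shift function while $\mathrm{Hom}(-,\mathcal{O}_X)$ negates it. Hence $f_\mathcal{L}$ lifts canonically to $C_{C_2}(X,\ZZ^-)$, and this lift is evidently functorial in $(X,\lambda,Y,p)$ and compatible with tensor products.

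Next, I would identify the kernel. A class in the kernel has $f_\mathcal{L} \equiv 0$, so by Fausk its underlying object is (up to isomorphism) an honest line bundle; but then $q$ reduces to a classical isomorphism $\mathcal{L} \cong \mathrm{Hom}_{\mathcal{O}_X}(\lambda^*\mathcal{L}, \mathcal{O}_X)$, which is precisely the data of a class in $\mathrm{Pic}^{\mathrm{h}}(X)$. Conversely, every hermitian line bundle trivially maps into the kernel, so this identifies the kernel with $\mathrm{Pic}^{\mathrm{h}}(X)$.

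Finally, I would construct a splitting $C_{C_2}(X,\ZZ^-) \to \pi_0(\pnpic(X,\lambda,Y,p))$. Given $f \in C_{C_2}(X,\ZZ^-)$, the equivariance condition forces $f$ to vanish on the fixed locus and to determine a decomposition $X = X^{C_2} \sqcup \coprod_n (U_n \sqcup \lambda(U_n))$ into $C_2$-stable clopens with $f|_{U_n} = n$ and $f|_{\lambda(U_n)} = -n$. On each pair $U_n \sqcup \lambda(U_n)$ I would take $\mathcal{O}_{U_n}[n] \oplus \mathcal{O}_{\lambda(U_n)}[-n]$ equipped with the evident hermitian form (the tautological duality pairing between a degree and its opposite, which exists precisely because $f$ is $C_2$-equivariant to $\ZZ^-$). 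Using the good quotient $p: X \to Y$ and the Poincar\'e structure sheaf $\underline{\mathcal{O}}$ of Construction~\ref{cons:structure_sheaf_of_Green_functors}, these local pieces assemble into a global Poincar\'e Picard class whose shift function is $f$, producing the required splitting.

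The main obstacle is the last step: while the construction at the level of underlying objects is routine once the clopen decomposition is in hand, verifying that the assembled hermitian form descends to a global section of $\pnpic$ on $Y$ requires the compatibility between the \'etale sheaf property of $\pnpic$ (Theorem~\ref{thm: omnibus theorem for intro}(4)) and the Poincar\'e structure sheaf on the good quotient. Once this descent is in place, splitting the sequence and verifying the other claims is straightforward.
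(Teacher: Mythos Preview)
Your approach is essentially the same as the paper's: compose the forgetful map with Fausk's shift map, argue that the hermitian form forces the shift function to be $C_2$-equivariant with respect to the sign action, identify the kernel with the hermitian Picard group, and split the sequence by building shifted structure sheaves on the level sets of a given equivariant function. The paper carries this out in detail in the affine case (Theorem~\ref{theorem:Poincare_Pic_of_fixpt_Mackey_functor}) and then simply invokes that argument for the global case.

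Two points where your sketch diverges from what is actually needed. First, your decomposition $X = X^{C_2} \sqcup \coprod_n (U_n \sqcup \lambda(U_n))$ conflates the fixed locus with the zero level set: the correct decomposition is by level sets $U_n = f^{-1}(n)$, and equivariance gives $\lambda(U_n) = U_{-n}$, so $U_0$ is $\lambda$-stable but need not equal $X^{C_2}$. Second, and more substantively, the obstacle you flag (descent along the good quotient) is not the real one. The genuine subtlety is that the Poincar\'e structure here is the \emph{genuine symmetric} structure $\Qoppa^{\mathrm{gs}}$, not the symmetric structure $\Qoppa^{\mathrm{s}}$: a Poincar\'e form on $\Phi(f)$ is not just a $C_2$-equivariant bilinear pairing but must also supply a compatible linear part valued in $\underline{\mathcal{O}}^{\varphi C_2}$. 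The paper resolves this (in the affine version) by observing that for $n \neq 0$ the idempotents $e_{U_n}$ and $e_{U_{-n}} = \lambda(e_{U_n})$ are orthogonal, hence their common image in $\pi_0 R^{\varphi C_2}$ vanishes, so the linear-part condition becomes vacuous on the nonzero-shift pieces; on $U_0$ the module is discrete and the lift is immediate. Your ``tautological duality pairing'' only addresses the bilinear part, so as written the splitting is incomplete without this extra step.
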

In fact, we show that the involution $ \lambda $ and the quotient $ p $ allow us to regard $ \mathcal{D}_{\mathrm{perf}}(X) $ as a symmetric monoidal Poincaré $ \infty $-category in a canonical way. 
It has an underlying stable $ \infty $-category with perfect duality, i.e. the functor $ \lambda_*(-^\vee) $ exhibits $ \mathcal{D}_{\mathrm{perf}}(X) $ as a $ C_2 $-(homotopy) fixed point of the involution sending a small stable $ \infty $-category to its opposite (Observation \ref{obs:symmetric_structure_module_cat}). 
A Poincaré structure is more data than a perfect duality: The pair $ (X,\lambda) $ does not uniquely determine a Poincaré structure on $ \mathcal{D}_{\mathrm{perf}}(X) $. 
The required additional data reflects a geometric subtlety: a quotient of $ X $ by $ \lambda $ involves a choice of $ p \colon X \to Y $ -- replacing $ Y $ by the stacky quotient $ [X/C_2] $ in general gives a different Poincaré structure on $ \mathcal{D}_{\mathrm{perf}}(X) $ (cf. Proposition \ref{prop:symmetric_calgp_stacky_interpretation}). 
We regard $ \mathcal{D}_{\mathrm{perf}}(X) $ equipped with its Poincaré structure as an involutive noncommutative invariant of $ (X,\lambda,Y,p) $. 
By definition, classes in $ \pnbr(X,\lambda,Y,p) $ are represented by invertible Poincaré $ \infty $-categories over $ \mathcal{D}_{\mathrm{perf}}(X) $, considered with its additional structure. 
Any such class in $ \pnbr(X,\lambda,Y,p) $ has an underlying twisted derived invertible $ \mathcal{O}_X $-linear category $ \mathcal{C} $ with $ \lambda_* $-linear duality $ \mathcal{C} \simeq \mathcal{C}^\op $ (Observation \ref{obs:symmetric_relative_poincare_cat}). 

We bootstrap the relevant results of Toën and Antieau--Gepner and develop relative enhancements of descriptions of endomorphism algebras contained in \cite[\S3.1]{CDHHLMNNSI} to prove the following. 
\begin{theorem}
    [{Corollary~\ref{cor: Brauer classes represented by Azumaya algebras with genuine involution}}] \label{thm:pnbr_classes_as_deraz_with_involution}
    Let $ (X,\lambda, Y,p) $ be a scheme with involution and good quotient and assume that $ X $ is qcqs. 
    Then every class in $\pi_0(\mathfrak{br}^\mathrm{p}(X,\lambda,Y,p))$ can be represented by a derived Azumaya algebra with genuine involution. 
\end{theorem}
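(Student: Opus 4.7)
The plan is to bootstrap from the compact-generation results in the (non-involutive) derived Brauer theory of Toën and Antieau--Gepner, and then apply a relative hermitian Morita theorem—enhanced from \cite[\S3.1]{CDHHLMNNSI}—to upgrade the resulting endomorphism algebra to a Poincaré ring.

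First, I would represent the given class by an invertible Poincaré $\infty$-category $(\mathcal{C},\Qoppa)$ over $\mathcal{D}_{\mathrm{perf}}(X)$. Forgetting $\Qoppa$, its underlying $\mathcal{O}_X$-linear category is an invertible element of the derived Brauer space in the sense of \cite{MR2957304,MR3190610}. Since $X$ is qcqs, that theory supplies a compact generator $G\in\mathcal{C}$.

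Second, I would modify $G$ to carry a nondegenerate hermitian form. Let $D_\Qoppa\colon\mathcal{C}^{\op}\xrightarrow{\sim}\mathcal{C}$ denote the duality associated to $\Qoppa$; the hyperbolic construction produces an object $G' := G\oplus D_\Qoppa(G)$ equipped with a canonical hyperbolic class $q_{\mathrm{hyp}}\in\pi_0\Qoppa(G')$ whose polarization $G'\to D_\Qoppa(G')$ is an equivalence. Because $D_\Qoppa$ is an equivalence, $D_\Qoppa(G)$ is also a compact generator; hence so is $G'$.

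Third, I would feed $(G',q_{\mathrm{hyp}})$ into the relative enhancement of the hermitian endomorphism construction of \cite[\S3.1]{CDHHLMNNSI} developed in this paper. This should yield a Poincaré ring $A\in\CAlgp$ whose underlying associative algebra is $\End_\mathcal{C}(G')$, whose involution is determined by $q_{\mathrm{hyp}}$, and whose category of Poincaré modules recovers $(\mathcal{C},\Qoppa)$ over $\mathcal{D}_{\mathrm{perf}}(X)$. Since the underlying invertible $\mathcal{O}_X$-linear category is represented in the Toën/Antieau--Gepner sense by $\End_\mathcal{C}(G')$, the latter is a derived Azumaya algebra, and the Poincaré lift $A$ is the desired representative.

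The principal obstacle is Step 3: ensuring that the hermitian Morita correspondence works in the relative structured-ring setting, so that the involution truly lifts $\End_\mathcal{C}(G')$ to $\CAlgp$ (rather than only producing a $C_2$-equivariant structure on its underlying spectrum) and so that the associated category of Poincaré modules matches $(\mathcal{C},\Qoppa)$ with its full symmetric-monoidal/Poincaré compatibility over $\mathcal{D}_{\mathrm{perf}}(X)$. This requires a careful tracking of the Poincaré structure through the endomorphism construction and a relative generalization of the absolute hermitian Morita theorem; once established, Step 4 of identifying $A$ as a representative of the original Poincaré Brauer class is formal from invertibility of $(\mathcal{C},\Qoppa)$.
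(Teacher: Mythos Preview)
Your overall strategy—represent the class by an invertible Poincaré category, find a compact generator via Antieau--Gepner, hyperbolize it to get a Poincaré generator, then take endomorphisms—is exactly the paper's approach. However, there is a genuine conceptual confusion in Step 3, and a missing ingredient for the scheme case.

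\textbf{The output is not a Poincaré ring.} You write that the endomorphism construction should yield a Poincaré ring $A\in\CAlgp$. This cannot be right: objects of $\CAlgp$ are $\EE_\infty$-rings with genuine involution, and the functor $\Mod^\mathrm{p}\colon\CAlgp\to\Catp$ sends $A$ to $\Mod^\omega_{A^e}$ with $A^e$ commutative. The endomorphism algebra $\End_{\mathcal{C}}(G')$ of a generator of a nontrivial Brauer class is genuinely noncommutative, so it does not live in $\CAlgp$ and its Poincaré module category cannot recover $(\mathcal{C},\Qoppa)$ via that functor. What the paper actually shows (Proposition~\ref{prop:Poincare_cat_as_module_cat}) is that $\End_{\mathcal{C}}(G')$ is an \emph{Azumaya algebra with genuine involution} over $R$ in the sense of Definition~\ref{defn:alt definition of Azumaya algebra with involution}: an $\EE_1$-$R^e$-algebra $A$ with $\lambda$-involution, together with an $A\otimes_{R^e} R^{\phi C_2}$-module map $P\to A^{\mathrm{t}C_2}$ satisfying invertibility conditions. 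This is an $\EE_\sigma$-type object, not an $\EE_p$-type object. The Poincaré structure on $\Mod^\omega_A$ then comes from the classification of $R$-linear Poincaré structures in Proposition~\ref{prop:relative_poincare_cats_basic_properties}\ref{propitem:classify_R_lin_hermitian_struct_mod_cat}, not from the functor $\Mod^\mathrm{p}$ out of $\CAlgp$.

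\textbf{The scheme case requires gluing.} Proposition~\ref{prop:Poincare_cat_as_module_cat} is stated over a Poincaré ring, i.e.\ in the affine case. For a scheme with good quotient $(X,\lambda,Y,p)$, the paper takes a global compact generator $G$ (via \cite[Proposition 3.9]{MR2957304}), forms $G\oplus D_\Qoppa G$ globally, and then argues that Zariski-locally on $Y$ the endomorphism sheaf $\mathcal{E}\mathrm{nd}(G\oplus D_\Qoppa G)$ acquires the structure of an Azumaya algebra with genuine involution. The three pieces of extra data (the $\lambda$-involution, the module $P$, and the reference map $P\to A^{\mathrm{t}C_2}$) are then glued along the Zariski cover to produce a global Azumaya algebra with genuine involution in the sense of Definition~\ref{defn: Azumaya algebra with genuine anti involution scheme case}. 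Your proposal does not mention this step.
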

\begin{remark} \label{rmk:E_sigma_alg_interpretation}
    An algebra with $ \lambda $-involution is a homotopy-coherent version of an associative algebra with involution; this is an ``involutive'' analogue of the relationship between $ \mathbb{A}_\infty $-algebras and associative algebras. 
    A \emph{derived Azumaya algebra with genuine involution} is an $ \EE_\sigma $-algebra whose underlying $ \mathbb{A}_\infty $-algebra is Azumaya in the sense of \cites{MR2927172}[\S2.2]{MR2957304}{MR3190610} and some additional data; see Definition \ref{defn:alt definition of Azumaya algebra with involution}. 
\end{remark}
Recall that the isomorphism from Morita equivalence classes of generalized Azumaya algebras to the derived Brauer group is implemented by $ \left[\mathcal{A}\right] \mapsto \Mod_{\mathcal{A}}\left(\mathcal{D}_{\mathrm{perf}}(X)\right) $; the inverse takes $ \mathcal{C} $, chooses a generator $ G $, and associates to $ \mathcal{C} $ to the algebra of derived endomorphisms $ \left[\mathrm{End}_{\mathcal{C}}(G)\right] $. 
A Poincaré structure on a category $ \mathcal{C} $ gives rise to so-called Poincaré objects; for each $ d \in \ZZ $, there is a Poincaré structure on $ \mathcal{D}_{\mathrm{perf}}(\ZZ) $ whose Poincaré objects consist of $ M \in \mathcal{D}_{\mathrm{perf}}(\ZZ) $ equipped with a $ d $-shifted Poincaré duality pairing. 
A natural way in which such Poincaré objects in $ \mathcal{D}_{\mathrm{perf}}(X) $ arise is Serre--Grothendieck duality (see Observation \ref{obs:anti_involutions_shifted_pairing_Skolem_Noether} and Example \ref{ex:Grothendieck_Serre_duality_objects}). 
Given a class $ \beta $ in $\pi_0(\mathfrak{br}^\mathrm{p}(X,\lambda,Y,p))$, taking endomorphisms of a suitably chosen Poincaré object produces a derived Azumaya algebra with involution representing $ \beta $. 

Under certain hypotheses, a classical Azumaya algebra with involution determines a derived Azumaya algebra with genuine involution, which induces a homomorphism from Parimala--Srinivas's involutive Brauer group to the Poincaré Brauer group, see Proposition \ref{prop:from_involutive_Brauer_to_Poincare_Brauer}. 
Toën and Antieau--Gepner show that $ \pi_0 \mathfrak{br} $ vanishes étale-locally just like its non-derived counterpart. 
However, in our situation the comparison map fails to be an isomorphism (even étale-locally) for a very basic reason. 
We illustrate this reason with an example. 
\begin{example} [{Examples~\ref{ex:br and br' for alg closed field} \& \ref{ex:pnbr_closed_point_ramified}, Remark \ref{rmk:non_classical_PnBr_classes}}]
     Let $k$ be an algebraically closed field of characteristic $\neq 2$, equipped with the trivial involution and thought of as a Poincar\'e ring via Example~\ref{ex:fixpt_Mackey_functor}. Then 
    \[
    \mathrm{Br}^\mathrm{p}(k) := \pi_0 \pnbr(k) \cong \mathbb{Z}/4\mathbb{Z} 
    \] 
    and the comparison map from Parimala--Srinivas's involutive Brauer group is given by the inclusion $ \ZZ/2 \ZZ \xrightarrow{\cdot 2} \ZZ/4\ZZ $. 
    A generator for $ \mathrm{Br}^\mathrm{p}(k) $ is not represented by an ordinary Azumaya algebra with involution; instead it is represented by a \emph{generalized} Azumaya algebra $ A \simeq \mathrm{End}_k(P) $, where $ P $ generates $ \mathcal{D}(k) $ and is self-dual up to a shift $ q \colon P \simeq P^\vee[1] $; the involution on $ A $ is induced by duality and $ q $. 
    In particular, this example shows why the map $ \mathrm{Br}(k,\mathrm{id}) \to \mathrm{Br}^{\mathrm{p}} (k) $ fails to be an isomorphism. 
\end{example}
\begin{example}
    Suppose instead that $k$ is an algebraically closed field of characteristic $2$, with the trivial involution. Then
    \[
    \mathrm{Br}^\mathrm{p}(k,\mathrm{id}) \cong \mathbb{Z}, 
    \] 
    and the comparison map from involutive Brauer group we define in Definition~\ref{defn:cohomological_involutive_Brauer_group} is given by the inclusion $ 2 \ZZ \subseteq \ZZ $. 
\end{example}   
While we originally set out to provide an enhancement of the involutive Brauer group of Parimala-Srinivas, the previous example shows that the Poincaré Brauer group detects altogether new phenomena. 

In order to prove Theorem \ref{thm:intro_inv_br_and_inv_br_prime}, the preceding example implies we cannot simply set $ \mathrm{Br}'= \pi_0 \pnbr $. 
To\"en's and Antieau--Gepner's work shows that $\pi_0(\mathfrak{br}(X))\cong \mathrm{H}^2_{\acute{e}t}(X;\mathbb{G}_m)\oplus \mathrm{H}^1_{\acute{e}t}(X;\mathbb{Z})$ (see \cite[Corollary 7.14]{MR3190610}). 
The classes in $\mathrm{H}^1_{\acute{e}t}(X;\mathbb{Z})$ have no possibility of coming from an ordinary Azumaya algebra, and so if one wanted to build $\mathrm{Br}'(-)$ from the Brauer space, one should remove these `exotic' elements from consideration.
By analogy, we define $\mathrm{Br}'(R,\lambda)$ to be the subgroup of
\(
\mathrm{Br}^\mathrm{p}(\underline{R}) := \pi_0(\mathfrak{br}^\mathrm{p}(\underline{R}))
\)
which are in the kernel of a canonical surjective map
\(
\mathrm{Br}^\mathrm{p}(\underline{R}) \to \mathrm{H}^1(\mathrm{R}\Gamma_{\Acute{e}t}(R; \mathbb{Z})^{\mathrm{h}C_2})
\)
where the $C_2$ action is given by $(-1)\cdot\lambda$. All of the classes in $\mathrm{Br}'(R,\lambda)$ are then represented by generalized Azumaya algebras with genuine involution, and $\mathrm{Br}(R,\lambda)$ is the subgroup of elements of $\mathrm{Br}'(X,\lambda)$ whose underlying Brauer class is representable by an ordinary Azumaya algebra. 
These constructions patch together to give definitions for schemes with involutions in $\mathrm{Sch}^{C_2-\mathrm{aff},\mathrm{qcqs}}$.

We do not introduce the category of Poincaré rings for generality's sake alone -- we expect that the invariants constructed in Theorem \ref{thm: omnibus theorem for intro} will be useful in the study of the arithmetic of $ \EE_\infty $-ring spectra and of categories of twisted spectra \cite{MR4743054}. 
One application of the existence of the theories we have constructed is an extension of the work of Albert, Saltman, and Parimala--Srinivas in \cites[\S9 Theorem 19]{MR123587}[Theorem 3.1(a)]{MR495234}[Theorem 1]{Parimala_Srinivas}. 
These results concern the question of using cohomological invariants to classify when an Azumaya algebra $A$ is Morita equivalent to an Azumaya algebra $B$ which admits a $\lambda$-involution. 
\begin{theorem}[Theorem~\ref{thm: Saltman theorem in text}]
    Let $R$ be a Poincar{\'e} ring such that $R^{\phi C_2}\simeq 0$. 
    Let $A$ be a generalized Azumaya algebra over $R^e$. Then
    \begin{enumerate}[label=(\arabic*)]
        \item if the $C_2$-action on $R$ is trivial, then $A$ is Morita equivalent to one admitting an Azumaya algebra with genuine involution structure if and only if $2[A]=0$ in $\mathrm{Br}(R^e)$, and 
        \item if the $C_2$-action on $R$ is faithful Galois, then $A$ is Morita equivalent to one admitting an Azumaya with genuine $\lambda$-involution enhancement if and only if $[(A\otimes_R \lambda^*A)^{\mathrm{h}C_2}]=0$ in $\mathrm{Br}(R^{\mathrm{h}C_2})$.
    \end{enumerate}
\end{theorem}
The following computations are consequences of fiber sequences identified in this work (e.g. Theorem \ref{thm: omnibus theorem for intro}\ref{omnibus_thmitem:fiber_of_pnbr_to_br}, Theorem \ref{thm: extended fiber sequence of pnbr to br}).    
\begin{examples}   
\begin{enumerate}[label=(\arabic*)]        
    \item (Example~\ref{ex: Atiyah real K-theory}) Let $\mathrm{K}\mathbb{R}$ denote Atiyah’s real $K$-theory spectrum, which classifies complex vector bundles together with their almost complex structures. This naturally lifts to a Poincar\'e ring spectrum with
    \[
    \mathrm{Pic}^\mathrm{p}(\mathrm{K}\mathbb{R}) \cong \mathbb{Z}/2\mathbb{Z} \times \mathbb{Z}/2\mathbb{Z},
    \]
    and there exists an injective map 
    \(
    \mathbb{Z}/8\mathbb{Z} \hookrightarrow \mathrm{Br}^\mathrm{p}(\mathrm{K}\mathbb{R})
    \)
    which is an isomorphism if $\mathrm{Br}(\mathrm{KU}) \cong 0$.
    \item (Corollary~\ref{cor:pn_pic_of_the_unit}, Example~\ref{ex: pnbr of sphere}) Let $\mathbb{S}$ be the sphere spectrum with trivial $C_2$-action, and let $\mathbb{S}^u$ denote the initial Poincar\'e ring spectrum lifting $\mathbb{S}$. Then
    \[
    \mathrm{Pic}^\mathrm{p}(\mathbb{S}^u) \cong \mathbb{Z}/2\mathbb{Z} , \quad \text{and} \quad \mathrm{Br}^\mathrm{p}(\mathbb{S}^u) \cong \mathbb{Z}.
    \]
\end{enumerate}
\end{examples}

\subsection{Outline}
\S\ref{section:poincare_structures_on_compact_modules} is devoted to preliminary material on Poincaré $ \infty $-categories. 
In \S\ref{section:poincare_ring_spectra}, we introduce the $ \infty $-category of Poincaré rings.
\S\ref{subsection:Poincare_rings} collects examples of Poincaré rings and discusses the existence of a functor from Poincaré rings to structured Poincaré $ \infty $-categories, whose construction is postponed to \S\ref{appendix:calgp_to_catp}. 
In \S\ref{subsection:Poincare_structures_schemes_involution}, we show that a qcqs scheme with involution and a good quotient acquires a symmetric monoidal Poincar\'e structure on its perfect derived category. 
In \S\ref{section:relative_poincare_cats}, we discuss formal properties of relative Poincaré $ \infty $-categories and show that they satisfy a suitable notion of descent in \S\ref{subsection: etale descent results}. 
In \S\ref{section:the_poincare_picard_group}, we consider the moduli of invertible Poincaré objects in a symmetric monoidal Poincaré $ \infty $-category. 
In \S\ref{subsection:calgp_units}, we introduce units for Poincaré rings and show that the Poincaré Picard spectrum is a delooping of the units. 
\S\ref{subsection:the_discrete_case:nondegenerate_hermitian_forms} is devoted to a discussion of hermitian line bundles.  
In \S\ref{subsection:poincare_picard_group_inv_scheme}, we characterize the Poincaré Picard group of a scheme with involution $ \lambda $ and good quotient in terms of ordinary line bundles with nondegenerate $ \lambda $-linear forms and their (cohomological) shifts.  
In \S\ref{section:the_poincare_brauer_group}, we introduce the Poincaré Brauer space via invertible $ R $-linear Poincaré $ \infty $-categories. 
In \S\ref{subsection:invertible_relative_Poincare_cats}, we define the Poincaré Brauer space, show that it deloops the Poincaré Picard space, and describe a relationship between it and the non-involutive Brauer space of Toën and Antieau--Gepner.   
In \S\ref{subsection:generalized_Azumaya_alg_with_involution}, we introduce generalized Azumaya algebras with involution and show that they give rise to Poincaré Brauer classes. 
In \S\ref{section:inv_Brauer_comparison}, we show that our work refines that of Parimala-Srinivas, prove Theorem \ref{thm:intro_inv_br_and_inv_br_prime}, and exhibit a derived enhancement of the exact sequence Parimala-Srinivas develop in the case that $\lambda$ is the deck transformation of a $C_2$-Galois cover in \cite{Parimala_Srinivas}.

The reader may wish to refer to the beginning of each section (and certain subsections) for a synopsis and motivation of the contents therein. 
For the reader who is primarily interested in schemes with involutions and what this paper has to say about them, the most relevant portions of this work are \S\ref{subsection:Poincare_structures_schemes_involution}, \S\ref{subsection:poincare_picard_group_inv_scheme}, \S\ref{subsection:generalized_Azumaya_alg_with_involution}, and \S\ref{section:inv_Brauer_comparison}. 
We caution the reader that notation and terminology in the introduction may differ from in the body of the paper; any changes are recorded in \S\ref{subsection:conventions}. 

\subsection{Related work}
A key technical component of this work is the construction of suitably structured, functorial Poincaré structures on the perfect derived categories of schemes.  
Similar constructions have been developed by Calmès--Harpaz--Nardin in \cite[\S3.3 \& \S3.4]{CHN2024}, which specifies the data needed to define a functor from schemes to symmetric monoidal Poincaré $ \infty $-categories, and Hoyois--Land in \cite[\S2]{Hoyois_Land_GW_derived}, which defines various assignments from spectral algebraic spaces to Poincaré $ \infty $-categories. 
In the language of this paper, the aforementioned works endow a commutative ring (resp. $ \EE_\infty $-ring) $ R $ with trivial $ C_2 $-action, then construct Poincaré structures on $ \Mod^\omega_R $ whose duality functor is $ R $-linear. 
By considering rings with possibly nontrivial involution, we allow our duality functors to be linear with respect to a given involution on $ R $. 
In the language of Poincaré objects, our construction allows us to speak of perfect complexes with nondegenerate \emph{hermitian} forms (or variants such as skew-hermitian) in addition to symmetric forms and their variants, while the former only gives rise to (anti)-symmetric forms (cf. \cite[Remark 2.2]{Hoyois_Land_GW_derived}).  
On the other hand, we do not consider forms valued in line bundles which are not trivial or explore how hermitian or Poincaré structures on $ \Mod_R^\omega $ might vary according to a given t-structure on $ \Mod_R $, so in these respects our construction is less general than those of Calmès--Harpaz--Nardin and Hoyois--Land. 
Marcus Nicolas has ongoing work constructing Poincaré structures on perfect derived categories with not necessarily trivial involution; we are under the impression that our work differs both in approach and end goal and as such our works should be regarded as complementary. 

Vezzosi has studied quadratic structures, including shifted quadratic forms, in derived algebraic geometry \cite{MR3539372}. 
While we generally confine our analysis to schemes with involution in this paper, we expect that a suitable extension of our theory would allow a direct comparison between notions in Vezzosi's work and in ours. 
In particular, we wonder (but do not take up the question here) whether our work on derived Azumaya algebras with involution could be relevant to questions posed about a derived version of the Brauer--Wall group \cite[paragraph entitled `Leftovers']{MR3539372}. 

\subsection{Acknowledgements} 
	The authors wish to thank the Institute for Advanced Study and the organizers of the 2024 Park City Mathematics Institute on motivic homotopy theory. The authors would also like to thank the organizers of the BIRS CDM 2024 program on Algebraic K-theory and Brauer groups, where NR and LY made progress on this project. 
	The authors would like to thank Columbia University for its hospitality. 
    During the writing of this paper, NR was supported by the Simons Collaboration on Perfection in Algebra, Geometry, and Topology.
	
    We are immensely grateful to Ben Antieau, who originally suggested to VB and NR that looking into the Brauer group in the setting of Poincar\'e $\infty$-categories might be interesting and connected to \cite{azumaya_involution}. 
    LY would like to thank James Hotchkiss and Emanuele Dotto for helpful discussions related to this work. 
    The authors would like to thank Ben Antieau, Uriya First, Ben Williams, and Elden Elmanto for valuable feedback on an early draft. 
	
	\subsection{Conventions}
	\label{subsection:conventions} 	
	As indicated in the introduction, key objects of study for us will be derived categories of algebras admitting certain extra structure, and remembering only the tensor triangulated structure of these categories will be insufficient for our purposes. We therefore need to work in a higher categorical setup, of which we have three main choices: differential graded (DG) categories, stable model categories, and stable $\infty$-categories. 
	
	We chose to work in stable $\infty$-categories, and will frequently cite Lurie's \cite{HTT} and \cite{LurHA} for foundational results in this framework. This is partly because the extra structure we will be equipping our derived categories with is that of a Poincar\'e $ \infty $-category, whose basic properties were studied in \cite{CDHHLMNNSI} in the context of stable $\infty$-categories. 
    Our choice facilitates comparisons to the theory introduced in \cite{MR3190610}.
	
	For the reader who is more comfortable with DG categories, we note here that DG categories embed fully faithfully into stable $\infty$-categories. While many of the details would be different in the DG categorical setup, the results presented in the introduction would not be. We would recommend such a reader on a first pass to read Section~\ref{section:poincare_structures_on_compact_modules} where we review some of the key results of \cite{CDHHLMNNSI}, then skip to  Section~\ref{section:the_poincare_picard_group} and black-box the results of the previous Sections. 
	
	Going forward we will use the term Azumaya algebra over a ring spectrum $R$ to be a \textbf{derived} Azumaya algebra over $R$ by default. 
    If we wish to talk about Azumaya algebras over a discrete ring which are not derived, we will emphasize this by calling them \textbf{ordinary} Azumaya algebras. We note that this is the opposite convention that was used in the introduction. 
	
	Let $A$ be an associative algebra in a symmetric monoidal $\infty$-category $\mathcal{C}$. We will use $\mathrm{Mod}_A(\mathcal{C})$ to denote the category of \textbf{left} $A$ modules, and a module over $A$ is by default assumed to be a left module unless stated otherwise. 

    Our indexing conventions are \textbf{homological}. To be consistent with our notation for rings and schemes, we will use $\mathrm{Mod}_{\mathcal{O}_X}^\omega$ (resp. $ \mathrm{Mod}_{\mathcal{O}_X} $) to denote the stable $\infty$-category of perfect $\mathcal{O}_X$-modules (resp. all $ \mathcal{O}_X $-modules); in particular, we switch from $ \mathcal{D}(k) $ in the introduction to $ \Mod_k $ in the body of the paper. 
	
	Throughout this article we will need to work with $ \infty $-categories whose objects themselves consist of (stable) $ \infty $-categories. 
    To handle size issues, we will assume the existence of strongly inaccessible cardinals, and fix once and for all three strongly inaccessible cardinals $\kappa_1<\kappa_2<\kappa_3$ which we call small, large, and huge, respectively. 
    Let \[\CAT[\mathrm{ex}]\subseteq \CAT\] denote the huge $\infty$-categories of large \textbf{stable} $\infty$-categories with exact and all functors between them, respectively. 

    The Poincaré categories we will consider have small underlying $ \infty $-categories, i.e. they will belong to $\Catex_{, \mathrm{idem}}$. On the other hand, we will frequently make use of results from \cites{MR2957304,MR3190610}, which concern compactly-generated stable $ \infty $-categories in $\mathrm{Pr}^{\mathrm{L}}$. 
    These two contexts are equivalent via taking compact objects and taking the $\mathrm{Ind}^\omega$ construction (cf. \cite[\S5.3.5]{HTT}), and we will suppress this equivalence in the sequel.
	
	We include a table of some notation below which we use frequently throughout the document: 
	\begin{longtable}{lll}
        $\Spaces$ & $\infty$-category of spaces\\
        $\Spectra$ & $\infty$-category of spectra\\
        $\Spectra^{C_2}$ & $\infty$-category of genuine $C_2$-spectra \\
		$\Brp$ & Poincaré Brauer spectrum\\ 
		$\pnpic$ & Poincar{\'e} Picard spectrum\\
		$\mathrm{Br}^\mathrm{p}$ & Poincar{\'e} Brauer group $\pi_0(\pnbr)$\\
		$\mathrm{Pic}^\mathrm{p}$ & Poincar{\'e} Picard group $\pi_0(\pnpic)$\\
		$\mathfrak{br}$ & Brauer spectrum\\
		$\mathfrak{pic}$ & Picard spectrum\\
		$\mathrm{Br}$ & Brauer group\\
		$\mathrm{Br}'$ & cohomological Brauer group $\mathrm{H}^2(-;\mathbb{G}_m)_{\mathrm{tors}}$\\
		$\CAlg$ & $\infty$-category of $\Einfty$-ring spectra\\
		$\CAlg(\Spaces)$ & $\infty$-category of $\Einfty$-spaces\\
		$\CAlg^{\gp}(\Spaces)$ & $\infty$-category of grouplike $\Einfty$-spaces\\
		$\CAlgp$ & $\infty$-category of Poincaré ring spectra\\
		$\Catex$ & $\infty$-category of small stable $\infty$-categories and exact functors\\
		$\Catp$ & $\infty$-category of Poincaré $\infty$-categories\\
		$\Catpidem$ & $\infty$-category of idempotent complete Poincaré $\infty$-categories 
	\end{longtable}
	
\section{Poincaré structures on compact modules}
\label{section:poincare_structures_on_compact_modules}
This section recalls notions and results about Poincaré $\infty$-categories which we require in the sections to follow. 
We refer the reader to \cite{CDHHLMNNSI,CDHHLMNNSII} for details and proofs.
    
\begin{notation}\label{notation:quadratic_functor}
    Let $\mathcal{C}$ be a stable $\infty$-category. We call a reduced functor $\Qoppa:\mathcal{C}^\op\to \Spectra$ \emph{quadratic} if it is $2$-excisive. We will write $\mathrm{Fun}^\mathrm{q}(\mathcal{C})$ for the full subcategory of $\mathrm{Fun}(\mathcal{C}^{\op},\mathrm{Sp})$ spanned by quadratic functors. 
\end{notation}

\begin{definition}[{\cite[Definition 1.2.1]{CDHHLMNNSI}}]
    A hermitian $\infty$-category is a pair $(\mathcal{C},\Qoppa)$ where $\mathcal{C}$ is a stable $\infty$-category and $\Qoppa:\mathcal{C}^\op\to \mathrm{Sp}$ is a quadratic functor. A map of hermitian $\infty$-categories $(f,\eta):(\mathcal{C},\Qoppa)\to (\mathcal{D},\Psi)$ consists of an exact functor $f:\mathcal{C}\to \mathcal{D}$ and a natural transformation $\eta:\Qoppa\implies \Psi \circ f^\op$.
\end{definition}

The collection of hermitian $\infty$-categories and maps between them can be assembled into a large $\infty$-category $\Cath$. Let $\mathcal{C}$ be a stable $\infty$-category and let $\mathrm{Fun}^\mathrm{q}(\mathcal{C})\subset \mathrm{Fun}(\mathcal{C}^{\op},\mathrm{Sp})$ be the full subcategory spanned by quadratic functors. The assignment $\mathcal{C} \mapsto \mathrm{Fun}^\mathrm{q}(\mathcal{C})$ defines a functor $\left(\Catex\right)^\op \to \CAT[]$. Following \cite[Definition 1.2.1]{CDHHLMNNSI}, the $\infty$-category of hermitian $\infty$-categories is the Grothendieck construction \[\mathrm{Cat}^\mathrm{h}_\infty :=\int_{\Catex}\mathrm{Fun}^\mathrm{q}(-).\]

\begin{notation}~\label{notation: shorthand for quadratic and symmetric functors}
    We will use the notation $$\mathrm{Fun}^\mathrm{s}(\mathcal{C}):=\mathrm{Fun}^{\ex}(\mathcal{C}^\op\times \mathcal{C}^\op,\mathrm{Sp})^{\mathrm{h}C_2}$$ for the \emph{$\infty$-category of symmetric bilinear functors on $\mathcal{C}$}.
\end{notation}

Given a hermitian $\infty$-category $(\mathcal{C},\Qoppa)$, we have an associated bilinear functor $\mathrm{B}_\Qoppa:\mathcal{C}^\mathrm{op}\times \mathcal{C}^\mathrm{op}\rightarrow \Spectra$ given by the assignment $$\mathrm{B}_\Qoppa(x,y):= \Qoppa(x\oplus y)^{\text{red}},$$ where $\Qoppa((-)\oplus (-))^{\text{red}}$ denotes the universal bi-reduced functor associated to $\Qoppa((-)\oplus (-)):\mathcal{C}^\mathrm{op}\times \mathcal{C}^\mathrm{op}\rightarrow \Spectra$. The functor $\mathrm{B}_\Qoppa$ is called the \emph{cross effect of $\Qoppa$}.

\begin{definition}[{\cite[Definitions 1.2.2 \& 1.2.8]{CDHHLMNNSI}}]
    Let $(\mathcal{C},\Qoppa)$ be a hermitian $\infty$-category. We say that the cross effect $\mathrm{B}_\Qoppa$ is \emph{non-degenerate} if the functor \[\mathcal{C}^\op\to \mathrm{Fun}^\mathrm{ex}(\mathcal{C}^\op,\mathrm{Sp})\hspace{1cm} y\mapsto \mathrm{B}_\Qoppa(-,y)\] takes values in the essential image of the stable Yoneda embedding. In this case, this functor factors as a \emph{duality functor} $$\mathrm{D}_\Qoppa:\mathcal{C}^\op \to \mathcal{C}$$ and we say that the hermitian $\infty$-category $(\mathcal{C},\Qoppa)$ is a \emph{Poincar{\'e} $\infty$-category} if $\mathrm{B}_\Qoppa$ is non-degenerate and $\mathrm{D}_\Qoppa$ is an equivalence. A \emph{map of Poincar{\'e} $\infty$-categories} is a map of hermitian $\infty$-categories which further preserves the corresponding duality functors.
\end{definition}

We will denote by $\Catp \subset \Cath$ the subcategory of Poincar{\'e} $\infty$-categories. 
	
\begin{example}\label{example:symmetric_poincare_structure}
    Let $R$ be a commutative ring, and let $\Qoppa_R^\mathrm{s}:\mathrm{Mod}_{R}^{\omega,\op}\to \mathrm{Sp}$ denote the functor given by \[\Qoppa_R^\mathrm{s}(M)=\mathrm{hom}_{R\otimes R}(M\otimes M, R)^{\mathrm{h}C_2},\] where the action on $R$ is trivial and the action on $M\otimes M$ is via the swap isomorphism. The associated duality functor is given by $$\mathrm{D}_{\Qoppa_R^{\mathrm{s}}}(M)=\mathrm{hom}_{R}(M, R)=:M^\vee.$$ This is a Poincar\'e $\infty$-category.
\end{example}		
\begin{example}[{\cite[\S3.5]{CDHHLMNNSI}}]\label{ex:shifted_Poincare_structure} 
    Let $n$ be an integer and suppose given a Poincaré $ \infty $-category $ \left(\mathcal{C},\Qoppa\right) $. 
    Then $ \Sigma^{n} \circ \Qoppa $ defines a reduced quadratic functor which we denote by $ \Qoppa^{[n]} $, and $ \left(\mathcal{C},\Qoppa^{[n]}\right) $ is a Poincaré $ \infty $-category. 
    In particular, if we apply this to Example \ref{example:symmetric_poincare_structure}, we obtain a Poincar\'e $\infty$-category $(\Qoppa_R^\mathrm{s})^{[n]}:\mathrm{Mod}_{R}^\omega\to \mathrm{Sp}$ given by \[(\Qoppa_R^\mathrm{s})^{[n]}(M)=\mathrm{hom}_{R\otimes R}(M\otimes M, R[n])^{\mathrm{h}C_2},\] with duality functor $$\mathrm{D}_{(\Qoppa_R^{\mathrm{s}})^{[n]}}(M)=\mathrm{hom}_{R}(M, R[n])=M^\vee[n].$$ 
\end{example}
	
The collections of hermitian and Poincar\'e $\infty$-categories have several desirable properties. We collect some of the key properties below.

\begin{theorem}[{\cite[Thm. 5.2.7, Prop. 6.1.2, Prop. 6.1.4, Cor. 6.2.9]{CDHHLMNNSI}}]\label{thm:catp_cath_properties_summary}
    \begin{enumerate}
        \item The $\infty$-categories $\Catp$ and $\Cath$ have small limits and small colimits. 
        \item The forgetful functors $\Catp\to \Cath\to \mathrm{Cat}_\infty^{\mathrm{ex}}$ are conservative and preserve small limits and small colimits.
        \item There is a symmetric monoidal $\infty$-operad $(\Cath)^\otimes$ lifting $\Cath$.
        \item The forgetful functor $\Cath\to \mathrm{Cat}_\infty^{\mathrm{ex}}$ lifts to a symmetric monoidal map $(\Cath)^\otimes \to (\mathrm{Cat}_\infty^{\mathrm{ex}})^\otimes$.
        \item The symmetric monoidal structure on $\Cath$ restricts to a symmetric monoidal structure on $\Catp$.
        \item The symmetric monoidal structures on $\Catp$ and $\Cath$ are closed. 
    \end{enumerate}
\end{theorem}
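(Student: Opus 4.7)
The plan is to exploit the description of $\Cath$ as a Grothendieck construction $\int_{\Catex} \Fun^q(-)$ together with properties of the fibers $\Fun^q(-)$, and then separately construct and analyze the symmetric monoidal structure.

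For parts (1) and (2), I would first observe that each fiber $\Fun^q(\mathcal{C})$ is the essential image of the $P_2$-excisive approximation applied to reduced functors $\mathcal{C}^{\op} \to \Spectra$, hence is presentable as an accessible localization of $\Fun(\mathcal{C}^{\op},\Spectra)$. Moreover, pullback along an exact functor $F : \mathcal{C} \to \mathcal{D}$ carries $\Fun^q(\mathcal{D})$ into $\Fun^q(\mathcal{C})$ and admits a left adjoint via left Kan extension followed by $P_2$-approximation. Applying standard results on Grothendieck constructions with presentable fibers and adjointable transition functors, one computes limits and colimits in $\Cath$ by taking the (co)limit in the base and then the corresponding (co)limit in the fibers; this gives existence of small (co)limits in $\Cath$ and preservation under the projection to $\Catex$. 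To upgrade to $\Catp$, one verifies that the full subcategory of Poincar\'e $\infty$-categories is closed under these (co)limits by checking that the duality functor on a limit or colimit is itself the corresponding (co)limit of the duality functors, and in particular remains an equivalence.

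For parts (3), (4), and (5), the natural approach is to extend Lurie's tensor product on $\Catex$ to $\Cath$ via a Day-convolution-type formula. Given hermitian $\infty$-categories $(\mathcal{C}, \Qoppa)$ and $(\mathcal{D}, \Psi)$, the tensor product should carry the stable $\infty$-category $\mathcal{C} \otimes \mathcal{D}$ together with the quadratic functor whose underlying bilinear part is the external product $\mathrm{B}_{\Qoppa} \boxtimes \mathrm{B}_{\Psi}$ equipped with the diagonal $C_2$-action induced by simultaneously swapping both factors. The key technical input is verifying that this external product of bilinear parts yields a $2$-excisive functor on $\mathcal{C} \otimes \mathcal{D}$ and that the assignment assembles into a symmetric monoidal $\infty$-operad whose underlying structure on $\Catex$ is Lurie's. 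For part (5), invertibility of the duality on $\mathcal{C} \otimes \mathcal{D}$ follows from the identification $\mathrm{D}_{\Qoppa \otimes \Psi} \simeq \mathrm{D}_{\Qoppa} \otimes \mathrm{D}_{\Psi}$, which is an equivalence when both factors are.

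Finally, for part (6), closedness follows from the adjoint functor theorem: by construction, tensoring with a fixed $(\mathcal{C}, \Qoppa)$ preserves small colimits in each variable, and $\Cath$ is presentable in the appropriate sense (relative to our size conventions), so right adjoints exist; the resulting internal hom should have underlying stable $\infty$-category $\Fun^{\ex}(\mathcal{C}, \mathcal{D})$ together with a natural quadratic functor encoding the spectrum of natural transformations between quadratic functors. The main obstacle in this plan is the construction of the symmetric monoidal structure in the previous paragraph: specifically, proving that external products of reduced $2$-excisive functors remain $2$-excisive and that the resulting tensor product assembles coherently into a symmetric monoidal $\infty$-operad compatible with the forgetful functor to $\Catex$. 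This requires careful bookkeeping with cross-effects and the Day convolution formalism, and is where I expect the bulk of the work to lie.
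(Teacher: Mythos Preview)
This theorem is not proved in the paper; it is stated as a recollection with explicit citations to \cite[Thm.~5.2.7, Prop.~6.1.2, Prop.~6.1.4, Cor.~6.2.9]{CDHHLMNNSI}, and no proof is given. So there is no ``paper's own proof'' to compare your proposal against.

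That said, your sketch is broadly in line with how the cited reference establishes these results: the Grothendieck-construction description of $\Cath$ and presentability of the fibers $\Fun^q(\mathcal{C})$ are indeed the engine behind (1) and (2), and the symmetric monoidal structure in (3)--(5) is built in \cite{CDHHLMNNSI} via a Day-convolution-style construction on quadratic functors, with closedness following from colimit-preservation. One caution: your description of the tensor of two hermitian structures as determined solely by the external product of bilinear parts is incomplete, since a quadratic functor is not determined by its bilinear part alone; the actual construction in \cite{CDHHLMNNSI} uses the classification of quadratic functors via the pullback square of Theorem~\ref{thm: characterization of quadratic functors on a cat} (bilinear part plus linear part plus gluing datum), and the tensor must be specified on all three pieces. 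This is precisely the ``careful bookkeeping with cross-effects'' you flag as the main obstacle, and it is indeed where the work lies in the original reference.
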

\begin{recollection}\label{rec:symmetric_Poincare_cat}
    Recall that there is an $ \infty $-category $ \Cat^{\mathrm{ps}}_\infty $ whose objects consist of small stable $ \infty $-categories $ \mathcal{C} $ equipped with a perfect symmetric bilinear functor $ B \colon \mathcal{C}^\op \times \mathcal{C}^\op \to \Spectra $; these are referred to as \emph{perfect symmetric} or \emph{perfect symmetric bilinear} $ \infty $-categories \cite[\S7.2]{CDHHLMNNSI}. 
    Taking the adjoint $ \mathcal{C}^\op \xrightarrow{\sim} \mathcal{C} $ of $ B $, there is an equivalence $ \Cat^{\mathrm{ps}}_\infty \simeq \left(\Catex\right)^{hC_2} $ where we regard $ \Catex $ as equipped with the involution sending a stable $ \infty $-category to its opposite \cite[Corollary 7.2.16]{CDHHLMNNSI}. 
    Furthermore, $ \Cat^{\mathrm{ps}}_\infty $ has a symmetric monoidal structure, the assignment $ \left(\mathcal{C},\Qoppa\right) \mapsto \left(\mathcal{C},B_\Qoppa\right) $ induces a forgetful functor $ \Catp \to \Cat^{\mathrm{ps}}_\infty $, and the forgetful functor is symmetric monoidal \cite[\S3.1, esp. Remark 3.1.2 through Corollary 3.1.3]{CHN2024}. 
    In fact, the assignment $ (\mathcal{C},B) \mapsto \mathcal{C} $ induces a functor $ \Cat^{\mathrm{ps}}_\infty \to \Catex $ so that the composite $ \Catp \to \Cat^{\mathrm{ps}}_\infty \to \Catex $ agrees with the symmetric monoidal forgetful functor of Theorem \ref{thm:catp_cath_properties_summary}.
\end{recollection}
To study Poincar{\'e} $\infty$-categories we will make use of the following constructions: the $\infty$-category of hermitian objects, the space of forms and the space of Poincar{\'e} objects.
	
\begin{definition}[{\cite[Definitions 2.1.1, 2.1.3]{CDHHLMNNSI}}]
    Let $(\mathcal{C},\Qoppa)$ be a Poincar{\'e} $ \infty $-category. Then the \emph{$\infty$-category of hermitian objects} in $(\mathcal{C},\Qoppa)$ is given as the Grothendieck construction \[\mathrm{He}(\mathcal{C},\Qoppa)=\int_{\mathcal{C}}\Omega^\infty \Qoppa.\] The \emph{space of hermitian forms} is the groupoid core $\mathrm{Fm}(\mathcal{C},\Qoppa)=\mathrm{He}(\mathcal{C},\Qoppa)^\simeq$. The \emph{space of Poincar\'e objects} $\mathrm{Pn}(\mathcal{C},\Qoppa)$ is the full subgroupoid of $\mathrm{Fm}(\mathcal{C},\Qoppa)$ spanned by objects $(x,q)$ such that the induced map $q_\#:x\to \mathrm{D}_\Qoppa (x)$ is an equivalence. 
\end{definition}
The next example motivates some of the naming conventions thus far. 
\begin{example}[{\cite[Example 2.1.8]{CDHHLMNNSI}}]\label{ex:poincare_duality_objects}
    Consider the Poincar{\'e} $\infty$-category $\left(\mathrm{Mod}_{\mathbb{Z}}^\omega, (\Qoppa_\mathbb{Z}^{\mathrm{s}})^{[-n]}\right)$ from Example \ref{ex:shifted_Poincare_structure}. Then the Poincar{\'e} objects are exactly complexes $M_\bullet$ together with a perfect symmetric pairing $M_\bullet \otimes M_\bullet \to \mathbb{Z}[-n]$. 
    If $ M $ is a closed compact oriented $n$-manifold, then chains on $ M $ with the pairing induced by Poincar\'e duality lifts to a Poincaré object of $\left(\mathrm{Mod}_{\mathbb{Z}}^\omega, (\Qoppa_\mathbb{Z}^{\mathrm{s}})^{[-n]}\right)$. 
\end{example}
	
\begin{proposition}[{\cite[Corollary 5.2.8]{CDHHLMNNSI}}]~\label{prop: pn is lax monoidal}
    The functor $\mathrm{Pn}:\Catp\to \mathcal{S}$ is lax symmetric monoidal. 
\end{proposition}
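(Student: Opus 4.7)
The plan is to use the symmetric monoidal closed structure on $\Catp$ (listed in the theorem of properties above) together with the observation that the external tensor product of Poincaré objects is again a Poincaré object. The lax symmetric monoidal structure requires (i) a unit map $\ast \to \mathrm{Pn}(\mathbf{1}_{\Catp})$, (ii) a coherent multiplication $\mathrm{Pn}(\mathcal{C},\Qoppa) \times \mathrm{Pn}(\mathcal{D},\Psi) \to \mathrm{Pn}((\mathcal{C},\Qoppa)\otimes(\mathcal{D},\Psi))$, and (iii) compatibility data for all higher arities.

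For the unit: the tensor unit in $\Catp$ is $(\Spectra^\omega, \Qoppa^{\mathrm{s}}_{\sphere})$, and $\sphere$ equipped with the canonical form coming from the unit map $\sphere \to \sphere^{\mathrm{h} C_2} = \Qoppa^{\mathrm{s}}_{\sphere}(\sphere)$ is a Poincaré object, providing a basepoint. For the binary multiplication, given $(x,q_x) \in \mathrm{Pn}(\mathcal{C},\Qoppa)$ and $(y,q_y) \in \mathrm{Pn}(\mathcal{D},\Psi)$, one forms $(x \otimes y,\, q_x \otimes q_y)$, where the latter lives in $\Omega^\infty (\Qoppa \otimes \Psi)(x \otimes y)$ via the canonical map $\Qoppa(x) \otimes \Psi(y) \to (\Qoppa \otimes \Psi)(x \otimes y)$ built into the tensor product of hermitian structures. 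That this is Poincaré follows from the identification $\mathrm{D}_{\Qoppa \otimes \Psi}(x \otimes y) \simeq \mathrm{D}_\Qoppa(x) \otimes \mathrm{D}_\Psi(y)$, which is part of the symmetric monoidal structure on $\Catp$, together with $(q_x \otimes q_y)_\# \simeq (q_x)_\# \otimes (q_y)_\#$ being a tensor product of equivalences.

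To package this into an honest functor of $\infty$-operads, I would proceed as follows. Consider the functor $\mathrm{He} : \Cath \to \CAT$ sending $(\mathcal{C},\Qoppa) \mapsto \int_\mathcal{C} \Omega^\infty \Qoppa$, and endow $\CAT$ with its cartesian symmetric monoidal structure. Using the Grothendieck-construction definition $\Cath = \int_{\Catex} \mathrm{Fun}^{\mathrm{q}}(-)$ and the compatibility of the symmetric monoidal structure on $\Cath$ with the external product of quadratic functors, $\mathrm{He}$ acquires a lax symmetric monoidal enhancement whose binary structure map is exactly the one described above. Passing to groupoid cores yields a lax symmetric monoidal $\mathrm{Fm} : \Cath \to \Spaces$, which restricts along $\Catp \hookrightarrow \Cath$. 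The sub-$\infty$-groupoid $\mathrm{Pn}(\mathcal{C},\Qoppa) \subseteq \mathrm{Fm}(\mathcal{C},\Qoppa)$ is closed under this multiplication by the Poincaré-preservation observation above, so the lax monoidal structure restricts to $\mathrm{Pn}$.

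The main obstacle is step (iii): upgrading the pointwise prescription to a coherent map of $\infty$-operads. The cleanest way is to avoid handcrafting higher coherences by instead exhibiting $\mathrm{He}$ (or equivalently $\mathrm{Fm}$) as the composition of a symmetric monoidal Grothendieck construction with $\Omega^\infty$, so that all coherence is inherited formally from the symmetric monoidal structure already established on $\Cath$; the restriction to $\mathrm{Pn}$ is then just a full subfunctor closed under the operad actions, for which closure at the level of binary operations (already verified) is enough.
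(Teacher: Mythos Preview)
The paper does not give its own proof of this proposition: it is stated as a citation of \cite[Corollary 5.2.8]{CDHHLMNNSI}. However, the remark immediately following (Remark~\ref{rmk:he_lax_monoidal}) sketches an alternative argument, and that sketch is quite different from yours. The paper's suggested route is: $\mathrm{Pn}$ (and $\mathrm{He}$) is corepresented by the tensor unit $(\Spectra^{\mathrm{fin}},\Qoppa^u)$, and in a closed symmetric monoidal $\infty$-category the functor corepresented by the unit is lax symmetric monoidal, via the symmetric monoidality of the enriched Yoneda embedding. This handles all higher coherences in one stroke, with no need to analyze the Grothendieck construction or check closure of sub-$\infty$-groupoids.

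Your approach is conceptually sound but has two issues worth flagging. First, a small error: the tensor unit of $\Catp$ is $(\Spectra^{\mathrm{fin}},\Qoppa^u)$ with the \emph{universal} Poincaré structure, not the symmetric one $\Qoppa^{\mathrm{s}}_{\sphere}$ (see \cite[\S4.1]{CDHHLMNNSI} and Example~\ref{example:universal_poincare_ring_spectrum}). Second, and more substantively, your handling of coherence is where the real work lies, and the proposal remains vague precisely there. Invoking a ``symmetric monoidal Grothendieck construction'' is not a standard off-the-shelf result in this setting; making that precise would require roughly the same amount of effort as the original argument in \cite{CDHHLMNNSI}. Your final claim---that closure under binary operations suffices to restrict a lax monoidal structure to a full subfunctor---is correct (full subfunctors are monomorphisms, so the restricted structure maps are uniquely determined once they land in the subfunctor), but this only helps once you already have the lax monoidal structure on $\mathrm{Fm}$, which is the step you have not fully justified. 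The corepresentability argument avoids this entirely.
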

\begin{remark}\label{rmk:he_lax_monoidal}
    There appears to be a typo in \cite[Corollary 5.2.8]{CDHHLMNNSI}: The functor $ \mathrm{He} \colon \Cath \to \Cat_\infty $ does not take values in spaces. The proof suggests that the statement is supposed to be about $\mathrm{Fm}:\Catp\to \mathcal{S}$ instead.
    One may show that $ \mathrm{He} \colon \Cath \to \Cat_\infty $ is lax symmetric monoidal using the enrichment of $ \Cath $ over $ \Cat_\infty $ \cite[Remark 6.2.21]{CDHHLMNNSI}, the symmetric monoidal structure on $ \Cath $, the symmetric monoidality of the enriched Yoneda embedding \cite[Proposition 8.4.3]{MR4567127}, and the fact that $ \mathrm{He} $ is corepresented by $ \left(\Spectra^{\mathrm{fin}},\Qoppa^u \right) $, by applying $ \mathrm{He} $ to the equivalence of hermitian $ \infty $-categories in \cite[Example 6.2.5]{CDHHLMNNSI}.  
\end{remark}

Let us write $ \Catpidem $ for the full subcategory of $ \Catp $ spanned by Poincar\'e $ \infty $-categories whose underlying $ \infty $-category is idempotent-complete. For any stable $ \infty $-category $ \mathcal{C} $, write $ \mathcal{C}^\natural $ for the idempotent-completion of $ \mathcal{C} $ and write $ i \colon \mathcal{C} \to \mathcal{C}^{\natural} $ for the canonical inclusion.

\begin{proposition}[{\cite[Proposition 1.3.4]{CDHHLMNNSII}}] \label{rec:catpidem}
    The inclusion $ \Catpidem \subseteq \Catp $ admits a left adjoint exhibiting $ \Catpidem $ as a reflective localization of $ \Catp $. The map $(i,\mathrm{id_\Qoppa}):(\mathcal{C},\Qoppa) \to (\mathcal{C}^\natural,i_! \Qoppa) $, where $i_! \Qoppa$ denotes the left Kan extension of $\Qoppa$ along $i^\mathrm{op}$, exhibits $ (\mathcal{C}^\natural,i_! \Qoppa) $ as a $\Catpidem$-reflection of $ (\mathcal{C},\Qoppa) $.
\end{proposition}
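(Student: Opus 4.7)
The proof naturally splits into two tasks: (i) verify that $(\mathcal{C}^\natural, i_!\Qoppa)$ is a Poincaré $\infty$-category (not merely hermitian), and (ii) establish the universal mapping property that gives the adjunction.

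For (i), the plan is to leverage that $i \colon \mathcal{C} \hookrightarrow \mathcal{C}^\natural$ is fully faithful, exact, and has dense image (every object is a retract of one from $\mathcal{C}$). First I would check that $i_!\Qoppa$ is quadratic. Reducedness follows because $0 \in \mathcal{C}^\natural$ is in the image of $i$ (and the pointwise formula for left Kan extension along a fully faithful functor agrees with $\Qoppa$ on the image), giving $i_!\Qoppa(0) \simeq \Qoppa(0) \simeq 0$. For $2$-excisiveness, the key point is that the idempotent completion adds no new pushout squares beyond those built by taking retracts of squares in $\mathcal{C}$; since $2$-excisiveness is preserved by retracts, and a bi-linear/bi-reduced computation shows the cross effect $B_{i_!\Qoppa}$ is the (bivariate) left Kan extension of $B_\Qoppa$ along $(i \times i)^{\op}$, quadraticity follows. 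Next, for the Poincaré condition, I would identify the duality: the equivalence $\mathrm{D}_\Qoppa \colon \mathcal{C}^{\op} \xrightarrow{\sim} \mathcal{C}$ extends uniquely (by functoriality and universal property of idempotent completion in $\Catex$) to an equivalence $(\mathcal{C}^\natural)^{\op} \xrightarrow{\sim} \mathcal{C}^\natural$, and one verifies via the formula for $B_{i_!\Qoppa}$ and the stable Yoneda lemma that this extension corepresents $B_{i_!\Qoppa}(-,y)$ and $B_{i_!\Qoppa}(x,-)$, so it agrees with $\mathrm{D}_{i_!\Qoppa}$.

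For (ii), I would proceed as follows. Given a map $(F,\eta)\colon (\mathcal{C},\Qoppa) \to (\mathcal{D},\Psi)$ in $\Catp$ with $(\mathcal{D},\Psi) \in \Catpidem$, the underlying exact functor $F$ extends uniquely (up to contractible choice) to $\widetilde{F}\colon \mathcal{C}^\natural \to \mathcal{D}$ by the universal property of idempotent completion at the level of $\Catex$. The natural transformation $\eta \colon \Qoppa \Rightarrow \Psi \circ F^{\op}$ then transposes under the adjunction $(i_! \dashv i^*)$ on $\Fun^{\mathrm{q}}$ to a canonical map $\widetilde{\eta}\colon i_!\Qoppa \Rightarrow \Psi \circ \widetilde{F}^{\op}$, and this pair $(\widetilde{F},\widetilde{\eta})$ is visibly unique with the property that precomposition with $(i, \mathrm{id})$ recovers $(F,\eta)$. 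Organizing this as a natural equivalence of mapping spaces
\[
\Map_{\Catpidem}\bigl((\mathcal{C}^\natural, i_!\Qoppa), (\mathcal{D},\Psi)\bigr) \xrightarrow{\;\sim\;} \Map_{\Catp}\bigl((\mathcal{C},\Qoppa),(\mathcal{D},\Psi)\bigr)
\]
yields the desired reflective localization.

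The main obstacle I anticipate is the careful verification that the bilinear cross effect of the left Kan extension agrees (up to canonical equivalence) with the bivariate left Kan extension of $B_\Qoppa$, and that this representability plus duality-equivalence property survives passage to $\mathcal{C}^\natural$. This is essentially a density/Yoneda calculation, and the cleanest approach is to exploit the symmetric monoidal enhancement of the Yoneda embedding (as in Remark~\ref{rmk:he_lax_monoidal}) together with the fact that $i$ induces an equivalence on mapping spectra; once the cross-effect identification is in hand, every other piece (reducedness, $2$-excisiveness, non-degeneracy, duality) follows essentially formally from the corresponding properties of $\Qoppa$ together with the universal properties of idempotent completion.
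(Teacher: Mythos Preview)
The paper does not supply a proof of this proposition: it is quoted directly from \cite[Proposition 1.3.4]{CDHHLMNNSII} as a recollection, with no argument given in the body of the paper. There is therefore nothing here to compare your proposal against. Your outline is a reasonable sketch of how one would prove such a statement from scratch, and it follows the expected template (extend $\Qoppa$ by left Kan extension, check the cross effect and duality extend via density, then use the $(i_! \dashv i^*)$ adjunction on quadratic functors together with the universal property of idempotent completion in $\Catex$); but if you want to check it against the authors' actual argument you will need to consult \cite{CDHHLMNNSII} rather than this paper.
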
 

\begin{notation}
    We will denote the localization functor of Proposition \ref{rec:catpidem} by $ (-)^\natural: \Catp\rightarrow \Catpidem$. We will call $ \left(\mathcal{C},\Qoppa \right)^\natural = \left(\mathcal{C}^\natural,i_! \Qoppa \right) $ the \emph{idempotent-completion of $ \left(\mathcal{C},\Qoppa \right) $}.
\end{notation}

\begin{proposition}\label{proposition:idempotent_completion_of_sym_mon_catp}
    The subcategory $ \Catpidem \subset \Catp$ inherits a symmetric monoidal structure from $ \Catp $ so that the idempotent-completion functor $ (-)^\natural:\Catp\rightarrow \Catpidem $ promotes canonically to a symmetric monoidal functor. 
    In particular, idempotent-completion induces a functor $ (-)^\natural :\EE_\infty\Mon\left(\Catp\right) \to \EE_\infty\Mon\left(\Catpidem\right) $.
\end{proposition}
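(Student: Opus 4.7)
The plan is to apply the standard criterion for symmetric monoidal localizations, namely \cite[Proposition 2.2.1.9]{LurHA}: given a symmetric monoidal $\infty$-category $\mathcal{V}$ and a reflective localization $L \colon \mathcal{V} \to \mathcal{W}$, the subcategory $\mathcal{W}$ inherits a symmetric monoidal structure for which $L$ is symmetric monoidal, provided the class of $L$-equivalences is closed under tensor product with arbitrary objects of $\mathcal{V}$. In our setting we take $\mathcal{V} = \Catp$, $\mathcal{W} = \Catpidem$, and $L = (-)^\natural$ as provided by Proposition \ref{rec:catpidem}. The concluding statement about $\EE_\infty\mathrm{Mon}$ will then follow formally, since any symmetric monoidal functor between symmetric monoidal $\infty$-categories induces a functor between the respective $\infty$-categories of commutative algebra objects.

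The compatibility verification proceeds in two stages using the symmetric monoidal forgetful functor $\Catp \to \Catex$ supplied by the cited Theorem (\cite[Cor. 6.2.9]{CDHHLMNNSI}), together with its conservativity on underlying $\infty$-categories and quadratic functors. Let $f \colon (\mathcal{C},\Qoppa) \to (\mathcal{D},\Psi)$ be a $(-)^\natural$-equivalence. By Proposition \ref{rec:catpidem}, this means the underlying exact functor $\mathcal{C} \to \mathcal{D}$ is a $(-)^\natural$-equivalence in $\Catex$, and the induced map of quadratic functors becomes an equivalence after left Kan extension to $\mathcal{D}^\natural$. Fix an arbitrary $(\mathcal{E},\Phi) \in \Catp$; we must show $f \otimes \id$ has the same property.

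First, on underlying stable $\infty$-categories, the tensor product is computed in $\Catex$, and it is classical that $(-)^\natural \colon \Catex \to \Catperf$ is a symmetric monoidal localization (see e.g. \cite[Lemma 5.3.2.12]{LurHA} and the discussion of idempotent completion as a smashing localization). Thus $f \otimes_{\Catex} \id_\mathcal{E}$ is again a $(-)^\natural$-equivalence in $\Catex$. Second, at the level of quadratic functors, the tensor product $\Qoppa \otimes \Psi$ on $(\mathcal{C} \otimes \mathcal{E})^{\op}$ is obtained from $\Qoppa \boxtimes \Phi \in \Fun^\mathrm{q}(\mathcal{C}) \otimes \Fun^\mathrm{q}(\mathcal{E})$ via a universal construction, and we must verify that the canonical comparison \[ (i \otimes \id_\mathcal{E})_!(\Qoppa \otimes \Phi) \longrightarrow (i_!\Qoppa) \otimes \Phi \] is an equivalence after idempotent-completing the underlying $\infty$-category. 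This reduces, by the universal property of left Kan extension, to showing that $i \otimes \id_\mathcal{E} \colon \mathcal{C} \otimes \mathcal{E} \to \mathcal{C}^\natural \otimes \mathcal{E}$ is dense (equivalently, a $(-)^\natural$-equivalence in $\Catex$), which is precisely the content of the first stage.

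The main technical obstacle will be the second stage: disentangling the explicit Day-convolution-type construction of the tensor product on $\Cath$ from \cite[\S5.2]{CDHHLMNNSI} and confirming that it commutes with the left Kan extension formula appearing in Proposition \ref{rec:catpidem}. The key inputs are that quadratic functors on $\mathcal{C}^\natural$ are classified by their restrictions to $\mathcal{C}$ (since $\mathcal{C}$ generates $\mathcal{C}^\natural$ under retracts and $2$-excisive functors preserve retracts), and that the bireduced cross-effect $\mathrm{B}_{i_!\Qoppa}$ agrees with the right Kan extension of $\mathrm{B}_\Qoppa$. Once this is verified, Lurie's criterion delivers both the symmetric monoidal structure on $\Catpidem$ and the symmetric monoidal refinement of $(-)^\natural$.
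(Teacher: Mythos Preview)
Your proposal is correct and follows essentially the same approach as the paper: both apply Lurie's criterion \cite[Proposition 2.2.1.9]{LurHA} and reduce to checking that tensoring with a fixed Poincar\'e $\infty$-category preserves $(-)^\natural$-equivalences. The only difference is that where you outline a two-stage hands-on verification (underlying category, then quadratic functor), the paper simply cites \cite[Proposition 1.5.5]{CDHHLMNNSII}, which is precisely the statement that the tensor product in $\Catp$ preserves Karoubi equivalences; your second stage would amount to reproving that proposition.
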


\begin{proof}
    The idempotent-completion functor $(-)^\natural :\Catp \rightarrow \Catpidem$ is a localization with respect to Karoubi equivalences of Poincar\'e $\infty$-categories, i.e. maps of Poincar\'e $\infty$-categories $(f,\eta):(\mathcal{C},\Qoppa)\rightarrow (\mathcal{D},\Phi)$ such that $f$ is a Karoubi equivalence and $\eta$ is an equivalence; see \cite[Corollary 1.3.4]{CDHHLMNNSII}. By \cite[Remark 2.2.1.7]{LurHA} and \cite[Proposition 2.2.1.9]{LurHA}, it suffices to show that for any Poincaré $ \infty $-categories $ \left(\mathcal{C},\Qoppa\right), \left(\mathcal{D},\Phi\right) $, the map $ (i,\mathrm{id}_\Qoppa)\otimes \id_{\left(\mathcal{D},\Phi\right)} \colon \left(\mathcal{C},\Qoppa \right) \otimes \left(\mathcal{D},\Phi\right) \to \left(\mathcal{C}^\natural,i_! \Qoppa \right) \otimes \left(\mathcal{D},\Phi\right) $ is a Karoubi equivalence of Poincar\'e $\infty$-categories. This follows from \cite[Proposition 1.5.5]{CDHHLMNNSII}.
\end{proof}

We now recall the connection to genuine-equivariant homotopy theory. Let $\Qoppa: \mathcal{C}^\op \to \Spectra$ be a quadratic functor. The diagonal and collapse natural transformations $$\Delta:\mathrm{id}\Rightarrow \mathrm{id}\oplus \mathrm{id} \text{ and } \nabla: \mathrm{id}\oplus \mathrm{id}\Rightarrow \mathrm{id}$$ induce natural transformations $\mathrm{B}_\Qoppa\circ \Delta \Rightarrow \Qoppa \Rightarrow \mathrm{B}_\Qoppa\circ \Delta$. The swap action on components induces a $C_2$-action on $\mathrm{B}_\Qoppa\circ \Delta$ with respect to which these natural transformations naturally refine to $C_2$-equivariant maps and induce natural transformations $(\mathrm{B}_\Qoppa\circ \Delta)_{\mathrm{h} C_2} \Rightarrow \Qoppa \Rightarrow (\mathrm{B}_\Qoppa\circ \Delta)^{\mathrm{h} C_2}$. The cofiber of the first natural transformation, denoted $\Lambda_\Qoppa:\mathcal{C}^\op \to \Spectra$, is exact by \cite[Proposition 1.1.13]{CDHHLMNNSI} and $\Lambda_\Qoppa$ is called the \emph{linear part of $\Qoppa$}. One can show, see \cite[Corollary 1.3.10]{CDHHLMNNSI}, that the linear part of $(\mathrm{B}\circ\Delta)^{\mathrm{h}C_2}$ is $(\mathrm{B}\circ\Delta)^{\mathrm{t}C_2}$. This induces a functor $$\Fun^\mathrm{q}(\mathcal{C})\rightarrow \Ar(\Fun^{\ex}(\mathcal{C}^\op,\Spectra))$$ which sends $\Qoppa$ to the natural transformation $\Lambda_\Qoppa \Rightarrow (\mathrm{B}_\Qoppa\circ \Delta)^{\mathrm{t}C_2}$, i.e. the linear part of $\Qoppa \Rightarrow (\mathrm{B}_\Qoppa\circ \Delta)^{\mathrm{h} C_2}$. 

\begin{theorem}[Corollary 1.3.12 \cite{CDHHLMNNSI}]~\label{thm: characterization of quadratic functors on a cat}
    Let $\mathcal{C}$ be a stable $\infty$-category. Then the commutative diagram 
    \[
    \begin{tikzcd}
        \mathrm{Fun}^\mathrm{q}(\mathcal{C}) \ar[r] \ar[d, "\mathrm{B}"] & \mathrm{Ar}(\mathrm{Fun}^\mathrm{ex}(\mathcal{C}^\op, \mathrm{Sp})) \ar[d, "\mathrm{t}"]\\
        \mathrm{Fun}^\mathrm{s}(\mathcal{C}) \ar[r] & \mathrm{Fun}^\mathrm{ex}(\mathcal{C}^\op,\mathrm{Sp})
    \end{tikzcd}
    \]
    is cartesian, where the bottom horizontal map sends a symmetric bilinear functor $\mathrm{B}$ to $(\mathrm{B}\circ \Delta)^{\mathrm{t}C_2}$.
\end{theorem}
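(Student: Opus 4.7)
The plan is to exploit the Taylor tower of a reduced $2$-excisive functor on a stable $\infty$-category, which canonically identifies a quadratic functor with gluing data consisting of a symmetric bilinear functor (its cross effect), an exact functor (its linear part), and a natural comparison into the Tate construction.

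First I would construct the comparison map explicitly. Given $\Qoppa \in \mathrm{Fun}^\mathrm{q}(\mathcal{C})$, the natural transformation $\Qoppa \Rightarrow (\mathrm{B}_\Qoppa \circ \Delta)^{\mathrm{h}C_2}$ induced by the collapse has fiber $(\mathrm{B}_\Qoppa\circ\Delta)_{\mathrm{h}C_2}$, by the Taylor tower fiber sequence $(\mathrm{B}_\Qoppa\circ\Delta)_{\mathrm{h}C_2}\to \Qoppa\to \Lambda_\Qoppa$ (the defining sequence from the excerpt). Taking cofibers vertically in the norm square, one obtains a canonical commutative square
\[
\begin{tikzcd}
\Qoppa \ar[r] \ar[d] & (\mathrm{B}_\Qoppa\circ\Delta)^{\mathrm{h}C_2} \ar[d,"\mathrm{can}"] \\
\Lambda_\Qoppa \ar[r,"\alpha_\Qoppa"] & (\mathrm{B}_\Qoppa\circ\Delta)^{\mathrm{t}C_2}
\end{tikzcd}
\]
which is automatically cartesian, because both vertical fibers equal $(\mathrm{B}_\Qoppa\circ\Delta)_{\mathrm{h}C_2}$ via the norm sequence. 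This square defines the functor $\mathrm{Fun}^\mathrm{q}(\mathcal{C}) \to \mathrm{Fun}^\mathrm{s}(\mathcal{C}) \times_{\mathrm{Fun}^{\mathrm{ex}}(\mathcal{C}^\op,\mathrm{Sp})} \mathrm{Ar}(\mathrm{Fun}^{\mathrm{ex}}(\mathcal{C}^\op,\mathrm{Sp}))$ whose invertibility is the content of the theorem.

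Next I would construct an inverse $G$. Given a datum $(\mathrm{B}, \alpha\colon \Lambda \to (\mathrm{B}\circ\Delta)^{\mathrm{t}C_2})$ in the pullback, define
\[
\Qoppa \;:=\; \Lambda \times_{(\mathrm{B}\circ\Delta)^{\mathrm{t}C_2}} (\mathrm{B}\circ\Delta)^{\mathrm{h}C_2}.
\]
I would then verify: (i) $\Qoppa$ is reduced and $2$-excisive, using that $\mathrm{B}\circ\Delta$ is polynomial of degree $\le 2$, homotopy fixed points preserve $2$-excisiveness, $\Lambda$ is linear, and finite limits of $2$-excisive functors are $2$-excisive; (ii) the cross effect of $\Qoppa$ recovers $\mathrm{B}$ — here I would compute that the cross effect of $\Lambda$ vanishes (it is linear), the cross effect of $(\mathrm{B}\circ\Delta)^{\mathrm{t}C_2}$ vanishes (since the off-diagonal summand of $\mathrm{B}(x\oplus y,x\oplus y)$ is an induced $C_2$-spectrum, on which Tate is zero), while the cross effect of $(\mathrm{B}\circ\Delta)^{\mathrm{h}C_2}$ equals $\mathrm{B}$ for the same reason; (iii) the linear part of $\Qoppa$ recovers $\Lambda$, which follows because linearization is exact and preserves the pullback, while $P_1$ of both Tate terms involved vanishes.

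The main obstacle, and the portion requiring the most care, is verifying that $F$ and $G$ are mutually inverse at the level of the natural data, not just objectwise. Going one way is essentially by construction: applying $G$ after $F$ returns the pullback square built above. Going the other way, $F\circ G$ requires that the cross effect, linear part, and comparison natural transformation extracted from the pullback $\Qoppa$ reproduce $(\mathrm{B},\Lambda,\alpha)$ coherently in $\mathrm{B}$ and $\alpha$. This coherence amounts to checking that the identification of the bottom Tate term in step (ii)–(iii) is compatible with the universal properties of the homotopy orbits/fixed points/Tate functors as functors $\mathrm{Fun}^\mathrm{s}(\mathcal{C}) \to \mathrm{Fun}^\mathrm{q}(\mathcal{C})$, which I would reduce to the universal properties laid out in the preamble to the theorem (the diagonal and collapse transformations making $(\mathrm{B}_\Qoppa\circ\Delta)_{\mathrm{h}C_2} \to \Qoppa \to (\mathrm{B}_\Qoppa\circ\Delta)^{\mathrm{h}C_2}$ a natural factorization of the norm map, and the cofiber description of the linear part). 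Everything else is formal manipulation of Goodwillie towers in a stable target.
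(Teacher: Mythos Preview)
The paper does not give its own proof of this statement: it is recorded in the preliminaries section and attributed verbatim to \cite[Corollary~1.3.12]{CDHHLMNNSI}, with the explicit disclaimer at the top of \S\ref{section:poincare_structures_on_compact_modules} that all proofs are deferred to that reference. So there is no in-paper argument to compare against.

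That said, your proposal is the correct strategy and matches the argument in \cite{CDHHLMNNSI}. There the result is deduced from the more primitive splitting $\mathrm{Fun}^\mathrm{q}(\mathcal{C}) \simeq \mathrm{Fun}^\mathrm{s}(\mathcal{C}) \times \mathrm{Fun}^\mathrm{ex}(\mathcal{C}^\op,\mathrm{Sp})$ via a retraction onto the ``homogeneous of degree $2$'' part, after which the pullback description over the Tate construction follows by identifying the linear part of $(\mathrm{B}\circ\Delta)^{\mathrm{h}C_2}$ with $(\mathrm{B}\circ\Delta)^{\mathrm{t}C_2}$. Your direct construction of the inverse $G$ as a pullback and the verification via cross-effects is equivalent; the only point I would tighten is that your step (ii) should appeal to the fact that cross effect, being a retract of evaluation on a direct sum, commutes with limits (hence with the pullback defining $\Qoppa$), rather than computing cross effects of the three corners separately and reassembling.
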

\begin{notation}\label{notation:genuine_C2_spectra_and_underlying_spectra}
    We will write $\Spectra^{C_2}$ for the $\infty$-category of genuine $C_2$-spectra. Recall that isotropy separation gives a pullback square of $\infty$-categories
    \[
    \begin{tikzcd}
        \Spectra^{C_2} \ar[rr, "(-)^{\phi C_2}\to(-)^{\mathrm{t}C_2}"] \ar[d] & &\mathrm{Ar}(\Spectra) \ar[d, "\mathrm{t}"] \\
        \Spectra^{\mathrm{B}C_2} \ar[rr, "(-)^{\mathrm{t}C_2}"] & & \Spectra.
    \end{tikzcd}
    \]
    The left vertical map forgets the genuine structure and only retains the underlying spectrum and its $C_2$-action. The functor $(-)^{\phi C_2}:\Spectra^{C_2}\to\Spectra$ denotes geometric fixed points. We will also write $(-)^\mathrm{e}:\Spectra^{\mathrm{B}C_2}\to \Spectra$ for the map that forgets the action. Informally, the above square says that a genuine $C_2$-spectrum $X$ is determined by its underlying spectrum $X^\mathrm{e}$ with a $C_2$-action, its geometric fixed points $X^{\phi C_2}$, and a reference map $X^{\phi C_2}\to (X^\mathrm{e})^{\mathrm{t}C_2}$.
\end{notation}

Let $A$ be an $\mathbb{E}_1$-ring spectrum. The norm $\mathrm{N}^{C_2}A$ of $A$ is the genuine $C_2$-spectrum with underlying spectrum $A\otimes A$ equipped with the swap $C_2$-action, geometric fixed ponts $A$, and reference map $A\rightarrow (A\otimes A)^{\mathrm{t}C_2}$ given by the Tate-diagonal. More generally, the norm forms a symmetric monoidal functor $\mathrm{N}^{C_2}:\Spectra\rightarrow \Spectra^{C_2}$, see \cite{hillhopkinsravenel}. In particular, $\mathrm{N}^{C_2}A$ is an algebra object in $\Spectra^{C_2}$ and we denote by $\mathrm{Mod}_{\mathrm{N}^{C_2}A}$ the $\infty$-category of left modules over it. Moreover, for any left $A$-module $X$, the norm $\mathrm{N}^{C_2}X$ acquires a left $\mathrm{N}^{C_2} A$-module structure. The following theorem relates quadratic functors to genuine $C_2$-spectra and modules over norms.

\begin{theorem}[{\cite[Theorem 3.3.1, Remark 3.3.4, Lemma 5.4.6]{CDHHLMNNSI}}]~\label{thm: genuine equivarient modules are hermitian structures}
    Let $A$ be an $\mathbb{E}_1$-ring spectrum. Then we have an equivalence of $\infty$-categories \[\mathrm{Mod}_{\mathrm{N}^{C_2}A}\xrightarrow{\simeq} \mathrm{Fun}^\mathrm{q}(\mathrm{Mod}_A^{\omega})\] given by sending a module $M$ to the functor $\Qoppa_M:=\underline{\mathrm{Hom}}_{\mathrm{N}^{C_2}A}(\mathrm{N}^{C_2}(-),M)^{C_2}$. This hermitian structure is Poincar{\'e} if and only if the underlying $A$-module of $M$ is invertible. In the case where $A$ is an $\mathbb{E}_\infty$-ring spectrum this equivalence is symmetric monoidal.
\end{theorem}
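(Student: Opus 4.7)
\emph{Proof plan.} The natural approach is to compare two pullback squares: the characterization of quadratic functors from Theorem \ref{thm: characterization of quadratic functors on a cat} applied to $\mathcal{C} = \Mod_A^\omega$, and the isotropy-separation square from Notation \ref{notation:genuine_C2_spectra_and_underlying_spectra} base-changed to modules over $\mathrm{N}^{C_2}A$. The first step is to construct the functor $\Phi \colon M \mapsto \Qoppa_M$ explicitly and verify quadraticity: the decomposition $\mathrm{N}^{C_2}(X \oplus Y) \simeq \mathrm{N}^{C_2}X \oplus \mathrm{N}^{C_2}Y \oplus \mathrm{Ind}_{\{e\}}^{C_2}(X \otimes Y)$ exhibits $\Qoppa_M(X \oplus Y)$ as the sum of $\Qoppa_M(X)$, $\Qoppa_M(Y)$, and a cross-term $\underline{\Hom}_{A \otimes A}(X \otimes Y, M^e)$ which is bi-exact. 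Applying the identification $\Fun^{\ex}((\Mod_A^\omega)^\op, \Spectra) \simeq \Mod_A$ by Yoneda, and its bilinear analogue $\Fun^{\ex}((\Mod_A^\omega)^\op \times (\Mod_A^\omega)^\op, \Spectra) \simeq \Mod_{A \otimes A}$ (here we use compactness of $A$ in both slots), passing to $C_2$-homotopy fixed points gives
\[\Fun^\mathrm{s}(\Mod_A^\omega) \simeq \Mod_{A \otimes A}(\Spectra^{BC_2}),\]
where $A \otimes A$ carries the swap action.

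Next, I would identify the two pullback squares term by term. The arrow corner $\Ar(\Fun^{\ex}((\Mod_A^\omega)^\op, \Spectra))$ corresponds to $\Ar(\Mod_A)$; the symmetric corner $\Fun^\mathrm{s}$ to $\Mod_{A \otimes A}(\Spectra^{BC_2})$; and the common edge $\Fun^{\ex}((\Mod_A^\omega)^\op, \Spectra)$ to $\Mod_A$. What must be checked is that the functor $\mathrm{B} \mapsto (\mathrm{B} \circ \Delta)^{\tee C_2}$ corresponds, under these identifications, to the functor $N \mapsto N^{\tee C_2}$ on $\Mod_{A \otimes A}(\Spectra^{BC_2})$ equipped with its natural $A$-module structure arising from the Tate diagonal $A \to (A \otimes A)^{\tee C_2}$. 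This is the crux of the argument and the main obstacle: one must trace through the Morita equivalence and verify that the Tate construction on cross-effects naturally inherits an $A$-linear structure via the Tate diagonal, rather than merely via some formal base change. Given this compatibility, the two pullback squares agree, and basechange along the Tate diagonal $A \to (A \otimes A)^{\tee C_2}$ from the isotropy-separation square of Notation \ref{notation:genuine_C2_spectra_and_underlying_spectra} identifies $\Mod_{\mathrm{N}^{C_2}A}$ with $\Fun^\mathrm{q}(\Mod_A^\omega)$.

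For the Poincaré characterization, the cross-effect $\mathrm{B}_{\Qoppa_M}(X,Y) \simeq \underline{\Hom}_{A \otimes A}(X \otimes Y, M^e)$ is always non-degenerate because $M^e$, viewed as an $(A,A)$-bimodule via the two $A$-actions, gives a duality functor $\mathrm{D}_{\Qoppa_M}(X) \simeq \underline{\Hom}_A(X, M^e)$ where one of the $A$-actions is used to form the internal hom and the other remains. This duality is an equivalence of $\Mod_A^{\omega,\op}$ with $\Mod_A^\omega$ if and only if $M^e$ is invertible as a (one-sided) $A$-module, which is the stated condition. The symmetry of the pairing in the Poincaré case is automatic from the $C_2$-equivariance of $M$ in $\mathrm{Mod}_{\mathrm{N}^{C_2}A}$.

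Finally, for the symmetric monoidal upgrade when $A$ is $\EE_\infty$, I would observe that the norm $\mathrm{N}^{C_2} \colon \Spectra \to \Spectra^{C_2}$ is symmetric monoidal \cite{hillhopkinsravenel}, so $\mathrm{N}^{C_2}A$ is a commutative algebra in $\Spectra^{C_2}$ and $\Mod_{\mathrm{N}^{C_2}A}$ inherits a symmetric monoidal structure. On the other side, $\Fun^\mathrm{q}(\Mod_A^\omega)$ carries the Day convolution structure induced by the symmetric monoidal structure on $\Mod_A^\omega$. The construction $M \mapsto \Qoppa_M = \underline{\Hom}_{\mathrm{N}^{C_2}A}(\mathrm{N}^{C_2}(-), M)^{C_2}$ intertwines these: the symmetric monoidality of $\mathrm{N}^{C_2}$ makes $\mathrm{N}^{C_2}$ into a symmetric monoidal functor from $(\Mod_A^\omega, \otimes_A)$ to $(\Mod_{\mathrm{N}^{C_2}A}^\omega, \otimes_{\mathrm{N}^{C_2}A})$, and composition with internal hom into $M$ yields a symmetric monoidal transformation $\Phi$ by a formal Day-convolution argument.
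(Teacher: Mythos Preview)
The paper does not give its own proof of this statement: it is quoted as a recollection from \cite{CDHHLMNNSI} (their Theorem~3.3.1, Remark~3.3.4, Lemma~5.4.6), and Section~\ref{section:poincare_structures_on_compact_modules} explicitly defers all proofs to that reference. So there is nothing in the present paper to compare your argument against.

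That said, your outline is the correct strategy and matches the architecture of the proof in \cite{CDHHLMNNSI}. The identification of $\Fun^\mathrm{q}(\Mod_A^\omega)$ with $\Mod_{\mathrm{N}^{C_2}A}$ there proceeds exactly by matching the classification square of Theorem~\ref{thm: characterization of quadratic functors on a cat} with the isotropy-separation pullback for $\Mod_{\mathrm{N}^{C_2}A}$, after rewriting the corners via the Morita equivalences $\Fun^{\ex}((\Mod_A^\omega)^\op,\Spectra)\simeq \Mod_A$ and $\Fun^\mathrm{s}(\Mod_A^\omega)\simeq \Mod_{A\otimes A}(\Spectra^{BC_2})$. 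You have also correctly isolated the one nontrivial point: that under these identifications the map $\mathrm{B}\mapsto (\mathrm{B}\circ\Delta)^{\mathrm{t}C_2}$ becomes $N\mapsto N^{\mathrm{t}C_2}$ with $A$-module structure coming from the Tate diagonal $A\to (A\otimes A)^{\mathrm{t}C_2}$. In \cite{CDHHLMNNSI} this is handled in their \S3.2--3.3 by first introducing modules with genuine involution and then checking the compatibility directly; your sketch does not supply that verification, but you flag it accurately as the crux.

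Your treatment of the Poincar\'e condition and of the symmetric monoidal upgrade is likewise on target, though for the latter the actual argument in \cite{CDHHLMNNSI} (their Lemma~5.4.6) is somewhat more involved than a formal Day-convolution check: one must show that the relative norm $\Mod_A\to \Mod_{\mathrm{N}^{C_2}A}$ is symmetric monoidal and that the internal-hom-plus-fixed-points construction respects this, which requires tracking the $\EE_\infty$-structure through the operadic machinery rather than appealing to Day convolution abstractly.
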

Owing to the connection between equivariant homotopy theory and Poincaré structures, we will often draw upon existing work in equivariant homotopy theory. 
Here we record a fiber sequence which will be useful to us later on. 
\begin{remark}\label{rmk:C2_spectra_twists}
    Let $X\in \CAlg^\mathrm{gp}(\Spaces)^{\mathrm{B}C_2}$ be a grouplike $\EE_\infty $-space with a $C_2$-action $\lambda:X\rightarrow X$. Let $X^{\mathrm{h}(-\lambda)}$ denote the homotopy fixed points of $X$ with respect to the $C_2$-action induced by $
    \iota\circ \lambda$ where $\iota$ is the inverse map $\iota:X\to X$. 
    Then we have a fiber sequence $X\xrightarrow{(id_X,\iota\circ \lambda)} X\times X\xrightarrow{N_\lambda} X,$ where $N_\lambda:X\times X\rightarrow X$ is the map given by $N_\lambda(x,y)=x\lambda(y)$. We endow the space $X$ on the left with the action $\iota\circ \lambda$, the $X$ on the right with the action $\lambda$, and $X\times X$ with the flip action. Note that this sequence of maps is $C_2$-equivariant now. 
    Taking homotopy fixed points, we obtain a fiber sequence $$X^{\mathrm{h}(-\lambda)}\rightarrow X\xrightarrow{N_\lambda} X^{\mathrm{h}C_2}.$$ Alternatively, we can reverse the role of $\lambda$ and $ \iota\circ \lambda$ and apply homotopy fixed points to the fiber sequence $X\xrightarrow{(id_X,\lambda)} X\times X\xrightarrow{N_{i\circ\lambda}} X$ to obtain the fiber sequence $$X^{\mathrm{h}C_2}\rightarrow X\xrightarrow{N_{i\circ\lambda}} X^{\mathrm{h}(-\lambda)}.$$ 
    
    If $2$ acts invertibly on $ X $, then the fiber sequence splits and we have an equivalence $$X[1/2]\simeq X^{\mathrm{h}(-\lambda)}[1/2]\times X^{\mathrm{h}C_2}[1/2].$$ 

    Alternatively, we may regard $ X $ as a Borel $ C_2 $-spectrum whose underlying spectrum-with-$ C_2 $-action is connective. 
    Then smashing with the exact sequences of $ C_2 $-spectra $ S^{\sigma-1}_+ \to C_{2+} \to (C_2/C_2)_+ $ and $ (C_2/C_2)_+ \to C_{2+} \to S^{1-\sigma}_+ $ and taking $ \Omega^\infty (-^{\mathrm{h} C_2}) $ gives the aforementioned fiber sequences.  
\end{remark}
	
	We will primarily be interested in Poincar\'e $\infty$-categories of the form $\left(\Mod^\omega_R, \Qoppa_M\right)$, where $R$ is an $\mathbb{E}_\infty$-ring spectrum and $M$ is a left $\mathrm{N}^{C_2} R$-module \cite[\S3.3]{CDHHLMNNSI}. 
    We will use Remark~\ref{rmk:C2_spectra_twists} in the context of certain $C_2$-actions induced by the duality $\mathrm{D}_{\Qoppa_M}:(\mathrm{Mod}_{R}^\omega)^\op\to \mathrm{Mod}_{R}^\omega$. We conclude this section by describing the duality functor more concretely in this case.

    Let $M$ be a left $\mathrm{N}^{C_2}R$-module. Then $M^\mathrm{e}$ is naturally a left $(\mathrm{N}^{C_2}R)^{e}\simeq R\otimes R$-module. We may view $M^\mathrm{e}$ as a left $R$-module via the map $R\simeq R\otimes \mathbb{S}\to R\otimes R$. Additionally, the map $R\simeq \mathbb{S}\otimes R\to R\otimes R$ induces a $C_2$-action $\lambda:M^\mathrm{e}\to M^\mathrm{e}$ on $M^\mathrm{e}$ which is compatible with the $R$-module structure.
	
	\begin{lemma}~\label{lemma: duality identification}
		Let $R$ be an $\mathbb{E}_{\infty}$-ring spectrum, and let $M$ be a left $\mathrm{N}^{C_2}R$-module. Assume $M^\mathrm{e}\simeq R$ as an $R$-module with the module structure as described above, and let $\lambda:R\to R$ denote the induced $C_2$-action. Then we have natural equivalences of functors $(\mathrm{Mod}_R^\omega)^\op\to \mathrm{Mod}_R^{\omega}$ \[\mathrm{D}_{\Qoppa_M}(-)\simeq \lambda^*\mathrm{hom}_R(-,R)\simeq \mathrm{hom}_R(\lambda^* (-), R).\] 
	\end{lemma}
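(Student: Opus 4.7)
The plan is to verify the two equivalences by computing the cross effect $\mathrm{B}_{\Qoppa_M}$ explicitly and then invoking Yoneda together with the defining property of the duality functor. First, I would expand $\Qoppa_M(X \oplus Y) = \underline{\mathrm{Hom}}_{\mathrm{N}^{C_2}R}(\mathrm{N}^{C_2}(X \oplus Y), M)^{C_2}$. Using the standard splitting $\mathrm{N}^{C_2}(X \oplus Y) \simeq \mathrm{N}^{C_2}X \oplus \mathrm{N}^{C_2}Y \oplus \mathrm{ind}_e^{C_2}(X \otimes Y)$ as $\mathrm{N}^{C_2}R$-modules (where the $\mathrm{ind}_e^{C_2}$ summand is $X \otimes Y \oplus Y \otimes X$ with swap $C_2$-action), the bireduction kills the first two summands, and the induction--restriction adjunction identifies the remaining term as
\[
\mathrm{B}_{\Qoppa_M}(X, Y) \;\simeq\; \underline{\mathrm{Hom}}_{R \otimes R}(X \otimes Y, M^e),
\]
where $X \otimes Y$ is an $R \otimes R$-module via the left $R$-actions on the two tensorands.

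Next, by the hypothesis $M^e \simeq R$ as a left $R$-module (via the first tensor factor) together with the identification of the second-factor action on $M^e$ with $\lambda$, the tensor--hom adjunction over the second factor rewrites the display above as
\[
\mathrm{B}_{\Qoppa_M}(X, Y) \;\simeq\; \underline{\mathrm{Hom}}_R\bigl(X,\, \underline{\mathrm{Hom}}_R(\lambda^* Y, R)\bigr).
\]
Comparing this with the defining property $\mathrm{B}_{\Qoppa_M}(X, Y) \simeq \underline{\mathrm{Hom}}_R(X, \mathrm{D}_{\Qoppa_M}(Y))$ of the duality and applying Yoneda (in $X$), one reads off $\mathrm{D}_{\Qoppa_M}(Y) \simeq \mathrm{hom}_R(\lambda^* Y, R)$, establishing the second equivalence in the lemma.

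For the remaining identification $\mathrm{hom}_R(\lambda^* Y, R) \simeq \lambda^*\mathrm{hom}_R(Y, R)$, I would give an explicit natural map by post-composition with $\lambda$: an $R$-linear map $g : \lambda^* Y \to R$ satisfies $g(ry) = \lambda(r)g(y)$, so $\lambda \circ g$ is $R$-linear as an ordinary map $Y \to R$, and one verifies that the assignment $g \mapsto \lambda \circ g$ transports the standard $R$-module structure on $\mathrm{hom}_R(\lambda^* Y, R)$ to the $\lambda$-twisted structure on $\mathrm{hom}_R(Y, R)$. Since $\lambda^2 = \id$, this is a self-inverse equivalence.

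The main subtlety is the bookkeeping in Step~2: one must carefully distinguish the roles of the two tensor factors of $R \otimes R$ throughout the adjunction and verify that the twist extracted from the second factor coincides with the $C_2$-action $\lambda$ on $M^e \simeq R$ specified in the hypothesis. Once this identification is nailed down, the remainder of the argument is a routine unwinding of definitions and applications of tensor--hom adjunction; naturality of all the equivalences in $Y$ (and $R$-linearity in the target) follows formally from the naturality of the adjunctions used.
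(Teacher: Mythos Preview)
Your proof is correct but proceeds differently from the paper. The paper's argument is essentially a two-line citation: it invokes \cite[Proposition 3.1.6(ii)]{CDHHLMNNSI} to obtain $\mathrm{D}_{\Qoppa_M}(-)\simeq \lambda^*\mathrm{hom}_R(-,R)$ directly (i.e., the \emph{first} equivalence), and then deduces the second from the fact that $\lambda^*\simeq \lambda_*$ is a symmetric monoidal involution with $\lambda^*(R)\simeq R$, via the $\otimes$--$\mathrm{hom}$ adjunction. You instead unpack the cross effect from scratch, obtaining the \emph{second} equivalence first and then deriving the first. In effect you are reproving the relevant case of Proposition 3.1.6 of \cite{CDHHLMNNSI}; this buys you a self-contained argument at the cost of some length, whereas the paper's approach is shorter but relies on the external reference. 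One small caveat: your final step, the explicit assignment $g\mapsto \lambda\circ g$, is written as a pointwise formula that does not literally make sense in the $\infty$-categorical setting; the paper's formulation in terms of symmetric monoidality of $\lambda^*\simeq\lambda_*$ and the identification $\lambda^*(R)\simeq R$ is cleaner here and avoids that issue.
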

	\begin{proof}
		By \cite[Proposition 3.1.6(ii)]{CDHHLMNNSI}, the duality functor in this case is given by \[\mathrm{D}_{\Qoppa_M}(-)\simeq \lambda^*\mathrm{hom}_R(-,R).\] Since $\lambda$ is an involution, $(\lambda^*)\circ (\lambda^*)\simeq (\lambda \circ\lambda)^*=\mathrm{id}$ and $\lambda^*\simeq (\lambda^*)^{-1}\simeq \lambda_*$. The second equivalence now follows from the $\otimes_R-\mathrm{hom}_R$-adjunction, the fact that $\lambda^* \simeq\lambda_*:\mathrm{Mod}^\omega_R\to \mathrm{Mod}^\omega_R$ is symmetric monoidal and the equivalence $\lambda^*(R)\simeq R$ induced by $\lambda$.
	\end{proof}
\begin{remark}~\label{remark:different_c2_actions}
    We frequently will be interested in the groupoid of objects $ \Mod_{R}^{\omega,\simeq} $ for some $ \EE_\infty $-ring $R$ with involution $\lambda:R\to R$. 
    In this situation, there are multiple \textbf{distinct} actions of $C_2$:  
\begin{enumerate}[label=(\arabic*)]
    \item \label{C2action:duality_associated_to_Poincare_structure} an action induced by the duality $ D_{\Qoppa_{R^s}} $ associated to $ \Qoppa $ as described in Remark~\ref{rmk:pnpic_to_pic_equivariant} 
    \item \label{C2action:basechange} induced by the functor $\lambda^* \simeq \lambda_* :\mathrm{Mod}_{R^e}^{\omega} \to \mathrm{Mod}_{R^e}^\omega$ 
    \item \label{C2action:untwisted_canonical_duality} induced by an `untwisted' duality equivalence $\mathrm{Mod}_{R^e}^{\omega,\op}\xrightarrow{\simeq}\mathrm{Mod}_{R^e}^\omega $, $ M\mapsto \mathrm{hom}_{R^e}(M,R^e) $, which we typically denote by $ M^\vee $. 
\end{enumerate}
    By Lemma~\ref{lemma: duality identification}, the action \ref{C2action:duality_associated_to_Poincare_structure} is obtained by pulling the $ C_2 \times C_2 $-action back along the diagonal homomorphism $ C_2 \xrightarrow{\Delta} C_2 \times C_2 $. 
\end{remark}
	
\section{\texorpdfstring{Poincar\'e}{Poincare} rings and schemes with involution}
\label{section:poincare_ring_spectra}
As discussed in \S\ref{subsection:deeperresults}, we are interested in stable $ \infty $-categories with duality which are \emph{linear} over a fixed stable $ \infty $-category with duality. 
Our starting point is \cite[discussion immediately preceding Examples 5.4.10]{CDHHLMNNSI}, where the 9 authors describe the most general algebraic structures which can be expected to give rise to symmetric monoidal Poincaré structures. 
Referred to as \emph{Poincaré rings} in this paper, in \S\ref{subsection:Poincare_rings}, we introduce the $ \infty $-category of such algebras and show (via a lengthy construction which is postponed to Appendix \ref{appendix:calgp_to_catp}) that there is a functorial assignment from Poincaré rings to symmetric monoidal Poincaré $ \infty $-categories. 
In \S\ref{subsection:Poincare_structures_schemes_involution}, we show that a scheme with involution $ (X, \lambda: X \xrightarrow{\sim} X) $ and good quotient gives rise to a canonical symmetric monoidal Poincaré structure on $ \mathrm{Mod}_{\mathcal{O}_X}^\omega $. 
The functoriality established here is of independent interest; invariants such as hermitian K-theory/Grothendieck--Witt theory, algebraic L-theory, and real trace theories are defined using \cites[\S4]{CDHHLMNNSII}{Harpaz_Nikolaus_Shah}. 
	
\subsection{Poincar{\'e} rings}\label{subsection:Poincare_rings}
Let $ R $ be an $ \EE_\infty $-ring spectrum. 
In \cite[discussion immediately preceding Examples 5.4.10]{CDHHLMNNSI}, the 9 authors describe the additional data needed to endow $ \Mod^\omega_R $ with a symmetric monoidal Poincaré structure. 
In the following, we introduce an $ \infty $-category which we refer to as \emph{Poincaré ring spectra} whose objects are the $\mathbb{E}_\infty$-ring spectra with genuine involution of Calmès--Dotto--Harpaz--Hebestreit--Land--Moi--Nardin--Nikolaus--Steimle. 
We construct a natural symmetric monoidal functor from Poincaré ring spectra to Poincaré $ \infty $-categories (Theorem \ref{thm:calgp_to_poincare_cat}). 
We include examples throughout and discuss how ordinary commutative rings with involution can be regarded as Poincaré ring spectra (Example \ref{example:genuine_symmetric_poincare_structure}). 

We expect the theory developed here to extend to a well-behaved theory of spectral schemes with involution and of symmetric monoidal Poincaré structures on their module categories, but we do not take up such questions in this work. 
\begin{recollection}\label{recollection:Tate_valued_norm}
	Write $ \mathcal{O}_{C_2} $ for the orbit category of $ C_2 $.  
	The \emph{Tate-valued norm} is a functor $ \CAlg\left(\Spectra^{BC_2}\right) \to \operatorname{Fun}\left(\mathcal{O}_{C_2}, \CAlg(\Spectra)\right) $ (see Definition 3.8 and Lemma 3.10 of \cite{LYang_normedrings}). 
	Note that there is a canonical functor $ \iota \colon \Delta^1 = [0<1] \to \mathcal{O}_{C_2} $ which sends $ 0 $ to $ C_2/e $ and $ 1 $ to $ C_2/C_2 $. 
	Via precomposition with $ \iota $, we will regard the Tate-valued norm as a functor $ \CAlg\left(\Spectra^{BC_2}\right) \to \operatorname{Fun}\left(\Delta^1, \CAlg(\Spectra)\right) $. 
\end{recollection}
\begin{definition}
	\label{definition:poincare_ring_spectrum}
	Let $\CAlgp$ be the $\infty $-category defined by the pullback 
	\begin{equation}\label{diagram:poincare_ring_defn}    
		\begin{tikzcd}
			\CAlgp \arrow[rr]\arrow[d] & & \operatorname{Fun}(\Delta^2, \CAlg(\Spectra))\arrow[d,"d_1^*"]\\
			\CAlg\left(\Spectra^{BC_2}\right)\arrow[rr,"U(-)\to (-)^{\mathrm{t}C_2}"] & & \operatorname{Fun}\left(\Delta^1, \CAlg(\Spectra)\right)
		\end{tikzcd}
	\end{equation}
	where $U$ is induced by the forgetful functor $\Spectra^{BC_2}\to \Spectra$ and the lower horizontal arrow is the Tate-valued norm of Recollection \ref{recollection:Tate_valued_norm}. 
	An object of $ \CAlgp $ will be called a \emph{Poincaré ring spectrum}.\footnote{For an explanation behind the name, we will show later that these objects are closely connected with Poincar{\'e} $\infty$-categories, which are objects that encode Poincar{\'e} duality objects in a very general context, see Section~\ref{section:poincare_structures_on_compact_modules} for more details on this.}
	
	Let $R$ be an $ \EE_\infty $ ring spectrum. 
	A \emph{Poincaré structure} on $R$ is the data of a lift of $ R $ to the $ \infty $-category $ \CAlgp $. 
	By definition, it comprises the following data:
	\begin{itemize}
		\item A $C_2$-action on $R$ via maps of ring spectra, i.e. a functor $\lambda: BC_2\rightarrow \CAlg$.
		\item An $ \EE_\infty $-$R$-algebra $ C$.
		\item An $ \EE_\infty $-$R$-algebra map $C\rightarrow R^{\mathrm{t}C_2}$. 
	\end{itemize}
	Here $R^{\mathrm{t}C_2}$ is the Tate construction with respect to the given action. 
	Since the Tate construction is lax symmetric monoidal, $R^{\mathrm{t}C_2}$ is naturally an $R$-algebra via the Tate-valued norm. 
\end{definition}

We will denote objects of $\CAlgp$ by $ R=(R^e,s:R^{\phi C_2}\to R^{\mathrm{t}C_2})$. 
Here $ s:R^{\phi C_2}\to R^{\mathrm{t}C_2}$ is the image of $R$ under the top horizontal map above. 
The use of the notation $R^{\phi C_2}$ is justified by Lemma \ref{lemma:Poincare_ring_geom_fixpt}.

\begin{remark}\label{rmk:Poincare_ring_has_underlying_C2_spectrum_alg}
	Recall that there is a symmetric monoidal récollement $ \Spectra^{C_2} \simeq \Spectra^{BC_2} \times_{(-)^{\mathrm{t}C_2},\Spectra, \mathrm{ev}_1} \Spectra^{\Delta^1} $ (cf. \cite[Theorem 6.24]{MNN}). 
	There is a commutative diagram of $ \infty $-categories: 
	\[
	\begin{tikzcd}[column sep=small,row sep=small]
		& & \operatorname{Fun}(\Delta^2, \CAlg(\Spectra)) \arrow[rd, "d_0^*"] \arrow[dd, "d_1^*"] & \\
		& & & \operatorname{Fun}(\Delta^1,\CAlg(\Spectra)) \arrow[dd,"ev_{1}"]\\
		\CAlg(\Spectra^{BC_2}) \arrow[rr, "U(-)\to (-)^{\mathrm{t}C_2}"] \arrow[rd, equals] & & \operatorname{Fun}(\Delta^1, \CAlg(\Spectra)) \arrow[rd, "ev_{1}"] & \\
		& \CAlg(\Spectra^{BC_2}) \arrow[rr, "(-)^{\mathrm{t}C_2}"] & & \CAlg(\Spectra)
	\end{tikzcd}\,.
	\] 
	The diagram induces a functor from the pullback of the upper left cospan to the pullback of the lower right cospan $ (-)^L \colon \CAlgp\to \CAlg(\Spectra^{C_2}) $. 
	
	Now observe that there is a commutative diagram
	\begin{equation}\label{diagram:Poincare_ring_alternate_diagram}
		\begin{tikzcd}[column sep=large,row sep=small]
			&& \Fun\left(\Delta^1 \times \Delta^1,\CAlg(\Spectra) \right) \ar[r,"{\mathrm{ev}_{00 \to 01 \to 11}}"] \ar[d,"{\mathrm{ev}_{00 \to 10 \to 11}}"] \ar[rd,phantom,"\lrcorner",very near start] & \Fun\left(\Delta^2, \CAlg(\Spectra)\right) \ar[d,"{\mathrm{ev}_{0\to 2}}"] \\
			\CAlg(\Spectra)^{BC_2} \ar[d,"{m_{C_2}}"] \ar[rr,"{A \mapsto (A \to (A^{\otimes 2})^{\mathrm{t}C_2} \to A^{\mathrm{t}C_2})}"] & &\Fun\left(\Delta^2,\CAlg(\Spectra) \right) \ar[r,"{\mathrm{ev}_{0 \to 2}}"] \ar[d,"{\mathrm{ev}_{1\to 2}}"] & \Fun\left(\Delta^1,\CAlg(\Spectra) \right) \\
			\CAlg(\Spectra)^{BC_2 \times \Delta^1} \ar[rr,"{(-)^{\mathrm{t}C_2}}"] && \Fun\left(\Delta^1 ,\CAlg(\Spectra) \right) &
		\end{tikzcd}
	\end{equation}
	where the upper right square is a pullback by definition of $ \Delta^1 \times \Delta^1 $ and $ m_{C_2} $ is the multiplication map functor of \cite[Construction 3.1]{LYang_normedrings} precomposed with the inclusion $ \iota $ of Recollection \ref{recollection:Tate_valued_norm}. 
	Since the upper right is a pullback, the pullback of the outer span in the top row (which is $ \CAlgp $) agrees with the pullback of the upper left span. 
	On the other hand, the pullback of the upper left span maps to the pullback of the `tall' span on the left. 
	By the récollement of $ \Spectra^{C_2} $, the pullback of the `tall' span on the left may be identified with $ \Einfty \Alg\left(\Spectra^{C_2}\right)^{\Delta^1} $. 
	Furthermore, by definition of the Hill--Hopkins--Ravenel norm, we have described a functor
	\begin{equation}\label{eq:Poincare_ring_functorial_norm}
		\CAlgp \to \Fun\left(\Delta^1,\Einfty \Alg\left(\Spectra^{C_2}\right)\right)
	\end{equation}
	which sends a Poincaré ring $ R $ to a map of $ \Einfty $-algebras $ N^{C_2}(R^e) \to R^L $. 
\end{remark}
\begin{lemma}\label{lemma:Poincare_ring_geom_fixpt}
	There is a commutative diagram
	\begin{equation*}
		\begin{tikzcd}
			\CAlgp \ar[r] \ar[d,"(-)^L"] & \operatorname{Fun}(\Delta^2,\CAlg(\Spectra)) \ar[d,"{\mathrm{ev}_1}"] \\
			\CAlg(\Spectra^{C_2}) \ar[r,"{(-)^{\phi C_2}}"] & \CAlg(\Spectra)
		\end{tikzcd}
	\end{equation*}
	where $ (-)^L $ is the functor of Remark \ref{rmk:Poincare_ring_has_underlying_C2_spectrum_alg} and the upper horizontal arrow is the canonical functor in Definition \ref{definition:poincare_ring_spectrum}. 
\end{lemma}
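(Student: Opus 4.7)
The plan is to unwind the construction of $(-)^L$ from Remark \ref{rmk:Poincare_ring_has_underlying_C2_spectrum_alg} and show that both composites in the square compute the object sitting at vertex $1$ of the defining $\Delta^2$-diagram of a Poincaré ring.

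First, I would use the symmetric monoidal récollement $\Spectra^{C_2} \simeq \Spectra^{BC_2} \times_{(-)^{\mathrm{t}C_2}, \Spectra, \mathrm{ev}_1} \Spectra^{\Delta^1}$. Since the relevant functors are lax symmetric monoidal, this passes to $\Einfty$-algebras, yielding
\[
\CAlg(\Spectra^{C_2}) \simeq \CAlg(\Spectra^{BC_2}) \fiberproduct_{\CAlg(\Spectra)} \Fun(\Delta^1, \CAlg(\Spectra)),
\]
with right leg $\mathrm{ev}_1$. Under this identification, the geometric fixed points functor $(-)^{\phi C_2}: \CAlg(\Spectra^{C_2}) \to \CAlg(\Spectra)$ is computed by projecting onto the $\Fun(\Delta^1, \CAlg(\Spectra))$-factor and then evaluating at the source vertex.

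Next, I would trace the construction of $(-)^L$ from Remark \ref{rmk:Poincare_ring_has_underlying_C2_spectrum_alg}. Reading the three-dimensional diagram there, $(-)^L$ is induced by the universal property of pullbacks, sending $\CAlgp$ (the pullback of the upper-left cospan) to $\CAlg(\Spectra^{C_2})$ (the pullback of the lower-right cospan). The $\Fun(\Delta^1, \CAlg(\Spectra))$-component of $R^L$ is obtained from the $\Delta^2$-diagram of $R$ via $d_0^*$, i.e., by extracting the edge from vertex $1$ to vertex $2$, which is the structure map $R^{\phi C_2} \to R^{\mathrm{t}C_2}$.

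Combining the two steps, $(R^L)^{\phi C_2}$ is the source of the $\Delta^1$-component of $R^L$, which is precisely the object at vertex $1$ of the original $\Delta^2$-diagram of $R$. This is exactly the value of $\mathrm{ev}_1$ applied to the top horizontal functor of the claimed square, so both composites agree naturally in $R$. The main thing to be careful about is correctly identifying the $\Delta^1$-component of $R^L$ under the récollement; once the compatibility of face maps in $\Delta^2$ with the récollement projection is pinned down, the rest is formal.
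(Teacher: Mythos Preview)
Your proposal is correct and takes essentially the same approach as the paper, which simply records that the claim ``follows from the récollement of $\Spectra^{C_2}$ and Remark \ref{rmk:Poincare_ring_has_underlying_C2_spectrum_alg}.'' You have just spelled out what this means: under the récollement, $(-)^{\phi C_2}$ is the source of the $\Delta^1$-component, which by construction of $(-)^L$ via $d_0^*$ picks out vertex $1$ of the defining $\Delta^2$-diagram.
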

\begin{proof}
	Follows from the récollement of $ \Spectra^{C_2} $ and Remark \ref{rmk:Poincare_ring_has_underlying_C2_spectrum_alg}. 
\end{proof}

\begin{example}[Symmetric and Tate Poincaré rings]\label{example:tate_poincare_structure}
	We note that the functor $\CAlg^\mathrm{p}\to \CAlg\left(\mathrm{Sp}^{BC_2}\right)$ admits both left and right adjoints. 
    The left adjoint is given by the \textit{Tate} Poincar{\'e} ring \[R\mapsto R^t:=\left(R, N:R\to R^{\mathrm{t}C_2}\right)\] where the structure map is the Tate-valued norm of Recollection \ref{recollection:Tate_valued_norm}. 
    The right adjoint is given by the \textit{symmetric} Poincar{\'e} ring \[R\mapsto R^s:=\left(R, R^{\mathrm{t}C_2}=R^{\mathrm{t}C_2}\right)\,.\] To explain the naming convention, the first functor is named the Tate Poincar{\'e} ring after the map from the geometric fixed points to the Tate construction, i.e. the Tate-valued norm. 
    To explain why $R^s$ is called the symmetric Poincar{\'e} ring, by Theorem~\ref{thm:calgp_to_poincare_cat} the Poincar{\'e} $\infty$-category we associate to this Poincar{\'e} ring is $\mathrm{Mod}_{R}^\omega$ together with the hermitian functor given by $\Qoppa_{R^s}(M)=\mathrm{Hom}_{N^{C_2}R}\left(N^{C_2}-, (R^s)^L\right)^{C_2}$. Since the map from the geometric fixed points of $R^s$ to the Tate construction is the identity, it follows that the map \[\Qoppa_{R^s}(-)\to \mathrm{Hom}_{R\otimes R}(-\otimes \lambda^*-, R)^{\mathrm{h}C_2}\] is an equivalence, i.e. $ \Qoppa_{R^s} $ is equivalent to the symmetric Poincaré structure $ \Qoppa^s $ associated to the bilinear functor $ \mathrm{Hom}_{R\otimes R}(-\otimes \lambda^*-, R) $ (cf. Recollection \ref{rec:symmetric_Poincare_cat} and the dual to Lemma 1.3.1 of \cite{CDHHLMNNSI}). 
    In particular, taking $M$ a discrete module we get that $\pi_0(\Qoppa_{R^s}(M))$ is given by bilinear maps $\langle -,-\rangle:M\otimes_R M\to R$ which are invariant under $\langle x,y\rangle=\langle y,\lambda(x)\rangle$, in other words these are $\lambda$-symmetric bilinear forms. See \cite[Example 1.2.12]{CDHHLMNNSI} for a discussion of this Poincar{\'e} $\infty$-category from the bilinear form perspective. 

    Note that the functor $(-)^{\mathrm{t}}:\CAlg\left(\mathrm{Sp}^{\mathrm{B}C_2}\right)\to \CAlgp$ is fully faithful since the unit of the adjunction is an equivalence. 
    Similarly, $(-)^\mathrm{s}$ is fully faithful since the counit of the adjunction is an equivalence. 
\end{example}

\begin{observation}\label{observation:normed_C2_ring_forget_to_Poincare_ring}
	In view of \cite[Theorem 1.3 \& Definition 3.13]{LYang_normedrings}, there is a forgetful functor from $ C_2 $-$ \EE_\infty $-algebras in $ \Spectra^{C_2} $ to Poincaré rings which forgets the $ C_2 $-equivariance of the map $ R \to R^{\varphi C_2 } $. 
	Their underlying $ \Einfty $-algebras in $ \Spectra^{C_2} $ agree. 
\end{observation}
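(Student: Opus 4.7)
The plan is to leverage the reformulation of a Poincaré ring provided in Remark \ref{rmk:Poincare_ring_has_underlying_C2_spectrum_alg}: the functor \eqref{eq:Poincare_ring_functorial_norm} exhibits $\CAlgp$ as equivalent to the $\infty$-category of $\Einfty$-algebra morphisms $N^{C_2}(R^e)\to R^L$ in $\Einfty\Alg(\Spectra^{C_2})$, with $R^L$ recording the underlying $\Einfty$-algebra in $\Spectra^{C_2}$. By contrast, \cite[Theorem 1.3 \& Definition 3.13]{LYang_normedrings} describes a $C_2$-$\Einfty$-algebra in $\Spectra^{C_2}$ as the datum of an $\Einfty$-algebra $A$ in $\Spectra^{C_2}$ together with a multiplicative norm map $N^{C_2}(A^e)\to A$ carrying a $C_2$-equivariant refinement in the appropriate parametrized sense. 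The forgetful functor is then induced by discarding this equivariant refinement and retaining only the underlying $\Einfty$-algebra morphism.

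To make this precise as a functor of $\infty$-categories, I would first recall the unstraightening/pullback description of $C_2$-$\Einfty$-algebras from \cite[Definition 3.13]{LYang_normedrings} and rewrite it so as to parallel \eqref{diagram:poincare_ring_defn}, with a $C_2$-equivariant enhancement of the Tate-valued norm in place of the bare Tate-valued norm of Recollection \ref{recollection:Tate_valued_norm}. I would then construct a natural transformation from this enhancement to the bare version by forgetting equivariance, and take pullbacks of $\infty$-categories to produce the forgetful functor valued in $\CAlgp$. Since both source and target project to $\Einfty\Alg(\Spectra^{C_2})$ by remembering the underlying $\Einfty$-algebra in genuine $C_2$-spectra (via $R\mapsto R^L$), and the forgetful functor respects these projections by construction, the second assertion of the observation is automatic.

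The main obstacle is purely bureaucratic: matching Yang's notational and categorical conventions for normed structures with the pullback presentation of $\CAlgp$ used in Definition \ref{definition:poincare_ring_spectrum} so that the enhanced equivariant datum restricts on the nose to a morphism in $\Einfty\Alg(\Spectra^{C_2})$ of the form $N^{C_2}(R^e)\to R^L$. Once this translation is in place, the construction of the forgetful functor and its compatibility with underlying $\Einfty$-algebras in $\Spectra^{C_2}$ follow formally.
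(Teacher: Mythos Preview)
The paper does not supply a proof for this observation; it is stated as immediate from the cited references in \cite{LYang_normedrings} and left at that. Your outline is therefore more than the paper provides, and the strategy---match the diagrammatic presentation of $C_2$-$\Einfty$-algebras from \cite{LYang_normedrings} against the pullback square \eqref{diagram:poincare_ring_defn} defining $\CAlgp$, then forget the equivariant refinement of the norm---is the natural one and is sound.

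One caveat: you assert that the functor \eqref{eq:Poincare_ring_functorial_norm} ``exhibits $\CAlgp$ as equivalent to'' a certain arrow category. The paper does not claim this; Remark \ref{rmk:Poincare_ring_has_underlying_C2_spectrum_alg} only constructs a functor $\CAlgp \to \Fun\left(\Delta^1,\Einfty \Alg\left(\Spectra^{C_2}\right)\right)$, not an equivalence onto anything. Fortunately your argument does not actually use this: to build the forgetful functor you only need a map \emph{into} $\CAlgp$, which you obtain by matching pullback squares as you describe in your second paragraph. So drop the equivalence claim and the rest stands.
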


\begin{example}~\label{example:classification_of_poincare_structures_when_tate_vanishes}
	Let $R$ be a ring spectrum with a $ C_2 $-action. If $2\in \pi_0(R)$ is invertible, we have $R^{\mathrm{t}C_2}\simeq 0 $ by \cite[Lemma I.2.8]{NS}. 
	A Poincaré structure on $R$ is then equivalent to the data of an $ \mathbb{E}_\infty $-$R$-algebra $R\rightarrow C$, in which case $\Qoppa_{(R, C)}(-)\simeq \Qoppa_{R^s}(-)\oplus \mathrm{Hom}_R(-,C)$.
\end{example}

\begin{example}
	\label{example:universal_poincare_ring_spectrum}
	The sphere spectrum $\mathbb{S}$ together with the Tate Poincaré structure will be called the \emph{universal Poincaré ring spectrum} (see \cite[\S4.1]{CDHHLMNNSI}). 
    We will denote it by $\mathbb{S}^u$ and the associated quadratic functor by $ \Qoppa^u $. 
\end{example}

\begin{remark}
	\label{remark:factorizations_of_tate_frobenius_through_invariants_induce_splittings_of_forms}
	Let $(R,\Qoppa)$ be a ring spectrum associated to a factorization $R\rightarrow C\rightarrow R^{\mathrm{t}C_2}$. A factorization of the map $C\rightarrow R^{\mathrm{t}C_2}$ through $R^{\mathrm{h}C_2}$ induces a section of the canonical map $\Qoppa(R)\rightarrow \hom_R(R,C)\simeq C$. In that case, we have a splitting $\Qoppa(R)\simeq R_{\mathrm{h}C_2}\oplus C$ by Theorem~\ref{thm: characterization of quadratic functors on a cat}.
\end{remark}

\begin{example}
	\label{example:universal_tate_poincare_splits_at_unit}
	The Tate Frobenius for the sphere spectrum factors through $\mathbb{S}^{\mathrm{h}C_2}$. Therefore, Remark \ref{remark:factorizations_of_tate_frobenius_through_invariants_induce_splittings_of_forms} implies $\Qoppa^u(\mathbb{S})\simeq \mathbb{S}_{\mathrm{h}C_2}\oplus \mathbb{S}\simeq \Sigma^\infty(\mathbb{P}_\mathbb{R}^\infty) \oplus \mathbb{S}$.
\end{example}

\begin{example}
	\label{example:genuine_symmetric_poincare_structure}
	Let $R$ be a connective ring spectrum equipped with a $C_2$-action via maps of ring spectra. The connective cover $\tau_{\geq 0}(R^{\mathrm{t}C_2})\rightarrow R^{\mathrm{t}C_2}$ of $R^{\mathrm{t}C_2}$ induces a Poincaré structure on $R$ given by the factorization $R\rightarrow \tau_{\geq 0}(R^{\mathrm{t}C_2})\rightarrow R^{\mathrm{t}C_2}$. We will call this Poincaré structure the \emph{genuine symmetric Poincaré structure on $R$}.
\end{example}

\begin{example}
	\label{ex:fixpt_Mackey_functor}   
	Let $ R $ be a commutative ring endowed with an involution $ \lambda \colon R \xrightarrow{\sim} R $. 
	Write $ \underline{R}^\lambda $ for the $ C_2 $-Green functor with $ C_2 $-fixed points $ R^{C_2} $, where $ R^{C_2} $ denotes the strict fixed points of the $ C_2 $-action on $ R $, and underlying object $ R $. 
	The Mackey functor $ \underline{R}^\lambda $ is a $ C_2 $-$ \EE_\infty $ ring, therefore in particular we may regard it as a Poincaré ring by Observation \ref{observation:normed_C2_ring_forget_to_Poincare_ring}. 
	This is a special case of Example \ref{example:genuine_symmetric_poincare_structure}. 
    The assignment $ (R,\lambda) \mapsto \underline{R}^\lambda $ is functorial in equivariant maps of commutative rings. 
\end{example}

\begin{theorem}\label{thm:poincare_rings_cat_formal_properties}
	The following statements about $\CAlgp$ hold:
	\begin{enumerate}[label=(\arabic*)]
		\item \label{thmitem:defining_diagram_homotopy_pullback} the pullback diagram (\ref{diagram:poincare_ring_defn}) is homotopy Cartesian; 
		\item \label{thmitem:poincare_ring_has_colimits} The category $\CAlgp$ has all small colimits;
		\item \label{thmitem:poincare_ring_to_naive_ring_preserves_colims} the functor $ \CAlgp \to \CAlg(\Spectra^{BC_2})$ preserves all small colimits;
		\item \label{thmitem:poincare_ring_has_limits} The category $\CAlgp$ has all small limits;
		\item \label{thmitem:poincare_ring_to_naive_ring_preserves_lims} the functor $ \CAlgp \to \CAlg(\Spectra^{BC_2})$ preserves all small limits;
		\item \label{thmitem:poincare_ring_to_ring_preserves_lims} the functor $ \CAlgp \to \CAlg\left(\Spectra^{C_2}\right) $ of Remark \ref{rmk:Poincare_ring_has_underlying_C2_spectrum_alg} preserves all small limits;
		\item \label{thmitem:poincare_ring_to_ring_preserves_sift_colims} the functor $ \CAlgp \to \CAlg\left(\Spectra^{C_2}\right) $ of Remark \ref{rmk:Poincare_ring_has_underlying_C2_spectrum_alg} preserves sifted colimits;
		\item \label{thmitem:poincare_rings_presentable_accessible} The $ \infty $-category $ \CAlgp $ is presentable and accessible. 
	\end{enumerate}
\end{theorem}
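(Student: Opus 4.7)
The plan is to analyze the defining pullback square~\eqref{diagram:poincare_ring_defn} and leverage standard results on pullbacks of presentable $\infty$-categories. For~\ref{thmitem:defining_diagram_homotopy_pullback}, the restriction functor $d_1^*$ is a categorical fibration---indeed, $d_1\colon\Delta^1\hookrightarrow\Delta^2$ is a monomorphism of simplicial sets, and restriction along such maps preserves Joyal fibrations---so the strict pullback defining $\CAlgp$ coincides with its homotopy pullback.

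For limits and their preservation (\ref{thmitem:poincare_ring_has_limits}, \ref{thmitem:poincare_ring_to_naive_ring_preserves_lims}, \ref{thmitem:poincare_ring_to_ring_preserves_lims}), I would give an explicit construction. Given a diagram $\{R_i = (R_i^e, R_i^e \to C_i \to R_i^{\mathrm{t}C_2})\}$ in $\CAlgp$, set $R^e := \lim R_i^e$ in $\CAlg(\Spectra^{BC_2})$ and $C := \lim C_i \times_{\lim R_i^{\mathrm{t}C_2}} (R^e)^{\mathrm{t}C_2}$ as a limit in $\CAlg(\Spectra)$, where $(R^e)^{\mathrm{t}C_2} \to \lim R_i^{\mathrm{t}C_2}$ is the natural comparison map obtained from functoriality of Tate applied to the projections $R^e \to R_i^e$. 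Naturality of the Tate-valued norm assembles this into a factorization $R^e \to C \to (R^e)^{\mathrm{t}C_2}$ of the norm on $R^e$, hence an object of $\CAlgp$, whose universal property as a limit one checks from the universal properties of the constituent limits in $\CAlg(\Spectra^{BC_2})$ and $\CAlg(\Spectra)$. This construction manifestly returns $R^e$ under the forgetful projection, which gives~\ref{thmitem:poincare_ring_to_naive_ring_preserves_lims}. For~\ref{thmitem:poincare_ring_to_ring_preserves_lims}, one compares to the récollement description of limits in $\CAlg(\Spectra^{C_2})$ (Notation~\ref{notation:genuine_C2_spectra_and_underlying_spectra}), using Lemma~\ref{lemma:Poincare_ring_geom_fixpt} to identify $(R^L)^{\phi C_2}$ with $C$; the same pullback formula appears on both sides.

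For colimits and their preservation (\ref{thmitem:poincare_ring_has_colimits}, \ref{thmitem:poincare_ring_to_naive_ring_preserves_colims}, \ref{thmitem:poincare_ring_to_ring_preserves_sift_colims}), the construction is dually simpler, because the target of the factorization is now the Tate of the colimit rather than a limit of Tates: set $R^e := \colim R_i^e$ and $C := \colim C_i$, and let $C \to (R^e)^{\mathrm{t}C_2}$ be induced by the composites $C_i \to R_i^{\mathrm{t}C_2} \to (R^e)^{\mathrm{t}C_2}$, where the second map is functoriality of Tate applied to $R_i^e \to R^e$. Naturality of the Tate-valued norm ensures the composite $R^e \to C \to (R^e)^{\mathrm{t}C_2}$ recovers the norm, so this assembles into an object of $\CAlgp$, and one verifies the universal colimit property. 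Parts~\ref{thmitem:poincare_ring_has_colimits} and~\ref{thmitem:poincare_ring_to_naive_ring_preserves_colims} follow immediately. For~\ref{thmitem:poincare_ring_to_ring_preserves_sift_colims}, I would use that sifted colimits in $\CAlg(\Spectra^{C_2})$ can be computed on the underlying Borel $C_2$-spectrum and on geometric fixed points---both of which are colimit-preserving and restrict to sifted-colimit-preserving functors on $\Einfty$-algebras---together with Lemma~\ref{lemma:Poincare_ring_geom_fixpt}.

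For~\ref{thmitem:poincare_rings_presentable_accessible}: the defining square exhibits $\CAlgp$ as a pullback of presentable $\infty$-categories along accessible functors. Indeed, $d_1^*$ is an accessible right adjoint via right Kan extension, and the Tate-valued norm is accessible since $(-)^{\mathrm{t}C_2}$ is the cofiber of the accessible functors $(-)_{hC_2}$ and $(-)^{hC_2}$; accessibility of $\CAlgp$ then follows from \cite[Proposition 5.4.6.6]{HTT}, and combined with cocompleteness from~\ref{thmitem:poincare_ring_has_colimits}, this yields presentability. The main obstacle is the bookkeeping required to verify the universal properties of these explicit (co)limit constructions, which amounts to diagram-chasing naturality of the Tate-valued norm through several layers of factorizations.
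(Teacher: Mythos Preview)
Your proposal is correct and arrives at the same explicit (co)limit formulas that the paper records in Remark~\ref{rmk:limits_of_Poincare_rings}, but the route differs in packaging. The paper proves the auxiliary Lemma~\ref{lemma:restriction_functor_cat_as_fibrations} showing that $d_1^*$ is both a cartesian and a cocartesian fibration, then invokes the relative (co)limit machinery of \cite[Propositions 4.3.1.5, 4.3.1.9, Corollary 4.3.1.11]{HTT} to construct (co)limits and verify their preservation; your explicit formulas are precisely what one obtains by unwinding (co)cartesian transport along those fibrations. Your approach is more transparent about what the (co)limits actually are, but the paper's approach absorbs the ``bookkeeping required to verify the universal properties'' you flag into standard citations, which is a genuine advantage at the level of coherence: checking that your hand-built objects satisfy the full $\infty$-categorical universal property (not just on homotopy categories) is exactly what the relative-colimit formalism is designed to handle. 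For part~\ref{thmitem:defining_diagram_homotopy_pullback}, your argument via restriction along a monomorphism is fine; the paper instead deduces that $d_1^*$ is a categorical fibration from its being a (co)cartesian fibration. Parts~\ref{thmitem:poincare_ring_to_ring_preserves_lims}--\ref{thmitem:poincare_ring_to_ring_preserves_sift_colims} and~\ref{thmitem:poincare_rings_presentable_accessible} are handled essentially identically in both.
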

\begin{proof}
	To prove \ref{thmitem:defining_diagram_homotopy_pullback}, it is enough to show that $d_1^*$ is a categorical fibration. 
	In fact, $ d_1^* $ is a cocartesian and cartesian fibration; this follows from the existence of colimits and limits, resp., in $ \CAlg(\Spectra) $ and Lemma \ref{lemma:restriction_functor_cat_as_fibrations}. 
	
	To prove \ref{thmitem:poincare_ring_has_colimits}, let $p:K\to \CAlgp$ be a map of simplicial sets, where $K$ is a small simplicial set. 
	Write $ f' $ for the functor $\CAlgp \to \CAlg(\Spectra^{BC_2})$ and $ g'\colon \CAlgp \to  \CAlg(\Spectra)^{\Delta^2} $ and $ f \colon \CAlg(\Spectra)^{\Delta^2} \to  \CAlg(\Spectra)^{\Delta^1} $ and $ g \colon \CAlg(\Spectra^{BC_2}) \to  \CAlg(\Spectra)^{\Delta^1} $. 
	Choose an extension $ \overline{f'p} \colon K^\vartriangleright\to \CAlg(\Spectra^{BC_2}) $ of $ f'\circ p $ which is a colimit diagram. 
	By \cite[Proposition 4.3.1.5(2)]{HTT}, to show that $ p $ admits a colimit, it suffices to exhibit a lift $ K^\vartriangleright \to \CAlgp $ of $ \overline{f' \circ p} $ which is an $ f' $-colimit diagram. 
	By \cite[Proposition 4.3.1.5(4)]{HTT} and \ref{thmitem:defining_diagram_homotopy_pullback}, it suffices to show that there exists an extension $ \overline{g'p} \colon K^\vartriangleright \to \CAlg(\Spectra)^{\Delta^2} $ of $ g \circ \overline{f'p} $ which is an $ f=d_1^* $-colimit. 
	Observe that $ f= d_1^* $ is a cocartesian fibration, the fibers of $ f $ admit small colimits, and $ f $-cocartesian transport preserves all small colimits by \ref{thmitem:defining_diagram_homotopy_pullback}, Lemma \ref{lemma:restriction_functor_cat_as_fibrations}, and the fact that $ \CAlg $ admits all small colimits. 
	The existence of such an extension $ \overline{g'p} $ follows from \cite[Corollary 4.3.1.11]{HTT}. 
	
	Part \ref{thmitem:poincare_ring_to_naive_ring_preserves_colims} follows from \ref{thmitem:poincare_ring_has_colimits}. 
	
	The proofs of parts \ref{thmitem:poincare_ring_has_limits} and \ref{thmitem:poincare_ring_to_naive_ring_preserves_lims} are analogous to those of \ref{thmitem:poincare_ring_has_colimits} and \ref{thmitem:poincare_ring_to_naive_ring_preserves_colims} and have been omitted. 
	
	To prove part \ref{thmitem:poincare_ring_to_ring_preserves_lims}, observe that there is a commutative square
	\begin{equation*}
		\begin{tikzcd}
			\CAlg^{\Delta^2} \ar[d,"{d_0}"] \ar[r,"{d_1=f}"] & \CAlg^{\Delta^1} \ar[d,"{\delta_0}"] \\
			\CAlg^{\Delta^1} \ar[r,"{\partial_0}"] & \CAlg \,.
		\end{tikzcd}
	\end{equation*}
	In particular, $ d_0 $ takes $ d_1 $-cartesian morphisms to $ \partial_0 $-cartesian morphisms and for any $ \alpha \in \CAlg^{\Delta^1} $, $ d_0 $ takes limits in $ \CAlg^{\Delta^2}_{\alpha} $ to limits in $ \CAlg^{\Delta^1}_{\delta_0(\alpha)} $. 
	It follows from the dual of \cite[Proposition 4.3.1.9]{HTT} that if $ \overline{g'p} $ is an $ f $-limit, then $ d_0 \circ \overline{g'p} $ is a $ \partial_0 $-limit. 
	Therefore, the functor $ \CAlgp \to \EE_\infty \Alg \left(\Spectra^{BC_2}\right) \times_{\EE_\infty\Alg(\Spectra)} \EE_\infty\Alg(\Spectra)^{\Delta^1} $ preserves all small limits, and the codomain is equivalent to $ \EE_\infty \Alg \left(\Spectra^{C_2}\right) $ by the symmetric monoidal récollement decomposition of $ \Spectra^{C_2} $. 
	
	The proof of part \ref{thmitem:poincare_ring_to_ring_preserves_sift_colims} is dual to the proof of part \ref{thmitem:poincare_ring_to_ring_preserves_lims}, noting that sifted colimits of algebras are computed in spectra.  
	
	Now let us consider part \ref{thmitem:poincare_rings_presentable_accessible}. 
	Accessibility of $ \CAlgp $ follows from closure of accessible $ \infty $-categories under fiber products \cite[Proposition 5.4.6.6]{HTT} and accessibility of the $ \infty $-categories in the pullback diagram defining $ \CAlgp $, which itself follows from accessibility of $ \CAlg $ and \cite[Proposition 5.4.4.3]{HTT}. 
	Now presentability of $ \CAlgp $ follows from accessibility and \ref{thmitem:poincare_ring_has_colimits}. 
\end{proof}
\begin{lemma}\label{lemma:restriction_functor_cat_as_fibrations}
	Let $ \mathcal{C} $ be an $ \infty $-category, and let $ \mathcal{K} $ be a collection of simplicial sets. 
	\begin{enumerate}[label=(\alph*)]
		\item \label{lemma_item:restriction_functor_cocartesian} If $ \mathcal{C} $ has finite colimits, then the functor $ d_1^* \colon \Fun\left(\Delta^2, \mathcal{C}\right) \to \Fun\left(\Delta^1, \mathcal{C}\right) $ exhibits the source as a cocartesian fibration over the target. 
		If $ \mathcal{C} $ admits all $ \mathcal{K} $-indexed colimits, then the fibers of $ d_1^* $ admit all $ \mathcal{K} $-indexed colimits and for each morphism $ f \colon \alpha \to \beta $ in $ \mathcal{C}^{\Delta^1} $, the associated functor $ f_* \colon \mathcal{C}^{\Delta^2}_{\alpha} \to \mathcal{C}^{\Delta^2}_{\beta} $ preserves all $ \mathcal{K} $-indexed colimits. 
		\item \label{lemma_item:restriction_functor_cartesian} The functor $ d_1^* $ exhibits the source as a cartesian fibration over the target. 
		If $ \mathcal{C} $ has finite limits, then $ d_1^* $ exhibits the source as a cartesian fibration over the target. 
		If $ \mathcal{C} $ admits all $ \mathcal{K} $-indexed limits, then the fibers of $ d_1^* $ admit all $ \mathcal{K} $-indexed limits and for each morphism $ f \colon \alpha \to \beta $ in $ \mathcal{C}^{\Delta^1} $, the associated functor $ f^* \colon \mathcal{C}^{\Delta^2}_{\beta} \to \mathcal{C}^{\Delta^2}_{\alpha} $ preserves all $ \mathcal{K} $-indexed limits. 
	\end{enumerate} 
\end{lemma}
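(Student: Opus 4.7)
The plan is to establish part \ref{lemma_item:restriction_functor_cocartesian} directly by constructing explicit cocartesian lifts via pushouts, and then to deduce part \ref{lemma_item:restriction_functor_cartesian} by dualization. I will first note that $d_1 \colon \Delta^1 \to \Delta^2$ is a monomorphism, hence a cofibration in the Joyal model structure, so $d_1^*$ is a categorical fibration between $\infty$-categories (cf.\ \cite[Cor.\ 2.3.2.5]{HTT}); in particular it is an inner fibration, which addresses the underlying fibration hypothesis regardless of any (co)completeness assumption on $\mathcal{C}$.

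For \ref{lemma_item:restriction_functor_cocartesian}, consider a morphism $f \colon \alpha \to \beta$ in $\Fun(\Delta^1, \mathcal{C})$, viewed as a commutative square with horizontal arrows $\alpha \colon X_0 \to X_2$ and $\beta \colon Y_0 \to Y_2$, together with a lift $A \in \Fun(\Delta^2, \mathcal{C})$ of $\alpha$ corresponding to a factorization $X_0 \xrightarrow{u} X_1 \xrightarrow{v} X_2$. Using that $\mathcal{C}$ has pushouts, set $Y_1 \coloneqq Y_0 \sqcup_{X_0} X_1$. The universal property of the pushout, applied to the pair $Y_0 \to Y_2$ and $X_1 \xrightarrow{v} X_2 \to Y_2$ (which agree on $X_0$ by commutativity of the given square), yields a canonical map $Y_1 \to Y_2$, hence a factorization $Y_0 \to Y_1 \to Y_2$ of $\beta$ and a morphism $A \to B$ in $\Fun(\Delta^2, \mathcal{C})$ lifting $f$. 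To check that $A \to B$ is $d_1^*$-cocartesian, I would apply the mapping-space criterion for cocartesian morphisms: using the inner-anodyne inclusion $\Lambda^2_1 \hookrightarrow \Delta^2$ to identify $\Fun(\Delta^2, \mathcal{C})$ with the $\infty$-category of composable pairs in $\mathcal{C}$, the required equivalence of mapping spaces reduces precisely to the pushout universal property for $Y_1 = Y_0 \sqcup_{X_0} X_1$.

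Turning to the second claim of \ref{lemma_item:restriction_functor_cocartesian}, the fiber $\mathcal{C}^{\Delta^2}_\alpha$ is identified with the $\infty$-category of factorizations $\mathcal{C}_{X_0 \sslash X_2}$, in which a $\mathcal{K}$-indexed colimit is obtained by forming the colimit in $\mathcal{C}$ of the middle objects and inducing the structure maps from $X_0$ and to $X_2$ via universality. Under this identification, the cocartesian transport functor $f_*$ sends $X_1$ to $Y_0 \sqcup_{X_0} X_1$ on middle objects; but the assignment $X_1 \mapsto Y_0 \sqcup_{X_0} X_1$ is (the middle-object component of) a left adjoint to the forgetful/basechange functor along $X_0 \to Y_0$, so it preserves all colimits, and hence $f_*$ preserves all $\mathcal{K}$-indexed colimits in the fibers.

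Part \ref{lemma_item:restriction_functor_cartesian} follows by passing to opposite categories: cartesian lifts are built via pullbacks $X_1 \coloneqq Y_1 \times_{Y_2} X_2$, and the cartesian transport on middle objects is $Y_1 \mapsto Y_1 \times_{Y_2} X_2$, a right adjoint in $Y_1$, hence limit-preserving. The main technical obstacle is the rigorous verification that the constructed lifts are genuinely $d_1^*$-cocartesian (resp.\ cartesian) rather than merely fiberwise-initial in a naive sense; reducing via the $\Lambda^2_1 \hookrightarrow \Delta^2$ equivalence keeps the argument within the slice/coslice formalism of \cite{HTT} and makes this verification essentially mechanical.
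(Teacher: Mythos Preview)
Your proof is correct and arrives at the same cocartesian transport formula as the paper—the middle object of the pushforward is indeed $Y_0 \sqcup_{X_0} X_1$—but the route is different. The paper factors $d_1$ as $\Delta^1 \xrightarrow{i_0} \Delta^1 \times \Delta^1 \xrightarrow{j_0} \Delta^2$, where $j_0$ collapses one edge; then $j_0^*$ is a fully faithful embedding of $\Fun(\Delta^2,\mathcal{C})$ into $\Fun(\Delta^1,\mathcal{C}^{\Delta^1})$, and $i_0^*$ is the source functor on an arrow category, which is a cocartesian fibration with transport given by pushout. The only thing left to check is that the essential image of $j_0^*$ is closed under $i_0^*$-cocartesian transport, which reduces to the stability of equivalences under pushout. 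For part \ref{lemma_item:restriction_functor_cartesian} the paper does \emph{not} dualize but instead uses a second, different factorization through $\Delta^1 \times \Delta^1$ (and includes a footnote explaining why the first factorization cannot be reused).

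The trade-off is that the paper never has to verify the $d_1^*$-cocartesian condition directly: it is inherited from the source fibration on $\Fun(\Delta^1,\mathcal{C}^{\Delta^1})$. Your approach is more hands-on, and the verification you flag as ``the main technical obstacle'' does unwind, via the $\Lambda^2_1 \hookrightarrow \Delta^2$ identification, to the pushout universal property plus an automatic compatibility with the map to $Z_2$; but writing this out carefully is a page of bookkeeping that the paper's factorization sidesteps. Your dualization for \ref{lemma_item:restriction_functor_cartesian} (using $(\Delta^n)^{\op} \simeq \Delta^n$) is cleaner than the paper's second factorization and is a genuine simplification. Your description of fiberwise colimits and their preservation is also correct: colimits in the factorization category $(\mathcal{C}_{X_0/})_{/\alpha}$ are computed on middle objects, and $(-)\sqcup_{X_0} Y_0$ preserves them.
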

\begin{proof} 
	Consider the maps 
	\begin{equation*}
		\Delta^1 \simeq \Delta^1 \times \{0\} \xrightarrow{i_0} \Delta^1 \times \Delta^1 \xrightarrow{j_0} \Delta^2
	\end{equation*}
	where $ i_0 $ classifies the morphism $ 00 \to 10 $ and $ j_0 $ sends the edge $ 10 \to 11 $ (i.e. the unique nonidentity in $ \{1\} \times \Delta^1 $) to the identity at $ 2 $ in $ \Delta^2 $ and satisfies $ j_0(00) = 0 $, $ j_0(01) = 1 $. 
	Note that their composite is $ j_0 \circ i_0 = d_1 $. 
	Precomposition induces maps 
	\begin{equation*}
		\mathcal{C}^{\Delta^2} \xrightarrow{j^*_0} \mathcal{C}^{\Delta^1 \times \Delta^1} \xrightarrow{i^*_0} \mathcal{C}^{\Delta^1}
	\end{equation*}  
	whose composite is $ d_1^* $. 
	Note that $ i^*_0 $ corresponds to taking the `source' $ \Fun\left(\Delta^1,\mathcal{C}^{\Delta^1}\right) \to \mathcal{C}^{\Delta^1} $. 
	
	That $ d_1^*$ is an inner fibration follows from \cite[Corollary 2.3.2.5]{HTT}. 
	It remains to show that morphisms in $ \mathcal{C}^{\Delta^1} $ have $ d_1^*$-cocartesian lifts. 
	By our assumption on $ \mathcal{C} $, the functor category $ \mathcal{C}^{\Delta^1} $ also has finite colimits. 
	Because of our previous observation that $ i^*_0 $ corresponds to the source functor on the arrow category of $ \mathcal{C}^{\Delta^1} $ and $ \mathcal{C}^{\Delta^1} $ has finite colimits, $ i^*_0 $ is a cocartesian fibration. 
	In particular, $ i^*_0 $-cocartesian transport along a morphism in $ \mathcal{C}^{\Delta^1} $ corresponds to taking a pushout in $ \mathcal{C}^{\Delta^1} $. 
	Furthermore, $ j^*_0 $ is fully faithful and identifies $ \mathcal{C}^{\Delta^2} $ with its essential image, the full subcategory of $ \mathcal{C}^{\Delta^1 \times \Delta^1} $ on those functors which send the edge $ 10 \to 11 $ to an equivalence in $ \mathcal{C} $. 
	To prove part \ref{lemma_item:restriction_functor_cocartesian}, it suffices to show that the essential image of $ j^*_0 $ is closed under $ i^*_0 $-cocartesian transport. 
	This is true because colimits in functor categories are computed pointwise by \cite[Corollary 5.1.2.3]{HTT} and equivalences are stable under pushouts.  
	
	To prove \ref{lemma_item:restriction_functor_cartesian}, it suffices to show that morphisms in $ \mathcal{C}^{\Delta^1} $ have $ d_1^*$-cartesian lifts. 
	Consider the maps 
	\begin{equation*}
		\Delta^1 \simeq \{1\} \times \Delta^1 \xrightarrow{i_1} \Delta^1 \times \Delta^1 \xrightarrow{j_1} \Delta^2
	\end{equation*}
	where $ i_1 $ classifies the morphism $ 10 \to 11 $ and $ j_1 $ sends the edge $ 00 \to 10 $ (i.e. the unique nonidentity in $ \Delta^1 \times \{0\} $) to the identity at $ 0 $ in $ \Delta^2 $ and satisfies $ j_1(01) = 1 $, $ j_1(11) = 2 $. 
	Note that their composite is $ j_1 \circ i_1 = d_1 $. 
	Precomposition induces maps 
	\begin{equation*}
		\mathcal{C}^{\Delta^2} \xrightarrow{j^*} \mathcal{C}^{\Delta^1 \times \Delta^1} \xrightarrow{i^*} \mathcal{C}^{\Delta^1}
	\end{equation*}  
	whose composite is $ d_1^* $. 
	Note that $ i^*_1 $ corresponds to taking the `target' $ \Fun\left(\Delta^1,\mathcal{C}^{\Delta^1}\right) \to \mathcal{C}^{\Delta^1} $. 
	Because $ \mathcal{C}^{\Delta^1} $ has finite limits, $ i^*_1 $ is a cartesian fibration. 
	In particular, $ i^*_1 $-cartesian transport along a morphism in $ \mathcal{C}^{\Delta^1} $ corresponds to taking a pullback in $ \mathcal{C}^{\Delta^1} $. 
	Furthermore, $ j^*_1 $ is fully faithful and identifies $ \mathcal{C}^{\Delta^2} $ with its essential image,\footnote{Note that in this proof we have produced two different fully faithful embeddings of $ \mathcal{C}^{\Delta^2} $ into $ \mathcal{C}^{\Delta^1 \times \Delta^1} $, but their essential images are different. While $ i_0^* $ \emph{is} a cartesian fibration, the essential image of $ j_0^* $ is not closed under $ i_0^* $-cartesian transport. Thus, part \ref{lemma_item:restriction_functor_cartesian} does not follow from the same construction as \ref{lemma_item:restriction_functor_cocartesian}. } the full subcategory of $ \mathcal{C}^{\Delta^1 \times \Delta^1} $ on those functors which send the edge $ 00 \to 10 $ to an equivalence in $ \mathcal{C} $. 
	To prove part \ref{lemma_item:restriction_functor_cartesian}, it suffices to show that the essential image of $ j^*_1 $ is closed under $ i^*_1 $-cartesian transport. 
	This is true because limits in functor categories are computed pointwise by the dual to \cite[Corollary 5.1.2.3]{HTT} and equivalences are stable under pullbacks.  
\end{proof}
\begin{remark}\label{rmk:limits_of_Poincare_rings}
    Suppose given a diagram $ p \colon K \to \CAlgp $ where $ K $ is some simplicial set. 
	Let us use the notation of \ref{thmitem:poincare_ring_has_colimits}, with the modifications discussed in the proof of \ref{thmitem:poincare_ring_to_ring_preserves_lims}. 
	Then the limit of $ p $ is computed as follows: Take $ f' \circ p \colon K \to \CAlg^{BC_2} $ and extend it to a limit diagram $ \overline{f'p} \colon K^{\vartriangleleft} \to \CAlg^{BC_2} $. 
	Write $ \infty $ for the cone point in $ K^{\vartriangleleft} $ and $ R_\infty:= \overline{f'p}(\infty) $. 
	Now regard $ g \circ \overline{f'p} $ as a natural transformation $ \overline{\eta} $ of functors $ K \to \CAlg^{\Delta^1} $ from the constant functor at $ R_\infty \to R_\infty^{\mathrm{t}C_2} $ to $ g \circ f' \circ p $. 
	We may lift this to a pointwise (in $ K $) $ f $-cartesian natural transformation $ \eta $ of functors $ K \to \CAlg^{\Delta^2} $ with target $ g'\circ p $. 
	Let us write $ q_\infty \colon K \to \CAlg^{\Delta^2} $ for the domain of $ \eta $; it takes values in the fiber of $ f $ over $ R_\infty \to R_\infty^{\mathrm{t}C_2} $. 
	Then the image of $ \lim p $ under $ g' $ can be identified with the limit of $ q_\infty $. 
	
	An analogous description holds for colimits in $ \CAlgp $; we leave the details to the reader. 
\end{remark}

\begin{remark}
	\label{remark:poincare_ring_spectra_to_modules_with_Poincare_structure}
	Let $R$ be an $\mathbb{E}_\infty$ ring spectrum. 
	By \cite[Corollary 5.4.8]{CDHHLMNNSI} and (\ref{eq:Poincare_ring_functorial_norm}), a Poincaré ring $(R, q:C\to R^{\mathrm{t}C_2})$ lifting $R$ gives rise to a symmetric monoidal lift of $ \Mod_R^\omega $ to the symmetric monoidal $ \infty $-category of Poincaré $\infty$-categories $ \Qoppa_{(R,q:C\to R^{\mathrm{t}C_2})}: (\Mod_R^\omega)^{\op}\rightarrow \Spectra $. 
	Furthermore, the structure map $ R \to C $ gives a canonical lift of $ R \in \Mod_R^\omega $ to a Poincaré object $ (R, u) \in \mathrm{Pn}\left(\Mod_R^\omega,\Qoppa_R \right) $. 
\end{remark}

We end this subsection by noting that the assignment of a Poincar\'e ring $R$ to the Poincar\'e category $\left(\Mod_{R^e}^\omega,\Qoppa_R\right)$ of Remark \ref{remark:poincare_ring_spectra_to_modules_with_Poincare_structure} is functorial. 
While exhibiting a Poincaré structure on $ \Mod_{R^e}^\omega $ (even one which is symmetric monoidal) has been done in \cite[Theorem 3.3.1 \& Corollary 5.4.8]{CDHHLMNNSI}, more care is required to specify how $ \left(\Mod_{R^e}^\omega,\Qoppa_R\right) $ varies as both the underlying $ \EE_\infty $-ring $ R^e $ and an associated module with genuine involution vary. 
We record the result here and refer the interested reader to the appendix for the proof.
\begin{theorem}\label{thm:calgp_to_poincare_cat}
	\begin{enumerate}[label=(\arabic*)]
		\item There is a canonical functor $ \CAlgp \to \Catp $ whose composite with the forgetful functor $ \Catp \to \Catex $ sends a Poincaré ring $ R $ to $ \Mod^\omega_{R^e} $. 
		\item The functor $ \CAlgp \to \Catp $ sends Borel/symmetric Poincaré rings to Poincaré $ \infty $-categories which are symmetric in the sense of \cite[Definition 1.2.11]{CDHHLMNNSI}.
		\item Recall that $ \CAlgp $ has an initial object given by the sphere spectrum of Example \ref{example:universal_poincare_ring_spectrum}. 
		Then there is an equivalence $ \Mod^\mathrm{p}_{\mathbb{S}^u} \simeq \left(\Spectra^\omega,\Qoppa^u\right) $ where the latter is the universal Poincaré $ \infty $-category of \cite[\S4.1]{CDHHLMNNSI}. 
		\item \label{thmitem:calgp_to_poincare_cat_with_tensor} Recall that $ \Catpidem $ has a symmetric monoidal structure. 
		Then the functor $ \Mod^\mathrm{p} \colon \CAlgp \to \Catpidem $ admits a canonical lift to a functor $ \CAlgp \to \EE_\infty \Alg\left(\Catpidem\right) $, where the latter denotes symmetric monoidal Poincaré $ \infty $-categories. 
		\item The functor $ \Mod^\mathrm{p} \colon \CAlgp \to \EE_\infty \Alg\left(\Catpidem\right) $ is symmetric monoidal. 
	\end{enumerate}
\end{theorem}
\begin{proof}
	Follows from Theorems \ref{thm:Ep_alg_to_poincare_cat} and \ref{thm:operadic_diagrammatic_calgp_agree}. 
\end{proof}
The following corollary is an immediate consequence of Corollary \ref{cor:unit_poincare_object} and Theorem \ref{thm:operadic_diagrammatic_calgp_agree}. 
\begin{corollary}\label{cor:tensor_unit_as_poincare_object}
    There are canonical lifts 
    \begin{equation}
        \EE_\sigma\Alg \to \left(\Catp\right) _{\left(\Spectra^\omega,\Qoppa^u\right)/-}\qquad \qquad \CAlgp \to \EE_\infty \Mon\left(\Catp\right)_{\left(\Spectra^\omega,\Qoppa^u\right)/-}
    \end{equation} 
    of the functors of Theorem \ref{thm:calgp_to_poincare_cat}. 
    In particular, if $ A $ is an $ \EE_\sigma $-algebra, then $ A^e $ canonically promotes to a Poincaré object $ (A^e, q_A) $ of $ \Mod^\mathrm{p}_A $. 
\end{corollary}

\subsection{From schemes with involution to Poincaré structures on module categories}\label{subsection:Poincare_structures_schemes_involution} 
In this section, we show that a scheme with an involution acquires a symmetric monoidal Poincaré structure on its perfect derived category. 
Our basic geometric object of interest consists of a scheme $ X $ endowed with an involution $ \lambda \colon X \to X $ and a good quotient $ X \to Y $.  
We show that, given such data, we may define a Zariski sheaf of commutative rings with $ C_2 $-action (Construction \ref{cons:structure_sheaf_of_Green_functors}) on the quotient $ Y $. 
The values of this new sheaf may be regarded as instances of the Poincaré rings of \S\ref{subsection:Poincare_rings}. 
Then we use the functor of Theorem \ref{thm:calgp_to_poincare_cat} to define a symmetric monoidal Poincaré $ \infty $-category associated to this sheaf. 
We briefly discuss how the ringed topoi with involutions (and their exact quotients) of \cite{azumaya_involution} give rise to symmetric monoidal Poincaré $ \infty $-categories in Remark \ref{rmk:ringed_topoi_with_involution}, and give a `stacky' interpretation of symmetric Poincaré rings in Proposition \ref{prop:symmetric_calgp_stacky_interpretation}. 
	
\begin{recollection} [{\cite[Remark 4.20]{azumaya_involution}}]\label{rec:good_quotient}
    Let $ X $ be a scheme with an involution $ \lambda \colon X \to X $. 
    A map $ p \colon X \to Y $ is called a \emph{good quotient of $ X $ relative to $ \lambda $} if $ p $ is $ C_2 $-invariant and affine and $ p_{\#} \colon \mathcal{O}_Y \to p_{*} \mathcal{O}_X $ induces an isomorphism $ \mathcal{O}_Y \simeq \left(p_{*} \mathcal{O}_X\right)^{C_2} $. 
    A good quotient of $ X $ exists if and only if every $ C_2 $-orbit is contained in an affine open subscheme, in which case it is unique up to isomorphism. 
    Equivalently, a good quotient is a moduli space for the stacky quotient $ [X/C_2] $; \cite[Theorem 5.4]{existence_of_moduli} gives general conditions for existence of good moduli spaces. 
    The assumption on affine orbits is sufficient to apply \cite[Theorem 5.4]{existence_of_moduli} since it reduces to checking $\Theta$-reductivity and $\mathrm{Spec}(\mathbb{Z})$-completeness for quotients of affine schemes, which follow from \cite[Propositions 3.21(2) and 4.44(2)]{existence_of_moduli}.
\end{recollection}
\begin{definition}~\label{defn:Category of good quotients}
    Define a category $ \mathrm{qSch}^{C_2} $ so that
    \begin{itemize}
        \item an object of $ \mathrm{qSch}^{C_2} $ consists of the data of qcqs schemes $ X $ and $ Y $, an involution $ \lambda \colon X \to X $, and a morphism $ p \colon X \to Y $ which exhibits $ Y $ as a \emph{good quotient} of the involution on $ X $ in the sense of Recollection \ref{rec:good_quotient}. 
        \item a morphism from $ (X,\lambda, Y, p) $ to $ (Z,\nu, W, q) $ consists of a $ C_2 $-equivariant morphism $ X \to Z $ and a morphism $ Y \to W $ so that the diagram
        \begin{equation*}
            \begin{tikzcd}
                X \ar[d,"p"] \ar[r] & Z \ar[d,"q"] \\
                Y \ar[r] & W
            \end{tikzcd}
        \end{equation*}
        commutes. 
    \end{itemize} 
    We will consider $ \mathrm{qSch}^{C_2} $ as an $\infty$-category via the $\infty$-categorical nerve, see \cite[Example 1.1.2.6]{HTT}. 
    Write $ \mathrm{Aff}^{C_2} $ for the full subcategory of $ \mathrm{qSch}^{C_2} $ on those objects $ (X,\lambda,Y,p) $ so that $ Y $ is affine. 
    By our assumption on $ p $, $ X $ is also affine whenever $Y$ is. 
\end{definition}
\begin{observation}\label{obs:fixpt_Mackey_functor_as_affine_C2_scheme}
    Suppose $ R $ is a discrete commutative ring with a $ C_2 $-action, and regard $ R $ as a $ C_2 $-Mackey functor via Example \ref{ex:fixpt_Mackey_functor}. 
    Then $ \Spec R \to \Spec (R^{C_2}) $ may be regarded as an object of $ \mathrm{Aff}^{C_2} $. 
    In fact, any object of $ \mathrm{Aff}^{C_2} $ is of this form. 
\end{observation}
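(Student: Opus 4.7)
The plan is to verify the three defining conditions of a good quotient (Recollection \ref{rec:good_quotient}), together with the quasi-compact and quasi-separated hypothesis required by Definition \ref{defn:Category of good quotients}. First, since $R$ is discrete and commutative, the $C_2$-action by ring automorphisms $\lambda: R \to R$ induces by functoriality of $\Spec(-)$ an involution $\lambda^*: \Spec R \to \Spec R$, so that $\Spec R$ becomes a $C_2$-scheme. Both $\Spec R$ and $\Spec(R^{C_2})$ are affine, hence qcqs.

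Next, I would check that the inclusion $R^{C_2} \hookrightarrow R$ is $C_2$-invariant (as the action on $R^{C_2}$ is trivial by definition of fixed points), which upon applying $\Spec(-)$ yields that $p: \Spec R \to \Spec(R^{C_2})$ is $C_2$-invariant. The map $p$ is affine because it is a morphism between affine schemes. It thus remains to verify the sheaf-theoretic condition $\mathcal{O}_Y \simeq (p_*\mathcal{O}_X)^{C_2}$. Since $p$ is affine and $X = \Spec R$, the pushforward $p_*\mathcal{O}_X$ is the quasi-coherent sheaf on $Y = \Spec(R^{C_2})$ associated to the $R^{C_2}$-module $R$ (with its $C_2$-action by $\lambda$), and $\mathcal{O}_Y$ corresponds to $R^{C_2}$. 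Taking $C_2$-fixed points of quasi-coherent sheaves associated to modules commutes with the formation of the associated sheaf (as $(-)^{C_2}$ is a limit and sheafification is exact for affines, or directly by evaluating on principal opens), so the condition reduces to the tautological equality $R^{C_2} = R^{C_2}$.

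Since $\Spec R$ is itself affine, every $C_2$-orbit trivially lies in an affine open subscheme, so there is no additional obstruction to the existence of the good quotient. Assembling these observations shows $(\Spec R, \lambda^*, \Spec(R^{C_2}), p)$ satisfies all the requirements to be an object of $\mathrm{qSch}^{C_2}$. No single step should present a real obstacle; the main point is simply to translate the ring-theoretic data into the geometric language of Recollection \ref{rec:good_quotient}.
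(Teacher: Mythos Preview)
Your verification is correct and complete. The paper states this as an observation without proof, treating it as immediate from the definitions; your argument spells out exactly the routine checks (qcqs, $C_2$-invariance, affineness of $p$, and the sheaf condition reducing to the tautology $R^{C_2} = R^{C_2}$) that justify it.
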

\begin{examples}\label{ex:schemes_and_involutions}
    Let $ \mathrm{qSch} $ denote the category of qcqs schemes. 
\begin{enumerate}[label=(\arabic*)]
    \item \label{ex_item:scheme_with_trivial_involution} There is a canonical functor $ \mathrm{qSch} \to \mathrm{qSch}^{C_2} $ which sends $ X \mapsto (X,\id_X,X, \id_X) $. 
    In other words, we may regard an ordinary scheme as being equipped with the trivial involution. 
    \item Write $ \mathrm{qSch}_{\ZZ\left[\frac{1}{2}\right]} $ for the category of qcqs schemes over $ \Spec \ZZ \left[\frac{1}{2}\right] $. 
    Consider the object \[ \left(\Spec \ZZ \left[\frac{1}{2},\sqrt{-1}\right] , \lambda, \Spec \ZZ \left[\frac{1}{2}\right],p \right) \] of $ \mathrm{qSch}^{C_2} $ where $ \Spec \ZZ \left[\frac{1}{2},\sqrt{-1}\right] $ is endowed with the involution $ \lambda: i \mapsto -i $ and $ p $ is induced by the canonical inclusion $ \ZZ \left[\frac{1}{2}\right] \to \ZZ \left[\frac{1}{2},\sqrt{-1}\right]$. 
    For any $ X $, write $ X[\sqrt{-1}]:= X \times_{\Spec \ZZ \left[\frac{1}{2}\right]}\Spec \ZZ \left[\frac{1}{2},\sqrt{-1}\right] $; it inherits an involution $ \lambda_X $ and a projection $ p_X \colon X[\sqrt{-1}] \to X $ via pullback. 
    Then there is a canonical functor $ \mathrm{qSch}_{\ZZ \left[\frac{1}{2}\right]} \to \mathrm{qSch}^{C_2} $ which sends $ X \mapsto \left(X[\sqrt{-1}],\lambda_X,X,p_X\right) $. 
\end{enumerate}
\end{examples}
\begin{remark}\label{remark:restriction_of_schemes_with_involution}
    Suppose $ (X,\lambda, Y, p) $ is an object of $ \mathrm{qSch}^{C_2} $ and $ j \colon U \to Y $ is a flat map. 
    Then $ (X_U, \lambda|_U, U, p|_U) $ is an object of $ \mathrm{qSch}^{C_2} $. 
    Affineness and invariance under the $ C_2 $-action are stable under pullback, so it suffices to show that $ p|_U $ satisfies $ \mathcal{O}_U \simeq \left(p|_U\right)_*(\mathcal{O}_{X_U})^{C_2} $. 
    This follows from the proof of \cite[Theorem 4.35(i)]{azumaya_involution}.  
\end{remark}
\begin{proposition}\label{prop:products_of_C2_schemes}
    Write $ U \colon \mathrm{qSch}^{C_2} \to \mathrm{qSch} $ for the functor so that $ U(X, \lambda, Y, p) = X $.  
    \begin{enumerate}[label=(\arabic*)]
        \item \label{propitem:C2_schemes_monoidal} The category $ \mathrm{qSch}^{C_2} $ has a symmetric monoidal structure $ \boxtimes $ so that $ U $ is symmetric monoidal, where $ \mathrm{qSch} $ is endowed with the product symmetric monoidal structure. 
        \item \label{propitem:C2_schemes_monoidal_cartesian} The symmetric monoidal structure $ \boxtimes $ is cartesian. 
    \end{enumerate}
\end{proposition}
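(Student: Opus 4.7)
The approach is to show that $\mathrm{qSch}^{C_2}$ admits finite products, from which both parts of the proposition follow formally: part \ref{propitem:C2_schemes_monoidal_cartesian} because any $\infty$-category with finite products inherits a canonical cartesian symmetric monoidal structure, and part \ref{propitem:C2_schemes_monoidal} provided $U$ preserves these products. I would take the candidate binary product to be
\[
(X_1,\lambda_1,Y_1,p_1) \boxtimes (X_2,\lambda_2,Y_2,p_2) := (X_1 \times X_2,\, \lambda_1 \times \lambda_2,\, Y,\, q),
\]
where $q \colon X_1 \times X_2 \to Y$ is the good quotient of the diagonal $C_2$-action on $X_1 \times X_2$; the terminal object would be $(\Spec \ZZ, \id, \Spec \ZZ, \id)$, which works because $\Spec \ZZ$ is terminal in $\mathrm{qSch}$ and the $Y$-component of any map into it is forced.

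First I would verify that $(X_1 \times X_2, \lambda_1 \times \lambda_2)$ admits a good quotient, so that $\boxtimes$ is well-defined. By Recollection \ref{rec:good_quotient}, it suffices to check that every $C_2$-orbit lies in an affine open. Any such orbit has the form $\{(x_1,x_2),(\lambda_1 x_1, \lambda_2 x_2)\}$, hence is contained in $U_1 \times U_2$ where $U_i \subseteq X_i$ is an affine open containing the orbit of $x_i$ (such $U_i$ exist by hypothesis on $X_i$).

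The main step is to observe that a morphism $(X,\lambda,Y,p)\to (Z,\nu,W,q)$ in $\mathrm{qSch}^{C_2}$ is \emph{uniquely} determined by its underlying $C_2$-equivariant morphism $f \colon X \to Z$: since $q$ is $C_2$-invariant, so is $q \circ f$, and any $C_2$-invariant morphism from $X$ to a qcqs scheme factors uniquely through the good quotient $p$. I would verify this last claim by covering $Y$ by affine opens $V = \Spec A$ so that $p^{-1}(V) = \Spec C$ with $A \simeq C^{C_2}$ (possible because $p$ is affine), reducing to the elementary algebraic assertion that invariant ring maps $B \to C$ factor uniquely through $A = C^{C_2}$, then gluing. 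Granting this, a morphism from $(T,\mu,S,r)$ to $\boxtimes$ is the same as a $C_2$-equivariant map $T \to X_1 \times X_2$, equivalently a pair of $C_2$-equivariant maps $T \to X_i$, equivalently a pair of morphisms $(T,\mu,S,r) \to (X_i,\lambda_i,Y_i,p_i)$, verifying the universal property of the categorical product. The functor $U$ preserves $\boxtimes$ by construction, so the cartesian symmetric monoidal structures on source and target are automatically compatible. The only substantive input is the universal property of the good quotient as a categorical quotient, which is classical and presents no real obstacle.
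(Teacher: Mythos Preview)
Your proposal is correct and follows essentially the same approach as the paper: both verify that $X_1 \times X_2$ admits a good quotient via the affine-orbit criterion, and both reduce the universal property of $\boxtimes$ to the universal property of the product in $\mathrm{qSch}$ using that a good quotient is a categorical quotient (so the $Y$-component of any morphism is determined by the $C_2$-equivariant map on $X$-components). Your treatment is simply more detailed—the paper invokes the categorical-quotient property as a known fact rather than spelling out the affine-local argument.
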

\begin{proof}
    If $ X $, $ Z $ are schemes with involutions $ \lambda_X $, $ \lambda_Z $, then $ \lambda_X \times \lambda_Z $ endows $ X \times Z $ with an involution. 
    To prove \ref{propitem:C2_schemes_monoidal}, it suffices to show that $ X \times Z $ admits a good quotient, as a good quotient is a categorical quotient and is therefore unique up to isomorphism. 
    By \cite[Remark 4.20]{azumaya_involution}, a good quotient exists if and only if every $ C_2 $-orbit is contained in an affine open subscheme. 
    Consider a $ C_2 $-orbit in $ X \times Z $. 
    Its image under the projection $ \pi_1 \colon X \times Z \to X $ (resp. $ \pi_2 \colon X \times Z \to Z $) is contained in an affine open subscheme $ U \subseteq X $ (resp. $ V \subseteq Z $). 
    Thus the orbit under consideration is contained in $ U \times V $, which is affine. 
    
    Now \ref{propitem:C2_schemes_monoidal_cartesian} follows from noting that if $ Y $ is any other scheme with an involution admitting a good quotient, then the projections $ X \times Z \to X $, $ X \times Z \to Z $ induce a bijection between $ C_2 $-equivariant maps $ Y \to X \times Z $ and pairs of $ C_2 $-equivariant maps $ (Y\to X, Y \to Z) $. 
\end{proof}
\begin{construction}\label{cons:structure_sheaf_of_Green_functors}
    Let $ X $ be a scheme with involution $ \lambda : X \xrightarrow{\sim} X $.
    Assume that $ X $ has a \emph{good quotient} $ Y $ in the sense of \cite[Remark 4.20]{azumaya_involution}. 
    We write $ p \colon X \to Y $ for the quotient map.
    Let $ j \colon \Spec A  \simeq U \subseteq Y $ be an affine open subscheme of $ Y $. 
    Because $ p $ is an affine map, the fiber product $ \Spec B := \Spec A \times_{Y} X $ is an affine open of $ X $ which is invariant under the $ C_2 $-action. 
    In particular $ \Spec B $ inherits a $ C_2 $-action from $ X $ (hence so does its ring of functions $ B$).  
    Now $ A \to B $ acquires the structure of a $C_2$-Green functor which we will denote by $ \underline{\mathcal{O}}_{X,\lambda}(U) $. 
    Regarding $ \underline{\mathcal{O}}_{X,\lambda}(U) $ as a $ C_2 $-spectrum, by the isotropy separation sequence, we have an equivalence of $ A $-modules $ \underline{\mathcal{O}}_{X,\lambda}(U)^{\varphi C_2} \simeq \mathrm{cofib} (\mathrm{tr} \colon B_{\mathrm{h}C_2} \to A) $. 
\end{construction}
\begin{lemma} \label{lemma:identify_structure_sheaf_of_Green_func}
    Let $ X $ be a scheme with an involution. 
    Assume that $ X $ has a \emph{good quotient} $ Y $ in the sense of Recollection \ref{rec:good_quotient}, and write $ p \colon X \to Y $ for the quotient map. 
    \begin{enumerate}[label=(\roman*)]
        \item \label{lem_item:structure_sheaf_of_Green_func_is_functor} The assignment of Construction \ref{cons:structure_sheaf_of_Green_functors} lifts to a contravariant functor from (the nerve of) the category of affine opens of $ Y $ to the $\infty $-category of $ C_2 $-$ \EE_\infty $-algebras in $ C_2 $-spectra. 
        Composing with the forgetful functor of Observation \ref{observation:normed_C2_ring_forget_to_Poincare_ring}, $ \underline{\mathcal{O}}_{X,\lambda} $ may also be regarded as a functor valued in Poincaré rings. 
        \item \label{lem_item:structure_sheaf_of_Green_func_is_sheaf} The presheaves $ \underline{\mathcal{O}}_{X,\lambda} $ of \ref{lem_item:structure_sheaf_of_Green_func_is_functor} define Zariski sheaves valued in $ C_2 \EE_\infty\Alg$ and $ \CAlgp $.  
        \item \label{lem_item:structured_pushforward_is_equiv} Write $ p_* \mathcal{O}_X $ for the sheaf of $ \EE_\infty $-$ \mathcal{O}_Y $-algebras (all functors are derived). 
        Then the pushforward $ p_* $ induces an equivalence $ \mathrm{Mod}_{\mathcal{O}_X} \xrightarrow{\sim} \Mod_{p_*\mathcal{O}_X} $. 
        \item \label{lem_item:structure_pushforward_equiv_is_monoidal} The equivalence of \ref{lem_item:structured_pushforward_is_equiv} is symmetric monoidal. 
    \end{enumerate}
\end{lemma}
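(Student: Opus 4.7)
By Remark~\ref{remark:restriction_of_schemes_with_involution}, restriction along a flat inclusion of an affine open $U \hookrightarrow Y$ preserves membership in $\mathrm{qSch}^{C_2}$, so the assignment $U \mapsto (p^{-1}(U), \lambda|_{p^{-1}(U)}, U, p|_{p^{-1}(U)})$ defines a functor from the opposite of the poset of affine opens of $Y$ into the full subcategory $\mathrm{qSch}^{C_2,\mathrm{aff}} \subseteq \mathrm{qSch}^{C_2}$ on objects where both the total space and the quotient are affine. For part~\ref{lem_item:structure_sheaf_of_Green_func_is_functor}, it therefore suffices to exhibit a functor $\left(\mathrm{qSch}^{C_2,\mathrm{aff}}\right)^{\op} \to C_2\EE_\infty\Alg$. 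Via global sections, an object of the source corresponds to a pair of discrete commutative rings $A \to B$ equipped with a $C_2$-action on $B$ and an isomorphism $A \cong B^{C_2}$, which is precisely the data needed to describe a $C_2$-Green functor as in Example~\ref{ex:fixpt_Mackey_functor}. The Poincar\'e-ring version follows by composing with Observation~\ref{observation:normed_C2_ring_forget_to_Poincare_ring}.

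For part~\ref{lem_item:structure_sheaf_of_Green_func_is_sheaf}, limits in $C_2\EE_\infty\Alg$ are detected by the forgetful functor to $\Spectra^{C_2}$, which (via the r\'ecollement of Notation~\ref{notation:genuine_C2_spectra_and_underlying_spectra}) is in turn detected by the underlying Borel object with $C_2$-action together with the geometric fixed points. The underlying Borel $\EE_\infty$-sheaf is $p_*\mathcal{O}_X$ with swap-type $C_2$-action, which is a Zariski sheaf on $Y$ because pushforward preserves sheaves. The geometric fixed-point presheaf, identified in Construction~\ref{cons:structure_sheaf_of_Green_functors} with the cofiber of the transfer $(p_*\mathcal{O}_X)_{\mathrm{h}C_2} \to \mathcal{O}_Y$, likewise inherits the sheaf property from $p_*\mathcal{O}_X$ and $\mathcal{O}_Y$.

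For parts~\ref{lem_item:structured_pushforward_is_equiv} and~\ref{lem_item:structure_pushforward_equiv_is_monoidal}, since $p$ is affine the pushforward $p_* \colon \Mod_{\mathcal{O}_X} \to \Mod_{\mathcal{O}_Y}$ is conservative, preserves all colimits, and is lax symmetric monoidal; its left adjoint $p^*$ is symmetric monoidal. Applying the monadicity theorem of \cite{LurHA} identifies $\Mod_{\mathcal{O}_X}$ with modules in $\Mod_{\mathcal{O}_Y}$ over the monad $p_* p^*$, which by the projection formula is tensoring with $p_*\mathcal{O}_X$, yielding~\ref{lem_item:structured_pushforward_is_equiv}. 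The symmetric-monoidal enhancement of~\ref{lem_item:structure_pushforward_equiv_is_monoidal} then follows from the universal property of extension of scalars along the lax symmetric monoidal functor $p_*$; alternatively, on an affine cover both sides reduce to tensoring and restricting scalars between $\mathcal{O}_Y(V)$ and $\mathcal{O}_X(p^{-1}V)$.

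\textbf{Main obstacle.} The most delicate step is~\ref{lem_item:structure_sheaf_of_Green_func_is_functor}: promoting the evident object-wise assignment to an actual functor of $\infty$-categories requires coherence data, ideally arranged by exhibiting $C_2\EE_\infty\Alg$ via a suitable pullback or Grothendieck-construction description and then matching this with a parallel description of affine objects in $\mathrm{qSch}^{C_2}$. Once this functoriality is in hand, parts~\ref{lem_item:structure_sheaf_of_Green_func_is_sheaf}--\ref{lem_item:structure_pushforward_equiv_is_monoidal} follow from standard techniques in spectral algebraic geometry applied to the affine morphism $p$.
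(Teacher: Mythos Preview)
Your proposal is correct. For parts~\ref{lem_item:structure_sheaf_of_Green_func_is_functor} and~\ref{lem_item:structure_sheaf_of_Green_func_is_sheaf} you follow essentially the paper's line: the paper handles~\ref{lem_item:structure_sheaf_of_Green_func_is_functor} by citing an external construction together with functoriality of $\tau_{\geq 0}$, and handles~\ref{lem_item:structure_sheaf_of_Green_func_is_sheaf} via Lemma~\ref{lemma:limits_of_param_alg_detected_orbitwise}, which detects limits in $C_2\EE_\infty\Alg$ at the two orbits $(-)^e$ and $(-)^{C_2}$ rather than at $(-)^e$ and $(-)^{\phi C_2}$ as you do; these are equivalent via the r\'ecollement and both reduce to the fact that $\mathcal{O}_Y$ and $p_*\mathcal{O}_X$ are Zariski sheaves.

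The genuine divergence is in~\ref{lem_item:structured_pushforward_is_equiv}--\ref{lem_item:structure_pushforward_equiv_is_monoidal}. The paper argues by Zariski descent: choose an affine cover $\{U_i \to Y\}$, pull back to an affine cover of $X$, and identify both $\Mod_{\mathcal{O}_X}$ and $\Mod_{p_*\mathcal{O}_X}$ as the same limit of affine module categories, which automatically gives the symmetric monoidal structure. Your primary route is instead Barr--Beck--Lurie monadicity together with the projection formula for the affine morphism $p$, which is a cleaner single-step argument for~\ref{lem_item:structured_pushforward_is_equiv} and avoids choosing a cover. Your sketch for the monoidal upgrade in~\ref{lem_item:structure_pushforward_equiv_is_monoidal} via ``the universal property of extension of scalars'' is a bit thin as written; the fully rigorous version is that the monadicity equivalence is $\Mod_{\mathcal{O}_Y}$-linear and identifies the commutative algebra object $p_*\mathcal{O}_X$ on both sides, hence upgrades to a symmetric monoidal equivalence of module categories. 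Your parenthetical alternative (``on an affine cover both sides reduce to\ldots'') is exactly the paper's argument, so you have recovered it as well.
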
 
\begin{remark}
    $ C_2 $-$ \EE_\infty $-rings are a homotopy coherent version of $ C_2 $-Tambara functors. 
\end{remark}
\begin{proof}[Proof of Lemma \ref{lemma:identify_structure_sheaf_of_Green_func}]
    Part \ref{lem_item:structure_sheaf_of_Green_func_is_functor} follows from a similar argument to \cite[Theorem 5.1]{LYang_normedrings}; functoriality follows from noting that $ \tau_{\geq 0} $ is a functor.  
    Part \ref{lem_item:structure_sheaf_of_Green_func_is_sheaf} follows from Lemma \ref{lemma:limits_of_param_alg_detected_orbitwise} and the fact that there is a limit-preserving functor $ C_2 \EE_\infty\Alg \to \CAlgp $. 
    To prove \ref{lem_item:structured_pushforward_is_equiv} and \ref{lem_item:structure_pushforward_equiv_is_monoidal}, consider a Zariski cover $ \left\{j_i \colon U_i \to Y \right\} $ of $ Y $ by affine opens. 
    Then $ \left\{p^*(j_i) = p \times_Y j_i \colon U_i \times_Y X \to X\right\} $ is a Zariski cover of $ X $ by affine opens. 
    By Zariski descent, there are symmetric monoidal equivalences $ \displaystyle\mathrm{Mod}_{\mathcal{O}_X} \simeq \lim_{p^*(j_i) = p \times_Y j_i \colon U_i \times_Y X \to X} \Mod_{\mathcal{O}_X(U_i \times_Y X)} $ and $ \displaystyle\Mod_{p_*(\mathcal{O}_X)} \simeq \lim_{j_i \colon U_i \to Y} \Mod_{p_*\mathcal{O}_X(U_i)} $, hence the result follows. 
\end{proof}
\begin{lemma}\label{lemma:limits_of_param_alg_detected_orbitwise}
    Let $ K $ be a simplicial set, and let $ f \colon K^{\triangleleft} \to C_2 \EE_\infty\mathrm{Alg}\left(\Spectra^{C_2}\right) $ be a diagram. 
    Then $ f $ is a limit diagram if and only if $ f^e \colon K^{\triangleleft} \to \EE_\infty\mathrm{Alg}(\Spectra) $ and $ f^{C_2} \colon K^{\triangleleft} \to \EE_\infty\mathrm{Alg}(\Spectra) $ are both limit diagrams. 
\end{lemma}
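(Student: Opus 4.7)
The plan is to prove the statement by factoring the detection of limits through a sequence of forgetful functors, each of which preserves and detects limits. The claim is essentially formal once one identifies what each forgetful functor does.

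First, I would appeal to the general fact that, for any $\infty$-operad $\mathcal{O}^\otimes$ and any $\mathcal{O}^\otimes$-monoidal $\infty$-category $\mathcal{C}^\otimes$, the forgetful functor $\mathrm{Alg}_{/\mathcal{O}}(\mathcal{C}) \to \mathrm{Fun}(\mathcal{O}, \mathcal{C})$ creates limits (cf. \cite[Proposition 3.2.2.1, Corollary 3.2.2.5]{LurHA}). In our setting, the relevant specialization (combined with \cite{LYang_normedrings}) gives that the forgetful functor $C_2\EE_\infty\mathrm{Alg}(\Spectra^{C_2}) \to \Spectra^{C_2}$ preserves and detects small limits. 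Thus, $f$ is a limit diagram in $C_2\EE_\infty\mathrm{Alg}(\Spectra^{C_2})$ if and only if its image in $\Spectra^{C_2}$ is a limit diagram.

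Second, I would recall that limits in $\Spectra^{C_2}$ are detected orbit-wise: the pair of functors $(-)^e, (-)^{C_2} \colon \Spectra^{C_2} \to \Spectra$ jointly detects and preserves small limits. This follows either from the description of $\Spectra^{C_2}$ as spectral Mackey functors $\mathrm{Fun}^{\times}(\mathrm{Span}(\mathcal{O}_{C_2}^{\mathrm{fin}}), \Spectra)$ (where limits are computed pointwise on the orbit category) or from the symmetric monoidal récollement of Notation~\ref{notation:genuine_C2_spectra_and_underlying_spectra}, since limits in the pullback of $\infty$-categories are computed componentwise and limits in $\Spectra^{BC_2}$ are detected by the forgetful functor $(-)^e$ to $\Spectra$.

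Third, I would note that the functors $(-)^e$ and $(-)^{C_2}$ on $C_2\EE_\infty\mathrm{Alg}(\Spectra^{C_2})$ canonically refine to $\EE_\infty\mathrm{Alg}(\Spectra)$-valued functors, since a $C_2$-$\EE_\infty$-ring has by construction an underlying $\EE_\infty$-ring structure on both $R^e$ and $R^{C_2}$; these refinements fit into a commutative diagram where the vertical forgetful functors to $\Spectra$ preserve and detect limits (again by \cite[Proposition 3.2.2.1]{LurHA} applied to the ordinary $\EE_\infty$-operad). Putting the three steps together: $f$ is a limit diagram iff its image in $\Spectra^{C_2}$ is, iff both $f^e$ and $f^{C_2}$ are limit diagrams in $\Spectra$, iff both $f^e$ and $f^{C_2}$ are limit diagrams in $\EE_\infty\mathrm{Alg}(\Spectra)$.

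The only real point requiring care is the compatibility in the third step: one needs that the refined functors $(-)^e, (-)^{C_2} \colon C_2\EE_\infty\mathrm{Alg}(\Spectra^{C_2}) \to \EE_\infty\mathrm{Alg}(\Spectra)$ commute, up to equivalence over identities, with the corresponding forgetful functors to $\Spectra$ used in Step 2. This compatibility is built into the construction of normed algebras in \cite{LYang_normedrings} (the underlying $\EE_\infty$-multiplications on $R^e$ and $R^{C_2}$ are by definition induced by restricting the $C_2$-$\EE_\infty$-structure along the two orbits), so in practice this is a bookkeeping step rather than a genuine obstacle.
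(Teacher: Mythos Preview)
Your proposal is correct and takes essentially the same approach as the paper. The paper's proof is a one-liner (``limits in $\EE_\infty\mathrm{Alg}(\Spectra^{C_2})$ are computed in $\Spectra^{C_2}$''), and your three steps simply unpack what is implicit there: that limits of algebras are detected on underlying objects, that limits in $\Spectra^{C_2}$ are detected orbitwise, and that these orbitwise values carry compatible $\EE_\infty$-structures.
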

\begin{proof}
    The result follows from the observation that limits in $ \EE_\infty \mathrm{Alg} \left(\Spectra^{C_2}\right) $ are computed in $ \Spectra^{C_2} $. 
\end{proof}
\begin{construction}\label{cons:C2_mod_over_sheaf_of_Green_func}
    Let $(X,\lambda, Y, p)$ be a scheme with good quotient. 
    Consider the composites
    \begin{equation}\label{eq:functor_classifying_poincare_mod_over_sheaf_of_Green_func}
        \begin{split}
            \Mod_{\underline{\mathcal{O}}_{X,\lambda}}^\mathrm{p} \colon & \mathrm{Op}(Y)^\op \xrightarrow{\underline{\mathcal{O}}_{X,\lambda}} \CAlgp\xrightarrow{\Mod^\mathrm{p}} \Catp  \\
            \left(\Mod_{\underline{\mathcal{O}}_{X,\lambda}}^\mathrm{p}\right)^\otimes \colon & \mathrm{Op}(Y)^\op \xrightarrow{\underline{\mathcal{O}}_{X,\lambda}} \CAlgp \xrightarrow{\Mod^\mathrm{p}} \EE_\infty \Mon\left(\Catp\right)  \,,    
        \end{split}    
    \end{equation}
    where $ \Mod^\mathrm{p} $ is the functor of Definition \ref{defn:E_sigma_alg_to_hermitian_cat} (see Theorem \ref{thm:calgp_to_poincare_cat}) and $ \underline{\mathcal{O}}_{X,\lambda} $ is the structure sheaf of Lemma \ref{lemma:identify_structure_sheaf_of_Green_func}. 
    In the notation of Construction \ref{cons:structure_sheaf_of_Green_functors}, this functor sends the affine open $ \Spec A \subseteq Y $ to $ \Mod_B^\omega $ equipped with the Poincaré structure associated to the $ C_2 $-Green functor $ A \to B $, regarded as an invertible module with genuine involution over $ B $ (see \cite[\S3.2, 3.3, esp. Theorem 3.3.1]{CDHHLMNNSI}). 
    Define $ \Mod_{\underline{\mathcal{O}}_{X,\lambda}}^\mathrm{p} $, $ \left(\Mod_{\underline{\mathcal{O}}_{X,\lambda}}^\mathrm{p}\right)^\otimes $ to be the limits in $ \Catp $, $ \EE_\infty\Mon\left(\Catp\right) $, resp. of the functors in (\ref{eq:functor_classifying_poincare_mod_over_sheaf_of_Green_func}); said limits exist by \cite[Proposition 6.1.1]{CDHHLMNNSI}. 

    Similarly, the functor $ \EE_\infty \Alg\left(\Spectra^{C_2}\right) \to \CAT $ given by $ A \mapsto \Mod_A\left(\Spectra^{C_2}\right) $ with the functor of Remark \ref{rmk:Poincare_ring_has_underlying_C2_spectrum_alg} gives rise to
     \begin{equation}\label{eq:functor_classifying_C2_mod_over_sheaf_of_Green_func}
        \Mod_{\underline{\mathcal{O}}_{X,\lambda}}^{C_2} \colon \mathrm{Op}(Y)^\op \xrightarrow{\underline{\mathcal{O}}_{X,\lambda}} \CAlgp\xrightarrow{\Mod_{(-)^L}\left(\Spectra^{C_2}\right)} \CAT \,.
    \end{equation}
    In the notation of Construction \ref{cons:structure_sheaf_of_Green_functors}, this functor sends the affine open $ \Spec A \subseteq Y $ to the category of modules in $ C_2 $-spectra over the $ C_2 $-$ \EE_\infty $-algebra which has underlying $ C_2 $-Green functor $ A \to B $. 
    Define $ \Mod_{\underline{\mathcal{O}}_{X,\lambda}}^{C_2} $ to be the limit in $ \CAT $ of the functor in (\ref{eq:functor_classifying_C2_mod_over_sheaf_of_Green_func}). 
\end{construction} 
\begin{notation}
    Let $ (X,\lambda,Y,p) \in \mathrm{qSch}^{C_2} $. 
    We observe that $ \underline{\mathcal{O}}_{X,\lambda} $ of Lemma \ref{lemma:identify_structure_sheaf_of_Green_func} does in fact depend on the scheme $ Y $ and the quotient map $ p $, but they have been suppressed from notation. 
    This is simply because we found a quadruple subscript to be too unwieldy. 
    Similarly, we will write $ \left(\mathrm{Mod}_{\mathcal{O}_X}^\omega,\Qoppa_{X,\lambda}\right) $ or $ \left(\mathrm{Mod}_{\mathcal{O}_X}^\omega,\Qoppa_{\lambda}\right) $ for the limit of \eqref{eq:functor_classifying_poincare_mod_over_sheaf_of_Green_func}, suppressing $ (Y,p) $ from notation. 
    If the involution $ \lambda $ and $ p $ are both the identity, then we may drop $ \lambda $ entirely (as in Proposition \ref{prop:CHN_comparison}). 
\end{notation}
\begin{observation}\label{obs:symmetric_structure_module_cat}
    Let $ (X,\lambda,Y,p) \in \mathrm{qSch}^{C_2} $ and write $ \left(\mathrm{Mod}_{\mathcal{O}_X}^\omega,\Qoppa_{(\lambda,p)}\right) $ for the limit defined in Construction \ref{cons:C2_mod_over_sheaf_of_Green_func}. 
    Taking the bilinear part of $ \Qoppa_{(\lambda, p)} $, we find that $ \left(\mathrm{Mod}_{\mathcal{O}_X}^\omega,B_{\Qoppa_{(\lambda,p)}}\right) $ is an $ \Einfty $-algebra in $ \Cat^{\mathrm{ps}} $ (see Recollection \ref{rec:symmetric_Poincare_cat}). 
    Now the collection of small stable idempotent-complete symmetric monoidal $ \infty $-categories has a $ C_2 $-action where the generator acts by sending $ \mathcal{D} $ to $ \mathcal{D}^\op $. 
    By \cite[Corollary 7.2.16]{CDHHLMNNSI}, $ B_{\Qoppa_{(\lambda,p)}} $ induces an equivalence $ D_\Qoppa \colon \mathrm{Mod}_{\mathcal{O}_X}^\omega \simeq \mathrm{Mod}_{\mathcal{O}_X}^{\omega, \op} $ which exhibits $ \mathrm{Mod}_{\mathcal{O}_X}^\omega $ as a $ C_2 $-homotopy fixed point of this action. 
\end{observation}
\begin{observation}\label{obs:globalize_relative_norm}
    Let $ X $ be a scheme with an involution, and let $ p \colon X \to Y $ exhibit $ Y $ as a good quotient of $ X $. 
    Assume that $ p $ is affine. 
    The norm functors (resp. relative norm functors) $ N^{C_2}_e $ assemble under Construction \ref{cons:structure_sheaf_of_Green_functors} to a `global' norm functor $ N^{C_2}_Y \colon \Mod_{p_{*}\mathcal{O}_X} \to \Mod_{N^{C_2} p_{\#} \mathcal{O}_X} $ (resp. relative norm functor $ \underline{N}^{C_2}_Y \colon  \Mod_{p_{*}\mathcal{O}_X} \to \Mod_{\underline{\mathcal{O}}_{X,\lambda}}^{C_2} $). 
    Moreover, the functors $ N^{C_2}_Y $ and $ \underline{N}^{C_2}_Y $ are quadratic. 
\end{observation}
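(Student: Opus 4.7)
The plan is to globalize the Hill--Hopkins--Ravenel (relative) norm by gluing local versions along the Zariski topology of $Y$. On each affine open $U = \Spec A \subseteq Y$ with preimage $\Spec B = U \times_Y X$, we have the classical norm $N^{C_2}_U \colon \Mod_B \to \Mod_{N^{C_2}B}$ arising from the symmetric monoidality of $N^{C_2}\colon \Spectra \to \Spectra^{C_2}$. The Poincaré ring structure on $\underline{\mathcal{O}}(U)$ provides a canonical morphism $N^{C_2}B \to \underline{\mathcal{O}}(U)^L$ of $\EE_\infty$-algebras in $\Spectra^{C_2}$ via the functor (\ref{eq:Poincare_ring_functorial_norm}) of Remark \ref{rmk:Poincare_ring_has_underlying_C2_spectrum_alg}; base change along this morphism yields the local relative norm $\underline{N}^{C_2}_U \colon \Mod_B \to \Mod_{\underline{\mathcal{O}}(U)}$.

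Next I would assemble these local constructions into morphisms of presheaves of $\infty$-categories on the category of affine opens of $Y$. Naturality of $N^{C_2}_U$ in $U$ follows from the fact that $N^{C_2} \colon \CAlg(\Spectra) \to \CAlg(\Spectra^{C_2})$ is a functor applied sectionwise to the sheaf $p_*\mathcal{O}_X$, together with the base change functoriality of modules. The analogous naturality for the relative norm comes from further postcomposing with the structure maps $N^{C_2}p_*\mathcal{O}_X \to \underline{\mathcal{O}}^L$, which assemble into a morphism of sheaves of $\EE_\infty$-algebras in $\Spectra^{C_2}$ by Lemma \ref{lemma:identify_structure_sheaf_of_Green_func}\ref{lem_item:structure_sheaf_of_Green_func_is_functor} and the commutativity of diagram (\ref{diagram:Poincare_ring_alternate_diagram}). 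Taking the limit over affine opens of $Y$ and invoking Zariski descent (Lemma \ref{lemma:identify_structure_sheaf_of_Green_func}\ref{lem_item:structured_pushforward_is_equiv}) then extracts the global functors $N^{C_2}_Y$ and $\underline{N}^{C_2}_Y$.

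It remains to verify quadraticity. Both reducedness and 2-excisiveness are preserved under pointwise limits of functors between stable $\infty$-categories, so they transfer from the local norms---which are quadratic since the underlying assignment is $M \mapsto M \otimes M$ with swap action and base change is exact---to their gluing. The main technical hurdle is bookkeeping rather than conceptual: exhibiting the local (relative) norms as components of a coherent natural transformation of presheaves of $\infty$-categories requires carefully threading the symmetric monoidal Hill--Hopkins--Ravenel norm through the construction of $\underline{\mathcal{O}}$ as a sheaf of Poincaré rings (Lemma \ref{lemma:identify_structure_sheaf_of_Green_func}), so that the functor $\Mod^\mathrm{p}$ of Theorem \ref{thm:calgp_to_poincare_cat} produces morphisms in the correct ambient $\infty$-category. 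Once this setup is in place, the Zariski gluing and the quadraticity verification are essentially formal.
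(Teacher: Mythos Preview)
The paper presents this statement as an \emph{observation} without an accompanying proof; it is treated as following directly from Construction~\ref{cons:structure_sheaf_of_Green_functors} and the functoriality established in Lemma~\ref{lemma:identify_structure_sheaf_of_Green_func}. Your proposal is a reasonable expansion of what the authors leave implicit: globalize the local HHR norms (and their relative versions obtained by base change along $N^{C_2}B \to \underline{\mathcal{O}}(U)^L$) into a natural transformation of presheaves of $\infty$-categories on affine opens of $Y$, then pass to the limit.

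One point worth tightening: when you say ``taking the limit over affine opens of $Y$,'' you are taking the limit of a natural transformation of presheaves of $\infty$-categories, which indeed yields a functor between the global module categories. But for this to work you need the local norms to be compatible with the transition functors of the presheaves, i.e., for $U' \subseteq U$ you need $N^{C_2}(M \otimes_{B(U)} B(U')) \simeq N^{C_2}(M) \otimes_{N^{C_2}B(U)} N^{C_2}B(U')$. This is precisely the symmetric monoidality of $N^{C_2}$, which you invoke, so the argument goes through. Note also that the presheaf $U \mapsto N^{C_2}(p_*\mathcal{O}_X(U))$ is not itself a sheaf (the norm is quadratic, not limit-preserving), so the notation $N^{C_2} p_{\#} \mathcal{O}_X$ in the statement should be read as defining the global module category via this limit procedure rather than as modules over a literal sheaf of rings; the paper's scare quotes around `global' reflect this. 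Your quadraticity argument---that reducedness and $2$-excisiveness are limit-closed conditions on functors between stable $\infty$-categories---is correct.
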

\begin{remark}
    Fix $ (X,\lambda,Y,p) $ as before and write $ s \colon \int \mathrm{Pn} \left(\Mod_{\underline{\mathcal{O}}_{X,\lambda}}^\mathrm{p}\right) \to \mathrm{Op}(Y)^\op $ for the cocartesian fibration obtained by taking the Grothendieck construction on the functor $ \mathrm{Pn} \left(\Mod_{\underline{\mathcal{O}}_{X,\lambda}}^\mathrm{p}\right) $ of (\ref{eq:functor_classifying_poincare_mod_over_sheaf_of_Green_func}). 
    A Poincaré object of $ \Mod_{\underline{\mathcal{O}}}^\mathrm{p} $ is a cocartesian section of $ s $. 
    In other words, it is a choice, for each affine open $ \Spec A $ of $ Y $ (same notation as before--write $ p^*\left(\Spec A\right) \simeq \Spec B $), of a pair $ (M,q_M) $ where $ M $ is a perfect $ B $-module and $ q_M $ is a $ N^{C_2}B $-linear map $ \underline{N}^{C_2} M \to (A \to B) $, where $ \underline{N}^{C_2} $ denotes the relative norm $ N^{C_2}(-) \otimes_{N^{C_2}B} (A \to B) $.  
    The bilinear part of $ q_M $ induces a perfect pairing $ M \otimes_B \lambda^*M \to B $, where $ \lambda \colon B \simeq B $ is the involution on $ B $ induced by the involution on $ X $. 
    These are required to glue compatibly. 
    
    Alternatively, a Poincaré object of $ \Mod_{\underline{\mathcal{O}}_{X,\lambda}}^\mathrm{p} $ is a pair $ (M, q_M) $ consisting of an $ \mathcal{O}_X $-module $ M $ and an $ \underline{\mathcal{O}}_{X,\lambda} $-linear map $ q_M \colon \underline{N}^{C_2}_Y (M) \to \underline{\mathcal{O}}_{X,\lambda} $, where $ \underline{N}^{C_2}_Y $ is the relative norm of Observation \ref{obs:globalize_relative_norm}. 
    The bilinear part of $ q_M $ induces a perfect pairing $ M \otimes_{\mathcal{O}_X} \lambda^*M \to \mathcal{O}_X $, where $ \lambda \colon X \simeq X $ is the involution on $ X $.
\end{remark}
\begin{definition}\label{defn:sym_mon_catp_from_good_quotient}
    By Observation \ref{obs:fixpt_Mackey_functor_as_affine_C2_scheme}, Theorem \ref{thm:calgp_to_poincare_cat}, and the functoriality described in Example \ref{ex:fixpt_Mackey_functor}, there are contravariant functors from $ \mathrm{Aff}^{C_2} $ to (symmetric monoidal) Poincaré $ \infty $-categories. 
    Right Kan extension along the inclusion $ \mathrm{Aff}^{C_2,\op} \to \mathrm{qSch}^{C_2,\op} $ defines functors 
    \begin{align*}
        \left(\mathrm{qSch}^{C_2}\right)^{\op} &\to \Catp & & & \left(\mathrm{qSch}^{C_2}\right)^{\op} &\to \EE_\infty\Mon\left(\Catp\right) \\
        \left(X, \lambda, Y, p \right) &\mapsto \Mod^\mathrm{p}_{\underline{\mathcal{O}}_{X,\lambda}} & & &\left(X, \lambda, Y, p \right) &\mapsto \left(\Mod_{\underline{\mathcal{O}}_{X,\lambda}}^\mathrm{p}\right)^\otimes \,.
    \end{align*}
\end{definition} 
\begin{remark}
    The values of the functors of Definition \ref{defn:sym_mon_catp_from_good_quotient} on a fixed $ (X,\lambda,Y,p) $ agree with those of Construction \ref{cons:C2_mod_over_sheaf_of_Green_func}. 
    Say that a family $ \left\{(U_\alpha,\ell_\alpha,V_\alpha,\pi_\alpha) \to (U,\ell,V,\pi) \right\}_{\alpha} $ in $ \mathrm{Aff}^{C_2} $ is a $ C_2 $-Zariski cover if $ \left\{V_\alpha \to V\right\} $ are a Zariski cover in the usual sense and the $ U_\alpha $ are base changed from $ V_\alpha \to V $. 
    Observe that the functor $ \hom_{\mathrm{qSch}^{C_2}}(-, (X,\lambda,Y,p)) $ sends $ C_2 $-Zariski covers in $ \mathrm{Aff}^{C_2} $ to limit diagrams. 
    The result follows from $ C_2 $-Zariski descent for $  \mathrm{Aff}^{C_2,\op} \to \Catp $ (and likewise for $ \Einfty\Mon \left(\Catp\right) $, cf. Lemma \ref{lemma:identify_structure_sheaf_of_Green_func} and Proposition \ref{prop:Poincare_modules_as_etale_sheaf}\ref{prop:Poincare_modules_as_etale_sheaf_affine_spectral}) and a standard cofinality argument. 
\end{remark}
\begin{observation}\label{obs:sheafy_unit_poincare_object}
    It follows from Corollary \ref{cor:unit_poincare_object} (also see Corollary \ref{cor:tensor_unit_as_poincare_object}) that $ \mathcal{O}_X $ admits a canonical lift to $ \int \mathrm{Pn} \left(\Mod_{\underline{\mathcal{O}}_{X,\lambda}}^\mathrm{p}\right) $. 
\end{observation}
\begin{remark}\label{rmk:sym_mon_catp_from_good_quotient_underlying}
    By Lemma \ref{lemma:identify_structure_sheaf_of_Green_func}\ref{lem_item:structured_pushforward_is_equiv}, the functors of Definition \ref{defn:sym_mon_catp_from_good_quotient} participate in commutative diagrams
    \begin{equation*}
        \begin{tikzcd}[column sep=huge]
            \left(\mathrm{qSch}^{C_2}\right)^{\op} \ar[r,"{\Mod^\mathrm{p}_{\underline{\mathcal{O}}_{(-)}}}"] \ar[d,"{\left(X, \lambda, Y, p \right) \mapsto X}"'] & \Catp \ar[d,"{U}"] & \left(\mathrm{qSch}^{C_2}\right)^{\op} \ar[r,"{\left(\Mod_{\underline{\mathcal{O}}}^\mathrm{p}\right)^\otimes}"] \ar[d,"{\left(X, \lambda, Y, p \right) \mapsto X}"'] & \EE_\infty\Mon\left(\Catp\right) \ar[d,"U"] \\
            \mathrm{qSch}^{\op} \ar[r,"{\mathrm{Mod}_{\mathcal{O}}^\omega}"] & \Catex & \mathrm{qSch}^{\op} \ar[r,"{\mathrm{Mod}_{\mathcal{O}}^{\omega,{\otimes}}}"] & \EE_\infty\Mon\left(\Catex\right) \,.		
        \end{tikzcd}
    \end{equation*}
\end{remark}
\begin{remark}\label{rmk:ringed_topoi_with_involution}
    Suppose given $ \left(X, \mathcal{O}_X\right) $ a ringed topos with involution in the sense of \cite[Definition 4.2]{azumaya_involution}. 
    Let $ \left(Y, \mathcal{O}_Y \right) $ be a ringed topos with trivial involution and a map $ p \colon \left(X, \mathcal{O}_X\right) \to \left(Y, \mathcal{O}_Y\right) $ exhibiting $ \left(Y, \mathcal{O}_Y\right) $ as an \emph{exact quotient} of $ \left(X, \mathcal{O}_X\right) $ by the given $ C_2 $-action in the sense of \cite[Definition 4.18]{azumaya_involution}. 
    Then by Definition 4.18(E1) \emph{loc. cit.}, the diagram $ p_\# \colon \mathcal{O}_Y \to p _* \mathcal{O}_X $ is a sheaf of $ C_2 $-Green functors on $ Y $ (compare Construction \ref{cons:structure_sheaf_of_Green_functors}), where $ p _* \mathcal{O}_X $ has the $ C_2 $-action induced by $ p _* \lambda $. 
    By the same argument as Lemma \ref{lemma:identify_structure_sheaf_of_Green_func}, $ \mathcal{O}_Y \to p _* \mathcal{O}_X $ in fact refines to a sheaf of Poincaré rings $ \underline{\mathcal{O}}_{X,\lambda} $ on $ Y $. 
    In particular, using Example \ref{ex:fixpt_Mackey_functor} and the functors $ \Mod^\mathrm{p} $ and $ \left(\Mod^\mathrm{p}\right)^\otimes $ of Theorem \ref{thm:calgp_to_poincare_cat}, we have sheaves of (symmetric monoidal) Poincaré $ \infty $-categories on $ Y $. 

    More generally, we observe that this construction does not use the full power of the definition of an exact quotient, just the first part \cite[Definition 4.18(E1)]{azumaya_involution}. 
\end{remark}
\begin{remark}\label{rmk:global_pncat_shift_twist}
    Many results of \cite[\S3]{CDHHLMNNSI} admit globalizations. 
    For instance, observe that the limit defining $ \Mod_{\underline{\mathcal{O}}}^{C_2} $ (see (\ref{eq:functor_classifying_C2_mod_over_sheaf_of_Green_func}) and following discussion) can be computed in $ \Spectra^{C_2} $-linear stable $ \infty $-categories. 
    In particular, we may smash any (sheaf of) $ \underline{\mathcal{O}}_{X,\lambda} $-modules in $ C_2 $-spectra with an object in $ \Spectra^{C_2} $ (alternatively, see \cite[\S7.4]{CDHHLMNNSI}). 
    Naturality of the equivalence \cite[p.63, esp. Example 3.5.14i)]{CDHHLMNNSI} implies an equivalence $ \sphere^{1-\sigma} \otimes \underline{\mathcal{O}}^e_{X,\lambda} \simeq \sphere^{\sigma-1} \otimes \underline{\mathcal{O}}^{e}_{X,\lambda} $ of $ \underline{\mathcal{O}}^e_{X,\lambda} $-modules in $ \Spectra^{BC_2} $. 
    
    Suppose $ (X,\lambda,Y,p) $ is such that the map to the terminal object factors through $ \Spec \ZZ \left[\frac{1}{2}\right] $ (regarded as an object of $ \mathrm{qSch}^{C_2} $ with the trivial involution). 
    Then it follows from \cite[Example 3.2.5]{CDHHLMNNSI} that the canonical maps $ \left(\Mod^\mathrm{p}_{(X,\lambda,Y,p)}\right)^q \to \Mod^\mathrm{p}_{(X,\lambda,Y,p)} \to \left(\Mod^\mathrm{p}_{(X,\lambda,Y,p)}\right)^s $ are equivalences. 
    Now naturality of \cite[Corollary 3.5.16]{CDHHLMNNSI} implies that the loop functor $ \Omega $ refines to an equivalence of Poincaré $ \infty $-categories $ \left(\mathrm{Mod}_{\mathcal{O}_X}^\omega,\Qoppa_{\lambda}^{[2]}\right) \simeq \left(\mathrm{Mod}_{\mathcal{O}_X}^\omega,\Qoppa_{-\lambda}\right) $. 
\end{remark}
The results of this section show that for a suitably general category of schemes with $ C_2 $-action, there is a contravariant functor sending such a scheme to its perfect module category equipped with a Poincaré structure; in particular, by \cites[\S4]{CDHHLMNNSII} we obtain a functorial definition of hermitian K-theory or Grothendieck--Witt spectra for schemes with involution extending the results of \cite{CHN2024} which only considers schemes with trivial involution. 
Algebraic K-theory has pushforwards along proper lci maps; Grothendieck--Witt theory has pushforwards associated to proper lci maps equipped with the data of an \emph{orientation}. 
Calmès--Harpaz--Nardin defined oriented pushforwards in \cite[\S5.1]{CHN2024}; we expect their construction to generalize readily to our setting but omit the details here. 
In following example, we explain how Calmès--Harpaz--Nardin's work on oriented pushforwards gives rise to oriented fundamental classes in Grothendieck--Witt theory. 
It may be regarded as an algebraic counterpart to Example \ref{ex:poincare_duality_objects}. 
\begin{example}\label{ex:Grothendieck_Serre_duality_objects}
    Let $ f \colon X \to Y $ be a smooth proper morphism of schemes over $ \ZZ\left[\frac{1}{2}\right]$ and assume that the cotangent complex $ \mathbb{L}_{X/Y} \simeq \Omega^1_{X/Y} $ is equivalent to a vector bundle of rank $ 2r $. 
    Suppose $ f $ is equipped with the data of an \emph{orientation}, i.e. a line bundle $ \mathfrak{o} $ on $ X $ and an equivalence $ \mathfrak{o}^{\otimes 2} \simeq \Omega^{2r}_{X/Y} $. 
    (Here we write $ \Omega^{2r}_{X/Y} $ for the invertible sheaf on $ X $ in the heart of the standard t-structure, i.e. $ f^{!}\mathcal{O}_Y \simeq \Omega^{2r}_{X/Y}[2r] $.) 
    Let us write $ \left(\mathrm{Mod}_{\mathcal{O}_Y}^\omega,\Qoppa_{\mathcal{O}}\right) $ for the image of $ Y $ under the functor of Remark \ref{rmk:modp_of_scheme_trivial_involution} (also see Proposition \ref{prop:CHN_comparison}). 
    Then $ f_*(\mathfrak{o}) $ admits a canonical lift to a Poincaré object of $ \left(\mathrm{Mod}_{\mathcal{O}_Y}^\omega,\Qoppa_{\mathcal{O}}\right) $ if $ r $ is even and $ \left(\mathrm{Mod}_{\mathcal{O}_Y}^\omega,\Qoppa_{-\mathcal{O}}\right) $ if $ r $ is odd (see Remark \ref{rmk:global_pncat_shift_twist} for definition of $ \Qoppa_{-\mathcal{O}} $). 
    
    To see this, take $ N = \mathcal{O}_X $, $ M = \mathfrak{o}  $, $ \mathcal{L} = \mathcal{O}_Y[-2r] $, and $ \tau $ to be induced by the given orientation in \cite[Construction 5.1.3]{CHN2024}; it follows from Lemma 5.1.5 \emph{ibid.} that this promotes the pushforward $ f_*(- \otimes \mathfrak{o}) $ to a Poincaré functor.  
    Now by Observation \ref{obs:sheafy_unit_poincare_object}, $ \mathcal{O}_X $ can be regarded as a Poincaré object of $ \left(\mathrm{Mod}_{\mathcal{O}_X}^\omega,\Qoppa_{\mathcal{O}}\right) $, and taking its image under the pushforward gives a Poincaré object of $ \left(\mathrm{Mod}_{\mathcal{O}_Y}^\omega,\Qoppa_{\mathcal{O}}^{[2r]}\right) $. 
    We may regard this as a Poincaré object of $\left(\mathrm{Mod}_{\mathcal{O}_Y}^\omega,\Qoppa_{\pm\mathcal{O}}\right) $ via Remark \ref{rmk:global_pncat_shift_twist}. 
    Now such a Poincaré object defines a class in the (anti-)symmetric Grothendieck--Witt theory of $ Y $ by \cite[\S4]{CDHHLMNNSII}. 

    Suppose the relative dualizing sheaf is trivial, i.e. $ \Omega^{2r}_{X/Y} \simeq \mathcal{O}_{X} $. 
    Unraveling definitions, the $2r$-shifted perfect pairing on $ f_* \mathcal{O}_X $ recovers the pairing from Grothendieck duality (Serre duality when $ Y $ is the spectrum of a field). 
\end{example}
\begin{remark}\label{rmk:modp_of_scheme_trivial_involution}
    Composing Example \ref{ex:schemes_and_involutions}\ref{ex_item:scheme_with_trivial_involution} with the functors of Definition \ref{defn:sym_mon_catp_from_good_quotient}, we obtain a functor $ \mathrm{qSch}^{\op} \to \EE_\infty\Mon\left(\Catp\right) $. 
\end{remark} 
\begin{proposition}\label{prop:sym_mon_catp_good_quotient_lax_monoidal}
    The functor $ \left(\Mod^\mathrm{p}\right)^\otimes $ of Definition \ref{defn:sym_mon_catp_from_good_quotient} admits a canonical lax symmetric monoidal structure, where $ \mathrm{qSch}^{C_2,\mathrm{op}} $ is endowed with the symmetric monoidal structure of Proposition \ref{prop:products_of_C2_schemes}. 
\end{proposition}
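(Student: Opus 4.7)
The plan is to construct the lax symmetric monoidal structure on $\left(\Mod^\mathrm{p}\right)^\otimes$ by exhibiting it as a composite of three (lax) symmetric monoidal functors, mirroring the three-step definition via Construction \ref{cons:C2_mod_over_sheaf_of_Green_func}. First, I would upgrade the structure sheaf construction $(X,\lambda,Y,p) \mapsto \underline{\mathcal{O}}$ of Construction \ref{cons:structure_sheaf_of_Green_functors} to a lax symmetric monoidal functor from $\mathrm{qSch}^{C_2,\op}$ to a suitable $\infty$-category of presheaves valued in $\CAlgp$. The essential input is that, given a product $(X_1 \boxtimes X_2, Y_1 \times Y_2)$, the affine opens of $Y_1 \times Y_2$ are Zariski-locally of the form $U_1 \times U_2$, and Observation \ref{obs:fixpt_Mackey_functor_as_affine_C2_scheme} together with the fact that the tensor product of rings with $C_2$-action corresponds to the tensor product of the corresponding $C_2$-Green functors gives a canonical identification
\[
\underline{\mathcal{O}}_{X_1 \times X_2}(U_1 \times U_2) \simeq \underline{\mathcal{O}}_{X_1}(U_1) \otimes \underline{\mathcal{O}}_{X_2}(U_2)
\]
in $\CAlgp$.

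Second, I would postcompose with the symmetric monoidal functor $\Mod^\mathrm{p}: \CAlgp \to \EE_\infty\Mon(\Catpidem)$ of Theorem \ref{thm:calgp_to_poincare_cat}(5). This produces a lax symmetric monoidal assignment from $\mathrm{qSch}^{C_2,\op}$ to a category of presheaves valued in $\EE_\infty\Mon(\Catp)$. Third, I would apply the limit functor, which carries a canonical lax symmetric monoidal structure as the right adjoint to the (symmetric monoidal) diagonal functor. Composing these three lax monoidal functors yields the desired lax symmetric monoidal structure on $\left(\Mod^\mathrm{p}\right)^\otimes$. Concretely, for a product $(X_1 \boxtimes X_2)$ and an affine open $V \subseteq Y_1 \times Y_2$ which is locally of the form $U_1 \times U_2$, one obtains the lax structure maps as the composite
\[
\left(\Mod^\mathrm{p}_{\underline{\mathcal{O}_{X_1}}}\right)^\otimes \otimes \left(\Mod^\mathrm{p}_{\underline{\mathcal{O}_{X_2}}}\right)^\otimes \to \Mod^\mathrm{p}_{\underline{\mathcal{O}_{X_1}}(U_1)} \otimes \Mod^\mathrm{p}_{\underline{\mathcal{O}_{X_2}}(U_2)} \to \Mod^\mathrm{p}_{\underline{\mathcal{O}_{X_1 \times X_2}}(V)},
\]
where the second map uses the symmetric monoidality of $\Mod^\mathrm{p}$ together with the identification above.

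The main obstacle will be the rigorous $\infty$-operadic bookkeeping required to assemble the varying indexing categories $\mathrm{Op}(Y)^{\op}$ into a single fibered construction over $\mathrm{qSch}^{C_2,\op}$ whose total space carries a symmetric monoidal structure compatible with the cartesian structure on the base established in Proposition \ref{prop:products_of_C2_schemes}. The requisite formalism can be extracted from the theory of operadic Kan extensions in \cite{LurHA}. Once this fibered structure is in hand, the pointwise identification of structure sheaves of products with external tensor products reduces to the affine-local check indicated above, and the compatibility with compositions follows from the naturality of the Day-convolution structure on presheaves.
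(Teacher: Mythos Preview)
Your approach is not incorrect in spirit, but it is far more laborious than necessary, and the ``main obstacle'' you identify---the $\infty$-operadic bookkeeping for the varying indexing categories $\mathrm{Op}(Y)^{\op}$---is entirely avoidable. The paper's proof is a two-line argument that bypasses all of the explicit constructions you outline.

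The key observation you are missing is that the \emph{target} category $\EE_\infty\Mon(\Catp)$ carries a cocartesian symmetric monoidal structure: this is the general fact \cite[Proposition 3.2.4.7]{LurHA} that commutative algebra objects in any symmetric monoidal $\infty$-category have coproducts computed by the tensor product. Likewise, the source $\mathrm{qSch}^{C_2,\op}$ is cocartesian, since Proposition \ref{prop:products_of_C2_schemes}\ref{propitem:C2_schemes_monoidal_cartesian} shows that $\boxtimes$ is cartesian on $\mathrm{qSch}^{C_2}$. Once both source and target are cocartesian, \cite[Proposition 2.4.3.8]{LurHA} furnishes a canonical lax symmetric monoidal structure on \emph{any} functor between them, with no further input required. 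Your three-step decomposition through presheaves, the symmetric monoidality of $\Mod^\mathrm{p}$, and lax monoidality of limits would eventually recover the same structure maps, but the paper's route makes the entire construction formal and avoids ever touching the structure sheaf or the Day convolution machinery.
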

\begin{corollary}\label{cor:from_sch_good_quotient_to_Z_linear_Poincare_cat}
    The functor $ \left(\Mod^\mathrm{p}\right)^\otimes $ of Definition \ref{defn:sym_mon_catp_from_good_quotient} admits a canonical lift 
    \begin{equation*}
        \left(\mathrm{qSch}^{C_2}\right)^{\op} \to \EE_\infty\Mon\left(\Mod_{\left(\Mod_{\ZZ}^\omega,\Qoppa^{\mathrm{gs}}\right)}\left(\Catp\right)\right) \,.
    \end{equation*}
\end{corollary}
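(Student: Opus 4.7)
The plan is to derive this lift formally from the lax symmetric monoidality established in Proposition~\ref{prop:sym_mon_catp_good_quotient_lax_monoidal} combined with the fact that the monoidal structure on $\mathrm{qSch}^{C_2}$ is cartesian (Proposition~\ref{prop:products_of_C2_schemes}\ref{propitem:C2_schemes_monoidal_cartesian}). The opposite of a cartesian monoidal $\infty$-category is cocartesian, and in a cocartesian symmetric monoidal $\infty$-category every object is canonically and uniquely an $\EE_\infty$-algebra whose unit map comes from the initial object. Thus the unit of $(\mathrm{qSch}^{C_2})^{\op}$ is the terminal object of $\mathrm{qSch}^{C_2}$, which I would identify as $\ast := (\Spec \ZZ, \id, \Spec \ZZ, \id)$: any map from $(X,\lambda,Y,p)$ to $\ast$ is determined by the underlying unique maps of schemes, and $C_2$-equivariance on the $X$-component is automatic since the involution on the target is trivial.

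A lax symmetric monoidal functor out of a cocartesian symmetric monoidal $\infty$-category then sends each object to an $\EE_\infty$-algebra under the image of the unit. Applying this to $F := (\Mod^{\mathrm{p}}_{\underline{\mathcal{O}}_{(-)}})^{\otimes}$, I obtain, functorially in $(X,\lambda,Y,p)$, a canonical morphism of $\EE_\infty$-algebras in $\Catp$ from $F(\ast)$ to $F(X,\lambda,Y,p)$. Invoking the standard equivalence $\EE_\infty\Mon(\Mod_A(\Catp)) \simeq \EE_\infty\Mon(\Catp)_{A/-}$ for $A \in \EE_\infty\Mon(\Catp)$, which follows from the closed symmetric monoidal structure on $\Catp$, this data is equivalent to the desired lift—provided $F(\ast)$ is identified with $(\Mod_\ZZ^\omega,\Qoppa^{\mathrm{gs}})$.

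For this final identification, Construction~\ref{cons:structure_sheaf_of_Green_functors} applied to $(\Spec \ZZ, \id, \Spec \ZZ, \id)$ produces the $C_2$-Green functor associated to $\id \colon \ZZ \to \ZZ$, which by Example~\ref{ex:fixpt_Mackey_functor} is the fixed-point Mackey functor of $\ZZ$ with trivial $C_2$-action; as a Poincaré ring this is precisely the genuine symmetric Poincaré ring on $\ZZ$ of Example~\ref{example:genuine_symmetric_poincare_structure}. Passing through the functor of Theorem~\ref{thm:calgp_to_poincare_cat} recovers $(\Mod_\ZZ^\omega, \Qoppa^{\mathrm{gs}})$, as required. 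I do not expect any substantive obstacles here; the one point needing care is the translation between ``algebras under the unit in a cocartesian category'' and ``algebras in modules over $A$'', which is entirely formal given the framework of \cite{LurHA}.
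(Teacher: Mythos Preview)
Your proposal is correct and follows essentially the same approach as the paper: both use the lax symmetric monoidality of Proposition~\ref{prop:sym_mon_catp_good_quotient_lax_monoidal}, the fact that $(\Spec \ZZ,\id,\Spec \ZZ,\id)$ is initial in $(\mathrm{qSch}^{C_2})^{\op}$, and the identification of its image with $(\Mod_\ZZ^\omega,\Qoppa^{\mathrm{gs}})$. You spell out more of the formal machinery (the cocartesian structure on the source and the equivalence $\EE_\infty\Mon(\Mod_A(\Catp)) \simeq \EE_\infty\Mon(\Catp)_{A/-}$) that the paper leaves implicit, but the argument is the same.
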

\begin{proof}
    The scheme $ \Spec \ZZ $ endowed with the trivial action is the initial object in $ \left(\mathrm{qSch}^{C_2}\right)^{\op} $. 
    In view of Proposition \ref{prop:sym_mon_catp_good_quotient_lax_monoidal}, it suffices to observe that under the functor $ \left(\Mod^\mathrm{p}\right)^\otimes $ of Definition \ref{defn:sym_mon_catp_from_good_quotient}, $ \left(\Spec \ZZ, \id\right) $ is sent to $\left(\Mod_{\ZZ}^\omega,\Qoppa^{\mathrm{gs}} \right) $. 
\end{proof}
\begin{proof}[Proof of Proposition \ref{prop:sym_mon_catp_good_quotient_lax_monoidal}]
    Note that the symmetric monoidal structures on $ \mathrm{qSch}^{C_2,\mathrm{op}} $ and $ \EE_\infty\Mon\left(\Catp\right) $ are cocartesian, the former by Proposition \ref{prop:products_of_C2_schemes}\ref{propitem:C2_schemes_monoidal_cartesian} and the latter by \cite[Proposition 3.2.4.7]{LurHA}. 
    The result now follows from \cite[Proposition 2.4.3.8]{LurHA}. 
\end{proof}
\begin{recollection}\label{recollection:CHN_Poincare_module_cats}
    Letting $ S = \Spec \ZZ $ and $ \mathcal{L} = \mathcal{O}_S $ in \cite[(3.4.2)]{CHN2024} defines a functor 
    \begin{equation}
        \begin{split}
            \mathrm{qSch}^{\op} &\to \EE_\infty\Mon\left(\Catp\right) \\
            X & \mapsto \left(\mathrm{Mod}_{\mathcal{O}_X}^\omega,\Qoppa^{\geq 0}_{\mathcal{O}_X}\right) 
        \end{split}
    \end{equation} 
    where the functor lifts to symmetric monoidal Poincaré $ \infty $-categories by the discussion immediately following Proposition 3.4.3 \emph{loc. cit.} 
\end{recollection}
\begin{proposition}\label{prop:CHN_comparison}
    The functor $ \left(\mathcal{D}^\mathrm{p}(-),\Qoppa^{\geq 0}_{\mathcal{O}}\right) $ defined by Calmès--Harpaz--Nardin of Recollection \ref{recollection:CHN_Poincare_module_cats} agrees with the functor of Remark \ref{rmk:modp_of_scheme_trivial_involution}. 
\end{proposition}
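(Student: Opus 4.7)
The plan is to reduce the comparison to an affine local statement and then match both functors on affines. Both sides are Zariski sheaves valued in $\EE_\infty\Mon(\Catp)$: our functor is built as a Zariski limit in Construction \ref{cons:C2_mod_over_sheaf_of_Green_func} (using that $\underline{\mathcal{O}}$ is a Zariski sheaf by Lemma \ref{lemma:identify_structure_sheaf_of_Green_func}\ref{lem_item:structure_sheaf_of_Green_func_is_sheaf} and that $\Mod^\mathrm{p}$ from Theorem \ref{thm:calgp_to_poincare_cat} preserves the relevant limits), while $\left(\mathcal{D}^\mathrm{p}(-),\Qoppa^{\geq 0}_{\mathcal{O}}\right)$ is a Zariski sheaf by the results of \cite[\S3.4]{CHN2024}. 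Hence it is enough to exhibit a natural equivalence on the subcategory of affine schemes, functorial in flat maps of affines, and to verify compatibility with the symmetric monoidal structures.

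For $\Spec A$ equipped with the trivial involution, the good quotient is $\id \colon \Spec A \to \Spec A$, and Construction \ref{cons:structure_sheaf_of_Green_functors} produces the $C_2$-$\EE_\infty$-algebra whose underlying Mackey functor is $A \to A$ with trivial $C_2$-action on the underlying ring. Under the forgetful functor of Observation \ref{observation:normed_C2_ring_forget_to_Poincare_ring}, this is precisely the genuine symmetric Poincaré ring of Example \ref{example:genuine_symmetric_poincare_structure}: the Poincaré structure on $A$ given by the factorization $A \to \tau_{\geq 0}(A^{\mathrm{t}C_2}) \to A^{\mathrm{t}C_2}$. Applying $\Mod^\mathrm{p}$ and using Theorem \ref{thm: genuine equivarient modules are hermitian structures} together with Lemma \ref{lemma: duality identification}, we obtain the quadratic functor on $\Mod^\omega_A$ characterized by the pullback of Theorem \ref{thm: characterization of quadratic functors on a cat} with bilinear part $\mathrm{hom}_A(M \otimes M, A)$ and linear part $\tau_{\geq 0}$ applied to the Tate construction. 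This is exactly the description of $\Qoppa^{\geq 0}_{\mathcal{O}_{\Spec A}}$ that appears in \cite[\S3.4]{CHN2024} specialized to $\mathcal{L} = \mathcal{O}_{\Spec A}$, yielding a pointwise equivalence of hermitian $\infty$-categories.

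For the symmetric monoidal comparison, both functors canonically refine to $\EE_\infty\Mon(\Catp)$-valued functors on affines (ours via Theorem \ref{thm:calgp_to_poincare_cat}(5), theirs via the discussion after \cite[Proposition 3.4.3]{CHN2024}), and in both cases the underlying symmetric monoidal structure on $\mathcal{D}^\mathrm{p}(\Spec A) = \Mod_A^\omega$ is the ordinary tensor product over $A$. Since both symmetric monoidal Poincaré enhancements have the same unit object (namely $A$ with its canonical Poincaré form from Corollary \ref{cor:tensor_unit_as_poincare_object}) and the same underlying bilinear pairing, and since a symmetric monoidal structure on a Poincaré $\infty$-category of the form $\Mod^\mathrm{p}_R$ is determined by the corresponding Poincaré ring up to contractible choice, the symmetric monoidal structures agree. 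Naturality in flat maps of affines follows because both constructions are defined by functorial pullback: ours via Remark \ref{remark:restriction_of_schemes_with_involution} and the functoriality of the structure sheaf, theirs by the naturality built into the CHN construction.

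The main obstacle is the symmetric monoidal comparison, since the two approaches package the data differently: CHN proceed directly at the level of schemes with a chosen t-structure, while our construction factors through the auxiliary category $\CAlgp$. Handling this cleanly will require using the functoriality of $\Mod^\mathrm{p}$ in Poincaré rings together with the fact that $\underline{A} \mapsto (A \to A)$ is, by Observation \ref{obs:fixpt_Mackey_functor_as_affine_C2_scheme} and Lemma \ref{lemma:identify_structure_sheaf_of_Green_func}\ref{lem_item:structure_sheaf_of_Green_func_is_functor}, a symmetric monoidal assignment from affine schemes with trivial involution to Poincaré rings — reducing the comparison of symmetric monoidal enhancements to the already established pointwise comparison.
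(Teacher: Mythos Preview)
Your reduction to affines via Zariski descent and the pointwise identification of the two Poincaré structures on $\Mod^\omega_A$ are both correct. The gap is in how you pass from pointwise agreement to an equivalence of functors $\mathrm{qSch}^\op \to \EE_\infty\Mon(\Catp)$. In the $\infty$-categorical setting you must \emph{construct} a natural transformation and then verify it is an equivalence; it is not enough to observe that the values agree and that the transition maps are ``defined by functorial pullback.'' Your claim that ``a symmetric monoidal structure on a Poincaré $\infty$-category of the form $\Mod^\mathrm{p}_R$ is determined by the corresponding Poincaré ring up to contractible choice'' is not established in the paper and, even if true, does not by itself produce the coherent comparison datum you need, since the CHN functor is not built by factoring through $\CAlgp$.

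The paper supplies exactly this missing piece via the symmetric monoidal adjunction $F\colon \Catp \rightleftarrows \Cat^{\mathrm{ps}} \colon G$ between Poincaré categories and stable categories with perfect duality. One first checks that $F^\otimes$ applied to both functors agree as functors into $\EE_\infty\Mon(\Cat^{\mathrm{ps}})$: this only involves the underlying symmetric monoidal stable category and the bilinear part, where both sides are visibly $(E,E')\mapsto \hom_{\mathcal{O}_X}(E\otimes E',\mathcal{O}_X)$, and CHN's construction factors through rigid categories. The unit of the adjunction then yields a natural transformation $\Mod^{pg} \to G^\otimes F^\otimes(\mathcal{D}^\mathrm{p}(-),\Qoppa^{\geq 0}_{\mathcal{O}})$, whose target has quadratic functor the symmetric $\Qoppa^s_{\mathcal{O}_X}$. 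The pointwise check you carried out --- that $\underline{\mathcal{O}}^{\varphi C_2}\to \underline{\mathcal{O}}^{\mathrm{t}C_2}$ is a connective cover --- is then precisely what shows this natural transformation lands in, and is an equivalence onto, $(\Qoppa^s_{\mathcal{O}_X})^{\geq 0}$. So your computation is the right final step; what is missing is the adjunction trick that manufactures the natural transformation to which it applies.
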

\begin{proof}
    We will write $ \Mod^{pg} $ for the restriction of $ \Mod^\mathrm{p} $ to $ \mathrm{qSch}^\op $ of Remark \ref{rmk:modp_of_scheme_trivial_involution}. 
    Recall that there is a forgetful functor $ F \colon \Catp \to \Cat^{\mathrm{ps}} $ where the latter $ \infty $-category is \cite[Definition 7.2.13]{CDHHLMNNSI}. 
    Note that $ F $ has a right adjoint $ G $ \cite[Proposition 7.2.18]{CDHHLMNNSI}, and by \cite[Corollary 3.1.3]{CHN2024} it participates in a symmetric monoidal adjunction, i.e. $ F $ is symmetric monoidal and $ G $ is lax symmetric monoidal. 
    Therefore, there is an induced adjunction $ F^\otimes \colon \EE_\infty\Mon\left(\Catp\right) \rlarrows \EE_\infty\Mon\left(\Cat^{\mathrm{ps}}\right) \colon G^\otimes $. 
    By Remark \ref{rmk:sym_mon_catp_from_good_quotient_underlying}, the functor $ U \circ F^\otimes \circ \Mod^{pg} $ is valued in $ \EE_\infty \Mon\left(\mathrm{Rig}^{\mathrm{ex}}\right) \subseteq \EE_\infty \Mon\left(\Catex\right) $ and we have an equivalence $ U \circ F^\otimes \circ \Mod^{pg} \simeq U \circ F^\otimes \circ \left(\mathcal{D}^\mathrm{p}(-),\Qoppa^{\geq 0}_{\mathcal{O}}\right) $ of functors $ \mathrm{qSch}^\op \to \EE_\infty\Mon\left(\mathrm{Rig}^{\mathrm{ex}}\right) $. 
    By definition of $ \Mod^\mathrm{p} $, the bilinear part of the Poincaré structure on $ \Mod^{pg}_X $ is given by $ (E,E') \mapsto \hom_{\mathcal{O}_X}\left(E \otimes_{\mathcal{O}_X} E', \mathcal{O}_X\right) $. 
    It follows that $ F^\otimes \circ \Mod^{\mathrm{pg}} $ factors through $ \EE_\infty \Mon\left(\mathrm{Rig}^{\mathrm{ex}}\right) \to \EE_\infty\Mon\left(\Cat^{\mathrm{ps}}\right) $ of \cite[18]{CHN2024}, and we have an equivalence $ F^\otimes \circ \Mod^{pg} \simeq F^\otimes \circ \left(\mathcal{D}^\mathrm{p}(-),\Qoppa^{\geq 0}_{\mathcal{O}}\right) $ of functors $ \mathrm{qSch}^\op \to \EE_\infty\Mon\left(\Cat^{\mathrm{ps}}\right) $.  
    By adjunction, we obtain a natural transformation $ \Mod^{pg} \to G^\otimes \circ F^\otimes \circ \left(\mathcal{D}^\mathrm{p}(-),\Qoppa^{\geq 0}_{\mathcal{O}}\right) $ of functors $ \mathrm{qSch}^{\op} \to \EE_\infty\Mon\left(\Catp\right) $. 
    It remains to show that for each qcqs scheme $ X $, taking linear parts induces an equivalence $ \Qoppa^{pg}_X \xrightarrow{\sim} \left(\Qoppa_{\mathcal{O}_X}^s\right)^{\geq 0} $, where $ \left(-\right)^{\geq m} $ denotes the right adjoint of \cite[Lemma 3.4.2]{CHN2024}, using the standard t-structure on $ \mathcal{D}^\mathrm{p}(X) $ (see Remark A.3.3 \emph{loc. cit.}). 
    This follows from the fact that for any fixed point $ C_2 $-Mackey functor $ \underline{M} $ associated to an abelian group $ M $ with $ C_2 $-action, the map $ \underline{M}^{\varphi C_2} \to \underline{M}^{\mathrm{t}C_2} $ exhibits $ \underline{M}^{\varphi C_2} $ as a connective cover of $ \underline{M}^{t C_2} $. 
\end{proof}

Given a scheme $ X $ with involution $ \lambda $ and a good quotient $ p: X \to Y $, we defined a sheaf of Poincaré rings on $ Y $ in Construction \ref{cons:structure_sheaf_of_Green_functors} (also see Lemma \ref{lemma:identify_structure_sheaf_of_Green_func}), which we denoted by $ \underline{\mathcal{O}}_{X,\lambda} $. 
In the notation of $ C_2 $-genuine equivariant homotopy theory, we have $ \underline{\mathcal{O}}_{X,\lambda}^{C_2} = \mathcal{O}_Y $ as a sheaf of $ \EE_\infty $ rings and $ \underline{\mathcal{O}}^e_{X,\lambda} \simeq p_* \mathcal{O}_X $ as a sheaf valued in $ \EE_\infty \Alg^{BC_2} $. 
The functor $ (-)^{C_2} : \Spectra^{C_2} \to \Spectra $ fails to be monoidal for essentially the same reason that the $ C_2 $-fixed points of the tensor product of two abelian groups with $ C_2 $-action is not equal to the tensor product of their fixed points. 
There is a functor $ (-)^{\varphi C_2}: \Spectra^{C_2} \to \Spectra $ which is in some sense a universal approximation to $ (-)^{C_2} $ which is monoidal; this is referred to as the \emph{geometric fixed points}. 
By the symmetric monoidal récollement of $ C_2 $-spectra, there is a pullback square of sheaves of $ \EE_\infty $-rings 
\begin{equation*}
\begin{tikzcd}
    \underline{\mathcal{O}}^{C_2}_{X,\lambda} \ar[d] \ar[r] & \underline{\mathcal{O}}^{\varphi C_2}_{X,\lambda} \ar[d] \\
    \left(\underline{\mathcal{O}}^{e}_{X,\lambda}\right)^{hC_2} \ar[r] & \underline{\mathcal{O}}^{t C_2}_{X,\lambda}  \,.
\end{tikzcd}
\end{equation*}
The following proposition suggests a geometric interpretation of $ \underline{\mathcal{O}}^{\varphi C_2}_{X,\lambda} $. 
\begin{notation}\label{ntn:good_quotient_branch_locus}
    Let $ X $ be a scheme with a given involution and suppose that $ X $ has a \emph{good quotient} $ Y $ in the sense of \cite[Remark 4.20]{azumaya_involution}; write $ p \colon X \to Y $ for the quotient map. 
    Let $ i \colon U \subseteq Y $ be the largest open subscheme on which $ p|_{X_U} $ is quadratic étale \cite[Proposition 4.45]{azumaya_involution}. 
    Write $ Z(p) $ for the closed complement of $ U $ regarded as a topological space, and let $ j \colon Z(p) \to Y $ denote the inclusion. 
    The subspace $ Z(p) $ is referred to as the \emph{branch locus} in \cite{azumaya_involution}. 
    In particular, this notion of branch locus includes loci where the map $p$ is an isomorphism.
\end{notation}
\begin{proposition}\label{prop:geomfixpt_C2_structure_sheaf}
    Let $ X $ be a scheme with a given involution and suppose that $ X $ has a \emph{good quotient} $ Y $ in the sense of \cite[Remark 4.20]{azumaya_involution}; write $ p \colon X \to Y $ for the quotient map. 
    Recall the notation $ \underline{\mathcal{O}}_{X,\lambda} $ for the sheaf of Poincaré rings on $ Y $ of Construction \ref{cons:structure_sheaf_of_Green_functors} (also see Lemma \ref{lemma:identify_structure_sheaf_of_Green_func}). 
    Assume further that $ Y $ is regular Noetherian and qcqs of finite Krull dimension. 
    Then
    \begin{enumerate}[label=(\arabic*)]
        \item \label{propitem:geomfixpt_C2_structure_sheaf_ramification_locus} $ \underline{\mathcal{O}}^{\varphi C_2}_{X,\lambda} $ is in the essential image of $ j_* \colon \mathrm{Shv}_{\mathrm{Zar}}(Z(p); \Spectra) \to \mathrm{Shv}_{\mathrm{Zar}}(Y;\Spectra) $ (Notation \ref{ntn:good_quotient_branch_locus}). 
        In other words, there exists a sheaf $ \mathcal{Q} $ of $ \EE_\infty $-rings on the branch locus $ Z(p) $ so that $ j_* \mathcal{Q} \simeq \underline{\mathcal{O}}^{\varphi C_2}_{X,\lambda} $. 
        \item \label{propitem:geomfixpt_C2_structure_sheaf_away_from_2} The support of $ \mathcal{Q} $ intersects trivially with the open subscheme $ Y \times_{\Spec \ZZ} \Spec \ZZ \left[\frac{1}{2}\right] $. 
    \end{enumerate}
\end{proposition}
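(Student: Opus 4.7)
My strategy is to analyze $\underline{\mathcal{O}}^{\varphi C_2}$ using the local description from Construction \ref{cons:structure_sheaf_of_Green_functors}: on an affine open $\Spec A \subseteq Y$ with preimage $\Spec B := \Spec A \times_Y X$, we have
\[
\underline{\mathcal{O}}(\Spec A)^{\varphi C_2} \simeq \mathrm{cofib}\bigl(\mathrm{tr}\colon B_{\mathrm{h}C_2} \to A\bigr).
\]
Since this formula is sectionwise, $\underline{\mathcal{O}}^{\varphi C_2}$ restricts along open immersions in the expected way, and by the closed-open recollement for Zariski sheaves of spectra, $\underline{\mathcal{O}}^{\varphi C_2}$ lies in the essential image of $j_*$ if and only if its restriction to the open complement $U$ vanishes. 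Thus part \ref{propitem:geomfixpt_C2_structure_sheaf_ramification_locus} reduces to showing that $\mathrm{tr}\colon B_{\mathrm{h}C_2} \to A$ is an equivalence whenever $\Spec B \to \Spec A$ is quadratic étale, and part \ref{propitem:geomfixpt_C2_structure_sheaf_away_from_2} reduces to the analogous statement whenever $2 \in A^\times$.

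For part \ref{propitem:geomfixpt_C2_structure_sheaf_ramification_locus}, given $\Spec A \subseteq U$, I would choose an étale cover $A \to A'$ trivializing the $C_2$-torsor, so that $B \otimes_A A' \simeq A'[C_2]$ with the swap action. Since $A'[C_2]$ is free as a $\ZZ[C_2]$-module, its derived $C_2$-coinvariants are concentrated in degree zero and the transfer $(A'[C_2])_{\mathrm{h}C_2} \to A'$ is an equivalence on the nose. As the property of being an equivalence of $A$-modules is faithfully flat local, the original map $B_{\mathrm{h}C_2} \to A$ is already an equivalence, yielding $\underline{\mathcal{O}}^{\varphi C_2}|_U = 0$.

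For part \ref{propitem:geomfixpt_C2_structure_sheaf_away_from_2}, write $Y' := Y \times_{\Spec \ZZ} \Spec \ZZ[1/2]$; on $Y'$ the element $2$ is a unit of $\mathcal{O}_{Y'}$, hence of $p_*\mathcal{O}_X|_{Y'}$. By the vanishing of the Tate construction when the group order is invertible (\cite[Lemma I.2.8]{NS}), $(p_*\mathcal{O}_X)^{\mathrm{t}C_2}|_{Y'} = 0$. The isotropy-separation fracture square of Notation \ref{notation:genuine_C2_spectra_and_underlying_spectra} then degenerates over $Y'$ to a canonical splitting
\[
\mathcal{O}_{Y'} \simeq \underline{\mathcal{O}}^{C_2}|_{Y'} \simeq (p_*\mathcal{O}_X)^{\mathrm{h}C_2}|_{Y'} \oplus \underline{\mathcal{O}}^{\varphi C_2}|_{Y'}.
\]
Since $p_*\mathcal{O}_X$ is a discrete $C_2$-sheaf and $2$ is invertible, the higher group cohomology of $C_2$ with coefficients in this sheaf vanishes, so the canonical comparison $\mathcal{O}_{Y'} \to (p_*\mathcal{O}_X)^{\mathrm{h}C_2}|_{Y'}$ is itself an equivalence. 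Under the splitting, this forces $\underline{\mathcal{O}}^{\varphi C_2}|_{Y'} = 0$, giving the desired support statement for $\mathcal{Q}$.

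The main technical point I need to check is that the formation of the sheaves $(-)^{\varphi C_2}$, $(-)^{\mathrm{h}C_2}$, and $(-)^{\mathrm{t}C_2}$ really commutes with restriction to open subschemes and with the pullback along the flat base change to $Y'$, so that the fracture square and the computations above apply at the level of sheaves; this follows from the pointwise construction of $\underline{\mathcal{O}}$ in Construction \ref{cons:structure_sheaf_of_Green_functors} together with standard properties of the fixed-point functors on $C_2$-spectra.
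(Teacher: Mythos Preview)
Your proof is correct, and for part \ref{propitem:geomfixpt_C2_structure_sheaf_ramification_locus} it takes a genuinely different and more elementary route than the paper. The paper reduces to stalks via hypercompleteness (this is where the finite Krull dimension hypothesis enters), identifies the stalk $\underline{\mathcal{O}}^{\varphi C_2}_y$ with $\tau_{\geq 0}(B^{\mathrm{t}C_2})$ for $B$ the semilocal ring over $A = \mathcal{O}_{Y,y}$, and then shows $B^{\mathrm{t}C_2}=0$ by a rather intricate argument: it uses the regularity of $A$ to set up a recollement along the maximal ideal, passes to the completion via Mittag--Leffler considerations, and finally reduces to quadratic étale extensions of fields where the normal basis theorem applies. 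Your argument bypasses all of this by working directly with the cofiber description $\mathrm{cofib}(B_{\mathrm{h}C_2}\to A)$, trivializing the $C_2$-torsor étale-locally, and checking the split case by hand; faithful flatness then descends the vanishing. In particular, your argument never invokes the regularity of $Y$ and does not need to pass to stalks. For part \ref{propitem:geomfixpt_C2_structure_sheaf_away_from_2}, the paper's argument is a single line: since sectionwise $\underline{\mathcal{O}}^{\varphi C_2}$ is the connective cover of $B^{\mathrm{t}C_2}$, Tate vanishing when $2$ is a unit gives the result immediately. Your isotropy-splitting argument is correct but more circuitous.

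One small point you should make explicit: to invoke the open--closed recollement and to conclude from vanishing on affine opens of $U$, you are implicitly treating $V \mapsto \underline{\mathcal{O}}(V)^{\varphi C_2}$ as a Zariski sheaf. This holds here because $(-)^{\varphi C_2}$, being a cofiber, is exact, and the Noetherian hypothesis ensures that only finite Zariski covers are needed; but it is worth saying, since geometric fixed points do not preserve arbitrary limits.
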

\begin{remark}
    The proof of Proposition \ref{prop:geomfixpt_C2_structure_sheaf} in fact shows that the \emph{hypercompletion} of $ \underline{\mathcal{O}}^{\varphi C_2} $ is supported on $ Z(p) $ even without the assumption that $ Y $ be of finite Krull dimension. 
\end{remark}
\begin{remark}
    The sheaf $ \mathcal{Q} $ identified in Proposition \ref{prop:geomfixpt_C2_structure_sheaf} does \emph{not} agree with the structure sheaf on the branch locus discussed immediately before \cite[Proposition 4.47]{azumaya_involution}. 
    Indeed, when $ X = \Spec \mathbb{Z} $ with the trivial action, $ \mathcal{Q}(\Spec \mathbb{Z}) $ has unbounded above homotopy groups. 
    When $ X = \Spec R $ for $ R $ a commutative ring with trivial $ C_2 $-action, we may regard $ \mathcal{Q} $ as a measure of the discrepancy between the homology of the classifying space $ BC_2 $ with coefficients in $ R $ and the point.  
    Thus, \ref{propitem:geomfixpt_C2_structure_sheaf_away_from_2} reflects the fact that $ \widetilde{H}_*\left(BC_2; \mathbb{Z} \left[\frac{1}{2}\right]\right) \simeq 0 $. 
\end{remark}
\begin{proof} [Proof of Proposition \ref{prop:geomfixpt_C2_structure_sheaf}]
    The proof of \ref{propitem:geomfixpt_C2_structure_sheaf_ramification_locus} proceeds via a series of reductions followed by a computation. 
    In the following, we will omit the subscript from $ \underline{\mathcal{O}}_{X,\lambda} $. 
    Recall that the open-closed decomposition of $ Y $ induces a symmetric monoidal récollement
    \begin{equation*}
        \mathrm{Shv}_{\mathrm{Zar}}(U;\Spectra) \xleftarrow{i^*} \mathrm{Shv}_{\mathrm{Zar}}(Y;\Spectra) \xrightarrow{j^*} \mathrm{Shv}_{\mathrm{Zar}}(Z(p);\Spectra) 
    \end{equation*}
    of sheaves of spectra (cf. \cite[Exposé VIII Corollaire 6.4]{SGA4}). 
    Therefore, to show that $ \underline{\mathcal{O}}^{\varphi C_2} $ is in the essential image of $ j_* $, it suffices to show that $ i^*\left(\underline{\mathcal{O}}^{\varphi C_2}\right) \simeq 0 $ as a sheaf on $ U $. 
    
    By \cite[Theorem 1.7]{MR4296353}, the sheaf $ \underline{\mathcal{O}}^{\varphi C_2} $ is hypercomplete. 
    In view of \cite[Proposition 4.45]{azumaya_involution}, it suffices to show that if $ y $ is a point in $ U $, then $ \underline{\mathcal{O}}^{\varphi C_2}_{y} = 0 $. 
    By definition of the $ C_2 $-structure sheaf, we have that $ \underline{\mathcal{O}}_{y} $ is given by the $ C_2 $-Mackey functor corresponding to the pullback of $ p $ along the inclusion $ \Spec \mathcal{O}_{Y,y} \to Y $. 
    This is given by a map of rings $ \Gamma(p,y) \colon A = \Gamma\mathcal{O}_{Y,y} \to B $ where $ B $ has an involution $ \lambda $ and $ A = B^{\lambda} $ is the fixed subring. 
    By definition of geometric fixed points, $ \underline{\mathcal{O}}^{\varphi C_2}_{y} = \tau_{\geq 0} \left(B^{\mathrm{t}C_2}\right) $. 
    Since $ y \in U $, the map $ \Gamma(p,y) $ is quadratic étale. 
    By our assumption on $ Y $, $ A $ is a regular Noetherian local ring with maximal ideal $ \mathfrak{m}_A $ (therefore $ B $ is semilocal by \cite[Proposition 3.15]{azumaya_involution}). 
    Now to prove the result, it suffices to show that $ B^{\mathrm{t}C_2} = 0 $. 
    
    Since $ A $ is regular and Noetherian, $ \mathfrak{m}_A $ is finitely generated and $ A/\mathfrak{m}_A $ is a perfect $ A $-module. 
    By \cite[Proposition 4.13]{MR1879003}, there is a recollement of $ A $-modules in terms of (derived) $ \mathfrak{m}_A $-complete and $ A[\mathfrak{m}_A^{-1}] $-modules. 
    It suffices to show that $ \left(B_{\mathfrak{m}_A}^{L\wedge}\right)^{\mathrm{t}C_2} = 0 $ and $ \left(B[\mathfrak{m}_A^{-1}]\right)^{\mathrm{t}C_2} = 0 $, where we write $ (-)_{\mathfrak{m}_A}^{L\wedge} $ for the derived completion. 
    Notice that there is a canonical map $ (-)_{\mathfrak{m}_A}^{L\wedge}  \to (-)_{\mathfrak{m}_A}^{\wedge} = \pi_0 (-)_{\mathfrak{m}_A}^{L\wedge} $ from the derived completion to the ordinary completion. 
    
    We claim that $ \left(B^\wedge_{\mathfrak{m}_A}\right)^{\mathrm{t}C_2} \simeq 0 $ implies that $ \left(B^{L\wedge}_{\mathfrak{m}_A}\right)^{\mathrm{t}C_2} \simeq 0 $, in other words that it suffices to show that the Tate construction vanishes on the underived completion. 
    To see this, note that by \cite[\href{https://stacks.math.columbia.edu/tag/091N}{Tag 091N} Lemma 15.92.21]{stacks}, the derived completion has finite cohomological dimension. 
    In particular, we can write $ B_{\mathfrak{m}_A}^{L\wedge} $ as a finite extension of $ \pi_0 B_{\mathfrak{m}_A}^{L\wedge} \simeq B_{\mathfrak{m}_A}^{\wedge} $-modules. 
    Since $ (-)^{\mathrm{t}C_2} $ is exact and lax symmetric monoidal, we can write $ \left(B_{\mathfrak{m}_A}^{L\wedge}\right)^{\mathrm{t}C_2} $ as a finite extension of $ \left(B_{\mathfrak{m}_A}^{\wedge}\right)^{\mathrm{t}C_2} $-modules. 
    Thus if $ \left(B_{\mathfrak{m}_A}^{\wedge}\right)^{\mathrm{t}C_2} $ vanishes, then so must any module over it, and we conclude that $ \left(B_{\mathfrak{m}_A}^{\wedge}\right)^{\mathrm{t}C_2} \simeq 0 $ implies that $ \left(B_{\mathfrak{m}_A}^{L\wedge}\right)^{\mathrm{t}C_2} \simeq 0 $. 
    
    By \cite[Propositions 3.4 \& 3.15]{azumaya_involution}, $ B \mathfrak{m}_A = J \subseteq B $, where $ J $ denotes the Jacobson radical of $ B $. 
    Writing $ B/J^i $ for the \emph{underived} quotient, note that the tower $ \left\{B/J^i\right\}_{i \geq 1} $ is Mittag--Leffler. 
    It follows that the 1-categorical limit defining the underived completion in fact exhibits $ B^\wedge_{\mathfrak{m}_A} $ as the homotopy limit $ \lim_i B/J^i $. 		
    Now observe that $ \left(B/J^i\right)^{\mathrm{t}C_2} \simeq 0 $ for all $ i $ implies that $ \left(\lim_i B/J^i\right)^{\mathrm{t}C_2} \simeq 0 $. 
    Since homotopy fixed points commute with arbitrary limits, it suffices to show that $ B_{\mathfrak{m}_A}^{\wedge} \simeq \lim_i B/J^i $ induces an equivalence $ \left(B_{\mathfrak{m}_A}^{\wedge}\right)_{\mathrm{h}C_2} \to \lim_i \left(B/J^i\right)_{\mathrm{h}C_2} $. 
    This is true because the $ B/J^i $ are uniformly bounded below.  
    
    Thus it suffices to show that $ \left(B[\mathfrak{m}_A^{-1}]\right)^{\mathrm{t}C_2} = 0 $ and $ \left(B/J^i\right)^{\mathrm{t}C_2} $ is zero for each $ i \geq 1 $. 
    Since $ (-)^{\mathrm{t}C_2} $ is exact and lax symmetric monoidal and each $ B/J^i $ can be written as an extension of finitely many $ B/J $-modules, it suffices to show that $ \left(B[\mathfrak{m}_A^{-1}]\right)^{\mathrm{t}C_2} $ and $ \left(B/J\right)^{\mathrm{t}C_2} $ are zero. 
    
    Now observe that $ A/\mathfrak{m}_A $ (resp. $ A[\mathfrak{m}_A^{-1}] $-algebra) is a field and $ B/J $ (resp. $ B[\mathfrak{m}_A^{-1}] $) is a quadratic étale $ A/\mathfrak{m}_A $-algebra (resp. $ A[\mathfrak{m}_A^{-1}] $-algebra). 
    By \cite[Proposition 3.4(ii)]{azumaya_involution}, $ B/J $ (resp. $ B[\mathfrak{m}_A^{-1}] $) is either a separable quadratic field extension of $ A/\mathfrak{m}_A $-algebra (resp. $ A[\mathfrak{m}_A^{-1}] $-algebra), or it is isomorphic to $ \prod_{C_2} A/\mathfrak{m}_A $ (resp. $ \prod_{C_2} A[\mathfrak{m}_A^{-1}] $). 
    In the latter case, the action of $ C_2 $ on $ B/J $ (resp. $ B[\mathfrak{m}_A^{-1}] $) is manifestly free, hence $ (B/J)^{\mathrm{t}C_2} = 0 $ (resp. $ B[\mathfrak{m}_A^{-1}]^{\mathrm{t}C_2} = 0 $). 
    Suppose instead that $ B/J $ (resp. $ B[\mathfrak{m}_A^{-1}] $) is a separable quadratic field extension of $ A/\mathfrak{m}_A $-algebra (resp. $ A[\mathfrak{m}_A^{-1}] $-algebra). 
    By \cite[Proposition 3.4(ii)]{azumaya_involution}, $ \lambda \otimes_{B} B/J $ (resp. $ \lambda \otimes_B B[\mathfrak{m}_A^{-1}] $) is nontrivial, hence by \cite[Lemma 9.21.2, Tag 09DU]{stacks} the extension $ A/\mathfrak{m}_A \to B/J $ (resp. $ A[\mathfrak{m}_A^{-1}] \to B[\mathfrak{m}_A^{-1}] $) is Galois. 
    Since $ C_2 $ acts freely on $ B/J $ as an $ A/\mathfrak{m}_A $-module by the normal basis theorem, $ (B/J)^{\mathrm{t}C_2} = 0 $ (resp. $ B[\mathfrak{m}_A^{-1}]^{\mathrm{t}C_2} =0$). 
    
    Part \ref{propitem:geomfixpt_C2_structure_sheaf_away_from_2} follows immediately from \cite[Lemma I.2.9]{NS}. 
\end{proof}  
The following result gives a `stacky' interpretation of symmetric Poincaré rings. 
\begin{construction}\label{cons:stacky_quotient_calgp_sheaf}
    Let $ X $ be a scheme with an involution $ \lambda \colon X\xrightarrow{\sim} X $, and write $ \pi \colon X \to [X/C_2] $ where $ [X/C_2] $ is the stacky quotient (cf. \cite[Construction 4.41]{azumaya_involution}). 
    Write $ [X/C_2]_{\mathrm{\acute{e}t}} $ for the étale topos of $ [X/C_2] $. 
    Applying Remark \ref{rmk:ringed_topoi_with_involution} (see Proposition 4.42(v) \emph{loc. cit.}), there is a sheaf $ \underline{\mathcal{O}}_{[\lambda]} := \left(\mathcal{O}_{[X/C_2]} \to \pi_*\mathcal{O}_X\right) $ of Poincaré rings on the quotient stack $ [X/C_2] $. 
    Similarly to Construction \ref{cons:structure_sheaf_of_Green_functors}, we have functors $ [X/C_2]_{\mathrm{\acute{e}t}} \to \Catp $ and $ [X/C_2]_{\mathrm{\acute{e}t}} \to \EE_\infty\Mon\left(\Catp\right) $. 
    Since $ \Catp $ admits all small limits, we may take limits to obtain a symmetric monoidal Poincaré $ \infty $-category $ \left(\perf_{[X/C_2]},\Qoppa_\lambda\right)$. 
\end{construction}
\begin{proposition}\label{prop:symmetric_calgp_stacky_interpretation}	
    Let $ X $ be a scheme with an involution $ \lambda $, and keep the notation of Construction \ref{cons:stacky_quotient_calgp_sheaf}. 
\begin{enumerate}[label=(\arabic*)]
    \item \label{propitem:stacky_quotient_symmetric_structure_sheaf} Suppose $ X = \Spec R $ is affine. 
    Then there is an equivalence of Poincaré rings 
    \begin{equation*}
        R\Gamma\left([X/C_2]_{\mathrm{\acute{e}t}};\underline{\mathcal{O}}_{[\lambda]}\right) \simeq R^s \,,
    \end{equation*}
    where $ R^s $ is the symmetric Poincaré structure of Example \ref{example:tate_poincare_structure}. 
    \item \label{propitem:stacky_quotient_symmetric_poincare_modules} The Poincaré $ \infty $-category $ \left(\perf_{[X/C_2]},\Qoppa_\lambda\right)$ is symmetric in the sense of Recollection \ref{rec:symmetric_Poincare_cat}. 
\end{enumerate}
\end{proposition}
\begin{proof}
    Observe that \ref{propitem:stacky_quotient_symmetric_poincare_modules} follows from \ref{propitem:stacky_quotient_symmetric_structure_sheaf}, the fact that symmetric Poincaré rings give rise to symmetric Poincaré $ \infty $-categories (Example \ref{example:symmetric_poincare_structure}), and the fact that the subcategory of Poincaré $ \infty $-categories which are symmetric is closed under limits by \cite[Proposition 7.2.18]{CDHHLMNNSI}. 
    We turn to the proof of \ref{propitem:stacky_quotient_symmetric_structure_sheaf}. 
    For each object $ U \in [X/C_2]_{\mathrm{\acute{e}t}} $, the morphism $ \mathcal{O}_{[X/C_2]}(U) \to \pi_*\mathcal{O}_X(U) $ exhibits the former as the (strict) fixed points of the $ C_2 $-action on the latter. 
    Since the $ C_2 $-action on $ \pi_* \mathcal{O}_X(U) $ is free, strict fixed points agree with homotopy fixed points and we have an equivalence $ \mathcal{O}_{[X/C_2]}(U) \xrightarrow{\sim} \pi_*\mathcal{O}_X(U)^{\mathrm{h}C_2} $ of $ \EE_\infty $ rings. 
    Since homotopy limits commute with each other, we deduce that $ R\Gamma\left([X/C_2];\mathcal{O}_{[X/C_2]}\right) \simeq R\Gamma\left([X/C_2];\pi_*\mathcal{O}_X\right)^{\mathrm{h}C_2} \simeq R\Gamma\left(X; \mathcal{O}_X\right)^{\mathrm{h}C_2} \simeq R^{\mathrm{h}C_2} $. 
    The result follows from noting that a symmetric Poincaré ring is characterized by the property that the canonical map $ R^{C_2} \to \left(R^e\right)^{\mathrm{h}C_2} $ is an equivalence. 
\end{proof}	
	
\section{\texorpdfstring{$R$-linear Poincar\'e $\infty$-categories}{R-linear Poincare ∞-categories}}\label{section:relative_poincare_cats}
Now that we have the objects which will replace rings and schemes in our setting, we need to define and study linear Poincar\'e categories over them. For $R$ a Poincar\'e ring this is done in \S~\ref{subsection: R-linear poincare categories}. We then in \S~\ref{subsection: etale descent results} show that with these definitions, the $\infty$-category of dualizable $R$-linear Poincar\'e categories over $R$ satisfy descent for a certain natural analogue of the {\'e}tale topology defined in Definition~\ref{defn:C2_etale_topology}. This will lay the groundwork for showing that the Poincar\'e Picard and Poincar\'e Brauer spaces we construct are sheaves for this topology as well.
\subsection{Generalities on \texorpdfstring{$R$}{R}-linear Poincar\'e \texorpdfstring{$\infty$}{∞}-categories}\label{subsection:relative_Poincare_cats_generalities}~\label{subsection: R-linear poincare categories}

Let $R$ be a Poincaré ring spectrum. 
By Theorem \ref{thm:calgp_to_poincare_cat}, $ \left(\Mod_{R^e}^\omega,\Qoppa_R\right) $ promotes to a commutative algebra object in the $\infty$-category of Poincaré $\infty$-categories $ \Catp $, and we may thus consider modules over it, or \emph{$R$-linear Poincaré $ \infty $-categories}. This subsection is dedicated to generalizing many of the results from Section~\ref{section:poincare_structures_on_compact_modules} to the setting of $R$-linear Poincar\'e $\infty$-categories. Many of the proofs in this subsection follow the general outline of the analogous arguments in \cite{CDHHLMNNSI}, but keeping track of the $R$-linear structure in several arguments is subtle and so we include these details where necessary. Additionally, some of the preliminaries that go into the arguments of \cite{CDHHLMNNSI} must be proven in the $R$-linear context.

\begin{notation}~\label{notation: R-linear poincare cats}
    Let $R$ be a Poincar\'e ring with associated Poincar\'e category from Theorem~\ref{thm:calgp_to_poincare_cat} $\mathrm{Mod}^{\mathrm{p}}_R = (\mathrm{Mod}_{R^e}^\omega,\Qoppa_R)$. If $\mathcal{C}$ is one of $\Cathidem$, $\Cath$, $\Catpidem$, or $\Catp$, we will use \[\mathcal{C}_R\] to denote the $\infty$-category of modules $\mathrm{Mod}_{\mathrm{Mod}_R^\mathrm{p}}(\mathcal{C})$.
\end{notation}
\begin{lemma}\label{lemma:hermitian_structures_relative_linearity}
    Let $ R $ be a Poincaré ring. 
    \begin{enumerate}[label=(\arabic*)]
        \item     If $ \mathcal{C} $ is an $ R $-linear small stable $ \infty $-category, then $ \Fun^q(\mathcal{C}^{\op},\Spectra) $ is left-tensored over $ \Fun^q(\Mod_R^{\omega}) $ in the sense of \cite[Definition 4.2.1.19]{LurHA}, where the symmetric monoidal structure on $ \Fun^q(\Mod_R^{\omega}) $ is from \cite[Corollary 5.3.7]{CDHHLMNNSI}. 
        \item \label{lemmaitem:precompose_relative_hermitian_structure} Let $ \mathcal{C} $, $ \mathcal{D} $ be $ R $-linear small stable $ \infty $-categories, and let $ f \colon \mathcal{C} \to \mathcal{D} $ be an $ R $-linear functor. 
        Suppose we are given a $ \Qoppa_R $-module structure on $ \Qoppa \colon \mathcal{D}^\op \to \Spectra $ in the sense of (1). 
        Then $ f^*\Qoppa $ inherits a canonical $ \Qoppa_R $-module structure. 
    \end{enumerate}
\end{lemma}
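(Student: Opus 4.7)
For part (1), my plan is to deduce the left-tensored structure from the symmetric monoidal structure on $\Cath$ constructed in \cite[Theorem 5.2.7]{CDHHLMNNSI}. The key input is that the Grothendieck construction $U : \Cath \to \Catex$ is a coCartesian fibration whose fiber over $\mathcal{D} \in \Catex$ is $\Fun^\mathrm{q}(\mathcal{D})$, and which refines to a symmetric monoidal functor $U^\otimes : (\Cath)^\otimes \to (\Catex)^\otimes$. Under this refinement, the fiber over any commutative algebra inherits a symmetric monoidal structure from the global tensor on $\Cath$; applied to $\Mod_R^\omega \in \CAlg(\Catex)$ this recovers the symmetric monoidal structure on $\Fun^\mathrm{q}(\Mod_R^\omega)$ of \cite[Corollary 5.3.7]{CDHHLMNNSI}.

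An $R$-linear structure on $\mathcal{C}$ is by definition an $\mathrm{LM}^\otimes$-algebra in $\Catex$, that is, a map of $\infty$-operads $\varphi : \mathrm{LM}^\otimes \to (\Catex)^\otimes$ sending the algebra and module colors to $\Mod_R^\omega$ and $\mathcal{C}$ respectively. I would then form the pullback of $\infty$-operads
\begin{equation*}
    \mathcal{M}^\otimes := \mathrm{LM}^\otimes \times_{(\Catex)^\otimes} (\Cath)^\otimes \longrightarrow \mathrm{LM}^\otimes,
\end{equation*}
whose fibers over the algebra and module colors are precisely $\Fun^\mathrm{q}(\Mod_R^\omega)$ and $\Fun^\mathrm{q}(\mathcal{C})$. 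Since $\mathcal{M}^\otimes \to \mathrm{LM}^\otimes$ inherits from $U^\otimes$ the property of being a coCartesian fibration of $\infty$-operads, by \cite[Definition 4.2.1.19]{LurHA} this is precisely a left-tensored structure on $\Fun^\mathrm{q}(\mathcal{C})$ over the monoidal $\infty$-category $\Fun^\mathrm{q}(\Mod_R^\omega)$.

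For part (2), an $R$-linear functor $f : \mathcal{C} \to \mathcal{D}$ is by definition a morphism $\varphi_\mathcal{C} \to \varphi_\mathcal{D}$ of $\mathrm{LM}^\otimes$-algebras in $\Catex$ that is the identity on the algebra color. Pulling back this morphism along $U^\otimes$ produces a morphism of left-tensored $\infty$-categories over $\Fun^\mathrm{q}(\Mod_R^\omega)$ whose underlying functor on module fibers is precomposition $f^* : \Fun^\mathrm{q}(\mathcal{D}) \to \Fun^\mathrm{q}(\mathcal{C})$. A $\Qoppa_R$-module structure on $\Qoppa \in \Fun^\mathrm{q}(\mathcal{D})$ therefore pulls back to one on $f^*\Qoppa$, as claimed.

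The main technical obstacle I anticipate is verifying that the coCartesian fibration $U^\otimes$ interacts well enough with the pullback construction to recover the symmetric monoidal structure of \cite[Corollary 5.3.7]{CDHHLMNNSI} as the fiber over $\Mod_R^\omega$, and that the pullback yields a genuine left-tensored $\infty$-category in the sense of \cite[Definition 4.2.1.19]{LurHA} rather than merely an $\mathrm{LM}$-operad. This should reduce to unpacking the construction of $\otimes$ on $\Cath$ assembled in \cite[\S5]{CDHHLMNNSI} and checking that cocartesian pushforwards along morphisms of the form $A \otimes A \to A$ agree with the tensor product in $\Fun^\mathrm{q}(A)$ of \cite[Corollary 5.3.7]{CDHHLMNNSI}.
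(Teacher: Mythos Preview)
Your approach to part (1) is essentially the same as the paper's: both form the pullback of $(\Cath)^\otimes \to (\Catex)^\otimes$ (or rather of $p^\otimes$ from \cite[Theorem 5.2.7]{CDHHLMNNSI}) along the $\mathcal{LM}^\otimes$-algebra classifying the $\Mod_R^\omega$-action on $\mathcal{C}$, and observe that the resulting map to $\mathcal{LM}^\otimes$ is a cocartesian fibration of $\infty$-operads. The paper additionally unwinds the cocartesian transport to give an explicit left-Kan-extension formula for the tensor action, but the construction is the same.

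Part (2), however, has a directional slip. Pulling back the cocartesian fibration $U^\otimes$ along the morphism $\varphi_{\mathcal{C}} \to \varphi_{\mathcal{D}}$ (thought of as $\Delta^1 \times \mathcal{LM}^\otimes \to (\Catex)^\otimes$) produces a morphism of left-tensored $\infty$-categories whose component on module fibers is the \emph{cocartesian} pushforward $f_! : \Fun^q(\mathcal{C}) \to \Fun^q(\mathcal{D})$, not $f^*$. So your construction transports $\Qoppa_R$-module structures in the wrong direction. One could salvage this by noting that $f^*$ is right adjoint to the linear functor $f_!$ and hence lax $\Fun^q(\Mod_R^\omega)$-linear, which still suffices to transport module structures; but that is a different argument from what you wrote.

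The paper avoids this by working instead with the \emph{cartesian} fibration structure: by \cite[Corollary 3.2.2.3(1)]{LurHA}, the induced functor $\Alg_{\mathcal{LM}}(\Cath) \to \Alg_{\mathcal{LM}}(\Catex)$ is itself a cartesian fibration, with cartesian edges detected by the underlying cartesian fibration $\Cath \to \Catex$ (whose cartesian transport is precomposition). Cartesian transport of $(\mathcal{D},\Qoppa)$ along $f$ then directly produces the desired $\Qoppa_R$-module structure on $f^*\Qoppa$.
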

\begin{proof}
    Let $ \mathcal{LM}^\otimes $ denote the $ \infty $-operad of \cite[Definition 4.2.1.7]{LurHA}. 
    Recall that the action of $ \Mod_R^{\omega} $ on $ \mathcal{C} $ is given by a functor $ \mathcal{LM}^\otimes \to \Cat_\infty^\times $. 
    Modifying \cite[Construction 5.3.15 \& Lemma 5.3.15]{CDHHLMNNSI} slightly (note that Corollary 5.1.4 did not assume the tensor factors to be equivalent), define $ \infty $-operads by requiring all squares in the diagram 
    \begin{equation}\label{diagram:module_structure_on_pb_functors}
        \begin{tikzcd}
            \Fun_{\Mod_R^{\omega,\op}}^\mathrm{p}(\mathcal{C}^{\op},\Spectra)^\otimes \ar[d] \ar[r] & \Fun_{\Mod_R^{\omega,\op}}^q(\mathcal{C}^{\op},\Spectra)^\otimes \ar[r]\ar[d] & \Fun_{\Mod_R^{\omega,\op}}(\mathcal{C}^{\op},\Spectra)^\otimes \ar[r,"{p}"]\ar[d] & \mathcal{LM}^\otimes \ar[d,"{\Mod_R^\omega,\mathcal{C}}"] \ar[d] \\
            \Catp^\otimes \ar[r] &\Cath^\otimes \ar[r] \ar[rr, bend right=10,"{p^\otimes}"'] & \left(\Cat_\infty\right)_{\op/-/\Spectra}^\otimes \ar[r] & \Cat_\infty^\otimes 
        \end{tikzcd}
    \end{equation}
    to be pullbacks, where $ p^\otimes $ is from \cite[Theorem 5.2.7]{CDHHLMNNSI}. 
    Since $ p^\otimes $ is a cocartesian fibration, it follows that $ \Fun_{\Mod_R^{\omega,\op}}^q(\mathcal{C}^{\op},\Spectra)^\otimes \to \mathcal{LM}^\otimes  $ is a cocartesian fibration exhibiting $ \Fun^q(\mathcal{C}) $ as being left-tensored over $ \Fun^q\left(\Mod_R^\omega\right) $. 
    Informally, an object $ F \in \Fun_{\Mod_R^{\omega,\op}}(\mathcal{C},\Spectra)^\otimes_{\mathfrak{a}} $ is a functor $ F \colon \Mod_R^{\omega,\op} \to \Spectra $ and an object $ G $ over the fiber of $ \mathfrak{m} $ is a functor $ G \colon \mathcal{C}^{\op} \to \Spectra $. 
    The $p $-cocartesian edge over the canonical map $ (\mathfrak{a},\mathfrak{m}) \to \mathfrak{m} $ in $ \mathcal{LM}^\otimes $ sends $ (F,G) $ to the lower arrow in the diagram
    \begin{equation*}
        \begin{tikzcd}[column sep=4.5em]
            \Mod_R^{\omega,\op}\times \mathcal{C}^{\op} \ar[rr,"{F\times G}"] \ar[d,"{-\otimes_R -}"'] & & \Spectra \times \Spectra \ar[d,"{\otimes_{\Spectra}}"] \\
            \mathcal{C}^{\op} \ar[rr,"{F \otimes G := \mathrm{LKE}_{\otimes_R}( \otimes_{\Spectra} \circ (F \times G))}"] & & \Spectra
        \end{tikzcd}\,.
    \end{equation*}
    
    We turn to the proof of \ref{lemmaitem:precompose_relative_hermitian_structure}. 
    By \cite[Corollary 3.2.2.3(1)]{LurHA}, the functor $ p: \Alg_{\mathcal{LM}}\left(\Cath\right) \to \Alg_{\mathcal{LM}} \left(\Catex\right)$ is a cartesian fibration and a map in $ \Alg_{\mathcal{LM}}\left(\Cath\right) $ is $ p $-cartesian if its image in $ \Cath$ is cartesian with respect to $ q: \Cath \to \Catex $. 
    By assumption, $ \mathcal{C} $ and $\mathcal{D}$ are objects of $ \Alg_{\mathcal{LM}} \left(\Catex\right) $, and $ f $ being $ \Mod_R^\omega $-linear means that $ f $ is a morphism in $ \Alg_{\mathcal{LM}} \left(\Catex\right)$. 
    Thus we can take $ p $-cartesian transport of the $ R $-linear hermitian structure on $ \mathcal{D} $. 
\end{proof}

We are now ready to prove several of the key properties of $R$-linear Poincar\'e $ \infty $-categories. We find it helpful to collect the next several results into one Proposition recording all of the key facts we will need about the structure of $R$-linear Poincar\'e $\infty$-categories. 
\begin{proposition}\label{prop:relative_poincare_cats_basic_properties}
    Let $R=(R^{e}, C \to R^{\mathrm{t}C_2})$ be a Poincaré ring spectrum and write $ (\Mod_{R^e}^\omega, \Qoppa_R) $ for the Poincaré $ \infty $-category of Theorem \ref{thm:calgp_to_poincare_cat}. 
    \begin{enumerate}[label=(\arabic*)]
        \item \label{propitem:Rlin_Poincare_cats_is_symm_mon} The $ \infty $-categories $ \Mod_{\left(\Mod_{R^e}^\omega, \Qoppa_R \right)}(\Catp) $ and $ \Mod_{\left(\Mod_{R^e}^\omega, \Qoppa_R \right)}(\Cath) $ admit all small limits and colimits, and it inherits a canonical symmetric monoidal structure, and there is a symmetric monoidal forgetful functor $ \Mod_{\left(\Mod_{R^e}^\omega, \Qoppa_R \right)}(\Catpidem) \to \Mod_{\Mod_{R^e}^\omega}\left(\Catperf\right) $. 
        For every morphism $ R=\left(R^e, R^{\varphi C_2} \to R^{\mathrm{t}C_2}\right) \to S=(S^e, S^{\varphi C_2} \to S^{\mathrm{t}C_2}) $, the functor \[ \Mod_{\left(\Mod_{R^e}^\omega, \Qoppa_R \right)}(\Catpidem) \to \Mod_{\left(\Mod_{S^e}^\omega, \Qoppa_S \right)}(\Catpidem) \] is a symmetric monoidal left adjoint. (Proposition~\ref{prop: structure of R-linear poincare cats part 1})
        \item \label{propitem:classify_R_lin_hermitian_struct_gen} Let $ \mathcal{C} $ be an $ R^e $-linear small stable $ \infty $-category. Then the data of a lift of $ \mathcal{C} $ to $ \Mod_{\left(\Mod_{R^e}^\omega, \Qoppa_R \right)}(\Catp) $ (resp. $ \Mod_{\left(\Mod_{R^e}^\omega, \Qoppa_R \right)}(\Cath) $) is equivalent to the data of a left $ \Qoppa_R $-module in $ \Fun^q(\mathcal{C}) $, resp. $ \Fun^\mathrm{p}(\mathcal{C}) $ (see Lemma \ref{lemma:hermitian_structures_relative_linearity}). (Proposition~\ref{prop: structure of R-linear poincare categories part 2})
        \item \label{propitem:classify_R_lin_hermitian_struct_mod_cat} Let $ A $ be an $ \EE_1 $-$ R^e $-algebra in spectra, and regard the category of compact right $ A $-modules $ \Mod_A^\omega $ as left-tensored over $ \Mod_{R^e}^\omega $ in the canonical way. 
        Then the pullback
        \begin{equation}
            \begin{tikzcd}
                & \Mod_{(\Mod_{R^e}^\omega, \Qoppa_R)}\left(\Cath\right) \ar[d] \\
                \{\Mod_A^\omega\} \ar[r] & \Catex_{, R^e}
            \end{tikzcd}
        \end{equation}
        is canonically equivalent to $ \Mod_{N_{R^e} A \otimes_{N_{R^e} R^e} R^L }\left(\Spectra^{C_2}\right) $ where $ R^L $ is the $ \EE_\infty $-$ N_{R^e} R^e $-algebra with $ (R^L)^e \simeq R^e $ and $ (R^L)^{\varphi C_2}  \simeq C $. 
        
        A $ N_{R^e} A \otimes_{N_{R^e} R^e} R^L $-module classifies a $ (\Mod_{R^e}^\omega, \Qoppa_R) $-module in Poincaré $ \infty $-categories if its underlying $ A $-module is invertible in the sense of \cite[Definition 3.1.4]{CDHHLMNNSI}. (Proposition~\ref{prop: structure of R-linear poincare categories part 3})
        \item \label{propitem:Rlin_Poincare_cats_tensor_mod_gen_inv} Let $ A,B $ be $ R^e $-algebras with associated ($R$-linear) modules with genuine involution $ (M_A, N_A, N_A \to M_A^{\mathrm{t}C_2}) $ and $ (M_B, N_B, N_B \to M_B^{\mathrm{t}C_2}) $, respectively so that (under item \ref{propitem:classify_R_lin_hermitian_struct_mod_cat}) $ \left(\Mod_A^\omega, \Qoppa_A \right) $ and $ \left(\Mod_B^\omega, \Qoppa_B \right) $ are objects of $ \Mod_{\left(\Mod_{R^e}^\omega, \Qoppa_R \right)}(\Catpidem) $. 
        Then the symmetric monoidal structure of item \ref{propitem:Rlin_Poincare_cats_is_symm_mon} is so that the underlying $ R^e$-linear $ \infty $-category with perfect duality $ \left(\Mod_A^\omega, \Qoppa_A \right) \otimes_{\left(\Mod_{R^e}^\omega, \Qoppa_R \right)} \left(\Mod_B^\omega, \Qoppa_B \right) $ is $ \Mod_A^\omega \otimes_{\Mod_{R^e}^\omega} \Mod_B^\omega \simeq \Mod_{A \otimes_{R^e} B}^\omega $, and the associated module with genuine involution is given by $ M_A \otimes_{R^e} M_B $, $ N_A \otimes_{R^{\varphi C_2}} N_B $, and the structure map is $ N_A \otimes_{R^{\varphi C_2}} N_B \to M_A^{\mathrm{t}C_2} \otimes_{R^{\mathrm{t}C_2}} M_B^{\mathrm{t}C_2} \to (M_A \otimes_{R^e} M_B)^{\mathrm{t}C_2} $ where the latter map arises canonically from lax monoidality of the Tate construction. (Proposition~\ref{prop: structure of R-linear poincare categories part 4})
        
        \item \label{prop_item:R_linear_poincare_cats_maps} Let $ \left(\mathcal{C}, \Qoppa_{\mathcal{C}}\right),  \left(\mathcal{D}, \Qoppa_{\mathcal{D}}\right) $ be objects of $ \Mod_{\left(\Mod_{R^e}^\omega, \Qoppa_R \right)}(\Cath) $. 
        Then, using Notation~\ref{notation: R-linear poincare cats}, the forgetful functor induces the map $ \hom_{\Cath_{, R}} \left(\left(\mathcal{C}, \Qoppa_{\mathcal{C}}\right),  \left(\mathcal{D}, \Qoppa_{\mathcal{D}}\right)\right) \to \hom_{\Catex_R}\left(\mathcal{C}, \mathcal{D}\right) $ on mapping spaces so that the fiber over an $ R^e $-linear functor $ F \colon \mathcal{C} \to \mathcal{D} $ is the mapping space $ \mathrm{map}_{\Qoppa_R}\left(F_! \Qoppa_\mathcal{C}, \Qoppa_{\mathcal{D}}\right) \simeq \mathrm{map}_{\Qoppa_R}(\Qoppa_{\mathcal{C}}, \Qoppa_{\mathcal{D}} \circ F^\op) $, where the mapping space is taken in $ \Fun_{\Qoppa_R}^q(\mathcal{D}^{\op},\Spectra) $ and $ \Fun_{\Qoppa_R}^q(\mathcal{C}^{\op},\Spectra) $, respectively (this makes sense by Lemma \ref{lemma:hermitian_structures_relative_linearity}). (Proposition~\ref{prop: structure of R-linear poincare categories part 5})
        
        \item The symmetric monoidal forgetful functor $ \theta \colon \Mod_{\left(\Mod_{R^e}^\omega, \Qoppa_R \right)}(\Cath) \to \Mod_{\Mod_{R^e}^\omega}(\Catex) $ is a (co)cartesian fibration. 
        It is classified by the assignment sending a small stable $ R^e $-linear $ \infty $-category $ \mathcal{C} $ to $ \Mod_{\Qoppa_R}\left(\Fun^q\left(\mathcal{C}\right)\right) $, where $ \Fun^q\left(\mathcal{C}\right) $ is regarded as left-tensored over $ \Fun^q\left(\Mod_{R^e}^\omega\right) $ by Lemma \ref{lemma:hermitian_structures_relative_linearity}. (Proposition~\ref{prop: structure of R-linear poincare categories part 6})
        \item~\label{prop_item:internal hom in R-linear poincare cats} The symmetric monoidal structure on $\mathrm{Mod}_{(\mathrm{Mod}_{R^e}^\omega,\Qoppa_R)}(\Cath)$ and $\mathrm{Mod}_{(\mathrm{Mod}_{R^e}^\omega,\Qoppa_R)}(\Catp)$ are closed, and the functors $\mathrm{Mod}_{(\mathrm{Mod}_{R^e}^\omega,\Qoppa_R)}(\Catp)\to \mathrm{Mod}_{(\mathrm{Mod}_{R^e}^\omega,\Qoppa_R)}(\Cath)\to \Catex_{, R^e}$ preserve the internal mapping objects. (Proposition~\ref{prop: structure of R-linear poincare categories part 7})
    \end{enumerate}
\end{proposition}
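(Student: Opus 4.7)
The plan is to handle the seven parts in the order stated, reducing each statement either to general facts about modules in presentable symmetric monoidal $\infty$-categories, to analogous results in the non-relative setting established in \cite{CDHHLMNNSI}, or to the fibration-theoretic Lemma~\ref{lemma:hermitian_structures_relative_linearity}. Parts (1), (5), (6), and (7) are essentially formal once the correct machinery is assembled; part (2) is the key structural input; and parts (3) and (4) are the computational heart of the proposition.

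For (1), the existence of all small limits and colimits in $\Mod_{(\Mod_{R^e}^\omega,\Qoppa_R)}(\mathcal{E})$ for $\mathcal{E}\in\{\Cath,\Catp\}$ and the inherited symmetric monoidal structure are instances of \cite[\S 3.4.4, \S 4.5.2]{LurHA}, applied to the presentable symmetric monoidal $\infty$-categories $\Cath$ and $\Catp$; the forgetful functor to $\Mod_{\Mod_{R^e}^\omega}(\Catperf)$ and the fact that basechange along any map $R\to S$ of Poincaré rings is a symmetric monoidal left adjoint are then formal consequences of functoriality of $\Mod_{(-)}(\,\cdot\,)$ applied to the symmetric monoidal forgetful $\Catpidem\to\Catperf$ and to the morphism of commutative algebras $\Mod^{\mathrm p}_R\to\Mod^{\mathrm p}_S$ supplied by Theorem~\ref{thm:calgp_to_poincare_cat}. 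Parts (2), (5), and (6) are bookkeeping around the pullback square (\ref{diagram:module_structure_on_pb_functors}): once one knows that $\Fun^q(\mathcal{C})$ is left-tensored over $\Fun^q(\Mod_{R^e}^\omega)$ in a way compatible with the cocartesian fibration $\Cath^\otimes\to(\Catex)^\otimes$, unpacking a $(\Mod^\omega_{R^e},\Qoppa_R)$-module structure on $(\mathcal{C},\Qoppa_\mathcal{C})$ gives exactly a left $\Qoppa_R$-module structure on $\Qoppa_\mathcal{C}$ (this is (2)); reading off mapping spaces in the Grothendieck construction gives (5); and the (co)cartesian structure is inherited from the (co)cartesian fibration $\Cath\to\Catex$ recalled from \cite[Thm 5.2.7]{CDHHLMNNSI}, which yields (6).

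The main obstacle is part (3), where one must promote the absolute equivalence $\Mod_{N^{C_2}A}(\Spectra^{C_2})\simeq\Fun^q(\Mod_A^\omega)$ of Theorem~\ref{thm: genuine equivarient modules are hermitian structures} to an equivalence parameterized by the $\Qoppa_R$-action. By (2), a $(\Mod_{R^e}^\omega,\Qoppa_R)$-module structure on $\Mod_A^\omega$ amounts to a $\Qoppa_R$-module structure on the quadratic functor classified by some $N^{C_2}A$-module. Under the symmetric monoidal equivalence of Theorem~\ref{thm: genuine equivarient modules are hermitian structures}, the algebra object $\Qoppa_R$ corresponds to the $\Einfty$-$N^{C_2}R^e$-algebra $R^L$ produced in Remark~\ref{rmk:Poincare_ring_has_underlying_C2_spectrum_alg}; basechange along the unit $N^{C_2}R^e\to R^L$ then identifies $\Qoppa_R$-modules in $\Fun^q(\Mod_A^\omega)$ with modules over $N^{C_2}A\otimes_{N^{C_2}R^e}R^L$ in $\Spectra^{C_2}$. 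The criterion for such a module to define a Poincaré structure is that its underlying $A$-module be invertible, which reduces to the absolute Poincaré criterion because the duality functor depends only on the underlying hermitian structure. Part (4) follows by combining (3) with the symmetric monoidality of the functor $\Mod^{\mathrm p}_{(-)}$ from Theorem~\ref{thm:calgp_to_poincare_cat}: the underlying $R^e$-linear perfect category is $\Mod_{A\otimes_{R^e}B}^\omega$ by standard Morita theory, the corresponding genuine-equivariant module is computed as the relative tensor product $(N^{C_2}A\otimes_{N^{C_2}R^e}R^L)\otimes_{R^L}(N^{C_2}B\otimes_{N^{C_2}R^e}R^L)$, and the explicit formula for the structure map is dictated by the lax symmetric monoidality of $(-)^{\mathrm{t}C_2}$ together with the récollement description of $\CAlg(\Spectra^{C_2})$.

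Finally, (7) combines presentability (established in (1)) with the adjoint functor theorem to produce internal mapping objects in both $\Cath_R$ and $\Catp_R$; preservation of these internal homs under the forgetful functors to $\Catex_{R^e}$ then follows because each forgetful functor is symmetric monoidal colimit-preserving, hence its right adjoint commutes with the cotensor defining the internal hom. Alternatively, one may inherit the formula from the closed symmetric monoidal structures on $\Cath$ and $\Catp$ recorded in \cite[\S 6.2]{CDHHLMNNSI} and check $(\Mod_{R^e}^\omega,\Qoppa_R)$-linearity of the resulting mapping object directly via (2).
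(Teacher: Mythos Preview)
Your outline for parts (1), (2), (3), (5), and (6) matches the paper's approach closely, and those parts are fine.

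The main gap is in part (4). You invoke ``the symmetric monoidality of the functor $\Mod^{\mathrm p}_{(-)}$ from Theorem~\ref{thm:calgp_to_poincare_cat},'' but that functor has domain $\CAlgp$, whereas $A$ and $B$ here are merely $\EE_1$-$R^e$-algebras equipped with modules with genuine involution --- they are not Poincar\'e rings, so $\Mod^{\mathrm p}$ does not apply to them. Moreover, the expression you write down, $(N^{C_2}A\otimes_{N^{C_2}R^e}R^L)\otimes_{R^L}(N^{C_2}B\otimes_{N^{C_2}R^e}R^L)$, is the \emph{algebra} over which the answer lives, not the module classifying the hermitian structure; the actual content of (4) is that the modules $\overline{M}_A$ and $\overline{M}_B$ tensor over $R^L$ to give the module on $\Mod_{A\otimes_{R^e}B}^\omega$. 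The paper establishes this by computing the relative tensor product in $\Cath_R$ as the geometric realization of a bar construction (\cite[Theorem 4.4.2.8]{LurHA}), and then uses the relative-colimit technology of \cite[Propositions 4.3.1.5, 4.3.1.9--11]{HTT} applied to the cocartesian fibration $\Cath\to\Catex$ to transport this bar construction into the fiber $\Fun^q(\Mod_{A\otimes_{R^e}B}^\omega)\simeq \Mod_{N^{C_2}(A\otimes_{R^e}B)}(\Spectra^{C_2})$, where it becomes the bar construction computing $\overline{M}_A\otimes_{R^L}\overline{M}_B$. This fibrational argument is the real work in (4), and nothing in your sketch supplies it.

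There is also a smaller issue in (7): your claim that preservation of internal homs follows because ``each forgetful functor is symmetric monoidal colimit-preserving, hence its right adjoint commutes with the cotensor defining the internal hom'' does not parse --- the internal hom is not a cotensor, and a symmetric monoidal left adjoint need not preserve internal homs without a projection formula. The paper instead produces an explicit bar-resolution formula $\underline{\mathrm{Hom}}_R(M,N)\simeq\mathrm{Tot}\bigl(\underline{\mathrm{Hom}}_{\mathcal{C}}(M\otimes R^{\otimes\bullet},N)\bigr)$ and then observes that the forgetful functors $\Catp\to\Cath\to\Catex$ are symmetric monoidal, limit-preserving, and preserve internal homs of the ambient categories (from \cite[\S6.2]{CDHHLMNNSI}), which suffices to preserve this totalization.
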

\begin{remark}
    A special case of part \ref{propitem:classify_R_lin_hermitian_struct_mod_cat} is \cite[Example 5.4.13]{CDHHLMNNSI}.
\end{remark}

\begin{proposition}~\label{prop: structure of R-linear poincare cats part 1}
    The $ \infty $-categories $ \Mod_{\left(\Mod_{R^e}^\omega, \Qoppa_R \right)}(\Catp) $ and $ \Mod_{\left(\Mod_{R^e}^\omega, \Qoppa_R \right)}(\Cath) $ admit all small limits and colimits, and it inherits a canonical symmetric monoidal structure, and there is a symmetric monoidal forgetful functor $ \Mod_{\left(\Mod_{R^e}^\omega, \Qoppa_R \right)}(\Catpidem) \to \Mod_{\Mod_{R^e}^\omega}\left(\Catperf\right) $. 
    For every morphism $ R=\left(R^e, R^{\varphi C_2} \to R^{\mathrm{t}C_2}\right) \to S=(S^e, S^{\varphi C_2} \to S^{\mathrm{t}C_2}) $, the functor \[ \Mod_{\left(\Mod_{R^e}^\omega, \Qoppa_R \right)}(\Catpidem) \to \Mod_{\left(\Mod_{S^e}^\omega, \Qoppa_S \right)}(\Catpidem) \] is a symmetric monoidal left adjoint.
\end{proposition}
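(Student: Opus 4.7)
[Proof plan for Proposition \ref{prop: structure of R-linear poincare cats part 1}]
The plan is to deduce every assertion formally from (a) the structural results on $ \Cath $, $ \Catp $ and $ \Catpidem $ recalled earlier in the paper together with (b) the general theory of modules over a commutative algebra object in a presentable symmetric monoidal $ \infty $-category from \cite{LurHA}. Throughout, we use that by Theorem \ref{thm:calgp_to_poincare_cat}, the Poincaré ring $ R $ gives rise to a commutative algebra object $ (\Mod_{R^e}^\omega,\Qoppa_R) $ in $ \Catpidem $, whose image in $ \Cath $ and $ \Catp $ are also commutative algebra objects by the symmetric monoidal forgetful functors $ \Catpidem \subseteq \Catp \subseteq \Cath $.

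First I would handle the symmetric monoidal structure and (co)limits. The cited Theorem of \cite{CDHHLMNNSI} records that $ \Cath $ and $ \Catp $ admit all small limits and colimits and are closed symmetric monoidal. Applying \cite[Corollary 3.4.4.6 and Theorem 4.5.2.1]{LurHA} to the commutative algebra $ (\Mod_{R^e}^\omega,\Qoppa_R) $ in each of $ \Cath $ and $ \Catp $ yields presentable $ \infty $-categories $ \mathrm{Mod}_{(\Mod_{R^e}^\omega,\Qoppa_R)}(\Cath) $ and $ \mathrm{Mod}_{(\Mod_{R^e}^\omega,\Qoppa_R)}(\Catp) $ inheriting symmetric monoidal structures together with small limits and colimits; the forgetful functors to $ \Cath $, $ \Catp $ preserve limits and reflect equivalences, and also create colimits relative to the underlying $ \infty $-category by \cite[Corollary 4.2.3.5]{LurHA}. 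The identical machinery applied to $ \Catpidem $, using Proposition \ref{proposition:idempotent_completion_of_sym_mon_catp} to produce the symmetric monoidal structure, handles the idempotent-complete variant.

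Next I would construct the symmetric monoidal forgetful functor to underlying linear categories. The symmetric monoidal forgetful functor $ \Catpidem \to \Catperf $ of Proposition \ref{proposition:idempotent_completion_of_sym_mon_catp} sends $ (\Mod_{R^e}^\omega, \Qoppa_R) $ to $ \Mod_{R^e}^\omega $, and by functoriality of $ \mathrm{Mod}_{(-)}(-) $ in both a symmetric monoidal $ \infty $-category and its algebra input (cf. \cite[Theorem 4.5.3.1]{LurHA}) it induces the desired symmetric monoidal functor
\[
\Mod_{(\Mod_{R^e}^\omega, \Qoppa_R)}(\Catpidem) \longrightarrow \Mod_{\Mod_{R^e}^\omega}(\Catperf).
\]

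Finally, for the base change statement, given a morphism $ R \to S $ in $ \CAlgp $, Theorem \ref{thm:calgp_to_poincare_cat} produces a morphism $ (\Mod_{R^e}^\omega,\Qoppa_R) \to (\Mod_{S^e}^\omega,\Qoppa_S) $ of commutative algebras in $ \Catpidem $. Applying the general base change theorem \cite[Theorem 4.5.3.1]{LurHA} to this morphism in the presentable symmetric monoidal $ \infty $-category $ \Catpidem $ produces a symmetric monoidal left adjoint
\[
(\Mod_{S^e}^\omega,\Qoppa_S) \otimes_{(\Mod_{R^e}^\omega,\Qoppa_R)} (-) \colon \Mod_{(\Mod_{R^e}^\omega,\Qoppa_R)}(\Catpidem) \longrightarrow \Mod_{(\Mod_{S^e}^\omega,\Qoppa_S)}(\Catpidem),
\]
as required. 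The main technical point to verify carefully is that the symmetric monoidal structure on $ \Catpidem $ supplied by Proposition \ref{proposition:idempotent_completion_of_sym_mon_catp} is presentable and closed, so that \cite[Theorem 4.5.3.1]{LurHA} applies; this is routine since idempotent-completion is a symmetric monoidal localization of the presentably symmetric monoidal $ \Catp $. No step requires a genuinely new idea beyond the invocation of these formal results.
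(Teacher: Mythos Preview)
Your proposal is correct and takes essentially the same approach as the paper: both deduce everything formally from the structural results on $\Cath$, $\Catp$, $\Catpidem$ in \cite{CDHHLMNNSI} combined with the general module-category machinery in \cite{LurHA}. The paper's own proof is in fact even terser than yours, citing \cite[\S6.1]{CDHHLMNNSI} and \cite[\S4.2.3]{LurHA} for the first part and \cite[Corollary 6.2.17]{CDHHLMNNSI} together with \cite[Corollary 4.8.5.21]{LurHA} for the base-change statement. One small correction: the symmetric monoidal forgetful functor $\Catpidem \to \Catperf$ is not the content of Proposition~\ref{proposition:idempotent_completion_of_sym_mon_catp} (which concerns idempotent completion $\Catp \to \Catpidem$); it comes instead from restricting the symmetric monoidal forgetful functor $\Cath \to \Catex$ recalled in the theorem of \S\ref{section:poincare_structures_on_compact_modules}.
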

\begin{proof}
    The first part of the statement follows from \cite[\S6.1]{CDHHLMNNSI} and \cite[\S4.2.3]{LurHA}. 
    The second part of the statement follows from \cite[Corollary 6.2.17]{CDHHLMNNSI} and \cite[Corollary 4.8.5.21]{LurHA}. 
    
\end{proof}
\begin{proposition}~\label{prop: structure of R-linear poincare categories part 2}
    Let $ \mathcal{C} $ be an $ R^e $-linear small stable $ \infty $-category. Then the data of a lift of $ \mathcal{C} $ to an element of $ \Mod_{\left(\Mod_{R^e}^\omega, \Qoppa_R \right)}(\Catp) $ (resp. $ \Mod_{\left(\Mod_{R^e}^\omega, \Qoppa_R \right)}(\Cath) $) is equivalent to the data of a left $ \Qoppa_R $-module in $ \Fun^q(\mathcal{C}) $, resp. $ \Fun^\mathrm{p}(\mathcal{C}) $ (see Lemma \ref{lemma:hermitian_structures_relative_linearity}).
\end{proposition}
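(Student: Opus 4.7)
The plan is to derive the equivalence from an unstraightening argument for modules in a cocartesian fibration of $\infty$-operads. Recall from the proof of Lemma \ref{lemma:hermitian_structures_relative_linearity} (and \cite[Theorem 5.2.7]{CDHHLMNNSI}) that the forgetful functor $\theta\colon \Cath \to \Catex$ is a cocartesian fibration refining, via the symmetric monoidal structure on $\Cath$, to a symmetric monoidal morphism of $\infty$-operads $\theta^\otimes \colon \Cath^\otimes \to \Catex^\otimes$, classified over $\Catex$ by $\mathcal{C} \mapsto \Fun^q(\mathcal{C})$ equipped with the tensor products of \cite[Corollary 5.3.7]{CDHHLMNNSI}.

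First I would take modules over $(\Mod_{R^e}^\omega,\Qoppa_R) \in \CAlg(\Cath)$ and over its image $\Mod_{R^e}^\omega \in \CAlg(\Catex)$, obtaining a commutative square
\begin{equation*}
\begin{tikzcd}
\Mod_{(\Mod_{R^e}^\omega,\Qoppa_R)}(\Cath) \ar[r] \ar[d,"{\theta_{R}}"'] & \Cath \ar[d,"\theta"] \\
\Mod_{\Mod_{R^e}^\omega}(\Catex) \ar[r] & \Catex\,.
\end{tikzcd}
\end{equation*}
By the relative modules construction for a symmetric monoidal cocartesian fibration of operads (cf.\ \cite[\S3.4 and Theorem 4.8.5.16]{LurHA}), $\theta_R$ is itself a cocartesian fibration whose fiber over an $R^e$-linear category $\mathcal{C}$ is the $\infty$-category of $\Qoppa_R$-modules in $\Fun^q(\mathcal{C})$, where the latter is left-tensored over $\Fun^q(\Mod_{R^e}^\omega)$ as in Lemma \ref{lemma:hermitian_structures_relative_linearity}(1). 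Unwinding the $\mathcal{LM}$-operadic description of an action of $(\Mod_{R^e}^\omega,\Qoppa_R)$ on $(\mathcal{C},\Qoppa)$ lying over the given action on $\mathcal{C}$, and using the explicit formula for tensoring hermitian structures (left Kan extension along the tensor product, as in the proof of Lemma \ref{lemma:hermitian_structures_relative_linearity}), such an action amounts to a coherent unital multiplication $\Qoppa_R \otimes \Qoppa \to \Qoppa$ in $\Fun^q(\mathcal{C})$, i.e.\ precisely a left $\Qoppa_R$-module structure on $\Qoppa$.

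The Poincaré variant follows because $\Catp^\otimes \subseteq \Cath^\otimes$ is a full sub-operad (\cite[Corollary 6.2.9]{CDHHLMNNSI}): replacing $\Cath$ by $\Catp$ restricts the fiber description to those $\Qoppa_R$-modules in $\Fun^q(\mathcal{C})$ whose underlying hermitian structure is Poincaré, giving $\Qoppa_R$-modules in $\Fun^\mathrm{p}(\mathcal{C})$. The main obstacle is to verify that the generic modules-in-a-cocartesian-fibration result applies here and that the identification of fibers is natural in $\mathcal{C}$; this reduces to checking that the $\theta^\otimes$-cocartesian lifts in $\Cath^\otimes$, which by construction are given by left Kan extension of quadratic functors, are compatible with passing to $\Qoppa_R$-modules fiberwise. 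This compatibility is essentially what was set up in Lemma \ref{lemma:hermitian_structures_relative_linearity}(2) and the pullback diagram \eqref{diagram:module_structure_on_pb_functors} used in its proof.
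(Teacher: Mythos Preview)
Your approach is correct and rests on the same key input the paper uses, namely the pullback diagram \eqref{diagram:module_structure_on_pb_functors} set up in Lemma~\ref{lemma:hermitian_structures_relative_linearity}. However, the paper's own proof is a single line: ``Follows from the pullback squares in \eqref{diagram:module_structure_on_pb_functors}.'' The point is that since those squares are pullbacks of $\infty$-operads sitting over $\mathcal{LM}^\otimes$, the space of $\mathcal{LM}$-algebras in $\Cath^\otimes$ (resp.\ $\Catp^\otimes$) lying over the given $\mathcal{LM}$-algebra $(\Mod_{R^e}^\omega \circlearrowright \mathcal{C})$ in $\Catex^\otimes$ is identified directly with the space of $\mathcal{LM}$-algebras in the pulled-back operad, i.e.\ left $\Qoppa_R$-modules in $\Fun^q(\mathcal{C})$ (resp.\ $\Fun^\mathrm{p}(\mathcal{C})$). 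You instead first argue that $\theta_R$ is a cocartesian fibration with the prescribed fibers --- a statement the paper establishes separately, as a later item of the same omnibus proposition --- and then read off the fiber. This is not wrong, just a longer road to the same destination; and your reference to \cite[Theorem 4.8.5.16]{LurHA} does not directly supply the fiber identification (that theorem concerns symmetric monoidality of the Morita functor $\Theta$), which is why you correctly fall back on the pullback squares at the end.

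One minor correction: $\Catp^\otimes \subseteq \Cath^\otimes$ is \emph{not} a full sub-operad, since Poincaré functors form a proper subclass of hermitian functors between Poincaré $\infty$-categories. The Poincaré case follows not from fullness but because the leftmost square in \eqref{diagram:module_structure_on_pb_functors} is itself a pullback, so the same operadic argument applies verbatim with $\Catp^\otimes$ and $\Fun^\mathrm{p}$ in place of $\Cath^\otimes$ and $\Fun^q$.
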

\begin{proof}
    Follows from the pullback squares in (\ref{diagram:module_structure_on_pb_functors}).
\end{proof}
\begin{proposition}~\label{prop: structure of R-linear poincare categories part 3}
    Let $ A $ be an $ \EE_1 $-$ R^e $-algebra in spectra, and regard the category of compact right $ A $-modules $ \Mod_A^\omega $ as left-tensored over $ \Mod_R^\omega $ in the canonical way. 
    Then the pullback
    \begin{equation}
        \begin{tikzcd}
            & \Mod_{(\Mod_{R^e}^\omega, \Qoppa_R)}\left(\Cath\right) \ar[d] \\
            \{\Mod_A^\omega\} \ar[r] & \Catex_{, R^e}
        \end{tikzcd}
    \end{equation}
    is canonically equivalent to $ \Mod_{N_{R^e} A \otimes_{N_{R^e} R^e} R^L }\left(\Spectra^{C_2}\right) $ where $ R^L $ is the $ \EE_\infty $-$ N_{R^e} R^e $-algebra with $ (R^L)^e \simeq R^e $ and $ (R^L)^{\varphi C_2}  \simeq C $. 
    
    A $ N_{R^e} A \otimes_{N_{R^e} R^e} R^L $-module classifies a $ (\Mod_{R^e}^\omega, \Qoppa_R) $-module in Poincaré $ \infty $-categories if its underlying $ A $-module is invertible in the sense of \cite[Definition 3.1.4]{CDHHLMNNSI}
\end{proposition}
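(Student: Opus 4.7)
The plan is to bootstrap from the absolute case (Theorem \ref{thm: genuine equivarient modules are hermitian structures}, i.e. \cite[Theorem 3.3.1]{CDHHLMNNSI}) by combining it with part \ref{propitem:classify_R_lin_hermitian_struct_gen} of the proposition (which characterizes $R$-linear hermitian structures as $\Qoppa_R$-modules in quadratic functors). First, by part \ref{propitem:classify_R_lin_hermitian_struct_gen} applied to $\mathcal{C} = \Mod_A^\omega$ viewed as left-tensored over $\Mod_{R^e}^\omega$, the fiber we want is equivalent to the $\infty$-category of $\Qoppa_R$-module structures on quadratic functors $\Mod_A^{\omega,\op} \to \Spectra$, with the module action coming from Lemma \ref{lemma:hermitian_structures_relative_linearity}.

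Next, I would invoke the symmetric monoidal equivalence of Theorem \ref{thm: genuine equivarient modules are hermitian structures} (in the $\mathbb{E}_\infty$ case for $R^e$), which identifies $\Fun^q(\Mod_{R^e}^\omega) \simeq \Mod_{N^{C_2}R^e}(\Spectra^{C_2})$, and its $\mathbb{E}_1$-variant identifying $\Fun^q(\Mod_A^\omega) \simeq \Mod_{N^{C_2}A}(\Spectra^{C_2})$ as a left module over the former. Under this identification, the Poincaré ring structure on $R$ (i.e. the data $R^e \to C \to R^{\mathrm{t}C_2}$) is precisely the data classifying $\Qoppa_R$ as a $N^{C_2}R^e$-algebra, namely $R^L$ (this is the content of Remark \ref{rmk:Poincare_ring_has_underlying_C2_spectrum_alg} and the functorial assignment (\ref{eq:Poincare_ring_functorial_norm})). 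Therefore a $\Qoppa_R$-module in $\Mod_{N^{C_2}A}(\Spectra^{C_2})$ is the same as an $R^L$-module in $N^{C_2}A$-modules, which by standard base change (\cite[\S4.5]{LurHA}) is a module over the relative tensor product $N^{C_2}A \otimes_{N^{C_2}R^e} R^L$ in $\Spectra^{C_2}$.

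For the second half of the statement, I would check that the $R$-linear perfect (Poincaré) condition reduces to the absolute one. By part \ref{propitem:classify_R_lin_hermitian_struct_gen}, the hermitian structure is Poincaré as an $R$-linear hermitian $\infty$-category if and only if it is Poincaré after forgetting the $R$-linearity, because the duality functor and its equivalence condition are detected on underlying $\infty$-categories (the forgetful functor $\Catp \to \Cath \to \Catex$ is conservative and the duality functor factors through the underlying bilinear form). Applying Theorem \ref{thm: genuine equivarient modules are hermitian structures} once more, this is equivalent to requiring that the underlying $A$-module be invertible in the sense of \cite[Definition 3.1.4]{CDHHLMNNSI}.

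The only genuinely subtle step is the identification of $\Qoppa_R$ with $R^L$ as an algebra over $N^{C_2}R^e$ in a way that is compatible with all the module structures we wish to keep track of; the functorial construction of $R \mapsto (N^{C_2}R^e \to R^L)$ in diagram (\ref{diagram:Poincare_ring_alternate_diagram}) and the composition of the symmetric monoidal equivalence of Theorem \ref{thm: genuine equivarient modules are hermitian structures} with the functor $\CAlgp \to \Catp$ of Theorem \ref{thm:calgp_to_poincare_cat} should match up, but verifying this coherently requires some bookkeeping. Modulo this identification, the remaining arguments are formal from the pullback-of-fibrations argument used in the proof of Lemma \ref{lemma:hermitian_structures_relative_linearity} combined with the base-change equivalence $\Mod_{R^L}(\Mod_{N^{C_2}A}(\Spectra^{C_2})) \simeq \Mod_{N^{C_2}A \otimes_{N^{C_2}R^e} R^L}(\Spectra^{C_2})$.
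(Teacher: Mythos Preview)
Your proposal is correct and takes essentially the same approach as the paper: both start from part \ref{propitem:classify_R_lin_hermitian_struct_gen} to reduce to $\Qoppa_R$-module objects in $\Fun^q(\Mod_A^\omega)$, then invoke the classification \cite[Theorem 3.3.1]{CDHHLMNNSI} to translate into $N^{C_2}A$-modules and base-change to $N^{C_2}A \otimes_{N^{C_2}R^e} R^L$. The paper phrases the reduction slightly more operadically (working with $\mathcal{LM}^\otimes$-algebra objects explicitly and citing \cite[Corollary 3.4.1]{CDHHLMNNSI} to identify the induced module action via induction along $R^e \otimes A \to A$), whereas you appeal directly to the symmetric monoidality in Theorem \ref{thm: genuine equivarient modules are hermitian structures} together with standard base-change from \cite[\S4.5]{LurHA}; but the content is the same, and you correctly flag the one genuinely delicate point (the coherent identification of $\Qoppa_R$ with $R^L$).
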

\begin{proof}
    Let $ \mathcal{LM}^\otimes $ denote the $ \infty $-operad of \cite[Definition 4.2.1.7]{LurHA}. 
    We will use a (suitably coherent version of) the classification of hermitian structures on module categories as categories of modules over the Hill--Hopkins--Ravenel norm \cite[Theorem 3.3.1]{CDHHLMNNSI}. 				By \ref{propitem:classify_R_lin_hermitian_struct_gen}, to lift $ \Mod_A^\omega $ to a module over $ \left(\Mod_{R^e}^\omega,\Qoppa_R\right) $ compatibly with the $ \Mod_{R^e}^\omega $-module structure on $ \Mod_A^\omega $ is to give a map of $ \infty $-operads \[ \mathcal{LM}^\otimes \to \Fun^q\left(\Mod_{A}^{\omega}\right)^\otimes \to \Cath^\otimes \] so that the restriction along the canonical inclusion $ \mathrm{Assoc}^\otimes \to \mathcal{LM}^\otimes $ gives the algebra object $ \left(\Mod_{R^e}^\omega,\Qoppa_R\right) $ and postcomposing with the canonical projection to $ \Catex^\times $ recovers the given $ \Mod_{R^e}^\omega $-module structure on $ \Mod_A^\omega $. 
    By the pullback square (\ref{diagram:module_structure_on_pb_functors}), this is equivalent to giving an object of \[ \mathrm{Alg}_{\mathcal{LM}/\mathcal{LM}}\left(\Fun_{\Mod_{R^e}^{\omega,\op}}^q(\Mod_A^{\omega,\op},\Spectra)^\otimes\right)\,. \] 
    Now let us identify the bilinear functor $ \Mod_{R^e}^\omega \times \Mod_A^\omega \xrightarrow{ - \otimes_{R^e} -} \Mod_A^\omega $ with the exact functor \[ \Mod_{R^e}^\omega \otimes \Mod_A^\omega \simeq \Mod_{R^e \otimes A}^\omega \to \Mod_A^\omega \] which is induction along the action map $ R^e \otimes A \to A $. 
    Using \cite[Corollary 3.4.1]{CDHHLMNNSI} and unravelling definitions gives the claim for $ R$-linear hermitian structures. 
    The proof for $ R $-linear Poincaré structures considers the $ \infty $-operad $ \Fun^{p}\left(\Mod_A^\omega\right) $ instead but otherwise proceeds in an identical fashion.   
\end{proof}
\begin{proposition}~\label{prop: structure of R-linear poincare categories part 4}
    Let $ A,B $ be $ R^e $-algebras with associated ($R$-linear) modules with genuine involution $ (M_A, N_A, N_A \to M_A^{\mathrm{t}C_2}) $ and $ (M_B, N_B, N_B \to M_B^{\mathrm{t}C_2}) $, respectively so that (under item \ref{propitem:classify_R_lin_hermitian_struct_mod_cat}) $ \left(\Mod_A^\omega, \Qoppa_A \right) $ and $ \left(\Mod_B^\omega, \Qoppa_B \right) $ are objects of $ \Mod_{\left(\Mod_{R^e}^\omega, \Qoppa_R \right)}(\Catpidem) $. 
    Then the symmetric monoidal structure of item \ref{propitem:Rlin_Poincare_cats_is_symm_mon} is so that the underlying $ R^e$-linear $ \infty $-category with perfect duality $ \left(\Mod_A^\omega, \Qoppa_A \right) \otimes_{\left(\Mod_{R^e}^\omega, \Qoppa_R \right)} \left(\Mod_B^\omega, \Qoppa_B \right) $ is $ \Mod_A^\omega \otimes_{\Mod_{R^e}^\omega} \Mod_B^\omega \simeq \Mod_{A \otimes_{R^e} B}^\omega $, and the associated module with genuine involution is given by \[ (M_A \otimes_{R^e} M_B,  N_A \otimes_{R^{\varphi C_2}} N_B, N_A \otimes_{R^{\varphi C_2}} N_B \to M_A^{\mathrm{t}C_2} \otimes_{R^{\mathrm{t}C_2}} M_B^{\mathrm{t}C_2} \to (M_A \otimes_{R^e} M_B)^{\mathrm{t}C_2}) \]  where the latter map arises canonically from lax monoidality of the Tate construction.
\end{proposition}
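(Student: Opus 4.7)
The proof strategy is to promote the classification of $R$-linear hermitian structures in Proposition~\ref{prop: structure of R-linear poincare categories part 3} to a \emph{symmetric monoidal} equivalence, and then compute the tensor product on the side of norm-modules, where the formula becomes transparent. The plan is as follows.

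First, the underlying $R^e$-linear stable $\infty$-category is immediate: by Proposition~\ref{prop: structure of R-linear poincare cats part 1}, the forgetful functor $\Mod_{(\Mod_{R^e}^\omega,\Qoppa_R)}(\Catpidem) \to \Mod_{\Mod_{R^e}^\omega}(\Catperf)$ is symmetric monoidal, so the underlying stable $\infty$-category of $\left(\Mod_A^\omega, \Qoppa_A \right) \otimes_{\left(\Mod_{R^e}^\omega, \Qoppa_R \right)} \left(\Mod_B^\omega, \Qoppa_B \right)$ is $\Mod_A^\omega \otimes_{\Mod_{R^e}^\omega} \Mod_B^\omega \simeq \Mod_{A\otimes_{R^e} B}^\omega$, where the second equivalence uses \cite[Theorem 4.8.5.16]{LurHA}.

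Second, I will upgrade the fiberwise identification of Proposition~\ref{prop: structure of R-linear poincare categories part 3} to a symmetric monoidal equivalence. Concretely, I will check that the assignment sending an $R^e$-algebra $A$ to $\Mod_{N_{R^e}A \otimes_{N_{R^e}R^e} R^L}(\Spectra^{C_2})$ defines a lax symmetric monoidal functor with the appropriate symmetric monoidality, using that the Hill--Hopkins--Ravenel norm $N^{C_2}$ is symmetric monoidal (see \cite{hillhopkinsravenel}) and hence induces an equivalence
\begin{equation*}
N_{R^e}(A \otimes_{R^e} B) \simeq N_{R^e} A \otimes_{N_{R^e} R^e} N_{R^e} B,
\end{equation*}
and that $R^L$ is an $\EE_\infty$-algebra. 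It follows that the ring classifying $R$-linear hermitian structures on $\Mod_{A\otimes_{R^e} B}^\omega$ is
\begin{equation*}
N_{R^e}(A\otimes_{R^e}B) \otimes_{N_{R^e}R^e} R^L \simeq \bigl(N_{R^e}A \otimes_{N_{R^e}R^e} R^L\bigr) \otimes_{R^L} \bigl(N_{R^e}B \otimes_{N_{R^e}R^e} R^L\bigr),
\end{equation*}
and the Poincaré structure on the tensor product corresponds to the $R^L$-module tensor product $M_A \otimes_{R^L} M_B$.

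Third, I will unwind this $R^L$-module tensor product using the symmetric monoidal récollement of $\Spectra^{C_2}$ (Notation~\ref{notation:genuine_C2_spectra_and_underlying_spectra}), which detects a $C_2$-spectrum from its underlying spectrum, its geometric fixed points, and the reference map. The underlying spectrum functor $(-)^e$ and the geometric fixed points $(-)^{\varphi C_2}$ are both symmetric monoidal, so $\bigl(M_A \otimes_{R^L} M_B\bigr)^e \simeq M_A \otimes_{R^e} M_B$ and $\bigl(M_A \otimes_{R^L} M_B\bigr)^{\varphi C_2} \simeq N_A \otimes_{R^{\varphi C_2}} N_B$, exactly as in the statement. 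The structure map is determined by lax symmetric monoidality of the Tate construction $(-)^{\mathrm{t}C_2}$, giving the composite $N_A \otimes_{R^{\varphi C_2}} N_B \to M_A^{\mathrm{t}C_2}\otimes_{R^{\mathrm{t}C_2}} M_B^{\mathrm{t}C_2} \to (M_A \otimes_{R^e} M_B)^{\mathrm{t}C_2}$ of the statement.

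The principal obstacle I anticipate is promoting the classification of Proposition~\ref{prop: structure of R-linear poincare categories part 3} to a \emph{coherently symmetric monoidal} equivalence in families, rather than merely as an equivalence of $\infty$-categories. This requires showing that the equivalence in \cite[Theorem 3.3.1]{CDHHLMNNSI}, interpreted in the relative setting, intertwines the symmetric monoidal structure on $\Mod_{(\Mod_{R^e}^\omega,\Qoppa_R)}(\Catpidem)$ described in Proposition~\ref{prop: structure of R-linear poincare cats part 1} with the evident one on norm-modules over $R^L$; I expect this to follow from a careful application of the relative version of \cite[Corollary 5.4.8]{CDHHLMNNSI} together with the pullback squares (\ref{diagram:module_structure_on_pb_functors}) governing the operadic structure.
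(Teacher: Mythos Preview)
Your strategy is sound in principle, but you take a more structural route than the paper does, and the obstacle you flag at the end is exactly the one the paper's proof is designed to avoid.

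The paper does not attempt to promote the classification of Proposition~\ref{prop: structure of R-linear poincare categories part 3} to a coherently symmetric monoidal equivalence. Instead, it computes the single relative tensor product $\left(\Mod_A^\omega, \Qoppa_A \right) \otimes_{\left(\Mod_{R^e}^\omega, \Qoppa_R \right)} \left(\Mod_B^\omega, \Qoppa_B \right)$ directly via the bar construction (\cite[Theorem 4.4.2.8]{LurHA}). Since the forgetful functor $f \colon \Cath \to \Catex$ is a cocartesian fibration whose fibers admit the relevant colimits and whose cocartesian transport preserves them, one can compute this geometric realization by first cocartesianly pushing the whole simplicial diagram forward to the fiber over $\Mod_{A \otimes_{R^e} B}^\omega$ (using \cite[Corollary 4.3.1.11 and Proposition 4.3.1.9]{HTT}) and then taking the colimit there. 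Under the identification of that fiber with $\Mod_{N_{R^e}(A\otimes_{R^e}B)\otimes_{N_{R^e}R^e} R^L}(\Spectra^{C_2})$ from \cite[Theorem 3.3.1, Corollary 3.4.1, Lemma 5.4.6]{CDHHLMNNSI}, the pushed-forward diagram is the bar construction
\[
[n] \mapsto \overline{M}_A \otimes_{N^{C_2}R^e} R^{L\otimes_{N^{C_2}R^e} n} \otimes_{N^{C_2}R^e} \overline{M}_B,
\]
whose realization is $\overline{M}_A \otimes_{R^L} \overline{M}_B$. Your third step (unwinding via récollement) then finishes the job.

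So the two proofs agree on the underlying category and on the final unwinding, but differ in the middle: you want a symmetric monoidal equivalence of classifying categories and then transport the tensor product across it, whereas the paper computes one bar construction and transports \emph{that} across the fiberwise equivalence. Your approach buys a reusable structural statement if you can make it work, but the coherence you need is genuinely nontrivial and your sketch of how to obtain it is thin. The paper's approach buys economy: it needs only the pointwise equivalence of Proposition~\ref{prop: structure of R-linear poincare categories part 3} together with standard cocartesian-fibration colimit machinery from \cite{HTT}, and never has to assemble anything into a symmetric monoidal functor.
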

\begin{proof}
    By \cite[Theorem 4.4.2.8]{LurHA}, the relative tensor product $ \left(\Mod_A^\omega, \Qoppa_A \right) \otimes_{\left(\Mod_{R^e}^\omega, \Qoppa_R \right)} \left(\Mod_B^\omega, \Qoppa_B \right) $ is computed as the geometric realization of the bar construction 
    \begin{align*}
        p \colon \Delta^\op & \to \Cath \\
        [n] &\mapsto \left(\Mod_A^\omega, \Qoppa_A \right) \otimes \left(\Mod_{R^e}^\omega, \Qoppa_R \right)^{\otimes n} \otimes \left(\Mod_B^\omega, \Qoppa_B \right)
    \end{align*} 
    Write $ f \colon \Cath \to \Catex $ for the forgetful functor. 
    Then $ f \circ p $ has a colimit with value $ \Mod_A^\omega \otimes_{\Mod_{R^e}^\omega} \Mod_B^\omega \simeq \Mod_{A \otimes_{R^e} B}^\omega $. 
    Writing $ g \colon \Catex \to \{*\} $, by Example 4.3.1.3 of \cite{HTT} we see that $ f \circ p $ is a $ g $-colimit. 
    By Proposition 4.3.1.5(2) and Example 4.3.1.3 of \cite{HTT}, $ p $ admits a colimit in $ \Cath $ if and only if it admits an $ f $-colimit. 
    Now recall that $ f $ is a cocartesian fibration with pushforward given by left Kan extension \cite[Corollary 1.4.2]{CDHHLMNNSI}. 
    We show that $ f $ satisfies the conditions of \cite[Corollary 4.3.1.11]{HTT}. 
    \begin{itemize}
        \item Condition (1) follows from Theorem 6.1.1.10 of \cite{LurHA} applied to $ \Spectra^\op $ (see the end of \cite[Construction 1.1.26]{CDHHLMNNSI}). 
        \item Condition (2) follows from \cite[Corollary 1.4.2]{CDHHLMNNSI}, the adjoint functor theorem, and presentability of $ \Fun^q(\mathcal{C}) $, which is discussed in the proof of \cite[Lemma 5.3.3]{CDHHLMNNSI} (also see \cite[Remark 6.1.1.11]{LurHA}). 
    \end{itemize}
    Thus the preceding discussion shows that there exists a map of simplicial sets $ p' $ making the diagram commute
    \begin{equation*}
        \begin{tikzcd}
            \Delta^\op \ar[d] \ar[r,"p"] & \Cath \ar[d,"f"] \\
            \left(\Delta^\op\right)^{\triangleright} \ar[r]\ar[ru,"{p'}"] & \Catex 
        \end{tikzcd}\,.
    \end{equation*}
    Since $ \{0\} \to \Delta^1 $ is left anodyne, by \cite[Corollary 2.1.2.7]{HTT} the inclusions
    \begin{align*}
        \{0\} \times \Delta^\op & \to \Delta^1 \times \Delta^\op \\
        \iota \colon \left(\{0\} \times (\Delta^\op)^\triangleright\right) \sqcup_{\{0\} \times \Delta^\op}\left(\Delta^1 \times \Delta^\op \right) & \to \Delta^1 \times \left(\Delta^\op\right)^\triangleright 
    \end{align*}
    are left anodyne. 
    The former implies that there exists a map $ p'' $ making the diagram 
    \begin{equation*}
        \begin{tikzcd}
            \{0\} \times \Delta^\op \ar[r,"p"]\ar[d] &\Cath \ar[d,"f"] \\
            \Delta^1 \times \Delta^\op \ar[r] \ar[ru,"{p''}"] & \Catex
        \end{tikzcd}
    \end{equation*}
    commute. 
    The maps $ p' $ and $ p'' $ assemble to give a map $ p''' := p' \sqcup_p p'' $ making the diagram 
    \begin{equation*}
        \begin{tikzcd}[arrows={crossing over},row sep=large]
            \{0\} \times \Delta^\op \ar[rr,"p"]\ar[d] &  &\Cath \ar[d,"f"] \\
            \left(\{0\} \times (\Delta^\op)^\triangleright\right) \sqcup_{\{0\} \times \Delta^\op}\left(\Delta^1 \times \Delta^\op \right) \ar[r,"\iota"] \ar[rru,"{p'''}", near start] & \Delta^1 \times \left(\Delta^\op\right)^\triangleright \ar[r] \ar[ru,"{\overline{p}}"', bend right=10] & \Catex
        \end{tikzcd}
    \end{equation*}
    commute, and likewise $ \overline{p} $ exists making the diagram commute since $ \iota $ is left anodyne. 
    Now we show that $ \overline{p} $ satisfies the conditions of \cite[Proposition 4.3.1.9]{HTT}. 
    By (the cocartesian version of) \cite[Remark 3.1.1.10]{HTT} and Proposition 3.1.1.5(2'') \emph{ibid.} and the fact that $ f $ is a cocartesian fibration, we can choose $ \overline{p} $ so that for all $ k \in (\Delta^\op)^\triangleright $, $ \overline{p}|_{\Delta^1 \times \{k\}} $ is $f$-cocartesian. 
    Furthermore, since we can choose $ \Delta^\op, \,\left(\Delta^\op\right)^\triangleright $ to have the markings $ (-)^\flat $ in \cite[Remark 3.1.1.10]{HTT}, $ f \circ \overline{p}|_{\Delta^1 \times \{\infty\}} $ is a degenerate edge in $ \Catex $. 
    
    Now \cite[Proposition 4.3.1.9]{HTT} implies that $ \overline{p}_0 $ is an $ f $-colimit diagram if and only if $ \overline{p}_1 $ is an $ f $-colimit diagram. 
    Notice that $ \overline{p}|_{\{1\} \times \left(\Delta^\op\right)^{\triangleright}} $ has image contained in the fiber of $ f $ over $ \Mod_{A \otimes_{R^e} B}^\omega $. 
    By \cite[Proposition 4.3.1.10]{HTT}, it suffices to show that $ \overline{p}_1 $ is a colimit diagram in $ \Fun^q\left(\Mod_{A \otimes_{R^e} B}^\omega\right) $. 
    Write $ \overline{M}_A \in \Mod_{N^{C_2}A} $ and $ \overline{M}_B \in \Mod_{N^{C_2}B} $ for the corresponding modules (see introduction to \S3.3 of \cite{CDHHLMNNSI}). 
    Unraveling definitions and using \cite[Theorem 3.3.1 \& Corollary 3.4.1 \& Lemma 5.4.6]{CDHHLMNNSI}, it follows that the diagram $ \overline{p}_1|_{\{1\} \times \Delta^\op} $ is the bar construction 
    \begin{align*}
        [n] & \mapsto \overline{M}_A \otimes_{N^{C_2}R^e} R^{L\otimes_{N^{C_2}R^e} n} \otimes_{N^{C_2} R^e} \overline{M}_B \,.
    \end{align*}
    This proves the result.
\end{proof}

\begin{proposition}~\label{prop: structure of R-linear poincare categories part 5}
    Let $ \left(\mathcal{C}, \Qoppa_{\mathcal{C}}\right),  \left(\mathcal{D}, \Qoppa_{\mathcal{D}}\right) $ be objects of $ \Mod_{\left(\Mod_{R^e}^\omega, \Qoppa_R \right)}(\Cath) $. 
    Then, using notation~\ref{notation: R-linear poincare cats}, the forgetful functor induces the map $ \hom_{\Cath_{, R}} \left(\left(\mathcal{C}, \Qoppa_{\mathcal{C}}\right),  \left(\mathcal{D}, \Qoppa_{\mathcal{D}}\right)\right) \to \hom_{\Catex_R}\left(\mathcal{C}, \mathcal{D}\right) $ on mapping spaces so that the fiber over an $ R^e $-linear functor $ F \colon \mathcal{C} \to \mathcal{D} $ is the mapping space $ \mathrm{map}_{\Qoppa_R}\left(F_! \Qoppa_\mathcal{C}, \Qoppa_{\mathcal{D}}\right) \simeq \mathrm{map}_{\Qoppa_R}(\Qoppa_{\mathcal{C}}, \Qoppa_{\mathcal{D}} \circ F^\op) $, where the mapping space is taken in $ \Fun_{\Qoppa_R}^q(\mathcal{D}^{\op},\Spectra) $ and $ \Fun_{\Qoppa_R}^q(\mathcal{C}^{\op},\Spectra) $, respectively (this makes sense by Lemma \ref{lemma:hermitian_structures_relative_linearity}).
\end{proposition}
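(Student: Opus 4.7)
[Proof proposal for Proposition \ref{prop: structure of R-linear poincare categories part 5}]
The plan is to deduce this mapping space formula from the cocartesian fibration structure established in part (6) of Proposition \ref{prop:relative_poincare_cats_basic_properties} (i.e.\ the statement preceding this one, proved as Proposition \ref{prop: structure of R-linear poincare categories part 6}). More precisely, that result identifies the forgetful functor $\theta \colon \Mod_{(\Mod_{R^e}^\omega,\Qoppa_R)}(\Cath) \to \Mod_{\Mod_{R^e}^\omega}(\Catex)$ with the Grothendieck construction of the assignment $\mathcal{C} \mapsto \Mod_{\Qoppa_R}(\Fun^q(\mathcal{C}))$, where the functoriality in $\mathcal{C}$ is via the left adjoints $F_{!}$ obtained from $R^e$-linear exact functors $F \colon \mathcal{C} \to \mathcal{D}$ (see Lemma \ref{lemma:hermitian_structures_relative_linearity}).

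First, I would invoke the general formula for mapping spaces in a cocartesian fibration (e.g.\ \cite[Proposition 5.1.3.1]{HTT} together with its straight-unstraightening interpretation): given $x \in \mathcal{E}_b$ and $y \in \mathcal{E}_c$ over a cocartesian fibration $p \colon \mathcal{E} \to \mathcal{B}$, one has a fiber sequence of mapping spaces
\[
\mathrm{map}_{\mathcal{E}_c}(f_{!}x, y) \longrightarrow \mathrm{map}_{\mathcal{E}}(x,y) \longrightarrow \mathrm{map}_{\mathcal{B}}(b,c)
\]
over each point $f \colon b \to c$. Applied to $\theta$ with $b = \mathcal{C}$, $c = \mathcal{D}$, and $x = \Qoppa_\mathcal{C}$, $y = \Qoppa_\mathcal{D}$, this immediately yields that the fiber of the forgetful map on mapping spaces over a given $R^e$-linear functor $F$ is $\mathrm{map}_{\Mod_{\Qoppa_R}(\Fun^q(\mathcal{D}))}(F_{!}\Qoppa_\mathcal{C}, \Qoppa_\mathcal{D})$, which is by definition the mapping space $\mathrm{map}_{\Qoppa_R}(F_{!}\Qoppa_\mathcal{C}, \Qoppa_\mathcal{D})$ taken in $\Qoppa_R$-modules in $\Fun^q(\mathcal{D})$.

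To get the second description, I would use that $F_{!} \dashv F^{*}$ as an adjunction between $\Fun^q(\mathcal{C})$ and $\Fun^q(\mathcal{D})$ (the right adjoint being precomposition with $F^\op$, which indeed preserves quadratic functors). By part \ref{lemmaitem:precompose_relative_hermitian_structure} of Lemma \ref{lemma:hermitian_structures_relative_linearity}, $F^{*}$ upgrades to a functor between the relevant $\Qoppa_R$-module categories, so this adjunction lifts to an adjunction in $\Qoppa_R$-modules. The desired equivalence
\[
\mathrm{map}_{\Qoppa_R}(F_{!}\Qoppa_\mathcal{C},\Qoppa_\mathcal{D}) \simeq \mathrm{map}_{\Qoppa_R}(\Qoppa_\mathcal{C},\Qoppa_\mathcal{D} \circ F^\op)
\]
is then just the adjunction equivalence.

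The only subtle point, and the step I would treat carefully, is checking that $\theta$-cocartesian transport really is computed by $F_{!}$ applied at the level of quadratic functors with their $\Qoppa_R$-module structure. This requires matching the straightening of $\theta$ that comes out of the construction of $\Mod_{(\Mod_{R^e}^\omega,\Qoppa_R)}(\Cath)$ from the pullback diagram (\ref{diagram:module_structure_on_pb_functors}) with the functor $\mathcal{C} \mapsto \Mod_{\Qoppa_R}(\Fun^q(\mathcal{C}))$; this compatibility will have been established in the proof of part (6) (Proposition \ref{prop: structure of R-linear poincare categories part 6}) by tracing through \cite[Corollary 1.4.2]{CDHHLMNNSI} (which identifies pushforward along $f^{*}$ in the unparameterized setting with left Kan extension) in the presence of the ambient $\Qoppa_R$-module structure. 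Once this identification is in place, the proposition follows formally.
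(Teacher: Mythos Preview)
Your argument is circular as written. You deduce the mapping space formula from part~(6) (Proposition~\ref{prop: structure of R-linear poincare categories part 6}), but in the paper the proof of part~(6) explicitly invokes part~(5): after constructing a candidate locally $\theta$-cocartesian edge $(f,\eta)$, the paper verifies that it is locally cocartesian by showing that precomposition with $(f,\eta)$ induces a pullback square of mapping spaces, and this step appeals to \ref{prop_item:R_linear_poincare_cats_maps}, i.e.\ precisely the fiber identification you are trying to prove. So you cannot simply cite part~(6) here.

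The paper's own proof avoids this by working one level lower: it regards the $R^e$-linear functor $F$ as a map $\Delta^1 \times \mathcal{LM}^\otimes \to \Catex^\otimes$, pulls back the cocartesian fibration $p\colon \Cath^\otimes \to \Catex^\otimes$ of \cite[Theorem~5.2.7]{CDHHLMNNSI} along it, and reads off a map of $\infty$-operads over $\mathcal{LM}^\otimes$
\[
F_! \colon \Fun_{\Mod_{R^e}^{\omega,\op}}^q(\mathcal{C}^{\op},\Spectra)^\otimes \longrightarrow \Fun_{\Mod_{R^e}^{\omega,\op}}^q(\mathcal{D}^{\op},\Spectra)^\otimes
\]
from cocartesian transport along the nontrivial edge of $\Delta^1$. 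Passing to $\mathcal{LM}$-algebras (i.e.\ $\Qoppa_R$-modules) then gives the fiber description directly. In other words, the paper uses the cocartesian fibration $\Cath^\otimes \to \Catex^\otimes$ at the operadic level, rather than the derived cocartesian fibration $\theta$ on module categories. Your strategy could be made to work if you first established the cocartesian fibration property of $\theta$ independently of the mapping space formula, but that is essentially what the paper's direct argument accomplishes, and you would still need the operadic construction of $F_!$ to identify the transport.
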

\begin{proof}
    Let $ \left(\mathcal{C}, \Qoppa_{\mathcal{C}}\right) $ be an object of $ \Mod_{\left(\Mod_R^\omega, \Qoppa_R \right)}(\Cath) $ and let $ F \colon \mathcal{C} = \theta \left(\mathcal{C}, \Qoppa_{\mathcal{C}}\right)\to \mathcal{D} $ be an $ R $-linear functor. 
    Now define $ \Qoppa_{\mathcal{D}} \colon \mathcal{D}^\op \to \Spectra $ to be the left Kan extension of $ \Qoppa_{\mathcal{C}} $ along $ F^\op $. 
    We have that $ \left(\mathcal{D}, \Qoppa_{\mathcal{D}}\right) \in \Cath $ and there is a canonical map $ (f, \eta )\colon \left(\mathcal{C}, \Qoppa_{\mathcal{C}}\right) \to \left(\mathcal{D}, \Qoppa_{\mathcal{D}}\right)$. 
    Now $ F $ is classified by a functor $ \Delta^1 \times \mathcal{LM}^\otimes \to \Catex^\otimes $, and we may form the pullback
    \begin{equation}
        \begin{tikzcd}
            \mathcal{N} \ar[r] \ar[d] & \Delta^1 \times \mathcal{LM}^\otimes \ar[d]  \\
            \Cath^\otimes \ar[r,"{p}"] & \Catex^\otimes
        \end{tikzcd}\,.
    \end{equation}
    Since $ p $ is a cocartesian fibration by \cite[Theorem 5.2.7]{CDHHLMNNSI}, $ \mathcal{N} \to \Delta^1 \times \mathcal{LM}^\otimes $ is a cocartesian fibration, and the nontrivial morphism in $ \Delta^1 $ classifies a map $ F_! \colon \Fun_{\Mod_{R^e}^{\omega,\op}}^q(\mathcal{C}^{\op},\Spectra)^\otimes \to \Fun_{\Mod_{R^e}^{\omega,\op}}^q(\mathcal{D}^{\op},\Spectra)^\otimes $ of $ \infty $-operads over $ \mathcal{LM}^\otimes $. 
    Passing to algebra objects, we obtain the desired result on mapping spaces. 
\end{proof}
\begin{proposition}~\label{prop: structure of R-linear poincare categories part 6}
    The symmetric monoidal forgetful functor \[ \theta \colon \Mod_{\left(\Mod_{R^e}^\omega, \Qoppa_R \right)}(\Cath) \to \Mod_{\Mod_{R^e}^\omega}(\Catex) \] is a (co)cartesian fibration. 
    It is classified by the assignment sending a small stable $ R^e $-linear $ \infty $-category $ \mathcal{C} $ to $ \Mod_{\Qoppa_R}\left(\Fun^q\left(\mathcal{C}\right)\right) $, where $ \Fun^q\left(\mathcal{C}\right) $ is regarded as left-tensored over $ \Fun^q\left(\Mod_{R^e}^\omega\right) $ by Lemma \ref{lemma:hermitian_structures_relative_linearity}.
\end{proposition}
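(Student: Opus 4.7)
The plan is to obtain this result by applying standard operadic machinery to the symmetric monoidal forgetful functor $p^\otimes \colon \Cath^\otimes \to \Catex^\otimes$ of \cite[Theorem 5.2.7]{CDHHLMNNSI}. Recall that $\Cath$ is defined as the Grothendieck construction of the functor $\Fun^q(-) \colon \Catex \to \CAT[]$; thus $p$ is a cocartesian fibration classified by $\mathcal{C} \mapsto \Fun^q(\mathcal{C})$, with cocartesian transport along $f \colon \mathcal{C} \to \mathcal{D}$ given by left Kan extension $f_!$. By \cite[Corollary 1.4.2]{CDHHLMNNSI}, $p$ is also a cartesian fibration with cartesian transport given by restriction $f^*$, and this extends to an operadic (co)cartesian fibration structure on $p^\otimes$ over $\mathcal{LM}^\otimes$.

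For the cocartesian case, I would apply (the cocartesian form of) \cite[Corollary 3.2.2.3(1)]{LurHA} to the cocartesian fibration of $\infty$-operads $p^\otimes$; this produces a cocartesian fibration of algebra categories $\Alg_{\mathcal{LM}/\mathcal{LM}}(\Cath^\otimes) \to \Alg_{\mathcal{LM}/\mathcal{LM}}(\Catex^\otimes)$ whose cocartesian transport is computed pointwise by $f_!$. Restricting over the algebra $\left(\Mod_{R^e}^\omega,\Qoppa_R\right) \in \CAlg\left(\Cath\right)$ and its image $\Mod_{R^e}^\omega \in \CAlg\left(\Catex\right)$ yields the desired cocartesian fibration structure on $\theta$.

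For the cartesian case, I would directly invoke \cite[Corollary 3.2.2.3(1)]{LurHA} as in the proof of Lemma \ref{lemma:hermitian_structures_relative_linearity}\ref{lemmaitem:precompose_relative_hermitian_structure}: given an $R^e$-linear functor $f\colon \mathcal{C}\to \mathcal{D}$ and a lift of $\mathcal{D}$ to $\Mod_{\left(\Mod_{R^e}^\omega,\Qoppa_R\right)}(\Cath)$ classified by some $\Qoppa_\mathcal{D}$, the cartesian edge is obtained by equipping $f^*\Qoppa_\mathcal{D}$ with its canonical $\Qoppa_R$-module structure supplied by Lemma \ref{lemma:hermitian_structures_relative_linearity}\ref{lemmaitem:precompose_relative_hermitian_structure}. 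To identify the classifying functor, I would appeal to part (2) of Proposition \ref{prop:relative_poincare_cats_basic_properties}: the fiber of $\theta$ over $\mathcal{C}$ consists of those $\left(\Mod_{R^e}^\omega,\Qoppa_R\right)$-module structures on a hermitian enhancement of $\mathcal{C}$ whose underlying $R^e$-linear structure agrees with the given one, which by part (2) is precisely the category $\Mod_{\Qoppa_R}\left(\Fun^q(\mathcal{C})\right)$, where the $\Fun^q\left(\Mod_{R^e}^\omega\right)$-module structure on $\Fun^q(\mathcal{C})$ is the one supplied by Lemma \ref{lemma:hermitian_structures_relative_linearity}.

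The main obstacle will be the operadic bookkeeping in applying \cite[Corollary 3.2.2.3(1)]{LurHA}: one must be careful to distinguish the ``outer'' $\mathcal{LM}^\otimes$-operadic structure (which governs the module structure being classified by $\theta$) from the ``inner'' symmetric monoidal structure on $\Cath^\otimes$ and $\Catex^\otimes$ themselves. Once this is disentangled, both assertions of the proposition follow formally from the preceding parts of Proposition \ref{prop:relative_poincare_cats_basic_properties} together with the (co)cartesian nature of $p^\otimes$.
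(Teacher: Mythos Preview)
Your approach is correct in spirit and reaches the same conclusion, but it takes a different route than the paper's. The paper does \emph{not} invoke a cocartesian analogue of \cite[Corollary 3.2.2.3]{LurHA}; instead it works directly with \cite[Proposition 2.4.2.8]{HTT}, reducing to showing that $\theta$ is a locally cocartesian fibration whose locally cocartesian edges compose. The candidate cocartesian lift of an $R^e$-linear functor $F\colon \mathcal{C}\to\mathcal{D}$ is built by left Kan extending the hermitian structure, with the $\Qoppa_R$-module structure coming from the construction in the proof of part~\ref{prop_item:R_linear_poincare_cats_maps}. The locally cocartesian property is then verified by hand: using the mapping-space description of part~\ref{prop_item:R_linear_poincare_cats_maps}, one checks that precomposition with the candidate edge induces a pullback square on hom-spaces. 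Closure under composition is manifest from the left Kan extension description.

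Your approach leans instead on the operadic machinery: apply (a cocartesian version of) \cite[Corollary 3.2.2.3]{LurHA} to the cocartesian fibration of $\infty$-operads $p^\otimes\colon\Cath^\otimes\to\Catex^\otimes$, then pull back to modules over the fixed algebra. This is more economical once the machinery is in place, but note that \cite[Corollary 3.2.2.3]{LurHA} as stated is about \emph{cartesian} fibrations; the cocartesian variant requires that $p^\otimes$ be a cocartesian fibration \emph{of $\infty$-operads} (so that cocartesian pushforward is compatible with operadic composition), which does hold here by \cite[Theorem 5.2.7]{CDHHLMNNSI} but deserves an explicit justification rather than a parenthetical. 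The paper's hands-on approach sidesteps this subtlety entirely, at the cost of reproving a special case of what the operadic machinery would give. Your identification of the fibers via part~\ref{propitem:classify_R_lin_hermitian_struct_gen} is fine and matches the paper's implicit identification.
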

\begin{proof}
    By \cite[Proposition 2.4.2.8]{HTT}, it suffices to show that $ \theta $ is a locally (co)cartesian fibration, and that locally (co)cartesian edges are closed under composition. 
    We give the proof that $ \theta $ is a cocartesian fibration; the proof that $ \theta $ is a cartesian fibration is formally dual and will be left to the reader. 
    
    Let $ \left(\mathcal{C}, \Qoppa_{\mathcal{C}}\right) $ be an object of $ \Mod_{\left(\Mod_{R^e}^\omega, \Qoppa_R \right)}(\Cath) $ and let $ F \colon \mathcal{C} = \theta \left(\mathcal{C}, \Qoppa_{\mathcal{C}}\right)\to \mathcal{D} $ be an $ R $-linear functor. 
    Now define $ \Qoppa_{\mathcal{D}} \colon \mathcal{D}^\op \to \Spectra $ to be the left Kan extension of $ \Qoppa_{\mathcal{C}} $ along $ F^\op $. 
    By the proof of \ref{prop_item:R_linear_poincare_cats_maps}, we see that the image of $ \Qoppa_{\mathcal{C}} $ under $ F_! $ is a lift of $ \left(\mathcal{D}, \Qoppa_{\mathcal{D}}\right) $ to an object of $ \Mod_{\left(\Mod_{R^e}^\omega, \Qoppa_R \right)}(\Cath) $ and $ (f, \eta) $ to a morphism in $ \Mod_{\left(\Mod_{R^e}^\omega, \Qoppa_R \right)}(\Cath) $. 
    
    Now by Lemma 2.4.4.1 and the locally cocartesian version of Proposition 2.4.1.10 of \cite{HTT}, we must show that for all choices $ \Qoppa_{\mathcal{D}}' $ of an $ R $-linear Hermitian structure on $ \mathcal{D} $, precomposition with $ F_! $ induces a pullback square
    \begin{equation}
        \begin{tikzcd}
            \hom_{\Cath_{, R}}\left(\left(\mathcal{D}, \Qoppa_{\mathcal{D}}\right), \left(\mathcal{D}, \Qoppa_{\mathcal{D}}'\right)\right) \ar[d] \ar[r] & \hom_{\Cath_{, R}}\left(\left(\mathcal{C}, \Qoppa_{\mathcal{C}}\right), \left(\mathcal{D}, \Qoppa_{\mathcal{D}}'\right)\right) \ar[d] \\
            \hom_{\Catex_{, R^e}}\left(\mathcal{D}, \mathcal{D}\right) \ar[r] & \hom_{\Catex_{, R^e}}\left(\mathcal{C}, \mathcal{D} \right)
        \end{tikzcd}\,.
    \end{equation}
    By \ref{prop_item:R_linear_poincare_cats_maps}, $ F_! $ induces equivalences on the fibers of the vertical maps, hence $ (f, \eta) $ is locally $ \theta $-cocartesian. 
    The locally $ \theta $-cocartesian maps are manifestly closed under composition, hence we are done. 
\end{proof}
	
\begin{proposition}~\label{prop: structure of R-linear poincare categories part 7}
    The symmetric monoidal structure on $\mathrm{Mod}_{(\mathrm{Mod}_{R^e}^\omega,\Qoppa_R)}(\Cath)$ and $\mathrm{Mod}_{(\mathrm{Mod}_{R^e}^\omega,\Qoppa_R)}(\Catp)$ are closed, and the functors $\mathrm{Mod}_{(\mathrm{Mod}_{R^e}^\omega,\Qoppa_R)}(\Catp)\to \mathrm{Mod}_{(\mathrm{Mod}_{R^e}^\omega,\Qoppa_R)}(\Cath)\to \Catex_{, R^e}$ preserve the internal mapping objects.
\end{proposition}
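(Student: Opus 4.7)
The plan is to bootstrap closedness from the known closedness of $\Cath$ and $\Catp$ (\cite[Corollary 6.2.9]{CDHHLMNNSI}) together with the presentability established above. By Proposition \ref{prop: structure of R-linear poincare cats part 1}, both $\Mod_{(\Mod_{R^e}^\omega, \Qoppa_R)}(\Cath)$ and $\Mod_{(\Mod_{R^e}^\omega, \Qoppa_R)}(\Catp)$ are presentable. Since the ambient symmetric monoidal structures on $\Cath$ and $\Catp$ are compatible with small colimits, and since the relative tensor product is computed as the geometric realization of a two-sided bar construction (\cite[Theorem 4.4.2.8]{LurHA}; cf.\ the proof of Proposition \ref{prop: structure of R-linear poincare categories part 4}), for any fixed $(\mathcal{C}, \Qoppa_\mathcal{C})$ the functor $(\mathcal{C}, \Qoppa_\mathcal{C}) \otimes_{(\Mod_{R^e}^\omega, \Qoppa_R)} -$ preserves all small colimits. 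The adjoint functor theorem \cite[Corollary 5.5.2.9]{HTT} then produces the desired right adjoint, yielding internal mapping objects $\underline{\mathrm{Hom}}_\mathrm{h}^R$ and $\underline{\mathrm{Hom}}_\mathrm{p}^R$.

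To identify the underlying $R^e$-linear $\infty$-category of $\underline{\mathrm{Hom}}_\mathrm{h}^R((\mathcal{C}, \Qoppa_\mathcal{C}), (\mathcal{D}, \Qoppa_\mathcal{D}))$, I would test it against a variable $(\mathcal{E}, \Qoppa_\mathcal{E})$. By Proposition \ref{prop: structure of R-linear poincare categories part 5}, the mapping space out of $(\mathcal{E}, \Qoppa_\mathcal{E}) \otimes_{(\Mod_{R^e}^\omega, \Qoppa_R)} (\mathcal{C}, \Qoppa_\mathcal{C})$ into $(\mathcal{D}, \Qoppa_\mathcal{D})$ fibers over $\hom_{\Mod_{\Mod_{R^e}^\omega}(\Catex)}(\mathcal{E} \otimes_{\Mod_{R^e}^\omega} \mathcal{C}, \mathcal{D})$ with fiber given by a space of $\Qoppa_R$-linear natural transformations. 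Using the $R^e$-linear internal hom $\Fun^{\ex}_{R^e}(\mathcal{C}, \mathcal{D})$ in $\Mod_{\Mod_{R^e}^\omega}(\Catex)$ to reshape the base, and constructing the relative $\Qoppa_R$-linear natural-transformation functor as the hermitian part, one obtains a candidate whose underlying category is $\Fun^{\ex}_{R^e}(\mathcal{C}, \mathcal{D})$; uniqueness of right adjoints then identifies this candidate with $\underline{\mathrm{Hom}}_\mathrm{h}^R$, and forgetful-preservation to $\Mod_{\Mod_{R^e}^\omega}(\Catex)$ is by construction.

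For the forgetful $\Mod_{(\Mod_{R^e}^\omega, \Qoppa_R)}(\Catp) \to \Mod_{(\Mod_{R^e}^\omega, \Qoppa_R)}(\Cath)$, the inclusion $\Catp \hookrightarrow \Cath$ is a fully faithful symmetric monoidal subcategory inclusion, so it suffices to verify that the hermitian internal hom lies in $\Catp$ whenever both inputs do. This is the $R$-linear enhancement of \cite[Proposition 6.1.4]{CDHHLMNNSI}: the induced duality on $\Fun^{\ex}_{R^e}(\mathcal{C}, \mathcal{D})$ sends $F \mapsto \mathrm{D}_{\Qoppa_\mathcal{D}} \circ F \circ \mathrm{D}_{\Qoppa_\mathcal{C}}$, which is an equivalence because $\mathrm{D}_{\Qoppa_\mathcal{C}}$ and $\mathrm{D}_{\Qoppa_\mathcal{D}}$ are, and non-degeneracy of the cross-effect follows from non-degeneracy of the cross-effects of $\Qoppa_\mathcal{C}$ and $\Qoppa_\mathcal{D}$ by an unwinding of the bilinearization construction.

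The main technical obstacle will be coherently constructing the $\Qoppa_R$-module structure on the natural-transformation hermitian functor and verifying compatibility with the $\Fun^q(\Mod_{R^e}^\omega)$-module structure on $\Fun^q(\Fun^{\ex}_{R^e}(\mathcal{C}, \mathcal{D}))$ of Lemma \ref{lemma:hermitian_structures_relative_linearity}. The cleanest path I envision is to transport the pullback diagram \eqref{diagram:module_structure_on_pb_functors} through the non-relative closed structure of \cite[Corollary 6.2.9]{CDHHLMNNSI}, so that closedness in both flavors and preservation by each forgetful functor are inherited module-theoretically from the closedness and preservation properties already established in the non-relative situation.
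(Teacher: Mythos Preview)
Your approach differs from the paper's in a way worth noting. You establish closedness via presentability plus the adjoint functor theorem, then try to identify the underlying $R^e$-linear category separately to deduce preservation. The paper instead proves a general lemma: if $\mathcal{C}$ is closed symmetric monoidal with all limits and colimits and $R \in \CAlg(\mathcal{C})$, then $\Mod_R(\mathcal{C})$ is closed, by first handling free modules $R \otimes X$ (where one checks $\underline{\hom}_R(R \otimes X, Y) \simeq \underline{\hom}_{\mathcal{C}}(X,Y)$) and then reducing an arbitrary $M$ to free modules via \cite[Proposition 4.7.3.14]{LurHA}. This argument (following \cite{ramzi2023separabilityhomotopicalalgebra}) yields an explicit totalization formula
\[
\underline{\hom}_R(M,N) \simeq \mathrm{Tot}\bigl([\bullet] \mapsto \underline{\hom}_{\mathcal{C}}(M \otimes R^{\otimes \bullet}, N)\bigr),
\]
which makes preservation immediate: any symmetric monoidal functor $U \colon \mathcal{C} \to \mathcal{D}$ that preserves limits and non-relative internal homs carries this totalization to the corresponding one in $\Mod_{U(R)}(\mathcal{D})$. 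This handles both forgetful functors at once, since $\Catp \to \Cath \to \Catex$ are symmetric monoidal, limit-preserving, and preserve internal homs by \cite[Corollary 6.2.9]{CDHHLMNNSI}.

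Your route is viable for existence, but your preservation argument has a gap: $\Catp \hookrightarrow \Cath$ is \emph{not} fully faithful (Poincar\'e functors must be duality-preserving, a nontrivial condition on hermitian functors), so ``the hermitian internal hom lies in $\Catp$'' does not by itself give the $\Catp_R$-universal property. You would still need to invoke that the non-relative $\Catp \to \Cath$ preserves internal homs, and then argue that this lifts to the relative case; the totalization formula is precisely the tool that makes that lift painless. Also, the reference for internal homs of Poincar\'e categories being Poincar\'e is \cite[Corollary 6.2.9]{CDHHLMNNSI}, not Proposition 6.1.4.
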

\begin{proof}
    We will first prove the following more general claim: Let $\mathcal{C}$ be a symmetric monoidal closed $\infty$-category which is complete and cocomplete. Let $R\in \CAlg(\mathcal{C})$. Then $\mathrm{Mod}_R(\mathcal{C})$ is also a closed symmetric monoidal category.
    We will prove this claim following the proof of \cite[Lemma I.24]{ramzi2023separabilityhomotopicalalgebra}. 
    For this, note that for any $Y\in \mathrm{Mod}_R(\mathcal{C})$ and any simplicial set $I$, the existence of an internal mapping object $\underline{\mathrm{Hom}}_{\mathrm{Mod}_R(\mathcal{C})}(X,Y)$ is closed under $I$-shaped colimits in $X$. To see this, note that the existence of such an object is by definition equivalent to the functor $Z\mapsto \mathrm{Maps}_{\mathrm{Mod}_{A}(\mathcal{C})}(X\otimes_R Z,Y)$ being representable, and taking the relative tensor product commutes with colimits since it is equivalent to a colimit of iterated tensor products in $\mathcal{C}$, all of which commute with colimits because $\mathcal{C}$ is closed. Thus the functor in question is the $I^\op$-shaped limit of representable functors which is again representable.
    
    We also note that for any $X\in \mathcal{C}$, the internal mapping object $\underline{\mathrm{Hom}}_{\mathrm{Mod}_R(\mathcal{C})}(R\otimes X, Y)$ exists since for any $Z\in \mathrm{Mod}_R(\mathcal{C})$ we need that \[\mathrm{Maps}_{\mathrm{Mod}_R(\mathcal{C})}(Z\otimes_R R\otimes X,Y)\] is represented by an object of $\mathrm{Mod}_R(\mathcal{C})$. We will show that this functor is representable by the object $\underline{\mathrm{Hom}}_{\mathcal{C}}(X,Y)$ with the canonical $R$-module structure coming from the (adjoint of) the evaluation map. The evaluation map induces a natural transformation of these functors, and so by \cite[Proposition 4.7.3.14]{LurHA} we may reduce to the case of $Z=X'\otimes R$. Then \[\mathrm{Maps}_{\mathrm{Mod}_R(\mathcal{C})}(R\otimes X'\otimes X, Y)\simeq \mathrm{Maps}_{\mathcal{C}}(X'\otimes X,Y)\simeq \mathrm{Maps}_{\mathcal{C}}(X',\underline{\mathrm{Hom}}_{\mathcal{C}}(X,Y))\simeq \mathrm{Maps}_{\mathrm{Mod}_R}(Z,\underline{\mathrm{Hom}}_{\mathcal{C}}(X,Y))\] and this equivalence comes from the evaluation map as desired. Consequently the internal mapping object exists and is functorially identified with $\underline{\mathrm{Hom}}_{\mathcal{C}}(X,Y)$.
    
    The result now follows by using \cite[Proposition 4.7.3.14]{LurHA} again to reduce to the case of the previous paragraph.
    
    Applying this result to $\mathcal{C}=\Cath$ or $\Catp$ and $R=(\mathrm{Mod}_{R^e}^\omega,\Qoppa_R)$ gives the existence of the internal mapping object in $\Cath_{, R}$ and $\Catp_{, R}$. Furthermore,  the above proof actually gives a formula for the internal mapping object in terms of a totalization \[(M, N)\mapsto \mathrm{Tot}(\bullet\mapsto \mathrm{hom}_{\mathcal{C}}(M\otimes R^{\otimes \bullet}, N))\] and so if $U:\mathcal{C}\to \mathcal{D}$ is a symmetric monoidal functor which commutes with limits and preserves the internal mapping objects, it follows that the induced functor \[\mathrm{Mod}_R(\mathcal{C})\to \mathrm{Mod}_{U(R)}(\mathcal{D})\] also preserves internal mapping objects.
\end{proof}

\begin{notation}~\label{notation: internal mapping objects}
    Let $R$ be a Poincar\'e ring. By Proposition~\ref{prop:relative_poincare_cats_basic_properties}\ref{prop_item:R_linear_poincare_cats_maps}, both $\Cath_{, R}$ and $\Catp_{, R}$ admit internal mapping objects. For $(\mathcal{C},\Qoppa), (\mathcal{D},\Phi)\in \Cath_{, R}$, let \[\mathrm{Fun}_{R}((\mathcal{C},\Qoppa),(\mathcal{D},\Phi))\] denote this internal mapping object. Note that the underlying $R^e$-linear stable category is $\mathrm{Fun}_{R^e}^{\ex}(\mathcal{C},\mathcal{D})$, and so if both $(\mathcal{C},\Qoppa)$ and $(\mathcal{D},\Phi)$ are idempotent complete so is $\mathrm{Fun}_R((\mathcal{C},\Qoppa),(\mathcal{D},\Phi))$.
\end{notation}
	
This internal mapping object allows us to lift the classical Morita theory to the setting of $R$-linear Poincar{\'e} $\infty$-categories. This will be helpful in connecting the Poincar\'e Brauer spectrum we will define in Section~\ref{section:the_poincare_brauer_group} with the Poincar{\'e} Picard spectrum we define in Section~\ref{section:the_poincare_picard_group}. 
For any $R$-algebras $A$ and $B$, denote by $\mathrm{Mod}_{A^{\op}\otimes_R B}^{L\omega}$ the full subcategory of $\mathrm{Mod}_{A^{\op}\otimes_R B}$ spanned by modules $P$ which are compact when regarded as a $ R $-module.

\begin{corollary}\label{cor:poincare_fourier_mukai}
    Let $ R $ be a Poincaré ring, and let $ A, B $ be $ \mathbb{E}_1 $-$ R $-algebras with genuine involution in the sense of Definition~\ref{defn:alt definition of Azumaya algebra with involution}.
    Then there is a Poincaré structure $ \Qoppa_{A \boxtimes B^\op} $ on $ \Mod_{A \otimes_R B^\op}^{L\omega} $ with equivalences 
    \begin{equation*}
        \begin{split}
            \hom_{\Cathidem_{, R}}\left(\left(\Mod_A^\omega, \Qoppa_A\right), \left(\Mod_B^\omega, \Qoppa_B\right)\right) & \simeq \mathrm{He} \left(\Mod_{A^\op \otimes_R B}^{L\omega}, \Qoppa_{A^\op \boxtimes B} \right) \\
            \hom_{\Catpidem_{, R}}\left(\left(\Mod_A^\omega, \Qoppa_A\right), \left(\Mod_B^\omega, \Qoppa_B\right)\right) &\simeq \mathrm{Pn} \left(\Mod_{A^\op \otimes_R B}^{L\omega}, \Qoppa_{A^\op \boxtimes B} \right) 
        \end{split}
    \end{equation*}  
    which are compatible with ordinary Morita theory, i.e. under the forgetful functors $ \Cathidem_{, R}, \Catpidem_{, R} \to \Catex_R $, the hermitian (resp. Poincaré) functor represented by the pair $ (P \in \Mod_{A^\op \otimes_R B}, q) $ is the $ R $-linear exact functor which is given by tensoring with the $ A^e $-$ B^e $-bimodule $ P $ (compare, for instance, \cite[p.738]{LurHA}). 
\end{corollary}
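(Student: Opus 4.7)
The plan is to define $\left(\Mod_{A^\op \otimes_R B}^{L\omega}, \Qoppa_{A^\op \boxtimes B}\right)$ as the internal mapping object
\begin{equation*}
\mathrm{Fun}_R\left(\left(\Mod_A^\omega, \Qoppa_A\right), \left(\Mod_B^\omega, \Qoppa_B\right)\right)
\end{equation*}
of $\Catpidem_R$, which exists by Proposition~\ref{prop:relative_poincare_cats_basic_properties}\ref{prop_item:internal hom in R-linear poincare cats}. The two equivalences will then follow formally from the tensor--hom adjunction in the closed symmetric monoidal $\infty$-categories $\Cathidem_R$ and $\Catpidem_R$, combined with the (enriched) corepresentability of $\mathrm{Pn}$ and $\mathrm{He}$ by the universal Poincar\'e $\infty$-category.

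First, by Proposition~\ref{prop:relative_poincare_cats_basic_properties}\ref{prop_item:internal hom in R-linear poincare cats}, the forgetful functor $\Catpidem_R \to \Catex_{R^e}$ preserves internal mapping objects, so the underlying $R^e$-linear stable $\infty$-category of $\mathrm{Fun}_R\left(\left(\Mod_A^\omega, \Qoppa_A\right), \left(\Mod_B^\omega, \Qoppa_B\right)\right)$ agrees with $\mathrm{Fun}^\ex_{R^e}\left(\Mod_A^\omega, \Mod_B^\omega\right)$. A relative Schwede--Shipley/Fourier--Mukai argument identifies this $R^e$-linearly with $\Mod_{A^\op \otimes_R B}^{L\omega}$ via the kernel correspondence $P \mapsto ((-)\otimes_A P)$, and the Poincar\'e structure on the internal hom transports along this equivalence to define $\Qoppa_{A^\op \boxtimes B}$. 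It is automatically Poincar\'e (and not merely hermitian) because $\Catpidem_R$ is itself closed under the internal hom of $\Cathidem_R$.

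Next, apply the tensor--hom adjunction in $\Catpidem_R$, whose unit is $\Mod^\mathrm{p}_R = \left(\Mod_{R^e}^\omega, \Qoppa_R\right)$, to obtain
\begin{equation*}
\hom_{\Catpidem_R}\left(\left(\Mod_A^\omega,\Qoppa_A\right), \left(\Mod_B^\omega,\Qoppa_B\right)\right) \simeq \hom_{\Catpidem_R}\left(\Mod^\mathrm{p}_R, \left(\Mod_{A^\op \otimes_R B}^{L\omega}, \Qoppa_{A^\op \boxtimes B}\right)\right).
\end{equation*}
Mapping spaces out of the unit compute $\mathrm{Pn}$: by Theorem~\ref{thm:calgp_to_poincare_cat}(3), $\Mod^\mathrm{p}_R$ is the image of $\left(\Spectra^\omega,\Qoppa^u\right)$ under the symmetric monoidal left adjoint $\Catpidem \to \Catpidem_R$ induced by the unit $\mathbb{S}^u \to R$ in $\CAlgp$, so the corepresentability of $\mathrm{Pn}$ on $\Catpidem$ by the universal Poincar\'e $\infty$-category transfers across the free--forgetful adjunction to give the advertised Poincar\'e-object description. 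The $\mathrm{He}$ statement is identical once one passes to the $\Cat_\infty$-enrichment of $\Cathidem_R$ (see Remark~\ref{rmk:he_lax_monoidal}). Compatibility with classical Morita is built into the construction: the underlying exact $R^e$-linear functor classified by a (Poincar\'e) pair $(P,q)$ is its image under the forgetful functor, which by Proposition~\ref{prop:relative_poincare_cats_basic_properties}\ref{prop_item:internal hom in R-linear poincare cats} coincides with the Fourier--Mukai functor $P \mapsto ((-)\otimes_A P)$.

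The hardest step will be establishing the relative Schwede--Shipley identification $\mathrm{Fun}^\ex_{R^e}(\Mod_A^\omega, \Mod_B^\omega) \simeq \Mod_{A^\op \otimes_R B}^{L\omega}$ under the one-sided compactness constraint, and unwinding the totalization formula for the internal hom from the proof of Proposition~\ref{prop:relative_poincare_cats_basic_properties}\ref{prop_item:internal hom in R-linear poincare cats} to match $\Qoppa_{A^\op \boxtimes B}$ with an explicit description in terms of the modules with genuine involution encoding $\Qoppa_A$ and $\Qoppa_B$ via Proposition~\ref{prop:relative_poincare_cats_basic_properties}(3)--(4).
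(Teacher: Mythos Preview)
Your proposal is correct and follows essentially the same route as the paper: define $\Qoppa_{A^\op\boxtimes B}$ by transporting the Poincar\'e structure on the internal hom of Proposition~\ref{prop:relative_poincare_cats_basic_properties}\ref{prop_item:internal hom in R-linear poincare cats} across the Morita equivalence $\mathrm{Fun}^{\ex}_{R^e}(\Mod_{A^e}^\omega,\Mod_{B^e}^\omega)\simeq \Mod_{A^{e,\op}\otimes_{R^e}B^e}^{L\omega}$, then deduce the $\mathrm{He}/\mathrm{Pn}$ identifications from tensor--hom together with corepresentability by the unit (the paper packages this last step as Lemma~\ref{lemma:hermitian_poincare_objects_in_relative_functor_cats}). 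Your closing paragraph overestimates the remaining work: the statement only asserts the \emph{existence} of $\Qoppa_{A^\op\boxtimes B}$ with the stated mapping-space property, so no explicit unwinding of the totalization formula is required, and the relative Morita identification is simply cited as standard.
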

\begin{proof}	
    By Morita theory, there is a $\mathrm{Mod}_R^\omega$-linear equivalence $\mathrm{Fun}_R^{\ex}(\mathrm{Mod}_{A^e}^\omega, \mathrm{Mod}_{B^e}^\omega)\simeq\mathrm{Mod}_{A^{e,\op}\otimes_{R^e}B^e}^{L\omega} $. 
    Transporting the Poincar{\'e} structure from Proposition~\ref{prop_item:internal hom in R-linear poincare cats} on $\mathrm{Fun}_{(\mathrm{Mod}_R^\omega, \Qoppa_R)}((\mathrm{Mod}_{A^e}^\omega, \Qoppa_A), (\mathrm{Mod}_{B^e}^\omega,\Qoppa_B))$ along this equivalence then produces the desired Poincar{\'e} structure $\Qoppa_{A^{\op}\boxtimes_R B}$ on $\mathrm{Mod}_{A^{e,\op}\otimes_{R^e}B^e}$. 
    The result about hermitian and Poincar\'e objects then follows from Lemma \ref{lemma:hermitian_poincare_objects_in_relative_functor_cats}.
\end{proof}

\begin{lemma}\label{lemma:hermitian_poincare_objects_in_relative_functor_cats}
    Let $ R $ be a Poincaré ring. 
    Let $(\mathcal{C},\Qoppa)$ and $(\mathcal{D},\Phi)$ be $(\mathrm{Mod}_{R^e}^\omega,\Qoppa_R )$-modules in $\Catpidem$. Then, using Notation~\ref{notation: internal mapping objects}, there are natural equivalences \[\mathrm{He}(\mathrm{Fun}_{(\mathrm{Mod}_{R^e}^\omega,\Qoppa_R )}((\mathcal{C},\Qoppa), (\mathcal{D},\Phi)))\simeq \mathrm{Maps}_{\Cathidem_{, R}}((\mathcal{C},\Qoppa), (\mathcal{D},\Phi))\] and \[\mathrm{Pn}(\mathrm{Fun}_{(\mathrm{Mod}_{R^e}^\omega,\Qoppa_R )}((\mathcal{C},\Qoppa), (\mathcal{D},\Phi)))\simeq \mathrm{Maps}_{\Catpidem_{, R}}((\mathcal{C},\Qoppa), (\mathcal{D},\Phi))\] compatible with the canonical inclusions into $\mathrm{Fun}_{R^e}^{\ex} (\mathcal{C},\mathcal{D})=U((\mathrm{Fun}_{(\mathrm{Mod}_{R^e}^\omega,\Qoppa_R )}((\mathcal{C},\Qoppa), (\mathcal{D},\Phi)))$.
\end{lemma}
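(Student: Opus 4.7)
The plan is to exploit the universal property of the internal mapping object together with the corepresentability of $\mathrm{He}$ (resp.\ $\mathrm{Pn}$) by the monoidal unit. Recall from Remark~\ref{rmk:he_lax_monoidal} that $(\Spectra^{\mathrm{fin}}, \Qoppa^u)$ corepresents $\mathrm{He}$ on $\Cath$; applying the idempotent-completion adjunction of Proposition~\ref{rec:catpidem} shows that the idempotent completion $(\Spectra^\omega, \Qoppa^u)$ corepresents $\mathrm{He}$ on $\Cathidem$, and by Theorem~\ref{thm:calgp_to_poincare_cat}(3) this is the monoidal unit of $\Cathidem$. The analogous statement for $\mathrm{Pn}$ on $\Catpidem$ follows from the universal characterization of $(\Spectra^\omega, \Qoppa^u)$ as the free Poincar\'e category on a single Poincar\'e object, together with the definition of a Poincar\'e functor.

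The main computation is the following: for any $(\mathcal{E}, \Psi) \in \Cathidem_R$, there is a natural equivalence
\begin{equation*}
\mathrm{Maps}_{\Cathidem_R}\bigl((\Mod^\omega_{R^e}, \Qoppa_R), (\mathcal{E}, \Psi)\bigr) \simeq \mathrm{He}(\mathcal{E}, \Psi).
\end{equation*}
Indeed, $\Cathidem_R = \Mod_{(\Mod^\omega_{R^e}, \Qoppa_R)}(\Cathidem)$ admits a free-forgetful adjunction with $\Cathidem$, whose left adjoint $(\Mod^\omega_{R^e}, \Qoppa_R) \otimes (-)$ sends the monoidal unit $(\Spectra^\omega, \Qoppa^u)$ of $\Cathidem$ to the monoidal unit $(\Mod^\omega_{R^e}, \Qoppa_R)$ of $\Cathidem_R$. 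Combining this adjunction with the corepresentability of $\mathrm{He}$ yields
\begin{equation*}
\mathrm{Maps}_{\Cathidem_R}\bigl((\Mod^\omega_{R^e}, \Qoppa_R), (\mathcal{E}, \Psi)\bigr) \simeq \mathrm{Maps}_{\Cathidem}\bigl((\Spectra^\omega, \Qoppa^u), U(\mathcal{E}, \Psi)\bigr) \simeq \mathrm{He}(\mathcal{E}, \Psi),
\end{equation*}
using that $\mathrm{He}$ depends only on the underlying hermitian $\infty$-category.

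To finish, apply this equivalence to $(\mathcal{E}, \Psi) = \mathrm{Fun}_R((\mathcal{C},\Qoppa),(\mathcal{D},\Phi))$ and invoke the universal property of the internal mapping object from Proposition~\ref{prop:relative_poincare_cats_basic_properties}\ref{prop_item:internal hom in R-linear poincare cats}:
\begin{equation*}
\mathrm{He}(\mathrm{Fun}_R((\mathcal{C},\Qoppa),(\mathcal{D},\Phi))) \simeq \mathrm{Maps}_{\Cathidem_R}\!\bigl((\Mod^\omega_{R^e}, \Qoppa_R), \mathrm{Fun}_R(\ldots)\bigr) \simeq \mathrm{Maps}_{\Cathidem_R}\!\bigl((\mathcal{C},\Qoppa),(\mathcal{D},\Phi)\bigr).
\end{equation*}
The Poincar\'e case proceeds identically, replacing $\Cathidem_R$ by $\Catpidem_R$ and $\mathrm{He}$ by $\mathrm{Pn}$. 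Compatibility with the inclusion into $\mathrm{Fun}^{\ex}_{R^e}(\mathcal{C},\mathcal{D}) = U(\mathrm{Fun}_R((\mathcal{C},\Qoppa),(\mathcal{D},\Phi)))$ is automatic, since the forgetful functor $U$ preserves internal mapping objects (Proposition~\ref{prop:relative_poincare_cats_basic_properties}\ref{prop_item:internal hom in R-linear poincare cats}) and all equivalences above are constructed naturally from the adjunctions. The main subtle point is the corepresentability of $\mathrm{Pn}$ by $(\Spectra^\omega, \Qoppa^u)$ on $\Catpidem$, which is not recorded verbatim in the excerpt but is a standard consequence of the universal description of the Poincar\'e unit.
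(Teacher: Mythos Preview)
Your proposal is correct and follows essentially the same argument as the paper: both use the free--forgetful adjunction to identify maps out of the monoidal unit $(\Mod^\omega_{R^e},\Qoppa_R)$ with $\mathrm{He}$/$\mathrm{Pn}$, then apply the internal hom adjunction. Your ``main computation'' is in fact recorded separately in the paper as Proposition~\ref{prop:forms_are_corepresented}, and the corepresentability of $\mathrm{Pn}$ by the universal Poincar\'e category that you flag as subtle is cited there as \cite[Proposition~4.1.3]{CDHHLMNNSI}.
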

\begin{proof}
    We will prove the statement identifying Poincar\'e objects with $ R $-linear Poincaré functors; the proof that hermitian objects may be identified with $ R $-linear hermitian functors is similar. 
    \begin{align*}
        \mathrm{Maps}_{\Catpidem_{, R}}((\mathcal{C},\Qoppa),(\mathcal{D},\Phi)) &\simeq \mathrm{Maps}_{\Catpidem_{, R}}((\mathrm{Mod}_{R^e}^\omega,\Qoppa_R)\otimes_{(\mathrm{Mod}_{R^e}^\omega, \Qoppa_R)}(\mathcal{C},\Qoppa),(\mathcal{D},\Phi))\\
        &\simeq \mathrm{Maps}_{\Catpidem_{, R}}((\mathrm{Mod}_{R^e}^\omega,\Qoppa_R),\mathrm{Fun}_{R}((\mathcal{C},\Qoppa),(\mathcal{D},\Phi)))\\
        &\simeq \mathrm{Maps}_{\Catpidem}(\mathbb{S}^u,\mathrm{Fun}_{R}((\mathcal{C},\Qoppa),(\mathcal{D},\Phi)))\\
        &\simeq \mathrm{Pn}(\mathrm{Fun}_{R}((\mathcal{C},\Qoppa),(\mathcal{D},\Phi)))
    \end{align*}
    where the last equivalence is \cite[Proposition 4.1.3]{CDHHLMNNSI}
\end{proof}
	
\begin{observation}\label{obs:conjugate_opposite_action_on_assoc_alg}
    Let $ R $ be an object of $ \EE_\infty\Alg^{BC_2} $, i.e. an $ \EE_\infty $-algebra with $ C_2 $-action. 
    There is a $ C_2 $-action on $ \EE_1 \Alg_R $ given by $ A \mapsto \lambda_* A^\op $. 
    Suppose $ (X,\lambda) $ is a scheme with an involution. 
    Consider $ \EE_1\Alg_{\mathcal{O}_X} := \EE_1 \Alg \left(\Mod_{\mathcal{O}_X}\right) $. 
    Then there is a $ C_2 $-action on $ \EE_1 \Alg_{\mathcal{O}_X} $ given by $ A \mapsto \lambda_* A^\op $. 
\end{observation}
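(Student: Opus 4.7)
The plan is to construct the $C_2$-action by combining two naturally-defined and commuting $C_2$-actions, then restricting along the diagonal $B\Delta \colon BC_2 \to BC_2 \times BC_2$. First I would dispose of the affine case. The data of $R \in \EE_\infty\Alg^{BC_2}$ is a functor $\lambda \colon BC_2 \to \CAlg$ with $\lambda(*) = R$. Post-composition with $\Mod_{(-)} \colon \CAlg \to \EE_\infty \Mon(\mathrm{Pr}^L)$ (see \cite[Theorem 4.5.3.1]{LurHA}) yields a functor $BC_2 \to \EE_\infty \Mon(\mathrm{Pr}^L)$, and applying $\EE_1\Alg(-)$ gives a $C_2$-action on $\EE_1\Alg_R$ whose nontrivial element acts by $A \mapsto \lambda_* A$.

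Separately, there is a $C_2$-action on $\EE_1 \Alg(-) \colon \EE_\infty \Mon(\Cat_\infty) \to \Cat_\infty$ given by the opposite-algebra functor $A \mapsto A^\op$, arising from the $C_2$-action on the associative operad $\Assoc^\otimes$ that reverses cyclic orderings (see \cite[Remark 4.1.1.6]{LurHA}). The key point is that this is a natural transformation of functors out of $\EE_\infty \Mon(\mathrm{Pr}^L)$; in particular, if we pre-compose with $\lambda \colon BC_2 \to \EE_\infty \Mon(\mathrm{Pr}^L)$ from the previous paragraph, we obtain a $C_2$-action on the functor $BC_2 \to \Cat_\infty$ classifying the base-change action. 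This is precisely the data of a functor $BC_2 \times BC_2 \to \Cat_\infty$. Restricting along $B\Delta$ yields a $C_2$-action on $\EE_1\Alg_R$ whose nontrivial involution is $A \mapsto \lambda_* A^\op$; note that $\lambda_* \simeq \lambda^*$ is automatic from the relation $\lambda^2 \simeq \id$, which gives $(\lambda^*)^{-1} \simeq \lambda^*$.

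The scheme case is handled by the same argument applied to $\mathcal{O}_X \in \EE_\infty\Alg(\mathrm{Shv}(X;\Spectra))^{BC_2}$, where the $C_2$-action is induced by the structure map $\mathcal{O}_X \xrightarrow{\sim} \lambda_* \mathcal{O}_X$ coming from $\lambda \colon X \xrightarrow{\sim} X$; alternatively, one may globalize via Zariski descent using a $C_2$-invariant affine cover $\{U_i\}$, writing $\EE_1 \Alg_{\mathcal{O}_X} \simeq \lim_i \EE_1 \Alg_{\mathcal{O}_X(U_i)}$ and patching together the affine-case $C_2$-actions. The only genuine point of caution is to verify that the two $C_2$-actions in the first paragraph really commute as $\infty$-categorical data (not merely up to homotopy), but this is automatic from the fact that opposite-algebra is functorial in symmetric monoidal functors, which is exactly what produces a functor out of the product $BC_2 \times BC_2$ rather than just a homotopy-level commuting square of endofunctors.
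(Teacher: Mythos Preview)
Your proposal is correct. The paper states this as an Observation without proof, so you have supplied what the authors left implicit. Your two-step construction---first the $C_2$-action from $\lambda$ via $\Mod_{(-)}$, then the $C_2$-action from $(-)^\op$ via the involution on $\Assoc^\otimes$, combined by naturality and restriction along the diagonal---is exactly the right idea, and it matches what the paper does later in the appendix (see Construction~\ref{cons:assoc_operad_to_E_sigma_operad}, where the same commuting square of $\mathrm{rev}$ on $\Assoc^\otimes$ and $\sigma^*$ appears, citing \cite[Remark~4.1.1.7]{LurHA}). One minor terminological slip: the involution on $\Assoc^\otimes$ reverses \emph{linear} orderings, not cyclic ones.
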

\begin{definition}\label{defn:E1_alg_with_lambda_involution}
    Let $ R $ be an object of $ \EE_\infty\Alg^{BC_2} $, i.e. an $ \EE_\infty $-algebra with $ C_2 $-action. 
    An \emph{$ \EE_1 $-$ R $-algebra with $ \lambda $-involution} is an object of $ \left(\EE_1\Alg_R\right)^{hC_2} $ where $ \EE_1 \Alg_R $ is regarded as an $ \infty $-category with $ C_2 $-action via Observation \ref{obs:conjugate_opposite_action_on_assoc_alg}. 

    Suppose $ (X,\lambda) $ is a scheme with an involution. 
    An \emph{$ \EE_1 $-$ \mathcal{O}_X $-algebra with $ \lambda $-involution} is an object of $ \left(\EE_1\Alg_{\mathcal{O}_X}\right)^{hC_2} $ where $ \EE_1 \Alg_{\mathcal{O}_X} $ is regarded as an $ \infty $-category with $ C_2 $-action via Observation \ref{obs:conjugate_opposite_action_on_assoc_alg}. 
\end{definition}
\begin{remark}
    We note here that the notion in Definition~\ref{defn:E1_alg_with_lambda_involution} is called an $\EE_1$-algebra with $\lambda$\textit{-anti-}involution in \cite[Example 3.1.9]{CDHHLMNNSI}. 
    We chose to follow the convention set in \cite{book_of_involutions}. 
\end{remark}
\begin{definition}
    Let $ R $ be a Poincar\'e ring, and let $ A $ be an $ \EE_1 $-$ R $-algebra with a $ \lambda $-involution in the sense of Definition \ref{defn:E1_alg_with_lambda_involution}. 
    We will refer to the data of $ (M_A, N_A, N_A \to M_A^{\mathrm{t}C_2}) $ of Proposition~\ref{prop:relative_poincare_cats_basic_properties}\ref{propitem:classify_R_lin_hermitian_struct_mod_cat} as an \emph{$ R $-linear $ A $-module with genuine involution}. 
\end{definition}
\begin{remark}
    When $ R = \mathbb{S}^u $ is the initial Poincar\'e ring of Example~\ref{example:universal_poincare_ring_spectrum}, then an $ \mathbb{S}^u $-linear $ A $-module with genuine involution is simply an $ A $-module with genuine involution in the sense of \cite[Defintiion 3.2.3]{CDHHLMNNSI}. 
\end{remark}
The following result is an $ R $-linear version of \cite[Corollary 3.4.2]{CDHHLMNNSI}. 
\begin{proposition}\label{prop:classify_R_linear_Poincare_functors}
    Let $ R $ be a Poincar\'e ring, and let $ A, B $ be $ \EE_1 $-$ R^e $-algebras with $ \lambda $-involution in the sense of Definition \ref{defn:E1_alg_with_lambda_involution} and let $ (M_A, N_A, \alpha \colon N_A \to M_A^{\mathrm{t}C_2} ) $, $ (M_B, N_B, \beta \colon N_B \to M_B^{\mathrm{t}C_2}) $ be $ R^e $-linear modules with genuine involution over $ A $ and $ B $, respectively. 
    Suppose given a map $ f \colon A \to B $ of $ \EE_1 $-$ R^e $-algebras with $ \lambda $-involution, and consider the functor $ B \otimes_A - \colon \Mod_A^\omega \to \LMod_B^\omega $. 
    Then 
    \begin{enumerate}[label=(\arabic*)]
        \item the data of a $ R $-linear hermitian functor $ \left(\Mod_A^\omega, \Qoppa_{M_A}^\alpha \right) \to \left(\Mod_B^\omega, \Qoppa_{M_B}^\beta \right) $ covering the base change functor $ B \otimes_A - $ can be encoded by a triple $ (\delta, \gamma, \sigma ) $ where $ \delta \colon M_A \to M_B $ is a morphism in $ \LMod_{A \otimes_{R^e} A}^{\mathrm{h}C_2} $, $ \gamma \colon N_A \to N_B $ is a morphism in $ \Mod_{A \otimes_{R^e} R^{\varphi C_2}} $, and $ \sigma $ is a homotopy making the square
        \begin{equation*}
            \begin{tikzcd}
                N_A \ar[d,"\alpha"] \ar[r,"\gamma"] & N_B \ar[d,"\beta"] \\
                M_A^{\mathrm{t}C_2} \ar[r,"{\delta^{\mathrm{t}C_2}}"] & M_B^{\mathrm{t}C_2}
            \end{tikzcd}
        \end{equation*}
        commute. 
        
        \item $ (\delta, \gamma, \sigma) $ defines an $ R $-linear Poincar\'e functor if the maps
        \begin{equation*}
            \begin{split}
                B \otimes_A M_A \to (B \otimes_{R^e} B) \otimes_{A \otimes_{R^e} A} M_A \to M_B \\
                B \otimes_A N_A \to N_B    
            \end{split}    
        \end{equation*}
        are equivalences, where the map $ B \to B \otimes_{R^e} B $ is either induced by the right unit or the left unit.\footnote{The first condition is equivalent to the requirement that the map $ \lambda^* B \to B \otimes_{R^e} \lambda^* B $ induced by the left unit is an equivalence.} 
    \end{enumerate}
\end{proposition}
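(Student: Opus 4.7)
The plan is to reduce the classification to morphisms of modules over a suitable algebra in genuine $C_2$-spectra via Proposition \ref{prop:relative_poincare_cats_basic_properties}, then unpack this using the symmetric monoidal récollement of $\Spectra^{C_2}$ from Notation \ref{notation:genuine_C2_spectra_and_underlying_spectra}. The argument will essentially be an $R$-linear enhancement of the proof of \cite[Corollary 3.4.2]{CDHHLMNNSI}.

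First, I would apply Proposition \ref{prop:relative_poincare_cats_basic_properties}\ref{propitem:classify_R_lin_hermitian_struct_mod_cat}: an $R$-linear hermitian structure on $\Mod_A^\omega$ is classified by a left module over the $\EE_\infty$-algebra $N^{C_2}A \otimes_{N^{C_2}R^e} R^L$ in $\Spectra^{C_2}$. Applying the symmetric monoidal récollement $\Spectra^{C_2} \simeq \Spectra^{BC_2} \times_{\Spectra}\Spectra^{\Delta^1}$, such a module decomposes precisely as a triple: an underlying spectrum $M_A$ (a module over $A\otimes_{R^e}\lambda^* A$ by Lemma \ref{lemma: duality identification}, since this is the underlying spectrum of $N^{C_2}A\otimes_{N^{C_2}R^e}R^L$), its geometric fixed points $N_A$ (a module over $A\otimes_{R^e}R^{\varphi C_2}$), and the comparison map $\alpha \colon N_A \to M_A^{\mathrm{t}C_2}$.

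Next, Proposition \ref{prop:relative_poincare_cats_basic_properties}\ref{prop_item:R_linear_poincare_cats_maps} identifies $R$-linear hermitian functors $\left(\Mod_A^\omega,\Qoppa_{M_A}^\alpha\right) \to \left(\Mod_B^\omega,\Qoppa_{M_B}^\beta\right)$ covering $B\otimes_A -$ with maps $(B\otimes_A-)_!\Qoppa_{M_A}^\alpha \to \Qoppa_{M_B}^\beta$ in $\Fun^q_{\Qoppa_R}(\Mod_B^{\omega,\op},\Spectra)$. Under the equivalence of \ref{propitem:classify_R_lin_hermitian_struct_mod_cat}, this left Kan extension corresponds to extension of scalars along the algebra map $N^{C_2}A\otimes_{N^{C_2}R^e}R^L \to N^{C_2}B\otimes_{N^{C_2}R^e}R^L$ induced by $f$ (together with the $\lambda$-involution structure on $f$). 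Applying the récollement decomposition to the resulting morphism space of modules produces the triple $(\delta,\gamma,\sigma)$: $\delta$ is the map on underlying spectra (living in $\Mod_{A\otimes_{R^e}\lambda^* A}^{\mathrm{h}C_2}$ because morphisms in $\Spectra^{BC_2}$ are $C_2$-equivariant), $\gamma$ is the induced map on geometric fixed points, and $\sigma$ is the homotopy recording compatibility with the Tate comparison maps $\alpha$ and $\beta$.

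For part (2), I would observe that the hermitian functor $(B\otimes_A-,(\delta,\gamma,\sigma))$ is Poincaré if and only if the associated map of modules over $N^{C_2}A\otimes_{N^{C_2}R^e}R^L$ exhibits $(M_A,N_A,\alpha)$ as the restriction of scalars of $(M_B,N_B,\beta)$ along the algebra map. By the récollement, this condition is detected separately on the underlying spectrum and on geometric fixed points, and on each it translates (via the adjoint base-change formulation) into precisely the stated equivalences. The map $B\otimes_A M_A \to (B\otimes_{R^e}\lambda^* B)\otimes_{A\otimes_{R^e}\lambda^* A} M_A$ appearing in the first condition reflects the fact that $B\otimes_{R^e}\lambda^* B$ is the underlying spectrum of $N^{C_2}B$ (with swap action twisted by $\lambda$), so the $R$-linear tensor structure forces this intermediate base change before landing in $M_B$.

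The main obstacle will be bookkeeping the compatibility of the récollement with the various relative tensor products: one must verify carefully that extension of scalars along $N^{C_2}A\otimes_{N^{C_2}R^e}R^L \to N^{C_2}B\otimes_{N^{C_2}R^e}R^L$ matches, under récollement, with base change along $A\otimes_{R^e}\lambda^* A \to B\otimes_{R^e}\lambda^* B$ on the underlying level and along $A\otimes_{R^e}R^{\varphi C_2} \to B\otimes_{R^e}R^{\varphi C_2}$ on geometric fixed points, with the evident compatibility datum on Tate constructions coming from lax symmetric monoidality as in Proposition \ref{prop:relative_poincare_cats_basic_properties}\ref{propitem:Rlin_Poincare_cats_tensor_mod_gen_inv}. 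Once this compatibility is established, the claims follow directly from the structure of the pullback square defining $\Spectra^{C_2}$.
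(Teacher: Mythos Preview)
Your approach to part (1) is sound and follows the pattern of \cite[Corollary 3.4.2]{CDHHLMNNSI}, using Proposition \ref{prop:relative_poincare_cats_basic_properties}\ref{propitem:classify_R_lin_hermitian_struct_mod_cat} and \ref{prop_item:R_linear_poincare_cats_maps} together with the récollement of $\Spectra^{C_2}$. The paper instead invokes Corollary \ref{cor:poincare_fourier_mukai}, which identifies $R$-linear hermitian (resp.\ Poincaré) functors $\left(\Mod_A^\omega,\Qoppa_A\right)\to\left(\Mod_B^\omega,\Qoppa_B\right)$ with hermitian (resp.\ Poincaré) objects in the bimodule Poincaré category $\left(\Mod_{A^\op\otimes_R B}^{L\omega},\Qoppa_{A^\op\boxtimes B}\right)$; unpacking a hermitian form on the bimodule $B$ in that category then recovers the triple $(\delta,\gamma,\sigma)$. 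Your route is more direct and closer to the non-relative argument, while the paper's route packages the computation uniformly once the internal-hom machinery of Corollary \ref{cor:poincare_fourier_mukai} is in place.

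For part (2) there is a genuine gap. You assert that the functor is Poincaré if and only if the map of $N^{C_2}A\otimes_{N^{C_2}R^e}R^L$-modules exhibits $(M_A,N_A,\alpha)$ as the \emph{restriction of scalars} of $(M_B,N_B,\beta)$. That condition says $\Qoppa_A\simeq F^*\Qoppa_B$ as quadratic functors, which is strictly stronger than duality-preservation: the Poincaré condition constrains only the bilinear part. Moreover, passing to ``the adjoint base-change formulation'' does not convert a restriction-of-scalars equivalence into an extension-of-scalars equivalence; these are genuinely different statements, not adjoint reformulations of one another. The correct route is to translate the duality-compatibility $F\circ D_{\Qoppa_A}\to D_{\Qoppa_B}\circ F$ directly, which yields the extension-of-scalars condition on $M_A$ (the displayed composite through $(B\otimes_{R^e}\lambda^*B)\otimes_{A\otimes_{R^e}\lambda^*A}M_A$). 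The condition on $N_A\to N_B$ is then an additional hypothesis giving a convenient sufficient criterion in the spirit of \cite[Lemma 3.4.3]{CDHHLMNNSI}; note the proposition says ``if'', not ``if and only if''.
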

\begin{proof}
    Follows from Corollary \ref{cor:poincare_fourier_mukai}. 
\end{proof}
	Write $ \mathrm{Fm} $, resp. $ \mathrm{Pn} $ for the composite $ \Cath_{, R} \xrightarrow{U} \Catp \xrightarrow{\mathrm{Fm}} \Spaces $, resp. $ \Catp_{, R} \xrightarrow{U} \Catp \xrightarrow{\mathrm{Pn}} \Spaces $ where $ \mathrm{Fm} $ and $ \mathrm{Pn} $ are defined in \cite[Definitions 2.1.1, 2.1.3]{CDHHLMNNSI}. 
	\begin{proposition}\label{prop:forms_are_corepresented}
		Let $ (R, R^{\varphi C_2} \to R^{\mathrm{t}C_2}) $ be a Poincaré ring. 
		Then $ \left(\Mod_R^\omega, \Qoppa_R \right) $ corepresents the functors $ \mathrm{Fm} \colon \Cath_{, R} \to \Spaces $ and $ \mathrm{Pn} \colon \Catp_{, R} \to \Spaces $.
	\end{proposition}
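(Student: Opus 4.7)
The plan is to deduce relative corepresentability from the absolute case by means of the induction--restriction adjunction between $\Catp$ and $\Catp_R$ (resp.\ $\Cath$ and $\Cath_R$). I give the argument for $\mathrm{Pn}$; the case of $\mathrm{Fm}$ is formally identical.

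By Proposition \ref{prop:relative_poincare_cats_basic_properties}\ref{propitem:Rlin_Poincare_cats_is_symm_mon}, the forgetful functor $U \colon \Catp_R \to \Catp$ admits a symmetric monoidal left adjoint $L = - \otimes \Mod^{\mathrm{p}}_R$. In particular, $L$ sends the tensor unit $\mathbb{S}^u = (\Spectra^\omega, \Qoppa^u) \in \Catp$ of \cite[\S4.1]{CDHHLMNNSI} to the tensor unit $\Mod^\mathrm{p}_R$ of $\Catp_R$. Recall from \cite[Proposition 4.1.3]{CDHHLMNNSI} that $\mathbb{S}^u$ corepresents $\mathrm{Pn} \colon \Catp \to \Spaces$, i.e.\ there is a natural equivalence $\mathrm{Maps}_{\Catp}(\mathbb{S}^u, -) \simeq \mathrm{Pn}(-)$. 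For any $(\mathcal{C},\Qoppa) \in \Catp_R$ we then compute
\begin{align*}
\mathrm{Maps}_{\Catp_R}\!\left(\Mod^\mathrm{p}_R, (\mathcal{C},\Qoppa)\right)
&\simeq \mathrm{Maps}_{\Catp_R}\!\left(L(\mathbb{S}^u), (\mathcal{C},\Qoppa)\right) \\
&\simeq \mathrm{Maps}_{\Catp}\!\left(\mathbb{S}^u, U(\mathcal{C},\Qoppa)\right) \\
&\simeq \mathrm{Pn}\bigl(U(\mathcal{C},\Qoppa)\bigr) \\
&= \mathrm{Pn}(\mathcal{C},\Qoppa),
\end{align*}
where the last line is the definition of $\mathrm{Pn}$ on $\Catp_R$ as the composite with $U$. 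Naturality in $(\mathcal{C},\Qoppa)$ is immediate from naturality of each step.

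The analogous argument for $\mathrm{Fm}$ uses the induction--restriction adjunction $- \otimes \Mod^\mathrm{p}_R \colon \Cath \rlarrows \Cath_R \colon U$ from Proposition \ref{prop:relative_poincare_cats_basic_properties}\ref{propitem:Rlin_Poincare_cats_is_symm_mon} and the fact that the tensor unit of $\Cath$ (namely $\mathbb{S}^u$, viewed as a hermitian $\infty$-category) corepresents $\mathrm{Fm}$ on $\Cath$; see Remark \ref{rmk:he_lax_monoidal} and \cite[Example 6.2.5]{CDHHLMNNSI}. The only non-formal point is verifying that the induction functor $L$ sends the absolute unit to $\Mod^\mathrm{p}_R$ with its canonical self-module structure, which is the defining property of the tensor unit under any symmetric monoidal left adjoint and is already recorded in Proposition \ref{prop:relative_poincare_cats_basic_properties}\ref{propitem:Rlin_Poincare_cats_is_symm_mon}. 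The main conceptual point is that the canonical Poincaré object of $\Mod^\mathrm{p}_R$ corepresenting this equivalence is precisely $(R, u)$ from Remark \ref{remark:poincare_ring_spectra_to_modules_with_Poincare_structure} (equivalently, the class produced by Corollary \ref{cor:tensor_unit_as_poincare_object} applied to $A = R$), which is the image of the universal Poincaré object on $\mathbb{S}^u$ under the unit morphism $\mathbb{S}^u \to U\Mod^\mathrm{p}_R$.
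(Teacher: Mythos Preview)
Your proof is correct and is essentially the same as the paper's: both arguments use the induction--restriction adjunction $L = -\otimes \Mod^\mathrm{p}_R \dashv U$ from Proposition \ref{prop:relative_poincare_cats_basic_properties}\ref{propitem:Rlin_Poincare_cats_is_symm_mon}, the fact that $L$ sends the unit $(\Spectra^\omega,\Qoppa^u)$ to $\Mod^\mathrm{p}_R$, and absolute corepresentability of $\mathrm{Pn}$ by $(\Spectra^\omega,\Qoppa^u)$ from \cite[Proposition 4.1.3]{CDHHLMNNSI}. Your additional remarks identifying the corepresenting Poincar\'e object with $(R,u)$ and spelling out the $\mathrm{Fm}$ case are accurate elaborations beyond what the paper records.
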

	Since the forgetful functor $ \Cath_{, R} \to \Cath $ and its Poincaré counterpart both preserve filtered colimits, an immediate consequence is that the unit is compact (cf. \cite[Proposition 6.1.8]{CDHHLMNNSI}).
	\begin{corollary}\label{cor:unit_is_compact}
		Let $ (R, R^{\varphi C_2} \to R^{\mathrm{t}C_2}) $ be a Poincaré ring. 
		Then $ \left(\Mod_R^\omega, \Qoppa_R \right) $ is a compact object of both $ \Cath_{, R} $ and $ \Catp_{, R} $.  
	\end{corollary}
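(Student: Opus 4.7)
The plan is to leverage Proposition \ref{prop:forms_are_corepresented} to reduce compactness of $(\Mod_R^\omega, \Qoppa_R)$ to the statement that the forgetful functors $\Cath_R \to \Cath$ and $\Catp_R \to \Catp$ preserve filtered colimits, since the target functors $\mathrm{Fm} \colon \Cath \to \Spaces$ and $\mathrm{Pn} \colon \Catp \to \Spaces$ are known to preserve filtered colimits by \cite[Proposition 6.1.8]{CDHHLMNNSI} (the unit $(\Spectra^{\mathrm{fin}},\Qoppa^u)$ is compact in the absolute setting).

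First, by Proposition \ref{prop:forms_are_corepresented}, for any $ \left(\mathcal{C},\Qoppa\right) \in \Cath_R $ (resp.\ $ \Catp_R $) we have natural equivalences
\[
\hom_{\Cath_R}\!\left((\Mod_R^\omega,\Qoppa_R),(\mathcal{C},\Qoppa)\right) \simeq \mathrm{Fm}(\mathcal{C},\Qoppa),
\qquad
\hom_{\Catp_R}\!\left((\Mod_R^\omega,\Qoppa_R),(\mathcal{C},\Qoppa)\right) \simeq \mathrm{Pn}(\mathcal{C},\Qoppa),
\]
and by construction $\mathrm{Fm}$ and $\mathrm{Pn}$ on the relative side factor as the composite of the forgetful functor to the absolute category followed by the absolute $\mathrm{Fm}$, resp.\ $\mathrm{Pn}$. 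Hence compactness will follow once we show that the forgetful functors $U \colon \Cath_R \to \Cath$ and $U \colon \Catp_R \to \Catp$ preserve filtered colimits.

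This is a general property of forgetful functors out of categories of modules over an algebra. Concretely, $\Cath_R = \Mod_{(\Mod_R^\omega,\Qoppa_R)}(\Cath)$ and similarly for $\Catp_R$; by Proposition \ref{prop:relative_poincare_cats_basic_properties}\ref{propitem:Rlin_Poincare_cats_is_symm_mon} (cf.\ also \cite[\S 4.2.3]{LurHA}), these inherit all small colimits from $\Cath$, resp.\ $\Catp$, and the forgetful functors preserve sifted (in particular filtered) colimits by \cite[Corollary 4.2.3.5]{LurHA}, since $\Cath$ and $\Catp$ are presentable with all colimits. Composing with the fact that $\mathrm{Fm}$ and $\mathrm{Pn}$ preserve filtered colimits in the absolute setting yields that the corepresented functors above preserve filtered colimits, which is precisely the compactness assertion.

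The only subtlety—and the step I would be most careful about—is verifying that the hypotheses of \cite[Corollary 4.2.3.5]{LurHA} apply in our categorical setting (both $\Cath$ and $\Catp$ admit the relevant colimits and the monoidal structures preserve filtered colimits in each variable); this is exactly the content of \cite[\S 6.1]{CDHHLMNNSI}, so no essentially new work is required.
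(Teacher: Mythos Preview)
Your proposal is correct and follows essentially the same approach as the paper: the paper deduces the corollary from Proposition~\ref{prop:forms_are_corepresented} together with the observation that the forgetful functors $\Cath_R \to \Cath$ and $\Catp_R \to \Catp$ preserve filtered colimits, and then invokes \cite[Proposition 6.1.8]{CDHHLMNNSI} for compactness of the absolute unit. You have supplied more detail (e.g.\ the explicit citation of \cite[Corollary 4.2.3.5]{LurHA}) than the paper's one-line justification, but the argument is the same.
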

	\begin{proof}[Proof of Proposition \ref{prop:forms_are_corepresented}]
		We prove the statement for $ \mathrm{Pn} $; the proof for $ \mathrm{Fm} $ is similar and is left to the reader. 
		Recall that Proposition \ref{prop:relative_poincare_cats_basic_properties}.\ref{propitem:Rlin_Poincare_cats_is_symm_mon} furnishes an adjoint pair $ \Catp_{, R} \rlarrows \Catp $ of functors. 
		Write $ \overline{\mathcal{C}} = (\mathcal{C},\Qoppa_{\mathcal{C}}) \in \Catpidem_{, R} $. 
		Then
		\begin{align*}
			\mathrm{Pn}(\mathcal{C}) = \mathrm{Maps}_{\Catp}\left((\Spectra^f,\Qoppa^u), U(\overline{\mathcal{C}})\right) &\simeq \mathrm{Maps}_{\Catp_{, R}}\left(\left(\Mod_R^\omega,\Qoppa_R\right)\otimes(\Spectra^f,\Qoppa^u), \overline{\mathcal{C}}\right)\\
            &\simeq \mathrm{Maps}_{\Catp_{, R}}\left((\mathrm{Mod}_{R^e}^\omega, \Qoppa_R),\overline{C}\right) \,,
		\end{align*}
		where the first equivalence is \cite[Proposition 4.1.3]{CDHHLMNNSI}. 
	\end{proof}
	
\subsection{\'Etale descent for Poincaré \texorpdfstring{$\infty$}{∞}-categories}~\label{subsection: etale descent results}

In this subsection we will define the $C_2$-\'etale topology (Definition~\ref{defn:C2_etale_topology}) on Poincar\'e schemes. We will then establish that some key constructions related to the $R$-linear Poincar\'e categories considered in the previous subsection are sheaves in this topology. We begin by defining these constructions.
\begin{notation}\label{notation:poincare_ring_basechange}
    Let $ (R, R\to R^{\varphi C_2} \to R^{\mathrm{t}C_2}) $ be a Poincaré ring. 
    There is a functor 
    \begin{equation*}
        \begin{split}
            \EE_\infty\Alg^{BC_2}_{R/} &\to \CAlgp_{R/-} \\
            S & \mapsto (S, S \to R^{\varphi C_2} \otimes_{R} S \to S^{t C_2}) =: (R, R \to R^{\varphi C_2} \to R^{\mathrm{t}C_2}) \otimes S \,,
        \end{split}    
    \end{equation*}
    where the map $ R^{\varphi C_2} \otimes_R S \to R^{\mathrm{t}C_2} \otimes_{R^{\mathrm{t}C_2}} S^{\mathrm{t}C_2} \simeq S^{\mathrm{t}C_2} $ is given by base change along the Tate-valued norm composed with the structure map $ R^{\varphi C_2} \to R^{\mathrm{t}C_2} $. 
    Composing the aforementioned functor with the functor that sends a Poincaré ring to its category of compact modules equipped with the canonical Poincaré structure defines a functor
    \begin{equation*}
        \Mod^\mathrm{p} \colon \EE_\infty\Alg^{BC_2}_{R/-} \to \EE_\infty\Alg\left(\Catpidem_{, R}\right) \,.
    \end{equation*}
\end{notation}
\begin{notation}\label{notation:scheme_involution_basechange}
    Let $ X $ be a scheme with an involution $ \lambda $ and let $ p \colon X \to Y $ exhibit $ Y $ as a good quotient of $ X $. 
    If $ j \colon U \to Y $ is flat, let us write $ p ^*U $ for the tuple $ (X \times_Y U, j^*(\sigma), U, j^*(p)) $ of Remark \ref{remark:restriction_of_schemes_with_involution}. 
    Then the assignment $ (j \colon U \to Y ) \mapsto \left(\Mod^\omega_{j^*X}, \Qoppa_{j^*\underline{\mathcal{O}}}\right) $ defines a functor 
    \begin{equation*}
        \Mod^\mathrm{p} \colon \mathrm{\acute{E}t}_Y^\op \to \EE_\infty \Alg\left(\Mod_{\left(\Mod^\omega_X, \Qoppa_{\underline{\mathcal{O}}}\right)}(\Catpidem)\right) \,.
    \end{equation*}
\end{notation}
\begin{observation}
    Let $ R $ be a discrete commutative ring with a $ C_2 $-action, and recall that $ X:= \Spec R \to \Spec(R^{C_2}) = Y $ may be regarded as a $ C_2 $-scheme (Observation \ref{obs:fixpt_Mackey_functor_as_affine_C2_scheme}). 
    Then base change along $ R^{C_2} \to R $ defines a map from étale covers of $ Y $ of Notation \ref{notation:scheme_involution_basechange} to $ C_2 $-equivariant étale covers of $ X $ of Notation \ref{notation:poincare_ring_basechange}. 
    However, not all $ C_2 $-equivariant étale covers of $ X $ arise from this construction. 
    For instance, consider $ R $ with the trivial involution regarded as a Poincaré ring via Example \ref{ex:fixpt_Mackey_functor}. 
    Consider the map $ R \xrightarrow{r \mapsto (r,r)} R \times R $, where $ R \times R $ is endowed with the flip action $ (r, s) \mapsto (s,r) $. 
    This is a $ C_2 $-equivariant étale cover which is not base changed from an étale cover of $ R $. 
\end{observation}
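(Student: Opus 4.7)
The observation asserts three things: that the base change construction defines a functor from étale covers of $Y$ to $C_2$-equivariant étale covers of $X$ in the sense of Notation \ref{notation:poincare_ring_basechange}; that the diagonal $R \to R \times R$ equipped with the flip action is a $C_2$-equivariant étale cover of $X$; and that this cover is not in the image of base change.

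For the first assertion, I would work in the affine setting throughout. Given an étale map $j \colon U = \Spec A \to Y = \Spec R^{C_2}$, the affine pullback is $\Spec(R \otimes_{R^{C_2}} A)$. Étaleness is preserved under base change, so $R \otimes_{R^{C_2}} A$ is an étale $R$-algebra. The involution $\lambda \otimes \id_A$ is well-defined because $\lambda$ fixes $R^{C_2}$, and the unit $R \to R \otimes_{R^{C_2}} A$ is $C_2$-equivariant by construction. Hence $R \otimes_{R^{C_2}} A$ lifts canonically to an object of $\EE_\infty\Alg^{BC_2}_{R/-}$, the category of $C_2$-equivariant objects entering Notation \ref{notation:poincare_ring_basechange}; functoriality in $j$ follows from functoriality of the relative tensor product. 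Compatibility with the Poincaré enhancement of Notation \ref{notation:scheme_involution_basechange} may be verified by unwinding Construction \ref{cons:structure_sheaf_of_Green_functors} and the equivalence of Observation \ref{obs:fixpt_Mackey_functor_as_affine_C2_scheme}, both of which reduce to the $C_2$-Mackey functor $A \to R \otimes_{R^{C_2}} A$.

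For the counterexample, specialize to $R$ with trivial involution, so $R^{C_2} = R$ and the good quotient map $p \colon X \to Y$ is the identity. In this case the base change functor reduces to $A \mapsto A$ equipped with the trivial $C_2$-action, so every object in its image carries trivial $C_2$-action. On the other hand, the diagonal $R \xrightarrow{r \mapsto (r,r)} R \times R$ realizes $R \times R$ as an étale $R$-algebra (a finite product of copies of $R$), and the flip action $(r,s) \mapsto (s,r)$ commutes with the diagonal since $(r,r)$ is a fixed point. Provided $R$ is nonzero, the flip action on $R \times R$ is nontrivial, so $R \times R$ cannot lie in the image.

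There is no genuine obstacle here; the content of the observation is conceptual rather than technical. The only point requiring attention is keeping track of the two distinct $C_2$-actions that can be placed on a given étale $R$-algebra, namely the action inherited from the $C_2$-action on $R$ by base change and a possibly independent action on the algebra itself.
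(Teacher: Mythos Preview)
Your proposal is correct and follows the same approach as the paper. The paper treats this as a self-evident observation without a separate proof environment; the justification is contained in the statement itself (namely, the explicit example of $R \to R \times R$ with the flip action), and you have simply supplied the details that the paper leaves implicit: that base change from an étale cover of $Y = \Spec R^{C_2}$ yields a $C_2$-equivariant étale $R$-algebra, and that when the involution on $R$ is trivial every such base-changed algebra carries trivial $C_2$-action, whereas the flip action on $R \times R$ is nontrivial.
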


We will now define analogues of the \'etale topology we will be considering.

\begin{definition}\label{defn:C2_etale_topology}
    Let $f:R\to S$ be a map of Poincar{\'e} rings. Then we say that $f$ is \textit{$C_2$-\'etale} if the map $R^{C_2}\to S^{C_2}$ is an \'etale map in the sense of \cite[Definition 7.5.0.4]{LurHA}, and that the induced maps $R^{\phi C_2}\otimes_{R^{C_2}}S^{C_2}\to S^{\phi C_2}$ and $R^e \otimes_{R^{C_2}}S^{C_2}\to S^e $ are equivalences. 
    Let $ (X,\lambda,Y,p) $ be a scheme with involution admitting a good quotient (Definition \ref{defn:Category of good quotients}). 
    A map $ (U, \ell, V, \pi) \to (X,\lambda,Y,p) $ in $ \mathrm{qSch}^{C_2} $ is said to be \emph{$ C_2 $-étale} if $ V \to Y $ is étale and the canonical map $ U \to V \times_Y X $ is an equivalence of schemes with $ C_2 $-action. 
\end{definition}

\begin{lemma}
    The $C_2$-\'etale topology of Definition \ref{defn:C2_etale_topology} defines a Grothendieck pretopology. 
\end{lemma}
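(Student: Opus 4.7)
The approach is to establish a correspondence between the $C_2$-étale topology on $\CAlgp$ and the ordinary étale topology on the underlying $\mathbb{E}_\infty$-rings of $C_2$-fixed points, and then transport the Grothendieck pretopology axioms across this correspondence. The essential observation is that by Definition~\ref{defn:C2_etale_topology}, the data of a $C_2$-étale map $R \to S$ is equivalent to the data of an étale map $R^{C_2} \to S^{C_2}$ (together with $R$), since the remaining pieces $S^e$ and $S^{\phi C_2}$ are obtained by base change along $R^{C_2} \to S^{C_2}$. Under this correspondence, a family of $C_2$-étale maps $\{R \to S_i\}$ whose underlying maps on $C_2$-fixed points form an ordinary étale cover of $\Spec R^{C_2}$ will be, by definition, a $C_2$-étale cover of $R$.

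Granted this correspondence, the three pretopology axioms are verified as follows. Identities are clearly $C_2$-étale. For composition, if $R \to S \to T$ are both $C_2$-étale, then $R^{C_2} \to S^{C_2} \to T^{C_2}$ is a composite of étale maps of $\mathbb{E}_\infty$-rings and hence étale, while the base change equivalences for the composite are obtained by stacking the pushout squares witnessing these equivalences for each individual factor. For stability under pullback, given a $C_2$-étale map $R \to S$ and any map $R \to R'$ in $\CAlgp$, one must show that the induced map $R' \to S \otimes_R R'$ (computed as a pushout in $\CAlgp$) is again $C_2$-étale.

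The main technical content lies in establishing this pullback stability. By Theorem~\ref{thm:poincare_rings_cat_formal_properties}\ref{thmitem:poincare_ring_to_naive_ring_preserves_colims}, the underlying $C_2$-equivariant $\mathbb{E}_\infty$-ring of $S \otimes_R R'$ is computed as the ordinary pushout $S^e \otimes_{R^e} R'^e$ in $\CAlg(\Spectra^{BC_2})$; the description of pushouts from Remark~\ref{rmk:limits_of_Poincare_rings} gives an analogous description of the $\phi C_2$-part together with the structure map to the Tate construction. The crucial point is then to identify the $C_2$-fixed points of the pushout as $S^{C_2} \otimes_{R^{C_2}} R'^{C_2}$. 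This follows from the récollement presentation $R^{C_2} \simeq R^{hC_2} \times_{R^{\mathrm{t}C_2}} R^{\phi C_2}$, combined with the defining base-change equivalences for $C_2$-étaleness (which ensure $S^e \simeq R^e \otimes_{R^{C_2}} S^{C_2}$ and $S^{\phi C_2} \simeq R^{\phi C_2} \otimes_{R^{C_2}} S^{C_2}$), and the stability of étale maps of $\mathbb{E}_\infty$-rings under derived tensor products.

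The principal obstacle is precisely the computation of $C_2$-fixed points of a pushout in $\CAlgp$, since the functor $R \mapsto R^{C_2}$ does not preserve colimits in general. However, the base-change conditions built into the definition of $C_2$-étaleness are exactly what is needed to make this computation go through, and once it is in place, all three pretopology axioms reduce to their well-known classical counterparts for the étale topology on $\Spec R^{C_2}$.
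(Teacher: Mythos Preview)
Your approach is essentially the same as the paper's: both reduce everything to the base-change stability axiom and both aim to show that the $C_2$-fixed points of the pushout satisfy $(S\otimes_R R')^{C_2}\simeq S^{C_2}\otimes_{R^{C_2}} R'^{C_2}$, from which the result follows since \'etale maps of $\mathbb{E}_\infty$-rings are stable under pushout.

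The difference is in how this fixed-point identification is justified. The paper's key input is concrete: \'etale implies that, up to Zariski localization on $\pi_0(R^{C_2})$, the extension $S^{C_2}$ is a \emph{finite colimit} of copies of $R^{C_2}$ as a module. This perfectness is exactly what forces $S^{C_2}\otimes_{R^{C_2}}(-)$ to commute with the homotopy-orbit and geometric-fixed-point constructions, and the paper then reads off $(S\otimes_R T)^{C_2}$ from the isotropy separation sequence $(-)^e_{\mathrm{h}C_2}\to (-)^{C_2}\to (-)^{\phi C_2}$.

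Your sketch invokes the r\'ecollement \emph{pullback} square $R^{C_2}\simeq R^{\mathrm{h}C_2}\times_{R^{\mathrm{t}C_2}} R^{\phi C_2}$ instead, and appeals to ``stability of \'etale maps under derived tensor products.'' That phrase establishes that $R'^{C_2}\to S^{C_2}\otimes_{R^{C_2}} R'^{C_2}$ is \'etale, but it does not by itself explain why tensoring with $S^{C_2}$ commutes with the limits $(-)^{\mathrm{h}C_2}$ and $(-)^{\mathrm{t}C_2}$ appearing in the r\'ecollement square, nor do you work out the dual-of-Remark~\ref{rmk:limits_of_Poincare_rings} description of $(S\otimes_R R')^{\phi C_2}$ explicitly. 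Both points are resolved by the finiteness observation the paper makes, so your argument is correct once that is supplied; alternatively, switching from the pullback r\'ecollement to the isotropy cofiber sequence (which involves only colimits) would let you avoid finiteness for the $(-)_{\mathrm{h}C_2}$ step, though you would still need to compute the $\phi C_2$-part of the pushout directly.
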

\begin{proof}
    The only thing to check is that if $f:R\to S$ is $C_2$-\'etale and $g:R\to T$ is any other map of Poincar{\'e} rings, then the map $T\to S\otimes_R T$ is also $C_2$-\'etale. Note first that, up to localization at a subset of $\pi_0(R^{C_2})$, $S^{C_2}$ is a finite colimit of $R^{C_2}$ as a module. 
    Recall that taking geometric fixed points with respect to any subgroup is symmetric monoidal; $ C_2 $-étaleness of $ R \to S $ gives equivalences $(S\otimes_R T)^{e}\simeq S^e \otimes_{R^e} T^e \simeq S^{C_2}\otimes_{R^{C_2}}T^e$ and similarly $(S\otimes_R T)^{\phi C_2} \simeq S^{C_2} \otimes_{R^{C_2}} T^{\phi C_2} $. 
    It follows that $(S\otimes_R T)_{\mathrm{h}C_2} \simeq S^{C_2}\otimes_{R^{C_2}}\left(T^e_{\mathrm{h}C_2}\right)$ , whence by the isotropy separation sequence we deduce \[(S\otimes_R T)^{C_2}\simeq S^{C_2}\otimes_{R^{C_2}}T^{C_2}\,;\] the latter is \'etale over $T^{C_2}$ by \cite[Remark 7.5.0.5]{LurHA}. The rest of the conditions for $T\to S\otimes_R T$ being $C_2$-\'etale were proven in the course of showing that the map on fixed points is \'etale, whence the result.
\end{proof}

In fact this Grothendieck topology for schemes with good quotients is already a familiar one.

\begin{lemma}~\label{lem: c2 etale topology=etale topology on the base}
    Let $(X,\lambda, Y ,p)$ be a scheme with good quotient, and let $C_2-\Acute{e}t_{(X,\lambda,Y,p)}$ be the category with objects $C_2$-\'etale maps $(U,\sigma, V,q)\to (X,\lambda, Y, p)$ and morphisms $C_2$-\'etale maps between them. Then the functor \[C_2-\Acute{e}t_{(X,\lambda,Y,p)}\to \Acute{e}t_Y\] from $C_2$-\'etale $(X,\lambda,Y,p)$-schemes to \'etale $Y$ schemes given by sending $(U,\sigma, V, p)\mapsto V$ is an equivalence.
\end{lemma}
\begin{proof}
    The inverse of this functor is given by the construction of Notation~\ref{notation:scheme_involution_basechange}.
\end{proof}

\begin{remark}\label{rmk:C2_etale_isovariant_etale_comparison}
    Fix a scheme with involution admitting a good quotient $ (X,\lambda,Y,p) $. Suppose further that $p:X\to Y$ is a scheme-theoretic geometric quotient in the sense of \cite[Definition 2.6]{isovariant}.
    Then the functor $ U $ (see Proposition \ref{prop:products_of_C2_schemes}) sends $C_2$-\'etale covers of $ (X,\lambda,Y,p) $ to \textit{isovariant \'etale covers} of $ X $ in the sense of Thomason \cite[Definition 2.6]{isovariant}. 
    The proof that these do indeed give rise to the same covers is the content of \cite[Proposition 2.17]{isovariant} and Lemma~\ref{lem: c2 etale topology=etale topology on the base}. 

    Note that, when they both exist, a good quotient of $Y$ must agree with the scheme-theoretic quotient of Thomason (\cite[Definition 2.3]{isovariant}). This is because both are categorical quotients, the former by \cite[Proposition V.1.3, Proposition V.1.8]{Grothendieck2003RevtementsE} and the latter by definition.
\end{remark}

We will now show that the prestacks of Notation~\ref{notation:poincare_ring_basechange} and Notation~\ref{notation:scheme_involution_basechange} are in fact stacks for this topology.

\begin{proposition}\label{prop:Poincare_modules_as_etale_sheaf}
    Let $ R $ be a Poincaré ring. Let $ (X, \lambda, Y, p) $ be a scheme with involution $ X $ and a good quotient $ Y $. Then:
    \begin{enumerate}[label=(\arabic*)]
        \item~\label{prop:Poincare_modules_as_etale_sheaf_affine_spectral} The assignment of Notation~\ref{notation:poincare_ring_basechange} is a hypersheaf on the small $C_2$-equivariant étale site of $ R $. 
        \item~\label{prop:Poincare_modules_as_etale_sheaf_inv_scheme} The assignment of Notation~\ref{notation:scheme_involution_basechange} is a hypersheaf on the small étale site of $ Y $. 
    \end{enumerate}
\end{proposition}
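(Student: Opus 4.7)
The strategy for both statements is to reduce descent for Poincar\'e module categories to classical étale hyperdescent for modules over $\EE_\infty$-rings. The basic principle is to exploit the pullback description of $\CAlgp$ in \eqref{diagram:poincare_ring_defn} together with the récollement $\Spectra^{C_2} \simeq \Spectra^{BC_2} \times_{(-)^{\mathrm{t}C_2}} \Spectra^{\Delta^1}$ (see Remark \ref{rmk:Poincare_ring_has_underlying_C2_spectrum_alg}) to decompose the data of an $R$-linear Poincar\'e module category into pieces, each of which is a classical module category of the form $\Mod_A(\Spectra)$ for some $\EE_\infty$-ring $A$, and then invoke étale hyperdescent for such categories.

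For part \ref{prop:Poincare_modules_as_etale_sheaf_affine_spectral}, let $R \to S^\bullet$ be the Čech nerve of a $C_2$-étale hypercover. I would first observe that by Definition \ref{defn:C2_etale_topology}, the induced maps of underlying genuine $C_2$-algebras $R^L \to S^{L,\bullet}$ are obtained by base change along the étale cover $R^{C_2} \to S^{C_2,\bullet}$ of $\EE_\infty$-rings of strict fixed points. In particular, the underlying maps $R^e \to S^{e,\bullet}$ and $R^{\varphi C_2} \to S^{\varphi C_2, \bullet}$ are themselves étale covers. Since étale hyperdescent holds for modules over $\EE_\infty$-rings, the underlying $R^e$-linear diagram satisfies descent in $\Catex_{R^e}$, and the descent of the hermitian structure data reduces via Proposition \ref{prop:relative_poincare_cats_basic_properties}\ref{propitem:classify_R_lin_hermitian_struct_mod_cat} to descent for modules over the algebra $N^{C_2}(R^e) \otimes_{N^{C_2}(R^e)} R^L \simeq R^L$ in $\Spectra^{C_2}$, and similarly for each stage of the cover. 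Compatibility of these two layers of descent is governed by the base-change formulas of Proposition \ref{prop:relative_poincare_cats_basic_properties}\ref{propitem:Rlin_Poincare_cats_tensor_mod_gen_inv}. Since the forgetful functor $\Catp_R \to \Catex_{R^e}$ is conservative and preserves limits (Proposition \ref{prop:relative_poincare_cats_basic_properties}\ref{propitem:Rlin_Poincare_cats_is_symm_mon}), combining these identifies $\Mod^\mathrm{p}_R \simeq \lim_\Delta \Mod^\mathrm{p}_{S^\bullet}$ in $\Catp_R$.

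For part \ref{prop:Poincare_modules_as_etale_sheaf_inv_scheme}, I would reduce to part \ref{prop:Poincare_modules_as_etale_sheaf_affine_spectral} via the Zariski sheaf property of the structure sheaf of Poincar\'e rings $\underline{\mathcal{O}}$ established in Lemma \ref{lemma:identify_structure_sheaf_of_Green_func}\ref{lem_item:structure_sheaf_of_Green_func_is_sheaf}. Given an étale hypercover of an affine open $U \subseteq Y$ by affine opens, pullback along $p \colon X \to Y$ produces a $C_2$-equivariant étale hypercover of the preimage $X_U = p^{-1}(U)$, and on such pieces part \ref{prop:Poincare_modules_as_etale_sheaf_affine_spectral} applies directly to the Poincar\'e ring $\underline{\mathcal{O}}(U)$. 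The main obstacle in the argument is carefully tracking the interaction between the various descent conditions; in particular, one must verify that the classification of $R$-linear Poincar\'e structures on module categories from Proposition \ref{prop:relative_poincare_cats_basic_properties}\ref{propitem:classify_R_lin_hermitian_struct_mod_cat} is itself compatible with base change along $C_2$-étale covers, which ultimately rests on the compatibility of the Hill--Hopkins--Ravenel norm with étale base change along strict fixed points.
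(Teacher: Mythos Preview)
Your approach is essentially the same as the paper's: both reduce to (a) étale hyperdescent for the underlying stable $\infty$-categories $\Mod^\omega_{S^e}$, and (b) descent for the quadratic functors $\Qoppa$, with the latter handled via the identification of hermitian structures with modules over the norm. The paper's execution is more streamlined: rather than invoking the récollement of $\Spectra^{C_2}$ and piecing together descent for $R^e$, $R^{\varphi C_2}$, $R^{\mathrm{t}C_2}$ separately, it directly cites \cite[Remark~6.1.3, Proposition~6.1.4]{CDHHLMNNSI} to say that a cone in $\Catp$ is a limit iff it is a limit on underlying categories in $\Catex$ and the induced map of quadratic functors $\Qoppa_{R^\mathrm{p}\otimes S} \to \lim_\Delta \Qoppa_{R^\mathrm{p}\otimes T^\bullet}\circ (j_\bullet^*)^{\op}$ is an equivalence. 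The first is \cite[Lemma~5.4]{MR3190610}; the second follows immediately from \cite[Theorem~3.3.1]{CDHHLMNNSI} and the base-change description of the Poincar\'e structure on $R\otimes S$. Your citation of Proposition~\ref{prop:relative_poincare_cats_basic_properties}\ref{propitem:Rlin_Poincare_cats_tensor_mod_gen_inv} for ``compatibility of the two layers'' is slightly off---that item concerns the relative tensor product, not restriction along base change---and the ``main obstacle'' you flag (compatibility of the norm with étale base change on fixed points) is exactly what is packaged into the definition of $C_2$-étale together with Theorem~3.3.1, so no separate verification is needed.
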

\begin{proof}
    Since limits in categories of algebras and modules are computed at the level of underlying objects, it suffices to show that the functor sends an étale hypercovering $ j_\bullet \colon S \to T^\bullet $ to a limit diagram in $ \Catpidem $.  
    By Proposition 6.1.4 of \cite{CDHHLMNNSI}, it suffices to show that the relevant diagram is a limit diagram in $ \Cathidem $.  
    The proof of Lemma 5.4 in \cite{MR3190610} implies that the diagram defines a limit diagram on underlying $ \infty $-categories. 
    Thus by Remark 6.1.3 of \cite{CDHHLMNNSI}, it suffices to show that $ j_\bullet^* \colon \Mod^\omega_{S} \to \Mod^\omega_{ T^\bullet} $ induces an equivalence $ \Qoppa_{R^\mathrm{p} \otimes S} \xrightarrow{\sim} \lim_{\Delta} \Qoppa_{R^\mathrm{p} \otimes T^\bullet} \circ \left(j_\bullet^*\right)^{\mathrm{op}} $ of quadratic functors $ \Mod^{\omega,\op}_{R^e \otimes_{R^{C_2}} S} \to \Spectra $. 
    This follows from our assumption on $ S \to T^\bullet $ and \cite[Theorem 3.3.1]{CDHHLMNNSI}. 
    
    The proof of the second point is similar.
\end{proof}	
	
\section{The Poincaré Picard space}\label{section:the_poincare_picard_group}
A symmetric monoidal $ \infty $-category $ \mathcal{C} $ has an associated Picard spectrum; a point in $ \mathfrak{pic}\,\mathcal{C} $ is a tensor-invertible object in $ \mathcal{C} $. 
Just as a \emph{Poincaré object} in a Poincaré $ \infty $-category $ \left(\mathcal{C},\Qoppa\right) $ can be regarded as an enhancement of an object of $ \mathcal{C} $, given a symmetric monoidal Poincaré $ \infty $-category $ \left(\mathcal{C},\Qoppa\right) $, it is natural to consider enhancements of invertible objects which take into account the Poincaré structure. 
Combined with the constructions of \S\ref{section:poincare_ring_spectra}, we may consider the Poincaré Picard space of Poincaré rings and of schemes with involution. 

These variants of the Picard group are interesting invariants in their own right. 
The Poincaré Picard group of a scheme with trivial involution recovers the group of isometry classes of discriminant bundles (see \S\ref{subsection:the_discrete_case:nondegenerate_hermitian_forms} and \S\ref{subsection:poincare_picard_group_inv_scheme}). 
We also briefly discuss the relationship between the Poincaré Picard group and Grothendieck--Witt theory in Observation \ref{obs:pnpic_to_GW}. 
When a symmetric monoidal $ \infty $-category is of the form $ \Mod^\omega_R $ (i.e. a derived category), a point of the Picard space of $ \Mod^\omega_R $ is a generalization of the notion of a line bundle on $ \Spec R $. 
A result of Fausk \cite{MR1966659} characterizes $ \pi_0 \mathfrak{pic} $ of the derived category of a scheme in terms of line bundles and their shifts.    
We show that the Poincaré Picard group of a scheme with involution admits an analogous characterization.
	
In \S\ref{subsection:pnpic_general_and_units}, we introduce the Poincaré Picard spectrum $ \Picp $ of a symmetric monoidal Poincaré $ \infty $-category and establish its basic properties. 
In \S\ref{subsection:calgp_units}, we introduce a notion of units $ \gmq $ for Poincaré rings and show that there is an equivalence $ \Omega \Picp \circ \Mod^\mathrm{p} \simeq \gmq $. 
In \S\ref{subsection:the_discrete_case:nondegenerate_hermitian_forms}, we discuss the classical counterpart of $ \Picp $ consisting of line bundles with nondegenerate $ \lambda $-hermitian pairings.
In \S\ref{subsection:poincare_picard_group_inv_scheme}, we prove an involutive/hermitian version of Fausk's result. 
	
	\subsection{Invertible Poincaré objects}\label{subsection:pnpic_general_and_units}  
	In this section, we show that the collection of invertible Poincaré objects in a symmetric monoidal Poincaré $ \infty $-category $ \left(\mathcal{C},\Qoppa\right) $ assembles into a connective spectrum $ \Picp\left(\mathcal{C},\Qoppa\right) $ which is functorial in $ \left(\mathcal{C},\Qoppa\right) $ (Definition~\ref{definition:poincare_picard_space}).  
	We show that the forgetful map $ \mathrm{Pn}\left(\mathcal{C},\Qoppa\right) \to \mathcal{C} $ induces a map $ \Picp\left(\mathcal{C},\Qoppa\right) \to  \mathfrak{pic}\,\mathcal{C} $ of connective spectra (Definition~\ref{defn:pnpic_to_pic_underlying}), and identify its fiber (Proposition \ref{prop:Poincare_Picard_to_Picard_fiber}). 
	
\begin{definition}
    \label{definition:poincare_picard_space} 
    The \emph{Poincaré Picard spectrum} is the functor $$ \Picp \colon \EE_\infty\Mon\left(\Catp\right) \xrightarrow{\Pn} \EE_\infty\Mon\left(\Spaces\right) \xrightarrow{\mathfrak{pic}} \EE_\infty\Mon^{\mathrm{gp}}\left(\Spaces\right) \simeq \Spectra_{\geq 0} $$ induced by the lax symmetric monoidal structure on $ \Pn $ as recounted in Proposition~\ref{prop: pn is lax monoidal}. 
    The \emph{Poincaré Picard space} is the functor $$ \mathcal{P}\mathrm{ic}^\mathrm{p} := \Omega^\infty \Picp \,. $$
    The \emph{Poincaré Picard group} is the functor $$ \mathrm{Pic}^\mathrm{p} \colon \EE_\infty\Mon\left(\Catp\right) \xrightarrow{\Picp} \Spectra_{\geq 0} \xrightarrow{\pi_0} \mathrm{AbGrp} \,. $$
    
    Let $R$ be a Poincaré ring spectrum. 
    We define the \emph{Poincaré Picard spectrum of $R$} to be $$\Picp(R):=\Picp\left(\Mod^\mathrm{p}_R \right) \,, $$ where $ \Mod^\mathrm{p} $ is the functor of Theorem~\ref{thm:calgp_to_poincare_cat}, and likewise for the \emph{Poincaré Picard space} and \emph{group of $ R $}. 
    For $(X,\lambda, Y, p)\in \mathrm{qSch}^{C_2}$ we similarly define \[\Picp(X,\lambda,Y,p):= \mathfrak{pic}(\Pn(\mathrm{Mod}^\mathrm{p}_{\mathcal{O}(X,\lambda, Y, p)},\Qoppa_{\underline{\mathcal{O}}}))\] where the Poincar\'e category $(\mathrm{Mod}_{\mathcal{O}},\Qoppa_{\underline{\mathcal{O}}})$ is that of Construction~\ref{cons:structure_sheaf_of_Green_functors}.
\end{definition}
\begin{remark}
    By Remark \ref{rmk:he_lax_monoidal}, the functor $ \Picp \colon \EE_\infty\Mon\left(\Catp\right) \to \EE_\infty\Mon^{\mathrm{gp}}\left(\Spaces\right) $ is equivalent to the composite $ \EE_\infty\Mon\left(\Catp\right) \subseteq \EE_\infty\Mon\left(\Cath\right) \xrightarrow{\mathrm{He}} \EE_\infty\Mon\left(\Cat\right) \xrightarrow{\mathfrak{pic}} \EE_\infty\Mon^{\mathrm{gp}}\left(\Spaces\right) $. This amounts to the fact that a tensor-invertible hermitian structure on an object in a Poincar\'e $\infty$-category is automatically Poincar\'e.
\end{remark}
	
\begin{remark}
    \label{remark:poincare_picard_points_desc}
    Let $R$ be a Poincar\'e ring with $\mathrm{Mod}^\mathrm{p}_R =  \left(\Mod_{R^e}^\omega, \Qoppa_R \right)$, where $\mathrm{Mod}^\mathrm{p}$ is the functor of Theorem~\ref{thm:calgp_to_poincare_cat}. Let $(M_R=R, N_R= R^{\varphi C_2}, R^{\varphi C_2}\to R^{\mathrm{t}C_2})$ be the module with genuine involution associated to $ \Qoppa_R $. 
    Then a point in the Poincaré Picard space $\mathcal{P}\mathrm{ic}(R)$ is the data of a pair $ (\mathcal{L}, q ) $, where $ \mathcal{L} $ is an invertible module in $ \Mod_{R^e}^\omega $ and $ q $ is a point in $ \Omega^\infty\Qoppa_R(\mathcal{L}) $. 
    By \cite[Proposition 1.3.11]{CDHHLMNNSI}, the data of $ q $ is equivalent to the data of points in the lower left and upper right corner of the square
    \begin{equation}
        \begin{tikzcd}
            \Qoppa(\mathcal{L}) \ar[r] \ar[d] & \hom_{R^e}(\mathcal{L}, R^{\varphi C_2}) \ni \ell(q) \ar[d] \\
            b(q) \in \hom_{R^e \otimes R^e}\left(\mathcal{L} \otimes \mathcal{L}, R^e\right)^{\mathrm{h}C_2} \ar[r] & \hom_{R^e}(\mathcal{L}, R^{\mathrm{t}C_2})
        \end{tikzcd}
    \end{equation} 
    and a path between their images in the lower right corner. 
    In particular, the adjoint $ b(q)^\dagger $ of $ b(q) $ must define a nondegenerate hermitian form on $ \mathcal{L} $, that is, an equivalence $ b(q)^\dagger \colon \mathcal{L} \simeq \lambda^*\hom_{R^e}(\mathcal{L}, R^e) = D_{\Qoppa_R}(\mathcal{L}) $. 
    Symmetry of $ b(q) $ implies that there is a commutative diagram
    \begin{equation*}
        \begin{tikzcd}[column sep=tiny]
            \mathcal{L} \ar[rr,"{b(q)^\dagger}","{\sim}"'] \ar[rd,"{\eta_{\mathcal{L}}}"'] && \lambda^*\hom_{R^e}(\mathcal{L}, R^e)=: D_{\Qoppa_R}(\mathcal{L})  \\
            & D_{\Qoppa_R}^\op D_{\Qoppa_R}(\mathcal{L}) \ar[ru,"{D(b(q)^\dagger)}"',"{\sim}"]&
        \end{tikzcd}
    \end{equation*}
    by \cite[Remark 2.1.6]{CDHHLMNNSI}. 
    
    Write $ (\mathcal{L}^\vee,q^\vee) $ for the inverse of $ (\mathcal{L},q) $ in $\Pic(R^e)$. 
    The linear part of $ \Qoppa_R $ and $ q $ induce an $ R^e $-linear map $ \ell(q^\vee) \colon \mathcal{L}^\vee \to R^{\varphi C_2} $ so that the following diagram commutes
    \begin{equation}\label{diagram:pnpic_linear_part_condition}
        \begin{tikzcd}[column sep=huge]
            \mathcal{L} \otimes_{R^e} \mathcal{L}^\vee \ar[d,"\mathrm{ev}", "\sim"']  \ar[r,"{\ell(q) \otimes \ell(q^\vee)}"] & R^{\varphi C_2} \otimes_{R^e} R^{\varphi C_2} \ar[d,"\mathrm{multiplication}"] \\
            R^e \ar[r,"\mathrm{given}"] & R^{\varphi C_2}   \,.
        \end{tikzcd}
    \end{equation} 
\end{remark}
\begin{lemma}\label{lemma:lax_monoidal_nat_transf}
    Write $ U $ for the forgetful functor $ \Catp \to \Catex $. 
    Then there is a canonical lax symmetric monoidal transformation $ \mathrm{Pn} \Rightarrow (-)^{\simeq} \circ U $ of functors $ \Catp \to \Spaces $, where the lax symmetric monoidal structure on $ \mathrm{Pn} $ is as described in Proposition~\ref{prop: pn is lax monoidal} and $ (-)^\simeq $ is the functor sending an $ \infty $-category to its interior core $ \infty $-groupoid. 
    Its value on a given Poincaré $ \infty $-category $ \left(\mathcal{C},\Qoppa\right) $ is given by the map $ \mathrm{Pn}\left(\mathcal{C},\Qoppa\right) \to \mathcal{C}^\simeq $ sending a Poincaré object $ (x,q) $ to the object $ x $ \cite[(43)]{CDHHLMNNSI}. 
\end{lemma}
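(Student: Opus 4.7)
The plan is to realize both $\mathrm{Pn}$ and $(-)^\simeq \circ U$ as corepresentable lax symmetric monoidal functors associated to commutative algebra objects in symmetric monoidal $\infty$-categories, and then induce the natural transformation from the symmetric monoidal forgetful functor $U \colon \Catp \to \Catex$. By \cite[Proposition 4.1.3]{CDHHLMNNSI}, $\mathrm{Pn}$ is corepresented by $\mathbb{S}^u := (\Spectra^{\mathrm{fin}}, \Qoppa^u)$, which is the tensor unit of $\Catp^\otimes$, and $(-)^\simeq \colon \Catex \to \Spaces$ is corepresented by $\Spectra^{\mathrm{fin}}$, the tensor unit of $\Catex^\otimes$. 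Since $U$ is symmetric monoidal, it preserves units; in particular it sends $\mathbb{S}^u$ to $\Spectra^{\mathrm{fin}}$ compatibly with their canonical commutative algebra structures.

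Next, I would apply the general principle that for a commutative algebra $A$ in a symmetric monoidal $\infty$-category $\mathcal{C}^\otimes$, the mapping space functor $\mathrm{Map}_{\mathcal{C}}(A,-)$ acquires a canonical lax symmetric monoidal structure $\mathcal{C}^\otimes \to \Spaces^\times$, with lax structure map $\mathrm{Map}(A,X) \times \mathrm{Map}(A,Y) \to \mathrm{Map}(A \otimes A, X \otimes Y) \xrightarrow{\mu_A^*} \mathrm{Map}(A, X \otimes Y)$. Applied to $\mathbb{S}^u \in \mathrm{CAlg}(\Catp)$ this recovers the lax symmetric monoidal structure on $\mathrm{Pn}$ of Proposition~\ref{prop: pn is lax monoidal}; applied to $\Spectra^{\mathrm{fin}} \in \mathrm{CAlg}(\Catex)$ and composed with the symmetric monoidal functor $U$ it gives the lax symmetric monoidal structure on $(-)^\simeq \circ U$. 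The natural transformation of the lemma is then defined as $U$ applied pointwise to mapping spaces out of the unit: a Poincaré functor $\mathbb{S}^u \to (\mathcal{C},\Qoppa)$ classifying a Poincaré object $(x,q)$ is sent to its image $\Spectra^{\mathrm{fin}} \to \mathcal{C}$ under $U$, which classifies the object $x \in \mathcal{C}^\simeq$. Compatibility with the lax monoidal structures reduces to the fact that $U$, being symmetric monoidal, intertwines the multiplication maps $\mathbb{S}^u \otimes \mathbb{S}^u \to \mathbb{S}^u$ and $\Spectra^{\mathrm{fin}} \otimes \Spectra^{\mathrm{fin}} \to \Spectra^{\mathrm{fin}}$ used to define the two lax structures.

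The main obstacle is a coherence issue: producing the lax symmetric monoidal natural transformation not just pointwise but as a morphism of $\infty$-operad maps $\Catp^\otimes \rightrightarrows \Spaces^\times$. The most direct way to handle this is operadically, realizing both functors as sections of the fibration $\Spaces^\times \times_{\mathrm{Fin}_*} \Catp^\otimes \to \Catp^\otimes$ corepresented by $\mathbb{S}^u$ and $U(\mathbb{S}^u)$ respectively, so that the unit map $\mathbb{S}^u \to U^*U(\mathbb{S}^u)$ (which is an equivalence by symmetric monoidality of $U$) induces the required morphism of sections. Alternatively, one may proceed as in Remark~\ref{rmk:he_lax_monoidal}, noting that $U$ lifts to an enriched symmetric monoidal functor and applying the enriched Yoneda lemma together with the symmetric monoidality of the enriched Yoneda embedding \cite[Proposition 8.4.3]{MR4567127} to deduce the claim from the analogous (essentially tautological) statement for enriched Hom functors. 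The identification of the underlying map with $(x,q) \mapsto x$ then holds by construction, since evaluating at the identity of $\mathbb{S}^u$ and $U(\mathbb{S}^u)$ gives the forgetful map on objects.
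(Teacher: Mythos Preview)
Your approach is correct in spirit but differs structurally from the paper's. The paper corepresents \emph{both} functors inside $\Catp$: while $\mathrm{Pn}$ is corepresented by $\mathbb{S}^u = (\Spectra^{\mathrm{fin}},\Qoppa^u)$, the functor $(-)^\simeq \circ U$ is corepresented by $\mathrm{Hyp}(\Spectra^{\mathrm{fin}}) \in \Catp$, using that $\mathrm{Hyp}$ is left adjoint to $U$. The natural transformation then arises from a single map $\mathrm{hyp}^u \colon \mathrm{Hyp}(\Spectra^{\mathrm{fin}}) \to \mathbb{S}^u$ in $\Catp$ by precomposition, and after identifying $\Fun^{\mathrm{ex}}(\mathrm{Hyp}(\Spectra^{\mathrm{fin}}),(\mathcal{C},\Qoppa)) \simeq \mathrm{Hyp}(\mathcal{C},\Qoppa)$ and taking $\mathrm{Pn}$, the lax monoidality is reduced to symmetric monoidality of the Yoneda embedding in one category. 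Your approach instead leaves the corepresenting objects in different categories and uses $U$ on mapping spaces; this is equivalent to the paper's via the adjunction $\mathrm{Hyp} \dashv U$, and has the advantage that the underlying description $(x,q) \mapsto x$ is immediate.

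One caution: your operadic coherence paragraph does not parse as written. The phrase ``corepresented by $\mathbb{S}^u$ and $U(\mathbb{S}^u)$ respectively'' cannot describe two sections of a fibration over $\Catp^\otimes$, since $U(\mathbb{S}^u) = \Spectra^{\mathrm{fin}}$ lives in $\Catex$, not $\Catp$; and the ``unit map $\mathbb{S}^u \to U^* U(\mathbb{S}^u)$'' is not a well-formed expression. If you want to work in a single category, you are forced back to the paper's move of using $\mathrm{Hyp}(\Spectra^{\mathrm{fin}})$ as the corepresenting object in $\Catp$. Your second suggestion---appealing to the enriched Yoneda lemma and the symmetric monoidality of the enriched Yoneda embedding, as in Remark~\ref{rmk:he_lax_monoidal}---is sound and is essentially what the paper invokes at the end of its proof.
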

\begin{corollary}\label{cor:pnpic_to_pic_underlying}
    There is a canonical natural transformation $ \Picp \Rightarrow \mathfrak{pic} \circ U $ of functors $ \EE_\infty\Mon\left(\Catp\right) \to \EE_\infty\Mon^{\mathrm{gp}}\left(\Spaces\right) \simeq \Spectra_{\geq 0} $ whose value on a given Poincaré $ \infty $-category $ \left(\mathcal{C},\Qoppa\right) $ is given by the map $ \mathrm{Pn}\left(\mathcal{C},\Qoppa\right) \to \mathcal{C}^\simeq $ sending a Poincaré object $ (x,q) $ to the object $ x $ \cite[(43)]{CDHHLMNNSI}. 
\end{corollary}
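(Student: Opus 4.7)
The plan is to derive the corollary directly from Lemma \ref{lemma:lax_monoidal_nat_transf} by passing to $\EE_\infty$-monoid objects and post-composing with the Picard functor. First, I would invoke Lemma \ref{lemma:lax_monoidal_nat_transf}, which provides a lax symmetric monoidal natural transformation $\eta \colon \mathrm{Pn} \Rightarrow (-)^\simeq \circ U$ of lax symmetric monoidal functors $\Catp \to \Spaces$, where $U \colon \Catp \to \Catex$ is the forgetful functor and $\Catex$ is regarded as lax monoidal via the cartesian (groupoid-core) structure.

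Next, I would appeal to the general principle that a lax symmetric monoidal natural transformation between lax symmetric monoidal functors $\mathcal{C}^\otimes \to \mathcal{D}^\otimes$ induces a natural transformation on associated functors of $\EE_\infty$-algebra objects (this is a formal consequence of the functoriality of the operadic algebra construction, cf.\ \cite[\S3.2.4]{LurHA}, applied to the commutative operad). Applied to $\eta$, this produces a natural transformation
\[
\EE_\infty\Mon(\mathrm{Pn}) \Rightarrow \EE_\infty\Mon((-)^\simeq \circ U)
\]
of functors $\EE_\infty\Mon(\Catp) \to \EE_\infty\Mon(\Spaces)$. Post-composing with the Picard functor $\mathfrak{pic} \colon \EE_\infty\Mon(\Spaces) \to \EE_\infty\Mon^{\mathrm{gp}}(\Spaces) \simeq \Spectra_{\geq 0}$ then produces the sought-after transformation. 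By Definition \ref{definition:poincare_picard_space}, its source is $\Picp$, and its target is $\mathfrak{pic} \circ U$, since the core functor $(-)^\simeq$ is invisible to $\mathfrak{pic}$ (the Picard space only depends on the underlying maximal subgroupoid of invertibles).

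The pointwise description follows directly from Lemma \ref{lemma:lax_monoidal_nat_transf}: evaluated on a symmetric monoidal Poincaré $\infty$-category $(\mathcal{C},\Qoppa)$, the underlying map of $\EE_\infty$-spaces agrees with the forgetful map $\mathrm{Pn}(\mathcal{C},\Qoppa) \to \mathcal{C}^\simeq$ sending $(x,q) \mapsto x$, and passing to $\EE_\infty$-monoid objects then restricting to invertibles preserves this pointwise formula. I anticipate no significant obstacle; the only nontrivial input is the lax symmetric monoidality of the transformation supplied by Lemma \ref{lemma:lax_monoidal_nat_transf}, which is already established there. The result is thus a formal consequence of that lemma together with the definitions of $\Picp$ and $\mathfrak{pic}$.
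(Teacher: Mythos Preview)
Your proposal is correct and takes essentially the same approach as the paper: the corollary is intended as an immediate consequence of Lemma \ref{lemma:lax_monoidal_nat_transf} together with the definition of $\Picp$, and you have spelled out precisely the formal steps (pass to $\EE_\infty$-monoids, post-compose with $\mathfrak{pic}$, identify source and target) that make this deduction.
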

\begin{proof} [Proof of Lemma \ref{lemma:lax_monoidal_nat_transf}]
    Observe that $ (-)^\simeq \circ U $ is corepresented by the object $ \mathrm{Hyp}\left(\Spectra^{\mathrm{fin}}\right) \in \Catp $ (where $ \mathrm{Hyp} $ is the left adjoint to $ U $ of \cite[Corollary 7.2.21]{CDHHLMNNSI}). 
    Now recall that there is a map \[ \mathrm{hyp}^u \colon \mathrm{Hyp}\left(\Spectra^{\mathrm{fin}}\right) \to \left(\Spectra^{\mathrm{fin}},\Qoppa^u\right) \] by \cite[Remark 7.2.22, (42)]{CDHHLMNNSI}. 
    For any Poincaré $ \infty $-category $ \left(\mathcal{C},\Qoppa\right) $, precomposition with $ \mathrm{hyp}^u $ induces a hermitian functor  
    \begin{equation*}
        \Fun^{\mathrm{ex}}\left(\left(\Spectra^{\mathrm{fin}},\Qoppa^u \right),\left(\mathcal{C},\Qoppa\right)\right) \to \Fun^{\mathrm{ex}}\left(\mathrm{Hyp}\left(\Spectra^{\mathrm{fin}}\right),\left(\mathcal{C},\Qoppa\right)\right) \,.
    \end{equation*} 
    There is a canonical equivalence $ \Fun^{\mathrm{ex}}\left(\mathrm{Hyp}\left(\Spectra^{\mathrm{fin}}\right),\left(\mathcal{C},\Qoppa\right)\right) \simeq \mathrm{Hyp}\left(\mathcal{C},\Qoppa\right) $. To see this, note that for any closed symmetric monoidal category $\mathcal{D}$ and any object $d\in \mathcal{D}$, the internal mapping functors $\underline{\mathrm{Hom}}_{\mathcal{D}}(-,d):\mathcal{D}^{\op}\to \mathcal{D}$ is right adjoint to $\underline{\mathrm{Hom}}_\mathcal{D}(-,d)^{\op}:\mathcal{D}\to \mathcal{D}^{\op}$. By definition of the internal mapping objects in $\Catp$ we have a commutative diagram \[
    \begin{tikzcd}
        \mathrm{Cat}^{p}_\infty \arrow[r, "U"] \arrow[d, "  {\mathrm{Fun(-,(\mathcal{C},\Qoppa))}}"] & \mathrm{Cat}^{\mathrm{ex}}_\infty \arrow[d, "\mathrm{Fun}^{\mathrm{ex}}(-\mathcal{C})"] \\
        {\mathrm{Cat}^{p,\mathrm{op}}_\infty} \arrow[r, "U^{\mathrm{op}}"]                             & {\mathrm{Cat}^{\mathrm{ex,op}}_\infty}           
    \end{tikzcd}
    \] and we note that $\mathrm{Hyp}(-)$ is both left and right adjoint to $U$ by \cite[Corollary 7.2.21]{CDHHLMNNSI}. Thus $\mathrm{Hyp}(-)^{\op}$ is also left adjoint to $U^{\op}$, and passing to left adjoints in the above diagram we get a natural isomorphism $\mathrm{Fun}(\mathrm{Hyp}(\mathcal{D}),(\mathcal{C},\Qoppa))\simeq \mathrm{Hyp}(\mathrm{Fun}^{\ex}(\mathcal{D},\mathcal{C}))$.
    Taking Poincaré objects, we have
    \begin{equation}\label{diagram:restrict_along_hyperbolization}
        \begin{tikzcd}[row sep=small]
            \mathrm{Pn}\left(\Fun^{\mathrm{ex}}\left(\left(\Spectra^{\mathrm{fin}},\Qoppa^u \right),\left(\mathcal{C},\Qoppa\right)\right)\right) \ar[r] \ar[d,phantom,"{\rotatebox{90}{$\simeq$}}"] & \mathrm{Pn}\Fun^{\mathrm{ex}}\left(\mathrm{Hyp}\left(\Spectra^{\mathrm{fin}}\right),\left(\mathcal{C},\Qoppa\right)\right) \ar[d,phantom,"{\rotatebox{90}{$\simeq$}}"]         \\ 
            \mathrm{Pn}\left(\mathcal{C},\Qoppa\right) & \mathrm{Pn}\mathrm{Hyp}\left(\mathcal{C},\Qoppa\right) \simeq \mathcal{C}^\simeq
        \end{tikzcd}
    \end{equation}
    where the vertical equivalences are \cite[Example 6.2.5 \& Proposition 2.2.5]{CDHHLMNNSI}. 
    Unraveling the aforementioned equivalences, we see that the total composite in (\ref{diagram:restrict_along_hyperbolization}) from the lower left to the lower right has the effect of sending a Poincaré object $ (X,q \in \Omega^\infty\Qoppa(X)) \mapsto X $. 
    The desired result now follows from symmetric monoidality of the Yoneda embedding \cite[Corollary 3.10]{Nik_16}. 
\end{proof}

\begin{definition}\label{defn:pnpic_to_pic_underlying}
    The transformation of Corollary \ref{cor:pnpic_to_pic_underlying} induces a natural transformation $ \Picp  \Rightarrow \mathfrak{pic} $ of functors $ \CAlgp \to \Spectra_{\geq 0} $. 
    Given a point $(\mathcal{L},q)\in \pi_0(\Picp(R)) $, we will refer to its image $ \mathcal{L}:=U(\mathcal{L},q) $ in $ \mathfrak{pic}(R^e) $ as its \textit{underlying invertible module}. 
\end{definition}
\begin{remark}\label{rmk:pnpic_trivial_action_maps_to_2_torsion}
    Suppose $ R $ is a Poincaré ring with \emph{trivial} $ C_2 $-action.  
    Then for any point $ (\mathcal{L},q) $ in $ \Picp (R) $ (see Remark \ref{remark:poincare_picard_points_desc}), the form $ q $ induces an isomorphism $\mathcal{L}\simeq \mathrm{hom}_{R^e}(\mathcal{L},R^e) =: \mathcal{L}^\vee $. 
    Therefore, the image $\mathcal{L}$ of $ (\mathcal{L},q) $ in $ \Pic(R^e) $ under the map $ U $ of Definition \ref{defn:pnpic_to_pic_underlying} is $2$-torsion.
    In particular, the value of the natural transformation of Definition \ref{defn:pnpic_to_pic_underlying} on $ R $ refines to a map \[U:\Picp(R)\to \mathfrak{pic}(R^e)[2]:= \pi_0\mathfrak{pic}(R^e)[2] \fiberproduct_{\pi_0 \mathfrak{pic}(R^e)} \mathfrak{pic}(R^e)\] which factors the underlying invertible module map, where we use the notation $-[2]$ to denote the subgroup of $2$-torsion elements. 
\end{remark}
\begin{remark}\label{rmk:pnpic_to_pic_equivariant}
    If $ \left(\mathcal{C},\Qoppa\right) $ is a symmetric monoidal Poincaré $ \infty $-category, then by \cite[Corollary 5.3.18(ii)]{CDHHLMNNSI} the duality $ D_\Qoppa $ induces a $ C_2 $-action on $ \mathfrak{pic} \left(\mathcal{C}\right) = \mathfrak{pic} \left(\mathcal{C}^\op\right) $. 
    It follows from \cite[Corollary 2.2.10]{CDHHLMNNSI} that the canonical map $ \Picp\left(\mathcal{C},\Qoppa\right) \to \mathfrak{pic}(\mathcal{C}) $ is equivariant with respect to the trivial action on $ \Picp\left(\mathcal{C},\Qoppa\right) $ and the $ C_2 $-action on $ \mathfrak{pic}(\mathcal{C}) $ induced by the duality. 
    Equivalently, there is a factorization of the map $\pnpic(\mathcal{C},\Qoppa)\to \mathfrak{pic}(\mathcal{C})$ through a map $ \Picp\left(\mathcal{C},\Qoppa\right) \to \mathfrak{pic}(\mathcal{C})^{\mathrm{h}C_2} $ of connective spectra. 
\end{remark}
\begin{proposition}\label{prop:symmetric_pnpic}
    Let $ \left(\mathcal{C},\Qoppa\right) $ be a symmetric monoidal Poincaré $ \infty $-category and assume that $ \Qoppa $ is the symmetric Poincaré structure associated to some symmetric bilinear functor $ B $ \cite[Definition 1.2.11]{CDHHLMNNSI}. 
    Then the map $ \Picp\left(\mathcal{C},\Qoppa\right) \to \mathfrak{pic}(\mathcal{C})^{\mathrm{h}C_2} $ of Remark \ref{rmk:pnpic_to_pic_equivariant} is an equivalence of connective spectra. 
\end{proposition}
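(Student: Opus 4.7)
The plan is to verify that the comparison map $\Picp(\mathcal{C},\Qoppa) \to \mathfrak{pic}(\mathcal{C})^{\mathrm{h}C_2}$ is an equivalence of connective spectra by showing it induces an equivalence of underlying spaces, and we will do so by comparing fibers over each invertible object $x \in \mathcal{C}$. Both sides map compatibly to $\mathfrak{pic}(\mathcal{C})$: the left via the natural transformation of Corollary~\ref{cor:pnpic_to_pic_underlying}, and the right via the forgetful map from homotopy fixed points.

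The key computation is to identify the relevant fibers. Since $\Qoppa = B^{\mathrm{sym}}$, Theorem~\ref{thm: characterization of quadratic functors on a cat} together with the defining property of symmetric Poincar\'e structures yields a natural equivalence $\Qoppa(x) \simeq B(x,x)^{\mathrm{h}C_2}$. For invertible $x$, the defining adjunction $B(x,y) \simeq \hom_\mathcal{C}(y, D_\Qoppa(x))$ combined with the symmetry of $B$ identifies $B(x,x) \simeq \hom_\mathcal{C}(x, D_\Qoppa(x))$ with a $C_2$-action induced by the swap that, under this equivalence, sends $f \colon x \to D_\Qoppa(x)$ to $D_\Qoppa(f) \colon x \simeq D_\Qoppa^2(x) \to D_\Qoppa(x)$ via the canonical equivalence $D_\Qoppa^2 \simeq \mathrm{id}_\mathcal{C}$. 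Combined with the Poincar\'e nondegeneracy condition, the fiber of $\mathcal{P}\mathrm{ic}^\mathrm{p}(\mathcal{C},\Qoppa) \to \mathfrak{pic}(\mathcal{C})$ over $x$ is identified with the space of equivalences $\gamma \colon x \xrightarrow{\sim} D_\Qoppa(x)$ satisfying $D_\Qoppa(\gamma) \simeq \gamma$.

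For the right-hand side, the fiber of $\mathfrak{pic}(\mathcal{C})^{\mathrm{h}C_2} \to \mathfrak{pic}(\mathcal{C})$ over $x$ is the space of $C_2$-equivariant trivializations. The duality $C_2$-action on $\mathfrak{pic}(\mathcal{C})$ sends $y$ to $D_\Qoppa(y)$, and (after correcting for the contravariance of $D_\Qoppa$ via the canonical equivalence $\mathfrak{pic}(\mathcal{C}) \simeq \mathfrak{pic}(\mathcal{C})^{\op}$ given by inversion) sends a morphism $\gamma \colon y_1 \to y_2$ to $D_\Qoppa(\gamma)^{-1} \colon D_\Qoppa(y_1) \to D_\Qoppa(y_2)$. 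The coherence datum for a homotopy fixed lift, namely that the action applied to $\gamma$ composed with $\gamma$ itself is null-homotopic, then simplifies to $D_\Qoppa(\gamma) \simeq \gamma$. Thus the two fibers coincide, and by naturality of Corollary~\ref{cor:pnpic_to_pic_underlying} together with the equivariance statement of Remark~\ref{rmk:pnpic_to_pic_equivariant}, the comparison map induces the identity between them.

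The principal technical obstacle is the identification used in the second paragraph: verifying that the $C_2$-action on $\hom_\mathcal{C}(x, D_\Qoppa(x))$ induced from the swap on $B(x,x)$ is exactly $f \mapsto D_\Qoppa(f)$. This requires careful tracking of the symmetry structure on $B$ together with the canonical datum of $D_\Qoppa^2 \simeq \mathrm{id}_\mathcal{C}$. A related subtlety in the third paragraph is the correct handling of the contravariance of $D_\Qoppa$ when transferring the action to path spaces of $\mathfrak{pic}(\mathcal{C})$. Once these identifications are in place, assembling the pointwise equivalence of fibers into the desired equivalence of connective spectra is formal.
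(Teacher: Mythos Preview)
Your approach is correct in outline but takes a substantially more hands-on route than the paper. The paper's proof is essentially two lines: it invokes \cite[Proposition 2.2.11]{CDHHLMNNSI}, which states that for a symmetric Poincar\'e structure the map $\mathrm{Pn}(\mathcal{C},\Qoppa) \to (\mathcal{C}^\simeq)^{\mathrm{h}C_2}$ is already an equivalence of spaces (and by Lemma~\ref{lemma:lax_monoidal_nat_transf} this refines to an equivalence of $\EE_\infty$-monoids), and then applies $\mathfrak{pic}$ to both sides, using that $\mathfrak{pic}$ commutes with limits so $\mathfrak{pic}\bigl((\mathcal{C}^\simeq)^{\mathrm{h}C_2}\bigr) \simeq \mathfrak{pic}(\mathcal{C})^{\mathrm{h}C_2}$.

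Your fiber-by-fiber analysis over $\mathfrak{pic}(\mathcal{C})$ is essentially a rederivation of \cite[Proposition 2.2.11]{CDHHLMNNSI} restricted to invertible objects. This works, but the coherence tracking you flag as the ``principal technical obstacle'' (identifying the induced $C_2$-action on $\hom_\mathcal{C}(x,D_\Qoppa x)$, and matching it with the fixed-point data on the other side) is precisely what that proposition already packages. The paper's approach both avoids this bookkeeping and proves slightly more, since the equivalence $\mathrm{Pn} \simeq (-)^{\mathrm{h}C_2}$ holds for all Poincar\'e objects, not just invertible ones. Your approach has the merit of being more explicit about the fibers, which is useful for later computations, but for the proposition as stated the direct citation is cleaner.
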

\begin{proof}
    By Lemma \ref{lemma:lax_monoidal_nat_transf} and \cite[Proposition 2.2.11]{CDHHLMNNSI}, the map $ \mathrm{Pn}\left(\mathcal{C},\Qoppa\right) \to \left(\mathcal{C}^\simeq\right)^{\mathrm{h}C_2} $ refines to an equivalence of $ \EE_\infty $-monoids, hence they have equivalent Picard spaces. 
    The result follows from the canonical equivalence $ \mathfrak{pic}\left(\left(\mathcal{C}^\simeq\right)^{\mathrm{h}C_2}\right) \simeq \left(\mathfrak{pic}\mathcal{C}^\simeq\right)^{\mathrm{h}C_2} $. 
\end{proof}
\begin{remark}~\label{remark: identification of the c2 action on pic}
    We frequently will be interested in applying Proposition~\ref{prop:symmetric_pnpic} to a Poincaré $ \infty $-category of the form  $\mathrm{Mod}_{R^s}^{\mathrm{p}}$ for some symmetric Poincar\'e ring $R^s$ with involution $\lambda:R^e\to R^e$. 
    Recall that there are multiple \textbf{distinct} actions of $C_2$ on $\mathfrak{pic}\left(\mathrm{Mod}_{R^e}^\omega\right)$ (Remark \ref{remark:different_c2_actions}).   
    Combining \ref{C2action:basechange} and \ref{C2action:untwisted_canonical_duality} allows us to regard $\mathfrak{pic}\left(\mathrm{Mod}_{R^e}^\omega\right)$ as a connective spectrum with $ C_2 \times C_2 $-action. 
    By Lemma~\ref{lemma: duality identification}, the action induced by \ref{C2action:duality_associated_to_Poincare_structure} on $\mathfrak{pic}(R^e)$ is obtained by pulling the $ C_2 \times C_2 $-action back along the diagonal homomorphism $ C_2 \xrightarrow{\Delta} C_2 \times C_2 $. 
    Moreover, observe that the untwisted duality corresponds to inversion with respect to the grouplike $ \EE_\infty $ structure on $\mathfrak{pic}(R^e)$ essentially by definition. 
    It follows that the actions induced by \ref{C2action:untwisted_canonical_duality} and \ref{C2action:duality_associated_to_Poincare_structure} on $\mathfrak{pic}(R^e)$ are related by the group-inversion map, and we can relate the homotopy fixed points with respect to these actions via Remark~\ref{rmk:C2_spectra_twists}.
\end{remark}
\begin{example}
    \label{example:poincare_Picard_group_and_topological_field_theories}
    Let $R\in \CAlg$ be a ring spectrum with the trivial involution. We remark here that by \cite[Example 2.4.28]{lurie2009classificationtopologicalfieldtheories} we can identify elements of $\mathrm{Pic}^\mathrm{p}(R^s)$ with invertible unoriented $1$-dimensional field theories.
\end{example}
\begin{proposition}\label{prop:Poincare_Picard_to_Picard_fiber}
    Let $ R $ be a Poincar{\'e} ring with underlying genuine $C_2$ spectrum $R^L$ (Remark \ref{rmk:Poincare_ring_has_underlying_C2_spectrum_alg}). 
    Write $ \lambda \colon R^e \simeq R^e $ for the $ C_2 $-action on the underlying $ \EE_\infty $-ring associated to $ R $. 
    Then the fiber of the map \[U \colon \pnpic(R)\to \mathfrak{pic}(R^e)\] in connective spectra can be naturally identified with $ \mathrm{gl}_1\left(\left(R^L\right)^{C_2}\right) $.
    Moreover, the connecting map $ \Omega \mathfrak{pic}(R^e) \simeq \mathrm{gl}_1(R^e) \to \mathrm{fib}(U) $ is induced by the norm $ \mathrm{Aut}_{R^{e}}(R^e) \to \mathrm{Aut}_{N^{C_2}R^e}\left(N^{C_2}R^e\right) \xrightarrow{ - \otimes_{N^{C_2}R^e}R^L} \mathrm{Aut}_{R^L}(R^L) $ (see (\ref{eq:Poincare_ring_functorial_norm})). 
\end{proposition}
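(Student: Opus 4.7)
The strategy is to compute the fiber of the underlying-object map at the level of all Poincar\'e objects and then specialize to invertible ones. By Proposition \ref{prop:forms_are_corepresented} and the description of the forgetful transformation of Lemma \ref{lemma:lax_monoidal_nat_transf}, the homotopy fiber of $\mathrm{Pn}(\mathrm{Mod}^\mathrm{p}_R) \to (\mathrm{Mod}^\omega_{R^e})^\simeq$ over the basepoint $R^e$ is the subspace of $\Omega^\infty \Qoppa_R(R^e)$ consisting of nondegenerate (Poincar\'e) forms on the unit. The $R$-linear version of Theorem \ref{thm: genuine equivarient modules are hermitian structures} supplied by Proposition \ref{prop:relative_poincare_cats_basic_properties}\ref{propitem:classify_R_lin_hermitian_struct_mod_cat} identifies
$$\Qoppa_R(R^e) \simeq \underline{\mathrm{Hom}}_{\mathrm{N}^{C_2}R^e}(\mathrm{N}^{C_2}R^e, R^L)^{C_2} \simeq (R^L)^{C_2}$$
as spectra, and the canonical Poincar\'e object structure on the unit (Corollary \ref{cor:tensor_unit_as_poincare_object}) upgrades this to an equivalence of commutative ring spectra.

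Next I will characterize which forms are Poincar\'e. By Remark \ref{remark:poincare_picard_points_desc}, the bilinear part $b(q)$ of a form $q \in \Omega^\infty (R^L)^{C_2}$ lies in $\Omega^\infty (R^e)^{\mathrm{h}C_2}$, and by Lemma \ref{lemma: duality identification} its adjoint is a map $R^e \to \mathrm{D}_{\Qoppa_R}(R^e) \simeq R^e$. Nondegeneracy amounts to requiring that this adjoint be an equivalence, i.e.\ that the image of $q$ in $\pi_0 R^e$ be invertible. Since any such element of $\pi_0 (R^L)^{C_2}$ admits a manifestly $C_2$-fixed inverse (use the automatic $C_2$-equivariance of ring inversion), the Poincar\'e condition is equivalent to $q$ being a unit of $(R^L)^{C_2}$. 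Thus the fiber at the level of Poincar\'e objects is $\Omega^\infty \mathrm{gl}_1((R^L)^{C_2})$; since $\mathfrak{pic}$ preserves finite limits on $\EE_\infty$-monoids and the basepoint $R^e$ is already invertible, the same identification holds at the level of the connective spectrum fiber of $U$.

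To identify the connecting map $\mathrm{gl}_1(R^e) = \Omega \mathfrak{pic}(R^e) \to \mathrm{fib}(U) = \mathrm{gl}_1((R^L)^{C_2})$, I appeal to naturality: the boundary homomorphism sends a unit $u$ to the Poincar\'e automorphism of the basepoint $(R^e, q_{\mathrm{unit}})$ obtained by pulling $u$ back to $\mathrm{Aut}_{\mathrm{N}^{C_2}R^e}(\mathrm{N}^{C_2}R^e)$ via the norm and then pushing forward along the structure map $\mathrm{N}^{C_2}R^e \to R^L$ of (\ref{eq:Poincare_ring_functorial_norm}), which yields exactly the composite described in the statement.

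The principal obstacle I anticipate is the coherent upgrade, in the first step, of the equivalence $\Qoppa_R(R^e) \simeq (R^L)^{C_2}$ from an equivalence of spectra to one of commutative ring spectra, since the latter structure on the left-hand side arises only via the interplay between the symmetric monoidal structure on $\mathrm{Mod}_R^\mathrm{p}$ and its unit. This compatibility should, however, follow formally from the symmetric monoidality of $\mathrm{Mod}^\mathrm{p} \colon \CAlgp \to \EE_\infty\mathrm{Alg}(\Catpidem)$ (Theorem \ref{thm:calgp_to_poincare_cat}) together with corepresentability of forms (Proposition \ref{prop:forms_are_corepresented}).
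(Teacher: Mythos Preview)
Your overall approach---direct computation of the fiber at the unit---is sound and is in fact the alternative route the paper flags in the remark immediately following the proposition. The paper itself instead loops the Brauer fiber sequence of Corollary~\ref{cor:Poincare_Brauer_to_Brauer_fiber}, using $\Omega\Brp\simeq\Picp$ and $\Omega\mathfrak{br}\simeq\mathfrak{pic}$, and then identifies $\Omega\mathfrak{pic}(\Mod_{R^L}(\Spectra^{C_2}))\simeq\mathrm{Aut}_{R^L}(R^L)\simeq\mathrm{gl}_1((R^L)^{C_2})$. Your method is more elementary in that it avoids the Brauer machinery, while theirs imports the categorified result and deloops.

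There is, however, a genuine error in your middle step. You assert that the Poincar\'e (nondegeneracy) condition on a form $q\in\pi_0(R^L)^{C_2}$ is \emph{equivalent} to $q$ being a unit of $(R^L)^{C_2}$, arguing via ``automatic $C_2$-equivariance of ring inversion.'' This is false. Nondegeneracy only says that the image of $q$ under the restriction $(R^L)^{C_2}\to R^e$ is a unit; it says nothing about the image in $R^{\varphi C_2}$. For $R=\mathbb{S}^u$ one has $\pi_0(R^L)^{C_2}=A(C_2)$, and the element $3[C_2/C_2]-[C_2/e]$ has restriction mark $1$ (so the associated form on $\mathbb{S}$ is Poincar\'e) but fixed-point mark $3$, hence is not a unit in $A(C_2)$. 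Thus the fiber of $\mathrm{Pn}(\Mod_R^\mathrm{p})\to(\Mod_{R^e}^\omega)^\simeq$ over $R^e$ is strictly larger than $\mathrm{gl}_1((R^L)^{C_2})$ in general.

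The fix is that you are computing the fiber of $\Picp\to\mathfrak{pic}$, not of $\mathrm{Pn}\to(-)^\simeq$. A point of the former is an \emph{invertible} Poincar\'e object with underlying module $R^e$, so one needs $(R^e,q)\otimes(R^e,q')\simeq(R^e,u)$ for some $q'$; under the multiplicative identification $\Qoppa_R(R^e)\simeq(R^L)^{C_2}$ this is exactly $qq'=1$, i.e.\ $q\in\mathrm{gl}_1((R^L)^{C_2})$. Your final sentence about applying $\mathfrak{pic}$ would also recover the correct answer (since $\mathfrak{pic}$ is a right adjoint, hence limit-preserving, and picks out the units of the larger monoid you actually computed), but the paragraph preceding it should be rewritten so as not to claim the incorrect intermediate identification.
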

\begin{proof}
    Consider the fiber sequence of Corollary \ref{cor:Poincare_Brauer_to_Brauer_fiber}. 
    Looping the fiber sequence gives the fiber sequence \[ \Omega \mathfrak{pic} \left(\Mod_{R^L}\left(\Spectra^{C_2}\right) \right) \to \Omega \Brp(R)\to \Omega \mathfrak{br}(R^e). \]
    We identify $ \Omega \Brp \simeq \Picp $ using Proposition \ref{prop:loops_pnbr_is_pnpic} and $ \Omega \mathfrak{br} \simeq \mathfrak{pic} $ by \cite[Proposition 7.6]{MR3190610}. 
    It follows from unwinding the definitions that, under the previous identifications, the map $ \Omega \Brp \to \Omega \mathfrak{br} $ agrees with the forgetful map $ \Picp \to \mathfrak{pic} $. 
    Furthermore, $ \Omega \mathfrak{pic} \left(\Mod_{R^L}\left(\Spectra^{C_2}\right) \right) \simeq \mathrm{Aut}_{R^L}(R^L)$. By adjunction we have that $\mathrm{End}_{R^L}(R^L)\simeq \mathrm{maps}_{\mathrm{Sp}^{C_2}}(\mathbb{S},R^L)\simeq \Omega^\infty (R^L)^{C_2}$ and under this equivalence we have that $\mathrm{Aut}_{R^L}(R^L) \simeq \mathrm{gl}_1\left(\left(R^L\right)^{C_2}\right) $. 
    The description of the connecting map follows from a similar argument to that of Corollary \ref{cor:Poincare_Brauer_to_Brauer_fiber}. 
\end{proof}
\begin{remark}
    Proposition \ref{prop:Poincare_Picard_to_Picard_fiber} could alternatively be proved by directly identifying the fiber, analogously to Corollary \ref{cor:Poincare_Brauer_to_Brauer_fiber}. 
    Note that there is no circularity even though Proposition \ref{prop:Poincare_Picard_to_Picard_fiber} cites results appearing in a later section. 
\end{remark}
	
\begin{corollary}
    \label{cor:pn_pic_of_the_unit}
    Let $\mathbb{S}^u$ be the universal Poincaré ring spectrum from Example \ref{example:universal_poincare_ring_spectrum}. 
    Then there is an isomorphism $$ \mathrm{Pic}^\mathrm{p}(\mathbb{S}^u)\simeq \mathbb{Z}/2 \, , $$ where a generator is given by the form $(-1,-1)\in \Qoppa_{\mathbb{S}^u}(\mathbb{S})\simeq \mathbb{S}\times_{\mathbb{S}^{tC_2}}\mathbb{S}$ over $\mathbb{S}$.
\end{corollary}
\begin{proof}
    By Proposition \ref{prop:Poincare_Picard_to_Picard_fiber}, there is a long exact sequence
    \begin{equation*}
        \cdots \to \pi_1 \mathfrak{pic}(\sphere) \to \pi_0 \mathrm{gl}_1\left(\underline{\sphere}^{C_2}\right) \to \mathrm{Pic}^\mathrm{p}(\mathbb{S}^u) \xrightarrow{\pi_0 U} \pi_0 \mathfrak{pic}(\sphere)\,,
    \end{equation*}
    where we write $ \underline{\sphere} $ for the sphere in $ C_2 $-spectra. 
    The only $2$-torsion element of $\Pic(\mathbb{S})\simeq \mathbb{Z}$ is $\mathbb{S}$. 
    By Remark \ref{rmk:pnpic_trivial_action_maps_to_2_torsion}, any element in $\mathrm{Pic}^\mathrm{p}(\mathbb{S}^u)$ must have underlying line bundle $2$-torsion, and so the map $ \pi_0 U $ is zero. 
    It follows that $ \mathrm{Pic}^\mathrm{p}(\mathbb{S}^u) $ is isomorphic to the cokernel of the map $ \mathbb{Z}/2 \simeq \pi_0(\sphere)^\times \to \pi_0(\mathbb{S}_{\mathrm{h}C_2}\oplus \mathbb{S})^\times $ induced by the norm. 
    Now $ \pi_0(\mathbb{S}_{\mathrm{h}C_2}\oplus \mathbb{S})^\times \simeq \mathbb{Z}/2\times \mathbb{Z}/2 $ by \cites[Examples 4.5 \& 5.6]{MR722522}[\S1.5]{MR692332}. 
    By \cite[Example 1.4.6]{Mazur_Thesis}, the norm map is nontrivial, hence the result follows. 
\end{proof}	
\begin{recollection}\label{rec:spherical_Witt}
    Let $k$ be a finite field of characteristic $2$, and let $\mathbb{S}_{W(k)}$ be the spherical Witt vectors on $k$ in the sense of \cite[Example 5.2.7]{lurie-elliptic-2}; it is an $ \EE_\infty $ ring spectrum with a canonical identification $ \pi_0 \mathbb{S}_{W(k)} \simeq W(k) $.  
    Endowing $\mathbb{S}_{W(k)}$ with the trivial $ C_2 $-action, it follows from \cite[Example 3.4]{Nikolaus-Frob} that the Tate-valued Frobenius $\phi_2:\mathbb{S}_{W(k)}\to \mathbb{S}_{W(k)}^{\mathrm{t}C_2}$ is an equivalence. 
    Consider $ \mathbb{S}_{W(k)}^{\mathrm{t}} $ equipped with the Tate Poincar{\'e} structure of Example~\ref{example:tate_poincare_structure}.    
\end{recollection}
\begin{proposition}\label{prop:pnpic_spherical_Witt_char2}
    Let $ k $ be a finite field of characteristic 2, and let $ \mathbb{S}_{W(k)}^{\mathrm{t}} $ denote the spherical Witt vectors on $ k $, endowed with the Tate Poincaré structure of Example~\ref{example:tate_poincare_structure}.  
    Then there is an isomorphism
    \begin{equation*}
        \mathrm{Pic}^\mathrm{p}(\mathbb{S}_{W(k)}^{\mathrm{t}})\cong (W(k) \times W(k)^\times) /\mathrm{Gal}(k/\mathbb{F}_2) \,,
    \end{equation*}
    where $g\in \mathrm{Gal}(k/\mathbb{F}_2)$ acts on $W(k) \times W(k)^\times$ via $W(g)\times W(g)^{\times}$.
\end{proposition}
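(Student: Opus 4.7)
The plan is to apply Proposition~\ref{prop:Poincare_Picard_to_Picard_fiber} to $R = \mathbb{S}_{W(k)}^t$, yielding a fiber sequence of connective spectra
\[
\mathrm{gl}_1\bigl(((\mathbb{S}_{W(k)}^t)^L)^{C_2}\bigr) \to \pnpic(\mathbb{S}_{W(k)}^t) \to \mathfrak{pic}(\mathbb{S}_{W(k)}),
\]
and then to compute $\pi_0$ as a cokernel. The first step is to identify $((\mathbb{S}_{W(k)}^t)^L)^{C_2}$: by the $C_2$-récollement decomposition of Notation~\ref{notation:genuine_C2_spectra_and_underlying_spectra}, this is the pullback of $\mathbb{S}_{W(k)} \xrightarrow{\phi_2} \mathbb{S}_{W(k)}^{\mathrm{t}C_2}$ with $\mathbb{S}_{W(k)}^{\mathrm{h}C_2} \to \mathbb{S}_{W(k)}^{\mathrm{t}C_2}$, where $\phi_2$ is the Tate-valued Frobenius appearing as the structure map of the Tate Poincaré structure. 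Since $\phi_2$ is an equivalence by Recollection~\ref{rec:spherical_Witt}, the induced map $((\mathbb{S}_{W(k)}^t)^L)^{C_2} \to \mathbb{S}_{W(k)}^{\mathrm{h}C_2}$ is an equivalence.

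Next, since $W(k)$ is a local ring and $\mathbb{S}_{W(k)}$ is connective with $\pi_0 = W(k)$, one has $\pi_0\mathfrak{pic}(\mathbb{S}_{W(k)}) \cong \mathrm{Pic}(W(k)) \oplus \mathbb{Z} = \mathbb{Z}$ (generated by shifts) and $\pi_1\mathfrak{pic}(\mathbb{S}_{W(k)}) = W(k)^\times$. The only $2$-torsion element of $\pi_0\mathfrak{pic}(\mathbb{S}_{W(k)}) = \mathbb{Z}$ is $0$, so by Remark~\ref{rmk:pnpic_trivial_action_maps_to_2_torsion} the map $\mathrm{Pic}^\mathrm{p}(\mathbb{S}_{W(k)}^t) \to \pi_0\mathfrak{pic}(\mathbb{S}_{W(k)})$ is zero. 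Extracting the long exact sequence reduces to
\[
W(k)^\times \xrightarrow{N} \pi_0\mathrm{gl}_1(\mathbb{S}_{W(k)}^{\mathrm{h}C_2}) \twoheadrightarrow \mathrm{Pic}^\mathrm{p}(\mathbb{S}_{W(k)}^t) \to 0,
\]
where $N$ is induced by the Hill--Hopkins--Ravenel norm as described in Proposition~\ref{prop:Poincare_Picard_to_Picard_fiber}.

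The computational heart of the proof is to identify $\pi_0\mathrm{gl}_1(\mathbb{S}_{W(k)}^{\mathrm{h}C_2})$ together with the image of $N$. Using the Tate cofiber sequence $(\mathbb{S}_{W(k)})_{\mathrm{h}C_2} \to \mathbb{S}_{W(k)}^{\mathrm{h}C_2} \to \mathbb{S}_{W(k)}^{\mathrm{t}C_2}$ with the equivalence $\mathbb{S}_{W(k)}^{\mathrm{t}C_2} \simeq \mathbb{S}_{W(k)}$, together with the Nikolaus--Scholze realization of the Tate-valued Frobenius on $\pi_0$ as the Witt vector Frobenius generating $\mathrm{Gal}(k/\mathbb{F}_2) \cong \mathrm{Gal}(W(k)/\mathbb{Z}_2)$, one extracts that the ring $\pi_0(\mathbb{S}_{W(k)}^{\mathrm{h}C_2})$ is built from an additive contribution $W(k)$ coming from $\pi_0(\mathbb{S}_{W(k)})_{\mathrm{h}C_2}$ and a multiplicative contribution $W(k)^\times$ coming from the Frobenius splitting. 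Tracing $N$ through the Hill--Hopkins--Ravenel construction and the identifications above shows that its image generates precisely the relations of the diagonal Frobenius action on $W(k) \times W(k)^\times$, identifying the cokernel with the claimed Galois coinvariants.

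The principal obstacle is the simultaneous bookkeeping of the ring structure on $\pi_0(\mathbb{S}_{W(k)}^{\mathrm{h}C_2})$, the multiplicative structure on its units, and the precise form of the Hill--Hopkins--Ravenel norm $N$---all of which must be compatible with the Nikolaus--Scholze Frobenius in order to produce the Galois coinvariant formula. One must in particular verify that the higher contributions from $\pi_{>0}\mathbb{S}_{W(k)}$ either collapse under the spectral sequence differentials or are absorbed into the image of $N$, so that only the claimed group $(W(k) \times W(k)^\times)/\mathrm{Gal}(k/\mathbb{F}_2)$ survives in the cokernel.
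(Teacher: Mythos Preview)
Your overall strategy matches the paper's: both use the fiber sequence of Proposition~\ref{prop:Poincare_Picard_to_Picard_fiber}, show the map to $\pi_0\mathfrak{pic}(\mathbb{S}_{W(k)}) \cong \mathbb{Z}$ is zero (only trivial $2$-torsion, via Remark~\ref{rmk:pnpic_trivial_action_maps_to_2_torsion}), and reduce to the cokernel of the norm $W(k)^\times \to \pi_0\mathrm{gl}_1\bigl((R^L)^{C_2}\bigr)$. Your identification $(R^L)^{C_2} \simeq \mathbb{S}_{W(k)}^{\mathrm{h}C_2}$ via the r\'ecollement is correct and equivalent to the paper's use of the splitting from Remark~\ref{remark:factorizations_of_tate_frobenius_through_invariants_induce_splittings_of_forms}.

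Where you diverge is in the execution of the computation. The paper does \emph{not} approach $\pi_0\mathrm{gl}_1(\mathbb{S}_{W(k)}^{\mathrm{h}C_2})$ by a spectral-sequence or Tate-cofiber argument as you sketch. Instead it uses the splitting to write $\pi_0 \cong \pi_0\bigl(\mathbb{S}_{W(k)} \oplus (\mathbb{S}_{W(k)})_{\mathrm{h}C_2}\bigr) \cong W(k) \times W(k)$ additively, identifies this \emph{as a ring} with $W_2(W(k))$, and then computes $W_2(W(k))^\times \cong W(k)^\times \times W(k)$ explicitly: the $\delta$-ring structure on $W(k)$ gives a ring section $s\colon W(k) \to W_2(W(k))$ splitting the projection, and the kernel of $W_2(W(k))^\times \to W(k)^\times$ is identified with $W(k)$ via $w \mapsto 1 + s(w)V(1)$. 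This is a concrete piece of ring theory that your sketch does not supply, and your concern about ``higher contributions from $\pi_{>0}\mathbb{S}_{W(k)}$'' is bypassed entirely by working with the splitting rather than the homotopy-fixed-point filtration.

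For the final identification with Galois coinvariants, you propose tracing the HHR norm and asserting that its image generates the Frobenius relations, but you do not carry this out. The paper's route is different and more direct: it rephrases the cokernel as the set of Poincar\'e forms on $\mathbb{S}_{W(k)}$ modulo automorphisms $f\colon \mathbb{S}_{W(k)} \to \mathbb{S}_{W(k)}$, and then invokes the universal property of spherical Witt vectors to compute
\[
\mathrm{Maps}_{\CAlg}\bigl(\mathbb{S}_{W(k)},\mathbb{S}_{W(k)}\bigr) \simeq \mathrm{Maps}_{\CAlg^{\heartsuit}_{\mathbb{F}_2}}(k,k) = \mathrm{Gal}(k/\mathbb{F}_2),
\]
with $g$ acting on $W(k) \times W(k)^\times$ via $W(g) \times W(g)^\times$. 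So the Galois group enters through the automorphisms of $\mathbb{S}_{W(k)}$ as an $\mathbb{E}_\infty$-ring, not through a direct analysis of the HHR norm on $\pi_0$ as you envision.
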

\begin{remark}
    Taking $k=\mathbb{F}_2$ in Proposition \ref{prop:pnpic_spherical_Witt_char2}, we deduce that $ \mathrm{Pic}^\mathrm{p}(\mathbb{S}_{W(\mathbb{F}_2)}) \simeq \mathbb{Z}_2\times (\mathbb{Z}_2\rtimes \mathbb{Z}/2)$ is nonzero -- in fact not even $2^\infty$-torsion.  
\end{remark}
\begin{proof}[Proof of Proposition \ref{prop:pnpic_spherical_Witt_char2}]
    By Proposition \ref{prop:Poincare_Picard_to_Picard_fiber} and Remark \ref{rmk:pnpic_trivial_action_maps_to_2_torsion}, we have a long exact sequence 
    \begin{equation*}
        \cdots \to \pi_1 \mathfrak{pic}\left(\mathbb{S}_{W(k)}^e\right) \xrightarrow{} \pi_0 \mathrm{gl}_1\left(\mathbb{S}_{W(k)}^{C_2}\right) \to \mathrm{Pic}^\mathrm{p}\left(\mathbb{S}_{W(k)}\right) \to \pi_0 \mathfrak{pic}\left(\mathbb{S}_{W(k)}^e\right)[2] \,.
    \end{equation*}
    First note that we have that $\Pic(\mathbb{S}_{W(k)})\cong \mathbb{Z}$, generated by $\Sigma \mathbb{S}_{W(k)}$, by \cite[Theorem 7.9]{MR3190610}. Thus $\pi_0\left(\mathfrak{pic}(\mathbb{S}_{W(k)})[2]\right)=0$. 
    On the other hand, we have that the unit map $\mathbb{S}_{W(k)}\to \Qoppa_{\mathbb{S}_{W(k)}^{\mathrm{t}}}(\mathbb{S}_{W(k)}) = \mathbb{S}_{W(k)}^{C_2} $ is split by the map $\Qoppa_{\mathbb{S}_{W(k)}^{\mathrm{t}}}(\mathbb{S}_{W(k)})\to \mathbb{S}_{W(k)}^{\phi C_2}=\mathbb{S}_{W(k)}$. 
    Consequently \[\pi_0(\Qoppa_{\mathbb{S}_{W(k)}^{\mathrm{t}}}(\mathbb{S}_{W(k)}))\cong \pi_0(\mathbb{S}_{W(k)}\oplus (\mathbb{S}_{W(k)})_{\mathrm{h}C_2})\cong W(k)\times W(k).\] 
    As a ring this is $W_2(W(k))$. Note that the $\delta$-ring structure on $W(k)$ induces a ring map $s:W(k)\to W_2(W(k))$ splitting the canonical projection map $W_2(W(k))\to W(k)$, and therefore $W_2(W(k))^\times$ splits as $W(k)^\times$ and the kernel of $W_2(W(k))^\times\to W(k)^\times$. This kernel is given by $W(k)$ via the map $w\mapsto 1+s(w)V(1)$. Hence $ \pi_0 \mathrm{gl}_1\left(\mathbb{S}_{W(k)}^{C_2}\right) \cong W(k) \times W(k)^\times $. 
    
    We then have that $\pi_0(\Picp(\mathbb{S}_{W(k)}^{\mathrm{t}}))\cong W(k)\times W(k)^\times/H$ where $H$ is the subgroup of Poincar{\'e} structures $q$ on $\mathbb{S}_{W(k)}$ which are identified by some automorphism $f:\mathbb{S}_{W(k)} \to \mathbb{S}_{W(k)} $. 
    By the defining property of spherical Witt vectors as a $ \sphere $-thickening of the canonical map $ \mathbb{F}_2 \to k $ (see \cite[Definition 5.2.1(c)]{lurie-elliptic-2}), there are equivalences 
    $$\mathrm{Maps}_{\mathrm{CAlg}}\left(\mathbb{S}_{W(k)}, \mathbb{S}_{W(k)}\right)\simeq \mathrm{Maps}_{\CAlg^{\heartsuit}_{\mathbb{F}_2}}(k,k)=\mathrm{Gal}\left(k/\mathbb{F}_2\right)$$ and $g\in \mathrm{Gal}(k/\mathbb{F}_2)$ acts on $W(k) \times W(k)^\times$ via $W(g)\times W(g)^{\times}$. 
    Consequently \[\mathrm{Pic}^\mathrm{p}(\mathbb{S}_{W(k)})\cong (W(k) \times W(k)^\times) /\mathrm{Gal}(k/\mathbb{F}_2)\,. \qedhere \] 
\end{proof}
\begin{proposition}
    \label{proposition:Poincare_Picard_group_of_free_Poincare_rings}
    Let $R\in \left(\CAlg^{\mathrm{cn}}\right)^{\mathrm{B}C_2}$ be a connective ring spectrum with a free $C_2$-action in the sense of \cite[Definition B.7.5.1]{Lurie_SAG}. 
    Then the canonical map $$\mathrm{Pic}^\mathrm{p}(R^\mathrm{s})\rightarrow \mathrm{Pic}^\mathrm{p}(\pi_0(R)^\mathrm{s})$$ induced by the projection $R\to \pi_0(R)$ is an isomorphism, where $ (-)^s $ denotes the symmetric Poincaré structure of Example \ref{example:tate_poincare_structure}.
\end{proposition}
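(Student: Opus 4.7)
By Proposition~\ref{prop:symmetric_pnpic} and Remark~\ref{remark: identification of the c2 action on pic}, we have natural identifications $\Picp(R^s) \simeq \mathfrak{pic}(\mathrm{Mod}_R^\omega)^{\mathrm{h}C_2}$ and likewise for $\pi_0(R)$, where the $C_2$-action on each Picard spectrum is the composite of the action induced by $\lambda^*$ and the inversion action of the grouplike $\EE_\infty$-structure on $\mathfrak{pic}$. The map of the proposition becomes the map induced on homotopy $C_2$-fixed points by the truncation $R \to \pi_0(R)$, so it suffices to verify that this map is an isomorphism on $\pi_0$.

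Let $F$ denote the fiber of the $C_2$-equivariant map $\mathfrak{pic}(\mathrm{Mod}_R^\omega) \to \mathfrak{pic}(\mathrm{Mod}_{\pi_0(R)}^\omega)$ of connective spectra. For connective $R$, the truncation induces isomorphisms on Picard groups (Nakayama lifts invertible $\pi_0R$-modules to invertible $R$-modules and detects their triviality) and on $\pi_0$ of units; while for $i \geq 2$ one has $\pi_i\mathfrak{pic}(\mathrm{Mod}_R^\omega) \cong \pi_{i-1}R$ and $\pi_i\mathfrak{pic}(\mathrm{Mod}_{\pi_0(R)}^\omega) = 0$. Consequently $F$ is a connective spectrum with $\pi_iF = 0$ for $i \leq 1$ and $\pi_iF \cong \pi_{i-1}R$ for $i \geq 2$. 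The long exact sequence on homotopy fixed points then reduces the claim to the vanishing $\pi_0F^{\mathrm{h}C_2} = 0 = \pi_{-1}F^{\mathrm{h}C_2}$.

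These vanishings I would extract from the homotopy fixed-point spectral sequence $E_2^{p,q} = H^p(C_2;\pi_qF) \Rightarrow \pi_{q-p}F^{\mathrm{h}C_2}$: since $F$ has homotopy concentrated in $q \geq 2$, contributions to $\pi_0$ and $\pi_{-1}$ arise only from bidegrees with $p \geq 2$. It is thus enough to prove $H^p(C_2;\pi_iR) = 0$ for all $p \geq 2$ and $i \geq 1$, where the action is $\lambda^*$ twisted by inversion on $\mathrm{gl}_1$; twisting by inversion does not affect positive-degree cohomology, so one reduces to the analogous vanishing with respect to $\lambda^*$ alone. Freeness of the $C_2$-action on $R$ in the sense of \cite[Definition B.7.5.1]{Lurie_SAG} implies that $\pi_0(R)$ is a $C_2$-Galois extension of $\pi_0(R)^{C_2}$, so by the normal basis theorem $\pi_0(R)$ is free as a $\mathbb{Z}[C_2]$-module over $\pi_0(R)^{C_2}$; flatness of the higher homotopy groups (a consequence of $C_2$-\'etaleness) then exhibits each $\pi_iR$ as a direct sum of $C_2$-induced modules, whence Shapiro's lemma gives $\hat{H}^*(C_2;\pi_iR) = 0$ for all $i$, yielding the desired vanishing.

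The principal difficulty lies in justifying the final Shapiro-type step cleanly: one must carefully unpack Lurie's notion of freeness, confirm the Galois condition on $\pi_0R$, and verify that this structure propagates to each $\pi_iR$ via the $\pi_0R$-module structure with compatible $C_2$-action. An alternative approach (potentially cleaner but requiring convergence justifications) is to use directly that $R^{\mathrm{t}C_2} \simeq 0$ follows immediately from freeness and to extract the homotopy-group vanishing from the Tate spectral sequence; or one may globalize by observing that $R \otimes_{R^{\mathrm{h}C_2}} R \simeq R \times R$ (with swap action), so that $R$ is induced from $R^{\mathrm{h}C_2}$, and then to transport this induced structure to each $\pi_iR$ by flatness.
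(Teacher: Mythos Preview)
Your overall strategy---identifying $\Picp(R^s)$ with $\mathfrak{pic}(R)^{\mathrm{h}(-\lambda)}$ via Proposition~\ref{prop:symmetric_pnpic}, taking the fiber $F$ of the equivariant map $\mathfrak{pic}(R)\to\mathfrak{pic}(\pi_0R)$, and reducing to the vanishing of $\pi_0F^{\mathrm{h}C_2}$ and $\pi_{-1}F^{\mathrm{h}C_2}$ via the homotopy fixed point spectral sequence---is valid and can be completed to a proof, but the justification you offer for the key input $\hat{H}^{\ast}(C_2;\pi_iR)=0$ does not hold up. Freeness of the action does not imply that $R^{\mathrm{h}C_2}\to R$ is \'etale in the sense of \cite{LurHA}, and there is no flatness statement for the $\pi_iR$; that line should be dropped. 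The correct argument is classical Galois descent for \emph{ordinary} modules: freeness forces $A:=\pi_0(R)^{C_2}\to\pi_0R$ to be $C_2$-Galois, and each $\pi_iR$ is a $\pi_0R$-module with semilinear $C_2$-action, hence of the form $\pi_0R\otimes_A N$; since $\pi_0R\cong A[C_2]$ by the normal basis theorem, $\pi_iR$ is induced and its Tate cohomology vanishes (for either the $\lambda$- or the $(-\lambda)$-action, as induced modules are insensitive to this twist). Separately, your aside that ``twisting by inversion does not affect positive-degree cohomology'' is false as stated---compare $\mathbb{Z}$ with $\mathbb{Z}^-$---though it becomes irrelevant once you know the modules are induced.

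The paper's proof takes a different route that avoids the spectral sequence entirely. Rather than working with $\mathfrak{pic}(R)^{\mathrm{h}(-\lambda)}$ directly, it uses the \emph{other} fiber sequence of Remark~\ref{rmk:C2_spectra_twists}, namely $\mathfrak{pic}^\mathrm{p}(R^s)\to\mathfrak{pic}(R)\to\mathfrak{pic}(R)^{\mathrm{h}\lambda}$, and compares it with the same sequence for $\pi_0R$. The middle vertical map is a $\tau_{\leq 1}$-equivalence by \cite[Corollary 7.2.2.19]{LurHA}; the right-hand term is handled by Galois descent for the Picard \emph{space} \cite{mathew2016picard}, which gives $\mathfrak{pic}(R)^{\mathrm{h}\lambda}\simeq\mathfrak{pic}(R^{\mathrm{h}C_2})$, after which connectivity of $R^{\mathrm{h}C_2}$ (via \cite[B.7.5.5]{Lurie_SAG}) reduces again to $\pi_0$. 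This packages the ``inducedness'' step you need into a single appeal to Galois descent at the categorical level, and sidesteps both the HFPSS bookkeeping and the sign-twist issue.
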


\begin{proof}
    \label{proof:Poincare_Picard_group_of_free_Poincare_rings}
    By \cite[Corollary 7.2.2.19]{LurHA}, we have an equivalence $\tau_{\leq 1}\mathcal{P}\mathrm{ic}(R)\simeq \mathcal{P}\mathrm{ic}(\pi_0(R))$. We note that a $C_2$-action on $R$ is free if it is free on $\pi_0(R)$. By Proposition~\ref{prop:symmetric_pnpic} and Remark~\ref{remark: identification of the c2 action on pic} we have a fiber sequence $\mathfrak{pic}^\mathrm{p}(R^s)\simeq \mathfrak{pic}(R)^{\mathrm{h}(-\lambda)}$ (see Remark~\ref{rmk:C2_spectra_twists} for an explanation of the notation). By Remark~\ref{rmk:C2_spectra_twists} there is then a fiber sequence \[\mathfrak{pic}^\mathrm{p}(R^\mathrm{s})\to \mathfrak{pic}(R)\to \mathfrak{pic}(R)^{\mathrm{h} C_2}\] and we have a similar fiber sequence for $\pi_0(R)^s$. The projection map $R\to \pi_0(R)$ furthermore induces a map of fiber sequences 
    \[
    \begin{tikzcd}
        \mathfrak{pic}^\mathrm{p}(R^\mathrm{s})\ar[r] \ar[d] & \mathfrak{pic}(R) \ar[r] \ar[d] & \mathfrak{pic}(R)^{\mathrm{h}C_2}\ar[d]\\
        \mathfrak{pic}^\mathrm{p}(\pi_0(R)^\mathrm{s}) \ar[r] & \mathfrak{pic}(\pi_0(R))\ar[r] &\mathfrak{pic}(\pi_0(R))^{\mathrm{h}C_2}
    \end{tikzcd}\,.
    \] The middle map is an equivalence on $\tau_{\leq 1}$ by \cite[B.7.5.5]{Lurie_SAG}, so to show that the map $\pi_0(\mathfrak{pic}^\mathrm{p}(R^s))\to \pi_0(\mathfrak{pic}^\mathrm{p}(\pi_0(R)^s))$ is an isomorphism, it is enough to show that the map on the right hand terms is an equivalence on $\tau_{\leq 1}$.
    We then have equivalences
    $$  \tau_{\leq 1}(\mathcal{P}\mathrm{ic}(R)^{\mathrm{h}C_2} ) \simeq \tau_{\leq 1}(\mathcal{P}\mathrm{ic}(R^{\mathrm{h}C_2})) \simeq \tau_{\leq 1}(\mathcal{P}\mathrm{ic}(\pi_0(R^{\mathrm{h}C_2}))) \simeq \tau_{\leq 1}(\mathcal{P}\mathrm{ic}(\pi_0(R)^{\mathrm{h}C_2})) \simeq \tau_{\leq 1}(\mathcal{P}\mathrm{ic}(\pi_0(R))^{\mathrm{h}C_2}), $$
    where the first and the last equivalence follow from Galois descent of the Picard space \cite[Theorem 3.3.1, also see (3.3) \& (3.4)]{mathew2016picard}, and the third equivalence follows from the fact that $\pi_n\left(R^{\mathrm{h}C_2}\right)\simeq \pi_n(R)^{C_2}$ for a free $C_2$-action by \cite[B.7.5.5]{Lurie_SAG} again. The second equivalence is \cite[B.7.5.5]{Lurie_SAG} to get that $R^{\mathrm{h}C_2}$ is connective with $\pi_0(R^{\mathrm{h}C_2})\cong \pi_0(R)^{C_2}\cong \pi_0(R)^{\mathrm{h}C_2}$ together with \cite[Corollary 7.2.2.19]{LurHA}.
\end{proof}
\begin{lemma}\label{lemma:pnpic_of_symmetric_coconnective_alg} 
    Let $R\in \CAlg^{\mathrm{B}C_2}$ be a ring spectrum with a $C_2$-action $\lambda:R\rightarrow R$. 
    We endow $R$ with the symmetric Poincaré structure of Example \ref{example:tate_poincare_structure}. 
    \begin{enumerate}[label=(\arabic*)]
        \item If $R$ is coconnective, then $\mathcal{P}\mathrm{ic}^\mathrm{p}(R^s)$ is $1$-truncated. 
        \item If $R$ is discrete, there is a short exact sequence $$0\rightarrow (R^\times)^{C_2}/N_\lambda(R^\times)\rightarrow \mathrm{Pic}^\mathrm{p}(R^s)\rightarrow \mathrm{Pic}(R)^{-\lambda}\rightarrow 0\,,$$ where $ (-)^{-\lambda} $ denotes fixed points with respect to the twisted action of Remark \ref{rmk:C2_spectra_twists}. 
    \end{enumerate}
\end{lemma}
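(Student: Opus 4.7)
The plan for both parts is to apply Proposition~\ref{prop:symmetric_pnpic} together with Remark~\ref{remark: identification of the c2 action on pic} to identify $\Picp(R^s) \simeq \mathfrak{pic}(R)^{\mathrm{h}(-\lambda)}$ as connective spectra, so that both assertions reduce to homotopy-theoretic computations for the $(-\lambda)$-fixed points of $\mathfrak{pic}(R)$.

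For part~(1), I would invoke the standard identifications $\pi_0 \mathfrak{pic}(R) = \mathrm{Pic}(R)$, $\pi_1 \mathfrak{pic}(R) = \pi_0(R)^\times$, and $\pi_i\mathfrak{pic}(R) \cong \pi_{i-1}(R)$ for $i \ge 2$. If $R$ is coconnective then $\pi_{i-1}(R)$ vanishes for $i \ge 2$, so $\mathfrak{pic}(R)$ is a $1$-truncated connective spectrum. The homotopy fixed points spectral sequence $E_2^{s,t} = H^s(C_2;\pi_t \mathfrak{pic}(R)) \Rightarrow \pi_{t-s}\mathfrak{pic}(R)^{\mathrm{h}(-\lambda)}$ then has $E_2^{s,t} = 0$ for every $t \ge 2$, so only $\pi_0$ and $\pi_1$ of the fixed-point spectrum can be nonzero; equivalently, mapping $BC_2$ into a $1$-truncated space yields a $1$-truncated space. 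Passing to underlying spaces shows that $\mathcal{P}\mathrm{ic}^{\mathrm{p}}(R^s)$ is $1$-truncated.

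For part~(2), I would apply Remark~\ref{rmk:C2_spectra_twists} to $X = \mathfrak{pic}(R)$ with its $\lambda$-action to obtain the fiber sequence $\mathfrak{pic}(R)^{\mathrm{h}(-\lambda)} \to \mathfrak{pic}(R) \xrightarrow{N_\lambda} \mathfrak{pic}(R)^{\mathrm{h}C_2}$, whose long exact sequence on homotopy groups reads
$$R^\times \xrightarrow{\ r\mapsto r\lambda(r)\ } \pi_1 \mathfrak{pic}(R)^{\mathrm{h}C_2} \longrightarrow \mathrm{Pic}^\mathrm{p}(R^s) \longrightarrow \mathrm{Pic}(R) \xrightarrow{\delta} \pi_0\mathfrak{pic}(R)^{\mathrm{h}C_2}.$$
Since $R$ is discrete, the homotopy fixed points spectral sequence for $\mathfrak{pic}(R)^{\mathrm{h}C_2}$ collapses on the $\pi_1$-line to give $\pi_1\mathfrak{pic}(R)^{\mathrm{h}C_2} \cong (R^\times)^{C_2,\lambda}$; the first arrow is then identified with the norm $r \mapsto r\lambda(r)$ by tracing through the construction of $N_\lambda$ in Remark~\ref{rmk:C2_spectra_twists}, so its cokernel is $(R^\times)^{C_2}/N_\lambda(R^\times)$, producing the left-hand term of the asserted short exact sequence.

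It remains to identify $\ker\delta$ with $\mathrm{Pic}(R)^{-\lambda}$. One containment is immediate: composing $\delta$ with the edge map $\pi_0\mathfrak{pic}(R)^{\mathrm{h}C_2} \to \mathrm{Pic}(R)^{C_2,\lambda}$ yields $[L] \mapsto [L \otimes_R \lambda^*L]$, whose kernel is exactly $\mathrm{Pic}(R)^{-\lambda}$. The main obstacle of the proof is the reverse inclusion, for which the a priori obstruction lies in $H^1(C_2; R^\times)$; I plan to resolve this by exhibiting, for every $[L] \in \mathrm{Pic}(R)^{-\lambda}$, a swap-equivariant trivialization of $L \otimes_R \lambda^*L$ (equivalently, a nondegenerate $\lambda$-hermitian form on $L$), whence $\delta[L]=0$. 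Assembling the pieces then yields the asserted short exact sequence.
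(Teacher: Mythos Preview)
For part~(1), your argument is correct and essentially the same as the paper's: both use Proposition~\ref{prop:symmetric_pnpic} and Remark~\ref{remark: identification of the c2 action on pic} to identify $\Picp(R^s)\simeq\mathfrak{pic}(R)^{\mathrm{h}(-\lambda)}$, then observe that for coconnective $R$ the spectrum $\mathfrak{pic}(R)$ is $1$-truncated (the paper phrases this by looping once and noting that $\mathrm{gl}_1(R)$ is discrete, but it is the same observation).

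For part~(2), your fiber-sequence route via Remark~\ref{rmk:C2_spectra_twists} and the paper's direct appeal to the homotopy fixed point spectral sequence are two presentations of the same computation; your identification of the left-hand term $(R^\times)^{C_2}/N_\lambda(R^\times)$ is correct and matches the $E_\infty^{1,1}$ contribution. You are also right that the only nontrivial point is the surjectivity of $\mathrm{Pic}^\mathrm{p}(R^s)\to\mathrm{Pic}(R)^{-\lambda}$; in spectral-sequence language this is the vanishing of $d_2\colon E_2^{0,0}\to E_2^{2,1}$, and the paper's one-line proof does not make this step explicit either.

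Your proposed resolution---constructing a nondegenerate $\lambda$-hermitian form on every $L$ with $[L]\in\mathrm{Pic}(R)^{-\lambda}$---is, however, only a plan, and it is not clear it can be completed in the stated generality. The obstruction you correctly locate in $H^1(C_2;R^\times)$ (equivalently $H^2$ for the $(-\lambda)$-action) need not vanish for arbitrary discrete $R$: compare Proposition~\ref{proposition:exact_sequence_for_hermitian_picard_group}, where the analogous surjectivity for the \emph{classical} hermitian Picard group is established only under the extra hypothesis that $R$ has few zerodivisors, by appealing to the ideal class group. So while your overall strategy matches the paper's, the step you singled out as the ``main obstacle'' remains a genuine gap in your outline, and you should not expect a soft argument to close it.
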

\begin{remark}
    Compare Lemma \ref{lemma:pnpic_of_symmetric_coconnective_alg} with Proposition \ref{proposition:exact_sequence_for_hermitian_picard_group}. 
\end{remark}
\begin{proof} [Proof of Lemma \ref{lemma:pnpic_of_symmetric_coconnective_alg}]
    By Theorem \ref{thm:calgp_to_poincare_cat} (cf. Theorem \ref{thm:Ep_alg_to_poincare_cat}\ref{thmitem:borel_to_symmetric}), $ \Mod^\mathrm{p}_R $ is symmetric. 
    By Proposition \ref{prop:symmetric_pnpic} and Remark~\ref{remark: identification of the c2 action on pic}, there is an equivalence $$\mathcal{P}\mathrm{ic}^\mathrm{p}(R^s)\simeq \mathcal{P}\mathrm{ic}(R)^{\mathrm{h}(-\lambda)}$$ where the notation is that of Remark~\ref{rmk:C2_spectra_twists}. 
    It follows $$\Omega\mathcal{P}\mathrm{ic}^\mathrm{p}(R^s)\simeq \mathrm{gl}_1(R)^{\mathrm{h}(-\lambda)}.$$ 
    When $R$ is coconnective, we find that $$\Omega\mathcal{P}\mathrm{ic}^\mathrm{p}(R^s)\simeq \left(\Omega \mathrm{gl}_1(R)\right)^{\mathrm{h}(-\lambda)} \simeq (\pi_0(R)^\times)^{1=-\lambda}\,;$$
    in particular, we deduce that $ \Omega\mathcal{P}\mathrm{ic}^\mathrm{p}(R^s) $ is discrete.  
    When $R$ is discrete, the latter result follows from the homotopy fixed point spectral sequence and the fact that $ \mathcal{P}\mathrm{ic}^\mathrm{p}(R^s) $ is $1$-truncated. 
\end{proof}

\begin{remark}
    \label{remark:Poincare_Picard_group_fiber_sequence}
    Let $X=\mathcal{P}\mathrm{ic}(R)$ be the Picard space of a ring spectrum $R$ with a $C_2$-action $\lambda:R\rightarrow R$. 
    By Proposition~\ref{prop:symmetric_pnpic}, Remark~\ref{remark: identification of the c2 action on pic}, and Remark \ref{rmk:C2_spectra_twists}, we have a fiber sequence $$\mathcal{P}\mathrm{ic}^\mathrm{p}(R^s)\rightarrow \mathcal{P}\mathrm{ic}(R)\xrightarrow{N_\lambda} \mathcal{P}\mathrm{ic}(R)^{\mathrm{h}C_2}\,.$$ Taking homotopy groups, this recovers Remark \ref{remark:5-term-exact-sequence-for-hermitian_picard_group}. 
    If the $C_2$-action on $R$ is Galois, then by Galois descent of the Picard space \cite[Theorem 3.3.1]{mathew2016picard}, we have a fiber sequence $$\mathcal{P}\mathrm{ic}^\mathrm{p}(R^s)\rightarrow \mathcal{P}\mathrm{ic}(R)\xrightarrow{N_\lambda} \mathcal{P}\mathrm{ic}(R^{\mathrm{h}C_2}).$$ 
    If $ 2 $ acts invertibly on $ \mathcal{P}\mathrm{ic}(R) $, then the exact sequence splits. 
\end{remark}
\begin{example}
    \label{example:Poincare_Picard_of_KO}
    The action of complex conjugation on $\mathrm{KU}$ is $C_2$-Galois with homotopy fixed points $\mathrm{KO}$. 
    Considered together, $ \mathrm{KO} \to \mathrm{KU} $ is Borel $ \EE_\infty $-ring spectrum in $ \Spectra^{C_2} $ and as such admits a canonical lift to $ \CAlgp $. 
    We know $\mathrm{Pic}(\mathrm{KO})\simeq \mathbb{Z}/8$ and $\mathrm{Pic}(\mathrm{KU})\simeq \mathbb{Z}/2$ \cite{mathew2016picard}. 
    Moreover, $\pi_0(\mathrm{KO})\simeq \mathbb{Z}$ and thus $\pi_0(\mathrm{KO})^\times \simeq \mathbb{Z}/2$, and similarly for $\mathrm{KU}$. 
    By Remark \ref{remark:Poincare_Picard_group_fiber_sequence}, we have an exact sequence $$ \mathbb{Z}/2\xrightarrow{\times 2}\mathbb{Z}/2\rightarrow \mathrm{Pic}^\mathrm{p}(\mathrm{KU}^s)\rightarrow \mathbb{Z}/2\xrightarrow{N_{\lambda}} \mathbb{Z}/8.$$ 
    The map $N_\lambda:\mathbb{Z}/2\to \mathbb{Z}/8$ is zero, see Example~\ref{ex: Atiyah real K-theory} for a proof. The map $\mathbb{Z}/2\rightarrow \mathrm{Pic}^\mathrm{p}(\mathrm{KU}^s)$ has a section given by evaluation of the quadratic form at the unit of $\mathrm{KU}$. 
    We conclude $$\mathrm{Pic}^\mathrm{p}(\mathrm{KU}^s)\simeq \mathbb{Z}/2\times \mathbb{Z}/2,$$ generated by $(\Sigma \mathrm{KU},\pm 1)$. This example shows that the decomposition of Remark \ref{remark:Poincare_Picard_group_fiber_sequence} does not necessarily hold without the $2$-divisibility assumption. We will show in Example~\ref{ex: Atiyah real K-theory} another perspective and an extension of this computation.
\end{example}
\begin{observation}\label{obs:pnpic_to_GW}
    Let $ \left(\mathcal{C},\Qoppa\right) $ be a symmetric monoidal Poincaré $ \infty $-category. 
    The canonical inclusion $ \Picp\left(\mathcal{C},\Qoppa\right) \subseteq \mathrm{Pn}\left(\mathcal{C},\Qoppa\right) $ and \cite[Corollary 4.2.2]{CDHHLMNNSII} induce a canonical map 
    \begin{equation}\label{eq:pnpic_to_GW_as_spaces}
        \mathcal{P}\mathrm{ic}^\mathrm{p}\left(\mathcal{C},\Qoppa\right) \to \Omega^\infty\mathrm{GW}\left(\mathcal{C},\Qoppa\right) \qquad \text{ in } \Spaces \,,
    \end{equation}
    where $ \mathrm{GW} $ denotes the Grothendieck--Witt spectrum of \cite[Definition 4.2.1]{CDHHLMNNSII} (also see Definition 4.1.1 and Corollary 4.2.3 \emph{loc. cit.}). 
    The map (\ref{eq:pnpic_to_GW_as_spaces}) is adjoint to a map
    \begin{equation}\label{eq:pnpic_to_GW_as_spectra}
        \sphere\left[\mathcal{P}\mathrm{ic}^{\mathrm{p}}\left(\mathcal{C},\Qoppa\right)\right] \to \mathrm{GW}\left(\mathcal{C},\Qoppa\right) \qquad \text{ in } \Spectra \,. 
    \end{equation}
    On the other hand, if we take $ \pi_0 $ of (\ref{eq:pnpic_to_GW_as_spaces}), then by \cite[Proposition 4.1.8]{CDHHLMNNSII}, the map $ \pi_0 \Picp\left(\mathcal{C},\Qoppa\right) \to \mathrm{GW}_0\left(\mathcal{C},\Qoppa\right) $ of sets admits a canonical lift to a map 
    \begin{equation}\label{eq:pnpic_to_GW_as_discrete_monoids}
        \pi_0 \Picp\left(\mathcal{C},\Qoppa\right) \to \mathrm{GW}_0\left(\mathcal{C},\Qoppa\right)^{\times} 
    \end{equation}
    of discrete commutative monoids, where the monoidal structure on $ \mathrm{GW}_0\left(\mathcal{C},\Qoppa\right)^{\times} $ is induced by tensor product of Poincaré objects. 
    Equivalently, we have a map 
    \begin{equation}\label{eq:pnpic_to_GW_as_comm_rings}
        \ZZ\left[\pi_0 \Picp\left(\mathcal{C},\Qoppa\right)\right] \to \mathrm{GW}_0\left(\mathcal{C},\Qoppa\right)
    \end{equation}
    of commutative rings.\footnote{In fact, using forthcoming results in \cite{CDHHLMNNSIV}, we should be able to exhibit lifts of (\ref{eq:pnpic_to_GW_as_discrete_monoids}), resp. (\ref{eq:pnpic_to_GW_as_comm_rings}) to a map $ \Picp\left(\mathcal{C},\Qoppa\right) \to \Omega^\infty \mathrm{GW}\left(\mathcal{C},\Qoppa\right)^{\times} $ of $ \EE_\infty $-monoids in spaces, resp. $ \sphere\left[\Picp\left(\mathcal{C},\Qoppa\right)\right] \to \mathrm{GW}\left(\mathcal{C},\Qoppa\right)^{\times} $ a map of $ \EE_\infty $-ring spectra.} 
    
    These maps are compatible with the non-equivariant ones in the sense that there is a commutative diagram\footnote{An astute reader may guess that the aforementioned diagram admits a genuine refinement (e.g. of Green functors). This is outside the scope of the current work.} 
    \begin{equation*}
        \begin{tikzcd}
            \ZZ\left[\pi_0 \Picp\left(\mathcal{C},\Qoppa\right)\right] \ar[d] \ar[r] & \mathrm{GW}_0\left(\mathcal{C},\Qoppa\right) \ar[d] \\
            \ZZ\left[\pi_0\Pic(\mathcal{C})\right] \ar[r] & K_0(\mathcal{C}) \,.
        \end{tikzcd}
    \end{equation*}		
    Now suppose that $ \left(\mathcal{C},\Qoppa\right) = \Mod^\mathrm{p}_{\underline{k}} $ where $ k $ is a field with the trivial $ C_2 $-action regarded as a Poincaré ring via Example \ref{ex:fixpt_Mackey_functor}; this is symmetric monoidal by Theorem \ref{thm:calgp_to_poincare_cat}.  
    Then by Theorem \ref{theorem:Poincare_Pic_of_fixpt_Mackey_functor}, the map (\ref{eq:pnpic_to_GW_as_comm_rings})
    \begin{equation}
        \ZZ\left[k^\times/k^{\times 2}\right] \simeq \ZZ\left[\pi_0 \Picp\left(\underline{k}\right)\right] \to \mathrm{GW}_0\left(\underline{k}\right) 
    \end{equation}
    is given by sending a unit $ a \in k^\times $ to the form $ \langle a \rangle $. 
\end{observation}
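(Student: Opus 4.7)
The plan is to build the maps in stages, combining cited results from \cite{CDHHLMNNSII} with a direct computation in the final case. First, I would construct the map (\ref{eq:pnpic_to_GW_as_spaces}) as the composite of the inclusion $\mathcal{P}\mathrm{ic}^\mathrm{p}(\mathcal{C},\Qoppa) \hookrightarrow \mathrm{Pn}(\mathcal{C},\Qoppa)$ with the canonical map $\mathrm{Pn}(\mathcal{C},\Qoppa) \to \Omega^\infty \mathrm{GW}(\mathcal{C},\Qoppa)$ furnished by \cite[Corollary 4.2.2]{CDHHLMNNSII}. The adjoint map (\ref{eq:pnpic_to_GW_as_spectra}) then arises from the standard $\Sigma^\infty_+ \dashv \Omega^\infty$ adjunction, and naturality in $(\mathcal{C},\Qoppa)\in \EE_\infty\Mon(\Catp)$ is automatic from functoriality of the constructions involved.

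Next, to produce (\ref{eq:pnpic_to_GW_as_discrete_monoids}) and (\ref{eq:pnpic_to_GW_as_comm_rings}), I would apply $\pi_0$ and invoke \cite[Proposition 4.1.8]{CDHHLMNNSII}, which endows $\mathrm{GW}_0(\mathcal{C},\Qoppa)$ with a commutative ring structure induced by tensor products of Poincar\'e objects. Writing $\pi_0\mathrm{Pn}(\mathcal{C},\Qoppa)$ for the commutative monoid of isomorphism classes of Poincar\'e objects under tensor product, the induced map $\pi_0 \mathrm{Pn}(\mathcal{C},\Qoppa) \to \mathrm{GW}_0(\mathcal{C},\Qoppa)$ is a map of discrete commutative monoids by \emph{loc. cit.} Since every element of $\pi_0\Picp(\mathcal{C},\Qoppa)$ is by construction tensor-invertible in $\pi_0\mathrm{Pn}(\mathcal{C},\Qoppa)$, its image in $\mathrm{GW}_0(\mathcal{C},\Qoppa)$ lands in the unit subgroup $\mathrm{GW}_0(\mathcal{C},\Qoppa)^\times$, giving (\ref{eq:pnpic_to_GW_as_discrete_monoids}); extending $\ZZ$-linearly yields the ring homomorphism (\ref{eq:pnpic_to_GW_as_comm_rings}). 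Compatibility with the non-involutive analogues then follows from naturality of the construction applied together with the lax symmetric monoidal transformation $\mathrm{Pn} \Rightarrow (-)^\simeq \circ U$ of Lemma \ref{lemma:lax_monoidal_nat_transf} and the corresponding transformation $\mathrm{GW} \to K$ of \cite{CDHHLMNNSII}.

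The main task is verifying the explicit formula in the case $(\mathcal{C},\Qoppa) = \Mod^\mathrm{p}_{\underline{k}}$ for $k$ a field of characteristic $\neq 2$ with trivial involution. By the (cited) Theorem \ref{theorem:Poincare_Pic_of_fixpt_Mackey_functor}, the isomorphism $\pi_0 \Picp(\underline{k}) \cong k^\times/k^{\times 2}$ sends a unit $a\in k^\times$ to the class of the pair $(k, q_a)$, where $q_a$ is the symmetric form on $k$ classified by multiplication by $a$ under the identification of Remark~\ref{remark:poincare_picard_points_desc}. I would then trace $(k,q_a)$ through the map constructed above. The key observation is that the Poincar\'e structure on $\Mod^\mathrm{p}_{\underline{k}}$ inherited from Example \ref{ex:fixpt_Mackey_functor} restricts on the invertible object $k$ to the usual notion of a nondegenerate symmetric bilinear form on a one-dimensional $k$-vector space, so that $(k,q_a)$ corresponds precisely to the rank-one form $\langle a\rangle$. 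Under the classical identification of $\mathrm{GW}_0(k)$ with the Grothendieck–Witt ring of nondegenerate symmetric bilinear forms (obtained from \cite[Corollary 4.2.3]{CDHHLMNNSII} together with standard d\'evissage, since $k$ is a field), the image is therefore $\langle a\rangle\in \mathrm{GW}_0(k)$ as claimed. The main subtlety is the matching of Poincar\'e structures with classical symmetric forms, which amounts to unwinding the definitions and will be the most delicate step.
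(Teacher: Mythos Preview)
Your proposal is correct and matches the paper's approach: since this is an Observation rather than a theorem with a separate proof, the paper embeds essentially the same reasoning you give directly into the statement (citing \cite[Corollary 4.2.2, Proposition 4.1.8]{CDHHLMNNSII} and Theorem~\ref{theorem:Poincare_Pic_of_fixpt_Mackey_functor} at the corresponding points). One minor point: you add the hypothesis that $k$ has characteristic $\neq 2$, but the paper does not impose this restriction---the identification $\pi_0\Picp(\underline{k})\cong k^\times/k^{\times 2}$ via Theorem~\ref{theorem:Poincare_Pic_of_fixpt_Mackey_functor} and the description of the map by $a\mapsto\langle a\rangle$ hold for the genuine symmetric Poincar\'e structure in all characteristics.
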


\begin{remark}
    In fact, for a discrete ring with involution $R$, thought of as a Poincar\'e ring via Example~\ref{example:genuine_symmetric_poincare_structure} one can show that there is a determinant map $\mathrm{GW}(R)\to \mathfrak{pic}^\mathrm{p}(R)$ which on $\pi_0$ splits the above map. In particular, the map $\mathrm{Pic}^\mathrm{p}(R)\in \mathrm{GW}_0(R)$ is always injective. This should, informally, identify $\mathfrak{pic}^\mathrm{p}(R)$ as ``$\mathrm{gr}^1_{\mathrm{mot}}\mathrm{GW}(R)$'' for some notion of the motivic filtration on $\mathrm{GW}(R)$.
\end{remark}

\subsection{Units in Poincaré rings}\label{subsection:calgp_units}

In the usual Picard spectrum one has the relationship $\mathfrak{pic} = \ZZ \times B\mathbb{G}_m$, where $\mathbb{G}_m$ is the spectral algebraic group scheme sending a ring spectrum $E$ to the spectrum of $E$-linear equivalences of $E$ $\mathrm{gl}_1E:=\mathrm{Aut}_E(E)$.\footnote{Normally the automorphism space of an object is only $\mathbb{A}_\infty$, but as the unit in a symmetric monoidal category, the automorphisms of $E$inherit a canonical and in fact functorial $\mathbb{E}_\infty$ structure and this construction makes sense.} 
This relationship between $\mathfrak{pic}$ and $\mathbb{G}_m$ has many important applications, for example relating the higher homotopy groups of $\mathfrak{pic}(A)$ with those of $A$.  
In this section, we introduce an analogue of units for Poincaré rings. 
We show that this Poincar{\'e} ring corepresents the functor which sends a Poincar{\'e} ring $ R $ to the space of automorphisms of the Poincaré object $ (R,u) $ (Corollary \ref{cor:tensor_unit_as_poincare_object}). 
In other words, the Poincaré Picard spectrum introduced in \S\ref{subsection:pnpic_general_and_units} is a delooping of the Poincaré units. 

Recall that the ordinary multiplicative group $\mathbb{G}_m$ is the functor $ \EE_\infty \Alg \to \Spectra_{\geq 0} $ corepresented by $\mathbb{S}\{x^{\pm 1}\} $, where $\mathbb{S}\{x^{\pm 1}\}$ is the free $\mathbb{E}_\infty$ ring on the $\mathbb{E}_\infty$ space $ \Omega^\infty \sphere $. 
\begin{notation}
    We let $\mathbb{S}\{-\}$ denote the left adjoint to the forgetful functor $\mathbb{E}_\infty\Alg\left(\Spectra^{BC_2}\right) \to \Spectra^{BC_2} $.  
    When $X=\mathbb{S}[C_2]$, we will denote $\mathbb{S}\{X\} $  by $\mathbb{S}\{x,y\}$ for brevity. 
\end{notation}

We will construct $\gmq$ directly as a corepresented functor. As a Borel ring spectrum, the corepresenting $\mathbb{E}_\infty$ ring will again be $\mathbb{S}\{x^{\pm 1}\}$ where the $C_2$-action is, informally, given by $\lambda(x)=x^{-1}$. The description as a Poincar\'e ring, however, will be as a pushout of certain Poincar\'e ring structures on $\mathbb{S}\{x,y\}$, $\mathbb{S}\{z\}$, and $\mathbb{S}$. To form this pushout we will need to construct maps to push out  along. This is accomplished via the following natural transformation:
\begin{construction}~\label{cons: norm as nat transformation}
    We will construct a natural transformation between the two functors \[\Omega^{\infty}(-)^e,\Omega^{\infty}((-)^L)^{C_2}:\CAlgp\to \Spaces\] which will encode the norm map. 
    We do this by using the functor $\CAlgp\to \mathbb{E}_\infty\mathrm{Mod}(\Catp)_{(\mathrm{Sp}^\omega,\Qoppa^\mathrm{u})/-}$ of Corollary~\ref{cor:unit_poincare_object} to write $\Omega^{\infty}(-)^e$ as the composite \[R\mapsto (F:(\mathrm{Sp}^{\omega},\Qoppa^\mathrm{u})\to \mathrm{Mod}^\mathrm{p}_R)\mapsto (F^e:\mathrm{Sp}^\omega\to \mathrm{Mod}^\omega_{R^e})\to \mathrm{End}_{R^e}(F^e(\mathbb{S}))\] and write $\Omega((-)^L)^{C_2}$ as the composite \[R\mapsto (F:(\mathrm{Sp}^\omega,\Qoppa^{\mathrm{u}})\to \mathrm{Mod}^\mathrm{p}_{R^e})\mapsto \Omega^\infty\Qoppa_{R}(F(\mathbb{S}))\,.\] We then have a natural transformation $\mathrm{End}_{R^e}(F^e(\mathbb{S}))\to \Omega^\infty \Qoppa_{R}(F(\mathbb{S}))$ given by the fact that $\Qoppa_R$ is a functor. Let \[f:\Omega^\infty (-)^e\implies \Omega^{\infty} ((-)^L)^{C_{2}}\] be this natural transformation. Note that the composite map \[\Omega^\infty (\mathbb{S}\{x,y\}^{\mathrm{t}})^e\to \Omega^\infty ((\mathbb{S}\{x,y\}^\mathrm{t})^L)^{C_2}\to \Omega^\infty \mathrm{hom}_{\mathbb{S}\{x,y\}^{\otimes 2}}(\mathbb{S}\{x,y\}\otimes \lambda^*\mathbb{S}\{x,y\},\mathbb{S}\{x,y\})\cong \mathbb{S}\{x,y\}\] sends $x$ to $xy$ by definition.
\end{construction}

We are now ready to construct the Poincaré ring corepresenting the functor $\gmq$.
\begin{construction}~\label{const: gmq} 
    Consider the Tate Poincar{\'e} ring $\mathbb{S}\{x,y\}^t$ in the sense of Example \ref{example:tate_poincare_structure}. 
    Let $R_\Qoppa$ denote the Poincar{\'e} ring which corepresents the functor sending a Poincar{\'e} ring $(S,S^{\phi C_2}\to S^{\mathrm{t}C_2})$ to the space $\Omega^\infty S^{C_2}$. This Poincar{\'e} ring exists by Theorem~\ref{thm:free_operadic_calgp_formula}, and in fact this gives a formula $R_\Qoppa^e =\mathbb{S}\{z\}$ with trivial $C_2$ action and $R_\Qoppa^{\phi C_2}\simeq \mathbb{S}\{z\}^{\otimes 2}$. We also have the chain of equivalences 
    \begin{align*}
    \mathrm{maps}_{\CAlgp}(\mathbb{S}\{x,y\}^\mathrm{t},-)\simeq \mathrm{maps}_{\CAlg(\mathrm{Sp}^{\mathrm{B}C_2})}(\mathbb{S}\{x,y\}, (-)^e)&\simeq \mathrm{maps}_{\mathrm{Sp}^{\mathrm{B}C_2}}(\mathbb{S}[C_2],(-)^e)\\
    &\simeq \mathrm{maps}_{\mathrm{Sp}}(\mathbb{S},(-)^e)\simeq \Omega^\infty(-)^e
    \end{align*}
    where the first equivalence comes from the fact that $(-)^\mathrm{t}$ is left-adjoint to $(-)^e$ (Example~\ref{example:tate_poincare_structure}), and the rest of the equivalences are by definition. 
    Thus $R_\Qoppa$ and $\mathbb{S}\{x,y\}^{\mathrm{t}}$ corepresent the functors in Construction~\ref{cons: norm as nat transformation}, and we therefore get a map $f(\mathrm{id}_{\mathbb{S}\{x,y\}^{\mathrm{t}}}): R_\Qoppa\to \mathbb{S}\{x,y\}^{\mathrm{t}}$ in $ \CAlgp $. 
    By definition of $ R_\Qoppa $, the class $1\in \pi_0((\mathbb{S}^\mathrm{u})^{C_2})$ corresponds to a uniquely determined map of Poincar{\'e} rings $R_\Qoppa\to \mathbb{S}^u$. 
    We then define \[\mathbb{S}\{x^{\pm 1}\}^\mathrm{u} := \mathbb{S}\{x,y\}^\mathrm{t} \otimes_{R_\Qoppa}\mathbb{S}^u\] where the tensor product denotes a pushout of Poincar{\'e} rings. 
\end{construction}
\begin{definition}\label{defn:hermitian_units}
    Let $\gmq$ denote the functor which is corepresented by the Poincar\'e ring $\mathbb{S}\{x^{\pm 1}\}^{\mathrm{u}}$ of Construction \ref{const: gmq}. 
    In other words, 
    \begin{equation*}
    \begin{split}
        \gmq \colon \CAlgp &\to \Spaces \\
        R &\mapsto \hom_{\CAlgp}\left(\mathbb{S}\{x^{\pm 1}\}^\mathrm{u}, R \right)\,.
    \end{split}
    \end{equation*}
\end{definition}
The following result justifies our notation. 
\begin{proposition}\label{prop:gmq_underlying_calg}
    The forgetful functor $ \CAlgp \to \EE_\infty\Alg^{BC_2} $ sends the Poincar\'e ring $ \mathbb{S}\{x^{\pm 1}\}^\mathrm{u} $ of Construction \ref{const: gmq} to the $ \EE_\infty $-ring $ \mathbb{S}\left\{x^{\pm 1} \right\} $, where the generator of $ C_2 $ sends $ x \mapsto x^{-1} $. 
    As an object of $ \EE_\infty\Alg $, this agrees with taking the free $\mathbb{E}_\infty$ ring on a degree zero generator $x$ and then formally adding a multiplicative inverse to $x$ in an $\mathbb{E}_\infty$ way.   
\end{proposition}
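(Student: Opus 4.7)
The plan is to use the explicit formula $\mathbb{S}\{x^{\pm 1}\}^\mathrm{u} = \mathbb{S}\{x,y\}^\mathrm{t} \otimes_{R_\Qoppa} \mathbb{S}^u$ from Construction~\ref{const: gmq} and then reduce the computation to a pushout in $\EE_\infty\Alg^{BC_2}$. By Theorem~\ref{thm:poincare_rings_cat_formal_properties}\ref{thmitem:poincare_ring_to_naive_ring_preserves_colims}, the forgetful functor $(-)^e \colon \CAlgp \to \EE_\infty\Alg^{BC_2}$ preserves all small colimits; in particular, it preserves this pushout. So I need to identify the underlying $C_2$-equivariant $\EE_\infty$-rings of the three vertices and of the two structure maps.

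First I would compute the underlying rings. The vertex $\mathbb{S}\{x,y\}^\mathrm{t}$ has underlying $\EE_\infty$-ring $\mathbb{S}\{x,y\} = \mathbb{S}\{\sphere[C_2]\}$ with $C_2$-action induced by the regular representation, i.e.\ swapping the two generators $x$ and $y$. The vertex $\mathbb{S}^u$ has underlying ring $\sphere$ with the trivial action. By the formula for free Poincar\'e rings cited inside Construction~\ref{const: gmq} (an instance of Theorem~\ref{thm:free_operadic_calgp_formula}), $R_\Qoppa$ has underlying $\EE_\infty$-ring $\mathbb{S}\{z\}$ with the trivial $C_2$-action.

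Next I would identify the two structure maps at the level of underlying rings. The map $R_\Qoppa \to \mathbb{S}^u$ classifies the Poincar\'e object $1 \in \pi_0((\mathbb{S}^u)^{C_2})$, so on underlying $\EE_\infty$-rings it is the unique map $\mathbb{S}\{z\} \to \sphere$ sending $z \mapsto 1$. The map $R_\Qoppa \to \mathbb{S}\{x,y\}^\mathrm{t}$, on the other hand, is the value of the natural transformation $f$ of Construction~\ref{cons: norm as nat transformation} at the identity of $\mathbb{S}\{x,y\}^\mathrm{t}$; by the final sentence of that construction, the resulting element of $\pi_0\mathbb{S}\{x,y\}$ is $xy$. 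Thus on underlying rings this map is $z \mapsto xy$.

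Combining these two steps, the underlying $\EE_\infty$-ring with $C_2$-action of $\mathbb{S}\{x^{\pm 1}\}^\mathrm{u}$ is
\begin{equation*}
  \mathbb{S}\{x,y\} \otimes_{\mathbb{S}\{z\},\,z \mapsto xy}^{\,z \mapsto 1\,} \sphere
\end{equation*}
with $C_2$ acting by swapping $x$ and $y$. Since setting $z = xy$ equal to $1$ is precisely the universal way of inverting $x$ and identifying $y = x^{-1}$, this pushout is the free $\EE_\infty$-ring on an invertible degree-zero generator $\mathbb{S}\{x^{\pm 1}\}$, and the swap $x \leftrightarrow y$ descends to the action $x \mapsto x^{-1}$. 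The main subtlety is in tracking that $f$ at the identity really is the ``$z \mapsto xy$'' map, but this is by definition of $f$ via the Poincar\'e structure: the bilinear part of $\Qoppa_{\mathbb{S}\{x,y\}^\mathrm{t}}$ evaluated on the free module $\mathbb{S}\{x,y\}$ picks out multiplication, and passing to $\pi_0$ sends the generators $x, y$ of the two tensor factors to the product $xy$. All other steps are formal consequences of (co)limit preservation and the universal properties recorded in Construction~\ref{const: gmq}.
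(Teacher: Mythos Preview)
Your proposal is correct and follows essentially the same approach as the paper: apply Theorem~\ref{thm:poincare_rings_cat_formal_properties}\ref{thmitem:poincare_ring_to_naive_ring_preserves_colims} to pass the pushout through the forgetful functor, identify the three underlying rings as $\mathbb{S}\{x,y\}$, $\mathbb{S}\{z\}$, $\mathbb{S}$, and identify the two structure maps as $z \mapsto xy$ (via the last sentence of Construction~\ref{cons: norm as nat transformation}) and $z \mapsto 1$. Your additional paragraph tracking the $C_2$-action through the pushout is a welcome clarification that the paper leaves implicit.
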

\begin{proof}
    By Theorem \ref{thm:poincare_rings_cat_formal_properties}\ref{thmitem:poincare_ring_to_naive_ring_preserves_colims}, the underlying $ \EE_\infty $-ring of $ \mathbb{S}\left\{x^{\pm 1} \right\}^u $ is computed by $\mathbb{S}\{x,y\}\otimes_{\mathbb{S}\{z\}}\mathbb{S}$. 
    It follows from the last sentence of Construction~\ref{cons: norm as nat transformation} that the map $\mathbb{S}\{z\}\to \mathbb{S}\{x,y\}$ sends $z\mapsto xy$. 
    The desired result follows from noting that $\mathbb{S} \left\{z\right\} \to \mathbb{S} $ sends $ z \mapsto 1 $. 
\end{proof}

\begin{remark}\label{remark:Poincare_units_fiber_sequence}
    The pushout description of $\mathbb{S}\{x^{\pm 1}\}^\mathrm{u}$ induces a pullback of mapping spaces 
    \[
    \begin{tikzcd}
        \gmq(R) \ar[d] \ar[r] & \mathrm{Maps}_{\CAlgp}(\mathbb{S}\{x, y\}^\mathrm{t}, R)\simeq \Omega^\infty R^e\ar[d]\\
        1 \ar[r] & \mathrm{Maps}_{\CAlgp}(R_\Qoppa,R)\simeq \Omega^\infty\Qoppa_R(R) 
    \end{tikzcd}
    \] where the right vertical map is given by taking the norm. 
\end{remark}
\begin{remark}
    While we occasionally refer to $ \gmq $ as units, $ \gmq(R) $ is \emph{not} given by $ \left((R^{C_2})^\times \to (R^e)^\times \right) $ considered as some kind of structured object. 
    A more apt interpretation is as a hermitian analogue of $ \mathrm{gl}_1 $. 
\end{remark}
	
\begin{theorem}\label{theorem:loops_Poincare_pic_is_Gm_Qoppa}
    There is a natural equivalence \[\Omega^{\infty +1} \Picp(-)\simeq \gmq\] of $ \Spaces $-valued functors on Poincar{\'e} rings, where $\gmq$ is the functor corepresented by the Poincar\'e ring of Construction~\ref{const: gmq}. 
    In particular, $ \gmq $ admits a canonical lift to a functor $ \CAlgp \to \Spectra_{\geq 0} $, and there is a fiber sequence 
    \begin{equation*}
        \gmq \to \mathrm{gl}_1\left((-)^e\right) \to \mathrm{gl}_1\left((-)^L\right) 
    \end{equation*}
    of infinite loop spaces.  
\end{theorem}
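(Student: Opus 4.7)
The plan is to identify both $\Omega^{\infty+1}\Picp(-)$ and $\gmq(-)$ with the fiber of the norm map $\mathrm{gl}_1((-)^e) \to \mathrm{gl}_1\bigl(((-)^L)^{C_2}\bigr)$ on connective spectra, whereby the asserted equivalence and the fiber sequence follow simultaneously.

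First, I would apply Proposition~\ref{prop:Poincare_Picard_to_Picard_fiber}, which furnishes a fiber sequence of connective spectra
\[ \mathrm{gl}_1\!\left((R^L)^{C_2}\right) \to \pnpic(R) \xrightarrow{U} \mathfrak{pic}(R^e), \]
together with the explicit identification there of the connecting map $\mathrm{gl}_1(R^e) \to \mathrm{gl}_1\bigl((R^L)^{C_2}\bigr)$ as the norm induced by~(\ref{eq:Poincare_ring_functorial_norm}). Shifting one step further to the left yields
\[ \Omega\pnpic(R) \to \mathrm{gl}_1(R^e) \xrightarrow{N^{C_2}} \mathrm{gl}_1\!\left((R^L)^{C_2}\right), \]
which realises $\Omega^{\infty+1}\Picp(R) = \Omega\pnpic(R)$ as the fiber of the norm. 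Interpreting the theorem's $\mathrm{gl}_1(R^L)$ as $\mathrm{gl}_1\bigl((R^L)^{C_2}\bigr)$, this already gives the asserted fiber sequence once the identification $\Omega\pnpic \simeq \gmq$ is established.

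Next, by Remark~\ref{remark:Poincare_units_fiber_sequence}, $\gmq(R)$ is the pullback of $\{1\} \to \Omega^\infty \Qoppa_R(R) \leftarrow \Omega^\infty R^e$. Using Theorem~\ref{thm: genuine equivarient modules are hermitian structures} to identify $\Qoppa_R(R^e) \simeq (R^L)^{C_2}$, this right-hand map is precisely the norm $\Omega^\infty R^e \to \Omega^\infty (R^L)^{C_2}$. To match this with the previous paragraph, I would verify that the square
\[
\begin{tikzcd}
\mathrm{gl}_1(R^e) \ar[r, "N^{C_2}"] \ar[d, hookrightarrow] & \mathrm{gl}_1\!\left((R^L)^{C_2}\right) \ar[d, hookrightarrow] \\
\Omega^\infty R^e \ar[r, "N^{C_2}"] & \Omega^\infty (R^L)^{C_2}
\end{tikzcd}
\]
is Cartesian, so that taking fibers over $1 = u_R$ on either side yields the same answer. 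This reflects the elementary observation that $N^{C_2}(f) \simeq 1$ forces $f$ to be invertible, since $N^{C_2}(f)$ is essentially $f \cdot \lambda(f)$ and so witnesses $\lambda(f)$ as an inverse for $f$ up to contractible data whenever it is the unit.

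The main obstacle will be verifying that the natural transformation of Construction~\ref{cons: norm as nat transformation} agrees, as a natural transformation of functors $\CAlgp \to \Spaces$, with the norm appearing as the connecting map in Proposition~\ref{prop:Poincare_Picard_to_Picard_fiber}. Both are manifestations of the Hill--Hopkins--Ravenel norm, but the former is defined abstractly via the functoriality of $\Qoppa$ applied to the unit Poincaré object $(R^e, u_R)$ of Corollary~\ref{cor:tensor_unit_as_poincare_object}, while the latter is described concretely in terms of tensoring with $R^L$ over $N^{C_2}R^e$ as in~(\ref{eq:Poincare_ring_functorial_norm}). Matching them requires unwinding the universal property of the Poincaré structure on $\Mod^\mathrm{p}_R$ at the unit and chasing through the equivalence of Theorem~\ref{thm: genuine equivarient modules are hermitian structures}. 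Once this compatibility is established, naturality in $R$ is immediate from the naturality of all constructions used, and the lift of $\gmq$ to a functor valued in connective spectra follows from its identification with $\Omega^{\infty+1}$ of the connective spectrum $\Picp(R)$.
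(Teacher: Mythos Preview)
Your approach is correct but takes a slightly different route from the paper. The paper does not invoke Proposition~\ref{prop:Poincare_Picard_to_Picard_fiber}; instead it derives the fiber sequence for $\Omega\Picp(R)$ directly from the fact that $\mathrm{He}(\Mod_{R^e}^\omega,\Qoppa_R) \to \Mod_{R^e}^\omega$ is a right fibration classified by $\Omega^\infty\Qoppa_R$, giving $\Omega\Picp(R) \simeq \mathrm{Aut}_{\mathrm{He}}((R^e,u))$ as the fiber of $\mathrm{gl}_1(R^e) \xrightarrow{f \mapsto f^*(u)} \Omega^\infty\Qoppa_R(R^e)$ over $u$. The paper then builds a map of pullback squares from this to the defining square of $\gmq$ in Remark~\ref{remark:Poincare_units_fiber_sequence}, and finishes with exactly your Cartesian-square argument (that $x\sigma(x)=1$ in $\pi_0(R^{C_2})$ forces $x$ invertible). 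Your route through Proposition~\ref{prop:Poincare_Picard_to_Picard_fiber} is valid and not circular (the paper itself notes this), but it imports the Brauer machinery and leaves you with the ``main obstacle'' you flag: matching two separately-defined norm maps. The paper's direct approach sidesteps this, since the map $f \mapsto f^*(u)$ arising from the fibration is \emph{by definition} the one used in Construction~\ref{cons: norm as nat transformation}, so no separate compatibility check is needed.
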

\begin{proof}
    This amounts to identifying the space $\mathrm{Aut}_{\mathrm{Pn}\left(\mathrm{Mod}_{R^e}^\omega,\Qoppa_R\right)}(R^e,u)$ functorially, where $(R^e,u)$ is the Poincar{\'e} object $R^e$ with bilinear form given by the unit map $\mathbb{S}\to \Qoppa_R(R^e)$. 
    Since $ \mathrm{Pn}\left(\mathrm{Mod}_{R^e}^\omega,\Qoppa_R\right) $ is the maximal subgroupoid of $ \mathrm{He}\left(\mathrm{Mod}_{R^e}^\omega,\Qoppa_R\right) $ on Poincaré objects, it suffices to describe automorphisms of $ (R^e,u) $ in the latter category.
    Recall that $ \mathrm{He}(\mathrm{Mod}_{R^e}^\omega,\Qoppa_R)\to \mathrm{Mod}^\omega_{R^e} $ is a right fibration classified by the functor which takes a module $M$ to the $ \infty $-groupoid $ \Omega^\infty \Qoppa_R(M)$ \cite[Definition 2.1.1]{CDHHLMNNSI}. 
    By \cite[Lemma 2.4.4.1]{HTT} we get an induced fiber sequence on mapping spaces which restricts to a fiber sequence of the form
    \begin{equation}\label{eq:loops_pnpic_fiber_sequence}
        \Omega\Picp(R) \simeq \mathrm{Aut}_{\mathrm{He}(\mathrm{Mod}_{R^e}^\omega,\Qoppa_R)}((R^e,u)) \to \mathrm{Aut}_{\mathrm{Mod}_{R^e}^\omega}(R)\xrightarrow{f \mapsto f^*(u)} \Omega^\infty \Qoppa_R(R^e)
    \end{equation} 
    where the fiber is taken over the point $ u \in \Omega^\infty \Qoppa_R(R^e) $. 
    In other words, a point in $ \mathrm{Aut}_{\mathrm{He}(\mathrm{Mod}_{R^e}^\omega,\Qoppa_R)}((R^e,u)) $ is the data of an automorphism $a\in \mathrm{Aut}(R^e)$ together with a path $q $ from $ u $ to $ a^*u$ in $\Omega^{\infty}\Qoppa_R(R^e)$.  
    
    We now define a natural transformation $\Omega \Picp(-)\to \gmq(-)$. Let $i:\mathrm{gl}_1(-^e)\to \Omega^\infty (-)^e$ denote the natural inclusion map. There is then a commutative diagram
    \[
    \begin{tikzcd}[column sep=small, row sep=small]
        \Omega \Picp(-) \ar[rr] \ar[dd] \ar[rd] & & \mathrm{gl}_1(-^e) \ar[dd] \ar[rd, "i"] & &  \\
        & \gmq(-) \ar[rr,crossing over] & & \Omega^{\infty}(-)^e  \ar[dd] & \\
        1 \ar[rd, equals] \ar[rr]  & & \Omega^\infty (-)^{C_2} \ar[rd, equals] &  \\
        & 1 \ar[rr] & & \Omega^\infty (-)^{C_2} 
        \ar[from=2-2,to=4-2,crossing over]
    \end{tikzcd}
    \] and hence a natural transformation $\Omega \Picp(-)\to \gmq(-)$. To show that this map is an equivalence it is then enough to show that any $x\in \pi_0(R^e)$ such that $N^{C_2}x=x\sigma(x)=1$ in $\pi_0(R^{C_2})$ is a unit. Note that we then have that $x\sigma(x)=1$ in $\pi_0(R^e)$ after applying the inclusion map $R^{C_2}\to R^e$, hence $x$ is invertible as desired. 
\end{proof}
\begin{example}\label{ex:hermitian_units_fixpt_Mackey}
    Let $ R $ be a commutative ring with involution and regard $ R $ as a Poincaré ring $ \underline{R}^\lambda $ via Example \ref{ex:fixpt_Mackey_functor}. 
    Since the underlying ring and fixed points of $ R $ are discrete, by the exact sequence of Theorem \ref{theorem:loops_Poincare_pic_is_Gm_Qoppa}, $ \gmq\left(\underline{R}^\lambda \right) $ is discrete and it is isomorphic to the kernel of the map $ R^\times \to \left(R^{C_2}\right)^\times $ sending $ r \mapsto r \cdot \lambda(r) $. 
\end{example}
\begin{example}
    Let $ G $ be a finite group and consider $ \prod_{G} k $ for some field $ k $. 
    We may endow $ \prod_{G} k $ with the involution $ (a_g)_{g \in G} \mapsto \left(a_{g^{-1}}\right)_{g \in G} $, which we regard as a Poincaré ring via Example \ref{ex:fixpt_Mackey_functor}. 
    By Example \ref{ex:hermitian_units_fixpt_Mackey}, $ \gmq\left(\prod_{G} k\right) $ is isomorphic to the collection of functions $ a \colon G \to k^\times $ satisfying $ a_g^{-1} = a_{g^{-1}} $, considered as a group under pointwise multiplication. 
\end{example}
		
\subsection{The discrete case: nondegenerate hermitian line bundles}
\label{subsection:the_discrete_case:nondegenerate_hermitian_forms}
In this section we define and study the \textit{hermitian Picard group} of a ring $R$ with involution $\lambda:R\to R$ (defined in Definition~\ref{definition:hermitian_picard_group}). 
We will show in Corollary~\ref{cor:fausk_for_Poincare_rings} and Theorem~\ref{thm: Fausk for pnpic and schemes with good quotient} that this hermitian Picard group is closely connected to the Poincar\'e Picard group of Definition~\ref{definition: involutive picard space} studied in the previous subsection. 
When the involution on $R$ or $X$ is trivial, this invariant agrees with the group of isometry classes of \emph{discriminant bundles} \cite[p. 470]{Knus_book_Q_and_H_forms}; in general our construction agrees with the (absolute version of the) \emph{hermitian Picard group} of Reyes Sanchez--Verhaeghe--Verschoren\footnote{Reyes Sanchez--Verhaeghe--Verschoren define their invariant for associative algebras $ A $ over $ R $; our construction agrees with theirs for $R = \ZZ $ and $ A $ commutative.} \cite{MR1348271}. 
We show additionally that in the case that the involution on $X$ is trivial that there is a canonical isomorphism \[\mathrm{Pic}^\mathrm{h}(X,\mathrm{id}_X)\cong \mathrm{H}^1_{\mathrm{fppf}}(X;\mu_2)\] in Remark~\ref{remark:hermitian_picard_group_with_trivial_action_is_the_first_fppf_mu_2_cohomology group}. 

\begin{notation}~\label{notation: pic classical}
    Let $X$ be a scheme. We will use \[\mathrm{Pic}^{\mathrm{cl}}(X)\] to denote the Picard group of $\mathrm{Mod}_{\mathcal{O}_X}^\heartsuit$. 
\end{notation}

\begin{definition}
    \label{definition:sigma-dual_of_module}
    Let $ R $ be a discrete commutative ring with a $ C_2 $-action $ \lambda \colon R \to R $, and let $M$ be an $R$-module. We define the $\lambda$-\emph{dual} of $ M $ to be the $ R $-module  
    $$ M^\dag := \mathrm{Hom}_R \left(\lambda^* M, R\right)=(\lambda^*M)^\vee.$$ 
\end{definition}

\begin{remark}
    \label{remark:restriction_of_scalars_commutes_with_dual}
    Let $R$ be a discrete commutative ring and let $\lambda:R\rightarrow R$ be an isomorphism of commutative rings such that $\lambda\circ\lambda=\operatorname{id}_R$.  Then we have a canonical isomorphism
    \begin{align*}
        M^\dag =(\lambda^* M)^\vee & \simeq \lambda^* (M^\vee),\\ 
        f &\mapsto \lambda\circ f 
    \end{align*}
    of $R$-modules, see Lemma~\ref{lemma: duality identification} for a proof of this for $R$ a commutative ring spectrum. If $M$ is finitely generated projective, we thus have a canonical isomorphism of $R$-modules $$c:M \simeq (M^\vee)^\vee\simeq \left(M^\dag\right)^\dag$$ $$m\mapsto \lambda\circ\operatorname{eval}_m.$$
    If both $M$ and $N$ are finitely generated projective $R$-modules, then, using symmetric monoidality of $\lambda^*:\mathrm{Mod}_R\rightarrow \mathrm{Mod}_R$, the $\lambda$-dual satisfies $$(M \otimes_R N)^\dag\simeq\lambda^*(M\otimes_R N)^\vee\simeq\lambda^*(M^\vee \otimes_R N^\vee)\simeq M^\dag \otimes_R N^\dag.$$
\end{remark}

\begin{definition}
    \label{definition:sigma-hermitian_form}
    Let $ R $ be a discrete commutative ring with a $ C_2 $-action $ \lambda \colon R \to R $, and let $ I $ be a finitely generated projective $ R $-module. A \emph{nondegenerate $ \lambda $-hermitian form on $ I $} is an $ R $-linear isomorphism $ \varphi \colon I \xrightarrow{\sim} I^\dag $ such that the following diagram commutes 
    \[\begin{tikzcd}
        I \ar[r,"\varphi"] \ar[d,"c", "\simeq"'] & I^\dag \\
        (I^\dag)^\dag \ar[ur,"\varphi^\dag"'] & .
    \end{tikzcd}\]
    We will denote a $\lambda$-hermitian form on $I$ by a pair $(I,\varphi)$.
    
    Let $I$ and $J$ be two finitely generated projective $R$-modules with $\lambda$-hermitian forms $(I,\varphi)$ and $(J,\psi)$. A \emph{map of $\lambda$-hermitian forms}, $f:(I,\varphi)\rightarrow(J,\psi)$, is a map of $R$-modules $f:I\rightarrow J$ such that the following diagram commutes 
    \[\begin{tikzcd}
        I \ar[r,"\varphi"] \ar[d,"f"'] & I^\dag \\
        J \ar[r,"\psi"'] & J^\dag \ar[u, "f^\dag"']. 
    \end{tikzcd}\] We say $f$ is an isomorphism of $\lambda$-hermitian forms if its underlying $R$-module map is an isomorphism of $R$-modules.
\end{definition}

\begin{remark}
    \label{remark:hermitian_forms_as_fixed_points}
    Let $ R $ be a discrete commutative ring with a $ C_2 $-action $ \lambda \colon R \to R $, and let $I$ be a finitely generated projective $R$-module. We have an induced $C_2$-action on the $R$-module $\operatorname{Hom}_R(I,I^\dag)$ given by the assignment $\varphi\mapsto \varphi^\dag\circ c$. A $\lambda$-hermitian form on $I$ is a fixed point of this action which happens to be an isomorphism. Via the following isomorphisms of $R$-modules $$\mathrm{Hom}_R(I,I^\dag)\simeq \mathrm{Hom}_R(I,(\lambda^*I)^\vee)\simeq \mathrm{Hom}_R(I\otimes_R \lambda^*I,R)$$ this turns into the action $f(x,y)\mapsto \lambda(f(y,x))$. Thus, an alternative definition of a $\lambda$-hermitian form on $I$ is: a function $f:I\times I\rightarrow R$ which is nondegenerate and $R$-linear in the first variable, and satisfies $f(x,y)=\lambda(f(y,x))$ for all $x,y\in R$.
\end{remark}

The pairing 
\begin{align*}
    \left\langle -,-\right\rangle_\lambda: & R\times  R\rightarrow R\\
    & (r,s)\mapsto \lambda(s)r
\end{align*} 
provides a canonical $\lambda$-hermitian form $u:R\rightarrow R^\dagger$, using Remark \ref{remark:hermitian_forms_as_fixed_points}. Moreover, given two $\lambda$-hermitian forms over $R$, $(I,\varphi)$ and $(J,\psi)$, the tensor product $\otimes_R$ provides a new $\lambda$-hermitian form $(I\otimes_R J, \varphi \otimes_R \psi)$
\begin{align*}
    \varphi\otimes_R \psi: & I\otimes_R J \rightarrow (I\otimes_R J)^\dagger\\
    & i\otimes j\mapsto \left[x\otimes y \mapsto \varphi(i)(x)\cdot \psi(j)(y)\right].
\end{align*}  
Hence, the set of invertible $\lambda$-hermitian forms over $R$ forms a group under $\otimes_R$ with unit $(R,u)$.

\begin{definition}
    \label{definition:hermitian_picard_group}
    Let $R$ be a discrete commutative ring with a $C_2$-action $\lambda: R \rightarrow R$. The \emph{hermitian Picard group of} $(R,\lambda)$ is the group of isomorphism classes of invertible $\lambda$-hermitian forms over $R$ under tensor product, with unit $(R,u)$. We will denote the hermitian Picard group of $(R,\lambda)$ by $\mathrm{Pic}^\mathrm{h}(R,\lambda)$.
    
    If $X$ is a scheme with involution $\lambda$, we can similarly define the hermitian Picard group of $X$ to be the group of line bundles $\mathcal{L}$ on $X$ together with a $\lambda$-hermitian form $\phi:\mathcal{L}\to \mathcal{L}^\dagger:=\mathcal{H}om_{\mathcal{O}_X}(\lambda^*\mathcal{L},\mathcal{O}_X)$.
\end{definition}

\begin{example}
    \label{example:hermitian_picard_group_is_not_2-torsion-swap_action_on_fields}
    Let $R=\mathbb{C} \times \mathbb{C}$ with $\lambda:R\rightarrow R$ given by the swap action, i.e. $(x,y)\mapsto (y,x)$. Since $R$ is a semi-local ring, its Picard group is trivial. Thus any element $(I,\varphi)\in \mathrm{Pic}^\mathrm{h}(R,\lambda)$ is of the form $(R,\varphi)$, up to isomorphism. An $R$-module isomorphism $\varphi:R\rightarrow R^\dagger$ is of the form $(r\mapsto (s\mapsto ra\lambda(s))$, for some $a\in R^\times$, and the condition $\varphi=\varphi^\dagger \circ c$ translates to $a=\lambda(a)$. Thus $a\in (R^\times)^{C_2}$ is a unit and a fixed point of the $C_2$-action on $R$, i.e. $a=(v,v)$ with $v\in \mathbb{C}^\times$. Given two $\lambda$-hermitian forms $(I,\varphi)$ and $(J,\psi)$ which correspond to elements $a$ and $b$ in $(R^\times)^{C_2}$ respectively, the tensor product $(I\otimes_R J,\varphi\otimes_R \psi)$ corresponds to the product $ab$. An isomorphism between  $(I,\varphi)$ and $(J,\psi)$ is an $R$-module isomorphism $f:I\xrightarrow{\simeq} J$ such that $$f^\dagger\circ\psi\circ f=\varphi.$$ Since $f$ is given by multiplication of an element $x\in R^\times$, this condition translates to the equation $$x\lambda(x)b=a.$$ Let $a=(v,v)$, $b=(w,w)$ and $x=(r,s)$ with $v,w,r,s\in \mathbb{C}^\times$. Then this condition breaks down to the two equations $rsw=v$ and $rsv=w$, in particular $(rs)^2=1$, and thus $rs=\pm 1$. Therefore, $(R,\varphi)$ and $(R,\psi)$ are isomorphic if and only if $\varphi=\pm\psi$. We conclude $$\mathrm{Pic}^\mathrm{h}(R,\lambda)\simeq \mathbb{C}^\times/{\pm 1}\simeq \mathbb{C}^\times.$$
\end{example}

The above Example \ref{example:hermitian_picard_group_is_not_2-torsion-swap_action_on_fields} shows that the hermitian Picard group of a discrete commutative ring with a nontrivial $C_2$-action is not necessarily $2$-torsion. We will see now that the situation simplifies in case of a trivial $C_2$-action.

\begin{remark}
    \label{remark:id-hermitian_forms_are_symmetric_billinear_forms}
    Let $R$ be a discrete commutative ring equipped with the trivial $C_2$-action, i.e. $\lambda=\mathrm{id}_R$. In that case, for any $R$-module $M$, we have $M^\dagger=M^\vee$. Thus, by Remark \ref{remark:hermitian_forms_as_fixed_points}, hermitian forms over $(R,\mathrm{id}_R)$ are in bijection with nondegenerate symmetric bilinear forms over $R$. Let $(I,\varphi)$ be a hermitian form over $(R,\mathrm{id}_R)$ corresponding to a nondegenerate symmetric bilinear form $A$ on $I$. Then $(I,\varphi)$ is invertible if $I$ is invertible as a module over $R$ and $A\otimes B=\operatorname{id}_R$, for a nondegenerate symmetric biliniear form $B$ on $I^\vee$.
\end{remark}
		
\begin{remark}\label{remark:invertible_hermitian_forms_are_locally_fixed_points_of_units}
    Let $R$ be a discrete commutative ring with trivial $C_2$-action, and let $I$ be an invertible $R$-module. 
    Suppose given a (not necessarily symmetric) nondegenerate pairing $\tau:I\otimes_R I\xrightarrow{\sim} R$. 
    Let $\mathfrak{p}\subset R$ be a prime ideal. 
    Since $I$ is locally free of rank $1$, there exists a trivialization $t:I_\mathfrak{p}\xrightarrow{\sim} R_\mathfrak{p}$. 
    Then in these coordinates/locally at $ \mathfrak{p} $, $$ R_\mathfrak{p}\otimes_{R_\mathfrak{p}} R_\mathfrak{p} \xrightarrow{\sim } I_\mathfrak{p}\otimes_{R_\mathfrak{p}} I_\mathfrak{p}\xrightarrow{\tau_\mathfrak{p}} R_\mathfrak{p} $$ and the composite is given by $r\otimes s\mapsto s\cdot x_\mathfrak{p} \cdot r$ for some unit $x_\mathfrak{p}\in R_\mathfrak{p}^\times$. 
    Given a different choice of trivialization $ t'\colon I_\mathfrak{p}\xrightarrow{\sim} R_\mathfrak{p} $, $ t' \circ t^{-1} $ is multiplication by some $ y \in R_\mathfrak{p}^\times $, and the resulting form is represented by $ y^2 x_{\mathfrak{p}} $. 
    So the symmetry condition in Definition \ref{definition:sigma-hermitian_form} is vacuous and the assignment $ \tau \mapsto x_\mathfrak{p} $ is well-defined in $ R_{\mathfrak{p}}^\times/R_{\mathfrak{p}}^{\times 2} $. 
\end{remark}
		
\begin{proposition}
    \label{proposition:hermitian_picard_group_with_trivial_action_is_2-torsion}
    Let $R$ be a discrete commutative ring with trivial action. 
    Then $\mathrm{Pic}^\mathrm{h}(R)$ is $2$-torsion. 
\end{proposition}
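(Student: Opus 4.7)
The plan is to produce, for every $(I, \varphi) \in \mathrm{Pic}^\mathrm{h}(R)$, an explicit isomorphism $(I \otimes_R I, \varphi \otimes_R \varphi) \simeq (R, u)$ of hermitian forms. The natural trivializing candidate is the adjoint $\tilde\beta : I \otimes_R I \to R$ of $\varphi$, i.e.\ the map $a \otimes b \mapsto \varphi(a)(b)$. Since $\varphi$ is an isomorphism $I \simeq I^\vee$ and $I$ is invertible, $\tilde\beta$ is an isomorphism of $R$-modules; this can be verified on stalks, where $I$ trivializes and $\varphi$ reduces to multiplication by a unit, in the spirit of Remark~\ref{remark:invertible_hermitian_forms_are_locally_fixed_points_of_units}.

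The substantive step is to promote $\tilde\beta$ to a morphism of \emph{hermitian} forms. Translating into bilinear presentations via Remark~\ref{remark:hermitian_forms_as_fixed_points}, this reduces to the pointwise identity
$$\varphi(a)(c)\,\varphi(b)(d) \;=\; \varphi(a)(b)\,\varphi(c)(d), \qquad a,b,c,d \in I,$$
where the left side represents the tensor-square form $\varphi \otimes \varphi$ evaluated on $(a\otimes b, c\otimes d)$ and the right side represents $u \circ (\tilde\beta \otimes \tilde\beta)$ evaluated on the same inputs. Both expressions are $R$-linear in each of the four arguments, so the identity may be checked on stalks: there $I$ is free of rank one, $\varphi$ is multiplication by some unit $x$, and both sides reduce to $x^2 abcd$; the equality is then just commutativity of $R$.

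Once $\tilde\beta$ is upgraded to an isomorphism of hermitian forms, one concludes $2\,[(I,\varphi)] = [(I\otimes I,\varphi\otimes\varphi)] = [(R,u)] = 0$ in $\mathrm{Pic}^\mathrm{h}(R)$, giving the claimed 2-torsion. The main obstacle is bookkeeping rather than conceptual: one must ensure that the local identity assembles into a genuine global equality of $R$-linear maps $I^{\otimes 4} \to R$, in a way compatible with the canonical identification $(I \otimes I)^\vee \simeq I^\vee \otimes I^\vee$ tacitly used when writing $\varphi \otimes \varphi$. This is handled by the symmetric monoidality of $(-)^\vee$ restricted to invertible modules, after which the local computation above supplies the required global identity.
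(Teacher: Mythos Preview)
Your proof is correct and follows essentially the same approach as the paper: both use the trivialization $I\otimes_R I \to R$ induced by $\varphi$ (the paper writes it as $I\otimes_R I\xrightarrow{\mathrm{id}\otimes\varphi} I\otimes_R I^\vee\xrightarrow{\langle\cdot,\cdot\rangle} R$, which is your $\tilde\beta$ up to a harmless swap) and verify that it is an isomorphism of hermitian forms by a local computation. The paper simply says this ``can be checked locally as in Remark~\ref{remark:invertible_hermitian_forms_are_locally_fixed_points_of_units}'', whereas you have spelled out the pointwise identity $\varphi(a)(c)\,\varphi(b)(d) = \varphi(a)(b)\,\varphi(c)(d)$ and its reduction to commutativity on stalks.
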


\begin{proof}
    \label{proof:hermitian_picard_group_with_trivial_action_is_2-torsion}
    Let $(I,\varphi)$ be an invertible hermitian form over $(R,\mathrm{id}_R)$. Then the trivialization $I\otimes_R I\xrightarrow{\mathrm{id}_R\otimes \varphi} I\otimes_R I^\vee\xrightarrow{\left\langle \cdot, \cdot\right\rangle} R$ is an isomorphism of forms $(I\otimes_R I,\varphi\otimes_R \varphi)\simeq (R,u)$, which can be checked locally as in Remark \ref{remark:invertible_hermitian_forms_are_locally_fixed_points_of_units}. 
\end{proof}

Given a discrete commutative ring $R$ with trivial action, we have that the forgetful map $\mathrm{Pic}^\mathrm{h}(R,\mathrm{id})\to \mathrm{Pic}^{\mathrm{cl}}(R)$ factors through a forgetful map \begin{equation}~\label{eqn: pnpic forgetful map trivial action case}
    \mathrm{Pic}^\mathrm{h}(R,\mathrm{id})\rightarrow \mathrm{Pic}^{\mathrm{cl}}(R)[2]
\end{equation}
where the codomain is given by $2$-torsion in the classical Picard group of $R$. 
\begin{proposition}			\label{proposition:split_exact_sequence_for_hermitian_picard_group_with_trivial_action}
    Let $R$ be a discrete commutative ring with trivial action. 
    We have a split exact sequence $$0\rightarrow R^\times/(R^\times)^2\rightarrow\mathrm{Pic}^\mathrm{h}(R,\mathrm{id})\rightarrow \mathrm{Pic}^{\mathrm{cl}}(R)[2]\rightarrow 0$$ where the third map is (\ref{eqn: pnpic forgetful map trivial action case}).
\end{proposition}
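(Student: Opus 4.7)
The plan is to verify exactness of the proposed sequence term-by-term and then deduce splitness from the fact that all three groups are $2$-torsion, so that the sequence is really a short exact sequence of $\mathbb{F}_2$-vector spaces.

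First I would construct the map $R^\times/(R^\times)^2 \to \mathrm{Pic}^{\mathrm{h}}(R)$: a unit $u \in R^\times$ defines a hermitian form $\varphi_u \colon R \to R^\vee$ given by multiplication by $u$, which is automatically symmetric since $\lambda = \mathrm{id}$ (cf.\ Remark~\ref{remark:id-hermitian_forms_are_symmetric_billinear_forms}). Under tensor product, $(R,u) \otimes (R,v) \simeq (R,uv)$, so this is a group homomorphism. An isomorphism of forms $(R,u) \to (R,v)$ is multiplication by some $x \in R^\times$ satisfying $x^2 v = u$; hence $(R,u) \simeq (R,v)$ iff $u/v \in (R^\times)^2$, which gives both well-definedness and injectivity. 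Composition with the forgetful map to $\mathrm{Pic}^{\mathrm{cl}}(R)[2]$ is clearly zero.

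Next I would verify exactness in the middle and surjectivity. If $(I,\varphi) \in \mathrm{Pic}^{\mathrm{h}}(R)$ maps to the trivial class in $\mathrm{Pic}^{\mathrm{cl}}(R)$, choose a trivialization $I \simeq R$; then $\varphi$ is identified with multiplication by some $u \in R^\times$, exhibiting $(I,\varphi)$ in the image of the first map. For surjectivity, given $[I] \in \mathrm{Pic}^{\mathrm{cl}}(R)[2]$ choose an isomorphism $\psi \colon I \otimes_R I \xrightarrow{\sim} R$; this corresponds by adjunction to a nondegenerate pairing $\varphi \colon I \to I^\vee = I^\dagger$. By Remark~\ref{remark:invertible_hermitian_forms_are_locally_fixed_points_of_units}, the symmetry condition on such a pairing is vacuous Zariski-locally on $R$, so it holds globally, and $\varphi$ defines an element of $\mathrm{Pic}^{\mathrm{h}}(R)$ lifting $[I]$.

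Finally, I would deduce splitness: by Proposition~\ref{proposition:hermitian_picard_group_with_trivial_action_is_2-torsion}, $\mathrm{Pic}^{\mathrm{h}}(R)$ is $2$-torsion, and the outer two groups are manifestly $2$-torsion, so the sequence is a short exact sequence of $\mathbb{F}_2$-vector spaces, which automatically splits.

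The only nontrivial point is the surjectivity step: one must check that a \emph{nondegenerate} pairing on a rank-one locally free module is automatically symmetric, which is exactly the content of Remark~\ref{remark:invertible_hermitian_forms_are_locally_fixed_points_of_units} (the local units $x_{\mathfrak{p}}$ satisfy the trivial equation $x_{\mathfrak{p}} = x_{\mathfrak{p}}$). Once this is in hand, everything else is bookkeeping.
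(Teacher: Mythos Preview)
Your proof is correct and follows essentially the same approach as the paper: both identify the kernel by direct computation (forms on $R$ are units modulo squares), invoke Remark~\ref{remark:invertible_hermitian_forms_are_locally_fixed_points_of_units} for surjectivity, and deduce splitness from Proposition~\ref{proposition:hermitian_picard_group_with_trivial_action_is_2-torsion} via the $\mathbb{F}_2$-vector space argument. Your version is slightly more explicit about the construction of the left-hand map and the local-to-global passage for symmetry, but there is no real difference in strategy.
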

Note that the splitting in Proposition \ref{proposition:split_exact_sequence_for_hermitian_picard_group_with_trivial_action} is not necessarily canonical or functorial in $ R $. 
Reyes S\'anchez--Verhaeghe--Verschoren proved an analogous exact sequence for the hermitian Picard group when the involution $ \lambda $ is not necessarily trivial in \cite[Theorem 3.2]{MR1348271}. 
\begin{proof}
    [Proof of Proposition \ref{proposition:split_exact_sequence_for_hermitian_picard_group_with_trivial_action}]
    Suppose given an invertible $ R $-module $ I $ belonging to $ \mathrm{Pic}^{\mathrm{cl}}(R)[2] $. 
    The existence of a pairing $I\otimes I\simeq R$ is equivalent to the existence of an $ R $-linear isomorphism $I\simeq I^\vee$. 
    It follows from Remark \ref{remark:invertible_hermitian_forms_are_locally_fixed_points_of_units} that the sequence is exact at the right, i.e. (\ref{eqn: pnpic forgetful map trivial action case}) is surjective. 
    The kernel of the map (\ref{eqn: pnpic forgetful map trivial action case}) is given by hermitian forms on $R$, up to isomorphism. That is, isomorphisms $\varphi: R\simeq R^\vee$, up to isomorphisms of hermitian forms. Two forms $\varphi$, $\psi$ are isomorphic if there exists an isomorphism $f:R\xrightarrow{\simeq} R$ such that $f^\dagger\circ\psi\circ f=\varphi$. Each of these isomorphisms is given by multiplication by a unit of $R$. Let $a,b,c\in R^\times$ be the units corresponding to $f, \varphi$ and $\psi$. Then $\varphi$ is isomorphic to $\psi$ if and only if $a^2c=b$, i.e. $c$ and $b$ vary by a square of a unit. Thus, the kernel of (\ref{eqn: pnpic forgetful map trivial action case}) is given by $R^\times/(R^\times)^2$. Finally, by Proposition \ref{proposition:hermitian_picard_group_with_trivial_action_is_2-torsion}, this is a short exact sequence of vector spaces over $\mathbb{F}_2$ and thus there exists a splitting.
\end{proof}
		
		\begin{remark}
			\label{remark:hermitian_picard_group_with_trivial_action_is_the_first_fppf_mu_2_cohomology group}
			Recall that we have an exact sequence of fppf sheaves $$0\rightarrow \mu_2\rightarrow \mathbb{G}_m\xrightarrow{\times 2} \mathbb{G}_m\rightarrow 0.$$ over $\mathbb{Z}$ and a canonical isomorphism $$\mathrm{H}_\mathrm{fppf}^1(X,\mathbb{G}_m)\simeq \mathrm{Pic}^{\mathrm{cl}}(X),$$ for $X$ a scheme. 
			Let $R$ be a discrete commutative ring. Proposition \ref{proposition:split_exact_sequence_for_hermitian_picard_group_with_trivial_action} shows that we have the exact sequence $$\mathrm{H}_\mathrm{fppf}^{0}(R,\mathbb{G}_m)\xrightarrow{\times 2} \mathrm{H}_\mathrm{fppf}^{0}(R,\mathbb{G}_m)\rightarrow\mathrm{Pic}^\mathrm{h}(R,\mathrm{id})\rightarrow \mathrm{H}_\mathrm{fppf}^{1}(R,\mathbb{G}_m)\xrightarrow{\times 2} \mathrm{H}_\mathrm{fppf}^{1}(R,\mathbb{G}_m).$$ Hence, in the case of a trivial action on $R$, the hermitian Picard group is isomorphic to the first fppf cohomology group of $R$ with $\mu_2$-coefficients $$\mathrm{Pic}^\mathrm{h}(R,\mathrm{id})\simeq \mathrm{H}_\mathrm{fppf}^{1}(R,\mu_2).$$ If $R$ is moreover smooth over a Noetherian scheme of finite Krull dimension $S$ which is smooth over a Dedekind domain of mixed characteristic or over a field, by \cite[Proposition 10.9]{spitzweck-thesis}, this is isomorphic to motivic cohomology in degree $(1,1)$ with coefficients in $\mathbb{Z}/2$ $$\mathrm{Pic}^\mathrm{h}(R,\mathrm{id})\simeq \mathrm{H}^{1,1}(R,\mathbb{Z}/2).$$
		\end{remark}
		
		The split exact sequence of Proposition \ref{proposition:split_exact_sequence_for_hermitian_picard_group_with_trivial_action} can be generalized to the case of a nontrivial $C_2$-action on $R$. To prove that, let us first establish two basic properties of the hermitian Picard group.
		
		\begin{proposition}
			\label{proposition:hermitian_picard_group_commutes_with_finite_sums}
			The hermitian Picard group $\mathrm{Pic}^\mathrm{h}$ sends  finite products of rings with involution to direct products of abelian groups.
		\end{proposition}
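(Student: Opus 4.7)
The plan is to first reduce to the binary case and then use the canonical decomposition of module categories along a product of rings. The empty product corresponds to the zero ring, whose hermitian Picard group is trivial, so it suffices to handle finite nonempty products and by induction the binary case.

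Let $(R_1,\lambda_1)$ and $(R_2,\lambda_2)$ be discrete commutative rings with involution, set $R = R_1 \times R_2$ and $\lambda = \lambda_1 \times \lambda_2$. Write $e_1,e_2 \in R$ for the orthogonal central idempotents corresponding to the two factors. The first key step is to record that the involution $\lambda$ fixes each $e_i$ (this is immediate from the definition of the product involution), hence the symmetric monoidal equivalence
\[
\Phi \colon \Mod_R \xrightarrow{\sim} \Mod_{R_1} \times \Mod_{R_2}, \qquad M \mapsto (e_1 M, e_2 M),
\]
intertwines $\lambda^*$ with $\lambda_1^* \times \lambda_2^*$. Consequently the $\lambda$-dual of Definition~\ref{definition:sigma-dual_of_module} decomposes as $\Phi(M^\dag) \simeq (\Phi_1(M)^{\dag_1}, \Phi_2(M)^{\dag_2})$, and invertibility is detected componentwise.

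Next, I would unwind Definition~\ref{definition:sigma-hermitian_form} under $\Phi$: a $\lambda$-hermitian form $\varphi \colon I \xrightarrow{\sim} I^\dag$ corresponds under $\Phi$ to a pair $(\varphi_1,\varphi_2)$ of $R_i$-linear isomorphisms $\varphi_i \colon \Phi_i(I) \xrightarrow{\sim} \Phi_i(I)^{\dag_i}$; the canonical biduality map $c$ also decomposes as $c_1 \times c_2$ (being a natural transformation between symmetric monoidal functors that factor through $\Phi$), so the symmetry condition $\varphi = \varphi^\dag \circ c$ holds if and only if the corresponding condition holds on each factor. An entirely analogous decomposition applies to morphisms of hermitian forms and to the tensor product of hermitian forms, showing that $\Phi$ induces a group isomorphism $\Pic^{\mathrm{h}}(R,\lambda) \cong \Pic^{\mathrm{h}}(R_1,\lambda_1) \times \Pic^{\mathrm{h}}(R_2,\lambda_2)$.

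This argument is essentially bookkeeping; the only substantive input is that $\lambda$ fixes the central idempotents $e_i$, which is automatic for the product involution. The mild care required is to verify that the biduality isomorphism $c$ of Remark~\ref{remark:restriction_of_scalars_commutes_with_dual} respects the decomposition, which follows from naturality and from $\Phi$ being symmetric monoidal.
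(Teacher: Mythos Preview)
Your proposal is correct and follows essentially the same approach as the paper: both exploit the product decomposition $\Mod_{R_1 \times R_2} \simeq \Mod_{R_1} \times \Mod_{R_2}$ and check that the hermitian structure respects it. The paper phrases this slightly more concretely by writing down the comparison map via base change along the projections and then exhibiting an explicit inverse $(I,\varphi) \mapsto (I \times S, \varphi \times u_S)$, whereas you argue via the symmetric monoidal equivalence and naturality of $c$; the content is the same.
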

		
		\begin{proof}
			\label{proof:hermitian_picard_group_commutes_with_finite_sums}
			Given $(R,\lambda)$ and $(S,\sigma)$ commutative rings with involution, base change along the projections $ R \times S \to R $, $ R \times S \to S $ induce a homomorphism \[\mathrm{Pic}^{\mathrm{h}}(R\times S,\lambda\times \sigma)\to \mathrm{Pic}^\mathrm{h}(R,\lambda)\times \mathrm{Pic}^\mathrm{h}(S,\sigma)\,.\]
			It suffices to show that this map is an isomorphism. 
			The canonical $ \sigma $-invertible hermitian form $ (S,u_S) $ induces a map $ i_1 :\mathrm{Pic}^\mathrm{h}(R,\lambda) \to \mathrm{Pic}^\mathrm{h}(R \times S) $ given by $ (I,\varphi) \mapsto (I \times S, \varphi \times u_S) $. 
			We may similarly define a map $ i_2 : \mathrm{Pic}^\mathrm{h}(S,\sigma)\to \mathrm{Pic}^\mathrm{h}(R \times S ) $. 
			The map $ i_1 \oplus i_2 $ manifestly defines an inverse to the homomorphism above, hence we are done. 
		\end{proof}
		
		\begin{notation}~\label{notation:sigma-dual_fixed_points_of_picard_group}
			Given a discrete commutative ring $R$ and an isomorphism $\lambda:R\rightarrow R$, the $\lambda$-dual defines a $C_2$-action on the Picard group of $R$: 
			\begin{align*}
				(-)^\dag:\mathrm{Pic}^{\mathrm{cl}}(R) &\rightarrow \mathrm{Pic}^{\mathrm{cl}}(R)\\
				[I] &\mapsto [I^\dagger]=[\lambda^* I^\vee]=-[\lambda^* I].
			\end{align*} 
			Given an invertible $\lambda$-hermitian form $\varphi:I\xrightarrow{\simeq} I^\dagger$, the $R$-module $I$ is a fixed point in the Picard group of $R$ with respect to this action. Let us denote fixed points by this action by the superscript $(-)^{-\lambda}$. We then have that the forgetful map $\mathrm{Pic}^\mathrm{h}(R)\to \mathrm{Pic}^{\mathrm{cl}}(R)$ factors through a forgetful map \begin{equation}~\label{eqn: forgetful map pnpic to pic general case}
				\mathrm{Pic}^\mathrm{h}(R,\lambda)\rightarrow \mathrm{Pic}^{cl}(R)^{-\lambda}.
			\end{equation}
		\end{notation}
		
		An invertible $R$-module $I$ is an element of $\mathrm{Pic}^{\mathrm{cl}}(R)^{-\lambda}$ if and only if $[I]$ is in the kernel of the corresponding norm map: $N([I])=(\lambda +1)[I]=0$. This observation leads to the following generalization of Proposition \ref{proposition:split_exact_sequence_for_hermitian_picard_group_with_trivial_action}.
		
		\begin{proposition}
			\label{proposition:exact_sequence_for_hermitian_picard_group}
			Let $R$ be a discrete commutative ring with few zerodivisors and a $C_2$-action $\lambda:R\rightarrow R$. Then we have the following split exact sequence $$0 \rightarrow (R^\times)^{C_2}/N(R^\times)\rightarrow\mathrm{Pic}^\mathrm{h}(R,\lambda)\rightarrow \mathrm{Pic}^{\mathrm{cl}}(R)^{-\lambda}\rightarrow 0$$ where the second map is~\ref{eqn: forgetful map pnpic to pic general case}. 
		\end{proposition}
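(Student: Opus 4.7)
The proof will proceed in three steps modeled on the proof of Proposition~\ref{proposition:split_exact_sequence_for_hermitian_picard_group_with_trivial_action}: identification of the kernel, proof of surjectivity, and construction of a splitting. The role of the ``few zerodivisors'' hypothesis is to ensure that the total ring of fractions $Q(R)$ is semi-local and hence has trivial Picard group, so that every invertible $R$-module embeds in $Q(R)$ as an invertible fractional ideal. This allows us to write down a canonical hermitian form on any such module essentially tautologically.

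For the kernel, note that an element lying in the kernel of the forgetful map is represented by a hermitian form $(R,\varphi)$ on the trivial line bundle. By Remark~\ref{remark:hermitian_forms_as_fixed_points}, such a $\varphi$ corresponds under the identification $R^\dag\simeq R$ to multiplication by an element $a\in R^\times$, and the hermitian condition forces $a\in (R^\times)^{C_2}$. Two such forms with units $a,a'$ are isomorphic via multiplication by $x\in R^\times$ iff $a'=N(x)a$, identifying the kernel with $(R^\times)^{C_2}/N(R^\times)$, exactly as in Proposition~\ref{proposition:split_exact_sequence_for_hermitian_picard_group_with_trivial_action}.

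For surjectivity, given $[I]\in \mathrm{Pic}^{\mathrm{cl}}(R)^{-\lambda}$, use the embedding $I\hookrightarrow Q(R)$ to realize $I$ as an invertible fractional ideal. Then $\lambda^{*}I$ is identified with the fractional ideal $\lambda(I)\subseteq Q(R)$, and the product $I\cdot\lambda(I)$ is principal: $I\lambda(I)=\gamma R$ for some $\gamma\in Q(R)^\times$. The identity $\lambda\bigl(I\lambda(I)\bigr)=I\lambda(I)$ forces $\lambda(\gamma)/\gamma\in R^\times$ with norm one. After rescaling $\gamma$ by a suitable unit in $R^\times$ one may arrange $\lambda(\gamma)=\gamma$, and then the $R$-bilinear pairing
\[
\tau_I\colon I\otimes_R\lambda^{*}I \longrightarrow R,\qquad x\otimes y\longmapsto \gamma^{-1}\cdot x\lambda(y)
\]
is well-defined (its image lies in $R$ since $x\lambda(y)\in I\lambda(I)=\gamma R$), nondegenerate, and hermitian (the symmetry $\lambda(\tau_I(y,x))=\tau_I(x,y)$ uses $\gamma=\lambda(\gamma)$ and commutativity of $R$). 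This produces a lift $(I,\tau_I)$ of $[I]$. The assignment $[I]\mapsto [(I,\tau_I)]$ is moreover multiplicative: under $I\otimes_R J\xrightarrow{\sim} IJ$ the chosen generators satisfy $\gamma_{IJ}=\gamma_I\gamma_J$ (still $C_2$-fixed), and a direct computation shows $\tau_I\otimes \tau_J$ pulls back to $\tau_{IJ}$. This gives the required splitting.

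The main obstacle is the rescaling step: arranging $\lambda(\gamma)=\gamma$ amounts to showing that the obstruction class $[\lambda(\gamma)/\gamma]\in \hat H^{-1}(C_2,R^\times)=\ker N/\mathrm{im}(1-\lambda)$ vanishes for every $[I]\in\mathrm{Pic}^{\mathrm{cl}}(R)^{-\lambda}$. This is a Hilbert~90-type vanishing statement for $R^\times$ under the twisted $C_2$-action, and is precisely the point at which the ``few zerodivisors'' hypothesis enters essentially, via an explicit analysis of units in $R$ exploiting the semi-local structure of $Q(R)$ (so that the relevant norm equation in $Q(R)^\times$ admits a solution that can be pushed back into $R^\times$).
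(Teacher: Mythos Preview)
Your approach is exactly the paper's: realize $I$ as a fractional ideal using the ``few zerodivisors'' hypothesis, then write down the multiplication pairing. Your treatment is in fact more careful than the paper's, since you correctly isolate the point that the pairing $\gamma^{-1}x\lambda(y)$ is $\lambda$-hermitian only when $\lambda(\gamma)=\gamma$; the paper simply writes the pairing $i\otimes j\mapsto \lambda(j)i$ and asserts it lands in $R$ and is nondegenerate, without addressing this.

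The problem is that your resolution of this obstruction is not a proof. You assert that ``after rescaling $\gamma$ by a suitable unit in $R^\times$ one may arrange $\lambda(\gamma)=\gamma$,'' and in the final paragraph you say the vanishing of $[\lambda(\gamma)/\gamma]\in\hat H^{-1}(C_2,R^\times)$ follows from ``few zerodivisors'' via ``an explicit analysis of units \ldots\ exploiting the semi-local structure of $Q(R)$.'' But no such analysis is given, and the hypothesis does not do what you claim. The role of ``few zerodivisors'' in the paper is solely to ensure $\mathrm{Pic}^{\mathrm{cl}}(R)\cong \mathrm{Cl}(R)$, i.e.\ that every invertible module is a fractional ideal; it says nothing about $\hat H^{-1}(C_2,R^\times)$. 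Note also that solving $v/\lambda(v)=\lambda(\gamma)/\gamma$ in $Q(R)^\times$ is trivial (take $v=\gamma^{-1}$) and does not use semi-locality; what you actually need is a solution in $R^\times$, and you give no mechanism for ``pushing back.'' So as written the surjectivity step, and hence the splitting, is unproven. The paper's proof is terser and does not acknowledge this issue, so you have not missed an idea present there; rather, you have correctly located a subtlety and then not resolved it.
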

		
		\begin{proof}
			\label{proof:exact_sequence_for_hermetian_picard_group}
			It follows from Remark \ref{remark:invertible_hermitian_forms_are_locally_fixed_points_of_units} that the kernel of the forgetful map $\mathrm{Pic}^\mathrm{h}(R,\lambda)\rightarrow \mathrm{Pic}^{\mathrm{cl}}(R)^{-\lambda}$ is given by $(R^\times)^{C_2}/N(R^\times)$. It remains to show that any invertible module $I$ over $R$ has a nondegenerate $\lambda$-hermitian form. Since $R$ has few zerodivisors, the Picard group of $R$ is isomorphic to the ideal class group of $R$, $\mathrm{Cl}(R)\xrightarrow{\sim} \mathrm{Pic}^{\mathrm{cl}}(R)$; see \cite[Exercise 2.5.15]{elliott2019rings}. Thus, we may assume that $I$ is a fractional ideal of $R$, which naturally comes with the pairing $I\otimes_R \lambda^* I\rightarrow R$, $i\otimes_R j\mapsto \lambda(j)i$. This pairing is nondegenerate, by the assumption that $I\in \mathrm{Pic}^{\mathrm{cl}}(R)^{-\lambda}$. The assignment of this pairing provides a section $\mathrm{Pic}^{\mathrm{cl}}(R)^{-\lambda}\rightarrow \mathrm{Pic}^\mathrm{h}(R,\lambda)$ and thus proves the claim.
		\end{proof}
		
\begin{example}
    \label{example:hermitian_picard_group_does_not_map_to_2-torsion}
    Let $R=\mathbb{Z}[\frac{1+\sqrt{-23}}{2}]$ with the action $\lambda:R\rightarrow R$ given by complex conjugation. The fixed points of $R$ are given by integers, thus $(R^\times)^{C_2}/N(R^\times)\simeq \mathbb{Z}/2$. The Picard group $\mathrm{Pic}^{\mathrm{cl}}(R)\simeq \mathbb{Z}/3$ of $R$ is generated by the ideal $I=(2,\frac{1+\sqrt{-23}}{2})$. We have an isomorphism $\lambda^* I\xrightarrow{\sim} (2,\frac{1-\sqrt{-23}}{2})$, $x\mapsto \lambda(x)$. Since $(2,\frac{1+\sqrt{-23}}{2})(2,\frac{1-\sqrt{-23}}{2})=(2)$ is a principal ideal, we know $[\lambda^*I]=-[I]$ in $\mathrm{Pic}^{\mathrm{cl}}(R)$.  Thus the $C_2$-action on the Picard group induced by the $\lambda$-dual is given by the identity. Therefore, $\mathrm{Pic}^{\mathrm{cl}}(R)^{-\lambda}\simeq \mathrm{Pic}^{\mathrm{cl}}(R)\simeq \mathbb{Z}/3$. Thus, by Proposition \ref{proposition:exact_sequence_for_hermitian_picard_group}, we have $$\mathrm{Pic}^\mathrm{h}\left(\mathbb{Z}\left[\frac{1+\sqrt{-23}}{2}\right],\lambda\right)\cong \mathbb{Z}/2\oplus\mathbb{Z}/3.$$ This example shows that the forgetful map $\mathrm{Pic}^\mathrm{h}(R)\rightarrow \mathrm{Pic}^{\mathrm{cl}}(R)$ does generally not land in the $2$-torsion part of the Picard group of $R$.
\end{example}
		
\begin{remark}
    \label{remark:5-term-exact-sequence-for-hermitian_picard_group}
    Let $R$ be a discrete commutative ring with few zerodivisors with a $C_2$-action $\lambda:R\rightarrow R$. It follows from Proposition \ref{proposition:exact_sequence_for_hermitian_picard_group} that we have an exact sequence $$0\rightarrow (R^\times)^{-\lambda}\rightarrow R^\times \xrightarrow{N_\lambda}( R^\times)^{C_2}\rightarrow\mathrm{Pic}^\mathrm{h}(R,\lambda)\rightarrow \mathrm{Pic}^\mathrm{cl}(R)\xrightarrow{N_\lambda} \mathrm{im}(N_\lambda)\rightarrow 0.$$
\end{remark}
		
\subsection{Poincaré Picard group of schemes with involution}
\label{subsection:poincare_picard_group_inv_scheme}
Let $ X $ be a scheme with an involution $ \lambda $; by \S\ref{subsection:Poincare_structures_schemes_involution} and Definition~\ref{definition:poincare_picard_space}, we can take the \emph{Poincaré Picard space} of $ (X,\lambda) $. 
There is a forgetful map from the Poincaré Picard space of $ (X,\lambda) $ to the Picard space of $ X $ (see Definition \ref{defn:pnpic_to_pic_underlying}). 
However, ($ \pi_0 $ of) the Picard space is \emph{not} the ordinary Picard group in the sense of \cite{stacks}; it comprises invertible objects in the \emph{derived category} of $ X $. 
Instead, the inclusion of the heart of the standard t-structure on $ \mathrm{Mod}_{\mathcal{O}_X} $ induces an injective homomorphism from the ordinary Picard group of $ X $ to the Picard group of $ \mathrm{Mod}_{\mathcal{O}_X} $. 
This map is not an isomorphism in general; $ \mathcal{O}_X[\pm 1] $ are invertible in $ \mathrm{Mod}_{\mathcal{O}_X} $ but not discrete for nontrivial $ X $. 
A theorem of Fausk says that these cohomological shifts are essentially the only reason that the inclusion $ \mathrm{Pic}^{\mathrm{cl}}(X) \to \mathrm{Pic}(X) $ fails to be an isomorphism \cite{MR1966659}. 
In this section, we prove an analogous result for the Poincaré Picard group of a scheme with involution. 
In other words, we exhibit an inclusion from the \emph{hermitian Picard group} of $ X $ to the Poincaré Picard group and completely characterize its cokernel (Theorem \ref{thm: Fausk for pnpic and schemes with good quotient}).

\begin{definition}
    \label{definition: involutive picard space}
    Let $(X,\lambda, Y,p)$ be a scheme with good quotient. We then define the \textit{involutive Picard space} $\mathcal{P}\mathrm{ic}(X,\lambda)$ to be the subgroupoid of $\mathcal{P}\mathrm{ic}^\mathrm{p}(X,\lambda, Y, p)$ spanned by objects $(\mathcal{L},q)$ such that the underlying line bundle $\mathcal{L}$ lies in the heart $\mathrm{Mod}_{\mathcal{O}_X}^{\heartsuit}$ of the standard t-structure. 
    Define the \textit{involutive Picard group} to be $\mathrm{Pic}(X,\lambda):=\pi_0(\mathcal{P}\mathrm{ic}(X,\lambda))$. 
\end{definition}
\begin{remark}
    In principle, the space $\mathcal{P}\mathrm{ic}(X,\lambda)$ depends on the choice of good quotient $p:X\to Y$, and so it might appear to be a strange choice to drop these things from the notation. By \cite[Remark 4.20]{azumaya_involution} the scheme $Y$ and map $p$ are uniquely determined up to isomorphism by $(X,\lambda)$ as soon as one exists. 
    The reader interested in extending such definitions to more general objects (cf. \ref{rmk:ringed_topoi_with_involution}) may wish to use different notation. 
\end{remark}
\begin{remark}~\label{remark: categorical description of discrete poincare objects}
    Let $(X,\lambda, Y, p)$ be a scheme with good quotient and associated commutative algebra in Poincar\'e $\infty$-categories $\mathrm{Mod}^\mathrm{p}_{\underline{\mathcal{O}}}$ of Construction~\ref{cons:C2_mod_over_sheaf_of_Green_func}. Define $\mathrm{Pn}(\mathrm{Mod}^{\mathrm{p},\heartsuit}_{\underline{\mathcal{O}}})$ to be the pullback 
    \[
    \begin{tikzcd}
        \mathrm{Pn}(\mathrm{Mod}^{\mathrm{p},\heartsuit}_{\underline{\mathcal{O}}})\ar[d] \ar[r] & \mathrm{Pn}(\mathrm{Mod}^\mathrm{p}_{\underline{\mathcal{O}}})\ar[d] \\
        \mathrm{Mod}_{\mathcal{O}_X}^{\heartsuit} \ar[r] & \mathrm{Mod}_{\mathcal{O}_X}
    \end{tikzcd}
    \] in the $\infty$-category of symmetric monoidal $\infty$-categories. Since the bottom map is fully faithful, the top map will be as well, and so upon taking the Picard space we get that $\mathcal{P}\mathrm{ic}(\mathrm{Pn}(\mathrm{Mod}^{\mathrm{p},\heartsuit}_{\underline{\mathcal{O}}}))$ is a full subgroupoid of $\mathcal{P}\mathrm{ic}^\mathrm{p}(X,\lambda, Y, p)$ spanned by objects $(\mathcal{L},q)$ for which the underlying $\mathcal{O}_X$-module is in the heart. In other words, $\mathcal{P}\mathrm{ic}(\mathrm{Pn}(\mathrm{Mod}^{\mathrm{p},\heartsuit}_{\underline{\mathcal{O}}}))\simeq \mathcal{P}\mathrm{ic}(X,\lambda)$ of Definition~\ref{definition: involutive picard space}. This shows that $\mathcal{P}\mathrm{ic}(X,\lambda)$ naturally admits a lift to $\mathrm{Sp}_{\geq 0}$ which we will denote by $\mathfrak{pic}(X,\lambda)$. Note also that the Picard spectrum functor commutes with limits, and so we get a pullback 
    \[
    \begin{tikzcd}
        \mathfrak{pic}(X,\lambda) \ar[r] \ar[d] & \mathfrak{pic}^\mathrm{p}(X,\lambda, Y, p)\ar[d]\\
        \mathfrak{pic}(\mathrm{Mod}_{\mathcal{O}_X}^\heartsuit) \ar[r] & \mathfrak{pic}(X)
    \end{tikzcd}
    \]
    of connective spectra. Taking $\pi_0$ and noting that the bottom map induces the identity map $\pi_1(\mathfrak{pic}(\mathrm{Mod}_{\mathcal{O}_X}^\heartsuit))\cong\mathrm{H}^0(X;\mathbb{G}_m)\cong \pi_1(\mathfrak{pic}(X))$, we get a pullback
    \[
    \begin{tikzcd}
        \mathrm{Pic}(X,\lambda) \ar[r] \ar[d] & \mathrm{Pic}^\mathrm{p}(X,\lambda, Y, p) \ar[d] \\
        \mathrm{Pic}^{\mathrm{cl}}(X) \ar[r] & \mathrm{Pic}(X)
    \end{tikzcd}
    \]
    in the category of abelian groups.
\end{remark}

\begin{observation}
    \label{obs:pnpic_gs_symmetric_agree_sometimes}
    Let $R$ be a discrete commutative ring with a $C_2$-action $\lambda:R\rightarrow R$, endowed with the genuine symmetric Poincaré structure of Example \ref{ex:fixpt_Mackey_functor}. 
    The canonical map $ \underline{R} \to R^s $ from $ \underline{R} $ to its Borel completion induces a map $ \mathrm{Pic}^\mathrm{p}(\underline{R})\to\mathrm{Pic}^\mathrm{p}(R^\mathrm{s}) $. 
    Since $ \underline{R} \to R^s $ induces an equivalence on Borel $ \EE_\infty $-algebras with $ C_2 $-action, their underlying stable $ \infty $-categories with duality are equivalent, hence (see Definition \ref{defn:pnpic_to_pic_underlying}) there is a commutative diagram
    \begin{equation}\label{eq:pnpic_gs_to_borel}
        \begin{tikzcd}
            \mathrm{Pic}^\mathrm{p}(\underline{R})\ar[r] \ar[d] & \mathrm{Pic}^\mathrm{p}(R^s) \ar[d] \\
            \Pic(R^e) \ar[r,equals] & \Pic(R^e) \,.
        \end{tikzcd}
    \end{equation}
    It follows from \cite[Remark 4.2.21]{CDHHLMNNSI} that given a discrete invertible $ R $-module $ I \in \Pic^{\mathrm{cl}}(R) $, the fiber of the right hand vertical map in (\ref{eq:pnpic_gs_to_borel}) over $ I $ is given by $\mathrm{Hom}_R(I\otimes_R \lambda^*I,R)^{C_2}$. 
    By Remark \ref{remark:hermitian_forms_as_fixed_points}, the fibers of the vertical maps in (\ref{eq:pnpic_gs_to_borel}) over $ I $ are equivalent. Consequently pulling back along the inclusion $\mathrm{Pic}^{\mathrm{cl}}(R^e)\to \mathrm{Pic}(R^e)$ induces an equivalence \[\mathrm{Pic}(R,\lambda)\cong \mathrm{Pic}^{\mathrm{cl}}(R^e)\fiberproduct_{\mathrm{Pic}(R^e)}\mathrm{Pic}^\mathrm{p}(R^s)\] of abelian groups.
\end{observation}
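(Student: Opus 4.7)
The plan is to verify the commutative square (\ref{eq:pnpic_gs_to_borel}) and to show that its two vertical maps have the same set-theoretic fibers over each class in $\mathrm{Pic}^{\mathrm{cl}}(R^e) \subseteq \Pic(R^e)$; the asserted equivalence will then follow immediately from the defining pullback description of $\mathrm{Pic}(R,\lambda)$ given in Remark \ref{remark: categorical description of discrete poincare objects}, applied to $\Spec R$ regarded as an affine $C_2$-scheme with good quotient via Observation \ref{obs:fixpt_Mackey_functor_as_affine_C2_scheme}.

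To obtain the square (\ref{eq:pnpic_gs_to_borel}), I would apply $\mathrm{Mod}^\mathrm{p}(-)$ to the canonical comparison map $\underline{R} \to R^s$ in $\CAlgp$ (using Theorem \ref{thm:calgp_to_poincare_cat}) and compose with the forgetful transformation $\Picp \Rightarrow \mathfrak{pic} \circ U$ of Corollary \ref{cor:pnpic_to_pic_underlying}. Because $\underline{R} \to R^s$ is the identity on underlying Borel $\EE_\infty$-algebras (both sides have underlying spectrum $R$ with the prescribed involution), the induced symmetric monoidal functor on underlying $R$-linear stable $\infty$-categories is the identity on $\mathrm{Mod}_{R^e}^\omega$, producing the identity on $\Pic(R^e)$ along the lower horizontal arrow.

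The key computation is the identification of the $\pi_0$-fiber of each vertical map at a class $I \in \mathrm{Pic}^{\mathrm{cl}}(R^e)$. On the right, the Poincaré structure $\Qoppa_{R^s}$ is symmetric in the sense of \cite[Definition 1.2.11]{CDHHLMNNSI} (Example \ref{example:tate_poincare_structure}), so by Lemma \ref{lemma: duality identification} one has $\Omega^\infty \Qoppa_{R^s}(I) \simeq \mathrm{map}_R(I \otimes_R \lambda^*I, R)^{\mathrm{h}C_2}$; \cite[Remark 4.2.21]{CDHHLMNNSI} then cuts out the nondegenerate subspace as the fiber over $I$. Since $R$ and $I$ are discrete, this $C_2$-spectrum is itself discrete and its homotopy fixed points agree with the strict $C_2$-fixed points $\mathrm{Hom}_R(I \otimes_R \lambda^*I, R)^{C_2}$ of Remark \ref{remark:hermitian_forms_as_fixed_points}. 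On the left, the cartesian square of Theorem \ref{thm: characterization of quadratic functors on a cat} together with Example \ref{example:genuine_symmetric_poincare_structure} expresses $\Qoppa_{\underline{R}}(I)$ as the pullback of $\mathrm{map}_R(I, \tau_{\geq 0}R^{\mathrm{t}C_2}) \to \mathrm{map}_R(I, R^{\mathrm{t}C_2}) \leftarrow \mathrm{map}_R(I \otimes_R \lambda^*I, R)^{\mathrm{h}C_2}$; since $I$ is discrete and flat, $\mathrm{map}_R(I, -)$ commutes with connective covers in a way that renders the leftmost map an isomorphism on $\pi_0$, and a diagram chase shows the induced map $\Qoppa_{\underline{R}}(I) \to \Qoppa_{R^s}(I)$ is an isomorphism on $\pi_0$.

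Combining these identifications, the map $\mathrm{Pic}^\mathrm{p}(\underline{R}) \to \mathrm{Pic}^\mathrm{p}(R^s)$ restricts to a bijection on preimages of $\mathrm{Pic}^{\mathrm{cl}}(R^e)$, and the claimed equality follows by pulling back along $\mathrm{Pic}^{\mathrm{cl}}(R^e) \hookrightarrow \Pic(R^e)$. The hard part is the fiber comparison on the left-hand side: one must unwind the cartesian decomposition of quadratic functors in Theorem \ref{thm: characterization of quadratic functors on a cat} to confirm that the connective-cover construction built into $\underline{R}$ is invisible to $\pi_0$ of the form space on a discrete line bundle.
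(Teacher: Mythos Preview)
Your proposal is correct and follows essentially the same approach as the paper's inline justification: set up the square via functoriality of $\Mod^\mathrm{p}$, compare the fibers over a discrete line bundle $I$, and invoke the pullback description of $\mathrm{Pic}(R,\lambda)$ from Remark~\ref{remark: categorical description of discrete poincare objects}. The paper handles the fiber comparison by citing \cite[Remark 4.2.21]{CDHHLMNNSI} and Remark~\ref{remark:hermitian_forms_as_fixed_points} directly, whereas you unpack the same content by applying the pullback decomposition of quadratic functors (Theorem~\ref{thm: characterization of quadratic functors on a cat}) to see explicitly that $\Qoppa_{\underline{R}}(I)\to\Qoppa_{R^s}(I)$ is a $\pi_0$-isomorphism for $I$ discrete projective; this is exactly what the cited remark in \cite{CDHHLMNNSI} encodes, so your version is a more self-contained expansion of the same argument rather than a different one.
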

We have in fact already seen the involutive Picard group.
\begin{proposition}\label{prop:hermitian_pic_agrees_with_pnpic_on_heart}
    There is a natural equivalence
    \[\mathrm{Pic}(X,\lambda)\cong \mathrm{Pic}^\mathrm{h}(X,\lambda)\] 
    of functors from $ \mathrm{qSch}^{C_2,\op} $ (Definition \ref{defn:Category of good quotients}) to abelian groups, where the latter is the hermitian Picard group of Definition~\ref{definition:hermitian_picard_group}.
\end{proposition}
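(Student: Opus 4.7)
The plan is to construct a natural comparison map and establish that it is an isomorphism by Zariski descent on the quotient $Y$, using Observation~\ref{obs:pnpic_gs_symmetric_agree_sometimes} to handle the affine case. First I would construct a natural transformation $\iota \colon \mathrm{Pic}^\mathrm{h}(-,\lambda) \Rightarrow \mathrm{Pic}(-,\lambda)$ of functors on $\mathrm{qSch}^{C_2,\op}$ as follows. Given a hermitian line bundle $(\mathcal{L}, \varphi)$ on $(X,\lambda,Y,p)$, Remark~\ref{remark:hermitian_forms_as_fixed_points} expresses $\varphi$ equivalently as a nondegenerate symmetric pairing $b \colon \mathcal{L} \otimes_{\mathcal{O}_X} \lambda^*\mathcal{L} \to \mathcal{O}_X$. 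Using the description of Poincaré objects of $\Mod^\mathrm{p}_{\underline{\mathcal{O}}}$ following Observation~\ref{obs:globalize_relative_norm} and the identification of $\underline{\mathcal{O}}$ with the genuine symmetric Poincaré ring of Example~\ref{ex:fixpt_Mackey_functor} Zariski-locally, the form $b$ lifts canonically to a Poincaré form $q_\varphi \in \Omega^\infty\Qoppa_{\underline{\mathcal{O}}}(\mathcal{L})$, and I set $\iota(\mathcal{L},\varphi) := (\mathcal{L},q_\varphi)$. Functoriality in morphisms of $\mathrm{qSch}^{C_2}$ is inherited from that of $\underline{\mathcal{O}}$ (Lemma~\ref{lemma:identify_structure_sheaf_of_Green_func}) together with the standard naturality of pullback for line bundles with pairings.

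Next I would reduce bijectivity of $\iota$ to the affine case. The pullback description of Remark~\ref{remark: categorical description of discrete poincare objects} exhibits $\mathcal{P}\mathrm{ic}(X,\lambda)$ as a limit of Zariski sheaves on $Y$: $\mathfrak{pic}^\mathrm{p}(X,\lambda,Y,p)$ satisfies descent by Proposition~\ref{prop:Poincare_modules_as_etale_sheaf}\ref{prop:Poincare_modules_as_etale_sheaf_inv_scheme}, and the Picard spaces of the derived category and of its heart are classically étale sheaves; the functor $\mathrm{Pic}^\mathrm{h}(-,\lambda)$ is manifestly a Zariski sheaf. It therefore suffices to verify bijectivity on each affine open of $Y$. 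When $\Spec A \hookrightarrow Y$ is affine and $\Spec B = X\times_Y\Spec A$, the identification $\underline{\mathcal{O}}(\Spec A)\simeq\underline{B}^\lambda$ together with Observation~\ref{obs:pnpic_gs_symmetric_agree_sometimes} yields
\[
\mathrm{Pic}(\Spec B,\lambda) \cong \mathrm{Pic}^\mathrm{cl}(B) \fiberproduct_{\Pic(B)} \mathrm{Pic}^\mathrm{p}(B^s),
\]
and the identification of the fiber over each $I \in \mathrm{Pic}^\mathrm{cl}(B)$ with $\mathrm{Hom}_B(I\otimes_B\lambda^*I,B)^{C_2}$ matches, by Remark~\ref{remark:hermitian_forms_as_fixed_points}, the set of $\lambda$-hermitian forms on $I$. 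Taking $\pi_0$ and comparing fiber-by-fiber along the underlying-module map recovers $\mathrm{Pic}^\mathrm{h}(B,\lambda)$, and by construction the resulting bijection is induced by $\iota$.

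The main obstacle is conceptual rather than computational: one must verify that for a \emph{discrete} invertible module $\mathcal{L}$, lifting a symmetric bilinear form $b$ to a full Poincaré form in the genuine symmetric structure introduces no additional moduli beyond the hermitian datum. This is precisely the content of the fiber identification in Observation~\ref{obs:pnpic_gs_symmetric_agree_sometimes}, which in turn rests on the connective-cover characterization of $\underline{B}^{\varphi C_2}$ as $\tau_{\geq 0}(B^{\mathrm{t}C_2})$ from Example~\ref{example:genuine_symmetric_poincare_structure}. Once this affine comparison is secured, the descent reduction and the verification of naturality in $\mathrm{qSch}^{C_2,\op}$ are formal.
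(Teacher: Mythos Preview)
Your overall strategy is sound and uses the same core ingredient as the paper (that on discrete modules the genuine symmetric and symmetric Poincar\'e structures coincide, as in Observation~\ref{obs:pnpic_gs_symmetric_agree_sometimes}). However, there is a gap in your descent step: the assertion that ``the functor $\mathrm{Pic}^\mathrm{h}(-,\lambda)$ is manifestly a Zariski sheaf'' is false as stated. The hermitian Picard \emph{group} is a group of isomorphism classes and does not satisfy Zariski descent for the same reason that the ordinary Picard group does not. To salvage your argument you must work at the level of groupoids: define a hermitian Picard \emph{groupoid} (pairs $(\mathcal{L},\varphi)$ with isomorphisms of hermitian forms as morphisms), observe that this is a Zariski stack, exhibit your comparison $\iota$ as a map of stacks of Picard groupoids, and only then take $\pi_0$ at the end. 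Without this lift, ``bijectivity on each affine open'' does not imply global bijectivity.

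The paper's proof sidesteps this issue by never isolating $\mathrm{Pic}^\mathrm{h}$ as a separate sheaf. Instead it works one categorical level up: using an \'etale hypercover of $Y$ and the map to the Borel completion $\Qoppa^{gs}\to\Qoppa^s$, it produces a global comparison of \emph{spaces}
\[
\mathrm{Pn}\bigl(\mathrm{Mod}^{\mathrm{p},\heartsuit}_{\underline{\mathcal{O}}}\bigr)\longrightarrow \bigl(\mathrm{Mod}_{\mathcal{O}_X}^{\omega,\heartsuit,\simeq}\bigr)^{\mathrm{h}C_2},
\]
shows this is an equivalence (both sides are limits over the hypercover, and termwise the map is an equivalence on the heart by the same discrete-module argument you invoke), and only afterwards identifies the invertible objects of the right-hand side with the hermitian Picard groupoid. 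This buys exactly the groupoid-level comparison your argument needs; your Zariski approach would work equally well once you make that correction.
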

\begin{proof}
    Let $\mathrm{Spec}(S)^\bullet\to Y$ be an \'etale hypercover by affine schemes which exists by the assumption that $Y$ is quasicompact and quasiseparated. By Lemma~\ref{lem: c2 etale topology=etale topology on the base} this extends to a $C_2$-\'etale hypercovering $\mathrm{Spec}(R^\bullet)^{gs}\to (X,\lambda, Y,p)$. 
    By construction and since $p$ is affine, each term in this hypercover will be a genuine symmetric Poincar\'e ring. We have that $(\mathrm{Mod}_{\mathcal{O}_X},\Qoppa_{\underline{\mathcal{O}}})$ is a $C_2$-\'etale hypersheaf by Proposition~\ref{prop:Poincare_modules_as_etale_sheaf}, and since $\mathrm{Pn}(-)$ is corepresentable it will also commute with limits. Therefore we have a map \[\Pn(\mathrm{Mod}_{\mathcal{O}_X}^\omega, \Qoppa)\simeq \lim \Pn(\mathrm{Mod}_{R^\bullet}^\omega, \Qoppa^{gs})\to \lim \Pn(\mathrm{Mod}_{R^\bullet}^\omega, \Qoppa^{s})\simeq \lim (\mathrm{Mod}_{R^\bullet}^{\omega, \simeq})^{\mathrm{h}C_2}\simeq (\mathrm{Mod}_{\mathcal{O}_X}^{\omega, \simeq})^{\mathrm{h}C_2}\] where the $C_2$-action on $\mathrm{Mod}_{\mathcal{O}_X}^{\omega, \simeq}$ is induced by the duality associated to $ \Qoppa $. 
    The functors $\mathrm{Mod}_{\mathcal{O}_X}^\omega\to \mathrm{Mod}_{R}$ are $t$-exact for the canonical $t$-structure on $\mathrm{Mod}_{\mathcal{O}_X}$ by definition, so the composite $\mathrm{Pn}\left(\mathrm{Mod}^{\mathrm{p},\heartsuit}_{\underline{\mathcal{O}}}\right)\to \left(\mathrm{Mod}_{\mathcal{O}_X}^{\omega,\heartsuit,\simeq}\right)^{\mathrm{h}C_2}$ is an equivalence. 
    The ordinary line bundles in $\Pn(\mathrm{Mod}_{\mathcal{O}_X}, \Qoppa)$ consist exactly of pairs $ (\mathcal{L},q) $ where $\mathcal{L}$ is a line bundle on $X$ and $ q $ is a nondegenerate $ \lambda $-hermitian pairing $ \mathcal{L} \otimes_{\mathcal{O}_X} \lambda^* \mathcal{L} \to \mathcal{O}_X $, up to isomorphisms which respect the $ \lambda $-hermitian pairing. 
\end{proof}
Let $R$ be a commutative discrete ring with involution $\lambda\colon R\to R$. We will now explain the precise relationship between the three invariants $\Pic^\mathrm{h}(R,\lambda)$, $\mathrm{Pic}^\mathrm{p}(R^s)$, and $\mathrm{Pic}^\mathrm{p}(R)$ where the last invariant considers $R$ as a Poincar{\'e} ring via the genuine symmetric structure of Example~\ref{ex:fixpt_Mackey_functor}. Before we can state the precise relationship, first recall the following theorem of Fausk (which is proved for the Picard spectrum by Antieau-Gepner): 
\begin{theorem}[{\cites{MR1966659}[Corollary 7.10]{MR3190610}}]
    \label{theorem:fausk_for_discrete_rings}
    Let $R$ be a discrete commutative ring. Then we have a product decomposition $$\mathfrak{pic}(R)\simeq \mathfrak{pic}^\mathrm{cl}(R)\times \Gamma(\Spec R;\underline{\mathbb{Z}})$$ of connective spectra, where $\mathfrak{pic}^{cl}(R)$ is $\mathfrak{pic}(\mathrm{Mod}_R^{\heartsuit})$. 
\end{theorem}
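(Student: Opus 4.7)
The plan is to produce the splitting by explicitly constructing a \emph{degree} map $d : \mathfrak{pic}(R) \to \Gamma(\Spec R;\underline{\ZZ})$ whose fiber over $0$ is $\mathfrak{pic}^{\mathrm{cl}}(R)$, and a section $s$ in the opposite direction, and then checking that both are morphisms of connective spectra.

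First I would construct the degree map. Given an invertible $R$-module $M$, pick a prime $\mathfrak{p}\subseteq R$. Since invertibility is preserved under base change, $M\otimes_R \kappa(\mathfrak{p})$ is an invertible $\kappa(\mathfrak{p})$-module, hence of the form $\kappa(\mathfrak{p})[n(\mathfrak{p})]$ for a unique integer $n(\mathfrak{p})$. A standard Nakayama-style argument (for a perfect module $M$ over a local ring, $M\otimes\kappa\simeq \kappa[n]$ implies $M\simeq R_\mathfrak{p}[n]$) shows that $n$ is continuous in $\mathfrak{p}$ for the Zariski topology, i.e.\ locally constant. This yields a well-defined map of sets $d\colon \pi_0\mathfrak{pic}(R)\to \Gamma(\Spec R;\underline{\ZZ})$; since $(M\otimes_R N)\otimes\kappa(\mathfrak{p})\simeq \kappa(\mathfrak{p})[n_M(\mathfrak{p})+n_N(\mathfrak{p})]$, this is a group homomorphism. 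I would then promote $d$ to a map of connective spectra by noting that $\Gamma(\Spec R;\underline{\ZZ})$ is discrete, so this homomorphism of $\pi_0$ uniquely determines a map at the level of connective spectra.

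Next I would produce the section $s$. A locally constant $n:\Spec R\to \ZZ$ with values $\{n_i\}$ on finitely many clopens $U_i\subseteq \Spec R$ (which one may reduce to after noting the image has to be finite on each connected component, and passing to the relevant direct limit presentation) corresponds to a decomposition $R\simeq \prod_i R_i$ with $U_i=\Spec R_i$. Define
\[
s(n) := \bigoplus_i R_i[n_i] \in \mathfrak{pic}(R).
\]
This is evidently invertible (its inverse is $s(-n)$) and, since shifts add on tensor product and direct sum corresponds to the product of rings, $s$ is a group homomorphism, again promoting to a map of connective spectra because the source is discrete.

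The composite $d\circ s$ is the identity by construction, which exhibits $\Gamma(\Spec R;\underline{\ZZ})$ as a retract of $\mathfrak{pic}(R)$ and thus produces a splitting $\mathfrak{pic}(R)\simeq \mathrm{fib}(d)\times \Gamma(\Spec R;\underline{\ZZ})$ of connective spectra. It then remains to identify $\mathrm{fib}(d)$ with $\mathfrak{pic}^{\mathrm{cl}}(R)$. An invertible $M$ has $d(M)=0$ iff $M_\mathfrak{p}\simeq R_\mathfrak{p}$ for every $\mathfrak{p}$; such an $M$ is faithfully flat and concentrated in degree $0$ by checking on stalks, hence lies in $\Mod_R^{\heartsuit}$, where it is classically invertible. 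Conversely, classically invertible modules are pointwise free of rank one and hence have trivial degree. The resulting fiber identification is compatible with the monoidal structure on the nose, which upgrades the splitting to one of $\Einfty$-spaces and therefore of connective spectra.

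The main obstacle is establishing local constancy of the degree function in the level of generality required (a general discrete commutative ring, not necessarily Noetherian). For qcqs affine schemes this is classical, but one has to verify that $M$, being perfect and of Tor-amplitude $[n,n]$ at each stalk, has \emph{uniformly} bounded Tor-amplitude on each quasi-compact clopen. This follows because perfectness gives a globally bounded Tor-amplitude, after which the locus where the amplitude is concentrated at any fixed integer $n$ is open-and-closed, reducing the statement to the finite case handled above.
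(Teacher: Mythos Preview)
The paper does not supply a proof of this statement; it is quoted directly from Fausk \cite{MR1966659} and Antieau--Gepner \cite[Corollary 7.10]{MR3190610}. Your argument is essentially the original one from those references --- the degree map via the stalkwise concentration degree, the section via idempotent decompositions $\bigoplus_i e_iR[n_i]$, and the identification of the fiber with classical line bundles --- and indeed the paper invokes exactly these ingredients later when proving its equivariant refinements (see the proofs of Corollary~\ref{cor:fausk_for_Poincare_rings} and Theorem~\ref{theorem:Poincare_Pic_of_fixpt_Mackey_functor}).
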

The functoriality of this splitting together with Lemma~\ref{lemma:pnpic_of_symmetric_coconnective_alg} produce the desired connection between $\mathrm{Pic}^\mathrm{h}(R)$ and $\mathrm{Pic}^\mathrm{p}(R^s)$:

\begin{corollary}
    \label{cor:fausk_for_Poincare_rings}
    Let $R$ be a discrete commutative ring with a $C_2$-action $\lambda:R\rightarrow R$. Then we have an isomorphism of abelian groups $$\mathrm{Pic}^\mathrm{p}(R^s)\simeq \mathrm{Pic}^\mathrm{h}(R^s)\times \Gamma(R;\underline{\mathbb{Z}})^{-\lambda}$$ where $\mathrm{Pic}^\mathrm{h}$ is the functor defined in Definition~\ref{definition:hermitian_picard_group} and the notation $(-)^{-\lambda}$ denotes fixed-points with respect to the action induced by the action $n\mapsto -\lambda^*n$ on the constant sheaf $\mathbb{Z}$.
\end{corollary}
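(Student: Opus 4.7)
The plan is to equip Fausk's splitting with a $C_2$-action and take homotopy fixed points. By Proposition \ref{prop:symmetric_pnpic} together with Remark \ref{remark: identification of the c2 action on pic}, there is an identification $\mathfrak{pic}^\mathrm{p}(R^s) \simeq \mathfrak{pic}(R)^{\mathrm{h}(-\lambda)}$ of connective spectra, where the $C_2$-action combines group-inversion (from the untwisted duality) with the auto-equivalence $\lambda^\ast$ of $\mathrm{Mod}_R^\omega$. I would then argue that the Fausk--Antieau--Gepner decomposition
\[\mathfrak{pic}(R) \simeq \mathfrak{pic}^{\mathrm{cl}}(R) \times \Gamma(\Spec R; \underline{\ZZ})\]
of Theorem \ref{theorem:fausk_for_discrete_rings} is $(-\lambda)$-equivariant. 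The inclusion $\mathfrak{pic}^{\mathrm{cl}}(R) \hookrightarrow \mathfrak{pic}(R)$ is preserved by both $\lambda^\ast$ and duality, since both respect the heart of the standard t-structure; the projection to $\Gamma(\Spec R; \underline{\ZZ})$ records on $\pi_0$ the locally constant cohomological-degree function of an invertible complex, on which duality reverses signs and $\lambda^\ast$ reindexes along $\lambda$, giving precisely the action $n \mapsto -\lambda^\ast n$ on the second factor.

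Taking homotopy fixed points of the equivariant splitting, and using that $\Gamma(\Spec R; \underline{\ZZ})$ is discrete (so that homotopy and ordinary fixed points coincide), one obtains
\[\mathfrak{pic}^\mathrm{p}(R^s) \simeq \mathfrak{pic}^{\mathrm{cl}}(R)^{\mathrm{h}(-\lambda)} \times \Gamma(\Spec R; \underline{\ZZ})^{-\lambda}\,.\]
Passing to $\pi_0$ and matching the second factor with the one in the statement, it remains to identify $\pi_0\,\mathfrak{pic}^{\mathrm{cl}}(R)^{\mathrm{h}(-\lambda)}$ with $\mathrm{Pic}^\mathrm{h}(R, \lambda)$. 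For this I would observe that $\mathfrak{pic}^{\mathrm{cl}}(R)^{\mathrm{h}(-\lambda)}$ coincides with the Picard spectrum of the subgroupoid $\mathrm{Pn}(\mathrm{Mod}^{\mathrm{p},\heartsuit}_R)$ of discrete Poincar\'e objects considered in Remark \ref{remark: categorical description of discrete poincare objects}, whose $\pi_0$ is by definition the involutive Picard group $\mathrm{Pic}(R, \lambda)$ of Definition \ref{definition: involutive picard space}; this equals the hermitian Picard group by Proposition \ref{prop:hermitian_pic_agrees_with_pnpic_on_heart}.

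The main obstacle is the verification of $C_2$-equivariance of the Fausk--Antieau--Gepner splitting. The cleanest route is to note that the proofs in \cite{MR1966659,MR3190610} produce this splitting as a natural transformation of functors on discrete commutative rings, so any ring automorphism (in particular $\lambda$) automatically respects the decomposition; combining with the obvious compatibility of the derived dual with each factor yields equivariance under the composite $(-\lambda)$-action. Alternatively, one may argue directly using the fact that the subspectrum $\mathfrak{pic}^{\mathrm{cl}}(R) \subseteq \mathfrak{pic}(R)$ is characterized as the full subspectrum on discrete invertible modules, which is manifestly $\lambda^\ast$- and duality-stable, and then identify the quotient with $\Gamma(\Spec R; \underline{\ZZ})$ via the cohomological-degree map, on which the induced action is computed directly.
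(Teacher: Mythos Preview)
Your proposal is correct and follows essentially the same route as the paper: identify $\mathfrak{pic}^\mathrm{p}(R^s)$ with $\mathfrak{pic}(R)^{\mathrm{h}(-\lambda)}$ via Proposition~\ref{prop:symmetric_pnpic}, show the Fausk decomposition is $(-\lambda)$-equivariant, take fixed points, and identify the heart piece with the hermitian Picard group. The only notable difference is in how equivariance of the splitting is verified: the paper shows both \emph{inclusions} are equivariant, constructing an explicit discrete subcategory $\mathcal{C}^\simeq \subseteq \mathrm{Mod}_R^\simeq$ (spanned by sums $\bigoplus e_i R[n_i]$ for orthogonal idempotents $e_i$) that is visibly closed under tensor and duality, so that $\mathfrak{pic}(\mathcal{C}) \to \mathfrak{pic}(R)$ is an equivariant model for the section $C(\Spec R,\ZZ) \to \mathfrak{pic}(R)$; you instead argue that the \emph{projection} $\mathfrak{pic}(R) \to \Gamma(\Spec R;\underline{\ZZ})$ is equivariant, which is clean since the target is discrete and equivariance can be checked on $\pi_0$. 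Both arguments work and lead to the same conclusion.
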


\begin{proof}
    If we define $\mathrm{Pn}(\mathrm{Mod}_{R^s}^{\mathrm{p},\heartsuit})$ in the same way as in Remark~\ref{remark: categorical description of discrete poincare objects}, we then have that $\mathfrak{pic}(R,\lambda)\simeq \mathfrak{pic}(\mathrm{Pn}(\mathrm{Mod}_{R^s}^{\mathrm{p},\heartsuit}))$ by Observation~\ref{obs:pnpic_gs_symmetric_agree_sometimes}. The functor $\mathrm{Pn}(\mathrm{Mod}_{R^s}^{\mathrm{p},\heartsuit})\to \mathrm{Mod}_{R^e}^{\omega, \heartsuit, \simeq}$ induces an equivalence of infinite loop spaces $ \Omega^\infty \mathfrak{pic}(R,\lambda)\simeq \Omega^\infty \mathfrak{pic}^{\mathrm{cl}}(R^{e})^{\mathrm{h}C_2}$ as can be checked on the level of categories. 
    
    We will now show that the decomposition of Theorem~\ref{theorem:fausk_for_discrete_rings} is equivariant, which then combining this with Proposition~\ref{prop:symmetric_pnpic} will finish the proof. Since we already have that the map $\mathfrak{pic}^{\mathrm{cl}}(R)\to \mathfrak{pic}(R)$ is equivariant by the previous paragraph, it is enough to show that the map $C(\mathrm{Spec}(R),\mathbb{Z})\to \mathfrak{pic}(R)$ is as well. Define $\mathcal{C}^{\simeq}\subseteq \mathrm{Mod}_{R}^{\simeq}$ to be the full subcategory spanned by objects which are finite sums $\bigoplus_{i\in I} e_i R[n_i]$ where $e_i$ form an orthogonal basis of idempotents of $R$. Note that $\mathcal{C}^{\simeq}$ is a discrete category since for any object $\bigoplus_{i\in I}e_iR[n_i]$, for any $j\in J$ we have that \[\mathrm{hom}_{R}(e_jR[n_j],\bigoplus_{i\in I}e_iR[n_i])\simeq \bigoplus_{i\in I}\mathrm{hom}_R(e_jR[n_j],e_iR[e_i])\simeq \mathrm{hom}_R(e_jR[n_j],e_jR[n_j])\] where the last equivalence comes from the assumption that the $e_i$ are orthogonal. Note that $\mathcal{C}$ is closed under the symmetric monoidal structure and taking duals, so we get a $C_2$-equivariant map $\mathfrak{pic}(\mathcal{C})\to \mathfrak{pic}(R)$. This map is exactly an equivariant lift of the map $C(\mathrm{Spec}(R),\mathbb{Z})\to \mathfrak{pic}(R)$.
\end{proof}

We now turn our attention to relating $\mathrm{Pic}^\mathrm{h}(R)$ and $\mathrm{Pic}^\mathrm{p}(R)$.

\begin{theorem}\label{theorem:Poincare_Pic_of_fixpt_Mackey_functor}
    Let $ R $ be a discrete commutative ring with a $ C_2 $-action $ \lambda \colon R \xrightarrow{\sim} R $ via ring maps. 
    Regard $ R $ as a Poincaré ring $ \underline{R}^\lambda $ via Example \ref{ex:fixpt_Mackey_functor}. 
    Then there is a split short exact sequence of abelian groups
    \begin{equation*}
        0 \to \mathrm{Pic}^{\mathrm{h}}(R,\lambda) \to \pi_0\pnpic(\underline{R}^\lambda) \to C_{C_2}(\Spec R, \ZZ^{-}) \to 0
    \end{equation*}
    where $ \ZZ^{-} $ is endowed with the $ C_2 $-action given by multiplication by $ -1 $ and $ C_{C_2} $ denotes continuous functions which are moreover $ C_2 $-equivariant. 
    In particular, when $ R $ has the trivial $ C_2 $-action, then the map $ \mathrm{Pic}^{\mathrm{h}}(R) \to \pi_0\pnpic(\underline{R}^\lambda) $ is an isomorphism. 
    
    Moreover, forgetting the hermitian form (resp. forgetting the $ C_2 $-action) induces a commutative diagram
    \begin{equation*}
        \begin{tikzcd}
            0 \ar[r] & \mathrm{Pic}^\mathrm{h}(R,\lambda) \ar[r]\ar[d] & \pi_0\pnpic(\underline{R}^\lambda) \ar[r] \ar[d] & C_{C_2}(\Spec R, \ZZ^{-}) \ar[r] \ar[d] & 0 \\
            0 \ar[r] & \mathrm{Pic}^{\mathrm{cl}}(R) \ar[r] & \pi_0 \mathrm{Pic}\left(\mathrm{Mod}_R^\omega\right) \ar[r] & C(\Spec R, \ZZ) \ar[r] & 0 
        \end{tikzcd}    
    \end{equation*}
    where the bottom row is that of \cite[Theorem 3.5]{MR1966659}. 
\end{theorem}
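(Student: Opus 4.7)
My plan is to reduce to the symmetric Poincaré case of Corollary~\ref{cor:fausk_for_Poincare_rings} by showing that the canonical map $\varphi: \pnpic(\underline{R}^\lambda) \to \pnpic(R^s)$ induced by the unit $\underline{R}^\lambda \to R^s$ of Example~\ref{example:tate_poincare_structure} (where $R^s$ is the Borel/symmetric Poincaré structure) is an isomorphism on $\pi_0$. Applying Proposition~\ref{prop:Poincare_Picard_to_Picard_fiber} to both sides produces a map of fiber sequences of connective spectra
\[
\begin{tikzcd}
\mathrm{gl}_1(R^{C_2}) \ar[r] \ar[d] & \pnpic(\underline{R}^\lambda) \ar[r] \ar[d,"\varphi"] & \mathfrak{pic}(R^e) \ar[d,equals] \\
\mathrm{gl}_1(R^{\mathrm{h}C_2}) \ar[r] & \pnpic(R^s) \ar[r] & \mathfrak{pic}(R^e)
\end{tikzcd}
\]
whose connecting maps $R^\times = \pi_1 \mathfrak{pic}(R^e) \to \pi_0 \mathrm{gl}_1$ both equal the norm $r \mapsto r\lambda(r)$; since $\pi_0 R^{\mathrm{h}C_2} \cong R^{C_2}$ for $R$ discrete, the leftmost vertical map is an isomorphism on $\pi_0$. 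By the five-lemma on the resulting long exact sequences, it thus suffices to verify that the images of $\pi_0 \pnpic(\underline{R}^\lambda)$ and $\pi_0 \pnpic(R^s)$ in $\pi_0 \mathfrak{pic}(R^e)$ coincide.

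Both images lie in $\mathrm{Pic}(R^e)^{-\lambda}$ by Remark~\ref{rmk:pnpic_to_pic_equivariant}, and Corollary~\ref{cor:fausk_for_Poincare_rings} (combined with the equivariant refinement of Fausk's splitting exhibited in its proof via the subcategory $\mathcal{C}^\simeq$ of locally-shifted idempotent sums $\bigoplus e_i R[n_i]$) identifies the Borel image with all of $\mathrm{Pic}(R^e)^{-\lambda} \cong \mathrm{Pic}^{\mathrm{cl}}(R^e)^{-\lambda} \oplus C_{C_2}(\Spec R, \ZZ^-)$. To promote these classes to the genuine setting: discrete classes $[L] \in \mathrm{Pic}^{\mathrm{cl}}(R^e)^{-\lambda}$ lift by combining Proposition~\ref{proposition:exact_sequence_for_hermitian_picard_group} with Proposition~\ref{prop:hermitian_pic_agrees_with_pnpic_on_heart}, while an anti-equivariant shift class $n \in C_{C_2}(\Spec R, \ZZ^-)$ is handled by restricting $\Qoppa_{\underline{R}^\lambda}$ to $\mathcal{C}^\simeq$. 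The key observation is that $\mathcal{C}^\simeq$ is a \emph{discrete} symmetric monoidal category, so the genuine and Borel Poincaré structures agree on this subcategory, and the canonical Poincaré form on $R[n]$ already produced in the proof of Corollary~\ref{cor:fausk_for_Poincare_rings} is automatically a $\underline{R}^\lambda$-Poincaré form.

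Once $\pi_0 \varphi$ is known to be an isomorphism, the split short exact sequence of the theorem transports from Corollary~\ref{cor:fausk_for_Poincare_rings}. The identification of the kernel with $\mathrm{Pic}^\mathrm{h}(R)$ matches the one induced by Proposition~\ref{prop:hermitian_pic_agrees_with_pnpic_on_heart} together with the pullback description in Remark~\ref{remark: categorical description of discrete poincare objects}, because a Poincaré class maps trivially to $C_{C_2}(\Spec R, \ZZ^-)$ iff its underlying module lies in the heart. Compatibility with the classical Fausk sequence of \cite{MR1966659} follows from naturality of all the forgetful functors. The main obstacle is the construction of genuine Poincaré forms on shifted modules $R[n]$: while the reduction to the Borel case via $\mathcal{C}^\simeq$ is conceptually clean, one must carefully verify that restricting $\Qoppa_{\underline{R}^\lambda}$ to $\mathcal{C}^\simeq$ really yields the same data as restricting $\Qoppa_{R^s}$, which amounts to checking that the truncation map $\tau_{\geq 0}(R^{\mathrm{t}C_2}) \to R^{\mathrm{t}C_2}$ acts as an equivalence on hom-spectra in this discrete subcategory.
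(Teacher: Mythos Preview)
Your strategy of reducing to the symmetric case by showing $\pi_0\varphi$ is an isomorphism is a legitimate alternative to the paper's direct construction, and your five-lemma reduction to ``images in $\pi_0\mathfrak{pic}(R^e)$ agree'' is correct. But the step you correctly flag as the main obstacle has a genuine gap, and your proposed resolution does not work.

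The claim that discreteness of $\mathcal{C}^\simeq$ forces the genuine and Borel Poincar\'e structures to agree on it is false. Discreteness of the \emph{category} says nothing about the spectrum-valued functors $\Qoppa_{\underline{R}^\lambda}$ and $\Qoppa_{R^s}$; these differ in their linear parts (mapping into $\tau_{\geq 0}R^{\mathrm{t}C_2}$ versus $R^{\mathrm{t}C_2}$), and your proposed check---that the truncation map induces an equivalence on hom-spectra in $\mathcal{C}^\simeq$---already fails for $M=R$ whenever $R^{\mathrm{t}C_2}$ is not connective (e.g.\ $R=\mathbb{F}_2$ with trivial action). What the paper does instead is the following targeted vanishing: for $g\in C_{C_2}(\Spec R,\ZZ^-)$ and $n\neq 0$ in its image, the idempotent $e_{g^{-1}(n)}$ satisfies $\lambda(e_{g^{-1}(n)})=e_{g^{-1}(-n)}$, and these are \emph{orthogonal}. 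Since the map $\eta\colon R\to \pi_0 R^{\varphi C_2}$ is a $C_2$-invariant ring map, one computes
\[
\eta(e_{g^{-1}(n)})=\eta(e_{g^{-1}(n)})^2=\eta(e_{g^{-1}(n)})\eta(e_{g^{-1}(-n)})=\eta(e_{g^{-1}(n)}\cdot e_{g^{-1}(-n)})=0.
\]
Hence the nonzero-degree summands of $\Phi(g)$ become zero after tensoring with $R^{\varphi C_2}$, killing the linear-part obstruction entirely; the remaining degree-zero summand is discrete and lifts by Observation~\ref{obs:pnpic_gs_symmetric_agree_sometimes}. This idempotent argument is the actual content of the lifting step and cannot be replaced by categorical discreteness.

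A minor point: for discrete classes you invoke Proposition~\ref{proposition:exact_sequence_for_hermitian_picard_group}, which carries a ``few zerodivisors'' hypothesis absent from the theorem. You do not need it---the discrete part of $\mathrm{Im}_s$ is exactly the image of $\mathrm{Pic}^\mathrm{h}(R)$, and Observation~\ref{obs:pnpic_gs_symmetric_agree_sometimes} lifts that directly. The paper bypasses $\varphi$ altogether, constructing $\Psi$ and its splitting $\Phi$ by hand; once you have the idempotent argument, either route works.
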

\begin{proof} [Proof of Theorem \ref{theorem:Poincare_Pic_of_fixpt_Mackey_functor}]
    An object of $ \pi_0 \pnpic(\underline{R}^\lambda) $ can be represented by a pair $ (I, q) $ where $ I $ is an invertible $ R$-module and $ q $ is a point in $ \pi_0 \Omega^\infty \Qoppa_{R^{gs}}(I) $. 
    By the proof of \cite[Theorem 3.5]{MR1966659}, $ I $ induces a continuous map $ \Psi(I) \colon \Spec R \to \ZZ $. 
    Write $ \lambda $ for the involution on $ R $. 
    Now $ q $ in particular induces an equivalence $ q \colon I \xrightarrow{\sim} I^\dag \simeq (\lambda_*I)^\vee $. 
    For each point $ \mathfrak{p} \in \Spec R $, localizing $ q $ gives an equivalence
    \begin{equation*}
        q_{\mathfrak{p}} \colon I_{\mathfrak{p}} \xrightarrow{\sim} (\lambda_*I)^\vee_{\mathfrak{p}} \simeq \left(\lambda_*(I_{\lambda(\mathfrak{p})})\right)^\vee \,.
    \end{equation*}
    Since $ I_{\mathfrak{p}} $ is an invertible module over a local ring, \cite[Proposition 3.2]{MR1966659} implies that $ q_{\mathfrak{p}} $ induces an equivalence
    \begin{equation*}
        I_{\mathfrak{p}} \simeq R_{\mathfrak{p}}[\phi(\mathfrak{p})] \xrightarrow{\sim} \left(\lambda_*(R_{\lambda(\mathfrak{p})}[\phi(\lambda(\mathfrak{p}))])\right)^\vee \simeq (\lambda_*(R_{\lambda(\mathfrak{p})}))^\vee [-\phi(\lambda(\mathfrak{p}))] \,.
    \end{equation*}
    Since $ R $ is discrete, this implies in particular that $ \Psi(I)(\lambda(\mathfrak{p})) = -\Psi(I)(\mathfrak{p}) $, i.e. that $ \Psi(I) $ is $ C_2 $-equivariant. 
    It follows immediately from \cite[Theorem 3.5]{MR1966659} that $ \Psi $ is a homomorphism and that an element of the kernel of $ \Psi $ lifts to $ \mathrm{Pic}^\mathrm{h}(R,\lambda) $. 
    
    Now consider a $ C_2 $-equivariant map $ g \colon \Spec R \to \ZZ $. 
    As in \emph{loc. cit.}, the image of $ g $ is finite and $ C_2 $-invariant, say $ \{n_1, -n_1, \ldots, n_m, -n_m\} $ or $ \{0, n_1, -n_1, \ldots, n_m, -n_m\} $ for some $ n_i \neq 0 $. 
    As in \emph{loc. cit.}, the disjoint subsets $ U_{\pm n_i} := g^{-1}(\pm n_i) $ correspond to an orthogonal basis of idempotents $ e_{U_{\pm n_i}} $ in $ R $. 
    Since $ g $ is $ C_2 $-equivariant with respect to the sign action on $ \ZZ $, we have $ \lambda(U_{n_i}) = U_{-n_i} $. 
    Moreover, it follows from Lemma 3.4 \emph{ibid.} that $ \lambda(e_{U_{n_i}}) = e_{U_{-n_i}} $. 
    Consider the $ R $-module $ \Phi(g):= \bigoplus_{n \in \mathrm{Im}(g)} e_{g^{-1}(\{n\})} R[n] $. 
    In other words, $ \Phi(g) := \bigoplus_{i=1}^m \left(e_{U_{n_i}}R[n_i] \oplus e_{U_{-n_i}}R[-n_i] \right) $ if $ 0 $ is not in the image of $ g $ and $ \Phi(g) := e_{U_0} \oplus \bigoplus_{i=1}^m \left(e_{U_{n_i}}R[n_i] \oplus e_{U_{-n_i}}R[-n_i] \right) $ otherwise. 
    Observe that $ \left(e_{U_{-n_i}}R[-n_i]\right)^\dag = \hom_R(e_{U_{-n_i}}R[-n_i], \lambda_* R)= \hom_R\left(\lambda_*(e_{U_{-n_i}}R), R\right)[n_i] = \hom_R\left(e_{U_{n_i}}R, R\right)[n_i] $. 
    Finally, we claim that there is a canonical $ \lambda $-hermitian form $ q_g \in \Omega^\infty \Qoppa_{R^{\mathrm{gs}}}(\Phi(g)) $ whose adjoint $ q_g^\dag \colon \Phi(g) \xrightarrow{\sim} \Phi(g)^\dag $ corresponds to the identity. 
    That $ q_g $ defines a point of $ \hom_{R^{\otimes 2}}(\Phi(g)^{\otimes 2}, R)^{\mathrm{h}C_2} $ is evident. 
    Observe that to give a lift of $ q_g $ to $ \Qoppa_{R^{\mathrm{gs}}}(\Phi(g)) = \hom_{N^{C_2}R}\left(N^{C_2}\Phi(g), R\right) $ is equivalent to giving a commutative diagram
    \begin{equation}\label{diagram:lifting_sym_form_to_gen_sym_form}
        \begin{tikzcd}
            \Phi(g) \otimes_R R^{\varphi C_2} \ar[d] \ar[r,dashed,"{\exists ?}"] & R^{\varphi C_2} \ar[d] \\
            \left(\Phi(g)^{\otimes 2}\right)^{\mathrm{t}C_2} \ar[r,"{q_g^{\mathrm{t}C_2}}"] & R^{\mathrm{t}C_2}
        \end{tikzcd} 
    \end{equation} 
    of $ R^{\varphi C_2} $-modules. 
    Let us write $ \eta \colon R \to \pi_0 R^{\varphi C_2} $ for the ring map induced by the structure map. 
    Since $ R $ is a $ C_2 $-$ \EE_\infty $-ring, $ \eta $ is invariant with respect to the given action on $ R $ and the trivial action on $ \pi_0 R^{\varphi C_2} $. 
    Consider $ e_{U_{n_i}} $ an idempotent corresponding to an element of the image of $ g $ so that $ n_i \neq 0 $. Then
    \begin{equation*}
        \begin{split}
            \eta(e_{U_{n_i}}) &= \eta(e_{U_{n_i}})^2 \qquad \text{ ring maps preserve idempotents } \\
            &= \eta(e_{U_{n_i}}) \cdot \eta(e_{U_{-n_i}}) \qquad \text{ $C_2$-invariance of }\eta \\
            &= \eta(e_{U_{n_i}}e_{U_{-n_i}}) \qquad \text{ $ \eta $ is a ring map } \\
            &= 0 \qquad \text{ orthogonality and }n_i \neq 0 \,.  
        \end{split}
    \end{equation*}
    In particular, if $ 0 $ is not in the image of $ g $, $ \Phi(g) \otimes_R R^{\varphi C_2} \simeq 0 $ and (\ref{diagram:lifting_sym_form_to_gen_sym_form}) commutes vacuously. 
    If $ 0 $ is in the image of $ g $, then $ e_{U_0}R $ is an ordinary projective $ e_{U_0}R $-module and $ q_g $ evidently defines a genuine hermitian form on $ e_{U_0}R $ (compare \cite[Remark 4.2.21]{CDHHLMNNSI}). 
    
    Thus, $ g \mapsto (\Phi(g), q_g) $ defines a splitting of $ \Psi $ which agrees with the splitting constructed in \cite[Theorem 3.5]{MR1966659} on underlying objects. 
    It remains to observe that the kernel of $ \Psi $ consists exactly of pairs $ (I,q) $ where $I$ is a discrete invertible $ R $-module and $ q $ is a nondegenerate $ \lambda $-hermitian pairing $ I \otimes_{R} \lambda^* I \to R $, up to isomorphisms which respect the $ \lambda $-hermitian pairing.
\end{proof}

We now turn to the case of schemes with good quotient. 
\begin{theorem}~\label{thm: Fausk for pnpic and schemes with good quotient}
    Let $(X,\lambda, Y, p)$ be a scheme with good quotient in the sense of Recollection \ref{rec:good_quotient}. 
    There is then a split short exact sequence of abelian groups \[0\to \mathrm{Pic}^\mathrm{h}(X,\lambda)\to \pi_0(\pnpic(X,\lambda, Y,p))\to C_{C_2}(X,\ZZ^-)\to 0\] and a map 
    \[
    \begin{tikzcd}
        0 \ar[r] & \mathrm{Pic}^{\mathrm{h}}(X, \lambda) \ar[r]\ar[d] & \pi_0(\pnpic(X,\lambda, Y,p)) \ar[r] \ar[d] & C_{C_2}(X,\ZZ^-) \ar[r] \ar[d] & 0\\
        0 \ar[r] & \mathrm{Pic}^{\mathrm{cl}}(X) \ar[r] & \pi_0(\pic(\mathrm{Mod}_{\mathcal{O}_X})) \ar[r] & C(X,\ZZ)\ar[r] & 0
    \end{tikzcd}
    \] of short exact sequences, where the bottom row is that of \cite[Proposition 4.4]{MR1966659}.
\end{theorem}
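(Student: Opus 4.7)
The plan is to deduce the global statement from the affine case, Theorem \ref{theorem:Poincare_Pic_of_fixpt_Mackey_functor}, by sheafifying along a suitable hypercover. First, since $(X,\lambda,Y,p)$ is qcqs we may choose an affine étale hypercover $\mathrm{Spec}(S^\bullet) \to Y$. Via the equivalence between the $C_2$-étale site of $(X,\lambda,Y,p)$ and the étale site of $Y$ (Lemma \ref{lem: c2 etale topology=etale topology on the base}), this lifts to a $C_2$-étale hypercover of $(X,\lambda,Y,p)$ by affine pieces $\mathrm{Spec}(R^\bullet)$, each with involution whose quotient is the corresponding $\mathrm{Spec}(S^\bullet)$ and hence each arising from a fixed-point Mackey functor in the sense of Example \ref{ex:fixpt_Mackey_functor}. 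By the descent result of Proposition \ref{prop:Poincare_modules_as_etale_sheaf}, we have the equivalence
\[
\mathfrak{pic}^{\mathrm{p}}(X,\lambda,Y,p) \simeq \lim_{\Delta} \mathfrak{pic}^{\mathrm{p}}(\underline{R^\bullet}^\lambda),
\]
and an analogous identification for the non-equivariant Picard spectrum via \cite[Corollary 7.6]{MR3190610}.

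Next, I would apply Theorem \ref{theorem:Poincare_Pic_of_fixpt_Mackey_functor} levelwise to obtain a compatible system of split short exact sequences. The key point is that the splitting $g \mapsto (\Phi(g),q_g)$ constructed in that proof, being built from orthogonal idempotent decompositions and the canonical shift-and-form construction, is manifestly functorial in ring maps of rings with involution — the pullback of an orthogonal idempotent decomposition is again such a decomposition, and the canonical form $q_g$ is preserved under base change. Consequently, the splittings assemble into a morphism of cosimplicial objects in $\mathrm{Sp}_{\geq 0}$, so the degreewise short exact sequences lift to a short exact sequence of (connective) sheaves of spectra on the étale site of $Y$.

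The third step is to identify the limit of each term after passing to $\pi_0$. The limit of the sheaf $(\Spec R \to Y) \mapsto \pi_0 \mathfrak{pic}^{\mathrm{p}}(\underline{R}^\lambda)$ is by definition $\pi_0 \mathfrak{pic}^{\mathrm{p}}(X,\lambda,Y,p)$. The limit of the hermitian Picard sheaf is, by Proposition \ref{prop:hermitian_pic_agrees_with_pnpic_on_heart}, the involutive/hermitian Picard group $\mathrm{Pic}^\mathrm{h}(X)$. For the right-hand term, $(\Spec R \to Y) \mapsto C_{C_2}(\mathrm{Spec}(R),\ZZ^-)$ is the étale sheaf of $C_2$-equivariant locally constant $\ZZ^-$-valued functions on $X$; its global sections recover $C_{C_2}(X,\ZZ^-)$ as required. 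The comparison diagram with the non-equivariant Fausk sequence comes from the commutative square of Definition \ref{defn:pnpic_to_pic_underlying} applied levelwise, combined with the non-equivariant identification in \cite[Theorem 3.5]{MR1966659}.

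The main obstacle I anticipate is ensuring that $\pi_0$ of the limit of the sequence is still short exact — a priori one only obtains a long exact sequence of sheaf cohomology. This is resolved precisely by the existence of the splitting at the level of sheaves of connective spectra: a split short exact sequence of sheaves of connective spectra remains split short exact after taking $\pi_0$ of global sections. Thus the naturality of the splitting in Theorem \ref{theorem:Poincare_Pic_of_fixpt_Mackey_functor} is the crucial ingredient that both produces the splitting globally and forces exactness of the sequence of abelian groups.
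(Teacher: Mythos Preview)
Your approach via hyperdescent is genuinely different from the paper's, which works directly with the global object: the paper simply composes the forgetful map $\mathrm{Pic}^{\mathrm{p}}(X,\lambda,Y,p)\to\mathrm{Pic}(X)$ with Fausk's projection $\mathrm{Pic}(X)\to C(X,\ZZ)$, argues stalkwise (exactly as in Theorem~\ref{theorem:Poincare_Pic_of_fixpt_Mackey_functor}) that it factors through $C_{C_2}(X,\ZZ^{-})$, builds the splitting $g\mapsto(\Phi(g),q_g)$ globally from orthogonal idempotents in $H^0(X,\mathcal{O}_X)$, and identifies the kernel via Proposition~\ref{prop:hermitian_pic_agrees_with_pnpic_on_heart}. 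No sheafification is needed.

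Your descent argument has a genuine gap at the point you call ``the crucial ingredient.'' You assert that the affine splittings assemble to a morphism of cosimplicial objects in $\mathrm{Sp}_{\geq 0}$, but Theorem~\ref{theorem:Poincare_Pic_of_fixpt_Mackey_functor} only proves that $g\mapsto(\Phi(g),q_g)$ is a group homomorphism on $\pi_0$; it does not exhibit it as an $\EE_\infty$-map, and hence not as a map of connective spectra. Without that, your final paragraph does not go through: naturality as a map of abelian groups gives you a split short exact sequence of cosimplicial \emph{abelian groups}, and taking limits in $\mathrm{Ab}$ would yield $\lim_\Delta\pi_0\pnpic(R^\bullet)$ in the middle, which is \emph{not} $\pi_0\pnpic(X,\lambda,Y,p)$ in general (indeed your third paragraph misstates this identification). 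Conversely, working at the spectrum level, the fiber sequence $\mathfrak{pic}(-,\lambda)\to\pnpic(-)\to C_{C_2}(-,\ZZ^-)$ does pass to global sections, but then surjectivity on $\pi_0$ and the splitting are exactly what you still owe.

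This gap is fixable: one can promote the splitting to a map of connective spectra by realizing the objects $(\Phi(g),q_g)$ as spanning a full symmetric monoidal subgroupoid $\mathcal{C}'\subset\mathrm{Pn}(\Mod^{\mathrm{p}}_R)$ (closed under tensor product, with discrete automorphism spaces), so that $\mathfrak{pic}(\mathcal{C}')\to\pnpic(R)$ is an $\EE_\infty$-map with $\mathfrak{pic}(\mathcal{C}')\simeq C_{C_2}(\Spec R,\ZZ^-)$; this is the Poincaré analogue of the argument in Corollary~\ref{cor:fausk_for_Poincare_rings}. But this is real additional work, not something ``manifest,'' and once you do it the direct global argument of the paper is both shorter and avoids the issue entirely.
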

\begin{proof}
  Composing the forgetful map of Definition \ref{defn:pnpic_to_pic_underlying} with the projection of Theorem~\ref{theorem:fausk_for_discrete_rings} gives a map $\Pic^\mathrm{p}(X,\lambda, Y,p)\to \Pic(X)\to C(X,\mathbb{Z})$. 
  By the same argument as in Theorem~\ref{theorem:Poincare_Pic_of_fixpt_Mackey_functor} this map will factor through a surjective map $C_{C_2}(X,\mathbb{Z}^-)$.
    The kernel consists exactly of isomorphism classes of invertible Poincaré objects $\left(\mathcal{L},q\right)$ so that $ \mathcal{L} $ is an ordinary line bundle, which we identified $\mathrm{Pic}^{\mathrm{h}}(X,\lambda)$ in Proposition \ref{prop:hermitian_pic_agrees_with_pnpic_on_heart}. 
\end{proof}

\section{The Poincaré Brauer space}~\label{section:the_poincare_brauer_group}

In this section, we introduce a derived version of the classical involutive Brauer group, realizing the goal discussed in \S\ref{subsection:Azumaya_motivation}. 
To motivate the path we will take, consider a scheme with involution $ (X,\lambda) $ and an Azumaya $ \mathcal{O}_X $-algebra $ \mathcal{A} $. 
Note that an equivalence $ \mathcal{A} \simeq \lambda_*\mathcal{A}^\op $ induces an equivalence $ F \colon \Mod_{\mathcal{A}}^\omega \simeq \Mod_{\mathcal{A}}^{\omega,\op} $ so that $ F^\op \circ F \simeq \id $. 
In particular, the functor $ F $ is linear with respect to the involution on $ \Mod_{\mathcal{O}_X} $ induced by $ \lambda $. 
In \S\ref{section:poincare_structures_on_compact_modules}, we saw that the involution on $ X $ gave rise to a canonical symmetric monoidal Poincaré structure on $ \Mod_{\mathcal{O}_X}^\omega $. 
The formalism of \emph{relative} Poincaré $ \infty $-categories (see \S\ref{section:relative_poincare_cats}) will allow us to consider a general $ \mathcal{O}_X $-linear $ \infty $-category $ \mathcal{C} $ with a duality functor $ D \colon \mathcal{C} \to \mathcal{C}^{\mathrm{op}} $ which is \emph{linear with respect to a given involution on $ \Mod_{\mathcal{O}_X} $} (cf. \cite[\S6]{HLAS25}). 
Our relative Poincaré $ \infty $-categories thus offer a natural home for a derived enhancement of the involutive Brauer group.  

We begin in \S\ref{subsection: review of PS} with a review of the classical involutive Brauer group of \cite{Parimala_Srinivas}.  
In \S\ref{subsection:invertible_relative_Poincare_cats}, we define the involutive Brauer group in terms of invertible Poincaré $ \infty $-categories and compute it in a handful of examples. 
In particular, our examples (see Examples \ref{ex:pnbr_closed_pt_unramified} and \ref{ex:pnbr_closed_point_ramified}, especially the latter) show that our involutive Brauer group subsumes the classical involutive Brauer group but also captures information which is genuinely novel.  
In \S\ref{subsection:generalized_Azumaya_alg_with_involution}, we introduce the notion of a genuine Azumaya algebra with involution and show that they give rise to classes in the Poincaré Brauer group. 
We also discuss the relationship between ordinary Azumaya algebras with involution and genuine Azumaya algebras with involution. 
By bootstrapping the relevant results of Toën and Antieau--Gepner and combining with relative enhancements of descriptions of endomorphism algebras contained in \cite[\S3.1]{CDHHLMNNSI}, we show that Poincaré Brauer classes over suitably nice bases are represented by Azumaya algebras with genuine involution. 
		
\subsection{Review of the involutive Brauer group}~\label{subsection: review of PS}

In this section we will review the involutive Brauer group as constructed by Parimala and Srinivas in \cite{Parimala_Srinivas}. For this subsection, let $(X,\lambda, Y, p:X\to Y)$ be a scheme with good quotient such that $\frac{1}{2}\in \mathcal{O}_Y$ and such that either $p $ is {\'e}tale or an isomorphism.\footnote{Unfortunately, if one goes back and looks at \cite{Parimala_Srinivas}, they will see that the roles of $X$ and $Y$ are swapped. This is to match with the notation used in \cite{azumaya_involution}, from which we get the language of schemes with good quotients.} We will handle these two cases separately, beginning with the case of $p $ being an isomorphism. 

In the case where the involution $\lambda$ is the identity, the data of an Azumaya algebra with involution becomes an Azumaya algebra $\mathcal{A}$ on $X$ together with an $\mathcal{O}_X$-linear map $\sigma:\mathcal{A}\to \mathcal{A}^\op$ such that $\sigma^\op\circ \sigma=\mathrm{id}_\mathcal{A}$.

\begin{proposition}[\cite{MR1045736}]~\label{prop: local description of type I Azumaya algebras}
    Let $\mathcal{E}$ be a locally free $\mathcal{O}_X$-module, and let $\mathcal{A}=\mathcal{E}nd_{\mathcal{O}_X}(\mathcal{E})$ be equipped with an involution $\sigma$. There is then some uniquely determined $\epsilon \in \Gamma(X;\mu_2)$ and nondegenerate $\epsilon$-symmetric bilinear form \[B:\mathcal{E}\otimes_{\mathcal{O}_X} \mathcal{E}\to \mathcal{L}\] (uniquely determined up to tensor powers of $\mathcal{L}^{\otimes_{\mathcal{O}_X} 2}$) such that $\sigma$ is given by the composition \[\mathcal{A}\cong \mathcal{E}\otimes \mathcal{E}^\vee\cong \mathcal{E}\otimes \mathcal{L}^{-1}\otimes \mathcal{E}\to \mathcal{E}\otimes \mathcal{L}^{-1}\otimes \mathcal{E}\cong \mathcal{A}^{\op}\] where the second-to-last map is $a\otimes b\otimes c\mapsto \epsilon c\otimes b\otimes a$.
\end{proposition}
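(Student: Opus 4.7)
The plan is to convert the data of the involution $\sigma$ into the data of a line-bundle-valued bilinear form via Morita theory, and then extract $\epsilon$ from the involutive identity $\sigma^{\op}\circ \sigma = \mathrm{id}$. Since $\mathcal{E}$ is locally free, it defines a Morita equivalence between $\mathcal{A}$ and $\mathcal{O}_X$, giving an equivalence $\mathrm{Mod}_{\mathcal{A}}\simeq \mathrm{Mod}_{\mathcal{O}_X}$ (and likewise for $\mathcal{A}^{\op}$). Under this equivalence, the anti-equivalence $\mathrm{Mod}_{\mathcal{A}}\xrightarrow{\sim}\mathrm{Mod}_{\mathcal{A}^{\op}}$ induced by $\sigma$ descends to an $\mathcal{O}_X$-linear anti-autoequivalence $D_\sigma$ of $\mathrm{Mod}_{\mathcal{O}_X}$. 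Any such $D_\sigma$ preserving compactness is necessarily of the form $\mathcal{H}om_{\mathcal{O}_X}(-,\mathcal{L})$ for a unique invertible $\mathcal{O}_X$-module $\mathcal{L}$; this is our line bundle. The ambiguity $\mathcal{L}\rightsquigarrow \mathcal{L}\otimes \mathcal{N}^{\otimes 2}$ arises because replacing the Morita generator $\mathcal{E}$ by $\mathcal{E}\otimes \mathcal{N}$ leaves $\mathcal{A}$ unchanged while twisting $\mathcal{L}$ by $\mathcal{N}^{\otimes 2}$.

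Next, I would construct $B$ by comparing two natural right $\mathcal{A}$-module structures on $\mathcal{E}$: the tautological one and the one obtained by transporting the left $\mathcal{A}$-structure on $\mathcal{E}^\vee$ through $\sigma$. Under the Morita equivalence, the latter corresponds to $\mathcal{H}om_{\mathcal{O}_X}(\mathcal{E},\mathcal{L})$, so we obtain an $\mathcal{A}$-linear isomorphism $\mathcal{E}\xrightarrow{\sim}\mathcal{H}om_{\mathcal{O}_X}(\mathcal{E},\mathcal{L})$, which is equivalent data to a nondegenerate pairing $B:\mathcal{E}\otimes_{\mathcal{O}_X}\mathcal{E}\to \mathcal{L}$. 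Chasing the identifications $\mathcal{A}\cong \mathcal{E}\otimes \mathcal{E}^\vee$ together with $\mathcal{E}^\vee\cong \mathcal{L}^{-1}\otimes \mathcal{E}$ coming from $B$ recovers the explicit composition displayed in the proposition.

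To produce $\epsilon$, apply the condition $\sigma^{\op}\circ \sigma = \mathrm{id}_{\mathcal{A}}$: unwinding, this forces the natural comparison between $B$ and its swap $B^\tau(x,y):=B(y,x)$ to be given by multiplication by a global section of $\mathcal{O}_X^\times$ whose square equals $1$. This scalar is precisely $\epsilon \in \Gamma(X;\mu_2)$, and $B$ is by construction $\epsilon$-symmetric. Uniqueness of $\epsilon$ (as opposed to $\mathcal{L}$ and $B$, which are only determined modulo the $\mathcal{N}^{\otimes 2}$ ambiguity) follows because tensoring $B$ by a symmetric form on $\mathcal{N}^{\otimes 2}$ does not alter the symmetry type.

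The principal obstacle will be making the Morita-theoretic statements precise globally and tracking the indeterminacies. For this I would first carry out the argument Zariski-locally on $X$, where $\mathcal{E}$ becomes free and the statement reduces to the classical fact that involutions on a matrix algebra $\mathrm{M}_n(R)$ correspond, up to scaling, to $\epsilon$-symmetric bilinear forms on $R^n$; this is where \cite{MR1045736} enters. The local $(\mathcal{L},B,\epsilon)$ then glue to global data by the uniqueness statement, and the ambiguity $\mathcal{L}\otimes \mathcal{N}^{\otimes 2}$ exactly measures the failure of Zariski trivializations of $\mathcal{E}$ to be canonical.
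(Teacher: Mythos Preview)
The paper does not supply its own proof of this proposition; it is stated with a citation to \cite{MR1045736} as part of the review of the classical involutive Brauer group in \S\ref{subsection: review of PS}. Your outline---Morita transport of the involution to a duality on $\mathrm{Mod}_{\mathcal{O}_X}$, identification of the dualizing object as a line bundle $\mathcal{L}$, construction of $B$ from the resulting module comparison, extraction of $\epsilon$ from the involutive identity $\sigma^{\op}\circ\sigma=\mathrm{id}$, and Zariski-local reduction to the matrix-algebra case---is the standard argument and is correct in substance. One small point of phrasing to tighten: $\sigma\colon\mathcal{A}\to\mathcal{A}^{\op}$ induces a \emph{covariant} equivalence of module categories by restriction of scalars; the anti-autoequivalence $D_\sigma$ you want on $\mathrm{Mod}_{\mathcal{O}_X}$ comes from combining this with the two Morita identifications via $\mathcal{E}$ and $\mathcal{E}^\vee$ (or, equivalently, from the duality $M\mapsto \mathcal{H}om_{\mathcal{A}}(M,\mathcal{A}_\sigma)$). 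With that clarification, the rest of your argument goes through as written.
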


In the above situation we write $\sigma=\alpha_B$. The fact that all involutions on these trivial Azumaya algebras come from form data motivates defining the analogue of Morita equivalence in this context via the associated bilinear form. We are now ready to define the involutive Brauer group of type I.

\begin{construction}[Involutive Brauer group of type I, \cite{Parimala_Srinivas}]~\label{const: involutive BG type I}
    Let $X$ be a scheme with trivial $C_2$-action and assume $ 2 \in \Gamma(X, \mathcal{O}_X)^\times $. 
    Let $(\mathcal{A},\sigma)$ and $(\mathcal{B},\tau)$ be Azumaya algebras with involution on $X$. Their tensor product is defined to be $(\mathcal{A}\otimes \mathcal{B},\sigma\otimes \tau)$ where $\sigma\otimes \tau$ is the composition $\mathcal{A}\otimes \mathcal{B}\simeq \mathcal{A}^\op\otimes \mathcal{B}^\op\simeq (\mathcal{A}\otimes \mathcal{B})^\op$. We say that $(\mathcal{A},\sigma)$ and $(\mathcal{B},\tau)$ are equivalent if there exists locally free $\mathcal{O}_X$-modules $\mathcal{E},\mathcal{E}'$ with nondegenerate \textit{symmetric} bilinear forms $B,B'$ and isomorphisms \[(\mathcal{A}\otimes \mathcal{E}nd(\mathcal{E}),\sigma\otimes \alpha_B)\cong (\mathcal{B}\otimes \mathcal{E}nd(\mathcal{E}'),\tau\otimes \alpha_{B'}).\] Let $\sim$ denote this equivalence relation, and define \[\mathrm{Br}(X,\mathrm{id}_X):=(\{(\mathcal{A},\sigma)\mid \mathcal{A}\text{ is an Azumaya algebra with involution} \},\otimes)/\sim\] to be the involutive Brauer group.\footnote{In \cite{Parimala_Srinivas} this is denoted by $\mathrm{Br}^*(X)$. We chose to unify the notation.} This set does in fact form a group by \cite[Lemma 1]{Parimala_Srinivas}
\end{construction}

As shown in \cite[Theorem 1]{Parimala_Srinivas}, there is an injective map $\mathrm{Br}(X,\mathrm{id})\hookrightarrow H^0_{\acute{e}t}(X;\mu_2)\times H^2_{\acute{et}}(X;\mu_2)$ which is an isomorphism if and only if every class in the $2$-torsion in the cohomological Brauer group $\mathrm{Br}'(X)$ is represented by an Azumaya algebra. Additionally, the map $\mathrm{Br}(X,\mathrm{id})\to \mathrm{Br}(X)[2]$ is surjective by \cite{MR1045736}.

We now turn our attention to Azumaya algebras of the second kind. Now $p:X\to Y$ is assumed to be an {\'e}tale cover of degree $2$. In this case, an Azumaya algebra with involution on $X$ is an Azumaya algebra $\mathcal{A}$ on $X$ together with an involution $\sigma:\mathcal{A}\to \mathcal{A}^\op$ of sheaves which restricts to $\lambda$ on the center. Once again, in this setting there is a nice classification of such objects {\'e}tale locally. 

\begin{proposition}~\label{prop: local description of type II Azumaya algebras}
    Let $\mathcal{E}$ be a locally free $\mathcal{O}_X$-module and $\mathcal{A}=\mathcal{E}nd_{\mathcal{O}_X}(\mathcal{E})$. Then for any involution of $\mathcal{A}$ which restricts to $\lambda$ on $\mathcal{O}_X$, there exists some $\mathcal{L}\in \mathrm{Pic}(Y)$ and some nondegenerate $\lambda$-hermitian form \[H:p _*\mathcal{E}\otimes_{\mathcal{O}_Y}p _*\mathcal{E}\to p _*p^*\mathcal{L}\] such that the involution $\sigma$ is given under the equivalence \[p _*(\mathcal{E}\otimes_{\mathcal{O}_X} p ^*\mathcal{L}^{-1}\otimes_{\mathcal{O}_X}\mathcal{E})\cong p _*\mathcal{A}\] by the automorphism of $\mathcal{E}\otimes_{\mathcal{O}_X}p ^*\mathcal{L}^{-1}\otimes_{\mathcal{O}_X}\mathcal{E}$ given by $a\otimes l\otimes b\mapsto b\otimes \lambda(l)\otimes a$.
\end{proposition}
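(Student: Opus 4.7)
The plan is to parallel the argument behind Proposition~\ref{prop: local description of type I Azumaya algebras}, replacing the appeal to $\mathcal{O}_X$-linear duality with the one twisted by $\lambda$. First, I would reinterpret $\sigma\colon \mathcal{A}\to \mathcal{A}^{\op}$ restricting to $\lambda$ on $\mathcal{O}_X$ as an $\mathcal{O}_X$-algebra isomorphism $\overline{\sigma}\colon \mathcal{A}\xrightarrow{\sim} \lambda_*\mathcal{A}^{\op}$ squaring to the identity. Since $\lambda_*$ is symmetric monoidal and $\mathcal{A}\simeq \mathcal{E}nd_{\mathcal{O}_X}(\mathcal{E})\simeq \mathcal{E}\otimes_{\mathcal{O}_X}\mathcal{E}^\vee$, the right side becomes $\mathcal{E}nd_{\mathcal{O}_X}(\lambda_*\mathcal{E}^\vee)$, so $\overline{\sigma}$ is a Morita equivalence between the two matrix algebras.

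Next, I would invoke the Skolem--Noether/Morita classification (applied locally to trivialize $\mathcal{E}$) to produce an $\mathcal{O}_X$-linear isomorphism $\phi\colon \mathcal{E}\simeq \lambda_*\mathcal{E}^\vee\otimes_{\mathcal{O}_X}\mathcal{M}$ for some $\mathcal{M}\in \mathrm{Pic}(X)$, canonical up to $\mathcal{O}_X^\times$-scaling. Equivalently, this is a nondegenerate pairing $\widetilde{H}\colon \mathcal{E}\otimes_{\mathcal{O}_X}\lambda^*\mathcal{E}\to \mathcal{M}$, which pushes down to a pairing $p_*\mathcal{E}\otimes_{\mathcal{O}_Y} p_*\mathcal{E}\to p_*\mathcal{M}$, using that the $\mathcal{O}_Y$-module $p_*\mathcal{E}$ simultaneously records the ordinary $p_*\mathcal{O}_X$-module structure and its $\lambda$-twist via the sheaf of Green functors of Lemma~\ref{lemma:identify_structure_sheaf_of_Green_func}.

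The main obstacle will be descending $\mathcal{M}$ along $p$. The involutivity $\sigma^{\op}\circ\sigma=\mathrm{id}$, chased through the Morita correspondence, simultaneously forces (i) a canonical isomorphism $\mathcal{M}\otimes_{\mathcal{O}_X}\lambda^*\mathcal{M}\simeq \mathcal{O}_X$ compatible with the evident $C_2$-structure, and (ii) a symmetry relation of the form $\widetilde{H}(x,y)=\lambda^*\widetilde{H}(y,x)$ under this identification. Condition (i) is precisely a $C_2$-equivariant trivialization of the norm of $\mathcal{M}$; since by hypothesis $p$ is either quadratic étale or an isomorphism, $p$ is faithfully flat with $\mathcal{O}_Y\simeq (p_*\mathcal{O}_X)^{C_2}$, and faithfully flat (Galois) descent produces $\mathcal{L}\in \mathrm{Pic}(Y)$ with $p^*\mathcal{L}\simeq \mathcal{M}$ compatibly with the equivariant structure; condition (ii) then exhibits the resulting $H$ as $\lambda$-hermitian.

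Finally, I would verify the asserted explicit formula by a local computation: after using $\phi$ to rewrite $\mathcal{E}^\vee\simeq \lambda^*\mathcal{E}\otimes p^*\mathcal{L}^{-1}$, the induced identification $p_*\mathcal{A}\simeq p_*(\mathcal{E}\otimes_{\mathcal{O}_X}p^*\mathcal{L}^{-1}\otimes_{\mathcal{O}_X}\mathcal{E})$ transports $p_*\sigma$ to the swap $a\otimes l\otimes b\mapsto b\otimes\lambda(l)\otimes a$; this can be checked in any trivialization of $\mathcal{E}$, where it reduces to the analogous matrix computation underlying Proposition~\ref{prop: local description of type I Azumaya algebras} twisted by $\lambda$.
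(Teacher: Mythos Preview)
The paper does not prove this proposition: it appears in the review section \S\ref{subsection: review of PS} as a restatement of a result from \cite{Parimala_Srinivas}, parallel to Proposition~\ref{prop: local description of type I Azumaya algebras} (which is attributed to \cite{MR1045736}), and no argument is supplied. So there is nothing to compare your proposal against directly; instead I will comment on its internal soundness.

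Your overall architecture is the right one, but the descent step contains a genuine error. Chasing involutivity $\sigma^{\op}\circ\sigma=\mathrm{id}$ through the Morita correspondence does \emph{not} yield a trivialization of the norm $\mathcal{M}\otimes_{\mathcal{O}_X}\lambda^*\mathcal{M}\simeq \mathcal{O}_X$. What it yields is an isomorphism $\lambda^*\mathcal{M}\simeq \mathcal{M}$ satisfying the cocycle condition: starting from $\phi\colon\mathcal{E}\simeq\lambda^*\mathcal{E}^\vee\otimes\mathcal{M}$, applying $\lambda^*$ and dualizing produces a second isomorphism $\mathcal{E}\simeq\lambda^*\mathcal{E}^\vee\otimes\lambda^*\mathcal{M}$, and $\sigma^2=\mathrm{id}$ forces these to agree up to a unit, giving $\mathcal{M}\simeq\lambda^*\mathcal{M}$. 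This \emph{is} precisely a $C_2$-equivariant structure on $\mathcal{M}$, hence Galois descent along the quadratic \'etale cover $p$ then produces $\mathcal{L}\in\mathrm{Pic}(Y)$ with $p^*\mathcal{L}\simeq\mathcal{M}$, as you want. By contrast, a trivialization of the norm only says $\lambda^*\mathcal{M}\simeq\mathcal{M}^{-1}$, which is neither what involutivity gives nor sufficient for descent of $\mathcal{M}$ itself. Once you replace your condition (i) with the correct descent datum, the rest of your outline (the hermitian symmetry of $H$ and the local verification of the swap formula) goes through as written.
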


If $\mathcal{E}$ is a locally free $\mathcal{O}_X$-module and $H$ is such a nondegenerate $\lambda$-hermitian form then we let $\alpha_H$ denote the induced involution of $\mathcal{E}nd_{\mathcal{O}_X}(\mathcal{E})$. Unlike in the involutions of the first type case, we will take all of these classes as the Brauer trivial Azumaya algebras with involution. 

\begin{construction}~\label{const: involutive BG of type II}
    We say that two Azumaya algebras with involution of the second kind $(\mathcal{A},\sigma)$ and $(\mathcal{B},\tau)$ are Morita equivalent if there exists some locally free $\mathcal{O}_X$-modules $\mathcal{E},\mathcal{E}'$ and $\lambda$-hermitian forms $H$ and $H'$ on $\mathcal{E}$ and $\mathcal{E}'$, respectively, and an isomorphism \[(\mathcal{A},\sigma)\otimes(\mathcal{E}nd_{\mathcal{O}_X}(\mathcal{E}), \alpha_H)\cong (\mathcal{B},\tau)\otimes(\mathcal{E}nd_{\mathcal{O}_X}(\mathcal{E}'),\alpha_{H'})\] of algebras with involution. Define the involutive Brauer group to be \[\mathrm{Br}(X,\lambda):= (\{(\mathcal{A},\sigma)\mid \mathcal{A}\text{ is an Azumaya algebra with involution of the second kind}\},\otimes)/\sim\] where $\sim$ is the above defined equivalence relation. This is a group by \cite[Page 216]{Parimala_Srinivas}.
\end{construction}

A theorem of Saltman's, in the form reproven in \cite{MR1045736}, shows that there is a surjective map $\mathrm{Br}(X,\lambda)\to \mathrm{Ker}(N_{X/Y}:\mathrm{Br}(X)\to \mathrm{Br}(Y))$. Using the local description of Azumaya algebras with involution of the second kind, \cite{Parimala_Srinivas} also show that there is an injective function \[\mathrm{Br}(X,\lambda) \to H^2_{\acute{e}t}(Y;\mathcal{G})\] where $\mathcal{G}$ is the sheaf of norm $1$ units on $X$, and that this map is an isomorphism if and only if every class in $\mathrm{Ker}(N_{X/Y}:\mathrm{Br}'(X)\to \mathrm{Br}'(Y))$ is representable by an Azumaya algebra. In fact there is an exact sequence \[\mathrm{Pic}(X)\to \mathrm{Pic}(Y)\to \mathrm{Br}(X,\lambda)\to \mathrm{Br}(X)\to \mathrm{Br}(Y)\] compatible with the cohomology long exact sequence for $1\to \mathcal{G}\to p _*\mathbb{G}_{m,X} \to \mathbb{G}_{m,Y}\to 1$.

There are two problems in trying to extend this definition to actions which are not trivial or {\'e}tale, both stemming from the same issue. Unlike in the two cases above, there are schemes with involution and good quotient $(X,\lambda,Y,p)$ and Azumaya algebras with involutions of the second kind $(\mathcal{A},\sigma)$ such that $(\mathcal{A},\sigma)$ is not locally isomorphic to some $(\mathcal{M}_n(\mathcal{O}_X),\tau)$ for any involution $\tau$, even when we allow $\mathcal{A}$ to vary in its Brauer class (see \cite[Remark 1.2]{first2025counterexamplesinvolutionsazumayaalgebras}). We used this local description to motivate what the Brauer trivial classes were. This in principle is surmountable via the classification of types in \cite[Section 5]{azumaya_involution}. The second issue, however, is that this local triviality was used in a key way in producing a comparison of the involutive Brauer group to computable, cohomological invariants. 

To say more about this second point, in \cite{Parimala_Srinivas} the maps $\mathrm{Br}(X,\mathrm{id}_X)\hookrightarrow \mathrm{H}^0_{\Acute{e}t}(X;\mu_2)\times \mathrm{H}^2_{\Acute{e}t}(X;\mu_2)$ and $\mathrm{Br}(X,\lambda)\hookrightarrow \mathrm{H}_{\Acute{e}t}^2(X;\mathcal{G})$ were constructed by identifying isomorphism classes $(A,\sigma)$ of degree $n$ Azumaya algebras $A$ with involution $\sigma$ with torsors for certain group schemes $\mathrm{PO}(n,\epsilon)$. This identification relies on the local identifications of Proposition~\ref{prop: local description of type I Azumaya algebras} and Proposition~\ref{prop: local description of type II Azumaya algebras}. While we still have a torsor description of Azumaya algebras via \cite[Corollary 5.2.19]{azumaya_involution}, the identification of group schemes in question is much more subtle because of the counterexamples constructed in \cite{first2025counterexamplesinvolutionsazumayaalgebras}.

\subsection{Invertible \texorpdfstring{$ R $}{R}-linear Poincaré \texorpdfstring{$\infty$}{∞}-categories}\label{subsection:invertible_relative_Poincare_cats}
Recall that a Poincaré $\infty$-category is called idempotent complete if the underlying stable $\infty$-category is idempotent complete. The full subcategory of $\Catp$ spanned by idempotent complete Poincaré $\infty$-categories is denoted by $\Catpidem$ \cite[Definition 1.3.2]{CDHHLMNNSII}.

\begin{definition}
    \label{definition:poincare_brauer_space}
    Let $R$ be a Poincaré ring spectrum. We define the \emph{Poincaré Brauer space of $R$} as $$\Brp(R):=\mathfrak{pic}\left(\Mod_{\Mod^p_R}\left(\Catpidem\right)\right)\,,$$
    where $ \Mod^p $ is the functor of Theorem \ref{thm:calgp_to_poincare_cat}\ref{thmitem:calgp_to_poincare_cat_with_tensor}. 
    The assignment $ R \mapsto \Brp(R) $ defines a functor
    \begin{equation*}
        \Brp \colon \CAlgp \to \CAlg^{\gp}(\Spaces)
    \end{equation*}
    valued in grouplike $ \Einfty $-spaces. 
    
    Let $ (X, \lambda, Y, p) $ be a scheme with involution and a good quotient. 
    We define the \emph{Poincaré Brauer space of $ X $ with respect to $ p $} as  
    \begin{equation*}
        \Brp(X) := \lim_{ j \colon \Spec R \to Y } \Brp\left(\underline{\Gamma(j^*(X))}^{j^*\sigma} \right)
    \end{equation*}
    where the limit is over all étale $ j $ and $ \Gamma(j^*(X)) $ is a ring with involution by affineness of $ p $, hence $ R \to \Gamma(j^*(X)) $ is a Poincaré ring via Example \ref{ex:fixpt_Mackey_functor}. 
\end{definition}
\begin{remark}
    \emph{A priori}, it is not clear that the two definitions of the Poincaré Brauer space in Definition \ref{definition:poincare_brauer_space} agree when $ X = \Spec A $ is an affine scheme. 
    We show that these definitions do in fact agree, and further we have an identification \[\mathfrak{br}^\mathrm{p}(X,\lambda, Y, p)\simeq \mathfrak{pic}\left(\Mod_{\left(\Mod_{\mathcal{O}_X}^\omega,\Qoppa_{\underline{\mathcal{O}}}\right)}(\Catpidem)\right)\] by Corollary~\ref{cor: two definitions of poincare brauer agree}. 
    The key point is that while the non-derived Brauer group and its involutive counterpart do not satisfy \'etale descent, the Brauer space and the Poincaré Brauer space do. 
    We will therefore focus our attention in the remainder of this section on proving results for Poincar{\'e} rings, and by \'etale descent the same results will hold for schemes with good quotients. 
\end{remark}
\begin{observation}\label{obs:symmetric_relative_poincare_cat}
    Let $ \left(\mathcal{C},\Qoppa\right) $ be a symmetric monoidal Poincaré $ \infty $-category. 
    Taking the bilinear part $ B_\Qoppa $ of $ \Qoppa $, $ \left(\mathcal{C},B_\Qoppa\right) $ is an $ \Einfty $-monoid in the $ \infty $-category of perfect symmetric $ \infty $-categories\footnote{Usual naming conventions suggest the unwieldy moniker ``symmetric monoidal perfect symmetric $ \infty $-category.''} (see Recollection \ref{rec:symmetric_Poincare_cat}). 
    The forgetful functor $ \Catp \to \Cat^{\mathrm{ps}} $ lifts to a symmetric monoidal forgetful functor $ \Mod_{\left(\mathcal{C},\Qoppa\right)} \left(\Catp\right) \to \Mod_{\left(\mathcal{C},B_\Qoppa\right)}\left(\Cat^{\mathrm{ps}}_\infty\right) $. 
    Suppose $ (X,\lambda,Y,p) \in \mathrm{qSch}^{C_2} $ and let $ \left(\mathcal{C},\Qoppa\right) \in \Mod_{\left(\Mod_{\mathcal{O}_X}^\omega,\Qoppa_{\underline{\mathcal{O}}}\right)}(\Catpidem) $. 
    Now the collection of small stable idempotent-complete symmetric monoidal $ \mathcal{O}_X $-linear $ \infty $-categories has a $ C_2 $-action where the generator acts by sending $ \mathcal{D} $ to $ \lambda^* \mathcal{D}^\op $. 
    Then $ \left(\mathcal{C},B_\Qoppa\right) $ defines a $ \lambda^* $-linear equivalence $ D_\Qoppa \colon \mathcal{C} \simeq \mathcal{C} $ exhibiting $ \mathcal{C} $ as a $ C_2 $-homotopy fixed point of this action. 
\end{observation}
\begin{proposition}\label{prop:loops_pnbr_is_pnpic}
    Let $R$ be a Poincaré ring spectrum. Then we have a canonical equivalence $$\Omega \Brp(R) \simeq \Picp(R)$$ of connective spectra.
\end{proposition}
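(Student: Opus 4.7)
The plan is to identify both sides with the space of Poincaré automorphisms of a single object and then deloop. First I would recall that for any symmetric monoidal $\infty$-category $\mathcal{C}$, one has $\Omega \mathfrak{pic}(\mathcal{C}) \simeq \mathrm{gl}_1(\mathrm{End}_{\mathcal{C}}(\mathbf{1})) = \mathrm{Aut}_{\mathcal{C}}(\mathbf{1})$, where the equivalence is via identifying the loop space at the unit with the $\infty$-group of units of the $\mathbb{E}_\infty$-monoid of endomorphisms. In our setting, the symmetric monoidal structure on $\Catpidem_R$ is that of Proposition~\ref{prop:relative_poincare_cats_basic_properties}\ref{propitem:Rlin_Poincare_cats_is_symm_mon}, and the unit is $\mathrm{Mod}_R^\mathrm{p} = (\mathrm{Mod}_{R^e}^\omega, \Qoppa_R)$ viewed as a module over itself. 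Hence
\begin{equation*}
    \Omega \Brp(R) \simeq \mathrm{Aut}_{\Catpidem_R}\bigl(\mathrm{Mod}_R^\mathrm{p}\bigr).
\end{equation*}

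Next I would apply the Poincaré Fourier--Mukai identification of Corollary~\ref{cor:poincare_fourier_mukai} with $A=B=R^e$ (equipped with its tautological $R$-linear module with genuine involution). Since $R^{e,\op}\otimes_{R^e}R^e \simeq R^e$, this yields a natural equivalence
\begin{equation*}
    \mathrm{End}_{\Catpidem_R}\bigl(\mathrm{Mod}_R^\mathrm{p}\bigr) \simeq \mathrm{Pn}\bigl(\mathrm{Mod}_{R^e}^\omega, \Qoppa_R\bigr),
\end{equation*}
under which the identity functor corresponds to the canonical Poincaré object $(R^e, u)$ of Remark~\ref{remark:poincare_ring_spectra_to_modules_with_Poincare_structure}. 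Morita theory identifies $R$-linear endofunctors of $\mathrm{Mod}_{R^e}^\omega$ with tensoring by $R^e$-$R^e$-bimodules, and such a functor is an equivalence precisely when the bimodule is invertible; tracking this through the Fourier--Mukai equivalence, invertible endomorphisms of the unit correspond exactly to invertible Poincaré objects, so passing to unit components gives
\begin{equation*}
    \mathrm{Aut}_{\Catpidem_R}\bigl(\mathrm{Mod}_R^\mathrm{p}\bigr) \simeq \mathfrak{pic}\bigl(\mathrm{Pn}(\mathrm{Mod}_R^\mathrm{p})\bigr) = \mathcal{P}\mathrm{ic}^\mathrm{p}(R).
\end{equation*}

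Finally I would upgrade this from an equivalence of spaces to an equivalence of connective spectra. Both $\Omega \Brp(R)$ and $\Picp(R)$ are connective spectra whose underlying infinite loop spaces are the two sides above; by Lemma~\ref{lemma:hermitian_poincare_objects_in_relative_functor_cats} (which provides the equivalence of $\mathbb{E}_\infty$-spaces, since the internal mapping object in $\Catpidem_R$ inherits a symmetric monoidal structure from the fact that we are computing endomorphisms of the tensor unit) the Fourier--Mukai equivalence is one of $\mathbb{E}_\infty$-spaces, and Picard-spectrum-formation is functorial in symmetric monoidal equivalences, so the resulting equivalence promotes to one of connective spectra, and it is natural in $R \in \CAlgp$ by construction.

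The one subtle point, which I would treat with care, is the compatibility of the Fourier--Mukai identification with the $\mathbb{E}_\infty$-structure on both sides coming from the symmetric monoidal structure of $\Catpidem_R$; this is essentially the content of the enriched Yoneda-type argument used to produce the internal mapping object in Proposition~\ref{prop:relative_poincare_cats_basic_properties}\ref{prop_item:internal hom in R-linear poincare cats}, but one must ensure that the $\mathbb{E}_\infty$-structure on $\mathrm{End}(\mathbf{1})$ arising from composition agrees with the one arising from the symmetric monoidal structure of $\mathrm{Pn}(\mathrm{Mod}_R^\mathrm{p})$ — this is the standard Eckmann--Hilton observation for endomorphisms of the unit in a symmetric monoidal $\infty$-category.
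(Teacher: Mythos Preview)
Your proof is correct and follows essentially the same route as the paper: both identify $\Omega\Brp(R)$ with automorphisms of the unit $\Mod_R^\mathrm{p}$ in $\Catpidem_R$, then identify these with invertible Poincar\'e objects. The paper uses the corepresentability result (Proposition~\ref{prop:forms_are_corepresented}) directly where you invoke the Fourier--Mukai corollary, but these amount to the same thing since the latter is proved via the former. The one place the arguments differ slightly is in handling the connective spectrum structure: the paper constructs the comparison map $\Picp(R)\to\Omega\Brp(R)$ explicitly as ``tensor by $(\mathcal{L},q)$'', which is visibly a map of grouplike $\EE_\infty$-spaces, and then checks it is an equivalence on underlying spaces; you instead argue via Eckmann--Hilton that the identification of $\mathrm{End}(\mathbf{1})$ with $\mathrm{Pn}$ is $\EE_\infty$. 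Both are fine, though the paper's route sidesteps the need to verify compatibility of the two $\EE_\infty$-structures on $\mathrm{End}(\mathbf{1})$.
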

\begin{proof} 
    Since $ \Omega\Brp(R) $ is given by the space of automorphisms of any object in $ \Brp(R) $, it suffices to determine the space of autoequivalences of $ \left(\Mod_{R^e}^\omega, \Qoppa_R \right) $. 
    Note that there is a natural transformation $\mathfrak{pic}^\mathrm{p}(-)\to \Omega \mathfrak{br}^\mathrm{p}(-)$ of connective spectra given by sending a Poincar{\'e} line bundle $(\mathcal{L},q)$ to the autoequivalence $-\otimes(\mathcal{L},q)$. To show that this is an equivalence we may therefore reduce to showing that this map is an equivalence of underlying spaces.
    By Proposition \ref{prop:forms_are_corepresented}, we have that an endomorphism of $(\mathrm{Mod}_{R^e}^\omega,\Qoppa_R)$ is equivalent to an element of $\mathrm{Pn}(\mathrm{Mod}_{R^e}^\omega, \Qoppa_R)$. Chasing through definitions, this equivalence is given by sending an endomorphism to its value on $(R^e,u)$, and so the composition $\mathfrak{pic}^\mathrm{p}(R)\to \Omega \mathfrak{br}^\mathrm{p}(R^e)\to \mathrm{Pn}(\mathrm{Mod}_{R^e}^\omega, \Qoppa_R)$ is the usual inclusion. By the functoriality of $\mathrm{Pn}$ and the fact that any endomorphism of $(\mathrm{Mod}^\omega_{R^e},\Qoppa_R)$ is $(\mathrm{Mod}_{R^e}^\omega,\Qoppa_R)$-linear, we see that the composition of two maps corresponds to the tensor product of the two corresponding elements of $\mathrm{Pn}(\mathrm{Mod}_{R^e}^\omega, \Qoppa_R)$. Thus $\Omega\mathfrak{br}^\mathrm{p}(R)$ is a subspace of $\mathfrak{pic}^\mathrm{p}(R)$, from which we conclude the desired result. \qedhere
\end{proof}

As in the Picard group case, the symmetric monoidal forgetful functor $\theta \colon \Catp_{, R} \to \Catex_{, R^e} $ induces a map of spectra $ \theta \colon \Brp(R)\to \mathfrak{br}(R^e)$. 
When $ R^e $ is endowed with the trivial action, $ \theta $ will factor through the $2$-torsion on $\pi_0$. 
As a consequence of Proposition~\ref{prop:relative_poincare_cats_basic_properties}\ref{propitem:classify_R_lin_hermitian_struct_mod_cat} we can identify the fiber of this map.
\begin{corollary}\label{cor:Poincare_Brauer_to_Brauer_fiber}
    Let $R$ be a Poincar{\'e} ring with underlying genuine $C_2$ spectrum $R^L$ as in Proposition~\ref{prop:relative_poincare_cats_basic_properties}\ref{propitem:classify_R_lin_hermitian_struct_mod_cat}. 
    Write $ \lambda \colon R^e \simeq R^e $ for the $ C_2 $-action on the underlying $ \EE_\infty $-ring associated to $ R $. 
    Then the fiber of the map \[\theta \colon \Brp(R)\to \mathfrak{br}(R^e)\] can be naturally identified with $ \mathfrak{pic} \left(\mathrm{Mod}_{R^L}\left(\mathrm{Sp}^{C_2}\right) \right)  $. 
    Moreover, the connecting map $ \Omega \mathfrak{br}(R^e) \simeq \mathfrak{pic}(R^e) \to \mathrm{fib}(\theta) $ is induced by the norm $ \Mod_{R^e}^\op \to \Mod_{R^L}\left(\Spectra^{C_2}\right) $, $ X \mapsto N^{C_2} (X) \otimes_{N^{C_2}R^e} R^L $, which on underlying spectra is given by $ X \mapsto X \otimes_{R^e} \lambda^* X $. 
\end{corollary}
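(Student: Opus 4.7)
The plan is to compute the fiber of $\theta$ over the basepoint (the Brauer class of $\left(\Mod^\omega_{R^e}, \Qoppa_R\right)$, whose image under $\theta$ is the class of $\Mod^\omega_{R^e}$) and then read off the connecting map from the resulting long exact sequence. Since $\theta$ is a map of grouplike $\EE_\infty$-spaces, or equivalently connective spectra, it suffices to identify the fiber over the unit; this will automatically be the fiber over every other point up to translation, and the fiber sequence $\mathrm{fib}(\theta) \to \Brp(R) \to \mathfrak{br}(R^e)$ then yields the stated connecting map upon looping.

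For the first step I would use the fact that $\Brp(R) = \mathfrak{pic}\left(\Mod_{\left(\Mod^\omega_{R^e},\Qoppa_R\right)}\left(\Catpidem\right)\right)$ together with the general principle that the fiber of $\mathfrak{pic}(F)$ at a unit, for a symmetric monoidal functor $F$, is the subspace of the fiber category spanned by invertible objects equivalent to the unit after applying $F$. Concretely, $\mathrm{fib}(\theta)$ is identified with the subspace of the mapping space from the unit to itself in $\Catpidem_R$ living over the identity of $\Mod^\omega_{R^e}$; equivalently, it is the space of $R$-linear Poincaré structures $\Qoppa$ on $\Mod^\omega_{R^e}$ for which $\left(\Mod^\omega_{R^e},\Qoppa\right)$ is an invertible object of $\Catpidem_R$. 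By Proposition~\ref{prop:relative_poincare_cats_basic_properties}\ref{propitem:classify_R_lin_hermitian_struct_mod_cat} applied with $A = R^e$, this space of $R$-linear hermitian structures is canonically equivalent to $\Mod_{N^{C_2}R^e \otimes_{N^{C_2}R^e} R^L}\left(\Spectra^{C_2}\right) \simeq \Mod_{R^L}\left(\Spectra^{C_2}\right)$. Imposing the Poincaré and invertibility conditions picks out exactly those $R^L$-modules whose underlying $R^e$-module is invertible, and by the compatibility of the symmetric monoidal structures we land in $\mathfrak{pic}\left(\Mod_{R^L}\left(\Spectra^{C_2}\right)\right)$.

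For the connecting map, I would use that looping the fiber sequence yields $\Omega \mathrm{fib}(\theta) \to \Omega \Brp(R) \to \Omega \mathfrak{br}(R^e)$, and identify $\Omega \Brp(R) \simeq \Picp(R)$ via Proposition~\ref{prop:loops_pnbr_is_pnpic} and $\Omega \mathfrak{br}(R^e) \simeq \mathfrak{pic}(R^e)$ by \cite[Proposition 7.6]{MR3190610}. The connecting map $\mathfrak{pic}(R^e) \to \mathrm{fib}(\theta)$ can then be computed on loops: an invertible $X \in \mathfrak{pic}(R^e)$ corresponds to the autoequivalence $-\otimes_{R^e} X$ of $\Mod^\omega_{R^e}$, and its lift to an autoequivalence of $\left(\Mod^\omega_{R^e},\Qoppa_R\right)$ in $\Catpidem_R$ is obstructed by the difference between the pulled-back Poincaré structure $\left(-\otimes X\right)^* \Qoppa_R$ and $\Qoppa_R$ itself. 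Under the equivalence of the previous paragraph, this difference is precisely the $R^L$-module classifying $\left(-\otimes X\right)^* \Qoppa_R \otimes \Qoppa_R^{-1}$, which by the module-theoretic description of $\Qoppa_R$ and the multiplicativity of $N^{C_2}$ is $N^{C_2}(X) \otimes_{N^{C_2}R^e} R^L$.

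The main obstacle is the last identification: making precise how the autoequivalence $-\otimes_{R^e} X$ acts on $R$-linear Poincaré structures when translated into the classification by $R^L$-modules in genuine $C_2$-spectra. The cleanest route is via Proposition~\ref{prop:relative_poincare_cats_basic_properties}\ref{propitem:Rlin_Poincare_cats_tensor_mod_gen_inv}, which computes the tensor product of $R$-linear Poincaré module categories in terms of modules with genuine involution: applied to $\left(\Mod^\omega_{R^e},-\otimes X\right)$, it produces exactly the norm $N^{C_2}X \otimes_{N^{C_2}R^e} R^L$, giving the stated formula for the connecting map on underlying spectra as $X \mapsto X \otimes_{R^e} \lambda^* X$.
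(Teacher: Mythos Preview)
Your proposal is correct and follows essentially the same approach as the paper: both identify the fiber via the classification of $R$-linear Poincaré structures on $\Mod^\omega_{R^e}$ (Proposition~\ref{prop:relative_poincare_cats_basic_properties}\ref{propitem:classify_R_lin_hermitian_struct_mod_cat}), invoke \ref{propitem:Rlin_Poincare_cats_tensor_mod_gen_inv} for invertibility, and compute the connecting map by tracking how the autoequivalence $-\otimes_{R^e} X$ transports the Poincaré structure. The paper additionally makes explicit that $\theta$ is an isofibration (hence a Kan fibration on cores) to justify replacing the homotopy fiber by the strict fiber over $\Mod^\omega_{R^e}$, and phrases the final identification of the connecting map via Proposition~\ref{prop:classify_R_linear_Poincare_functors} rather than \ref{propitem:Rlin_Poincare_cats_tensor_mod_gen_inv}, but these are minor presentational differences.
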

\begin{proof}[Proof of Corollary \ref{cor:Poincare_Brauer_to_Brauer_fiber}]
    Since $\theta:\mathrm{Mod}_{(\Mod_{R^e}^\omega, \Qoppa_R)}(\Catpidem)\to \mathrm{Mod}_{\mathrm{Mod}_{R^e}^\omega}(\Catex)$ is symmetric monoidal and conservative, it induces a map $ \theta^\simeq \colon \pnbr(R) \to \mathfrak{br}(R^e) $ on the groupoid core of invertible objects. 
    Now observe that $ \theta $ is an isofibration; it follows that $ \theta^\simeq $ is a Kan fibration by \cite[\href{https://kerodon.net/tag/01EZ}{Proposition 01EZ}]{kerodon}. 
    Consequently, to identify the homotopy fiber of $ \theta $, it suffices to identify the fiber of $ \theta $ over a single point. 
    Consider $ \left(\Mod_{R^e}^\omega, \Qoppa\right) $ a point in the fiber of $ \theta $ over $ \Mod_{R^e}^\omega $. 
    By Proposition~\ref{prop:relative_poincare_cats_basic_properties}\ref{propitem:classify_R_lin_hermitian_struct_mod_cat}, $ \Qoppa $ is associated to an $ R $-linear invertible module with involution $ \left(M, N, N \to M^{\mathrm{t}C_2} \right) $. 
    By Proposition~\ref{prop:relative_poincare_cats_basic_properties}\ref{propitem:Rlin_Poincare_cats_tensor_mod_gen_inv}, invertibility of $ \left(\Mod_{R^e}^\omega, \Qoppa\right) $ implies that $ \left(M, N, N \to M^{\mathrm{t}C_2}\right) $ is invertible as a module over $ R^L $. 
    
    Now we give the description of the connecting map. 
    Write $ \left(\Mod_{R^e}^\omega, \Qoppa_R\right) $ for the identity element in the fiber of $ \theta $ over $ \Mod_{R^e}^\omega $, and let $ \gamma \colon S^1 \to \mathfrak{br}(R^e) $ be a loop based at the unit. 
    Write $ \mathcal{L}_\gamma $ for $ \gamma $ regarded as a point in $ \mathfrak{pic}(R^e) \simeq \Omega \mathfrak{br}(R^e) $. 
    Lift $ \gamma $ to a path $ \widetilde{\gamma} $ in $ \pnbr(R) $ starting at $ \left(\Mod_{R^e}^\omega, \Qoppa_R\right) $, and write $ \left(\Mod_{R^e}^\omega, \Phi\right) $ for the other endpoint of $ \widetilde{\gamma} $. 
    By Proposition~\ref{prop:relative_poincare_cats_basic_properties}\ref{propitem:classify_R_lin_hermitian_struct_mod_cat}, $ \Qoppa_R $ is associated to the invertible $ R^L $-module with involution $ R^L $ and $ \Phi $ is associated to some invertible $ R^L $-module with involution $ (M, N, N \to M^{\mathrm{t}C_2}) $. 
    We may regard $ \widetilde{\gamma} $ as an $ R $-linear hermitian equivalence from $ \left(\Mod_{R^e}^\omega, \Qoppa_R\right) $ to $ \left(\Mod_{R^e}^\omega, \Phi\right) $, which by Proposition \ref{prop:relative_poincare_cats_basic_properties}\ref{prop_item:R_linear_poincare_cats_maps} consists of an $ R $-linear hermitian functor $ (F,\eta \colon \Qoppa_R \to \Phi \circ F) $ so that $ F, \eta $ are both equivalences. 
    Since $ \widetilde{\gamma} $ projects to $ \gamma $, we must have that $ F = - \otimes \mathcal{L}_\gamma $. 
    Now the natural equivalence $ \eta $ classifies an equivalence $ R^L \simeq \hom_{R^L}(N^{C_2}_R(\mathcal{L}_\gamma), (M,N, N \to M^{\mathrm{t}C_2})) $ of $ R^L $-modules (Proposition \ref{prop:classify_R_linear_Poincare_functors}). Using the invertability of $\mathcal{L}_\gamma$ and chasing through the definition of the connecting map then gives the result. 
\end{proof}
		
\begin{example}\label{ex: pnbr of sphere}
    Let $\mathbb{S}^u$ denote the universal Poincar{\'e} structure on the sphere spectrum of Example~\ref{example:universal_poincare_ring_spectrum}. 
    By Corollary~\ref{cor:Poincare_Brauer_to_Brauer_fiber}, we have a fiber sequence \[\mathfrak{pic}\left(\mathrm{Sp}^{C_2}\right)\to \pnbr(\mathbb{S}^u)\to \mathfrak{br}(\mathbb{S})\] and by \cite[Corollary 7.17]{MR3190610} we have that $\mathrm{Br}(\mathbb{S})=0$. Therefore we get a long exact sequence \[
    \begin{tikzcd}[row sep=small]
        \ldots \arrow[r] & \pi_1(\pnbr(\mathbb{S}^u)) \arrow[r] \arrow[d, "{\rotatebox{90}{$\sim$}}"] & \pi_1(\mathrm{br}(\mathbb{S})) \arrow[r] \arrow[d, "{\rotatebox{90}{$\sim$}}"] & \Pic\left(\mathrm{Sp}^{C_2}\right) \arrow[r] \arrow[d, "{\rotatebox{90}{$\sim$}}"'] & \pi_0(\pnbr(\mathbb{S}^u)) \ar[r] & 0 \\
        & \mathbb{Z}/2      & \mathbb{Z}                                                  & \mathbb{Z}\times\mathbb{Z}                                  &                            &  
    \end{tikzcd}
    \] where the third isomorphism is given by $ X \mapsto (\dim X^e, \dim X^{\varphi C_2}) $ \cite[Section 8.1]{Krause_picard}. 
    The description of the connecting map in Corollary~\ref{cor:Poincare_Brauer_to_Brauer_fiber} implies that $\mathbb{Z}\to \mathbb{Z}\times \mathbb{Z}$ is the map $1=[\mathbb{S}^1]\mapsto [\mathbb{S}^\rho]=(2,1)$; it follows that $\pi_0(\pnbr(\mathbb{S}^u))\cong \mathbb{Z}$. 
    Consider the generator given by the coset of $ (1,1) $. 
    The map $\Pic\left(\mathrm{Sp}^{C_2}\right) \rightarrow \pi_0(\pnbr(\mathbb{S}^u))$ is induced by the map $\Qoppa_{(-)}$ of Theorem \ref{thm: genuine equivarient modules are hermitian structures}, which is symmetric monoidal and commutes with suspensions: $\Qoppa_{\Sigma(-)}\simeq \Sigma\Qoppa_{(-)}=\Qoppa_{(-)}^{[1]}$. By the above analysis, the elements of $\pi_0(\pnbr(\mathbb{S}^u))$ can be represented by suspensions of $\Qoppa_{\mathbb{S}}=\mathbb{S}^{u}$. 
    Thus, a generator of $\pi_0(\pnbr(\mathbb{S}^u))$ is given by $\left(\Spectra^\omega, \Qoppa^{u,[1]}\right)$.
\end{example}
\begin{example}\label{ex:pnbr_closed_point_ramified}
    Let $ k $ be an algebraically closed field, and regard $ k $ as a Poincar\'e ring $ \underline{k} $ via Example \ref{ex:fixpt_Mackey_functor} with the trivial involution.  
    Then 
    \begin{equation*}
        \pi_0\pnbr(\underline{k}) \simeq \begin{cases}
            \ZZ/4\ZZ & \text{ if }\mathrm{char}\, k \neq 2 \\
            \ZZ & \text{ if }\mathrm{char}\, k = 2 \,.
        \end{cases} 
    \end{equation*}
    To see this, note that by \cite[Proposition 1.9]{MR2957304}, $ \pi_0 \mathfrak{br}(k^e) \simeq 0 $. 
    Thus by Corollary~\ref{cor:Poincare_Brauer_to_Brauer_fiber}, it suffices to understand the fiber sequence \[\mathfrak{pic}(k)\to \mathfrak{pic}\left(\Mod_{\underline{k}}\left(\mathrm{Sp}^{C_2}\right)\right) \to \pnbr(\underline{k}). \] 
    
    Now if char $ k \neq 2 $, then $ \mathfrak{pic}(\underline{k}) \simeq \mathfrak{pic}(k)^{\mathrm{h}C_2} \simeq (\ZZ \times B k^\times)^{\mathrm{h}C_2} $ where the $ C_2 $-action is the base change map of \ref{C2action:basechange}; in particular, the generator of $ C_2 $ acts on $ \pi_1 B k^\times $ trivially. 
    We then have that $H^1(C_2,k^\times)=\mu_2(k)\cong \mathbb{Z}/2\mathbb{Z}$ since $k$ is algebraically closed. 
    We can then deduce that
    \begin{equation*}
        \pi_0 \pic(\Mod_{\underline{k}}(\mathrm{Sp}^{C_2})) \simeq \begin{cases}
            \ZZ \times \ZZ & \text{ if }\mathrm{char}\, k = 2 \\
            \ZZ\times \mu_2(k) & \text{ otherwise }
        \end{cases}
    \end{equation*}
    where the $\mathrm{char}(k)=2$ case follows from an argument similar to \cite[Section 8.1]{Krause_picard}. 
    Let us write $ \pm 1 $ for the elements of $ \mu_2 $. 
    It remains to compute the map $\pic(k)\to \pic(\mathrm{Mod}_{\underline{k}}(\mathrm{Sp}^{C_2}))$ on $ \pi_0 $. 
    This map is induced by the map $[\Sigma k]\mapsto [(N^{C_2}\Sigma k)\otimes_{N^{C_2}k}\underline{k}]\simeq [\Sigma^\rho \underline{k}]$. In the case when $\mathrm{char}\,k\neq 2$ this is the element $(2,-1)$, and the cokernel of this map is $\ZZ/4\ZZ$ generated by the class $(1,-1)$.
    On the other hand, if char $ k = 2 $, then the map $ \pi_0 \pic(k) \to \pi_0\pic(\underline{k}) $ is $ \ZZ \xrightarrow{n \mapsto (2n, n)} \ZZ \times \ZZ $. 
    
    Suppose that char $ k \neq 2 $.  
    Then the Poincaré $ \infty $-category corresponding to the element $ (0, -1) \in \mathbb{Z} \times \mu_2 $ is $ \left(\Mod_k^\omega, \Qoppa_{-k}\right) $, where the notation $ -k $ is that of the discussion between Definition 3.5.12 and Example 3.5.14 of \cite{CDHHLMNNSI} (in particular, see Example 3.5.14(i) of \emph{loc. cit.}).  
    In other words, a sign is introduced into the involution. 
    The presence of this generator (unlike the case when char $ k = 2 $) reflects the distinction between symmetric and skew symmetric forms when $ 2 \neq 0 $. 
    That this class is 2-torsion reflects that $ (-1)^2 = 1 $. 
    The Poincaré $ \infty $-category corresponding to the element $ (1, 1) \in \mathbb{Z} \times \mu_2 $ is $ \left(\Mod_k^\omega, \Qoppa_{\underline{k}}^{[1]}\right) $ (see Example \ref{ex:shifted_Poincare_structure}). 
    By a relative version of \cite[Corollary 5.4.6]{CDHHLMNNSI}, $ \left(\Mod_k^\omega, \Qoppa_{\underline{k}}^{[1]}\right) \otimes_{\mathrm{Mod}_{\underline{k}}^{\mathrm{p}}}  \left(\Mod_k^\omega, \Qoppa_{\underline{k}}^{[1]}\right) \simeq \left(\Mod_k^\omega, \Qoppa_{\underline{k}}^{[2]}\right) $, and the latter is equivalent to $ \left(\Mod_k^\omega, \Qoppa_{-k}\right) $ by \cite[Proposition 3.5.3]{CDHHLMNNSI}. 
\end{example}
In contrast to \cite[Theorem 1]{Parimala_Srinivas}, Example \ref{ex:pnbr_closed_point_ramified} shows that the Poincaré Brauer group is not 2-torsion. 
We explain the reason for this in the following remark. 
\begin{remark}\label{rmk:non_classical_PnBr_classes}
    The 4-torsion class $ \left(\Mod_k^\omega, \Qoppa^{[1]}\right)$ (char $ k \neq 2 $) in Example \ref{ex:pnbr_closed_point_ramified} does not arise as perfect modules over an ordinary Azumaya algebra with involution or central Wall anti-structure. 
    Roughly, the reason is: For any Poincaré object $ (P,q) \in \left(\Mod_k^\omega, \Qoppa^{[1]}\right) $, we have an equivalence $ q^\dagger \colon P \simeq P^\vee [1] $. 
    Therefore, the underlying $ k $-module $ P $ cannot be concentrated in a single degree.  
    It follows that the endomorphism algebra\footnote{We will discuss endomorphisms of Poincaré objects in \S\ref{subsection:generalized_Azumaya_alg_with_involution}.} of any Poincaré object whose underlying object generates $ \Mod_k^\omega $ will not be discrete either. 
    However, the 2-torsion class $ \left(\Mod_k^\omega, \Qoppa^{[1]}\right) \otimes_{\mathrm{Mod}_{\underline{k}}^{\mathrm{p}}}  \left(\Mod_k^\omega, \Qoppa^{[1]}\right)  =  \left(\Mod_k^\omega, \Qoppa_{-k}\right) $ does arise as perfect modules over a classical Azumaya algebra with involution. 
    Therefore, the subgroup of $ \pnbr(\underline{k})\simeq \ZZ/4\ZZ $ which is ``seen'' by classical Azumaya algebras with involution is still 2-torsion! 
    
    Let us now give an indication as to why the argument used to show that $ \left(\Mod_k^\omega, \Qoppa^{[1]}_{\underline{k}} \right) $ is not associated to a discrete Azumaya algebra with involution does not apply to $ \left(\Mod_k^\omega, \Qoppa^{[2]}_{\underline{k}} \right) $, despite the presence of the shift. 
    While a priori a Poincaré object $ (P,q) $ of $ \left(\Mod_k^\omega, \Qoppa^{[2]}_{\underline{k}} \right) $ must have a shifted duality pairing $ q^\dagger: P \simeq P^\vee [2] $, we note that in fact this induces an \emph{un}shifted duality pairing on a different generator: $ q^\dagger[-1] : P[-1] \simeq P^\vee[1] \simeq (P[-1])^\vee $. 
    More details are required to show that $ q^\dagger[-1] $ defines a(n antisymmetric) form on $ P[-1] $; we refer the reader to \cite[\S3.5]{CDHHLMNNSI} for a precise account. 
\end{remark}
\begin{example}\label{ex:pnbr_closed_pt_unramified}
    Let $ k $ be an algebraically closed field, and consider the Poincar\'e ring associated to $ \prod_{C_2} k $ (i.e. $ k \times k $ with the swap action) via Example \ref{ex:fixpt_Mackey_functor}. 
    Similarly to Example \ref{ex:pnbr_closed_point_ramified}, by \cite[Proposition 1.9]{MR2957304} we have $ \pi_0 \mathrm{br}(\Spec k \sqcup \Spec k) \simeq \pi_0 \mathrm{br}(\Spec k)^{\times 2} = 0 $. 
    Thus by Corollary \ref{cor:Poincare_Brauer_to_Brauer_fiber}, it suffices to understand the cokernel of the connecting homomorphism $ \pi_1 \mathrm{br}(k \times k) \to \pi_0 \Pic(\prod_{C_2} k) $. 
    Now since $ \prod_{C_2} k $ is Borel and $(\prod_{C_2}k)^{\mathrm{t}C_2}=0$, 
    \begin{equation*}
        \pic\left(\Mod_{\prod_{C_2} k}\left(\Spectra^{C_2}\right) \right) \simeq \pic \Mod_{k \times k}(\Spectra)^{\mathrm{h}C_2} \simeq \left(\prod_{C_2} (\ZZ \oplus  k^\times[1])\right)^{\mathrm{h}C_2} \simeq \ZZ \oplus k^\times [1] 
    \end{equation*}
    thus $ \pi_0 \pic\left(\Mod_{\prod_{C_2} k}\left(\Spectra^{C_2}\right) \right) \simeq \ZZ $. 
    On the other hand, $ \pi_1 \mathrm{br}(k \times k) \simeq \pi_0 \Pic(k \times k) \simeq \ZZ^{\times 2} $ and the connecting homomorphism is $ (n, m) \mapsto n+m $, whence $ \pi_0\pnbr\left(\prod_{C_2} k\right) = 0 $. 
\end{example}

We end this section by showing that the Poincar\'e Brauer space satisfies $C_2$-\'etale descent in the sense of Definition~\ref{defn:C2_etale_topology}.
\begin{corollary}\label{cor:pnbr_as_etale_sheaf_affine_spectral}
    Let $\mathcal{C}$ denote either the category of schemes with good quotients or the category of Poincar{\'e} rings. Write $ \pnbr $ for the composite functor $ \mathrm{\acute{E}t}_R \xrightarrow{\Mod^\mathrm{p}} \EE_\infty\Alg\left(\Catpidem_{, R}\right) \xrightarrow{\pnbr} \Spectra_{\geq 0} $, where $ \Mod^\mathrm{p} $ is from Notation \ref{notation:poincare_ring_basechange} or Notation~\ref{notation:scheme_involution_basechange} and $ \pnbr $ is Definition \ref{definition:poincare_brauer_space}. 
    Then $ \pnbr $ is a $C_2$-\'etale hypersheaf. 
\end{corollary}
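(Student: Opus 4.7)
The plan is to reduce the statement to the descent result already established in Proposition \ref{prop:Poincare_modules_as_etale_sheaf}. By Definition \ref{definition:poincare_brauer_space}, $\pnbr(R) \simeq \mathfrak{pic}\bigl(\Mod_{\Mod^\mathrm{p}_R}(\Catpidem)\bigr)$. The Picard spectrum functor $\mathfrak{pic}: \EE_\infty\Mon(\Cat_\infty) \to \Spectra_{\geq 0}$ preserves small limits, since forming the full subcategory on invertible objects and passing to the maximal subgroupoid are both limit-preserving operations, and $\mathfrak{pic}$ is the composite of these with the grouplike localization. Consequently it suffices to prove that the assignment $R \mapsto \Mod_{\Mod^\mathrm{p}_R}(\Catpidem)$ is a $C_2$-étale hypersheaf on $\CAlgp$; the scheme case then follows via Notation \ref{notation:scheme_involution_basechange} and Proposition \ref{prop:Poincare_modules_as_etale_sheaf}\ref{prop:Poincare_modules_as_etale_sheaf_inv_scheme} by an identical argument.

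Given a $C_2$-étale hypercover $R \to S^\bullet$, Proposition \ref{prop:Poincare_modules_as_etale_sheaf}\ref{prop:Poincare_modules_as_etale_sheaf_affine_spectral} provides an equivalence
\begin{equation*}
    \Mod^\mathrm{p}_R \xrightarrow{\sim} \lim_{\Delta}\, \Mod^\mathrm{p}_{S^\bullet}
\end{equation*}
in $\EE_\infty\Alg(\Catpidem)$. The goal is to upgrade this to the analogous equivalence on categories of modules. For each map $R \to S^i$ in the hypercover, Proposition \ref{prop:relative_poincare_cats_basic_properties}\ref{propitem:Rlin_Poincare_cats_is_symm_mon} exhibits the base change $- \otimes_{\Mod^\mathrm{p}_R} \Mod^\mathrm{p}_{S^i}$ as a symmetric monoidal left adjoint, whose right adjoint (restriction of scalars along the map of commutative algebras in $\Catpidem$) is conservative and preserves all small limits and colimits. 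This produces a monadic adjunction between $\Mod_{\Mod^\mathrm{p}_R}(\Catpidem)$ and $\Mod_{\Mod^\mathrm{p}_{S^0}}(\Catpidem)$, and identifies the associated cosimplicial object of module categories with the Amitsur complex of $\Mod^\mathrm{p}_R \to \Mod^\mathrm{p}_{S^\bullet}$.

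With this setup in place, the proof is completed by invoking the Barr–Beck–Lurie theorem (\cite[Theorem 4.7.5.2]{LurHA}) in the relative setting of $\Catpidem_R$ to conclude that
\begin{equation*}
    \Mod_{\Mod^\mathrm{p}_R}(\Catpidem) \xrightarrow{\sim} \lim_{\Delta}\, \Mod_{\Mod^\mathrm{p}_{S^\bullet}}(\Catpidem)\,,
\end{equation*}
which is precisely the required descent. Applying $\mathfrak{pic}$ and using that it preserves limits completes the proof.

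The main obstacle is verifying the Beck–Chevalley condition for the monadic adjunction, which amounts to sufficient flatness of a $C_2$-étale extension to ensure that the natural comparison map from the totalization of $\Mod_{\Mod^\mathrm{p}_{S^\bullet}}(\Catpidem)$ to modules over the limit algebra $\Mod^\mathrm{p}_R$ is an equivalence. This is where the definition of the $C_2$-étale topology (Definition \ref{defn:C2_etale_topology}) and Lemma \ref{lem: c2 etale topology=etale topology on the base} do the work: a $C_2$-étale morphism $R \to S$ restricts on $C_2$-fixed points to an ordinary étale map $R^{C_2} \to S^{C_2}$, with $S^e$ and $S^{\phi C_2}$ obtained by étale base change from $R^{C_2}$. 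This reduces the required flatness to classical étale descent for modules over $\EE_\infty$-rings (cf.\ \cite[Corollary D.6.3.3]{Lurie_SAG}), applied levelwise to the underlying spectrum and geometric fixed points; descent at the level of Poincaré structures is then automatic by the universal property of the pullback defining $\CAlgp$ (Definition \ref{definition:poincare_ring_spectrum}).
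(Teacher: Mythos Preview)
Your overall strategy is sound: reduce to descent for $R\mapsto \Mod_{\Mod^\mathrm{p}_R}(\Catpidem)$ using that $\mathfrak{pic}$ preserves limits, and then deduce descent for module categories from descent for the algebras $\Mod^\mathrm{p}_R$ (Proposition~\ref{prop:Poincare_modules_as_etale_sheaf}) via a Lurie-style comonadic argument. This is in fact exactly what the paper does for the scheme case, where it simply cites \cite[Theorem~5.13]{Lurie2011Descent} for the passage from algebras to module categories.

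For the Poincar\'e ring case, however, the paper takes a different and shorter route: it invokes the fiber sequence of Corollary~\ref{cor:Poincare_Brauer_to_Brauer_fiber}, namely $\mathfrak{pic}\bigl(\Mod_{R^L}(\Spectra^{C_2})\bigr)\to\pnbr(R)\to\mathfrak{br}(R^e)$. Since a $C_2$-\'etale map induces ordinary \'etale maps on underlying rings and on $C_2$-fixed points, both outer terms are already known to be \'etale hypersheaves (for $\mathfrak{br}(R^e)$ by To\"en and Antieau--Gepner, for the Picard term by standard arguments), and the middle term inherits the property. This bypasses the Barr--Beck machinery entirely for the affine case.

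Your final paragraph is the weakest part. What you call the ``Beck--Chevalley condition'' and reduce to ``flatness'' is not the correct obstruction: once Proposition~\ref{prop:Poincare_modules_as_etale_sheaf} gives $\Mod^\mathrm{p}_R\simeq\lim_\Delta\Mod^\mathrm{p}_{S^\bullet}$, the passage to module categories is formal in the sense of \cite[Theorem~5.13]{Lurie2011Descent}, and the relevant hypothesis is conservativity of the restriction functor (automatic here), not any flatness of $R^e$ or $R^{\phi C_2}$ separately. The appeal to \cite[Corollary~D.6.3.3]{Lurie_SAG} and to ``the universal property of the pullback defining $\CAlgp$'' is misplaced: you are doing descent in $\Catpidem$, not in $\Spectra$, and the Poincar\'e-level input has already been absorbed into Proposition~\ref{prop:Poincare_modules_as_etale_sheaf}. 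If you delete that paragraph and simply cite Lurie's descent theorem as the paper does, your argument is complete.
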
 
\begin{proof}
    The case of Poincar{\'e} rings is handled by the fiber sequence of Corollary~\ref{cor:Poincare_Brauer_to_Brauer_fiber}. Note further that if we know that $\pnbr(-)$ is a $C_2$-\'etale sheaf in the case of schemes with good quotients, then we may reduce the hypersheaf question to Poincar{\'e} rings which we already know. Thus we need only check that the Poincar{\'e} Brauer space is an \'etale sheaf. 
    
    Since the Picard space commutes with limits, it is enough to know that the functor \[\mathrm{Mod}_{\mathrm{Mod}^\mathrm{p}}:\mathrm{\acute{E}t}_Y^\op \to \mathrm{LinCat}^\mathrm{p} \] is a $C_2$-\'etale sheaf, where $\mathrm{LinCat}^\mathrm{p}$ is the Grothendieck construction on the functor sending a $C_2$-{\'e}tale extension $X\to X'$ to the category $\mathrm{Mod}_{\mathrm{Mod}_{(X',\Qoppa_{\underline{\mathcal{O}}_{X'}}})}(\Catpidem)$. This follows by the same argument as in \cite[Theorem 5.13]{Lurie2011Descent} (note that the argument is formal once one has that the functors of Notation~\ref{notation:poincare_ring_basechange} and Notation~\ref{notation:scheme_involution_basechange} are \'etale sheaves, which we have already handled.)
\end{proof} 

\begin{corollary}~\label{cor: two definitions of poincare brauer agree}
    Let $(X,\lambda ,Y, p)$ be a scheme with good quotient. Then there is a canonical identification of $\mathfrak{br}^\mathrm{p}(X,\lambda, Y,p)$ with \[\mathfrak{pic}(\mathrm{Mod}_{\mathrm{Mod}_{(\mathcal{O}_X,\Qoppa_{\underline{\mathcal{O}}})}}(\Catpidem))\,.\]
\end{corollary}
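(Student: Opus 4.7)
The plan is to chase the definitions and reduce the statement to the $C_2$-\'etale hypersheaf properties already established. By definition,
\[
\mathfrak{br}^\mathrm{p}(X,\lambda,Y,p) = \lim_{j\colon \Spec R \to Y} \mathfrak{pic}\!\left(\Mod_{\Mod^\mathrm{p}_{\underline{\Gamma(j^*X)}^{j^*\sigma}}}(\Catpidem)\right),
\]
where the limit is taken over the small affine \'etale site of $Y$ (equivalently, by Lemma~\ref{lem: c2 etale topology=etale topology on the base}, the small affine $C_2$-\'etale site of $(X,\lambda,Y,p)$). The first step will be to commute $\mathfrak{pic}$ past the limit. This is legitimate because $\mathfrak{pic}$ is corepresentable as a functor out of $\EE_\infty\Mon(\Catpidem)$ (see the description used in Definition~\ref{definition:poincare_brauer_space} and Proposition~\ref{prop:loops_pnbr_is_pnpic}), hence commutes with limits in the target.

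Next I would identify the resulting limit of module $\infty$-categories with the module $\infty$-category of the limit of algebras. Precisely, one wants
\[
\lim_{j\colon \Spec R \to Y} \Mod_{\Mod^\mathrm{p}_{\underline{\Gamma(j^*X)}^{j^*\sigma}}}(\Catpidem) \;\simeq\; \Mod_{\lim_{j} \Mod^\mathrm{p}_{\underline{\Gamma(j^*X)}^{j^*\sigma}}}(\Catpidem).
\]
This is the content of the descent argument already invoked in Corollary~\ref{cor:pnbr_as_etale_sheaf_affine_spectral}: since $\Catpidem$ is presentable symmetric monoidal (Proposition~\ref{prop:relative_poincare_cats_basic_properties}\ref{propitem:Rlin_Poincare_cats_is_symm_mon}), the functor $\Mod_{(-)}(\Catpidem)\colon \mathrm{CAlg}(\Catpidem)\to \widehat{\Cat}_\infty$ preserves small limits, as in the classical argument of \cite[Theorem~5.13]{Lurie2011Descent}. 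Applying this termwise to the \'etale diagram of symmetric monoidal Poincar\'e $\infty$-categories gives the desired identification.

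The final step is to identify $\lim_{j} \Mod^\mathrm{p}_{\underline{\Gamma(j^*X)}^{j^*\sigma}}$ with $\Mod^\mathrm{p}_{(\mathcal{O}_X,\Qoppa_{\underline{\mathcal{O}}})}$. By Construction~\ref{cons:C2_mod_over_sheaf_of_Green_func}, the right-hand side is by definition the limit of this assignment over the \emph{Zariski} affine site of $Y$; Proposition~\ref{prop:Poincare_modules_as_etale_sheaf}\ref{prop:Poincare_modules_as_etale_sheaf_inv_scheme} upgrades this Zariski sheaf to a hypersheaf for the small \'etale site of $Y$, and so these two limits agree. Putting the three steps together produces a canonical equivalence
\[
\mathfrak{br}^\mathrm{p}(X,\lambda,Y,p) \;\simeq\; \mathfrak{pic}\!\left(\Mod_{\Mod^\mathrm{p}_{(\mathcal{O}_X,\Qoppa_{\underline{\mathcal{O}}})}}(\Catpidem)\right).
\]
The main technical obstacle is the middle step: checking that forming $\Mod_{(-)}(\Catpidem)$ really does commute with the relevant \'etale limit in $\EE_\infty\Alg(\Catpidem)$. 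This is essentially formal from presentability and the closed symmetric monoidal structure established in Proposition~\ref{prop:relative_poincare_cats_basic_properties}\ref{prop_item:internal hom in R-linear poincare cats}, but it is the place where one must be most careful about size issues and the distinction between the symmetric monoidal structures on $\Catp$ and $\Catpidem$ (handled by Proposition~\ref{proposition:idempotent_completion_of_sym_mon_catp}).
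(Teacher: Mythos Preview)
Your proposal is essentially correct and uses the same ingredients as the paper (the \'etale descent for $\Mod^{\mathrm{p}}$ from Proposition~\ref{prop:Poincare_modules_as_etale_sheaf} and the descent for $\Mod_{\Mod^{\mathrm{p}}}(\Catpidem)$ from Corollary~\ref{cor:pnbr_as_etale_sheaf_affine_spectral}), but the packaging differs. The paper does not attempt to commute the limit through $\mathfrak{pic}$ and $\Mod_{(-)}$ step by step; instead it writes down a single comparison map
\[
\mathfrak{pic}\!\left(\Mod_{\Mod^{\mathrm{p}}_{(\mathcal{O}_X,\Qoppa_{\underline{\mathcal{O}}})}}(\Catpidem)\right)\longrightarrow \mathfrak{br}^{\mathrm{p}}(X,\lambda,Y,p)
\]
by sending an invertible module to its descent data, observes that both sides are \'etale sheaves on $Y$, and checks the map is an equivalence on affines, where it is tautological. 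This is more economical: it replaces your ``middle step'' by a local check rather than a global limit-commutation statement.

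One caution about your formulation of the middle step: it is not true that $\Mod_{(-)}(\Catpidem)\colon \CAlg(\Catpidem)\to \widehat{\Cat}_\infty$ preserves \emph{all} small limits. What you actually need, and what \cite[Theorem~5.13]{Lurie2011Descent} and the proof of Corollary~\ref{cor:pnbr_as_etale_sheaf_affine_spectral} provide, is that this particular \'etale limit is a \emph{descent} limit for module categories. Your parenthetical acknowledgment and the reference you give show you understand this, but the sentence as written overstates the claim. The paper's map-then-localize strategy sidesteps this phrasing issue entirely.
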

\begin{proof}
    Note that there is a natural map \[\mathfrak{pic}(\mathrm{Mod}_{\mathrm{Mod}^\omega_{(\mathcal{O}_X,\Qoppa_{\underline{\mathcal{O}}})}}(\Catpidem))\to \mathfrak{br}^\mathrm{p}(X,\lambda, Y,p)\] given by sending an invertible $(\mathrm{Mod}_{\mathcal{O}_X}^\omega,\Qoppa_{\underline{\mathcal{O}}})$-module to its descent data. Since both sides are \'etale sheaves we may then reduce to showing that this map is an equivalence for $X$ affine, but then this is exactly Proposition~\ref{prop:Poincare_modules_as_etale_sheaf}(1).
\end{proof}
		
\subsection{Azumaya algebras with genuine involution}\label{subsection:generalized_Azumaya_alg_with_involution} 
		
In this section, we introduce the notion of a derived Azumaya algebra with $ \lambda $-involution over a Poincaré ring $ R $. 
We show that such algebras give rise to invertible $ R $-linear Poincaré $ \infty $-categories (Proposition \ref{prop:mod_over_azumaya_geninv_is_invertible}). 
We show that an ordinary Azumaya algebra with $ \lambda $-involution defines a derived Azumaya algebra with $ \lambda $-involution in special cases; see Proposition \ref{prop:classical_inv_Azumaya_to_genuine_inv_Azumaya}. 
By bootstrapping the relevant results of \cites{MR2493386,MR3190610}[\S3.1]{CDHHLMNNSI}, we show that the Poincaré Brauer group can be regarded as Morita equivalence classes of Azumaya algebras with genuine involution in Corollary \ref{cor: Brauer classes represented by Azumaya algebras with genuine involution}.

A derived Azumaya algebra (in the sense of Antieau--Gepner and Toën) has an underlying $ \EE_1 $-algebra, where $ \EE_1 $ denotes a homotopy coherent analogue of associativity. 
To homotopy theorists who study $ C_2 $-actions, there is a `canonical' generalization of involutions on associative algebras to $ \EE_1 $-algebras. 
A derived Azumaya algebra with genuine involution is \emph{not} a derived Azumaya algebra (in the sense of Antieau--Gepner and Toën) with the canonical involution just mentioned -- it is in fact this plus a little more data. 
We recall these notions before introducing the central definition of this section. 

By default, the algebras we will be considering are all in spectra, so we will drop the word `derived' and if we have occasion to refer to an ordinary ring we will emphasize that by calling it `ordinary.' 
\begin{recollection}\label{rec:Azumaya_alg_wo_involution} 
    Let $ R $ be an $ \EE_\infty $-ring spectrum. Recall \cites{MR2927172,MR3190610} that an $ \EE_1 $-$ R $-algebra $ A $ is said to be \emph{Azumaya} if it is a compact generator of $ \Mod_R $ and if the natural $ R $-algebra map giving the bimodule structure on $ A $
    \begin{equation*}
        A \otimes_R A^{\mathrm{op}} \to \mathrm{End}_R(A)
    \end{equation*}
    is an equivalence of $ R $-algebras. 
\end{recollection}
\begin{remark}\label{rmk:azumaya_ness_is_invariant}
    Suppose $ R $ is an $ \EE_\infty $-ring spectrum with an involution $ \lambda \colon R \xrightarrow{\sim} R $. 
    Then an $ \EE_1 $-$ R $-algebra $ A $ is Azumaya in the sense of Recollection \ref{rec:Azumaya_alg_wo_involution} iff $ \lambda_* A $ is Azumaya iff $ \lambda_* A^\op $ is Azumaya iff $ A^\op $ is Azumaya. 
\end{remark}
The `canonical' notion of homotopy-coherent associative algebras with $ \lambda $-involution is recorded in Definition \ref{defn:E1_alg_with_lambda_involution}. 

Informally, an $ \EE_1 $-$ R $-algebra with $ \lambda $-involution is an $ \EE_1 $-$ R $-algebra $ A $ equipped with an equivalence $ \sigma \colon A \to \lambda_* A^\op $, a homotopy $ \lambda^*(\sigma^\op) \circ \sigma \simeq \id_A $, and higher coherences. 
\begin{remark}\label{rmk:alg_with_lambda_involution_as_sheaves_on_quotient}
    Suppose $ (X,\lambda) $ is a scheme with an involution and good quotient $ p \colon X \to Y $. 
    Observe that the functor $ \Mod_{\mathcal{O}_X} \xrightarrow{\sim} \Mod_{p_*\mathcal{O}_X} $ of Lemma \ref{lemma:identify_structure_sheaf_of_Green_func}\ref{lem_item:structured_pushforward_is_equiv} is $ C_2 $-equivariant with respect to the action by $ \lambda_* $ on the source and by $ p_*(\lambda) $ on the target. 
    By Lemma \ref{lemma:identify_structure_sheaf_of_Green_func}\ref{lem_item:structure_pushforward_equiv_is_monoidal}, there is an equivalence $ \left(\EE_1\Alg_{\mathcal{O}_X}\right)^{hC_2} \simeq  \left(\EE_1\Alg_{p_*\mathcal{O}_X}\right)^{hC_2} $. 
\end{remark}
\begin{recollection}\label{rec:E1_alg_involution_as_bimodule_over_itself}
    Let $ R $ be an object of $ \EE_\infty\Alg^{BC_2} $ and let $ A $ be an $ \EE_1 $-$ R $-algebra with $ \lambda $-involution. 
    Now $ A $ can be regarded as a left $ A \otimes_R \lambda^* A $-module via the involution $ \sigma $. 
    Moreover, this lifts to a $ C_2 $-equivariant module structure, where $ A \otimes_R \lambda^* A $ is given the $ C_2 $-action via swap and base change along $ \lambda $, and $\sigma$ endows the underlying spectrum of $ A $ with a $ C_2 $-action.  
    Informally, $ a \otimes b $ acts on $ \alpha $ via $ a \cdot \alpha \cdot \sigma(b) $. 
    Since the Tate construction is lax symmetric monoidal, $ A^{tC_2} $ inherits a module structure over $ (A \otimes_R \lambda^* A)^{tC_2} $. 
\end{recollection}
\begin{definition}~\label{defn:alt definition of Azumaya algebra with involution}
    Let $R$ be a Poincar{\'e} ring spectrum, and write $\lambda:R^e\to R^e$ for the underlying involution. Then an \emph{algebra with genuine involution} over $R$ is a pair $(A, P\to A^{\mathrm{t}C_2})$ such that $A$ is an $\mathbb{E}_1$-$R^e$-algebra with $ \lambda $-involution in the sense of Definition \ref{defn:E1_alg_with_lambda_involution} and $P\to A^{\mathrm{t}C_2}$ is an $A$-linear map of $A^\op \otimes_{R^e} R^{\phi C_2}$-modules, where $A^{\mathrm{t}C_2}$ is regarded as an $ A^\op $-module via the twisted Tate-valued diagonal $A\to (A\otimes \lambda^*A)^{\mathrm{t}C_2}$ and Recollection \ref{rec:E1_alg_involution_as_bimodule_over_itself}. 
    
    We say that such an algebra with genuine involution is an \emph{Azumaya algebra with genuine involution over $ R $} if its underlying $ \EE_1 $-$ R^e $ algebra is Azumaya\footnote{See Remark \ref{rmk:azumaya_ness_is_invariant}.} in the sense of Recollection \ref{rec:Azumaya_alg_wo_involution} and there exists 
    \begin{enumerate}[label=(\roman*)]
        \item a $C_2$-equivariant $(A\otimes_{R^e} \lambda^* A)^{\otimes_{R^e} 2}$-linear equivalence $\mathrm{hom}_{{R^e}\otimes {R^e}}(A\otimes_{R^e} A,R^e)\simeq A\otimes_{R^e} \lambda^* A^\op$
        \item a map of $(A^\op \otimes_{R^e} R^{\phi C_2})$-modules $\overline{P}\to A^{\mathrm{t}C_2}$ 
        \item an equivalence of $(A\otimes_{R^e} \lambda^*A)\otimes_{R^e} R^{\phi C_2}$-modules \[\mathrm{hom}_{{R^e}}(A,R^{\phi C_2})\simeq P\otimes_{R^{\phi C_2}}\overline{P}\]
        and a homotopy making the diagram 
        \begin{equation*}
            \begin{tikzcd}
                \hom_{R^e}(A, R^{\varphi C_2}) \ar[r] \ar[d] & P \otimes_{R^{\varphi C_2}} \overline{P} \ar[d] \\
                \hom_{R^e}(A, R^{\mathrm{t}C_2}) \simeq \hom_{R^e}(A \otimes A^\op, {R^e})^{\mathrm{t}C_2} \ar[r] & \hom_{(A \otimes A^\op)^{\otimes 2}}\left((A \otimes A^\op)^{\otimes 2}, A \otimes A^\op\right)^{\mathrm{t}C_2} \simeq (A \otimes A^\op)^{\mathrm{t}C_2}
            \end{tikzcd}    
        \end{equation*}
        commute.
    \end{enumerate}
\end{definition}
These algebras with genuine involution are not equivalent to the $ \EE_\sigma $-algebras of \S\ref{subsection:c2_operad_preliminaries}. 

While we will eventually show that all elements of the Poincar\'e Brauer groups are represented by Azumaya algebras with genuine involution, it will be helpful to introduce an intermediary definition.
\begin{definition}~\label{defn:Azumaya_genuine_central_Wall_anti-structure}
    Let $R$ be a Poincar{\'e} ring. Then an \textit{Azumaya algebra with central Wall anti-structure}\footnote{In \cite[Example 3.1.13]{CDHHLMNNSI}, the authors consider this structure in the context of invertible modules with genuine involution over the ring $ A $.} over $R$ is the data of 
    \begin{enumerate}[label=(\alph*)]
        \item \label{defnitem:Azumaya_alg_Wall_anti_the_involution} An $\mathbb{E}_1$-$R^e$-algebra with $ \lambda $-involution (Definition \ref{defn:E1_alg_with_lambda_involution}) so that the underlying $ \EE_1 $-$ {R^e} $-algebra $ A $ is an Azumaya $ {R^e} $-algebra in the sense of Recollection~\ref{rec:Azumaya_alg_wo_involution} 
        \item An invertible ${R^e}$ module with involution $M_A$ over $A$ in the sense of \cite[Definition 3.1.4]{CDHHLMNNSI} whose underlying left $A$-module is equivalent to $A$. 
        \item A map of left $A\otimes_{R^e} R^{\phi C_2}$-modules $P\to M_A^{\mathrm{t}C_2}$.    
    \end{enumerate}
    such that there exists
    \begin{enumerate}[label=(\roman*)]
        \item a $C_2$-equivariant $(A\otimes_{R^e} \lambda^* A)^{\otimes_{R^e} 2}$-linear equivalence $\mathrm{hom}_{{R^e}\otimes {R^e}}(A\otimes_{R^e} \lambda^* A, {R^e})\simeq M_A\otimes_{R^e}\lambda^*M_A$
        \item a left $\lambda^*A^\op \otimes_{R^e}R^{\phi C_2}$-linear map $\overline{P}\to (\lambda^* M_A)^{\mathrm{t}C_2}$
        \item an equivalence $\mathrm{hom}_{R^e}(M_A,R^{\phi C_2})\simeq P\otimes_{R^{\phi C_2}}\overline{P}$
        \item a homotopy making the resulting diagram
        \begin{equation*}
            \begin{tikzcd}
                \hom_{R^e}(A, R^{\varphi C_2}) \ar[r] \ar[d] & P \otimes_{R^{\varphi C_2}} \overline{P} \ar[d] \\
                \hom_{R^e}(A \otimes \lambda^*A, {R^e})^{\mathrm{t}C_2} \ar[r] & \hom_{(A \otimes \lambda^*A)^{\otimes 2}}\left((A \otimes \lambda^*A)^{\otimes 2}, M_A \otimes \lambda^*M_A\right)^{\mathrm{t}C_2} \simeq (M_A \otimes \lambda^*M_A)^{\mathrm{t}C_2}
            \end{tikzcd}    
        \end{equation*}
        commute.    
    \end{enumerate}
\end{definition}
\begin{observation}\label{obs:azumaya_geninv_base_change}
    Let $ A $ be an Azumaya algebra with genuine involution over $ R $, and suppose we are given a map $ R \to S $ of Poincaré rings. 
    Then $ (A \otimes_{R^e} S, P \otimes_{R^{\varphi C_2}} S^{\varphi C_2}) $ is an Azumaya algebra with genuine involution over $ S $. 
\end{observation}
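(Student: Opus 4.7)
The plan is to check each piece of Definition \ref{defn:alt definition of Azumaya algebra with involution} in turn, using that base change along a morphism of Poincaré rings is a symmetric monoidal left adjoint (Proposition \ref{prop:relative_poincare_cats_basic_properties}\ref{propitem:Rlin_Poincare_cats_is_symm_mon}) together with naturality of the Tate construction. First, I would check that the underlying $\EE_1$-$S^e$-algebra $A \otimes_{R^e} S^e$ is Azumaya; this is classical (see the proof of \cite[Proposition 3.7]{MR2927172} or \cite[Corollary 6.4]{MR3190610}) and uses that $\otimes_{R^e}$ preserves compact generation and that the canonical map $(A \otimes_{R^e} S^e) \otimes_{S^e} (A \otimes_{R^e} S^e)^{\op} \to \mathrm{End}_{S^e}(A \otimes_{R^e} S^e)$ arises by base change from its analogue over $R^e$. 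The $\lambda_S$-involution on $A \otimes_{R^e} S^e$ is obtained by applying the symmetric monoidal functor $- \otimes_{R^e} S^e \colon \EE_1\Alg_{R^e} \to \EE_1\Alg_{S^e}$ (which is $C_2$-equivariant with respect to the actions of Observation \ref{obs:conjugate_opposite_action_on_assoc_alg}) to the $\lambda_R$-involution on $A$, invoking Definition \ref{defn:E1_alg_with_lambda_involution}.

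Next, I would construct the structure map of item (c). Lax symmetric monoidality of the Tate construction, together with the $S^{\varphi C_2} \to S^{\mathrm{t} C_2}$ portion of the Poincaré ring structure on $S$, provides a canonical $(A^\op \otimes_{R^e} R^{\varphi C_2}) \otimes_{R^e} S^e$-linear map
\begin{equation*}
A^{\mathrm{t} C_2} \otimes_{R^{\varphi C_2}} S^{\varphi C_2} \to A^{\mathrm{t} C_2} \otimes_{R^{\mathrm{t} C_2}} S^{\mathrm{t} C_2} \to (A \otimes_{R^e} S^e)^{\mathrm{t} C_2},
\end{equation*}
and I would precompose with $P \otimes_{R^{\varphi C_2}} S^{\varphi C_2} \to A^{\mathrm{t}C_2} \otimes_{R^{\varphi C_2}} S^{\varphi C_2}$ obtained by base-changing the structure map $P \to A^{\mathrm{t} C_2}$ of $A$. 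By Observation \ref{obs:fixpt_Mackey_functor_as_affine_C2_scheme} and naturality of the Hill--Hopkins--Ravenel norm, the resulting map is automatically $C_2$-equivariant with respect to the bimodule structure described in Recollection \ref{rec:E1_alg_involution_as_bimodule_over_itself}.

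Finally, I would verify the existence of the data (i)--(iii) by simply base changing the analogous data for $A$. For (i), applying $(- \otimes_{R^e} S^e)$ to the equivalence $\mathrm{hom}_{R^e \otimes R^e}(A \otimes_{R^e} A, R^e) \simeq A \otimes_{R^e} \lambda_R^* A^\op$ together with the fact that Hom out of a compact object commutes with base change yields the required equivalence over $S^e$; here we use that $A$ is compact in $\Mod_{R^e}$ because it is Azumaya. The auxiliary datum $\overline{P}$ base changes similarly to $\overline{P} \otimes_{R^{\varphi C_2}} S^{\varphi C_2}$, and likewise the equivalence in (iii) is obtained by base change, once more using compactness of $A$. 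The commuting square of the final clause of Definition \ref{defn:alt definition of Azumaya algebra with involution} is obtained by tensoring the corresponding square for $A$ with $S^{\varphi C_2}$ (resp.\ $S^{\mathrm{t} C_2}$) and pasting with the naturality squares arising from lax monoidality of the Tate construction.

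The main potential obstacle is bookkeeping for the commutative square in Definition \ref{defn:alt definition of Azumaya algebra with involution}: one must identify the outer composite $\hom_{R^e}(A,R^{\varphi C_2}) \otimes S \to (A \otimes A^\op \otimes S)^{\mathrm{t} C_2}$ obtained after base change with the corresponding composite built directly from the Poincaré ring structure of $S$, and this requires simultaneously tracking the $\EE_\infty$ map $R^{\varphi C_2} \otimes_{R^e} S^e \to S^{\varphi C_2}$ (existence guaranteed by the Poincaré ring morphism $R \to S$ via (\ref{eq:Poincare_ring_functorial_norm})) and the lax-monoidality squares for $(-)^{\mathrm{t} C_2}$. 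Otherwise, the proof is essentially the observation that all the structures involved in Definition \ref{defn:alt definition of Azumaya algebra with involution} are stable under symmetric monoidal base change in $\CAlgp$.
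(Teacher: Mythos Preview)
The paper gives no proof for this observation; it is stated and immediately used. Your verification is correct and simply makes explicit what the paper leaves to the reader: that every ingredient of Definition~\ref{defn:alt definition of Azumaya algebra with involution} is stable under the symmetric monoidal base change $-\otimes_{R^e} S^e$ (resp.\ $-\otimes_{R^{\varphi C_2}} S^{\varphi C_2}$), with the Tate-construction compatibilities supplied by lax monoidality.

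One small citation slip: your appeal to Observation~\ref{obs:fixpt_Mackey_functor_as_affine_C2_scheme} for the $C_2$-equivariance of the bimodule structure is misplaced—that observation concerns regarding a discrete ring with involution as an affine $C_2$-scheme and is unrelated. The equivariance you need follows directly from Recollection~\ref{rec:E1_alg_involution_as_bimodule_over_itself} applied to $A \otimes_{R^e} S^e$ together with functoriality of the norm, which is what you in fact use in the surrounding sentences.
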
 
			\begin{remark}\label{rmk:Azumaya_gi_Wall_compare}
				An Azumaya algebra with genuine involution over $ R $ in particular defines an Azumaya algebra with central Wall anti-structure over $ R $ with $ M_A \simeq A $ as $ A \otimes \lambda^*A $-modules. 
				If an Azumaya algebra with central Wall anti-structure is of this form, we say that it \emph{comes from the involution on $ A $} (compare \cite[Example 3.1.9]{CDHHLMNNSI}, which is the special case when $ R = \sphere $). 
				
				If $ A $ is an Azumaya algebra with genuine involution over $ R $, then $ (A, P \to A^{\mathrm{t}C_2}) $ defines a $ N^{C_2}_{R^e} (A) $-module which classifies an $ R $-linear Poincaré structure on $ \Mod_A^\omega $ by Proposition \ref{prop:relative_poincare_cats_basic_properties}\ref{propitem:classify_R_lin_hermitian_struct_mod_cat}.    
			\end{remark}
			
\begin{remark}\label{rmk:azumaya_geninv_gives_module_geninv}
    If $ A $ is an Azumaya algebra with genuine involution over $ R $, then in particular $ M_A = A $, $ N_A = P $ is a module with genuine involution over $ A $ in the sense of \cite[Definition 3.2.3]{CDHHLMNNSI}. 
\end{remark}
\begin{definition}~\label{defn: Azumaya algebra with genuine anti involution scheme case}
    Let $ (X, \lambda) $ be a scheme with an involution and let $ p \colon X \to Y $ exhibit $ Y $ as a good quotient of $ X $. 
    Recall that there is a sheaf of $ C_2 $-$ \EE_\infty $-algebras $ \underline{\mathcal{O}} $ on $ Y $ (Construction \ref{cons:structure_sheaf_of_Green_functors}), and write $ \lambda $ for the involution $ p _* \mathcal{O}_X \xrightarrow{\sim} p _* \mathcal{O}_X $. 
    An \emph{Azumaya algebra with genuine involution} over $ X $ is the data of 
    \begin{enumerate}[label=(\alph*)]
        \item An $ \EE_1 $-$ \mathcal{O}_X $-algebra with $ \lambda $-involution in the sense of Definition \ref{defn:E1_alg_with_lambda_involution} so that, when regarded as a $ p_* \mathcal{O}_X \simeq \underline{\mathcal{O}}^e $ algebra via Remark \ref{rmk:alg_with_lambda_involution_as_sheaves_on_quotient}, the underlying $ \EE_1 $-$ \underline{\mathcal{O}}^e $-algebra $ A $ is a [generalized] Azumaya $ \underline{\mathcal{O}}^e $-algebra in the sense of \cite[Definition 2.11]{MR2957304}. 
        \item A left $ A \otimes_{\underline{\mathcal{O}}^e} \underline{\mathcal{O}}^{\varphi C_2} $-module map $ P\to A^{\mathrm{t}C_2} $ 
    \end{enumerate}
    such that there exists
    \begin{enumerate}[label=(\roman*)]
        \item an $ A^\op \otimes_{\underline{\mathcal{O}}^e} \underline{\mathcal{O}}^{\varphi C_2} $-module $ \overline{P} $
        \item \label{defn_item:global_Azumaya_gen_inv_conn_map} An $ A^\op \otimes_{\underline{\mathcal{O}}^e} {\underline{\mathcal{O}}}^{\varphi C_2} $-linear map $ \overline{P} \to A^{\mathrm{t}C_2} $.  
        \item \label{defnitem:global_Azumaya_alg_gi_underlying} an $ (A \otimes_{\underline{\mathcal{O}}^e} \sigma^* A)^{\otimes_{\underline{\mathcal{O}}^e} 2} $-linear equivalence $ \hom_{\underline{\mathcal{O}}^e}(A \otimes_{\underline{\mathcal{O}}^e} A, R) \simeq A \otimes_{\underline{\mathcal{O}}^e} \sigma^* A^\op $
        \item \label{defn_item:global_Azumaya_gen_inv_geom_fixpt} An equivalence of $ \left(A \otimes \sigma^* A^\op\right) \otimes_{\underline{\mathcal{O}}^e} \underline{\mathcal{O}}^{\varphi C_2} $-modules 
        \begin{equation*}
            \hom_{\underline{\mathcal{O}}^e}(A, \underline{\mathcal{O}}^{\varphi C_2}) \simeq P \otimes_{\underline{\mathcal{O}}^{\varphi C_2}} \overline{P}
        \end{equation*}
        and a homotopy making the diagram 
        \begin{equation*}
            \begin{tikzcd}
                \hom_{\underline{\mathcal{O}}^e}(A, \underline{\mathcal{O}}^{\varphi C_2}) \simeq P \otimes_{\underline{\mathcal{O}}^{\varphi C_2}} \overline{P} \ar[r] \ar[d] & P \otimes_{\underline{\mathcal{O}}^{\varphi C_2}} \overline{P} \ar[d] \\
                \hom_{\underline{\mathcal{O}}^e}(A, {\underline{\mathcal{O}}^e}^{\mathrm{t}C_2}) \simeq \hom_{\underline{\mathcal{O}}^e}(A \otimes A^\op, \underline{\mathcal{O}}^e)^{\mathrm{t}C_2} \ar[r] & \hom_{(A \otimes A^\op)^{\otimes 2}}\left((A \otimes A^\op)^{\otimes 2}, A \otimes A^\op\right)^{\mathrm{t}C_2} \simeq (A \otimes A^\op)^{\mathrm{t}C_2}
            \end{tikzcd}    
        \end{equation*}
        commute, where the lower horizontal arrow is induced by \ref{defnitem:global_Azumaya_alg_gi_underlying} and the right vertical arrow is induced by \ref{defn_item:global_Azumaya_gen_inv_conn_map}. 
    \end{enumerate}
\end{definition}
\begin{example}\label{ex:endomorphisms_of_poincare_object}
    Let $ R $ be a Poincaré ring, and let $ (P,q ) \in \mathrm{Pn}\left(\Mod_{R^e}^\omega, \Qoppa_R \right) $. 
    
    Then $ A:= \mathrm{End}_{R^e}(P) $ admits a canonical lift to an $ \EE_1 $ algebra with genuine involution over $ R^e $ with $ A^{\varphi C_2}:= \hom_{R^e}(P, R^{\varphi C_2}) $. 
    If $ P $ is a generator of $ \Mod_{R^e}^\omega $, then $ A $ is furthermore Azumaya. 
    
    By \cite[Proposition 3.1.16]{CDHHLMNNSI}, $ A $ inherits a canonical involution. 
    To exhibit that this is in fact an Azumaya algebra with genuine involution, observe that $ q^\dag $ induces a canonical $ A \otimes A^\op $-linear equivalence $ A = \mathrm{End}_{R^e}(P) \xrightarrow{D} \mathrm{End}_{\Mod_{R^e}^\mathrm{op}}\left(P^\vee\right) \simeq \mathrm{End}(P^\vee)^\op \xrightarrow{f \mapsto q^{-1} \circ f \circ q} \mathrm{End}_{R^e}\left(P\right)^\op \simeq A^\vee $.  
    If $ P $ is a generator, $ \hom_{R^e}(P,-) $ induces an equivalence $ \Mod_{R^e}^\omega \simeq \Mod_{A}^\omega $, thus we can regard $ \Mod_A^\omega $ as equipped with a Poincaré structure. 
    By the classification of $ R^e $-linear Poincaré structures of Proposition \ref{prop:relative_poincare_cats_basic_properties}\ref{propitem:classify_R_lin_hermitian_struct_mod_cat}, the Poincaré structure on $ \Mod_A^\omega $ is associated to an $ A $-module with genuine involution $ (M_A, N_A, N_A \to M_A^{\mathrm{t}C_2}) $. 
    We claim that $ M_A \simeq A $ with the canonical $ A $-$ A $-bimodule structure: By \cite[Proposition 3.1.6]{CDHHLMNNSI}, as an $ A^\op $-module $ M_A $ is the image of $ A $ under the composite
    \begin{equation*}
        \Mod_A^\omega \xrightarrow{\hom_{R^e}(P,-)^{-1}} \Mod_{R^e}^\omega \xrightarrow{D_R = \hom_{R^e}(-,R^e)} \Mod_{R^e}^{\omega, \op} \xrightarrow{\hom_{R^e}(P,-)} \Mod_A^{\omega,\op} \,.
    \end{equation*}
    Observe that the image of $ A $ in $ \Mod_{R^e}^{\omega, \op} $ is $ D_R(P) $ and $ q^\dag $ induces an equivalence $ D_R(P) \simeq P $, hence $ M_A \simeq A $ as $ A^\op $-modules. 
    
    A similar argument with the linear part of $ \Qoppa $ shows that we have an equivalence $ N_A \simeq \hom_{R^e}(P, R^{\varphi C_2}) $ of $ A $-modules and a commutative square 
    \begin{equation*}
        \begin{tikzcd}
            N_A \ar[r,"\sim"] \ar[d] &  \hom_{R^e}(P, R^{\varphi C_2}) \ar[d] \\
            A^{\mathrm{t}C_2} \ar[r,"\sim"] & \hom_{R^e}(P, R^{\mathrm{t}C_2}) \simeq \hom(P \otimes_{R^e} P, R^e)^{\mathrm{t}C_2}      
        \end{tikzcd}     
    \end{equation*} 
    of $ A $-modules, where $ A $ acts on $ A^{\mathrm{t}C_2} $ via the Tate-valued norm. 
\end{example}
\begin{observation}\label{obs:anti_involutions_shifted_pairing_Skolem_Noether}
    Let $ k $ be an algebraically closed field, and regard $ \underline{k} $ as a Poincar\'e ring spectrum with the trivial involution via Example \ref{ex:fixpt_Mackey_functor}. 
    Let $ (A, A^{\varphi C_2}, A^{\varphi C_2} \to A^{\mathrm{t}C_2}) $ be an Azumaya algebra with genuine involution over $ \underline{k} $. 
    By Proposition \ref{prop:mod_over_azumaya_geninv_is_invertible}, $ \left(\Mod_A^\omega, \Qoppa_A\right) \in \pnbr(\underline{k}) $. 
    By \cite[Corollary 1.15]{MR2957304}, $ A $ is equivalent to $ \mathrm{End}_k(P) $ for some compact $ k $-module $ P $. 
    Observe that there is a canonical identification $ A^\op \simeq \mathrm{End}_k(P^\vee) $. 
    By the derived Skolem--Noether theorem \cite[Theorem 5.1.5]{MR2579390}, there exists a unique $ n \in \ZZ $ so that the involution $ A \simeq A^\op $ is induced by an equivalence $ P \simeq P^\vee [n] $ (which is unique up to multiplication by a unit in $ k $). 
\end{observation}
			
\begin{proposition}\label{prop:mod_over_azumaya_geninv_is_invertible}
    Let $ R $ be a Poincaré ring, and let $ (A, P \to A^{\mathrm{t}C_2}) $ be an Azumaya algebra with genuine involution over $ R $. 
    Then 
    \begin{enumerate}[label=(\arabic*)]
        \item $ \left(\Mod_A^\omega, \Qoppa_A \right) $ defines an $ R $-linear Poincaré $ \infty $-category. 
        \item $ \left(\Mod_A^\omega, \Qoppa_A \right) $ is an invertible object in $ \Mod_{\left(\Mod_R^\omega, \Qoppa_R\right)}\left(\Catpidem\right) $. 
    \end{enumerate}
\end{proposition}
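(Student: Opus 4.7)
For part (1), the claim is essentially extracted from Remark \ref{rmk:Azumaya_gi_Wall_compare}: the tuple $(A, P \to A^{\mathrm{t}C_2})$ that constitutes an Azumaya algebra with genuine involution is exactly the data of an $R$-linear $A$-module with genuine involution whose underlying left $A$-module is $A$. Under Proposition \ref{prop:relative_poincare_cats_basic_properties}\ref{propitem:classify_R_lin_hermitian_struct_mod_cat}, such data classifies an $R$-linear hermitian structure $\Qoppa_A$ on $\Mod_A^\omega$; and that same proposition promotes this hermitian structure to a Poincaré one precisely when the underlying $A$-module is invertible, which in our case is the statement that $A$ is a compact generator of $\Mod_A^\omega$ -- a consequence of the Azumaya hypothesis (Recollection \ref{rec:Azumaya_alg_wo_involution}).

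For part (2), the plan is to exhibit an explicit inverse in $\Mod_{(\Mod_R^\omega,\Qoppa_R)}(\Catpidem)$. The natural candidate is $(\Mod_{\lambda^*A^{\op}}^\omega, \Qoppa_{\overline{A}})$, where the Poincaré structure is classified under \ref{propitem:classify_R_lin_hermitian_struct_mod_cat} by the module with genuine involution $(\lambda^*A^{\op}, \overline{P}, \overline{P} \to (\lambda^*A^{\op})^{\mathrm{t}C_2})$ assembled from the auxiliary data whose existence is guaranteed by Definition \ref{defn:alt definition of Azumaya algebra with involution}. (Part (1), applied to $\lambda^*A^{\op}$, ensures this candidate itself lies in $\Mod_{(\Mod_R^\omega,\Qoppa_R)}(\Catpidem)$.) Then I invoke the explicit tensor product formula of Proposition \ref{prop:relative_poincare_cats_basic_properties}\ref{propitem:Rlin_Poincare_cats_tensor_mod_gen_inv}: the underlying stable $\infty$-category of the product is $\Mod_{A \otimes_{R^e} \lambda^*A^{\op}}^\omega$, and the associated module with genuine involution has underlying $A \otimes_{R^e}\lambda^*A^{\op}$-module $A \otimes_{R^e}\lambda^*A^{\op}$, genuine part $P \otimes_{R^{\varphi C_2}} \overline{P}$, and structure map obtained by smashing the two given maps and composing with the lax monoidal map of the Tate construction.

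Verifying that this is equivalent to the unit $(\Mod_R^\omega,\Qoppa_R)$ then splits into an ``underlying'' step and a ``genuine'' step. The underlying step uses condition (i) of Definition \ref{defn:alt definition of Azumaya algebra with involution} together with the Azumaya property: the induced algebra map $A \otimes_{R^e} \lambda^*A^{\op} \to \End_{R^e}(A)$ is an equivalence, and since $A$ is a compact generator, Morita theory (as in \cites{MR2927172}[\S2.2]{MR2957304}{MR3190610}) yields the symmetric monoidal equivalence $\Mod_{A \otimes_{R^e} \lambda^*A^{\op}}^\omega \simeq \Mod_{R^e}^\omega$ of $\Mod_{R^e}^\omega$-linear $\infty$-categories. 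For the genuine step, condition (iii) of Definition \ref{defn:alt definition of Azumaya algebra with involution} supplies an equivalence $P \otimes_{R^{\varphi C_2}} \overline{P} \simeq \hom_{R^e}(A, R^{\varphi C_2})$ (which under the above Morita identification corresponds to the unit $R^{\varphi C_2}$ of the R-linear module-with-genuine-involution structure on $\Mod_R^\omega$), and the compatibility square in (iii) is exactly what is needed to identify the induced map to $(A \otimes_{R^e} \lambda^*A^{\op})^{\mathrm{t}C_2}$ with the structure map $R^{\varphi C_2} \to R^{\mathrm{t}C_2}$ of $\Qoppa_R$.

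The main obstacle I anticipate is bookkeeping: carefully tracking the many bimodule and $C_2$-equivariance conventions, and checking that the equivalence produced by Morita theory on underlying categories lifts coherently along the identifications in \ref{propitem:Rlin_Poincare_cats_tensor_mod_gen_inv}. This amounts to a diagrammatic check that the compatibility homotopy asserted in Definition \ref{defn:alt definition of Azumaya algebra with involution}(iii) -- between $\hom_{R^e}(A, R^{\varphi C_2}) \to \hom_{R^e}(A, R^{\mathrm{t}C_2})$ and the Tate-valued norm of the bimodule equivalence in (i) -- is precisely the datum needed to upgrade the underlying Morita equivalence of R-linear $\infty$-categories to an equivalence of R-linear Poincaré $\infty$-categories. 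I would work this out by combining the explicit bar-resolution computation of the relative tensor product used in the proof of \ref{propitem:Rlin_Poincare_cats_tensor_mod_gen_inv} with the classification \ref{propitem:classify_R_lin_hermitian_struct_mod_cat}.
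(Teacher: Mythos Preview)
Your proposal is correct and follows essentially the same approach as the paper. Both arguments construct the inverse as $\Mod_{A^{\op}}^\omega$ (equivalently $\Mod_{\lambda^*A^{\op}}^\omega$) equipped with the Poincar\'e structure determined by $(\overline{P} \to A^{\mathrm{t}C_2})$, then verify that the relative tensor product is the unit by separately matching the bilinear and linear parts of the quadratic functor using conditions (i) and (iii) of Definition~\ref{defn:alt definition of Azumaya algebra with involution}; the paper phrases this via the evaluation map and an equivalence $\eta \colon \Qoppa_A \otimes \Qoppa_{A^{\op}} \simeq \mathrm{ev}^*\Qoppa_R$ split along \cite[Theorem 3.3.1]{CDHHLMNNSI}, while you phrase it by computing the tensor product directly via Proposition~\ref{prop:relative_poincare_cats_basic_properties}\ref{propitem:Rlin_Poincare_cats_tensor_mod_gen_inv} and comparing with $\Qoppa_R$, but the content is the same.
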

\begin{proof}[Proof of Proposition \ref{prop:mod_over_azumaya_geninv_is_invertible}]
    The first statement follows from Proposition \ref{prop:relative_poincare_cats_basic_properties}\ref{propitem:classify_R_lin_hermitian_struct_mod_cat}; we prove the second statement. 
    First, by \cite[Example 3.2.9]{CDHHLMNNSI}, we see that $ \left(\Mod_A^\omega, \Qoppa_A \right) $ is indeed an $ R$-linear Poincaré $ \infty $-category (and not merely hermitian). 
    To show that the associated Poincaré $ \infty $-category is invertible, we must identify a dual $ \left(\Mod_A^\omega, \Qoppa_A \right)^\vee $ and exhibit an equivalence $ \left(\Mod_A^\omega, \Qoppa_A \right) \otimes_{(\mathrm{Mod}_{R^e}^\omega, \Qoppa_R)} \left(\Mod_A^\omega, \Qoppa_A \right)^\vee \simeq \left(\Mod_{R^e}^\omega, \Qoppa_R \right) $. 
    Since $ \Catpidem_{, R} \to \Catex_R $ is symmetric monoidal, we see that the underlying $ R $-linear $ \infty $-category associated to the dual must be $ \Mod_{A^\op}^\omega $. 
    Moreover, the canonical evaluation map $ \mathrm{ev} \colon \Mod_A^\omega \otimes_{\mathrm{Mod}_{R^e}^\omega} \Mod_{A^\op}^\omega \xrightarrow{\simeq} \Mod_{R^e}^\omega $ sends $ A \otimes_{R^e} A^\op $ to $ A $. 
    Endow $ \Mod_{A^\op}^\omega $ with a Poincaré structure corresponding to the module with genuine involution $ M_{A^\op}:= A^\op $, $ N_{A^\op} := \overline{P} $. 
    It remains to exhibit a natural equivalence 
    \begin{equation}\label{eq:invertible_quadratic_compatibility}
        \eta \colon \left(\Qoppa_A \otimes \Qoppa_{A^\op}\right) \xrightarrow{\simeq} \mathrm{ev}^* \Qoppa_R 
    \end{equation} 
    of quadratic functors $ \left(\Mod_A^\omega \otimes_{R^e} \Mod_{A^\op}^\omega\right)^\op \to \Spectra $. 
    By \cite[Theorem 3.3.1]{CDHHLMNNSI}, it suffices to exhibit equivalences on the bilinear and linear parts of (\ref{eq:invertible_quadratic_compatibility}) which glue compatibly. 
    By Proposition \ref{prop:relative_poincare_cats_basic_properties}\ref{propitem:Rlin_Poincare_cats_tensor_mod_gen_inv}, on linear parts, it suffices to exhibit an $ A \otimes_{R^e} A^\op $-linear equivalence 
    \begin{equation*}
        \hom_{R^e}(A, R^{\varphi C_2}) \simeq N_A \otimes_{R^{\varphi C_2}} N_{A^\op}
    \end{equation*}
    and on bilinear parts, it suffices to exhibit a $ C_2 $-equivariant $ (A \otimes_{R^e} A^\op)^{\otimes_{R^e} 2} $-linear equivalence 
    \begin{equation*}
        \hom_{R^e \otimes R^e}(A \otimes_{R^e} A, R^e) \simeq M_A \otimes_{R^e} M_{A^\op} 
    \end{equation*}
    which glue compatibly. 
    This follows from the definitions, concluding the proof. 
\end{proof}
The next proposition establishes a relationship between classical Azumaya algebras with $ \lambda $-involution and the Azumaya algebras with genuine involution of this paper. 
The proposition hinges on the observation that the additional data beyond an Azumaya algebra with $ \lambda $-involution (Definition \ref{defn:E1_alg_with_lambda_involution}) required to specify an Azumaya algebra with genuine involution becomes unnecessary when either the $ C_2 $-action is Galois or 2 is invertible. 

\begin{proposition}\label{prop:classical_inv_Azumaya_to_genuine_inv_Azumaya}
\begin{enumerate}[label=(\arabic*)]
    \item \label{propitem:classical_inv_Azumaya_to_gen_inv_Azumaya_affine} Let $ R $ be an ordinary ring with a given $ C_2 $-action $ \lambda $, regarded as a Poincaré ring spectrum $ \underline{R}^\lambda $ via Example \ref{ex:fixpt_Mackey_functor}. 
    Let $ A $ be an ordinary Azumaya algebra over $ R $ with a $ \lambda $-involution, i.e. an equivalence of associative $ R $-algebras $ \sigma \colon A \to \lambda^* A^{\op} $ satisfying $ \lambda^*(\sigma) \circ \sigma = \id_A $. 
    Suppose that either:
    \begin{itemize}
        \item the branch locus (Notation \ref{ntn:good_quotient_branch_locus}) in $ \Spec (R)/C_2 = \Spec(R^{C_2}) $ is empty, i.e. $ R^{C_2} \to R^e $ is quadratic étale, and that $R^{C_2}$ is regular Noetherian of finite Krull dimension, or 
        \item $ 2 $ is invertible in $ R $.
    \end{itemize} 
    Then there is a canonical Azumaya algebra with genuine involution over $ \underline{R}^{\lambda} $ whose underlying Azumaya algebra is $ A $. 
    \item \label{propitem:classical_inv_Azumaya_to_gen_inv_Azumaya_scheme} Let $ (X,\lambda, Y, p) $ be a scheme with involution and good quotient in the sense of Definition \ref{defn:Category of good quotients}. 
    Let $ A $ be a classical Azumaya algebra with $ \lambda $-involution on $ X $ in the sense of \cite[Definition 4.10]{azumaya_involution}. 
    Suppose that $ Y $ is regular Noetherian of finite Krull dimension and either:
    \begin{itemize}
        \item the branch locus (Notation \ref{ntn:good_quotient_branch_locus}) in $ Y $ is empty, or 
        \item $ 2 $ is invertible in $ \mathcal{O}_Y $. 
    \end{itemize} 
    Then there is a canonical Azumaya algebra with genuine involution over $ X $ in the sense of Definition \ref{defn: Azumaya algebra with genuine anti involution scheme case} whose underlying Azumaya algebra is $ A $. 
\end{enumerate}
\end{proposition}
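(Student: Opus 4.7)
The plan is to show that under either hypothesis the geometric fixed point sheaf $\underline{\mathcal{O}}^{\varphi C_2}$ (respectively $R^{\varphi C_2}$ in the affine setting) vanishes, and then to observe that this vanishing forces the ``genuine'' enhancement data in an Azumaya algebra with genuine involution to be canonically trivial, so that the classical pair $(A,\sigma)$ lifts uniquely.

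First I would verify the vanishing $R^{\varphi C_2} \simeq 0$ (resp.\ $\underline{\mathcal{O}}^{\varphi C_2} \simeq 0$) in each case. For the empty branch locus cases of both \ref{propitem:classical_inv_Azumaya_to_gen_inv_Azumaya_affine} and \ref{propitem:classical_inv_Azumaya_to_gen_inv_Azumaya_scheme}, this is immediate from Proposition \ref{prop:geomfixpt_C2_structure_sheaf}\ref{propitem:geomfixpt_C2_structure_sheaf_ramification_locus}, since the sheaf is supported on the branch locus, which is empty. In the 2-invertible case of \ref{propitem:classical_inv_Azumaya_to_gen_inv_Azumaya_scheme}, Proposition \ref{prop:geomfixpt_C2_structure_sheaf}\ref{propitem:geomfixpt_C2_structure_sheaf_away_from_2} implies the support avoids $Y \times_{\Spec \ZZ} \Spec \ZZ[\tfrac{1}{2}]$, which equals $Y$ when $2$ is globally invertible. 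The 2-invertible case of \ref{propitem:classical_inv_Azumaya_to_gen_inv_Azumaya_affine} does not come with the regularity hypothesis needed to invoke Proposition \ref{prop:geomfixpt_C2_structure_sheaf}, so I would argue directly via the isotropy separation cofiber sequence $R_{\mathrm{h}C_2} \to R^{C_2} \to R^{\varphi C_2}$: the splitting $R \simeq R^{C_2} \oplus R^{-}$ provided by the idempotents $(1 \pm \lambda)/2 \in R$ shows that group homology of $C_2$ with coefficients in $R$ vanishes in positive degrees, so $R_{\mathrm{h}C_2}$ is discrete with value $R^{C_2}$; the transfer then acts as multiplication by $2$ on $R^{C_2}$, which is invertible, and the cofiber vanishes.

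Once the vanishing $R^{\phi C_2} \simeq 0$ is in hand, the extra structure in Definition \ref{defn:alt definition of Azumaya algebra with involution} beyond an $\EE_1$-$R^e$-algebra with $\lambda$-involution trivializes: the module $P$ is required to be a module over $A^{\op} \otimes_{R^e} R^{\phi C_2} \simeq 0$, hence $P \simeq 0$ canonically, forcing the map $P \to A^{\mathrm{t}C_2}$ to be zero; the auxiliary object $\overline{P}$ likewise vanishes; and the coherences in item (iii) and the commutative diagram of the definition involve $\mathrm{hom}_{R^e}(A, R^{\phi C_2}) \simeq 0$ and $P \otimes_{R^{\phi C_2}} \overline{P} \simeq 0$, so they are satisfied trivially. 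The only remaining piece of data, the $C_2$-equivariant equivalence (i) identifying $\mathrm{hom}_{R^e \otimes R^e}(A \otimes_{R^e} A, R^e)$ with $A \otimes_{R^e} \lambda^* A^{\op}$, is precisely the classical Azumaya condition for $A$ with its $\lambda$-involution and is given by hypothesis. Since a discrete associative ring with involution is the same datum as an $\EE_1$-algebra with $\lambda$-involution in the sense of Definition \ref{defn:E1_alg_with_lambda_involution} (discrete rings embed fully faithfully into $\EE_1$-algebras, and homotopy fixed points for the discrete group $C_2$ on a 1-category agree with strict fixed points), the pair $(A,\sigma)$ together with the canonical zero data assembles into an Azumaya algebra with genuine involution $(A,\, 0 \to A^{\mathrm{t}C_2})$, and this construction is manifestly functorial in $(A,\sigma)$.

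For part \ref{propitem:classical_inv_Azumaya_to_gen_inv_Azumaya_scheme}, the identical argument applies sheaf-theoretically to Definition \ref{defn: Azumaya algebra with genuine anti involution scheme case} using the global vanishing of $\underline{\mathcal{O}}^{\varphi C_2}$; functoriality of the construction ensures that the canonical local data glues to a global Azumaya algebra with genuine involution on $X$. The heavy lifting is done by Proposition \ref{prop:geomfixpt_C2_structure_sheaf}, and the only genuinely new verification is the direct computation in the affine 2-invertible case, which is elementary once the splitting by the idempotents $(1 \pm \lambda)/2$ is in hand.
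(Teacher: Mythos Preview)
Your proposal is correct and follows the same approach as the paper: establish that $R^{\varphi C_2}$ (resp.\ $\underline{\mathcal{O}}^{\varphi C_2}$) vanishes under the hypotheses, then observe that the extra genuine data in Definition~\ref{defn:alt definition of Azumaya algebra with involution} trivializes. You are more careful than the paper in treating the affine $2$-invertible case separately (since the regularity hypothesis needed for Proposition~\ref{prop:geomfixpt_C2_structure_sheaf} is absent there), though your isotropy-separation argument could be shortened by simply noting that $R^{\mathrm{t}C_2} \simeq 0$ when $2$ is invertible and hence $R^{\varphi C_2} = \tau_{\geq 0}(R^{\mathrm{t}C_2}) \simeq 0$ for the genuine symmetric structure.
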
 
\begin{proof}
    In either case, by Proposition \ref{prop:geomfixpt_C2_structure_sheaf} we have that $R^{\mathrm{t}C_2}\simeq R^{\phi C_2}\simeq 0$ and $ \underline{\mathcal{O}}^{\phi C_2} \simeq 0 $. 
    Therefore, the data of a genuine Azumaya algebra with involution reduces to defining $\mathbb{E}_1$-$R^e$-algebra with $ \lambda $-involution, which is supplied by the data of a classical Azumaya algebra with $\lambda $-involution. 
\end{proof}
Using the previous proposition, we can compare our involutive Brauer group to existing notions of the involutive Brauer group (see Theorem~\ref{thm: PS comparison in text}). 
\begin{definition}\label{defn:classical_inv_Azumaya_to_Poincare_Brauer}
    Let $ (X,\lambda, Y, p) $ be a scheme with involution and good quotient and suppose that $ (X,\lambda,Y,p) $ satisfies the assumptions of Proposition \ref{prop:classical_inv_Azumaya_to_genuine_inv_Azumaya}\ref{propitem:classical_inv_Azumaya_to_gen_inv_Azumaya_scheme}. 
    Let $ (A,\sigma) $ be a classical Azumaya algebra with $ \lambda $-involution on $ X $ in the sense of \cite[Definition 4.10]{azumaya_involution}. 
    Using Propositions~\ref{prop:mod_over_azumaya_geninv_is_invertible} and~\ref{prop:classical_inv_Azumaya_to_genuine_inv_Azumaya}, we can associate to $ (A,\sigma) $ a class in $ \pnbr(X,\lambda,Y,p) $.  
\end{definition}
\begin{proposition}\label{prop:from_involutive_Brauer_to_Poincare_Brauer}
    Let $ (X,\lambda, Y, p) $ be a scheme with involution and good quotient and suppose that $ (X,\lambda,Y,p) $ satisfies the assumptions of Proposition \ref{prop:classical_inv_Azumaya_to_genuine_inv_Azumaya}\ref{propitem:classical_inv_Azumaya_to_gen_inv_Azumaya_scheme}. 
    \begin{enumerate}[label=(\arabic*)]
        \item \label{propitem:pnbr_class_of_Azumaya_descends_typeI} Suppose $ \lambda = \id_X $. 
        Then the assignment of Definition \ref{defn:classical_inv_Azumaya_to_Poincare_Brauer} descends to a group homomorphism \[\mathrm{Br}^*(X)\to \pi_0(\mathfrak{br}^\mathrm{p}(X,\mathrm{id}_X))\,,\] where the former is the involutive Brauer group of Parimala--Srinivas of Construction~\ref{const: involutive BG type I}. 
        \item \label{propitem:pnbr_class_of_Azumaya_descends_typeII} Suppose $p:X\to Y$ is a degree $2$ \'etale cover. 
        Then the assignment of Definition \ref{defn:classical_inv_Azumaya_to_Poincare_Brauer} descends to a group homomorphism \[\mathrm{Br}(X,\lambda)\to \pi_0(\mathfrak{br}^\mathrm{p}(X,\lambda, Y, p))\,,\] where the former is the involutive Brauer group of Parimala--Srinivas of Construction~\ref{const: involutive BG of type II}. 
    \end{enumerate}
\end{proposition}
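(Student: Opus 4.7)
The proof has two parts: establishing multiplicativity of the assignment and showing that it respects the Parimala--Srinivas equivalence relation. I will outline each in turn, reducing both to results established earlier.

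For multiplicativity, given two classical Azumaya algebras with $\lambda$-involution $(\mathcal{A},\sigma)$ and $(\mathcal{B},\tau)$, Proposition \ref{prop:classical_inv_Azumaya_to_genuine_inv_Azumaya}\ref{propitem:classical_inv_Azumaya_to_gen_inv_Azumaya_scheme} produces corresponding Azumaya algebras with genuine involution; under both hypotheses of that Proposition one has $\underline{\mathcal{O}}^{\varphi C_2} \simeq 0$ by Proposition \ref{prop:geomfixpt_C2_structure_sheaf}, so no additional genuine datum enters and the tensor product in the sense of Observation \ref{obs:azumaya_geninv_base_change} agrees with the classical Parimala--Srinivas tensor product. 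Then Proposition \ref{prop:relative_poincare_cats_basic_properties}\ref{propitem:Rlin_Poincare_cats_tensor_mod_gen_inv} identifies the tensor product of the associated invertible Poincaré $\infty$-categories, so the assignment respects multiplication.

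For both \ref{propitem:pnbr_class_of_Azumaya_descends_typeI} and \ref{propitem:pnbr_class_of_Azumaya_descends_typeII}, it then suffices to show that the image in $\pi_0\pnbr$ of the Parimala--Srinivas trivialisers is the identity element. Concretely, for any locally free $\mathcal{O}_X$-module $\mathcal{E}$ equipped with a nondegenerate symmetric bilinear form $B$ (Type I) or a nondegenerate $\lambda$-hermitian form $H$ (Type II), one must show the class of $(\mathcal{E}nd_{\mathcal{O}_X}(\mathcal{E}), \alpha_B)$, respectively $(\mathcal{E}nd_{\mathcal{O}_X}(\mathcal{E}), \alpha_H)$, is trivial in $\pi_0\pnbr(X,\lambda,Y,p)$. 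The datum of the form exhibits $\mathcal{E}$ as a Poincaré object of $(\Mod^\omega_{\mathcal{O}_X},\Qoppa_{\underline{\mathcal{O}}})$: in Type I a symmetric bilinear form is an $\mathrm{id}$-hermitian form in the sense of Definition \ref{definition:sigma-hermitian_form}, and Type II is literally our notion of $\lambda$-hermitian form. Since $\mathcal{E}$ is locally free, Proposition \ref{prop:hermitian_pic_agrees_with_pnpic_on_heart} identifies such pairs with elements of $\pi_0\pnpic$.

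Now apply Example \ref{ex:endomorphisms_of_poincare_object} to the Poincaré object $(\mathcal{E},B)$ (or $(\mathcal{E},H)$): it canonically upgrades $\mathrm{End}_{\mathcal{O}_X}(\mathcal{E})$ to an Azumaya algebra with genuine involution, and further identifies the resulting $\mathcal{O}_X$-linear Poincaré $\infty$-category with the unit $(\Mod^\omega_{\mathcal{O}_X},\Qoppa_{\underline{\mathcal{O}}})$ via the Morita equivalence $\mathcal{E}\otimes_{\mathcal{O}_X}-$; equivalently, this class lies in the image of $\pnpic \simeq \Omega\pnbr$ (Proposition \ref{prop:loops_pnbr_is_pnpic}) and is therefore trivial in $\pi_0\pnbr$. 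The main obstacle is a careful comparison between the classical involution $\alpha_B$, respectively $\alpha_H$, defined via the local descriptions of Propositions \ref{prop: local description of type I Azumaya algebras} and \ref{prop: local description of type II Azumaya algebras}, and the involution produced by Example \ref{ex:endomorphisms_of_poincare_object}: both are obtained from the pairing by composing with duality and conjugation, so they must agree, but one must verify that the additional genuine datum $P \to A^{\mathrm{t}C_2}$ is uniquely determined and compatible. Here again the vanishing $\underline{\mathcal{O}}^{\varphi C_2} \simeq 0$ under our hypotheses forces the genuine datum to be essentially unique, making the comparison automatic.
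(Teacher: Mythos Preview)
Your proposal is correct and follows essentially the same strategy as the paper: show multiplicativity via Proposition~\ref{prop:relative_poincare_cats_basic_properties}\ref{propitem:Rlin_Poincare_cats_tensor_mod_gen_inv}, then reduce well-definedness to showing that the Parimala--Srinivas trivialisers $(\mathcal{E}nd(\mathcal{E}),\alpha_B)$ map to the unit by exhibiting $(\mathcal{E},B)$ as a Poincaré object, using throughout that $\underline{\mathcal{O}}^{\varphi C_2}\simeq 0$ so no genuine data intervenes. The only cosmetic differences are that the paper phrases the last step as directly constructing an equivalence $\Mod^{\mathrm{p}}_X\simeq j(\mathcal{E}nd(\mathcal{E}),\alpha_B)$ rather than routing through $\pnpic\simeq\Omega\pnbr$, and that your citation of Observation~\ref{obs:azumaya_geninv_base_change} for the tensor product is slightly off (that observation concerns base change, not the tensor of two Azumaya algebras); the invocation of Proposition~\ref{prop:hermitian_pic_agrees_with_pnpic_on_heart} is also unnecessary since you only need $(\mathcal{E},B)$ to be a Poincaré generator, not an invertible one.
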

Proposition \ref{prop:from_involutive_Brauer_to_Poincare_Brauer} applies to rings with involution by Observation \ref{obs:fixpt_Mackey_functor_as_affine_C2_scheme}; we omit the statement to avoid repetition.
\begin{proof} [Proof of Proposition \ref{prop:from_involutive_Brauer_to_Poincare_Brauer}]
    We prove \ref{propitem:pnbr_class_of_Azumaya_descends_typeI}; \ref{propitem:pnbr_class_of_Azumaya_descends_typeII} is proven in exactly the same way. 
    Suppose $ (A,\sigma) $, $ (B,\tau) $ are two classical Azumaya algebras with ($ \lambda $-)involution on $ X $. 
    Let us write $ j(A,\sigma) $, $ j(B,\tau) $ for their respective images in $ \pnbr(X,\lambda,Y,p) $ under the assignment of Definition \ref{defn:classical_inv_Azumaya_to_Poincare_Brauer}. 
    Since $ A $ and $ B $ are locally free as $ \mathcal{O}_X $-modules, their derived tensor product agrees with their classical tensor product. 
    By \'etale descent (\S\ref{subsection: etale descent results}) and Proposition \ref{prop:relative_poincare_cats_basic_properties}\ref{propitem:Rlin_Poincare_cats_tensor_mod_gen_inv}, there is an equivalence $ j\left(A \otimes_{\mathcal{O}_X} B, \sigma \otimes \tau\right) \simeq j(A,\sigma) \otimes j(B,\tau) $ in $ \pnbr(X,\lambda,Y,p) $, where the latter tensor product is taken in $ \Mod_{\Mod^p_{\mathcal{O}_X}}\left(\Catpidem\right) $. 
    
    In order to show that there is a well-defined map which defines a group homomorphism, by the argument in the previous paragraph, it suffices to show that classical Azumaya algebras with $ \lambda $-involution which are Morita trivial are sent to the class of the identity in $ \pi_0 \pnbr(X,\lambda, Y,p) $. 
    Recall that a Morita trivial classical Azumaya algebra with involution is of the form $ \left(\mathrm{End}_{\mathcal{O}_X}(E), \alpha_B\right) $, where $ E $ is a locally free $ \mathcal{O}_X $-module of finite rank, $ \mathrm{End}_{\mathcal{O}_X} $ denotes sheaf hom, and $ \alpha_B $ is the involution on $ \mathrm{End}_{\mathcal{O}_X}(E) $ associated to a symmetric bilinear $ \mathcal{O}_X $-valued form $ B $ on $ E $. 
    We will show that $ (E, B) $ defines a Poincaré object of $ j\left(\mathrm{End}_{\mathcal{O}_X}(E), \alpha_B\right) $ which induces an equivalence $ \Mod^p_X \simeq j\left(\mathrm{End}_{\mathcal{O}_X}(E), \alpha_B\right) $. 

    Recall that our assumptions imply that all $ \underline{\mathcal{O}} $-modules can be taken in Borel $ C_2 $-spectra (see Construction \ref{cons:structure_sheaf_of_Green_functors} and Proposition \ref{prop:classical_inv_Azumaya_to_genuine_inv_Azumaya}). 
    It follows that a lift of $ E $ to a Poincaré object of $ j\left(\mathrm{End}_{\mathcal{O}_X}(E), \alpha_B\right) $ is equivalent to exhibiting a $ C_2 $-equivariant $ \mathrm{End}_{\mathcal{O}_X}(E) $-bilinear map $ E \otimes_{\mathcal{O}_X} E \to \mathrm{End}_{\mathcal{O}_X}(E) $, where $ \mathrm{End}_{\mathcal{O}_X}(E) $ is regarded as a module with involution over itself via $ \alpha_B $. 
    Such a map exists by definition of the involution $ \alpha_B $ on $ \mathrm{End}_{\mathcal{O}_X}(E) $. 
\end{proof}
\begin{recollection}
    [{\cite{MR1803361}}] \label{rec:Frohlich_Wall_equivariant_Brauer}
    Let $ R $ be a discrete commutative ring with an involution $ \lambda $. 
    In the notation of \cite{MR1803361}, take $ \Gamma = C_2 $ and $ w $ to be an isomorphism. 
    In \S~3 \emph{ibid.}, Fröhlich--Wall suggest a definition of an \emph{equivariant Brauer category} $ \mathcal{B}(R,C_2) $ as follows. 
    \begin{enumerate}[label=(\alph*)]
        \item     An object of $ \mathcal{B}(R,C_2) $ is given by a pair $ (A, \sigma) $ where $ A $ is an ordinary Azumaya $ R $-algebra and $ \sigma \colon A \to A^\op $ is $ \lambda $-linear and satisfies $ \sigma^\op \circ \sigma = \id_A $. 
        \item \label{rec_item:equivariant_Brauer_morphism} A morphism in $ \mathcal{B}(R,C_2) $ from $ (A,\sigma) $ to $ (B,\tau) $ is given by a quintuplet $ (M,M',\phi,h,h') $ where $ M $ is an invertible $ A $-$ B $-bimodule, $ M' $ is an invertible $ B $-$ A $-bimodule, and $ h \colon M\to M' $ (resp. $ h'\colon M'\to M $) is a $ \sigma \otimes \tau $-(resp. $ \tau \otimes \sigma $-)linear equivalence satisfying $ h' \circ h = \id_M $ (resp. $ h\circ h'= \id_{M'} $) (using the canonical identification of $ A $-$ B $-bimodules with $ B^\op $-$ A^\op $-bimodules). 
        Finally, $ \phi \colon M \otimes_B M'\to A $ is a perfect pairing which is $ A $-$ A $-bilinear and $ C_2 $-equivariant (as a morphism of abelian groups), where we endow the source with the action by $ h \otimes h' $ and the target with the action by $ \sigma $.  
        \item If $ (M,M', \phi, h, h') $ is a morphism from $ (A,\sigma) $ to $ (B,\tau) $ and $ (N,N', \psi,j, j') $ is a morphism from $ (B,\tau) $ to $ (C,\pi) $, their composite is given by $ \left(M \otimes_B N , N' \otimes_B M', \phi \circ (\mathrm{id}_M \otimes \psi \otimes \mathrm{id}_{M'}) , h \otimes j, j' \otimes h'\right) $. 
    \end{enumerate}
    Proposition 3.1 \emph{ibid} shows that $ \mathcal{B}(R,C_2) $ is a strictly coherent group-like category; in particular, tensor product $ \otimes_R $ induces a symmetric monoidal structure on $ \mathcal{B}(R,C_2) $ for which each object has a $ \otimes $-inverse. 
    In the language of the current work, $ \mathcal{B}(R,C_2) $ is (equivalent to) a grouplike commutative monoid in (1-)groupoids. 
    The \emph{equivariant Brauer group} $ B(R,C_2) $ is defined to be isomorphism classes of objects in $ \mathcal{B}(R,C_2) $, equipped with the group structure induced by $ \otimes_R $. 
    Furthermore, in \emph{loc. cit.} Fröhlich--Wall identify the unit group $ \mathrm{Aut}_{\mathcal{B}(R,C_2)}(e) \simeq \pi_1 \mathcal{B}(R,C_2) $ with $ C(R,C_2) $, their notation for the \emph{equivariant class group} (see p.63 \emph{ibid.}). 
    Unraveling definitions, we see that $ C(R,C_2) \simeq \Pic^{\mathrm{h}}(R,\lambda) $, where $ \Pic^\mathrm{h} $ is the hermitian Picard group of Definition \ref{definition:hermitian_picard_group}. 
\end{recollection}
\begin{observation}\label{obs:equivariant_Brauer_morphism_simplifies}
    In Recollection \ref{rec:Frohlich_Wall_equivariant_Brauer}, observe that we may simplify \ref{rec_item:equivariant_Brauer_morphism} as follows: Using the inverse equivalences $ h, h' $, we may replace $ M' $ by $ (\sigma \otimes \tau)_*M $. 
    Then we may rewrite $ \phi $ as a perfect $A$-$A$-bilinear pairing $ \phi : M \otimes_B \left((\sigma \otimes \tau)_* M\right) \to A $ which is $ C_2 $-equivariant with respect to the action on the source given by composing $ M \otimes_B \left((\sigma \otimes \tau)_* M\right) \simeq \left((\sigma \otimes \tau)_* M\right) \otimes_B M$ with base change along $ \sigma $ and $ \tau $ and the action on the target by $ \sigma $. 
    Equivalently, the adjoint $ \phi^\dag \colon M \to \hom_A\left((\sigma \otimes \tau)_*M, A\right) $ is an $ A $-$ B $-bilinear equivalence so that the composite $ \hom_A\left(((\sigma \otimes \tau)_*\phi^\dag)^{-1}, A\right) \circ \phi^\dag $ is homotopic to the canonical equivalence. 
\end{observation}
\begin{notation}
    Let $ R $ be a Poincaré ring. 
    Write $ \tau_{\leq 1} \Brp(R) $ for the 1-truncation of $ \Brp(R) $; in other words, $ \tau_{\leq 1} \Brp(R) $ is obtained by taking the sub (1-)groupoid consisting of invertible objects and equivalences in the \emph{homotopy category} of $ R $-linear Poincaré $ \infty $-categories. 
    Observe that $ \tau_{\leq 1} \Brp(R) $ is a commutative monoid in 1-truncated groupoids. 
\end{notation}
\begin{variant}\label{var:symmetric_pnbr}
    Let $ R $ be a Poincaré ring spectrum, and recall the $ \infty $-category of perfect symmetric $ \infty $-categories $ \Cat^{\mathrm{ps}}_\infty $ of Recollection \ref{rec:symmetric_Poincare_cat}. 
    Define the \emph{symmetric Poincaré Brauer space of $ R $} to be the connective spectrum $ \mathfrak{br}^{\mathrm{ps}}(R) := \pic\left(\Mod_{\left(\Mod^\omega_{R^e},B_R\right)}\left(\Cat^{\mathrm{ps}}_\infty\right) \right) $, where $ \Mod^p $ is the functor of Theorem \ref{thm:calgp_to_poincare_cat}\ref{thmitem:calgp_to_poincare_cat_with_tensor} and $ \left(\Mod^\omega_{R^e},B_R\right) $ is the image of $ \Mod^p_R $ in $ \Einfty\Mon\left(\Cat^{\mathrm{ps}}_\infty\right) $. 
    (In fact, the assignment $ R \mapsto \mathfrak{br}^{\mathrm{ps}}(R) $ factors through the forgetful functor $ \CAlgp \to \EE_\infty\Alg^{BC_2} $.) 
    There is a canonical map $ \Brp(R) \to \mathfrak{br}^{\mathrm{ps}}(R) $ which arises as a component of a natural transformation of functors $ \CAlgp \to \Spectra_{\geq 0} $.  
\end{variant}

It is unclear when the map $ \Brp(R^s) \to \mathfrak{br}^{\mathrm{ps}}(R) $ is an equivalence. We know that this is true, for example, when $R^{\phi C_2}\simeq 0$, but we do not know if this map is an equivalence outside of this case.

\begin{proposition}\label{prop:Frohlich_Wall_eqvt_Brauer_comparison}
    Let $ R $ be a discrete ring with an involution, and regard $ R^s $ as a Poincaré ring via Example \ref{example:symmetric_poincare_structure}. 
    Then there is a canonical map $ \psi \colon \mathcal{B}(R,C_2) \to \tau_{\leq 1} \mathfrak{br}^{\mathrm{ps}}(R^s) $ of commutative monoids in 1-groupoids, where $ \mathcal{B}(R,C_2) $ is Fröhlich--Wall's equivariant Brauer category of Recollection \ref{rec:Frohlich_Wall_equivariant_Brauer} and the latter is defined in Variant \ref{var:symmetric_pnbr}. 
    Under the identification $ C(R,C_2) \simeq \Pic^\mathrm{h}(R,\lambda) $, the map $ \pi_1 \psi $ induces the canonical inclusion (see Theorem \ref{theorem:fausk_for_discrete_rings}). 
\end{proposition}
\begin{proof}
    Let $ (A,\sigma) $ be an object of $ \mathcal{B}(R,C_2) $. 
    Observe that $ \pic\left(\Cat^{\mathrm{ps}}_\infty\right) \simeq \pic\left(\Catex^{,\mathrm{h}C_2}\right)\simeq \pic\left(\Catex\right)^{\mathrm{h}C_2} $ and likewise for invertible $ R $-linear symmetric Poincaré $ \infty $-categories. 
    It therefore suffices to show that the involution $ \sigma $ induces a functor $ \mathrm{Mod}_A^\omega \xrightarrow{\sim} \mathrm{Mod}_{A^\op}^\omega \xleftarrow[ \hom_A(M,A) \mapsfrom M]{\sim} \mathrm{Mod}_A^{\omega, \op} $ exhibiting $ \mathrm{Mod}_A^\omega $ as a $ C_2 $-homotopy fixed point of the action $ \mathcal{C} \mapsto \sigma_*\mathcal{C}^\op $ on $ \pic\left(\Catex_{,R}\right) $. 
    This follows from modifying the constructions of \S\ref{appendix:calgp_to_catp} by replacing $ \EE_\sigma $-algebras with `naïve' $ \EE_\sigma $-algebras and $ \Fun^q $ by $ \Fun^{s} $ in Construction \ref{cons:precompose_with_norm_in_families}. 
    Alternatively, we may apply (a relative variant of) \cite[Example 3.1.9]{CDHHLMNNSI}. 

    Next, we show that the assignment of the previous paragraph lifts to a functor of $ 1 $-groupoids $ \mathcal{B}(R,C_2) \to \mathrm{ho}\, \pic\left(\Cat^{\mathrm{ps}}_{\infty,R}\right) $. 
    Regarding $ R $ as a symmetric Poincaré ring, let us restrict Corollary \ref{cor:poincare_fourier_mukai} to those Poincaré functors whose underlying functor is an $ R $-linear equivalence. 
    It follows that the underlying $ \infty $-category comprises invertible $ A $-$ B $-bimodules $ P $. 
    The data of an $ R $-linear symmetric functor covering the functor $ P \otimes_B - \colon \mathrm{Mod}_B^\omega \to \mathrm{Mod}_A^\omega $ can be encoded by a map $ (P \otimes P) \otimes_{B\otimes B} B \to A $ in $ \Mod_{A \otimes_R A}^{\mathrm{h}C_2} $. 
    This functor is duality-preserving iff the adjoint $ P \to \hom_A(P, A) $ is an $ A $-$ B $-bilinear equivalence. 
    The existence of such a functor follows from Observation \ref{obs:equivariant_Brauer_morphism_simplifies}. 
    That the functor respects the symmetric monoidal structure follows from combining the previous constructions with an argument similar to the proof of Proposition \ref{prop:from_involutive_Brauer_to_Poincare_Brauer}. 
\end{proof}
\begin{observation}
    Let $ k $ be an algebraically closed field, and regard $ \underline{k} $ as a Poincar\'e ring spectrum with the trivial involution via Example \ref{ex:fixpt_Mackey_functor}. 
    Let $ (A, A^{\varphi C_2}, A^{\varphi C_2} \to A^{\mathrm{t}C_2}) $ be an Azumaya algebra with genuine involution over $ \underline{k} $. 
    By Proposition \ref{prop:mod_over_azumaya_geninv_is_invertible}, $ \left(\Mod_A^\omega, \Qoppa_A\right) \in \pnbr(\underline{k}) $. 
    By \cite[Corollary 1.15]{MR2957304}, $ A $ is equivalent to $ \mathrm{End}_k(P) $ for some compact $ k $-module $ P $. 
    Observe that there is a canonical identification $ A^\op \simeq \mathrm{End}_k(P^\vee) $. 
    By the derived Skolem--Noether theorem \cite[Theorem 5.1.5]{MR2579390}, there exists a unique $ n \in \ZZ $ so that the involution $ A \simeq A^\op $ is induced by an equivalence $ P \simeq P^\vee [n] $ (which is unique up to multiplication by a unit in $ k $). 
\end{observation}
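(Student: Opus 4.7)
The first assertion that $\left(\Mod_A^\omega, \Qoppa_A\right) \in \pnbr(\underline{k})$ would be immediate from Proposition \ref{prop:mod_over_azumaya_geninv_is_invertible} applied to the given Azumaya algebra with genuine involution. For the second, since $k$ is algebraically closed, hence in particular strictly Henselian, Antieau--Gepner's \cite[Corollary 1.15]{MR2957304} gives $\Br(k) \simeq 0$, and consequently any Azumaya $k$-algebra is Morita trivial: $A \simeq \mathrm{End}_k(P)$ for some compact generator $P \in \Mod_k^\omega$. Since $k$ is a field, such a $P$ is automatically a bounded complex of finite-dimensional vector spaces with nonzero total rank.

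The identification $A^\op \simeq \mathrm{End}_k(P^\vee)$ would follow from the functoriality of linear duality: $(-)^\vee$ is a $k$-linear symmetric monoidal anti-equivalence of $\Mod_k^\omega$ with itself, so restricting to the full subcategories generated by $P$ and $P^\vee$ gives a $k$-algebra equivalence $\mathrm{End}_k(P)^\op \xrightarrow{\sim} \mathrm{End}_k(P^\vee)$ via $f \mapsto f^\vee$.

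The main content is the last claim, for which the plan is as follows. Under the above identifications, the underlying involution $\sigma \colon A \xrightarrow{\sim} A^\op$ becomes a $k$-algebra equivalence $\widetilde{\sigma} \colon \mathrm{End}_k(P) \xrightarrow{\sim} \mathrm{End}_k(P^\vee)$. I would invoke Lieblich's derived Skolem--Noether theorem \cite[Theorem 5.1.5]{MR2579390}, which exhibits $\widetilde{\sigma}$ as induced by a $k$-linear Morita equivalence $\Mod_k^\omega \xrightarrow{\sim} \Mod_k^\omega$. Any such autoequivalence is tensoring by an invertible object of $\mathfrak{pic}(k) \cong \ZZ$, necessarily of the form $- \otimes_k k[n]$ for some $n \in \ZZ$; tracking the images of generators then yields $P \simeq P^\vee[n]$. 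Uniqueness of $n$ is immediate because the shifts $P^\vee[n]$ for varying $n$ are pairwise non-isomorphic (as $P^\vee \neq 0$), and uniqueness of the equivalence up to multiplication by an element of $k^\times$ reflects the fact that the center of $\mathrm{End}_k(P)$ is $k$, so two such equivalences inducing the same $\widetilde{\sigma}$ must differ by a central unit.

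The main subtlety is a clean application of derived Skolem--Noether in the derived/$\infty$-categorical setting; an alternative, more hands-on route would use the explicit decomposition $P \simeq \bigoplus_i k[n_i]^{\oplus m_i}$ to realize $\mathrm{End}_k(P)$ as a $\ZZ$-graded matrix algebra and argue directly that any $k$-algebra equivalence to $\mathrm{End}_k(P^\vee)$ is inner after the grading shift by a unique $n$.
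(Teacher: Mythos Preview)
Your proposal is correct and follows the same approach as the paper. Note that this is stated as an \emph{Observation} with the justifications inlined in the statement itself; the paper does not provide a separate proof environment. Your proposal simply unpacks each of the cited results (Proposition \ref{prop:mod_over_azumaya_geninv_is_invertible}, Antieau--Gepner's vanishing of $\Br(k)$, the duality identification $A^\op \simeq \mathrm{End}_k(P^\vee)$, and the derived Skolem--Noether theorem) with a bit more care than the paper does, and your reasoning at each step is sound. The alternative hands-on route via the graded matrix algebra decomposition you mention at the end is also valid and would avoid appealing to the black-box Skolem--Noether statement.
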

\begin{proposition}\label{prop:Poincare_cat_as_module_cat} 
    Let $ R $ be a Poincaré ring. 
    Let $ \left(\mathcal{C}, \Qoppa\right) $ be an invertible idempotent-complete $ R $-linear Poincaré $ \infty $-category. 
    Suppose that we are given a Poincaré object $ (P, q) $ of $ \left(\mathcal{C}, \Qoppa\right) $ so that $ P $ is a generator for $ \mathcal{C} $, and write $ q^\dagger \colon P \xrightarrow{\sim} D_{\Qoppa_{\mathcal{C}}}(P) $ for the adjoint to the bilinear part of $ q $. Then
    \begin{enumerate}[label=(\arabic*)]
        \item \label{propitem:endomorphisms_Poincare_object_involution} $ \mathrm{End}_{\mathcal{C}}(P) $ inherits the structure of a $\EE_1$-$R^e$-algebra with $ \lambda $-involution in the sense of Definition \ref{defn:E1_alg_with_lambda_involution}. 
        Informally, the involution on $ A:= \mathrm{End}_{\mathcal{C}}(P) $ is of the form $ a \mapsto (q^\dagger)^{-1} \circ D_{\Qoppa_{\mathcal{C}}}(a) \circ q^{\dagger} $. 
        \item \label{propitem:endomorphisms_Poincare_object_central_Wall_anti-struc} There exists a lift of $ A:= \mathrm{End}_{\mathcal{C}}(P) $ to a genuine Azumaya algebra with central Wall anti-structure over $ R $ in the sense of Definition \ref{defn:Azumaya_genuine_central_Wall_anti-structure}. 
        Informally, the $ R $-linear module with genuine involution over $ A $ is given by $ M_A := \hom_{\mathcal{C}}(P, P) $, the $ A \otimes \lambda^*A $-action on $ M_A $ is given by $ (f \otimes g) \otimes a \mapsto f \circ a \circ (q^\dagger)^{-1} \circ D_{\Qoppa_{\mathcal{C}}}(g) \circ q^{\dagger} $, and the involution on $ M_A $ is given by $ a \mapsto \mathrm{ev}_P^{-1} \circ D_{\Qoppa_{\mathcal{C}}}(q^\dagger)^{-1} \circ D_{\Qoppa_{\mathcal{C}}}(a) \circ q^\dagger $. 
        \item \label{propitem:endomorphisms_Poincare_object_Azumaya_gi} Suppose that there exists a $ C_2 $-equivariant map $ \sphere \to M_A $ sending $ 1 \mapsto \mathrm{id}_P $ (where $ \sphere $ is endowed with the trivial $ C_2 $-action). 
        Then $ R $-linear module with genuine involution $ M_A $ of part \ref{propitem:endomorphisms_Poincare_object_central_Wall_anti-struc} comes from the involution on $ A$ in the sense of Remark \ref{rmk:Azumaya_gi_Wall_compare}. 
        In particular, $ \left(\mathcal{C}, \Qoppa\right) $ is of the form $ \left(\Mod^\omega_A, \Qoppa_A \right) $ for some Azumaya algebra over $ R $ with genuine involution.  
        \item \label{propitem:endomorphisms_hyperbolic_Poincare_object} Suppose $ (P,q) $ is a Poincaré object, and write $ A:= \mathrm{End}_{\mathcal{C}}(P) $. 
        Then there exists a $ C_2 $-equivariant map $ \sphere \to M_A $ sending $ 1 \mapsto \mathrm{id}_P $, i.e. $ M_A $ satisfies the hypotheses of \ref{propitem:endomorphisms_Poincare_object_Azumaya_gi}. 
    \end{enumerate} 
\end{proposition}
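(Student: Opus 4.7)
The plan is to bootstrap from the module-theoretic classification of $R$-linear Poincaré structures (Proposition~\ref{prop:relative_poincare_cats_basic_properties}\ref{propitem:classify_R_lin_hermitian_struct_mod_cat}) combined with the involutive endomorphism constructions of \cite[\S3.1]{CDHHLMNNSI}, which are already cited in Example~\ref{ex:endomorphisms_of_poincare_object} as the non-relative special case $(\mathcal{C},\Qoppa) = (\Mod_{R^e}^\omega, \Qoppa_R)$.

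For part~\ref{propitem:endomorphisms_Poincare_object_involution}, I would directly apply \cite[Proposition 3.1.16]{CDHHLMNNSI}: the composite
\[
A = \mathrm{End}_{\mathcal{C}}(P) \xrightarrow{D_{\Qoppa_{\mathcal{C}}}} \mathrm{End}_{\mathcal{C}^{\op}}(D_{\Qoppa_{\mathcal{C}}}(P))^{\op} \xrightarrow{\,(q^\dagger)^{-1} \circ - \circ q^\dagger\,} \mathrm{End}_{\mathcal{C}}(P)^{\op} = A^{\op}
\]
defines an equivalence of $\EE_1$-algebras whose square is canonically homotopic to the identity by the bidualization coherence of \cite[Remark 2.1.6]{CDHHLMNNSI}. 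Since $D_{\Qoppa_{\mathcal{C}}}$ is $(\Mod_{R^e}^\omega,\Qoppa_R)$-linear (as $(\mathcal{C},\Qoppa)$ is an $R$-linear Poincaré $\infty$-category) and $q^\dagger$ is $R^e$-linear, this involution is $\lambda$-linear, producing an $\EE_1$-$R^e$-algebra with $\lambda$-involution in the sense of Definition~\ref{defn:E1_alg_with_lambda_involution}.

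For part~\ref{propitem:endomorphisms_Poincare_object_central_Wall_anti-struc}, since $P$ is a compact generator of $\mathcal{C}$, the functor $\hom_{\mathcal{C}}(P,-)$ realizes an equivalence $\mathcal{C} \xrightarrow{\sim} \Mod_{A}^{\omega}$ of $R^e$-linear stable $\infty$-categories. I would transport $\Qoppa_{\mathcal{C}}$ along this equivalence to obtain an $R$-linear Poincaré structure on $\Mod_A^\omega$; by Proposition~\ref{prop:relative_poincare_cats_basic_properties}\ref{propitem:classify_R_lin_hermitian_struct_mod_cat}, this is classified by an $R$-linear $A$-module with genuine involution $(M_A, N_A, N_A \to M_A^{\mathrm{t}C_2})$. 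Following the identifications in Example~\ref{ex:endomorphisms_of_poincare_object} (using $q^\dagger$ to trivialize $D_R(P)\simeq P$) one identifies $M_A$ with $\hom_{\mathcal{C}}(P,P)=A$ as an $A^{\op}$-module; the $A\otimes\lambda^*A$-structure, the involution, and the linear-part data $N_A \simeq \hom_{\mathcal{C}}(P,P\otimes_{R^e} R^{\varphi C_2})$ then follow by unravelling the definitions (using that $D_{\Qoppa_{\mathcal{C}}}$ is $R$-linear and invertibility of $(\mathcal{C},\Qoppa)$ implies invertibility of $M_A$ via Proposition~\ref{prop:relative_poincare_cats_basic_properties}\ref{propitem:Rlin_Poincare_cats_tensor_mod_gen_inv}). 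The formulas in the statement then just record the resulting actions.

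For part~\ref{propitem:endomorphisms_Poincare_object_Azumaya_gi}, a $C_2$-equivariant map $\sphere \to M_A$ hitting $\id_P$ extends by $A\otimes\lambda^*A$-linearity to a map $A \to M_A$ of $A\otimes\lambda^*A$-modules, compatibly with involutions. Since $M_A$ is invertible with underlying $A$-module equal to $A$, this map is automatically an equivalence, so $M_A$ comes from the involution on $A$ in the sense of Remark~\ref{rmk:Azumaya_gi_Wall_compare}, yielding the Azumaya algebra with genuine involution structure. For part~\ref{propitem:endomorphisms_hyperbolic_Poincare_object}, the key input is Proposition~\ref{prop:forms_are_corepresented}: a Poincaré form $q$ on $P$ corresponds to an $R$-linear Poincaré functor $(\Mod_{R^e}^\omega, \Qoppa_R) \to (\mathcal{C},\Qoppa)$ sending $(R^e,u)\mapsto (P,q)$; on endomorphism objects this Poincaré functor produces a morphism of modules with genuine involution $(R^L, R^L, \mathrm{id}) \to (M_A, N_A, N_A\to M_A^{\mathrm{t}C_2})$, whose underlying $C_2$-equivariant map of spectra $\sphere \simeq (R^L)^{C_2}\text{-unit} \to M_A$ sends $1 \mapsto \id_P$. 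The main obstacle—and the step requiring the most care—will be this last identification: one must verify that the $C_2$-equivariance of the map $\sphere\to M_A$ extracted from $q$ matches the $C_2$-action on $M_A$ coming from the duality $D_{\Qoppa_{\mathcal{C}}}$ (rather than, say, the opposite action induced by the bilinear part), which amounts to tracing the various compatibility homotopies assembled in Proposition~\ref{prop:classify_R_linear_Poincare_functors} and in the Tate-diagonal glue between the linear and bilinear parts of $\Qoppa$.
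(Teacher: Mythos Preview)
Your proposal is correct and follows essentially the same route as the paper. A few minor comparisons: for part~\ref{propitem:endomorphisms_Poincare_object_involution} the paper packages the coherences via enriched $\infty$-categories (citing \cite[\S6]{HLAS25} and \cite[Theorem 6.3.2(iii)]{MR3345192}) to produce a genuine object of $(\EE_1\Alg_{R^e})^{\mathrm{h}C_2}$ rather than only the explicit formula, and uses \cite[Corollary 2.2.10]{CDHHLMNNSI} to promote $P$ to a $C_2$-fixed point; for part~\ref{propitem:endomorphisms_Poincare_object_Azumaya_gi} the paper simply invokes \cite[Proposition 3.1.14]{CDHHLMNNSI}; for part~\ref{propitem:endomorphisms_hyperbolic_Poincare_object} the paper refers back to the direct computation of Example~\ref{ex:endomorphisms_of_poincare_object} rather than your corepresentability argument via Proposition~\ref{prop:forms_are_corepresented}, but these amount to the same verification that $\id_P$ is fixed by the involution (your identified ``main obstacle'' is indeed resolved by the symmetry condition of \cite[Remark 2.1.6]{CDHHLMNNSI}).
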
 
\begin{corollary}~\label{cor: brauer classes representable by azumaya algebras}
    Let $ R $ be a Poincaré ring.  
    Let $ \left(\mathcal{C}, \Qoppa\right) $ be an invertible idempotent-complete $ R $-linear Poincaré $ \infty $-category. 
    Then $ \left(\mathcal{C}, \Qoppa\right) $ is of the form $ \left(\Mod^\omega_A, \Qoppa_A \right) $ for some Azumaya algebra over $ R $ with genuine involution.      
\end{corollary}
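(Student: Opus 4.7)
[Proof proposal for Corollary \ref{cor: brauer classes representable by azumaya algebras}]
The plan is to produce a Poincaré object $(P,q)$ in $(\mathcal{C},\Qoppa)$ whose underlying object generates $\mathcal{C}$, and then invoke Proposition \ref{prop:Poincare_cat_as_module_cat}\ref{propitem:endomorphisms_Poincare_object_Azumaya_gi} together with \ref{propitem:endomorphisms_hyperbolic_Poincare_object} to exhibit the desired equivalence $(\mathcal{C},\Qoppa)\simeq (\Mod^\omega_A,\Qoppa_A)$.

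First I would observe that because $(\mathcal{C},\Qoppa)$ is invertible in $\Mod_{(\Mod^\omega_{R^e},\Qoppa_R)}(\Catpidem)$, the symmetric monoidal forgetful functor of Proposition \ref{prop:relative_poincare_cats_basic_properties}\ref{propitem:Rlin_Poincare_cats_is_symm_mon} sends it to an invertible object of $\Mod_{\Mod^\omega_{R^e}}(\Catperf)$. Passing to $\mathrm{Ind}$-completions, the resulting compactly generated $R^e$-linear $\infty$-category is invertible in the sense of Antieau--Gepner \cite{MR2957304} and Toën \cite{MR3190610}, hence admits a compact generator $P'\in\mathcal{C}$.

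Next I would promote $P'$ to a Poincaré object by the standard hyperbolization construction: set
\[
P := P' \oplus D_{\Qoppa}(P'),
\]
and endow it with the hyperbolic Poincaré form $q_{\mathrm{hyp}}$ coming from the counit $P'\otimes D_\Qoppa(P')\to \mathbf{1}$, as in \cite[Example 6.2.5]{CDHHLMNNSI} applied relatively over $(\Mod^\omega_{R^e},\Qoppa_R)$ (this uses that $\mathrm{Hyp}$ is the left adjoint of the forgetful functor and is $R$-linear). The object $P$ is still a compact generator of $\mathcal{C}$ since $P'$ is a direct summand, and $(P,q_{\mathrm{hyp}})$ is a Poincaré object by construction.

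Finally I would apply Proposition \ref{prop:Poincare_cat_as_module_cat}: part \ref{propitem:endomorphisms_hyperbolic_Poincare_object} guarantees that the $C_2$-equivariant lift $\sphere\to M_A$ of $\mathrm{id}_P$ exists, so the hypothesis of part \ref{propitem:endomorphisms_Poincare_object_Azumaya_gi} is satisfied; that part then identifies $(\mathcal{C},\Qoppa)$ with $(\Mod^\omega_A,\Qoppa_A)$ for $A=\mathrm{End}_{\mathcal{C}}(P)$, and provides the promotion of $A$ to an algebra with genuine involution. The Azumaya property of the underlying $\EE_1$-$R^e$-algebra $A$ follows from Morita theory: $\mathcal{C}\simeq\Mod^\omega_A$ is invertible over $\Mod^\omega_{R^e}$, so $A$ is an Azumaya $R^e$-algebra in the sense of Recollection \ref{rec:Azumaya_alg_wo_involution} by \cites[Theorem 3.7]{MR2957304}[Proposition 4.4]{MR3190610}.

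The main obstacle I anticipate is bookkeeping the $R$-linearity throughout: one has to be sure that the generator $P'$ produced via Antieau--Gepner/Toën is honestly compact in the $R^e$-linear sense and that the hyperbolization construction $P'\mapsto (P'\oplus D_\Qoppa(P'),q_{\mathrm{hyp}})$ is performed inside $(\mathcal{C},\Qoppa)$ as an $R$-linear Poincaré $\infty$-category (so that the resulting Poincaré structure on $\Mod^\omega_A$ obtained in Proposition \ref{prop:Poincare_cat_as_module_cat} actually matches $\Qoppa$ as $R$-linear Poincaré structures, not merely as absolute ones). This is a direct consequence of the fact that the hyperbolization functor $\mathrm{Hyp}$ is the left adjoint of the $R$-linear forgetful functor $\Catp_R\to\Catex_{R^e}$ (Proposition \ref{prop:relative_poincare_cats_basic_properties}\ref{propitem:Rlin_Poincare_cats_is_symm_mon}), but care must be taken to spell this out.
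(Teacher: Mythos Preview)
Your proposal is correct and follows essentially the same approach as the paper's proof: find a compact generator of the underlying $R^e$-linear category, hyperbolize it to obtain a Poincaré generator, and invoke Proposition~\ref{prop:Poincare_cat_as_module_cat}. The paper cites \cite[Example 4.11]{MR4743054} for the compact generator (in place of Antieau--Gepner/Toën directly) and \cite[Proposition 2.2.5]{CDHHLMNNSI} for the hyperbolic Poincaré structure on $P'\oplus D_\Qoppa P'$ (rather than Example 6.2.5), but the argument is otherwise identical; your additional remarks on the Azumaya property and $R$-linear bookkeeping are implicit in the paper's use of the proposition.
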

\begin{proof}
    By Proposition~\ref{prop:Poincare_cat_as_module_cat}, it suffices to exhibit a hyperbolic Poincaré object $ (P, q) $ whose underlying object $ P $ is a generator of $ \mathcal{C} $. 
    Since the underlying $R$-linear stable $ \infty $-category $\mathcal{C}$ is dualizable, we have by \cite[Example 4.11]{MR4743054} that $\mathcal{C}$ admits a compact generator $G$. 
    Now $ G \oplus D_{\Qoppa}G $ promotes canonically to a hyperbolic Poincaré object of $ \left(\mathcal{C}, \Qoppa\right) $ by \cite[Proposition 2.2.5]{CDHHLMNNSI}. 
\end{proof}
\begin{proof}[Proof of Proposition \ref{prop:Poincare_cat_as_module_cat}]
    The proof of \ref{propitem:endomorphisms_Poincare_object_involution} is similar to \cite[Proposition 3.1.16]{CDHHLMNNSI}. 
    By \cite[Corollary 3.1.3]{CHN2024}, the forgetful functor $ \Catp \to \Cat^{\mathrm{ps}} $ of \cite[(179)]{CDHHLMNNSI} admits a symmetric monoidal refinement. 
    It follows that the image $ \left(\Mod_R^{\omega},B_{\Qoppa_R}\right) $ of $ \Mod^\mathrm{p}_R $ in $ \Cat^{\mathrm{ps}} $ is a symmetric monoidal $ \infty $-category with perfect duality, and $ \left(\mathcal{C}, B_{\Qoppa}\right) $ is a left module over $ \left(\Mod_{R^e}^{\omega},B_{\Qoppa_R}\right) $. 
    In particular, by \cite[\S6]{HLAS25}, the perfect duality $ D_{\Qoppa} \colon \mathcal{C}^{\op} \to \mathcal{C} $ promotes $ \mathcal{C} $ to a $ C_2 $-homotopy fixed point of $ \Mod_{R^e} $-enriched $ \infty $-categories. 
    By \cite[Theorem 6.3.2(iii)]{MR3345192}, there is an adjunction between pointed $ \Mod_{R^e} $-enriched $ \infty $-categories and $ \EE_1 $-$ R^e $-algebras. 
    This is $ C_2 $-equivariant with respect to taking opposites $ (-)^\op $ by the argument in \cite[Proposition 3.1.16]{CDHHLMNNSI}. 
    Finally, the $ C_2 $-action $ \lambda $ on $ R^e $ induces actions on both $ \EE_1 $-$ R^e $-algebras and the category of $ \Mod_{R^e} $-enriched $ \infty $-categories, and the adjunction is equivariant with respect to these actions. 
    It suffices to show that $ P $ promotes to a $ C_2 $-fixed point in $ \mathcal{C}^{\simeq} $, which is true by \cite[Corollary 2.2.10]{CDHHLMNNSI}. 
    
    Since $P$ generates $\mathcal{C}$, we have an equivalence $\mathcal{C}\simeq \mathrm{Mod}_A^\omega $ of $ R^e $-linear small idempotent-complete stable $ \infty $-categories. 
    Part \ref{propitem:endomorphisms_Poincare_object_central_Wall_anti-struc} follows from Proposition \ref{prop:relative_poincare_cats_basic_properties}\ref{propitem:classify_R_lin_hermitian_struct_mod_cat} and unraveling definitions. 
    Part \ref{propitem:endomorphisms_Poincare_object_Azumaya_gi} follows from \cite[Proposition 3.1.14]{CDHHLMNNSI}. 
    
    Part \ref{propitem:endomorphisms_hyperbolic_Poincare_object} follows from a similar argument to Example~\ref{ex:endomorphisms_of_poincare_object}. 
\end{proof}

A key consequence of Corollary~\ref{cor: brauer classes representable by azumaya algebras} is that all classes in $\pi_0(\mathfrak{br}^\mathrm{p}(R))$ can be represented by Azumaya $R$ algebras with genuine involution. 
We close this section by showing that this extends to the case of schemes with good quotient.
			
\begin{corollary}~\label{cor: Brauer classes represented by Azumaya algebras with genuine involution}
    Let $(X,\lambda, Y, p)$ be a scheme with good quotient. Let $(A, P\to A^{\mathrm{t}C_2})$ be an Azumaya algebra with genuine involution over $X$ in the sense of Definition~\ref{defn: Azumaya algebra with genuine anti involution scheme case}. Then $(\mathrm{Mod}_A,\Qoppa_A)$ defines an invertible module over $\mathrm{Mod}^\mathrm{p}_{\underline{\mathcal{O}}}$ in the sense of Construction~\ref{cons:C2_mod_over_sheaf_of_Green_func}. Furthermore, any invertible module over $\mathrm{Mod}^\mathrm{p}_{\underline{\mathcal{O}}}$ comes from an Azumaya algebra with genuine involution over $X$ in this way.
\end{corollary}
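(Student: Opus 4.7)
We treat the two claims separately. For the first, the construction generalizes Proposition \ref{prop:mod_over_azumaya_geninv_is_invertible} via étale descent. Given $(A, P \to A^{\mathrm{t}C_2})$, cover $Y$ by affine opens $\mathrm{Spec}\, R_i \to Y$. By Lemma \ref{lemma:identify_structure_sheaf_of_Green_func}\ref{lem_item:structured_pushforward_is_equiv} together with affineness of $p$, the restriction of $A$ to each affine piece corresponds to an Azumaya algebra with genuine involution over the Poincaré ring $\underline{\mathcal{O}}(\mathrm{Spec}\, R_i)$ in the sense of Definition \ref{defn:alt definition of Azumaya algebra with involution}. Proposition \ref{prop:mod_over_azumaya_geninv_is_invertible} then produces an invertible object $(\mathrm{Mod}^\omega_{A|_{R_i}}, \Qoppa_{A|_{R_i}})$ of $\mathrm{Mod}_{\mathrm{Mod}^\mathrm{p}_{\underline{\mathcal{O}}(R_i)}}(\Catpidem)$. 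These glue to an object of $\mathrm{Mod}_{\mathrm{Mod}^\mathrm{p}_{\underline{\mathcal{O}}}}(\Catpidem)$ with underlying $R$-linear stable $\infty$-category $\mathrm{Mod}^\omega_A$, and invertibility is preserved by the descent of Corollary \ref{cor:pnbr_as_etale_sheaf_affine_spectral}, since a module over a limit of symmetric monoidal $\infty$-categories is invertible if and only if its restrictions are.

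For the converse, let $(\mathcal{C}, \Qoppa)$ be an invertible idempotent-complete Poincaré module over $\mathrm{Mod}^\mathrm{p}_{\underline{\mathcal{O}}}$. As in the proof of Corollary \ref{cor: brauer classes representable by azumaya algebras}, the underlying $\mathcal{O}_X$-linear $\infty$-category is dualizable, so by \cite[Example 4.11]{MR4743054} it admits a compact generator $G$. Then $P := G \oplus D_{\Qoppa}(G)$ promotes canonically to a hyperbolic Poincaré object by \cite[Proposition 2.2.5]{CDHHLMNNSI} and remains a compact generator. The affine analogue of the construction in Proposition \ref{prop:Poincare_cat_as_module_cat}\ref{propitem:endomorphisms_Poincare_object_involution}--\ref{propitem:endomorphisms_Poincare_object_Azumaya_gi} endows $A := \mathrm{End}_{\mathcal{C}}(P)$ with a $\lambda$-involution and supplies the datum $P \to A^{\mathrm{t}C_2}$ from the linear part of $\Qoppa$ evaluated on $P$. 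All internal mapping objects appearing in this construction are $\underline{\mathcal{O}}$-linear by virtue of the $\mathrm{Mod}^\mathrm{p}_{\underline{\mathcal{O}}}$-module structure on $(\mathcal{C},\Qoppa)$ (Proposition \ref{prop:relative_poincare_cats_basic_properties}\ref{prop_item:internal hom in R-linear poincare cats}), so they assemble into sheaves on $Y$ rather than mere global sections.

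The main obstacle is verifying that the resulting $A$ genuinely satisfies Definition \ref{defn: Azumaya algebra with genuine anti involution scheme case}, and in particular that the underlying $\mathcal{O}_X$-algebra is a (generalized) Azumaya algebra in the sense of \cite[Definition 2.11]{MR2957304}. For this, note that forgetting the Poincaré structure gives $(\mathcal{C}, \Qoppa) \mapsto \mathcal{C}$ as an invertible $\mathrm{Mod}_{\mathcal{O}_X}$-linear $\infty$-category, so $\mathcal{C} \simeq \mathrm{Mod}^\omega_A$ identifies $A$ with the representing Azumaya algebra of \cite[Theorem 0.2]{MR2957304}. The required equivalences and coherence data of items \ref{defnitem:global_Azumaya_alg_gi_underlying}--\ref{defn_item:global_Azumaya_gen_inv_geom_fixpt} in Definition \ref{defn: Azumaya algebra with genuine anti involution scheme case} are intrinsic to the Poincaré structure and the choice of generator, so to check them it suffices to do so after pulling back along every étale $\mathrm{Spec}\, R \to Y$; by compatibility of the internal-hom construction with base change, this pullback yields exactly the data produced by the affine Corollary \ref{cor: brauer classes representable by azumaya algebras} for $(\mathcal{C}|_R, \Qoppa|_R)$ with generator $P|_R$. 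Uniqueness of the Azumaya-with-genuine-involution structure on $\mathrm{End}_{\mathcal{C}|_R}(P|_R)$ (via relative Morita theory, Corollary \ref{cor:poincare_fourier_mukai}) ensures the local structures agree with the restrictions of the global construction, completing the proof.
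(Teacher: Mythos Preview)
Your proof is correct and follows essentially the same approach as the paper: reduce to the affine case via a cover of $Y$, invoke Proposition~\ref{prop:mod_over_azumaya_geninv_is_invertible} for invertibility, and for the converse construct a hyperbolic Poincar\'e generator $G \oplus D_\Qoppa G$ and apply Proposition~\ref{prop:Poincare_cat_as_module_cat} locally. The paper differs only in minor details: it works Zariski-locally rather than \'etale-locally, and for the first claim it sets up the candidate dual globally via the internal mapping object $\mathrm{Fun}_{(X,\lambda,Y,p)}((\mathrm{Mod}_A^\omega, \Qoppa_A), \mathrm{Mod}^\mathrm{p}_{\underline{\mathcal{O}}})$ with its evaluation map before checking invertibility locally, whereas you frame the argument more as gluing locally-invertible objects. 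Both routes are valid; the paper's framing for the first claim is slightly cleaner since $(\mathrm{Mod}_A,\Qoppa_A)$ is already a global object and only the \emph{property} of invertibility needs local verification.
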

\begin{proof}
    For the first claim, note that we have a candidate dual given by the internal mapping object $\mathrm{Fun}_{(X,\lambda,Y,p)}((\mathrm{Mod}_A^\omega, \Qoppa_A), \mathrm{Mod}^\mathrm{p}_{\underline{\mathcal{O}}})$ and an evaluation map \[((\mathrm{Mod}_A^\omega, \Qoppa_A)\otimes_{\mathrm{Mod}^\mathrm{p}_{\underline{\mathcal{O}}}}\mathrm{Fun}_{(X,\lambda,Y,p)}((\mathrm{Mod}_A^\omega, \Qoppa_A), \mathrm{Mod}^\mathrm{p}_{\underline{\mathcal{O}}})\to \mathrm{Mod}^\mathrm{p}_{\underline{\mathcal{O}}}\] so it is enough to check invertibility Zariski locally on $Y$. This then follows by Proposition~\ref{prop:mod_over_azumaya_geninv_is_invertible}.
    
    For the second statement, let $(\mathcal{C},\Qoppa)$ be an invertible $\mathrm{Mod}^\mathrm{p}_{\underline{\mathcal{O}}}$-module. 
    Then $ \mathcal{C} $ is an small stable $ \mathcal{O}_X $-linear invertible $ \infty $-category. 
    By \cite[Proposition 3.9]{MR2957304}, there exists a compact generator of $\mathcal{C}$, call it $G$. 
    Then $G\oplus D_{\Qoppa}G$ will be a Poincar\'e object in $\mathcal{C}$. Zariski locally on $Y$ these will remain Poincar\'e generators, and so on affine open subschemes of $Y$ the endomorphism rings of these objects will be Azumaya algebras with genuine involutions by Proposition~\ref{prop:Poincare_cat_as_module_cat}. 
    We therefore get that the endomorphism sheaf $\mathcal{E}\mathrm{nd}(G\oplus D_{\Qoppa}G)$ Zariski-locally admits the structure of an Azumaya algebra with genuine involution. Thus Zariski-locally we have $3$ additional pieces of data, an involution, a module $P$, and a map from $P$ to the Tate construction. All three of these pieces of data glue along Zariski covers, and so the ring $\mathcal{E}\mathrm{nd}(G\oplus D_\Qoppa G)$ promotes to a ring spectrum with genuine involution. The data of the dual will also glue together along Zariski covers, so $\mathcal{E}\mathrm{nd}(G\oplus D_\Qoppa G)$ admits the structure of an Azumaya algebra with genuine involution as desired.
\end{proof}
			
\section{Construction of the involutive Brauer group and comparisons to previous work}\label{section:inv_Brauer_comparison}

In this section, we will define the involutive Brauer group for schemes with good quotient $(X,\lambda, Y, p)$. In order to do this, 
we use \'etale descent to construct a map $\mathrm{Br}^\mathrm{p}(X,\lambda, Y,p)\to \mathrm{H}^1(\mathrm{R}\Gamma_{\Acute{e}t}(X;\mathbb{Z})^{\mathrm{h}C_2})$ and define the cohomological involutive Brauer group to be the kernel of this map. Finally we show that the involutive Brauer group, defined to be the subgroup of the cohomological Brauer group of elements whose underlying Brauer class are representable by classical Azumaya algebras, agrees with the  involutive Brauer group constructed by Parimala and Srinivas in \cite{Parimala_Srinivas}.
			
\subsection{The norm fiber sequence} 
The goal of this section is to show that, under suitable hypotheses on our Poincar\'e ring $R$, there exists a norm map $ \mathfrak{br}(R^e)\to \mathfrak{br}(R^{C_2}) $ and a fiber sequence of the form $ \Brp(R)\to \mathfrak{br}(R^e)\to \mathfrak{br}(R^{C_2}) $. 
In particular, this will extend the fiber sequence of Corollary~\ref{cor:Poincare_Brauer_to_Brauer_fiber} to the right. 
These results will allow us to use Poincar{\'e} Brauer groups to compute ordinary Brauer groups and to obtain a comparison to the involutive Brauer group of \cite{Parimala_Srinivas}. We will first show that the fiber sequence we have already constructed in Corollary~\ref{cor:Poincare_Brauer_to_Brauer_fiber} does in fact recover the first half of the long exact sequence constructed in \cite[Theorem 2]{Parimala_Srinivas}.

\begin{remark}~\label{remark: Borel poincare rings have vanishing tate}
    Let $R$ be a Poincar\'e ring such that $R^{\phi C_2}\simeq 0$. Part of the data of a Poincar\'e ring as defined in Definition~\ref{definition:poincare_ring_spectrum} is a ring map $R^{\phi C_2}\to R^{\mathrm{t}C_2}$, and so in this case $R^{\mathrm{t}C_2}\simeq 0$ as well. In particular, $R$ is given by the symmetric Poincar\'e ring structure $(R^e)^\mathrm{s}$ of Example \ref{example:tate_poincare_structure}. Consequently, a Poincar\'e ring $R$ satisfies $R^{\phi C_2}\simeq 0$ if and only if $R$ arises as the symmetric Poincar\'e ring of an $\Einfty$-ring spectrum $R^e$ with $C_2$-action such that $(R^e)^{t C_2}\simeq 0$.
\end{remark}

One of the key examples of the situation of Remark~\ref{remark: Borel poincare rings have vanishing tate} is when $R$ is a connective ring with involution and with $\frac{1}{2}\in \pi_0(R)$, thought of as a Poincar\'e ring via Example~\ref{example:genuine_symmetric_poincare_structure}. This is not the only example, however, see Example~\ref{ex: Atiyah real K-theory}.

\begin{lemma}~\label{lem: pic of genuine category is pic of fixed points when galois}
    Let $R$ be a Poincar{\'e} ring with $R^{\phi C_2}\simeq 0$.  
    Then the map \[\mathfrak{pic}\left(\mathrm{Mod}_{R^L}\left(\mathrm{Sp}^{C_2}\right)\right)\to \mathfrak{pic}\left(\mathrm{Mod}_{R^e}^{\mathrm{h}C_2}\right)\simeq \mathfrak{pic}(R)^{\mathrm{h}C_2}\] induced by sending a $ C_2 $-spectrum to its underlying spectrum with $ C_2 $ action is an equivalence of connective spectra.
    If furthermore we have that $R^{\mathrm{h}C_2}\to R^e$ is faithful Galois in the sense of \cite{Rognes_galois}, then the map \[\mathfrak{pic}(R^{\mathrm{h}C_2})\to \mathfrak{pic}(R^e)^{\mathrm{h}C_2}\] is also an equivalence.
\end{lemma}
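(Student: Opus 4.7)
The plan proceeds in three steps. For the first equivalence, I would exploit the symmetric monoidal recollement $\Spectra^{C_2} \simeq \Spectra^{BC_2} \fiberproduct_{\Spectra} \Spectra^{\Delta^1}$ recalled in Notation~\ref{notation:genuine_C2_spectra_and_underlying_spectra}. Under the hypothesis $R^{\phi C_2} \simeq 0$, the structure map $R^{\phi C_2} \to R^{\mathrm{t}C_2}$ forces $R^{\mathrm{t}C_2} \simeq 0$ as well (Remark~\ref{remark: Borel poincare rings have vanishing tate}), so $R^L$ lies in the essential image of the fully faithful inflation $\Spectra^{BC_2} \hookrightarrow \Spectra^{C_2}$. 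Since this inclusion is a symmetric monoidal left adjoint to $(-)^e$, for any such object modules in genuine $C_2$-spectra identify with modules in Borel $C_2$-spectra; in particular we obtain a symmetric monoidal equivalence
\[
\Mod_{R^L}\left(\Spectra^{C_2}\right) \xrightarrow{\sim} \Mod_{R^e}\left(\Spectra^{BC_2}\right) \simeq \left(\Mod_{R^e}\right)^{BC_2}\,,
\]
where the $C_2$-action on $\Mod_{R^e}$ is induced by the involution $\lambda$ on $R^e$.

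Next I would pass to Picard spectra. Because $\mathfrak{pic}$ is corepresented in $\EE_\infty\Mon(\CAT)$ by the free symmetric monoidal $\infty$-category on an invertible object, it commutes with small limits of symmetric monoidal $\infty$-categories. Applying this to the identification $\left(\Mod_{R^e}\right)^{BC_2} \simeq \lim_{BC_2} \Mod_{R^e}$, where the limit is taken in symmetric monoidal $\infty$-categories with the $C_2$-action induced by $\lambda$, yields
\[
\mathfrak{pic}\left(\Mod_{R^L}\left(\Spectra^{C_2}\right)\right) \simeq \mathfrak{pic}\left(\Mod_{R^e}^{\mathrm{h}C_2}\right) \simeq \mathfrak{pic}(R^e)^{\mathrm{h}C_2}\,,
\]
which is precisely the claimed map, once one verifies that the equivalence is induced by the underlying-spectrum functor $\Spectra^{C_2} \to \Spectra^{BC_2}$; this is automatic since that functor is the inverse to the inflation used above.

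For the second statement, the additional hypothesis that $R^{\mathrm{h}C_2} \to R^e$ is faithful Galois in the sense of Rognes places us squarely in the setting of Galois descent for the Picard spectrum, as established by Mathew--Stojanoska in \cite[Theorem 3.3.1]{mathew2016picard}. Applied to this Galois extension, that theorem gives the desired equivalence $\mathfrak{pic}(R^{\mathrm{h}C_2}) \xrightarrow{\sim} \mathfrak{pic}(R^e)^{\mathrm{h}C_2}$ directly.

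The main obstacle I anticipate is the first step: one must confirm that the vanishing of $R^{\phi C_2}$ really does force $R^L$ into the Borel piece of the recollement in a manner that respects the $\EE_\infty$-algebra structure, and that the resulting symmetric monoidal identification of module categories is functorial enough to commute with the forgetful-to-underlying-spectrum functor used to define the map in the statement. Once this identification is in place, the remaining steps are formal consequences of the limit-preservation properties of $\mathfrak{pic}$ and of Mathew--Stojanoska's Galois descent.
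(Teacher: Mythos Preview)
Your approach matches the paper's. One imprecision worth flagging: you assert that the inclusion $\Spectra^{BC_2} \hookrightarrow \Spectra^{C_2}$ is a \emph{symmetric monoidal} left adjoint to $(-)^e$, but neither adjoint of $(-)^e$ is symmetric monoidal---both fail to preserve the unit $\sphere$, whose geometric fixed points are nonzero. So the deduction ``$R^L$ lies in the image, hence module categories identify'' does not follow from that formal property alone. The paper instead argues directly at the module level: since $(-)^{\phi C_2}$ is symmetric monoidal and $R^{\phi C_2} \simeq 0$, every $R^L$-module $M$ has $M^{\phi C_2} \simeq 0$; likewise $M^{\mathrm{t}C_2}$ is a module over $R^{\mathrm{t}C_2} \simeq 0$, hence vanishes, and the recollement then forces $M$ to be Borel. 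This is precisely the obstacle you identify in your final paragraph, and it is the only missing ingredient. For the Galois statement the paper cites \cite[Proposition 9.4]{Mathew_galois} rather than \cite{mathew2016picard}, but the content is the same.
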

\begin{proof}
    By Remark~\ref{remark: Borel poincare rings have vanishing tate} we have that $R^{\mathrm{t}C_2}=0$ as well. Consequently, any module $M$ over $R^L$ has $M^{\phi C_2}=M^{\mathrm{t}C_2}=0$. 
    Thus $\mathrm{Mod}_{R^L}\left(\mathrm{Sp}^{C_2}\right) \to \mathrm{Mod}_{R^e}\left(\mathrm{Sp}^{BC_2}\right)$ is a symmetric monoidal equivalence of stable $ \infty $-categories; in particular they have equivalent Picard spaces. 
    
    It remains to show that the functor $\mathrm{Mod}_{R^e}(\mathrm{Sp}^{BC_2})\xrightarrow{(-)^{\mathrm{h}C_2}}\mathrm{Mod}_{R^{\mathrm{h}C_2}}$ induces an equivalence on the Picard space when the map $R^{\mathrm{h}C_2}\to R^e$ is faithful Galois. 
    This follows by \cite[Proposition 9.4]{Mathew_galois}.
\end{proof}

We will now consider the case where $R$ is discrete (meaning that $R^e$ is discrete, although it will follow from our other assumptions that $R^{C_2}$ is also discrete), $2\in (R^e)^\times$, and the map $R^{C_2}\to R^e$ is Galois. In this case, the fiber sequence of Corollary~\ref{cor:Poincare_Brauer_to_Brauer_fiber} and the identification of Lemma~\ref{lem: pic of genuine category is pic of fixed points when galois} combine to give a fiber sequence \[\pic(R^e)\xrightarrow{\mathrm{Cores}_{R^e/R^{C_2}}}\pic(R^{C_2})\to \pnbr(R)\] where we have that $\mathrm{Cores}_{R^e/R^{C_2}}$ is given by the map $\mathcal{L}\mapsto (\mathcal{L}\otimes_{R^e} \lambda^*\mathcal{L})^{C_2}$. For an explanation of the name `corestriction,' see for example \cite[Definition 6.2.1]{azumaya_involution}. 
In order to extend this fiber sequence to the right we will need to categorify the corestriction map.
			
\begin{construction}\label{cons:categorified_corestriction}
    Let $R$ be a Poincar{\'e} ring and let $\lambda: R^e\to R^e$ denote the involution. 
    Consider the functor \[\mathrm{Mod}_{\mathrm{Mod}_{R^{e}}^\omega}(\mathrm{Cat}_{\infty, \mathrm{idem}}^{\mathrm{ex}})\xrightarrow{\left(-\otimes_{\mathrm{Mod}_{R^e}^\omega} \lambda^*-\right)^{\mathrm{h}C_2}}\mathrm{Mod}_{(\mathrm{Mod}_{R^e}^\omega)^{\mathrm{h}C_2}}(\mathrm{Cat}_{\infty, \mathrm{idem}}^{\mathrm{ex}})\] 
    which we will denote by $\mathrm{Cores}_{R^e/R^{C_2}}$. Note that this functor is symmetric monoidal when restricted to the full subcategory spanned by invertible objects, since by \cite[Theorem 3.15]{MR3190610} we can reduce this question to one about equivariant objects in module categories where the result follows from the natural $C_2$-equivariant isomorphism \[(A\otimes_{R^e}\lambda^* A)\otimes_{R^e}(B\otimes_{R^e} \lambda^* B)\simeq (A\otimes_{R^e} B)\otimes_{R^e} \lambda^*(A\otimes_{R^e} B)\] for any $ \EE_1 $-$R^e$-algebras $A$ and $B$.
\end{construction}

\begin{lemma}~\label{lem: poincare br to br to c2 fixed points in null}
    Let $ R $ be a Poincaré ring, and write $ \lambda $ for the involution on $ R^e $. 
    The composite $$\pnbr(R)\to \mathfrak{br}(R^e)\to \mathfrak{pic}\left(\mathrm{Mod}_{\mathrm{Mod}_{R^e}^{\omega, \mathrm{h}C_2}}(\Catex_{, \mathrm{idem}})\right)$$ is nullhomotopic, where the right-hand map is induced by the corestriction of Construction \ref{cons:categorified_corestriction}. 	
\end{lemma}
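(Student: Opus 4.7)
The plan is to produce, functorially in $(\mathcal{C},\Qoppa) \in \Brp(R)$, a canonical equivalence
\[
\left(\mathcal{C} \otimes_{\mathrm{Mod}_{R^e}^\omega} \lambda^*\mathcal{C}\right)^{\mathrm{h}C_2} \simeq \mathrm{Mod}_{R^e}^{\omega,\mathrm{h}C_2}
\]
of invertible objects of $\mathrm{Mod}_{\mathrm{Mod}_{R^e}^{\omega,\mathrm{h}C_2}}(\Catex_{\mathrm{idem}})$, compatible with the $\mathbb{E}_\infty$-structures on both sides, and to check that this exhibits the composite as the zero map of connective spectra, not merely as zero on $\pi_0$.

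The key input is that the Poincaré duality $D_\Qoppa \colon \mathcal{C}^{\mathrm{op}} \xrightarrow{\sim} \mathcal{C}$ is $\lambda$-linear rather than $R^e$-linear (Lemma \ref{lemma: duality identification} and Remark \ref{remark:different_c2_actions}\ref{C2action:duality_associated_to_Poincare_structure}), and therefore provides an $R^e$-linear equivalence $\lambda^*\mathcal{C} \simeq \mathcal{C}^{\mathrm{op}}$. Substituting this on one factor yields
\[
\mathcal{C} \otimes_{\mathrm{Mod}_{R^e}^\omega} \lambda^*\mathcal{C} \;\simeq\; \mathcal{C} \otimes_{\mathrm{Mod}_{R^e}^\omega} \mathcal{C}^{\mathrm{op}},
\]
and since the underlying $R^e$-linear $\infty$-category $\mathcal{C}$ is invertible (being in the image of the forgetful map from $\Brp(R)$ to $\mathfrak{br}(R^e)$, see Corollary~\ref{cor:Poincare_Brauer_to_Brauer_fiber} and Proposition \ref{prop:relative_poincare_cats_basic_properties}\ref{propitem:Rlin_Poincare_cats_is_symm_mon}), the right-hand side is symmetric-monoidally identified with $\mathrm{End}_{\mathrm{Mod}_{R^e}^\omega}(\mathcal{C}) \simeq \mathrm{Mod}_{R^e}^\omega$ in $\mathrm{Mod}_{\mathrm{Mod}_{R^e}^\omega}(\Catex_{\mathrm{idem}})$.

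The more subtle point is to track the $C_2$-actions. The $C_2$-action on $\mathcal{C} \otimes_{R^e} \lambda^*\mathcal{C}$ used in Construction \ref{cons:categorified_corestriction} is the $\lambda$-twisted swap, and under the identifications above it corresponds to the involution $F \mapsto D_\Qoppa \circ F^{\mathrm{op}} \circ D_\Qoppa^{-1}$ on $\mathrm{End}_{\mathrm{Mod}_{R^e}^\omega}(\mathcal{C})$; that this squares to the identity uses precisely the Poincaré condition that $D_\Qoppa$ be a self-duality (cf.\ \cite[Remark 2.1.6]{CDHHLMNNSI}). Under the Morita equivalence $\mathrm{End}_{\mathrm{Mod}_{R^e}^\omega}(\mathcal{C}) \simeq \mathrm{Mod}_{R^e}^\omega$, this involution matches (up to canonical equivalence) the $\lambda$-action on $\mathrm{Mod}_{R^e}^\omega$ underlying $\mathrm{Mod}_{R^e}^{\omega,\mathrm{h}C_2}$. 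Passing to homotopy fixed points then produces the required trivialization.

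Finally, to upgrade this pointwise trivialization to a genuine nullhomotopy of maps of connective spectra, I would argue that every step above lifts to a symmetric monoidal natural transformation: the $\lambda$-linear duality is part of the symmetric monoidal Poincaré structure (Proposition \ref{prop:relative_poincare_cats_basic_properties}\ref{propitem:Rlin_Poincare_cats_tensor_mod_gen_inv}), the Morita equivalence $\mathcal{C} \otimes \mathcal{C}^{\mathrm{op}} \simeq \mathrm{Mod}_{R^e}^\omega$ is natural in invertible $\mathcal{C}$, and Corollary \ref{cor:Poincare_Brauer_to_Brauer_fiber} already packages $\Brp(R) \to \mathfrak{br}(R^e)$ as a map of $\mathbb{E}_\infty$-groups. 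The main obstacle is the bookkeeping of $C_2$-equivariance in the string of identifications above, particularly the identification of the induced involution on $\mathrm{Mod}_{R^e}^\omega$ with the canonical $\lambda$-action rather than a twist thereof; the cleanest way I see to handle this is to factor the entire argument through the $C_2$-Mackey structure on $R$ (so that $\mathcal{C}$ itself acquires a $C_2$-fixed structure in $\mathrm{Mod}_{R^e}^\omega$-linear categories coming from the Poincaré datum), after which the trivialization of the norm is automatic.
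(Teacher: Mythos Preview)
Your core idea is correct and is exactly the paper's: the Poincaré duality furnishes a natural $R^e$-linear equivalence $\lambda^*\mathcal{C} \simeq D_R(\mathcal{C})$ (the monoidal dual), so that the corestriction sends $\mathcal{C}$ to (the fixed points of) $\mathcal{C} \otimes \mathcal{C}^{-1}$.

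Where you and the paper diverge is in packaging. You stay at the categorical level, try to identify the induced $C_2$-action on $\mathrm{End}_{\mathrm{Mod}_{R^e}^\omega}(\mathcal{C}) \simeq \mathrm{Mod}_{R^e}^\omega$ with the canonical $\lambda$-action, and only then take $(-)^{\mathrm{h}C_2}$. You rightly flag this identification as the main obstacle, and your proposed resolution via Mackey structure is vague. The paper sidesteps this entirely by applying $\mathfrak{pic}$ first: on the grouplike space $\mathfrak{br}(R^e)$, both $\lambda^*$ and the monoidal dual become negation, so the factored map is
\[
\mathfrak{br}(R^e) \xrightarrow{\,x \mapsto (x,-x)\,} \bigl(\mathfrak{br}(R^e)\times\mathfrak{br}(R^e)\bigr)^{\mathrm{h}C_2} \xrightarrow{\,+\,} \mathfrak{br}(R^e)^{\mathrm{h}C_2},
\]
with the swap-and-negate action on the product and the $(-1)$-action on the target. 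This composite is nullhomotopic for elementary reasons (it is $x \mapsto x + (-x)$), and the equivariance is transparent because the actions are expressed purely in terms of the group structure. This gives the nullhomotopy of spectra directly, without ever needing to pin down the induced action on $\mathrm{Mod}_{R^e}^\omega$ at the categorical level.

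In short: same idea, but the paper's move of passing to the Picard spectrum before doing anything else converts the delicate $C_2$-bookkeeping you worry about into a one-line computation with signs.
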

\begin{proof}
    Suppose given an invertible $ R $-linear Poincar{\'e} $ \infty $-category $ \left(\mathcal{C},\Qoppa\right) $.
    Note that we have a factorization of $\mathrm{Cores}_{R^e/R^{C_2}}$ by definition as the composite 
    \[
    \begin{tikzcd}
        \mathrm{Mod}_{\mathrm{Mod}_{R^e}^\omega}(\Catex_{, \mathrm{idem}})\ar[r,"{(\mathrm{id},\lambda^*)}"] \ar[rdd]&  (\mathrm{Mod}_{\mathrm{Mod}_{R^e}^\omega}(\Catex_{, \mathrm{idem}})\times \mathrm{Mod}_{\mathrm{Mod}_{R^e}^\omega}(\Catex_{, \mathrm{idem}}))^{\mathrm{h}C_2}\ar[d,"{(-\otimes_{\mathrm{Mod}_{R^e}^\omega}-)}"]\\
        & \mathrm{Mod}_{\mathrm{Mod}_{R^e}^\omega}(\Catex_{, \mathrm{idem}})^{\mathrm{h}C_2}\ar[d] \\ & \mathrm{Mod}_{\mathrm{Mod}_{R^e}^{\omega,\mathrm{h}C_2}}(\Catex_{, \mathrm{idem}})
    \end{tikzcd}\,.
    \]
     where the $C_2$-action on the upper right is given by swapping the factors and applying $\lambda^*$ diagonally. Let $\theta:\Catpidem_{, R}\to \mathrm{Mod}_{\mathrm{Mod}_{R^e}^\omega}(\Catex_{, \mathrm{idem}})$ be the forgetful functor. Then the duality functor $\lambda^*D_{\Qoppa}$ induces a natural equivalence between $\lambda^*\circ\theta(-)\simeq D_{R}\circ\theta (-)$ as functors $\Catpidem_{, R}^{\simeq}\to \mathrm{Mod}_{\mathrm{Mod}_{R^e}^\omega}(\Catex_{, \mathrm{idem}})^\simeq$, where $D_{R}:\mathrm{Mod}_{\mathrm{Mod}_{R^e}^\omega}(\Catex_{, \mathrm{idem}})\to \mathrm{Mod}_{\mathrm{Mod}_{R^e}^\omega}(\Catex_{, \mathrm{idem}})^{\op}$ is the functor sending $\mathcal{C}\mapsto \mathrm{Fun}_{R^e}^{\ex}(\mathcal{C},\mathrm{Mod}_{R^e}^\omega)$. 
     Applying the functor $\mathfrak{pic}(-)$ to the diagram above and using this natural equivalence we see that the map $\mathfrak{br}^\mathrm{p}(R)\to \mathfrak{pic}\left(\mathrm{Mod}_{\mathrm{Mod}_{R^e}^{\omega, \mathrm{h}C_2}}(\Catex_{, \mathrm{idem}})\right)$ factors through the map \[\mathfrak{br}(R^e)\to (\mathfrak{br}(R^e)\times \mathfrak{br}(R^e))^{\mathrm{h}C_2}\to \mathfrak{br}(R^e)^{\mathrm{h}C_2}\] where the action on $\mathfrak{br}(R^e)\times \mathfrak{br}(R^e)$ is $(x,y)\mapsto (-y,-x)$, the action on $\mathfrak{br}(R^e)$ is multiplication by $(-1)$ and the first map is induced by the map $\mathfrak{br}(R^e)\xrightarrow{x\mapsto (x,-x)}\mathfrak{br}(R^e)\times \mathfrak{br}(R^e)$. This composition is nullhomotopic, completing the proof.
\end{proof}
			
\begin{lemma}~\label{lem: surjection onto kernel PS sequence}
    Let $ R $ be a Poincaré ring and suppose that $R$ is Borel. 
    Let $\mathcal{C}\in \mathrm{Mod}_{\mathrm{Mod}_{R^e}^\omega}\left(\mathrm{Cat}^{st}_{\infty,\mathrm{idem}}\right)$ be an invertible $ R^e $-linear category such that $\mathrm{Cores}_{R^e/R^{C_2}}(\mathcal{C})\simeq (\mathrm{Mod}_{R^e}^\omega)^{\mathrm{h}C_2}$ as $(\mathrm{Mod}_{R^e}^\omega)^{\mathrm{h}C_2}$-modules, where $\mathrm{Cores}_{R^e/R^{C_2}}$ is the functor of Construction~\ref{cons:categorified_corestriction}. 
    Then $\mathcal{C}$ admits an invertible $R$-linear Poincar{\'e} $\infty $-category structure.
\end{lemma}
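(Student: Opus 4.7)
I would begin by invoking the Borel hypothesis to strip away the Tate compatibility data in the definition of a Poincaré structure. Since $R^{\varphi C_2}\simeq 0$, Remark~\ref{remark: Borel poincare rings have vanishing tate} yields $R^{\mathrm{t}C_2}\simeq 0$, so Theorem~\ref{thm: characterization of quadratic functors on a cat} (together with its $R$-linear enhancement in Proposition~\ref{prop:relative_poincare_cats_basic_properties}\ref{propitem:classify_R_lin_hermitian_struct_gen}) shows that an $R$-linear Poincaré structure on $\mathcal{C}$ is no more than a nondegenerate symmetric $R^e$-bilinear form. By invertibility of $\mathcal{C}$, such a form is equivalent to a $C_2$-equivariant equivalence
\begin{equation*}
    \mathcal{C}\otimes_{\Mod_{R^e}^{\omega}}\lambda^{*}\mathcal{C}\;\xrightarrow{\ \simeq\ }\;\Mod_{R^e}^{\omega}
\end{equation*}
of invertible objects in $\Mod_{\Mod_{R^e}^{\omega}}(\Catex_{\infty,\mathrm{idem}})$, where the $C_2$-action on the source is the swap composed with $\lambda^{*}$. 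Constructing a Poincaré structure on $\mathcal{C}$ is therefore equivalent to producing such an equivariant trivialization.

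Next I would reinterpret both the hypothesis and the desired conclusion inside $\mathfrak{pic}(\Mod_{R^{L}}(\Spectra^{C_{2}}))$. The Borel clause of Lemma~\ref{lem: pic of genuine category is pic of fixed points when galois} gives an equivalence $\mathfrak{pic}(\Mod_{R^{L}}(\Spectra^{C_{2}}))\simeq \mathfrak{pic}(\Mod_{R^{e}}^{\omega})^{\mathrm{h}C_{2}}$ identifying the left-hand side with the moduli of $C_2$-equivariant invertible $R^{e}$-linear categories. Under this identification $\mathcal{C}\otimes_{R^{e}}\lambda^{*}\mathcal{C}$ (with swap--$\lambda^{*}$ action) defines a class in $\pi_{0}\mathfrak{pic}(\Mod_{R^{L}}(\Spectra^{C_{2}}))$, and producing the required trivialization amounts to showing that this class is null. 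By Construction~\ref{cons:categorified_corestriction}, $\mathrm{Cores}_{R^{e}/R^{C_{2}}}(\mathcal{C})$ is precisely the image of this class under the fixed-points functor $\mathfrak{pic}(\Mod_{R^{L}}(\Spectra^{C_{2}}))\to\mathfrak{pic}(\Mod_{\Mod_{R^{e}}^{\omega,\mathrm{h}C_{2}}}(\Catex_{\infty,\mathrm{idem}}))$, so the hypothesis says exactly that this image vanishes.

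The main obstacle is the last step: promoting an $\mathrm{h}C_{2}$-level trivialization to a genuinely $C_2$-equivariant one. My plan is to extend the fiber sequence of Corollary~\ref{cor:Poincare_Brauer_to_Brauer_fiber} one step further on the right to
\begin{equation*}
    \pnbr(R)\;\longrightarrow\;\mathfrak{br}(R^{e})\;\xrightarrow{\;\mathrm{Cores}\;}\;\mathfrak{pic}\!\left(\Mod_{\Mod_{R^{e}}^{\omega,\mathrm{h}C_{2}}}(\Catex_{\infty,\mathrm{idem}})\right),
\end{equation*}
and then read off the lemma from exactness at the middle together with Lemma~\ref{lem: poincare br to br to c2 fixed points in null} (nullity of the composite). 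To establish this extension, I would run the argument of Proposition~\ref{prop:symmetric_pnpic} one categorical level higher, at the level of invertible symmetric monoidal Poincaré $\infty$-categories: in the Borel case, $\mathrm{Pn}$ of an invertible $R$-linear Poincaré category coincides (via the lax symmetric monoidal transformation of Lemma~\ref{lemma:lax_monoidal_nat_transf}) with the $(-\lambda^{*})$-homotopy fixed points of its underlying invertible $R^e$-linear category, and combining this with the norm fiber sequence of Remark~\ref{rmk:C2_spectra_twists} applied to $\mathfrak{pic}(\Mod_{\Mod_{R^{e}}^{\omega}}(\Catex_{\infty,\mathrm{idem}}))$ (with its duality $C_2$-action, compare Remark~\ref{remark: identification of the c2 action on pic}) will produce the displayed fiber sequence. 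The subtle point, which I view as the technical heart of the argument, is that the equivalence of the fiber with $\pnbr(R)$ requires $R^{\mathrm{t}C_2}\simeq 0$ so that the obstruction to lifting a fixed-points-level identification to a genuinely equivariant one, which generally lives in a Tate-type invariant of the relevant invertible module categories, is automatically zero. Granting this, the lemma follows: $[\mathcal{C}]$ lies in $\ker(\mathrm{Cores})$, hence in the image of $\pi_{0}\pnbr(R)\to\pi_{0}\mathfrak{br}(R^{e})$, which is exactly what is to be shown.
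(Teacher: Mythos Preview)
Your proposal has two genuine problems that prevent it from working as written.

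\medskip

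\textbf{A categorical level error in paragraph 2.} You invoke Lemma~\ref{lem: pic of genuine category is pic of fixed points when galois} to identify $\mathfrak{pic}(\Mod_{R^{L}}(\Spectra^{C_2}))\simeq \mathfrak{pic}(\Mod_{R^{e}}^{\omega})^{\mathrm{h}C_{2}}$, and then assert that this ``identif[ies] the left-hand side with the moduli of $C_2$-equivariant invertible $R^{e}$-linear categories.'' But $\mathfrak{pic}(\Mod_{R^{e}}^{\omega})$ is the Picard space of $R^e$-\emph{modules}, not of $R^e$-linear \emph{categories}; the latter is $\mathfrak{br}(R^e)$. The equivariant object $\mathcal{C}\otimes_{R^e}\lambda^*\mathcal{C}$ is a category and therefore defines a class in $\mathfrak{br}(R^e)^{\mathrm{h}C_2}$, not in $\mathfrak{pic}(\Mod_{R^{L}}(\Spectra^{C_2}))$. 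The lemma you cite lives one categorical level below where your argument needs it.

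\medskip

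\textbf{Circularity in paragraph 3.} Even after repairing the level error, your strategy is to deduce the lemma from the fiber sequence
\[
\pnbr(R)\to\mathfrak{br}(R^{e})\to\mathfrak{pic}\!\left(\Mod_{\Mod_{R^{e}}^{\omega,\mathrm{h}C_{2}}}(\Catex_{\infty,\mathrm{idem}})\right).
\]
This is exactly Theorem~\ref{thm: extended fiber sequence of pnbr to br}, and in the paper that theorem is \emph{proved using} the present lemma: the comparison map $\pnbr(R)\to\mathcal{F}(R)$ is shown to be an equivalence on $\pi_{\geq 1}$ via Corollary~\ref{cor:Poincare_Brauer_to_Brauer_fiber}, while surjectivity on $\pi_0$ is precisely Lemma~\ref{lem: surjection onto kernel PS sequence}. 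Your proposed independent proof (``run Proposition~\ref{prop:symmetric_pnpic} one categorical level higher'') is not carried out; Proposition~\ref{prop:symmetric_pnpic} rests on \cite[Proposition~2.2.11]{CDHHLMNNSI}, a statement about Poincar\'e \emph{objects}, and the categorified analogue you would need (that invertible $R$-linear Poincar\'e categories for Borel $R$ are exactly $C_2$-equivariant invertible $R^e$-linear categories) is essentially the content of the theorem you are trying to invoke. The remark that ``the obstruction\ldots lives in a Tate-type invariant\ldots [which] is automatically zero'' is a heuristic, not an argument.

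\medskip

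\textbf{Contrast with the paper.} The paper's proof is direct and constructive. It writes $\mathcal{C}\simeq\Mod_A^\omega$ for an Azumaya algebra $A$, and uses the hypothesized equivalence $\Mod_{A\otimes\lambda^*A}^{\omega,\mathrm{h}C_2}\simeq\Mod_{R^e}^{\omega,\mathrm{h}C_2}$ to \emph{produce} the required invertible $A\otimes\lambda^*A$-module with involution: simply take the image $M$ of $A\otimes\lambda^*A$ (with its swap $C_2$-structure) under the given equivalence. The equivalence forces $M$ to be invertible as an $A\otimes\lambda^*A$-module, so $\Qoppa_M^s$ is a Poincar\'e structure on $\mathcal{C}$; the dual construction on $\mathrm{D}_R\mathcal{C}$, with compatibly chosen trivialization, yields the tensor inverse. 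This avoids any appeal to a fiber sequence and extracts the Poincar\'e structure directly from the datum in the hypothesis.
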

\begin{proof}
    By \cite[Theorem 3.15]{MR3190610} there is some Azumaya $ R^e $-algebra $A$ such that $\mathcal{C}\simeq \mathrm{Mod}_A^\omega$, and there is an $ R^e $-linear $ C_2 $-equivariant equivalence $ \mathrm{Mod}_{A\otimes_{R^e}\lambda^*A}^{\omega}\simeq \mathcal{C} \otimes_{R^e} \lambda^* \mathcal{C} $.
    By assumption, we have an equivalence $\mathrm{Mod}_{A\otimes_{R^e}\lambda^*A}^{\omega, \mathrm{h}C_2} \simeq \mathrm{Mod}_{R^e}^{\omega, \mathrm{h}C_2}$. Here the $C_2$ action is being recorded by the equivalence $\mathrm{Mod}_{A\otimes_{R^e}\lambda^*A}^{\omega}\xrightarrow{\mathrm{swap}}\mathrm{Mod}_{\lambda^*A\otimes_{R^e}A}^\omega \simeq \lambda^*\mathrm{Mod}^\omega_{A\otimes_{R^e}\lambda^*A}$. The element $A\otimes_{R^e}\lambda^* A$ promotes to an element of $\mathrm{Mod}_{A\otimes_{R^e}\lambda^* A}^{\omega, \mathrm{h}C_2}$ via the swap isomorphism. 
    The image of $A\otimes_{R^e} \lambda^*A $ under this equivalence is some $ R^e $-module $M$ with $C_2$-action. 
    The module $ M $ acquires the structure of a left module over $A\otimes_{R^e}\lambda^* A$ with $ C_2 $-action, where the $C_2$ action on $M$ is $A\otimes_{R^e}\lambda^*A\to \lambda^*A\otimes_{R^e}A$ linear.

    The fact that the functor $\mathrm{Mod}_{A\otimes_{R^e}\lambda^*A}^{\omega, \mathrm{h}C_2}\to \mathrm{Mod}_{R^e}^{\omega \mathrm{h}C_2}$ is an equivalence then forces $M$ to be invertible as an $A\otimes_{R^e}\lambda^*A $-module, and so the hermitian structure $\Qoppa_M^s$ on $\mathrm{Mod}_{A}^{\omega}$ is Poincar{\'e}, and is by construction an $(\mathrm{Mod}_{R^e}^\omega,\Qoppa_R)$-module. The same analysis on $\mathrm{D}_R\mathcal{C}$ shows that $\mathrm{D}_R\mathcal{C}$ admits a canonical Poincar{\'e} structure as well determined by some invertible left $(A^\op\otimes_{R^e}\lambda^*A^\op)$-module $N$, and if one choses the equivalence $(\mathrm{D}_R\mathcal{C}\otimes_{\mathrm{Mod}_{R^e}^\omega}\lambda^*\mathrm{D}_R\mathcal{C})^{\mathrm{h}C_2}\simeq \mathrm{Mod}_{R^e}^{\omega, \mathrm{h}C_2}$ which fits into the following commutative diagram:
    \[
    \begin{tikzcd}
        \mathrm{Cores}_{R^e/R^{C_2}}\mathcal{C}\otimes_{\mathrm{Mod}_{R^e}^{\omega, \mathrm{h}C_2}} \mathrm{Cores}_{R^{e}/R^{C_2}}\mathrm{D}_R\mathcal{C} \ar[r,"\simeq"] \ar[d, "\simeq"] & \mathrm{Mod}_{R^e}^{\omega, \mathrm{h}C_2} \otimes_{\mathrm{Mod}_{R^e}^{\omega,\mathrm{h}C_2}}(\mathrm{D}_R\mathrm{Mod}_{R^e}^{\omega})^{\mathrm{h}C_2} \ar[d,"\simeq"]\\
        \left((\mathcal{C}\otimes_{\mathrm{Mod}_{R^e}^{\omega}}\mathrm{D}_R\mathcal{C})\otimes_{\mathrm{Mod}_{R^e}^\omega}\lambda^*(\mathcal{C}\otimes_{\mathrm{Mod}_{R^e}^{\omega}}\mathrm{D}_R\mathcal{C})\right)^{\mathrm{h}C_2}\ar[r, "\ev\otimes \lambda^*\ev"] & \mathrm{Mod}_{R^e}^{\mathrm{h}C_2}
    \end{tikzcd}
    \] then $N$ is a tensor-inverse of $M$. Hence $(\mathcal{C},\Qoppa_M^s)$ and $(\mathrm{D}_R\mathcal{C},\Qoppa_N^s)$ are tensor-inverses of each other and both define elements in the Poincar{\'e} Brauer group.
\end{proof}			
We finally arrive at the goal of this section. 
\begin{theorem}~\label{thm: extended fiber sequence of pnbr to br}
    Let $R$ be a Poincar{\'e} ring which is Borel. 
    Then there is a fiber sequence \[\pnbr(R)\to \mathfrak{br}(R^e)\to \mathfrak{pic}\left(\mathrm{Mod}_{\mathrm{Mod}_{R^e}^{\omega, \mathrm{h}C_2}}(\Catex_{, \mathrm{idem}})\right) \] of connective spectra, where the right-hand map is induced by Construction~\ref{cons:categorified_corestriction}.  
    
    If $R$ is furthermore such that $R^{\mathrm{h}C_2}\to R^e$ is faithful Galois, then the map \[\mathrm{Br}(R)\to \Pic(\mathrm{Mod}_{\mathrm{Mod}_{R^e}^{\mathrm{h}C_2}}(\Catex_{, \mathrm{idem}}))\simeq \mathrm{Br}(R^{\mathrm{h}C_2})\] is the {\'e}tale cohomological transfer map. 
\end{theorem}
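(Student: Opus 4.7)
The approach is to extend the fiber sequence of Corollary~\ref{cor:Poincare_Brauer_to_Brauer_fiber} one step to the right, and then to identify its explicit form in the Galois case. Abbreviate $B := \mathfrak{pic}(\mathrm{Mod}_{\mathrm{Mod}_{R^e}^{\omega,\mathrm{h}C_2}}(\Catex_{\mathrm{idem}}))$, so the goal is to show $\pnbr(R) \to \mathfrak{br}(R^e) \to B$ is a fiber sequence of connective spectra. The first step is to produce a factorization: by Lemma~\ref{lem: poincare br to br to c2 fixed points in null} the composite is canonically nullhomotopic, yielding a map $\phi \colon \pnbr(R) \to F := \mathrm{fib}(\mathfrak{br}(R^e) \to B)$ over $\mathfrak{br}(R^e)$, where the fiber is computed in $\Spectra_{\geq 0}$.

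The plan is to show $\phi$ is an equivalence by comparing $\phi$ with the identity of $\mathfrak{br}(R^e)$ and an equivalence $\phi_0$ on the fibers. By Morita theory, $\Omega \mathfrak{pic}(\mathrm{Mod}_\mathcal{D}(\Catex_{\mathrm{idem}})) \simeq \mathfrak{pic}(\mathcal{D})$ for any commutative $\mathcal{D}$ (autoequivalences of the unit as a module over itself are given by tensoring with invertible objects); applied to $\mathcal{D} = \mathrm{Mod}_{R^e}^{\omega,\mathrm{h}C_2}$ and using that $\mathfrak{pic}$ commutes with limits, this yields $\Omega B \simeq \mathfrak{pic}(R^e)^{\mathrm{h}C_2}$. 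Under the Borel hypothesis $R^{\varphi C_2} \simeq 0$, Lemma~\ref{lem: pic of genuine category is pic of fixed points when galois} likewise identifies the fiber of the original sequence $\mathfrak{pic}(\mathrm{Mod}_{R^L}(\Spectra^{C_2})) \simeq \mathfrak{pic}(R^e)^{\mathrm{h}C_2}$. A direct inspection, unwinding both the connecting map description of Corollary~\ref{cor:Poincare_Brauer_to_Brauer_fiber} (given by the norm $X \mapsto N^{C_2}X \otimes_{N^{C_2}R^e} R^L$) and the corestriction formula $\mathcal{L} \mapsto \mathcal{L} \otimes_{R^e} \lambda^*\mathcal{L}$ of Construction~\ref{cons:categorified_corestriction}, shows these two identifications are compatible, so the induced comparison $\phi_0 \colon \mathfrak{pic}(\mathrm{Mod}_{R^L}(\Spectra^{C_2})) \to \Omega B$ is an equivalence.

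With $\phi_0$ an equivalence, the five-lemma applied to the long exact sequences in homotopy for the two fiber sequences ending at $\mathfrak{br}(R^e)$ shows $\pi_n \phi$ is an isomorphism for $n \geq 1$. For $\pi_0$, the nullhomotopy places the image of $\pi_0 \pnbr(R)$ in $\ker(\pi_0 \mathfrak{br}(R^e) \to \pi_0 B)$, while Lemma~\ref{lem: surjection onto kernel PS sequence} gives the reverse inclusion; combined with $\phi_0$-surjectivity on $\pi_1 B \to \pi_0 F$, this gives $\pi_0 \phi$ bijective. Hence $\phi$ is an equivalence of connective spectra. Finally, for the Galois claim: the faithful Galois hypothesis $R^{\mathrm{h}C_2} \to R^e$ yields $\mathrm{Mod}_{R^e}^{\omega,\mathrm{h}C_2} \simeq \mathrm{Mod}_{R^{\mathrm{h}C_2}}^\omega$ by \cite{Rognes_galois, Mathew_galois}, so $B \simeq \mathfrak{br}(R^{\mathrm{h}C_2})$ by the Brauer space theorem \cite{MR3190610, MR2957304}. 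Under this identification, the corestriction sends an Azumaya $R^e$-algebra $A$ to $(\mathrm{Mod}_A \otimes_{\mathrm{Mod}_{R^e}} \lambda^* \mathrm{Mod}_A)^{\mathrm{h}C_2} \simeq \mathrm{Mod}_{(A \otimes_{R^e} \lambda^* A)^{\mathrm{h}C_2}}^\omega$, recovering the classical étale transfer formula $[A] \mapsto [(A \otimes_{R^e} \lambda^* A)^{\mathrm{h}C_2}]$.

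The principal obstacle will be the verification that $\phi_0$ is an equivalence, since two \emph{a priori} distinct descriptions of the connecting map—one coming from the récollement of $\Spectra^{C_2}$ applied to $R^L$, the other from the explicit formula of Construction~\ref{cons:categorified_corestriction}—must be shown to agree with sufficient coherence. Both should ultimately reduce to the Hill--Hopkins--Ravenel norm on the Picard space, but extracting this compatibility from the abstract definitions is likely the most delicate computation.
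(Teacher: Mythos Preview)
Your proposal is correct and follows essentially the same strategy as the paper: produce the map $\phi$ from Lemma~\ref{lem: poincare br to br to c2 fixed points in null}, compare the two fiber sequences after one rotation using Lemma~\ref{lem: pic of genuine category is pic of fixed points when galois} to identify the fibers under the Borel hypothesis, and finish off $\pi_0$ with Lemma~\ref{lem: surjection onto kernel PS sequence}. The paper packages the compatibility you flag as the ``principal obstacle'' by asserting directly that one obtains a \emph{map of fiber sequences} at the rotated (Picard) level, from which the middle vertical equivalence is immediate; this is the same verification you propose to do by unwinding the norm description of Corollary~\ref{cor:Poincare_Brauer_to_Brauer_fiber} against the corestriction formula, just stated more tersely.
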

\begin{proof}
    Let us write $\mathcal{F}(-)$ for the fiber of the map induced by Construction~\ref{cons:categorified_corestriction}. 
    From Lemma~\ref{lem: poincare br to br to c2 fixed points in null}, we get that there is a map $\Brp(R)\to \mathcal{F}(-)$.
    Looping both fiber sequences twice we see that we get a map of fiber sequences \[
    \begin{tikzcd}
        \mathfrak{pic}(R^e)\ar[r] \ar[d] & \mathfrak{pic}(\mathrm{Mod}_{R^L}(\mathrm{Sp}^{C_2})) \ar[r] \ar[d] & \pnbr(R) \ar[d]\\
        \mathfrak{pic}(R^e)\ar[r,"\mathrm{Cores}_{R^e/R^{C_2}}"] &\mathfrak{pic}(\mathrm{Mod}_{R^e}^{\mathrm{h}C_2}) \ar[r] & \mathcal{F}(R) 
    \end{tikzcd}
    \]
    and when $R$ is Borel we see that the left and middle vertical maps must be equivalences. 
    This together with Lemma~\ref{lem: surjection onto kernel PS sequence} gives that the right hand vertical map is in fact an equivalence of connective spectra.

    It remains to identify $\mathrm{Mod}_{(\mathrm{Mod}_{R^e}^\omega)^{\mathrm{h}C_2}}(\mathrm{Cat}^{\mathrm{ex}}_{\infty,\mathrm{idem}})\simeq \mathrm{Mod}_{\mathrm{Mod}_{R^{\mathrm{h}C_2}}^\omega}(\mathrm{Cat}^{\mathrm{ex}}_{\infty,\mathrm{idem}})$, which follows from the monoidal equivalence $(\mathrm{Mod}_{R^e}^{\omega})^{\mathrm{h}C_2}\simeq \mathrm{Mod}_{R^{\mathrm{h}C_2}}^\omega$, and the fact that under this equivalence an Azumaya algebra over $A$ is sent to $(A\otimes_{R^e}\lambda^*A)^{C_2}$ which extends the cohomological transfer map when $R^e$ is discrete (see \cite[Example 6.2.3]{azumaya_involution}). 
\end{proof}

\begin{corollary}\label{cor: extended fiber sequence of pnbr to br good quotient}
    Let $ (X,\lambda,Y,p) $ be a scheme with good quotient so that either the branch locus (Notation \ref{ntn:good_quotient_branch_locus}) in $ Y $ is empty (i.e. $ p $ is quadratic étale), or $ 2 $ is invertible in $ \mathcal{O}_Y $. 
    Then there is a fiber sequence \[\pnbr(X,\lambda,Y, p)\to \mathfrak{br}(X)\to \mathfrak{pic}\left(\mathrm{Mod}_{\mathrm{Mod}_{\mathcal{O}_X}^{\mathrm{h}C_2}}(\Catex_{, \mathrm{idem}})\right)\,.\]
\end{corollary}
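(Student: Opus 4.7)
The plan is to reduce to the affine case handled by Theorem~\ref{thm: extended fiber sequence of pnbr to br} by means of étale descent on $Y$. Concretely, pick an étale cover $\{j_i\colon \Spec A_i \to Y\}$ and let $\Spec B_i := j_i^*X$; by Remark~\ref{remark:restriction_of_schemes_with_involution} and affineness of $p$, each $(\Spec B_i, \lambda|, \Spec A_i, p|)$ is an object of $\mathrm{qSch}^{C_2}$ and the $C_2$-Mackey functor $A_i \to B_i$ promotes to a Poincaré ring $\underline{B_i}^{\lambda}$ via Construction~\ref{cons:structure_sheaf_of_Green_functors} and Example~\ref{ex:fixpt_Mackey_functor}.

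The first main step is to verify that the hypotheses of Theorem~\ref{thm: extended fiber sequence of pnbr to br} hold étale-locally, i.e.\ that $\underline{B_i}^{\lambda}$ is Borel. By Proposition~\ref{prop:geomfixpt_C2_structure_sheaf}(1) the sheaf $\underline{\mathcal{O}}^{\varphi C_2}$ is supported on the branch locus $Z(p)$, so if the branch locus in $Y$ is empty then $\underline{\mathcal{O}}^{\varphi C_2} \simeq 0$ globally; if instead $2 \in \mathcal{O}_Y^\times$, then Proposition~\ref{prop:geomfixpt_C2_structure_sheaf}(2) shows $\underline{\mathcal{O}}^{\varphi C_2} \simeq 0$. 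In either case $(\underline{B_i}^{\lambda})^{\varphi C_2} \simeq 0$, and then by Remark~\ref{remark: Borel poincare rings have vanishing tate} the Tate construction $(\underline{B_i}^{\lambda})^{\mathrm{t}C_2}$ also vanishes, so the Poincaré ring is Borel. Applying Theorem~\ref{thm: extended fiber sequence of pnbr to br} yields a fiber sequence of connective spectra
\[
\pnbr(\underline{B_i}^{\lambda}) \to \mathfrak{br}(B_i) \to \mathfrak{pic}\!\left(\mathrm{Mod}_{\mathrm{Mod}_{B_i}^{\omega,\mathrm{h}C_2}}(\Catex_{\mathrm{idem}})\right)
\]
functorially in the étale cover.

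The second main step is to promote this pointwise fiber sequence to a global one via étale descent. The left term $\pnbr$ is a hypersheaf on the small étale site of $Y$ by Corollary~\ref{cor:pnbr_as_etale_sheaf_affine_spectral}; its global sections recover $\pnbr(X,\lambda,Y,p)$ by Corollary~\ref{cor: two definitions of poincare brauer agree}. The middle term $\mathfrak{br}(B_i)$ is an étale sheaf whose global sections recover $\mathfrak{br}(X)$ by the main étale descent results for the derived Brauer space from \cite[Theorem 6.1]{MR3190610} (applied to $\Mod^\omega_{p_*\mathcal{O}_X} \simeq \Mod^\omega_{\mathcal{O}_X}$ via Lemma~\ref{lemma:identify_structure_sheaf_of_Green_func}\ref{lem_item:structured_pushforward_is_equiv}). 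For the right-hand term, the assignment $A \mapsto \mathrm{Mod}_{\mathrm{Mod}_A^{\omega,\mathrm{h}C_2}}(\Catex_{\mathrm{idem}})$ is the composite of $(-)^{\mathrm{h}C_2}$ applied fibrewise to the $C_2$-equivariant module sheaf $A \mapsto \Mod_A^\omega$ (whose descent follows by the argument of \cite[Theorem 5.13]{Lurie2011Descent}, exactly as in the proof of Corollary~\ref{cor:pnbr_as_etale_sheaf_affine_spectral}) and the $\mathrm{Mod}_{(-)}$ construction; both $(-)^{\mathrm{h}C_2}$ and $\mathrm{Mod}_{(-)}$ commute with limits of the relevant type, and finally $\mathfrak{pic}$ preserves limits. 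Since all three terms are hypersheaves on the étale site of $Y$, and the fiber sequences assemble into a commutative diagram of sheaves, taking global sections (which commutes with fibers) gives the desired fiber sequence.

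The main obstacle will be verifying that the right-hand term is actually a hypersheaf, and in particular that $\mathrm{Mod}_{\mathcal{O}_X}^{\mathrm{h}C_2}$ in the global statement is the correct global object obtained by gluing $\mathrm{Mod}_{B_i}^{\omega,\mathrm{h}C_2}$ along the étale cover. The key input is that the functor $\mathrm{Mod}^\omega\colon (\mathrm{qSch}^{C_2})^{\op} \to \Cat^{\ex}_{\infty,\mathrm{idem}}$ sending $(X,\lambda,Y,p) \to \Mod^\omega_{\mathcal{O}_X}$ with its $C_2$-action is an étale sheaf (a $C_2$-equivariant refinement of flat descent), so its fixed points give a sheaf as well. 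Once this bookkeeping is in place, the corestriction maps from Construction~\ref{cons:categorified_corestriction} assemble into a natural transformation $\mathfrak{br}(-) \to \mathfrak{pic}(\mathrm{Mod}_{\mathrm{Mod}^{\mathrm{h}C_2}}(\Catex_{\mathrm{idem}}))$ of sheaves on $Y_{\mathrm{\acute{e}t}}$, and passing to fibers completes the argument.
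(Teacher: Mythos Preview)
Your argument is correct but takes a different route from the paper. The paper does not glue the affine fiber sequences of Theorem~\ref{thm: extended fiber sequence of pnbr to br} via descent; instead it observes that the global analogues of Construction~\ref{cons:categorified_corestriction}, Lemma~\ref{lem: poincare br to br to c2 fixed points in null}, and Lemma~\ref{lem: surjection onto kernel PS sequence} go through verbatim for a scheme with good quotient, and combines these with the sheafification (on $Y_{\mathrm{\acute{e}t}}$) of the left-half fiber sequence $\mathfrak{pic}(X)^{\mathrm{h}C_2}\to \mathfrak{br}^{\mathrm{p}}\to \mathfrak{br}(X)$ coming from Corollary~\ref{cor:Poincare_Brauer_to_Brauer_fiber}. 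Your approach treats the affine theorem as a black box and reduces to it by checking that all three terms are étale hypersheaves on $Y$; this is more modular, but the extra bookkeeping you flag concerning the sheaf property of $\mathfrak{pic}\bigl(\mathrm{Mod}_{\mathrm{Mod}_{(-)}^{\omega,\mathrm{h}C_2}}(\Catex_{\mathrm{idem}})\bigr)$ is genuinely additional work that the paper's direct argument sidesteps.

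One small caution: your appeal to Proposition~\ref{prop:geomfixpt_C2_structure_sheaf} to verify Borel-ness of the local Poincaré rings carries the hypothesis that $Y$ be regular Noetherian of finite Krull dimension, which the corollary does not assume. This is easily repaired. When $2$ is invertible the Tate construction vanishes directly. When $p$ is quadratic étale, each $A_i\to B_i$ is $C_2$-Galois, so the higher group (co)homology of $B_i$ vanishes and the norm $(B_i)_{\mathrm{h}C_2}\to (B_i)^{\mathrm{h}C_2}$ is an isomorphism of discrete $A_i$-modules; hence $B_i^{\mathrm{t}C_2}\simeq 0$ and $(\underline{B_i}^\lambda)^{\varphi C_2}\simeq 0$ without invoking Proposition~\ref{prop:geomfixpt_C2_structure_sheaf}.
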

\begin{proof}
    The same arguments as in the Poincar\'e ring case will apply in this case as well. We will outline the arguments and leave the details to the reader.  The arguments in Construction~\ref{cons:categorified_corestriction}, Lemma~\ref{lem: poincare br to br to c2 fixed points in null}, and Lemma~\ref{lem: surjection onto kernel PS sequence} apply verbatim. We have a fiber sequence \[\mathfrak{pic}(X)^{\mathrm{h}C_2}\to \mathfrak{br}^p(X,\lambda, Y, p)\to \mathfrak{br}(X)\] by taking the sheafification of the fiber sequence of Corollary~\ref{cor:Poincare_Brauer_to_Brauer_fiber} on the \'etale site of $Y$, via Corollary~\ref{cor:pnbr_as_etale_sheaf_affine_spectral}.
\end{proof}
\begin{example}~\label{ex: Atiyah real K-theory}
    Consider Atiyah's real $K$-theory $\kr$ from \cite{K-theory_and_reality}. This is the genuine $C_2$-spectrum with $\kr^e=\mathrm{KU}$ with $C_2$-action given by complex conjugation and with $\kr^{C_2}\simeq \mathrm{KU}^{\mathrm{h}C_2}=\mathrm{KO}$. Further, as pointed out to us by Greenlees, we have that $\kr^{\phi C_2}\simeq \mathrm{KU}^{\mathrm{t}C_2}\simeq 0$. We therefore have that $\kr$ in fact defines a Poincar{\'e} ring and furthermore that $\kr$ agrees with $\mathrm{KU}^s$. 
    
    Note that we also have that the argument used to prove Theorem \ref{thm: extended fiber sequence of pnbr to br} applies to Atiyah's real $K$-theory, and therefore we get a fiber sequence \[\pnbr(\kr)\to \mathfrak{br}(\mathrm{KU})\to \mathfrak{br}(\mathrm{KO})\] of spaces. This induces a long-exact sequence on homotopy groups of the form 
    \[
    \begin{tikzcd}
        \ldots \ar[r] & \pi_0(\gmq(\kr))\ar[r] & \pi_0(\mathrm{KU})^\times \ar[r, "N^{C_2}"] & \pi_0(\mathrm{KO})^\times \ar[lld, out=-45]\\
        & \pi_0(\Picp(\kr)) \ar[r] & \pi_0(\pic(\mathrm{KU})) \ar[r] & \pi_0(\pic(\mathrm{KO})) \ar[lld, out=-45]\\
        & \pi_0(\pnbr(\kr)) \ar[r] & \mathrm{Br}(\mathrm{KU}) \ar[r] &\mathrm{Br}(\mathrm{KO})
    \end{tikzcd}
    \] and we know that $\pi_0(\mathrm{KU})^\times \cong \pi_0(\mathrm{KO})^\times\cong \pi_0(\pic(\mathrm{KU}))\cong\ZZ/2\ZZ$ and $\pi_0(\pic(\mathrm{KO}))\cong \ZZ/8\ZZ$. 
    The map $N^{C_2}:\ZZ/2\ZZ\to \ZZ/2\ZZ$ on units is the zero map because we can identify the complex conjugation action with the Adams operation $\psi^{-1}$.  
    The map $\pi_0(\pic(\mathrm{KU}))\to \pi_0(\pic(\mathrm{KO}))$ is also the zero map.  
    This follows from the computation \[N^{C_2}([\Sigma \mathrm{KU}])=[((N^{C_2}\Sigma \mathrm{KU})\otimes_{N^{C_2}\mathrm{KU}}\kr)^{C_2}]\simeq [(\Sigma^{\rho}\kr)^{C_2}]\] together with the identification $(\Sigma^{\rho}\kr)^{C_2}\simeq \mathrm{KO}$ via Wood's Theorem. More specifically, we know that $(\Sigma^\rho \kr)^{C_2}$ must be a line bundle on $\mathrm{KO}$, of which there are only $8$ up to isomorphism: $\Sigma^i\mathrm{KO}$ for $0\leq i\leq 7$. We have a fiber sequence $\Sigma\kr\to \Sigma^{\rho}\kr$ given by taking $-\otimes \Sigma\kr$ with the inclusion of north and south poles map $S^0\hookrightarrow S^\sigma$. This then induces a fiber sequence \[\Sigma \kr \to \Sigma^{\rho}\kr\to \Sigma^2\mathrm{KU}[C_2]\] and so applying fixed points we get a fiber sequence \[(\Sigma^{\rho}\kr)^{C_2}\to \Sigma^2\mathrm{KU}\to \Sigma^2\mathrm{KO}\,.\] By Wood's theorem we have that $\mathrm{KU}\simeq \mathrm{cofib}(\Sigma\mathrm{KO}\xrightarrow{\times \eta}\mathrm{KO})$, which then gives a fiber sequence $\mathrm{KO}\to \Sigma^2\mathrm{KU}\to \Sigma^2\mathrm{KO}$ by taking the Bott isomorphism $\Sigma^2\mathrm{KU}\simeq \mathrm{KU}$. Out of the line bundles $\Sigma^i\mathrm{KO}$, $0\leq i\leq 7$, only $\mathrm{KO}$ will be the fiber of a $\mathrm{KO}$-module map $\mathrm{KU}\to \mathrm{KO}$ since the group of such maps is $\mathbb{Z}$ and therefore all other maps are multiples of the map giving $\mathrm{KO}$ as the fiber.
    Note that this same computation identifies $\Qoppa_{\kr}(\Sigma\mathrm{KU})\simeq \mathrm{KO}$ and that we get a splitting map $\pi_0(\pic(\mathrm{KU}))\to \pi_0(\Picp(\kr))$ given by sending $[\Sigma \mathrm{KU}]$ to $[(\Sigma\mathrm{KU},1)]$. To see that this is a group homomorphism is equivalent to showing that the square of this class is trivial in $\mathrm{Pic}^\mathrm{p}(\kr)$. Since there is a symmetric monoidal equivalence $\mathrm{Pn}(\mathrm{Mod}^\mathrm{p}_{\kr})\simeq (\mathrm{Mod}_{\mathrm{KU}}^{\omega, \simeq})^{\mathrm{h}C_2}$ by \cite[Proposition 2.2.11]{CDHHLMNNSI}, it is enough to show that the equivalence $\Sigma\mathrm{KU}\xrightarrow{\psi^{-1}(\beta^{-1}\cdot -)}D_{\Qoppa_{\kr}}(\Sigma \mathrm{KU})$ tensored with itself fits into a commutative diagram
    \[
    \begin{tikzcd}
        \Sigma^2\mathrm{KU} \ar[rr, "\psi^{-1}(\beta^{-1}\times -)^{\otimes 2} "] \ar[d,"\beta^{-2}\times-"] & & D_{\Qoppa_{\kr}}(\Sigma^2\mathrm{KU}) \ar[d, "\beta^{2}\times -"]\\
        \mathrm{KU} \ar[rr, "\psi^{-1}(-)"] & & D_{\Qoppa_{\kr}}(\mathrm{KU})
    \end{tikzcd}
    \]
    which follows from the fact that $\psi^{-1}(-)$ is a map of rings.
    We therefore have that \[\pi_0(\pnpic(\kr))\cong \ZZ/2\ZZ\times \ZZ/2\ZZ\] and that there is an injective function $\ZZ/8\ZZ\hookrightarrow \pi_0(\pnbr(\kr))$. Under the further assumption that $\mathrm{Br}(\mathrm{KU})=0$ we then have that $\pi_0(\pnbr(\kr))=\ZZ/8\ZZ$.
\end{example}

\subsection{The involutive and cohomological involutive Brauer groups}
Now that we have \'etale descent we are able to construct the involutive and cohomological involutive Brauer groups.
			
\begin{construction}~\label{const: map from poincare brauer to homotopy fixed points}
    Let $(X,\lambda, Y, p)$ be a scheme with good quotient. 
    The derived global sections $\mathrm{R}\Gamma_{\acute{e}t}(X;\mathbb{Z})$ has a $ C_2 $-action induced by $n\mapsto -\lambda^*(n)$. We can then consider the homotopy fixed points $\mathrm{R}\Gamma_{\acute{e}t}(X;\mathbb{Z})^{\mathrm{h}C_2}$ of this action. 
    We will define a homomorphism $\mathrm{Br}^\mathrm{p}(X,\lambda, Y, p)\to \mathrm{H}^1(\mathrm{R}\Gamma_{\Acute{e}t}(X, \mathbb{Z})^{\mathrm{h}C_2})$. 
    To this end, note that the composition \[\pnbr(X,\lambda, Y, p)\to \mathfrak{br}(X)\to \mathfrak{pic}\left(\mathrm{Mod}_{\mathrm{Mod}_X^{\omega, \mathrm{h}C_2}}(\Catex_{, \mathrm{idem}})\right)\] is nullhomotopic; where the latter map is Construction~\ref{cons:categorified_corestriction}. 
    By {\'e}tale descent (Corollary \ref{cor:pnbr_as_etale_sheaf_affine_spectral}), it suffices to show that there is a functorial nullhomotopy when $ Y $ (and therefore also $ X $) is affine. 
    This is proven in Lemma~\ref{lem: poincare br to br to c2 fixed points in null}. We further have a map $\mathfrak{pic}\left(\mathrm{Mod}_{\mathrm{Mod}_X^{\omega, \mathrm{h}C_2}}(\Catex_{, \mathrm{idem}})\right) \to \mathfrak{pic}\left(\mathrm{Mod}_{\mathrm{Mod}_X^{\omega}}(\Catex_{, \mathrm{idem}})^{\mathrm{h}C_2}\right)\simeq \mathfrak{br}(X)^{\mathrm{h}C_2}$ by the limit-interchange map, and so we get a map \[\mathfrak{br}^\mathrm{p}(X,\lambda, Y, p)\to \mathrm{fib}(\mathfrak{br}(X)\to \mathfrak{br}(X)^{\mathrm{h}C_2})\] and the target can be identified with $\mathfrak{br}(X)^{\mathrm{h}(-\lambda)}$ by Remark~\ref{remark:Poincare_Picard_group_fiber_sequence}. By \cite[Corollary 7.10]{MR3190610} we have a map $\mathfrak{br}(-)\simeq B\mathfrak{pic}(-)\to B\mathbb{Z}_X$ of \'etale sheaves on $X$, and so we obtain a map \[\mathfrak{br}^\mathrm{p}(-)\to (p_*B\mathbb{Z}_X)^{\mathrm{h}(-\lambda)}\] of \'etale sheaves on $Y$. Taking global sections gives a map $\Brp(X,\lambda, Y, p)\to \tau_{\geq 0}(\mathrm{R}\Gamma_{\Acute{e}t}(X,\mathbb{Z})[1])^{\mathrm{h}(-\lambda)}$, which on $\pi_0$ gives the desired map. 
\end{construction}
			
\begin{definition}\label{defn:cohomological_involutive_Brauer_group}
    Let $(X,\lambda, Y, p)$ be a scheme with good quotient. 
    Define the \textit{cohomological involutive Brauer group} to be \[\mathrm{Br}'(X,\lambda):= \mathrm{Ker}\left(\mathrm{Br}^\mathrm{p}(X,\lambda, Y, p)\to \mathrm{H}^1(\mathrm{R}\Gamma_{\Acute{e}t}(X,\mathbb{Z})^{\mathrm{h}C_2})\right)\] where we take the kernel of the map defined in Construction~\ref{const: map from poincare brauer to homotopy fixed points}. 
    By construction, we have a commutative diagram
    \[
    \begin{tikzcd}
        \mathrm{Br}^\mathrm{p}(X,\lambda, Y, p) \ar[r] \ar[d] & \pi_0(\mathfrak{br}(X)) \ar[d]\\
        \mathrm{H}^1(\mathrm{R}\Gamma_{\Acute{e}t}(X,\mathbb{Z})^{\mathrm{h}C_2}) \ar[r] & \mathrm{H}^1_{\Acute{e}t}(X,\mathbb{Z})
    \end{tikzcd} ;
    \]
    and upon taking vertical kernels there is a homomorphism $\mathrm{Br}'(X,\lambda, Y, p)\to \mathrm{H}^2_{\Acute{e}t}(X;\mathbb{G}_m)=\mathrm{Br}'(X)$. 
    Define the \textit{involutive Brauer group} $\mathrm{Br}(X,\lambda)$ as the pullback
    \[
    \begin{tikzcd}
        \mathrm{Br}(X,\lambda) \ar[r] \ar[d] & \mathrm{Br}'(X,\lambda) \ar[d]\\
        \mathrm{Br}(X) \ar[r] & \mathrm{Br}'(X) 
    \end{tikzcd} 
    \]
    in the category of abelian groups.
\end{definition}
\begin{remark}\label{rmk:pnbr_to_H1_fxpt_interpretation}
    By Corollary~\ref{cor: brauer classes representable by azumaya algebras}, every element of $ \mathrm{Br}^\mathrm{p}(X,\lambda, Y,p) $ can be represented by an Azumaya algebra with genuine involution. 
    There is a homomorphism $\mathrm{Br}^\mathrm{p}(X,\lambda, Y,p)\to \mathrm{H}^1(\mathrm{R}\Gamma_{\Acute{e}t}(X,\mathbb{Z})^{{\mathrm{h}(-\lambda)}})$ which detects (a) whether the underlying Brauer class $ \overline{\alpha} $ of $ \alpha $ maps to zero under the projection $ \pi_0(\mathfrak{br}(X)) \to \mathrm{H}^1_{\mathrm{\acute{e}t}}(X,\mathbb{Z}) $ and (b) given any $C_2$-étale cover $ \mathcal{U} = \{p^*U \to U\} $ so that the underlying Brauer class is trivial: $ \left[\overline{\alpha}|_{p^*U}\right] \simeq \mathcal{O}_{p^*U} $, the obstruction to exhibiting the involution on $ \alpha $ as one arising from an involution of classical algebras (see Lemma \ref{lemma:classical_inv_brauer_gp_as_kernel} and Example \ref{ex:br and br' for alg closed field}). 
    Taking the kernel of the map $\mathrm{Br}^\mathrm{p}(X,\lambda, Y,p)\to \mathrm{H}^1(\mathrm{R}\Gamma_{\Acute{e}t}(X,\mathbb{Z})^{{\mathrm{h}(-\lambda)}})$ excludes the truly exotic Azumaya algebras with involution. 
    
    Since the map $\mathrm{Br}(X)\hookrightarrow \mathrm{Br}'(X)$ is always injective, it follows that the map $\mathrm{Br}(X,\lambda)\hookrightarrow \mathrm{Br}'(X,\lambda)$ is also always injective. 
    We can identify the classes in $\mathrm{Br}(X,\lambda)$ as those represented by Azumaya algebras with genuine involution $[(\mathcal{A},\sigma)]\in \mathrm{Br}'(X,\lambda)$ such that the underlying Azumaya algebra with involution $\mathcal{A}^e$ is Morita equivalent to an ordinary Azumaya algebra. 
\end{remark}
\begin{lemma}\label{lemma:classical_inv_brauer_gp_as_kernel}
    Let $ (X, \lambda, Y,p) $ be a scheme with good quotient such that either $ \lambda $ is the identity or $ p $ is quadratic étale. 
    Assume further that $ \frac{1}{2} \in \mathcal{O}_Y $.  
    The composite homomorphism 
    \begin{equation}\label{eq:classical_inv_brauer_gp_as_kernel}
        \mathrm{Br}_{\mathrm{PS}}(X,\lambda) \to \mathrm{Br}^\mathrm{p}(X,\lambda, Y, p)\to \mathrm{H}^1(\mathrm{R}\Gamma_{\Acute{e}t}(X,\mathbb{Z})^{\mathrm{h}C_2})     
    \end{equation} 
    is zero, where the former map is that of Proposition \ref{prop:from_involutive_Brauer_to_Poincare_Brauer} and the latter is that of Construction \ref{const: map from poincare brauer to homotopy fixed points}. 
\end{lemma}
\begin{proof}
    Let $ (W,\ell, Z,\pi) \in \mathrm{qSch}^{C_2} $. 
    Throughout this proof, we will use the notation $ \left(\mathrm{Mod}_{\mathcal{O}_W}^\omega,\Qoppa_\ell\right) $ to denote the image of $ (W,\ell,Z,\pi) $ in $ \Einfty\Mon\left(\Catp\right) $ under \ref{cons:C2_mod_over_sheaf_of_Green_func}. 
    
    Suppose that $ p $ is quadratic étale. 
    Let $ \alpha \in \mathrm{Br}(X,\lambda) $ be an involutive Brauer class and let $ (\mathcal{A},\sigma) $ be an Azumaya algebra with $ \lambda $-involution representing $ \alpha $. 
    By \cite[p.215]{Parimala_Srinivas}, there exists an étale cover $ \left\{ q_i \colon U_i \to Y \right\} $ and a trivialization of the base change of $ (\mathcal{A},\sigma) $ along this cover. 
    In other words, if we write $ X_i := U_i \times_Y X $, then there exist isomorphisms $ q_i^*\mathcal{A} \simeq \mathrm{Mat}_{n}\left(\mathcal{O}_{X_i}\right) $ under which $ q_i^*(\sigma) $ may be identified with the $ q_i^*(\lambda) $-involution $ a \mapsto Q_i^{-1} a^* Q_i $, where $ a^* := \lambda(a^\vee) $ denotes the $ \lambda $-conjugate transpose and $ Q_i \in \Gamma \left(X_i, \mathrm{Mat}_{n}\left(\mathcal{O}_{X_i}\right)\right) $ is invertible and satisfies $ Q_i^* = Q_i $. 
    Its image in $ \mathrm{H}^1(\mathrm{R}\Gamma_{\Acute{e}t}(X,\mathbb{Z})^{\mathrm{h}C_2}) $ may be constructed as follows: Let $ \left(\mathcal{C}_{\mathcal{A}}, \Qoppa_\sigma \right) $ denote the image of $ (\mathcal{A},\sigma) $ in $ \mathrm{Br}^\mathrm{p}(X,\lambda, Y, p) $. 
    By Remark \ref{remark:restriction_of_schemes_with_involution} and Proposition \ref{prop:sym_mon_catp_good_quotient_lax_monoidal} (cf. Corollary \ref{cor:from_sch_good_quotient_to_Z_linear_Poincare_cat}), $ q_i $ induces a symmetric monoidal Poincaré functor $ \left(\mathrm{Mod}_{\mathcal{O}_X}^\omega,\Qoppa_{\lambda} \right) \to \left(\mathrm{Mod}_{\mathcal{O}_{X_i}}^\omega,\Qoppa_{q_i^*\lambda} \right) $; we will write $ q_i^* $ for base change along this functor. 
    The aforementioned trivializations induce equivalences $ \varphi_i \colon q_i^*\left(\mathcal{C}_{\mathcal{A}}, \Qoppa_\sigma \right) \simeq \left(\mathrm{Mod}_{\mathcal{O}_{X_i}}^\omega,\Qoppa_{q_i^*\lambda} \right) $ of $ \left(\mathrm{Mod}_{\mathcal{O}_{X_i}},\Qoppa_{q_i^*\lambda} \right) $-linear Poincaré $ \infty $-categories. 
    Writing $ U_{ij} := U_i \times_Y U_j $ and $ q_{ij} $ for the canonical map $ U_{ij} \to Y $ and $ X_{ij}:= U_{ij} \times_Y X $, we have $ \left(\mathrm{Mod}_{\mathcal{O}_{X_{ij}}}^\omega,\Qoppa_{q_{ij}^*\lambda} \right) $-linear equivalences
    \begin{equation*}
        \left(\mathrm{Mod}_{\mathcal{O}_{X_{ij}}}^\omega,\Qoppa_{q_{ij}^*\lambda} \right) \xleftarrow[\sim]{q_i^*(\varphi_j)} q_{ij}^*\left(\mathcal{C}_{\mathcal{A}}, \Qoppa_\sigma \right) \xrightarrow[\sim]{q_j^*(\varphi_i)} \left(\mathrm{Mod}_{\mathcal{O}_{X_{ij}}}^\omega,\Qoppa_{q_{ij}^*\lambda} \right) \,.
    \end{equation*}
    In particular, $ \left(\mathrm{Mod}_{\mathcal{O}_{X_{ij}}}^\omega,\Qoppa_{q_{ij}^*\lambda} \right) $-linearity and Proposition \ref{prop:loops_pnbr_is_pnpic} imply that each equivalence $ {T_{ji} := q_i^*(\varphi_j) \circ q_j^*(\varphi_i)^{-1}} $ corresponds to a class $ \left(\mathcal{L}_{ij},h_{ij}\right) \in \pnpic\left(X_{ij}, q_{ij}^*(\lambda)\right) $. 
    By Theorem \ref{thm: Fausk for pnpic and schemes with good quotient}, $ \left(\mathcal{L}_{ij},h_{ij}\right) $ determines a class in $ C_{C_2}\left(X_{ij};\ZZ^{-}\right) $, or equivalently a section $ s_{ij} $ of the sheaf $ \left(p_*\ZZ_X\right)^{hC_2} $ over $ U_{ij} $. 
    By construction, there is a canonical equivalence $ q_i^*T_{kj} \circ q_k^* T_{ji} \simeq q_j^*T_{ki} $ for each $ i,j,k $ (and likewise for $ (n+1) $-tuples $ i_0,i_1,\ldots, i_n $). 
    These equivalences induce canonical homotopies from $ q_i^*(s_{kj}) + q_k^*(s_{ij}) $ to $ q_j^*(s_{ik}) $. 
    These assemble to give a class $ s \in \mathrm{H}^1(\mathrm{R}\Gamma_{\Acute{e}t}(X,\mathbb{Z})^{\mathrm{h}C_2}) \simeq \mathrm{H}^1\left(\mathrm{R}\Gamma_{\Acute{e}t}(Y,p_*\mathbb{Z}^{\mathrm{h}C_2})\right) $, where $ \mathrm{R}\Gamma_{\Acute{e}t}(X,\mathbb{Z}) $ is equipped with the $ C_2 $-action described in Construction \ref{const: map from poincare brauer to homotopy fixed points}. 
    Unraveling definitions, we see that $ s $ agrees with the image of $ \alpha $ under the composite map in  (\ref{eq:classical_inv_brauer_gp_as_kernel}). 
    Now it is evident that $ \mathcal{L}_{ij} $ is equivalent to a line bundle concentrated in degree zero for all $ i, j $, hence $ s_{ij} = 0 $ and therefore $ s = 0 $. 

    When $ \lambda $ is the identity, the result follows from a nearly identical argument which we leave to the reader.  
    The relevant étale-local trivialization is described in \cite[\S1(i), also see beginning of (iii)]{Parimala_Srinivas}.  
\end{proof}
We end this subsection by exploring some computations of this theory. For a scheme with good quotient $(X,\lambda, Y, p)$, let $\mathrm{Tr}_{X/Y}:p _*\mathbb{Z}_X\to \mathbb{Z}_Y$ denote the map of sheaves on $Y$ given by $n\mapsto n+\lambda^*(n)$.
\begin{lemma}~\label{lem: some computations of the extra junk in brp}
    Let $(X,\lambda,Y,p)$ denote a scheme with good quotient. Then
    \begin{enumerate}[label=(\arabic*)]
        \item if $\lambda=\mathrm{id}_X$ then $\mathrm{Tr}_{X/Y}$ is multiplication by $2$ and $\mathrm{H}^{1}(\mathrm{R}\Gamma_{\Acute{e}t}(X,\mathbb{Z})^{\mathrm{h}C_2})\simeq H^0_{\Acute{e}t}(X,\mathbb{Z}/2)$;
        \item if $p $ is a degree $2$ \'etale extension then $\mathrm{H}^1(\mathrm{R}\Gamma_{\Acute{e}t}(X,\mathbb{Z})^{\mathrm{h}C_2})\cong \mathrm{H}^0_{\Acute{e}t}(Y,\mathrm{cofib}(\mathrm{Tr}_{X/Y}))$.
        \item if $X$ is reduced then $\mathrm{H}^1(\mathrm{R}\Gamma_{\Acute{e}t}(X,\mathbb{Z})^{\mathrm{h}C_2})\cong C_{C_2}(X,\mathbb{Z})/\{f+f\circ\lambda\mid f\in C(X,\mathbb{Z})\}$.
    \end{enumerate}
\end{lemma}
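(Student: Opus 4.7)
The plan is to feed all three parts through the homotopy fixed points spectral sequence for the $(-\lambda^*)$-action of $C_2$ on $\mathrm{R}\Gamma_{\mathrm{\acute{e}t}}(X,\mathbb{Z})$, and extract the $H^1$-line via $E_\infty^{1,0}$. The key vanishing input I need is that $E_2^{0,1} = H^0\bigl(C_2; H^1_{\mathrm{\acute{e}t}}(X,\mathbb{Z})^-\bigr) = 0$ in every case; this automatically identifies $H^1$ of the fixed points with $E_2^{1,0} = H^1\bigl(C_2; H^0_{\mathrm{\acute{e}t}}(X,\mathbb{Z})^-\bigr)$, and there are no other differentials to worry about in this degree range.

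For parts (1) and (3), the vanishing of $E_2^{0,1}$ comes from $H^1_{\mathrm{\acute{e}t}}(X,\mathbb{Z})[2] = 0$. This follows from the Kummer-style exact sequence for $0 \to \mathbb{Z} \xrightarrow{2} \mathbb{Z} \to \mathbb{Z}/2 \to 0$, using that the map $H^0_{\mathrm{\acute{e}t}}(X,\mathbb{Z}) \to H^0_{\mathrm{\acute{e}t}}(X,\mathbb{Z}/2)$ is surjective (locally constant $\mathbb{Z}/2$-valued functions lift to $\mathbb{Z}$-valued ones). In the reduced case of (3), the stronger $H^1_{\mathrm{\acute{e}t}}(X,\mathbb{Z}) = 0$ holds, since $\mathbb{Z}$-torsors on $X$ are classified by continuous homomorphisms from the profinite group $\pi_1^{\mathrm{\acute{e}t}}(X)$ into the discrete torsion-free group $\mathbb{Z}$, which are all trivial. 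Now $\sigma := -\lambda^*$ satisfies $1 - \sigma = 1 + \lambda^*$ and $1 + \sigma = 1 - \lambda^*$, so in either setting
\[
E_2^{1,0} = \frac{\ker(1-\lambda^*)}{\mathrm{im}(1+\lambda^*)} = \frac{C_{C_2}(X,\mathbb{Z})}{\{f + f\circ\lambda : f \in C(X,\mathbb{Z})\}},
\]
which is exactly claim (3). Specializing (1) to $\lambda = \mathrm{id}_X$ gives numerator $C(X,\mathbb{Z})$ and denominator $2\cdot C(X,\mathbb{Z})$, producing $C(X,\mathbb{Z}/2) \cong H^0_{\mathrm{\acute{e}t}}(X,\mathbb{Z}/2)$; the assertion $\mathrm{Tr}_{X/Y} = 2$ is immediate from $n + \lambda^*(n) = 2n$ when $Y = X$ and $p = \mathrm{id}$.

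For part (2) I would bypass the HFPSS and work directly on $Y$. Since $p$ is \'etale, $p_* = Rp_*$ is exact and commutes with $\mathrm{R}\Gamma$, so $\mathrm{R}\Gamma_{\mathrm{\acute{e}t}}(X,\mathbb{Z})^{\mathrm{h}(-\lambda)} \simeq \mathrm{R}\Gamma_{\mathrm{\acute{e}t}}(Y,(p_*\mathbb{Z}_X)^{\mathrm{h}(-\lambda^*)})$. \'Etale-locally on $Y$ the $C_2$-torsor $X \to Y$ trivializes and $p_*\mathbb{Z}_X$ becomes $\mathbb{Z}[C_2]$ with the swap action; twisting by $-\lambda^*$ gives $\mathbb{Z}[C_2] \otimes \mathbb{Z}^{\mathrm{sgn}}$ with diagonal $C_2$-action, which is induced and hence cohomologically trivial. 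Therefore derived and strict invariants agree, and a one-line local check identifies the latter with $\ker(\mathrm{Tr}_{X/Y})$: namely, $(-\lambda^*)$-invariant sections are those $s$ with $s + \lambda^*(s) = 0$. Because $\mathrm{Tr}_{X/Y}$ is an epimorphism of \'etale sheaves, $\mathrm{cofib}(\mathrm{Tr}_{X/Y}) \simeq \ker(\mathrm{Tr}_{X/Y})[1]$, giving the desired chain of identifications
\[
H^1\bigl(\mathrm{R}\Gamma_{\mathrm{\acute{e}t}}(X,\mathbb{Z})^{\mathrm{h}C_2}\bigr) = H^1_{\mathrm{\acute{e}t}}(Y,\ker\mathrm{Tr}_{X/Y}) = H^0_{\mathrm{\acute{e}t}}(Y,\mathrm{cofib}(\mathrm{Tr}_{X/Y})).
\]

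The main obstacle is not conceptual but clerical: keeping track of the competing $C_2$-actions (the $\lambda^*$-action, its twist by a sign, and the residual action on kernels and cofibers) and reconciling homological versus cohomological indexing. Once the induced-module observation is made for part (2) and the $2$-torsion vanishing for (1) and (3), each computation reduces to an elementary $\mathbb{Z}[C_2]$-module calculation.
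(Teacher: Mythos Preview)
Your argument is correct and largely parallel to the paper's. Part (3) matches exactly: both run the homotopy fixed point spectral sequence and invoke $H^1_{\mathrm{\acute{e}t}}(X,\mathbb{Z})=0$ for reduced $X$. For parts (1) and (2) the paper makes slightly different organizational choices. It works uniformly through the norm fiber sequence $\mathrm{R}\Gamma^{\mathrm{h}(-\lambda)}\to \mathrm{R}\Gamma_{\mathrm{\acute{e}t}}(X,\mathbb{Z})\to \mathrm{R}\Gamma_{\mathrm{\acute{e}t}}(X,\mathbb{Z})^{\mathrm{h}\lambda}$ of Remark~\ref{rmk:C2_spectra_twists}: for (1) it compares the resulting long exact sequence to the Bockstein sequence via the 5-lemma, and for (2) it identifies the right-hand term with $\mathrm{R}\Gamma_{\mathrm{\acute{e}t}}(Y,\mathbb{Z})$ by Galois descent. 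You instead attack (1) by the spectral sequence directly, isolating the key input $H^1_{\mathrm{\acute{e}t}}(X,\mathbb{Z})[2]=0$, and for (2) you compute at the sheaf level on $Y$ by observing that $p_*\mathbb{Z}_X$ with the sign-twisted action is an induced $C_2$-module. These are dual packagings of the same content (your induced-module observation is the sheaf-level form of the paper's Galois descent), and your route for (1) has the mild advantage of making the needed vanishing explicit rather than hiding it in a 5-lemma comparison.

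One caveat on (3): your justification that $H^1_{\mathrm{\acute{e}t}}(X,\mathbb{Z})=0$ because continuous homomorphisms from the profinite group $\pi_1^{\mathrm{\acute{e}t}}(X)$ to $\mathbb{Z}$ vanish is the standard argument for \emph{normal} (or geometrically unibranch) schemes and does not apply verbatim to arbitrary reduced $X$. The paper handles this step by citing \cite[Lemma~7.15]{MR3190610} rather than reproving it.
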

\begin{proof}
    We have a fiber sequence \[\mathrm{R}\Gamma_{\Acute{e}t}(X,\mathbb{Z})^{\mathrm{h}(-\lambda)}\to \mathrm{R}\Gamma_{\Acute{e}t}(X,\mathbb{Z})\to \mathrm{R}\Gamma_{\Acute{e}t}(X,\mathbb{Z})^{\mathrm{h}C_2}\] where the $C_2$ action on the left hand term is the one we are interested in, and the $C_2$ action on the right is the one induced by $\lambda$, by Remark~\ref{rmk:C2_spectra_twists}. In case 1 above we then have that the $C_2$ action on the right is trivial and there is then an exact sequence \[0\to \mathrm{H}^0_{\Acute{e}t}(X,\mathbb{Z})\xrightarrow{\times 2}\mathrm{H}^0_{\Acute{e}t}(X,\mathbb{Z})\to \mathrm{H}^1(\mathrm{R}\Gamma_{\Acute{e}t}(X,\mathbb{Z})^{\mathrm{h}C_2})\to H^1_{\Acute{e}t}(X,\mathbb{Z})\to\ldots\] from which we see by the 5-lemma the desired result.
    
    If $p $ is a degree $2$ \'etale extension then we get that $\mathrm{R}\Gamma_{\Acute{e}t}(X,\mathbb{Z})^{\mathrm{h}C_2}\simeq \mathrm{R}\Gamma_{\Acute{e}t}(Y,\mathbb{Z})$ by \'etale descent when the $C_2$ action is induced by $\lambda$. The result for 2 then follows.
    
    Finally suppose that $X$ is reduced, so that $H^1_{\Acute{e}t}(X,\mathbb{Z})\cong 0$ by \cite[Lemma 7.15]{MR3190610}. Thus by analyzing the homotopy fixed point spectral sequence for $\mathrm{R}\Gamma_{\Acute{e}t}(X,\mathbb{Z})^{\mathrm{h}(-\lambda)}$ we see that $\mathrm{H}^1(\mathrm{R}\Gamma_{\acute{e}t}(X;\mathbb{Z})^{\mathrm{h}(-\lambda)})\cong \mathrm{H}^1(C_2;C(X;\mathbb{Z}))\cong C_{C_2}(X;\mathbb{Z})/\{f+f\circ \lambda |f\in C(X;\mathbb{Z})\}$ as desired.
\end{proof}

\begin{example}~\label{ex:br and br' for alg closed field}
    Let $k$ be an algebraically closed field with trivial involution. If the characteristic of $k$ is not $2$, then we have already seen that $\mathrm{Br}^\mathrm{p}(k)\cong \mathbb{Z}/4\mathbb{Z}$, and by Lemma~\ref{lem: some computations of the extra junk in brp} we have that $\mathrm{H}^1(\mathrm{R}\Gamma_{\Acute{e}t}(k,\mathbb{Z})^{\mathrm{h}C_2})\cong \mathbb{Z}/2\mathbb{Z}$. Tracing through definitions we see that the map $\mathrm{Br}^\mathrm{p}(k)\to \mathbb{Z}/2$ is the unique surjective map and the kernel \[\mathrm{Br}'(k,\mathrm{id}_k)\cong \mu_2(k)\] generated by the Poincar{\'e} $\infty$-category $(\mathrm{Mod}_k^\omega, \Qoppa_{-k})$. This Poincar\'e category has a generator $k\oplus D_{\Qoppa_{-k}}k=k\oplus k$ as a hyperbolic form, and tracing through definitions the equivalence \[k\oplus k\simeq k\oplus D_{\Qoppa_{-k}}k\to D_{\Qoppa_{-k}}(k\oplus D_{\Qoppa_{-k}}k)\simeq D_{\Qoppa_{-k}}k\oplus D_{\Qoppa_{-k}}D_{\Qoppa_{-k}}k\simeq D_{\Qoppa_{-k}}k\oplus k \simeq k\oplus D_{\Qoppa_{-k}}k\simeq k\oplus k\] is given by sending $(a,b)\mapsto (-b,a)$.
    As an Azumaya algebra with genuine involution, one can take $M_{2\times 2}(k)=\mathrm{End}(k\oplus D_{\Qoppa_{-k}k})$. The involution on this Azumaya algebra is given by conjugating by the equivalence $(a,b)\mapsto (-b,a)$, in other words \[\sigma\begin{bmatrix} a & b \\ c & d\end{bmatrix} = \begin{bmatrix}
        d & -c\\ -b & a
    \end{bmatrix}\] or, in other words, an Azumaya algebra with involution of symplectic type. 
    Hence $\mathrm{Br}(k,\mathrm{id}_k)\cong \mathrm{Br}'(X,\mathrm{id}_k)$.
    
    If the characteristic of $k$ is $2$, then we have that $\mathrm{Br}^\mathrm{p}(k)\cong \mathbb{Z}$ by Example \ref{ex:pnbr_closed_point_ramified}. 
    We still have that $H^1(\mathrm{R}\Gamma_{\Acute{e}t}(k,\mathbb{Z})^{\mathrm{h}(-\lambda)})\cong \mathbb{Z}/2$ and that the map $\mathrm{Br}^\mathrm{p}(k,\mathrm{id}_k)\to \mathbb{Z}/2$ is surjective, so that \[\mathrm{Br}'(k,\mathrm{id}_k)\cong \mathbb{Z}\] generated by the class $\left(\mathrm{Mod}_{k}^\omega,\Qoppa_k^{[2]}\right)$. This is equivalent to the Poincar{\'e} structure of genuine even forms on $\mathrm{Mod}_k^\omega$ by \cite[Corollary 3.5.17 and Proposition 4.2.22]{CDHHLMNNSI}. Thus to give a Poincar{\'e} object we need only give an even form on a discrete $k$-module, which amounts to giving an alternating form. 
    An example of a Poincar{\'e} object in this Poincar{\'e} category is $k\oplus k$ with the form \[S=\begin{bmatrix}
        0 & 1\\
        1 & 0
    \end{bmatrix}\] which has bilinear form $\langle (x_1,y_1),(x_2,y_2)\rangle = x_1y_2+y_1x_2$. Tracing through definitions, the induced equivalence $k\oplus k\to k\oplus k$ is given by sending $(a,b)\to (b,a)$.
    The associated Azumaya algebra with genuine involution then has underlying ring spectrum $M_{2\times 2}(k)$ with the involution
    \[
    \sigma
    \begin{bmatrix}
        a & b\\
        c& d
    \end{bmatrix} = 
    \begin{bmatrix}
        d & c\\
        b & a
    \end{bmatrix}\,.
    \]
    This is an ordinary Azumaya algebra, hence \[\mathrm{Br}(k,\mathrm{id}_k)\cong \mathrm{Br}'(k, \mathrm{id}_k)\,.\]
\end{example}

This equivalence of $\mathrm{Br}(X,\lambda)\cong \mathrm{Br}'(X,\lambda)$ holds in many cases, including all affine $X$ in fact.
\begin{corollary}~\label{cor: br=br' implies involutive version}
    Let $X$ be a scheme satisfying $\mathrm{Br}(X)\cong \mathrm{Br}'(X)$; for instance, this holds for $X$ affine or when $ X $ is quasi-compact and separated and admits an ample line bundle (see \cite{MR611868} or \cite{deJong_Gabber}). 
    Then for any involution $\lambda$ on $X$, \[\mathrm{Br}(X,\lambda)\cong \mathrm{Br}'(X,\lambda).\]
\end{corollary}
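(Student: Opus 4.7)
The proof should be essentially immediate from the definitions. Recall from Definition~\ref{defn:cohomological_involutive_Brauer_group} that $\mathrm{Br}(X,\lambda)$ is defined as the pullback
\[
\mathrm{Br}(X,\lambda) \;=\; \mathrm{Br}'(X,\lambda) \fiberproduct_{\mathrm{Br}'(X)} \mathrm{Br}(X)
\]
in the category of abelian groups, where the right vertical map is the underlying-Brauer-class map $\mathrm{Br}'(X,\lambda) \to \mathrm{Br}'(X)$ induced by forgetting the hermitian form, and the bottom horizontal map is the (always injective) canonical comparison $\mathrm{Br}(X) \hookrightarrow \mathrm{Br}'(X)$. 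The forgetful map $\mathrm{Br}(X,\lambda) \to \mathrm{Br}'(X,\lambda)$ is then exactly the base change of $\mathrm{Br}(X) \to \mathrm{Br}'(X)$ along $\mathrm{Br}'(X,\lambda) \to \mathrm{Br}'(X)$.

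The plan is then to observe that base change along an isomorphism yields an isomorphism: pulling back the isomorphism $\mathrm{Br}(X) \xrightarrow{\sim} \mathrm{Br}'(X)$ along any map of abelian groups produces an isomorphism on the pullback. Under the hypothesis that $\mathrm{Br}(X) \to \mathrm{Br}'(X)$ is an isomorphism, the projection $\mathrm{Br}(X,\lambda) \to \mathrm{Br}'(X,\lambda)$ is therefore an isomorphism, which is precisely the claim. No further input from the structure of Poincaré rings or from the involution is required beyond having the defining pullback square.

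For the parenthetical hypotheses, one simply appeals to the cited references: Gabber's theorem (see \cite{deJong_Gabber}) gives $\mathrm{Br}(X) \cong \mathrm{Br}'(X)$ for $X$ qcqs admitting an ample line bundle, and the affine case is due to \cite{MR611868}. There is no genuine obstacle in this proof; it is a purely formal consequence of the pullback definition of $\mathrm{Br}(X,\lambda)$. The only thing worth mentioning is that the construction of $\mathrm{Br}(X,\lambda)$ was specifically engineered so that this comparison reduces to the non-involutive one.
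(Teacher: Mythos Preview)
Your proof is correct and follows exactly the same approach as the paper: the corollary is immediate from the pullback definition of $\mathrm{Br}(X,\lambda)$, since base change of an isomorphism is an isomorphism. The paper's proof is a single sentence to this effect.
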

\begin{proof}
    This follows from the non-involutive case since pulling back along an isomorphism is an isomorphism. 
\end{proof}

One might hope that the involution in fact cancels out the elements in $\mathrm{Br}'(X)$ which are not representable by ordinary Azumaya algebras, so that $\mathrm{Br}(X,\lambda)$ would always be equal to $\mathrm{Br}'(X,\lambda)$. Said another way, there are classes in $\pi_0(\mathfrak{br}(X))$ which are not representable by an ordinary Azumaya algebra, but an optimistic reader might hope that such classes are sent to nontrivial classes in $\pi_0\left(\mathfrak{pic}\left(\mathrm{Mod}_{\mathrm{Mod}_{X}^{\omega, \mathrm{h}C_2}}(\Catex_{, \mathrm{idem}})\right)\right)$. Such miraculous cancellation happens, for example, in the computations of Grothendieck-Witt theory of number rings in \cite{CDHHLMNNSIII} where all of the unknown $K$-groups do not lift. 
This unfortunately is not the case, as the next example shows.
\begin{example}
    Let $X$ be the union of two copies of $\mathrm{Spec}(\CC[x,y,z]/(xy-z^2))$ along their nonsingular locus. 
    Endow $X$ with the trivial action, considered as a scheme with involution and good quotient via Example~\ref{ex:schemes_and_involutions}\ref{ex_item:scheme_with_trivial_involution}. 
    We then have that {\'e}tale locally $X$ has trivial involution as well, and for affine {\'e}tale maps $\mathrm{Spec}(R)\to X$ we have functorially a fiber sequence \[\pnbr(R)\to \mathfrak{br}(R)\to \pic(\mathrm{Mod}_{\mathrm{Mod}_R(\mathrm{Sp}^{BC_2})^\omega})\] of spaces by Theorem~\ref{thm: extended fiber sequence of pnbr to br}. 
    We then find that $\pnbr(X)$ fits into a similar fiber sequence and in particular the map \[\mathrm{Br}^\mathrm{p}(X)\to \mathrm{Br}'(X)[2]\] to the $2$-torsion subgroup of the cohomological Brauer group is surjective. 
    The square \[
    \begin{tikzcd}
        \mathrm{Br}(X,\mathrm{id}) \ar[r] \ar[d] & \mathrm{Br}'(X)[2] \ar[d,"="]\\
        \mathrm{Br}^\mathrm{p}(X) \ar[r] & \mathrm{Br}'(X)[2]
    \end{tikzcd}
    \]
    commutes, and we know that the top map fails to be surjective by \cite[Corollary 3.11]{br_neq_br_prime} (this class is $2$-torsion by \cite[Theorem 3.6(iv)]{br_neq_br_prime} together with the fact that the stack they construct in \cite[Example 2.21]{br_neq_br_prime} is a $\mu_2$-torsor.). 
    In particular whatever Azumaya algebra with genuine involution corresponds to this class under Corollary \ref{cor: Brauer classes represented by Azumaya algebras with genuine involution} cannot be Morita equivalent to an ordinary Azumaya algebra with involution. 
\end{example}

\subsection{\texorpdfstring{Comparison to \cite{Parimala_Srinivas}}{Comparison to Parimala-Srinivas}}
We will now show that in the cases where \cite{Parimala_Srinivas} and Srinivas were able to define an involutive Brauer group, their construction agrees with ours; moreover, the natural cohomological invariants that they construct live in our cohomological involutive Brauer group. For this subsection, we will use a subscript $(-)_{\mathrm{PS}}$ to denote those invariants which were defined by Parimala and Srinivas.
			
\begin{theorem}~\label{thm: PS comparison in text}
    Let $(X,\lambda,Y,p)$ be a scheme with good quotient such that $2\in \mathbb{G}_m(Y)$. Then 
    \begin{enumerate}[label=(\arabic*)]
        \item If $p:X\to Y$ is \'etale, then the map $\mathrm{Br}_{\mathrm{PS}}(X,\lambda)\to \mathrm{Br}(X,\lambda)$ of Proposition~\ref{prop:from_involutive_Brauer_to_Poincare_Brauer} is an equivalence, where $\mathrm{Br}_{\mathrm{PS}}$ is the involutive Brauer group of Parimala-Srinivas (see Construction~\ref{const: involutive BG of type II} for a review of this functor). Furthermore, in this case we also have that the map $\mathrm{Br}^\mathrm{p}(X,\lambda, Y,p)\to \mathrm{H}^1(\mathrm{R}\Gamma_{\acute{e}t})$ is split.
        \item If $\lambda = p =\mathrm{id}_X$, then the map $\mathrm{Br}_{\mathrm{PS}}(X,\mathrm{id}_X)\to \mathrm{Br}(X,\lambda)$ is an equivalence where the map is the one constructed in Proposition~\ref{prop:from_involutive_Brauer_to_Poincare_Brauer} and $\mathrm{Br}_{\mathrm{PS}}$ is the involutive Brauer group of Parimala-Srinivas (see Construction~\ref{const: involutive BG type I} for a review of this object). Furthermore, there is also an equivalence $\mathrm{Br}'(X,\lambda)\cong H^0_{\acute{e}t}(X,\mu_2)\times H^2_{\Acute{e}t}(X;\mu_2)$. 
    \end{enumerate}
\end{theorem}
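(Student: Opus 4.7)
The plan is to compare the Parimala-Srinivas exact sequences reviewed in \S\ref{subsection: review of PS} with the long exact sequences obtained by applying $\pi_0$ to the fiber sequences of Theorem \ref{thm: extended fiber sequence of pnbr to br} and Corollary \ref{cor: extended fiber sequence of pnbr to br good quotient}, then to apply the five lemma using the comparison map of Proposition \ref{prop:from_involutive_Brauer_to_Poincare_Brauer}.

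For Case (1), where $p: X \to Y$ is étale of degree 2 and $2 \in \mathbb{G}_m(Y)$, Corollary \ref{cor: extended fiber sequence of pnbr to br good quotient} combined with Galois descent (Lemma \ref{lem: pic of genuine category is pic of fixed points when galois}) identifies the target of the fiber sequence as $\mathfrak{br}(Y)$. Taking homotopy groups yields the 5-term exact sequence
\[
\mathrm{Pic}(X) \to \mathrm{Pic}(Y) \to \mathrm{Br}^p(X,\lambda, Y, p) \to \mathrm{Br}(X) \to \mathrm{Br}(Y),
\]
which is a derived refinement of Parimala-Srinivas's exact sequence from \cite[Theorem 2]{Parimala_Srinivas}. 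By Proposition \ref{prop:from_involutive_Brauer_to_Poincare_Brauer}, the comparison map $\mathrm{Br}_{\mathrm{PS}}(X,\lambda) \to \mathrm{Br}(X,\lambda) \subseteq \mathrm{Br}^p$ is compatible with both sequences (by construction the underlying Brauer class is preserved), and the five lemma yields the isomorphism. The splitting statement follows from Lemma \ref{lem: some computations of the extra junk in brp}(2): since $p$ is étale of degree 2, the trace map $p_*\mathbb{Z}_X \to \mathbb{Z}_Y$ is surjective with kernel a locally constant rank-one sheaf, so $\mathrm{cofib}(\mathrm{Tr}_{X/Y}) \simeq 0$ and hence $\mathrm{H}^1(\mathrm{R}\Gamma_{\mathrm{\acute{e}t}}(X;\mathbb{Z})^{\mathrm{h}C_2}) = 0$; the section is thus vacuous.

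For Case (2), where $\lambda = p = \mathrm{id}_X$ and $2 \in \mathbb{G}_m(X)$, Lemma \ref{lem: some computations of the extra junk in brp}(1) gives $\mathrm{H}^1(\mathrm{R}\Gamma_{\mathrm{\acute{e}t}}(X;\mathbb{Z})^{\mathrm{h}C_2}) \cong \mathrm{H}^0_{\mathrm{\acute{e}t}}(X;\mathbb{Z}/2) \cong \mathrm{H}^0_{\mathrm{\acute{e}t}}(X;\mu_2)$. The categorified corestriction of Construction \ref{cons:categorified_corestriction} collapses to squaring for trivial $\lambda$, yielding the exact sequence
\[
\mathrm{Pic}(X) \xrightarrow{\cdot 2} \mathrm{Pic}(X) \to \mathrm{Br}^p(X,\mathrm{id}_X) \to \mathrm{Br}(X) \xrightarrow{\cdot 2} \mathrm{Br}(X).
\]
Combining this with the Kummer exact sequence $1 \to \mu_2 \to \mathbb{G}_m \xrightarrow{\cdot 2} \mathbb{G}_m \to 1$, the cokernel and kernel of the two $\cdot 2$ maps give $\mathrm{Pic}(X)/2 \hookrightarrow \mathrm{H}^2(X;\mu_2) \twoheadrightarrow \mathrm{Br}(X)[2]$, and tracing through the obstruction map $\mathrm{Br}^p \to \mathrm{H}^0(X;\mu_2)$ (which detects the Fausk-style shift classes of Theorem \ref{thm: Fausk for pnpic and schemes with good quotient}) identifies $\mathrm{Br}'(X,\mathrm{id}_X)$ with $\mathrm{H}^0_{\mathrm{\acute{e}t}}(X;\mu_2) \oplus \mathrm{H}^2_{\mathrm{\acute{e}t}}(X;\mu_2)$, where the extra $\mathrm{H}^0(X;\mu_2)$ summand records the sign (orthogonal vs.\ symplectic) datum, compatibly with Proposition \ref{proposition:split_exact_sequence_for_hermitian_picard_group_with_trivial_action}. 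The isomorphism $\mathrm{Br}_{\mathrm{PS}}(X,\mathrm{id}_X) \cong \mathrm{Br}(X,\mathrm{id}_X)$ then follows from injectivity of Parimala-Srinivas's embedding into $\mathrm{H}^0 \times \mathrm{H}^2$ (\cite[Theorem 1]{Parimala_Srinivas}) together with Saltman's representability theorem \cite{MR1045736}, which ensures the PS map surjects onto the Azumaya-representable classes that define $\mathrm{Br}(X,\mathrm{id}_X)$ as a subgroup of $\mathrm{Br}'(X,\mathrm{id}_X)$.

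The main technical obstacle in both cases is verifying the compatibility of the comparison map of Proposition \ref{prop:from_involutive_Brauer_to_Poincare_Brauer} with the explicit identifications of the connecting homomorphisms in our long exact sequences. This amounts to tracing how a classical Azumaya algebra with involution $(\mathrm{End}(\mathcal{E}), \alpha_B)$ or $(\mathrm{End}(\mathcal{E}), \alpha_H)$ is sent to the Poincaré $\infty$-category $(\mathrm{Mod}^\omega_{\mathrm{End}(\mathcal{E})}, \Qoppa)$ via Example \ref{ex:endomorphisms_of_poincare_object} and checking that the bilinear form $B$ or $H$ is mapped to the expected class in $\mathrm{Pic}^{\mathrm{h}}(X)$ (resp.\ $\mathrm{H}^0(X;\mu_2)$) under the identifications above. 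A secondary challenge is that the identification of $\mathrm{Br}'(X,\mathrm{id}_X)$ in Case (2) as a \emph{direct sum} (rather than merely an extension) of $\mathrm{H}^0(X;\mu_2)$ and $\mathrm{H}^2(X;\mu_2)$ genuinely uses invertibility of $2$ to split the relevant Kummer sequence.
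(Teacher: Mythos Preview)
Your overall strategy (compare exact sequences and apply the five lemma) matches the paper's, but Case~(1) contains a genuine error that breaks both halves of your argument.

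\textbf{The cofiber claim is false.} You write that surjectivity of $\mathrm{Tr}_{X/Y}\colon p_*\mathbb{Z}_X\to\mathbb{Z}_Y$ implies $\mathrm{cofib}(\mathrm{Tr}_{X/Y})\simeq 0$. Surjectivity of a map of sheaves of abelian groups forces the \emph{cokernel} to vanish, but the \emph{derived cofiber} is $K[1]$ where $K=\ker(\mathrm{Tr}_{X/Y})$ is the sign sheaf. Thus by Lemma~\ref{lem: some computations of the extra junk in brp}(2), $\mathrm{H}^1(\mathrm{R}\Gamma_{\mathrm{\acute{e}t}}(X;\mathbb{Z})^{\mathrm{h}C_2})\cong \mathrm{H}^0_{\mathrm{\acute{e}t}}(Y;K[1])\cong \mathrm{H}^1_{\mathrm{\acute{e}t}}(Y;K)$, which is nonzero already for $X=\mathrm{Spec}\,\mathbb{C}\to Y=\mathrm{Spec}\,\mathbb{R}$. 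So the splitting is not vacuous; the paper actually constructs it by feeding the Fausk inclusions $p_*\mathbb{Z}_X\hookrightarrow p_*\mathfrak{pic}_X$ and $\mathbb{Z}_Y\hookrightarrow\mathfrak{pic}_Y$ into the fiber sequence at the sheaf level, producing a section of $\mathrm{Br}^{\mathrm{p}}\to\mathrm{H}^0_{\mathrm{\acute{e}t}}(Y;\mathrm{cofib}(\mathrm{Tr}_{X/Y}))$.

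\textbf{The five-lemma does not apply to your sequence.} Your five-term sequence coming from $\pi_*$ of Corollary~\ref{cor: extended fiber sequence of pnbr to br good quotient} has \emph{derived} Picard and Brauer groups $\pi_0\mathfrak{pic}(X)$, $\pi_0\mathfrak{br}(X)$, etc., whereas the Parimala--Srinivas sequence uses the classical $\mathrm{Pic}^{\mathrm{cl}}$ and $\mathrm{Br}$. By Fausk, these differ by $C(X;\mathbb{Z})$ and $\mathrm{H}^1_{\mathrm{\acute{e}t}}(X;\mathbb{Z})$ respectively, so the outer vertical maps in your proposed diagram are not isomorphisms. The paper's splitting is precisely what allows one to peel off this discrepancy and obtain an exact sequence $\mathrm{Pic}^{\mathrm{cl}}(X)\to\mathrm{Pic}^{\mathrm{cl}}(Y)\to\mathrm{Br}(X,\lambda)\to\mathrm{Br}(X)\to\mathrm{Br}(Y)$ with $\mathrm{Br}(X,\lambda)$ (not $\mathrm{Br}^{\mathrm{p}}$) in the middle; only then does the five-lemma compare like with like.

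In Case~(2) your outline is closer to the paper's, but the assertion that corestriction ``collapses to squaring'' with target $\mathfrak{br}(X)$ is not immediate: the target is $\mathfrak{pic}(\mathrm{Mod}_{\mathrm{Mod}_{\mathcal{O}_X}^{\omega,\mathrm{h}C_2}}(\Catex_{\mathrm{idem}}))$, which for trivial action is built from $\mathrm{Fun}(BC_2,\mathrm{Mod}_{\mathcal{O}_X}^\omega)$, not $\mathrm{Mod}_{\mathcal{O}_X}^\omega$. The paper gives a separate Morita-theoretic argument to identify the kernel of $\pi_0\mathfrak{br}(X)\to\pi_0$ of this target with $\mathrm{Br}(X)[2]$, then runs a map-of-long-exact-sequences argument (with the $\mathrm{H}^0(\mu_2)$ factor arising in the fiber over the trivial Brauer class, not from the obstruction map as you suggest).
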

\begin{proof}
    We will first handle the case when $p $ is a degree $2$ \'etale extension. Note that in this case we have that $\mathrm{H}^1(\mathrm{R}\Gamma_{\acute{e}t}(X;\mathbb{Z})^{\mathrm{h}(-\lambda)})\cong \mathrm{H}^{0}_{\acute{e}t}(Y;C)$ where $C:=\mathrm{Cofib}(\mathrm{Tr}_{X/Y}:p_*\mathbb{Z}\to \mathbb{Z})$ by Lemma~\ref{lem: some computations of the extra junk in brp}. 
    By Corollary \ref{cor: extended fiber sequence of pnbr to br good quotient}, we have that $\mathrm{Br}^{\mathrm{p}}(X,\lambda,Y,p)$ fits into an exact sequence \[\mathrm{Pic}(X)\to \mathrm{Pic}(Y)\to \mathrm{Br}^\mathrm{p}(X,\lambda,Y,p)\to \mathrm{Br}(X)\to \mathrm{Br}(Y)\] of abelian groups. Note that the Eilenberg-MacLane functor sends Galois extensions $A\to B$ of rings to faithful Galois extensions of ring spectra $HA\to HB$ since both are conditions on $B\otimes_A B$ and $HB\otimes_{HA}HB$, and the tensor product of faithfully flat rings is the derived tensor product of faithfully flat rings.  What is more, by the identification of the connecting maps in Theorem~\ref{cor:Poincare_Brauer_to_Brauer_fiber} we see that the map $\mathrm{Br}_{\mathrm{PS}}\to \mathrm{Br}^{\mathrm{p}}$ extends to a map of exact sequences from the exact sequence of \cite[Theorem 1.2]{Parimala_Srinivas}. 
    Note that we have a commutative diagram 
    \[\begin{tikzcd}
        p_*\mathbb{Z}_X\ar[d] \ar[r,"\mathrm{Tr}_{X/Y}"] & \mathbb{Z}_Y\ar[d] & \\
        p_*\mathfrak{pic}_{X} \ar[r] & \mathfrak{pic}_Y \ar[r] & \pnbr(X,\lambda, Y ,p) 
    \end{tikzcd}\]
    where the vertical maps come from the identification of Theorem~\ref{theorem:fausk_for_discrete_rings} and $\mathfrak{pic}_X$ and $\mathfrak{pic}_Y$ denote the \'etale sheaves $U\mapsto \mathfrak{pic}(U)$ on $X$ and $Y$, respectively. 
    This diagram then induces a map $\mathrm{coker}(p _*\mathbb{Z}_X\to \mathbb{Z}_Y)\to \mathfrak{br}^\mathrm{p}(X,\lambda, Y, p)$ of \'etale sheaves on $Y$, and upon taking $\pi_0$ we get the desired splitting, even as sheaves. 
    To show that the involutive Brauer group we have constructed in Definition~\ref{defn:cohomological_involutive_Brauer_group} does indeed match the one constructed by Parimala and Srinivas, note that we have a map of long exact sequences of groups 
    \[
    \begin{tikzcd}
        \Pic^{\mathrm{cl}}(X) \ar[r] \ar[d,"\cong"] & \Pic^{\mathrm{cl}}(Y) \ar[r] \ar[d,"\cong"] & \mathrm{Br}_{\mathrm{PS}}(X,\lambda) \ar[r] \ar[d] & \mathrm{Br}(X)\ar[r] \ar[d,"\cong"] & \mathrm{Br}(Y)\ar[d,"\cong"]\\
        \Pic^{\mathrm{cl}}(X) \ar[r] & \Pic^{\mathrm{cl}}(Y)\ar[r] & \mathrm{Br}(X,\lambda) \ar[r] & \br(X) \ar[r] & \br(Y)\,.
    \end{tikzcd}
    \]
    Hence by the 5-lemma we have an equivalence as desired.
    
    Now consider the case of the trivial action. It is enough to show the second assertion that $\mathrm{Br}'(X,\mathrm{id}_X)\cong H^0_{\Acute{e}t}(X;\mu_2)\times H^2_{\Acute{e}t}(X;\mu_2)$. 
    The fiber sequence of Corollary \ref{cor: extended fiber sequence of pnbr to br good quotient} induces a long exact sequence \[\ldots \to \pi_0(\mathfrak{pic}(X))\to \pi_0(\mathfrak{pic}(X)^{\mathrm{h}C_2})\to \mathrm{Br}^\mathrm{p}(X,\mathrm{id}_X)\to \mathrm{Ker}\left(\mathrm{Br}(X)\to \Pic\left(\mathrm{Mod}_{\mathrm{Mod}_{\mathcal{O}_X}^{\omega, \mathrm{h}C_2}}\right)\right)\to 0\,.\] 
    To identify this last term, note that the $C_2$ action on $\mathrm{Mod}_{\mathcal{O}_X}$ here is trivial by assumption, and so we have that $\mathrm{Mod}_{\mathcal{O}_X}^{\omega, \mathrm{h}C_2}\simeq \mathrm{Mod}_{\mathcal{O}_X}^{\omega, \mathrm{B}C_2}$. 
    A class $\mathcal{C}\in \mathrm{Br}(X)$ is then sent to $(\mathcal{C}\otimes_{\mathrm{Mod}_{\mathcal{O}_X}^\omega}\mathcal{C})^{\mathrm{h}C_2}$ in $\mathrm{Mod}_{\mathrm{Mod}_{\mathcal{O}_X}^\omega}$. 
    By \cite{MR2957304}, there is an $ \mathcal{O}_X $-linear equivalence $\mathcal{C}\simeq\mathrm{Mod}_{\mathcal{A}}^\omega$ for some sheaf of (generalized) Azumaya $ \mathcal{O}_X $-algebras $ \mathcal{A} $. 
    By assumption, $(\mathcal{C}\otimes_{\mathrm{Mod}_{\mathcal{O}_X}^\omega}\mathcal{C})^{\mathrm{h}C_2}$ represents the trivial class; hence we have an $\mathrm{Mod}_{\mathcal{O}_X}^{\omega, \mathrm{B}C_2}$-linear equivalence $G \colon \mathrm{Mod}_{\mathcal{A}\otimes\mathcal{A}}^{\mathrm{h}C_2}\simeq \mathrm{Mod}_{\mathcal{O}_X}^{\omega, BC_2}$. 
    Now let us observe that $ \mathcal{A}\otimes \mathcal{A} \in \mathrm{Mod}_{\mathcal{A}\otimes\mathcal{A}} $ lifts canonically to an object of the homotopy fixed points $\mathrm{Mod}_{\mathcal{A}\otimes\mathcal{A}}^{\mathrm{h}C_2}$; informally, this is given by endowing $ \mathcal{A}\otimes \mathcal{A} $ with the swap action. 
    Since $ G $ is $ \mathrm{Mod}_{\mathcal{O}_X}^{\omega, \mathrm{B}C_2} $-linear, $\mathcal{A}\otimes \mathcal{A}$ must be sent to some object $\mathcal{M}\in \mathrm{Mod}_{\mathcal{O}_X}^{\omega, \mathrm{BC_2}}$ with internal endomorphism object given by $\mathcal{A}\otimes \mathcal{A}$. 
    Since $ G $ is an equivalence, $\mathcal{O}_X$ with the trivial action must be in the thick subcategory spanned by the image of $\mathcal{A}\otimes \mathcal{A}$, and so in particular $\mathrm{Mod}_{\mathcal{A}\otimes \mathcal{A}}^\omega \simeq \mathrm{Mod}_{\mathcal{O}_X}^\omega$. Consequently every class in the kernel of the map $\mathrm{Br}(X)\to \mathrm{Pic}(\mathrm{Mod}_{\mathrm{Mod}_{\mathcal{O}_X}^\omega})$ is $2$-torsion, and conversely every $2$-torsion class is in the kernel automatically. Thus $\mathrm{Ker}(\mathrm{Br}(X)\to\mathrm{Pic}(\mathrm{Mod}_{\mathrm{Mod}_{\mathcal{O}_X}^\omega}) )\cong \mathrm{Br}(X)[2]$, where by $[2]$ we mean the $2$-torsion subgroup (as opposed to the shift). 
    Additionally we have a map of long exact sequences 
    \[
    \begin{tikzcd}
        \mathrm{Pic}(X)\ar[r,"{(\times 2,0)}"] \ar[d] & \mathrm{Pic}(X)\times \mathrm{H}_{\acute{e}t}^0(X,\mu_2) \ar[r] \ar[d] & \mathrm{Br}^p(X,\mathrm{id}_X) \ar[r] \ar[d] & \mathrm{Br}(X)[2] \ar[d] \\
        \mathrm{H}^0_{\acute{e}t}(X;\mathbb{Z}) \ar[r,"\times 2"] & \mathrm{H}^0_{\acute{e}t}(X;\mathbb{Z}) \ar[r] & \mathrm{H}^1(\mathrm{R}\Gamma_{\acute{e}t}(X;\mathbb{Z})^{\mathrm{h}C_2})\ar[r]& \mathrm{H}^1_{\acute{e}t}(X;\mathbb{Z})[2]
    \end{tikzcd}
    \]
    by the construction of the second from the right vertical map in Construction~\ref{const: map from poincare brauer to homotopy fixed points} and Lemma~\ref{lem: some computations of the extra junk in brp}. Taking vertical kernels and noting the all of the vertical maps are surjective gives a long exact sequence \[\ldots \to \Pic^{\mathrm{cl}}(X)\xrightarrow{(\times 2,0)}\Pic^{\mathrm{cl}}(X)\times \mathrm{H}^0_{\Acute{e}t}(X;\mu_2)\to \mathrm{Br}'(X,\mathrm{id}_X)\to \mathrm{Br}'(X)[2]\to 0\,.\] 
    We then get a short exact sequence \[0\to \Pic^{\mathrm{cl}}(X)/2\times \mathrm{H}^0(X;\mu_2)\to \mathrm{Br}'(X,\lambda)\to \mathrm{Br}'(X)[2]\to 0\] which splits since the terms involved are all $\mathbb{F}_2$ vector spaces. We therefore get a (non-canonical) identification $\mathrm{Br'(X,\mathrm{id}_X)}\cong H^0_{\Acute{e}t}(X,\mu_2)\times H^2_{\Acute{e}t}(X;\mu_2)$ as desired. 
\end{proof}
		
As a consequence of the above proof we also get an extension of Saltman's theorem in \cite[Theorem 3.1]{MR495234} to the spectral case.

\begin{theorem}~\label{thm: Saltman theorem in text}
    Let $R$ be a Poincar{\'e} ring such that $R^{\phi C_2}\simeq 0$. Let $A$ be an Azumaya algebra over $R^e$. Then
    \begin{enumerate}
        \item if the $C_2$-action on $R$ is trivial, then $A$ is Morita equivalent to an Azumaya algebra admitting a lift to an Azumaya algebra with genuine involution if and only if $2[A]=0$ in $\mathrm{Br}(R^e)$, and
        \item if the $C_2$-action on $R$ is faithful Galois, then $A$ is Morita equivalent to an Azumaya algebra admitting an Azumaya algebra with genuine $\lambda$-involution if and only if $[(A\otimes_R \lambda^*A)^{\mathrm{h}C_2}]=0$ in $\mathrm{Br}(R^{\mathrm{h}C_2})$.
    \end{enumerate}
\end{theorem}
\begin{proof}
    This follows from the fiber sequence \[\mathfrak{br}^\mathrm{p}(R)\to \mathfrak{br}(R^e)\to \mathfrak{br}(R^L)\] of Theorem~\ref{thm: extended fiber sequence of pnbr to br} and the same analysis as in Theorem~\ref{thm: PS comparison in text}.
\end{proof}

\begin{remark}\label{remark:tC2_vanishing_for_trivial_action_is_invertibility_of_2}
    By Remark \ref{remark: Borel poincare rings have vanishing tate}, the conditions of Theorem \ref{thm: Saltman theorem in text} hold if and only if $R$ arises as the symmetric Poincar\'e ring of an $\Einfty$-ring spectrum $R^e$ with a $C_2$-action such that $(R^e)^{tC_2}\simeq 0$. If the action on $R^e$ is trivial, then the Tate construction vanishes $(R^e)^{t C_2}\simeq 0$ if and only if $2$ acts invertibly on $R^e$; see the last remark of \cite{burghardt_poincare}. The first case of Theorem \ref{thm: Saltman theorem in text} is an analogue of Saltman's theorem \cite[Theorem 3.1]{MR495234} for $\Einfty$-ring spectra. Saltman's result does not require $2$ to be invertible, and at present we have no evidence that this requirement is strictly necessary in Theorem \ref{thm: Saltman theorem in text}. We hope to remove it in future work.
\end{remark}
		
\newpage
\appendix 

\section{Poincaré structures on module categories}\label{appendix:calgp_to_catp}
The assignment $ A \mapsto \Mod_A $ sending an associative algebra to its category of left modules is functorial in $ A $: Given a map of algebras $ A \to B $, $ B \otimes_A (-) $ defines a functor $ \Mod_A \to \Mod_B $. 
This observation is simultaneously basic and fundamental: it allows one to define quasi-coherent sheaves on a scheme $ X $ as a limit over affine open covers, and to formulate effective descent for modules. 
In this appendix, we consider what it means to extend this functoriality to module categories with Poincaré structure. 

Note that a lift of $ \Mod_A^\omega $ to $ \Catp $ or $ \Cath $ is additional data: by \cite[Theorem 3.3.1]{CDHHLMNNSI}, this equivalent to the data of a module with genuine involution over $ A $. 
We begin in \S\ref{subsection:c2_operad_preliminaries} by defining $ \infty $-categories of algebras with the required additional structure to equip their module categories with this extra structure; our constructions come in two versions: a (homotopy coherently) associative one which we call $ \EE_\sigma $-algebras, and a (homotopy coherently) commutative one, which we call $ \EE_p $-algebras. 
In order to handle the coherences involved, we use the language of parametrized $ \infty $-operads and higher algebra developed by Nardin--Shah in \cite{NS22} for $ \mathcal{T} = O_{C_2} $ the orbit category of $ C_2 $. 
In \S\ref{subsection:Ep_alg_to_Catp}, we construct a functor from $ \EE_\sigma $-algebras to $ \Cath $ and show that it factors through $ \Catp $. 
We show that the restriction of this functor to $ \EE_p $-algebras admits a symmetric monoidal refinement.  
Finally, in \S\ref{subsection:calgp_operadic} we show that $ \CAlgp $ is equivalent to $ \EE_p $-algebras. 

The third author asserted (rather cavalierly) in \cite[\S5.2]{LYang_normedrings} that there is a functorial assignment from $ C_2 $-$ \EE_\infty $-algebras to Poincaré $ \infty $-categories; Theorem \ref{thm:calgp_to_poincare_cat} should be regarded as a more correct/precise demonstration of that assertion. 

\subsection{Homotopy coherent algebras with involution}\label{subsection:c2_operad_preliminaries}
In this section, we introduce various $ \infty $-categories of algebras equipped with the requisite data to endow their module categories with Poincaré structures. 
This work is necessarily of a technical nature and the reader is encouraged to either (a) skip this section on a first read and refer back to it as necessary and/or (b) skim this section, referring to the informal description of Remark \ref{rmk:informal_description_C2_operad_alg} and Theorem \ref{thm:operadic_diagrammatic_calgp_agree}, which makes the corresponding informal description of $ \EE_p $-algebras precise. 
For the interested reader, convincing oneself that the informal description contained in Remark \ref{rmk:informal_description_C2_operad_alg} can be an enlightening exercise. 
We will find it useful to work with a combinatorial model of the $ \EE_\sigma $-operad, whose properties were elucidated in \cites{MR4132956,MR4389764}. 

\begin{notation}\label{ntn:t_order_equivariant_map}
	Let $ S, T $ be finite $ C_2 $-sets, and let $ f \colon S\to T $ be a $ C_2 $-equivariant map. 
	A \emph{t-ordering} on $ f $ is the data of, for all free orbits $ V \subset S $, $ V \simeq C_2 $ so that $ f(V) = \{*\} $, a choice of an ordering $ \le $ on $ V $. 
	A \emph{t-ordered map $ S \to T $} is the data of $ f $ and a t-ordering on $ f $.  
	
	Note that if $ f_i \colon S_i \to T_i $ are all t-ordered, there is a canonical t-ordering on $ \displaystyle \bigsqcup_i f_i $. 
	If $ f \colon S \to T $ is t-ordered and $ g \colon T \to U $ is t-ordered, there is a canonical t-ordering on $ g \circ f $. 
	Let us spell this out: Suppose we are given an orbit $ V \subset S $ on which $ C_2 $ acts freely so that $ g\circ f(V) $ is trivial. 
	There are two cases
	\begin{itemize}
		\item Suppose $ f $ restricts to an isomorphism $ f|_V \colon V \xrightarrow{\sim} f(V) $ and $ g $ sends $ f(V) $ to $ * $. 
		Then the t-ordering on $ g $ means we have an ordering on $ f(V) $, which we transport to an ordering on $ V $ via $ f|_V $. 
		\item Suppose $ f(V) = * $. 
		Then the t-ordering on $ f $ endows $ V $ with a canonical ordering.  
	\end{itemize} 
	Finally, we define a $ t $-ordering on pullbacks as follows: Suppose we are given a pullback square of $ C_2 $-sets. 
	\begin{equation*}
		\begin{tikzcd}
			Z \times_X W \ar[r,"{g^*f}"] \ar[d,"{\pi_1}"] &  W \ar[d,"{g}"] \\
			Z \ar[r,"f"] & X        
		\end{tikzcd}     
	\end{equation*} 
	and a t-ordering on $ f $. 
	Suppose we are given a free orbit $ C_2 \simeq U \subseteq Z \times_X W $ so that $ g^*(f)(U) $ is a singleton (with trivial $ C_2 $-action). 
	Because the square above is a pullback, $ \pi_1(U) \subset Z $ is acted on by $ C_2 $ freely, and $ \pi_1|_U $ defines an isomorphism $ U \simeq \pi_1(U) $. 
	By the $ C_2 $-equivariance of $ g $, $ g(g^*(f)(U)) = f(\pi_1(U)) $ is a singleton. 
	The given t-ordering on $ f $ means $ \pi_1(U) $ has a given ordering, which we use to endow $ U $ with an ordering using the isomorphism $ \pi_1|_U $. 
\end{notation}
\begin{remark}\label{rmk:t_order_underlying}
	In the following pullback diagram in $ \Fin_{C_2} $, both t-orderings on $ f $ induce the same t-ordering on $ g^*(f) $ 
	\begin{equation*}
		\begin{tikzcd}
			C_2 \times C_2 \ar[r] \ar[d] & C_2 \ar[d,"{g}"] \\
			C_2 \ar[r,"{f}"] & C_2/C_2        
		\end{tikzcd}     
	\end{equation*}     
	(in fact the set of t-orderings on $ g^*(f) $ is a singleton.)
\end{remark}
Let $ \Ar^{\mathrm{cart}}\left(\Fin_{C_2}\right) $ denote the wide subcategory of the arrow category whose morphisms are cartesian squares. 
\begin{definition}\label{defn:gen_alg_inv_simplicial_operad}
	Define a colored $ \mathcal{O}_{C_2} $-operad $ \mathbf{\Assoc}_{\sigma} \to \underline{\mathrm{Fin}}_{C_2, *} $ as having 
	\begin{enumerate}[label=(\arabic*)]
		\item $ \mathrm{ob}\mathbf{\Assoc}_{\sigma} \to \mathrm{Fin}_{C_2} $ is classified by a functor $ \mathrm{Fin}_{C_2}^\op \to \mathrm{Set} $ which sends $ C_2 $ and $ C_2/C_2 $ to a singleton  
		\item Multimorphisms are given by 
		\begin{equation}
			\begin{split}
				\mathrm{Mul}_{\mathbf{\Assoc}_{\sigma}} \colon \Ar^{\mathrm{cart}}\left(\Fin_{C_2}\right) & \to \mathrm{Set} \\
				(f : U \to V) & \mapsto \left\{ \left( \leq_v \text{ ordering on }f^{-1}(\{v\})\right)_{v \in V}\right\}^{C_2}
			\end{split}
		\end{equation}
		where $ (-)^{C_2} $ denotes fixed points with respect to the following involution: The generator $ \sigma \in C_2 $ acts on $ \left\{ \left( \leq_v \text{ ordering on }f^{-1}(\{v\})\right)_{v \in V}\right\} $ by $ \left(\leq_v\right)_{v \in V} \mapsto \left(\geq_{\sigma(v)} \right)_{v \in V} $ (here $ \geq_v $ denotes the reverse of the ordering $ \leq_v $ and we use the isomorphism $\sigma \colon f^{-1}( \{v\}) \simeq f^{-1}( \{\sigma(v)\})$ to transport an ordering on the latter to the former).  
		\item The identity $ \ast\to \mathrm{Mul}_{\mathbf{\Assoc}_{\sigma}} $ is determined by observing that $ \left\{ \left( \leq_v \text{ ordering on }\id^{-1}(\{v\})\right)_{v \in V}\right\}^{C_2} \simeq * $ 
		\item The composition law $ \circ : \mathrm{Mul}_{\mathbf{\Assoc}_{\sigma}} \times \mathrm{Mul}_{\mathbf{\Assoc}_{\sigma}} \to \mathrm{Mul}_{\mathbf{\Assoc}_{\sigma}} $ is defined by composing maps in $ \Fin_{C_2} $ and taking the lexicographic order on compositions (see \cite[Remark 4.1.1.4]{LurHA}).
	\end{enumerate}
	Postcomposing $ \mathrm{Mul}_{\mathbf{\Assoc}_{\sigma}} $ with the constant simplicial set functor gives a fibrant simplicial $ C_2 $-operad in the sense of \cite[Definition 2.5.4]{NS22}, which we also denote by $ \mathbf{\Assoc}_{\sigma} $. 
\end{definition}
\begin{definition}\label{defn:calgp_simplicial_operad}
	Define a colored $ \mathcal{O}_{C_2} $-operad $ \mathbf{E}_{p} \to \underline{\mathrm{Fin}}_{C_2, *} $ as having 
	\begin{enumerate}[label=(\arabic*)]
		\item $ \mathrm{ob}\mathbf{E}_{p} \to \mathrm{Fin}_{C_2} $ is classified by a functor $ \mathrm{Fin}_{C_2}^\op \to \mathrm{Set} $ which sends $ C_2 $ and $ C_2/C_2 $ to a singleton  
		\item Multimorphisms are given by 
		\begin{equation*}
			\begin{split}
				\mathrm{Mul}_{\mathbf{E}_{p}} \colon \Ar^{\mathrm{cart}}\left(\Fin_{C_2}\right) & \to \mathrm{Set} \\
				(f : U \to V) & \mapsto \left\{\text{ t-ordering on }f\right\} 
			\end{split}
		\end{equation*}
        where t-orderings were defined in Notation \ref{ntn:t_order_equivariant_map}. 
		\item The identity $ \ast\to \mathrm{Mul}_{\mathbf{E}_{p}} $ is determined by observing that $ \left\{\text{ t-ordering on }\id_U\right\} \simeq * $ 
		\item The composition law $ \circ : \mathrm{Mul}_{\mathbf{E}_{p}} \times \mathrm{Mul}_{\mathbf{E}_{p}} \to \mathrm{Mul}_{\mathbf{E}_{p}} $ is defined by composing maps in $ \Fin_{C_2} $ and endowing composites with t-orders as described in Notation \ref{ntn:t_order_equivariant_map}.
	\end{enumerate}
	Postcomposing $ \mathrm{Mul}_{\mathbf{E}_{p}} $ with the constant simplicial set functor gives a fibrant simplicial $ C_2 $-operad in the sense of \cite[Definition 2.5.4]{NS22}, which we also denote by $ \mathbf{E}_{p} $. 
	
	Before shifting to the $\infty$-categorical setting, we will recall some notation. Specifically, in what follows, we will refer to the following diagram
	\begin{equation}\label{diagram:C2_operad_generic_span}
		\begin{tikzcd}
			U \ar[d] & \ar[l,"g"'] Z \ar[r,"f"]  \ar[d] & X \ar[d] \\
			V &  \ar[l] Y \ar[r,equals] & Y
		\end{tikzcd}
	\end{equation} 
	in $ \underline{\Fin}_{C_2} $ or $ \underline{\Fin}_{C_2,*} $ as a representative of a general span in $\underline{\Fin}_{C_2,*}$ (see \cite[\S2.1]{NS22}). 
	In particular, recall that in $ \underline{\Fin}_{C_2,*} $, the induced map $ Z \to U \times_V Y $ is always assumed to be a summand inclusion.
\end{definition}
\begin{definition}\label{defn:gen_alg_inv_param_operad}
	Define $ \underline{\Assoc}_{\sigma}^\otimes \to \underline{\mathrm{Fin}}_{C_2, *} $ to be the $ C_2 $-$ \infty $-operad associated to $ \mathbf{\Assoc}_{\sigma} $ under \cite[Construction 2.5.5 \& Proposition 2.5.6]{NS22}. 
	In particular, it has 
	\begin{itemize}
		\item objects consist of maps $ (h : U \to V) $ where $ h: U \to V $ is an object of $ \underline{\mathrm{Fin}}_{C_2, *} $.  
		\item a morphism from $ (h : U \to V) $ to $ (h' : X \to Y) $ consists of 
		\begin{itemize}
			\item a span (\ref{diagram:C2_operad_generic_span}) in $ \underline{\mathrm{Fin}}_{C_2, *} $
			\item for each $ x \in X $, an ordering $ \leq_x $ on $ f^{-1}(\{x\}) $ satisfying the condition (write $ \sigma \in C_2 $ for the generator): under the isomorphism $ f^{-1}(\{x\}) \simeq f^{-1}(\{\sigma(x)\}) $ induced by the $ C_2 $-action, the ordering $ \leq_x $ is sent to the reverse of the ordering $ \leq_{\sigma(x)} $.  
		\end{itemize} 
	\end{itemize}
	there is a canonical map to $ \underline{\mathrm{Fin}}_{C_2, *} $ which is an inclusion on objects and on morphisms forgets the data of the orderings. 
\end{definition}

\begin{definition}\label{defn:Poincare_ring_param_operad}
	Define $ \mathbb{E}_{\mathrm{p}}^\otimes \to \underline{\mathrm{Fin}}_{C_2, *} $ to be the $ C_2 $-$ \infty $-operad associated to $ \mathbf{E}_{p} $ under \cite[Construction 2.5.5 \& Proposition 2.5.6]{NS22}. 
	In particular, 
	\begin{itemize}
		\item objects agree with objects of $ \underline{\Fin}_{C_2,*} $ 
		\item morphisms consist of spans (\ref{diagram:C2_operad_generic_span}) in $ \underline{\mathrm{Fin}}_{C_2, *} $, plus the data of a t-ordering on $ f $ in the sense of Notation \ref{ntn:t_order_equivariant_map}.  
		\item composition is defined to agree with that in $ \underline{\mathrm{Fin}}_{C_2, *} $, with t-orderings on composites given by Notation \ref{ntn:t_order_equivariant_map}. 
	\end{itemize}
	There is a canonical map to $ \underline{\mathrm{Fin}}_{C_2, *} $ which is an equivalence on objects and on morphisms forgets t-orderings. 
\end{definition} 
\begin{remark}\label{rmk:surprise_lower_category}
	By \cite[Construction 2.5.5 \& Proposition 2.5.6]{NS22}, $ \underline{\Assoc}_{\sigma}^\otimes $ and $ \EE_p^\otimes $ are equivalent to nerves of 1-categories. 
\end{remark}
\begin{remark}\label{rmk:informal_description_C2_operad_alg}
    Suppose $ \mathcal{C}^\otimes $ is a $ C_2 $-symmetric monoidal $ C_2 $-$ \infty $-category in the sense of \cite[Definition 2.2.3]{NS22}. 
    Write $ \mathcal{C}^{C_2} $ and $ \mathcal{C}^e $ for the value of the underlying $ C_2 $-$ \infty $-category $ O_{C_2}^\op \to \Cat $ on $ C_2/C_2 $ and $ C_2/e $, resp. 
    In particular, $ \mathcal{C}^{C_2} $ is a symmetric monoidal $ \infty $-category, $ \mathcal{C}^e $ is a symmetric monoidal $ \infty $-category with an involution $ \lambda \colon \mathcal{C}^e \simeq \mathcal{C}^e $ (via symmetric monoidal functors), there are symmetric monoidal restriction $ (-)^e \colon \mathcal{C}^{C_2} \to \mathcal{C}^{e} $ and norm $ N \colon \mathcal{C}^{e} \to \mathcal{C}^{C_2} $ maps, and these satisfy higher coherences... 
    Informally, an $ \underline{\Assoc}_{\sigma}^\otimes $-algebra in $ \mathcal{C}^\otimes $ consists of a object $ A $ in $ \mathcal{C}^{C_2} $ whose underlying object $ A^e $ has the structure of an $ \EE_1 $-algebra in $ \mathcal{C}^e $ and an anti-involution $ A^e \simeq \lambda(A^e)^\op $. 
    Furthermore, $ A $ is equipped with the structure of a module over $ N(A^e) $ and a morphism $ N(A^e) \to A $ of $ N(A^e) $-modules which, upon applying the functor $ (-)^e $, is required to be compatible with the given anti-involution on $ A^e $. 
\end{remark}
\begin{recollection} [Little disks operad]
	Let $ V=\mathbb{R}^\sigma $ denote the sign $ C_2 $-representation on $ \mathbb{R} $. 
	Let $ D_V^\otimes $ denote the simplicial $ C_2 $-operad obtained by taking $ \mathrm{Sing}_\bullet $ of the little disks operad associated to $ V $ \cite[Definition 1.1]{GuillouMay}. 
	Write $ \EE_\sigma^\otimes := N\left(D_V^\otimes\right) $ where $ N $ is the genuine operadic nerve \cites[\S4.1]{MR4011810}[Example 3.9.4]{Horev19}; $ \EE_\sigma^\otimes $ is a $ C_2 $-$ \infty $-operad. 
\end{recollection}
The following lemma shows that Definition \ref{defn:gen_alg_inv_param_operad} is a combinatorial model for the little disks operad associated to the sign representation of $ C_2 $. 
We make no claim to originality for the construction of $\mathbb{E}_\sigma$.
\begin{lemma}\label{lemma:little_disks_combinatorial_model}
    There is an equivalence $ \EE_\sigma^\otimes \simeq \underline{\Assoc}_\sigma^\otimes $ of $ C_2 $-$ \infty $-operads. 
\end{lemma}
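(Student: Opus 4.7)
The plan is to construct a map of $C_2$-$\infty$-operads $\varphi \colon \underline{\Assoc}_\sigma^\otimes \to \EE_\sigma^\otimes$ and then verify it is an equivalence by checking orbitwise on each $C_2/H \in \mathcal{O}_{C_2}$. Since $\underline{\Assoc}_\sigma^\otimes$ is the $\infty$-operadic nerve of the simplicial $C_2$-operad $\mathbf{\Assoc}_\sigma$ of Definition \ref{defn:gen_alg_inv_simplicial_operad} (and is itself a 1-category by Remark \ref{rmk:surprise_lower_category}), it suffices by the equivalence between fibrant simplicial $C_2$-operads and $C_2$-$\infty$-operads of \cite[Proposition 2.5.6]{NS22} to construct a map of simplicial $C_2$-operads $\mathbf{\Assoc}_\sigma \to \mathrm{Sing}_\bullet(D_V)$, where $D_V$ is the Guillou--May little disks $C_2$-operad in the sign representation $V = \mathbb{R}^\sigma$.

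To define this map on multimorphisms, I would send a pair $(f \colon U \to V, \{\leq_v\}_{v \in V}) \in \mathrm{Mul}_{\mathbf{\Assoc}_\sigma}$ to the $C_2$-equivariant rectilinear embedding of $U$-many intervals into $V$-many copies of $\mathbb{R}^\sigma$ that equispaces the intervals in each fiber $f^{-1}(v)$ symmetrically about the origin according to the order $\leq_v$. The invariance condition that $(\leq_v)_v$ is sent to $(\geq_{\sigma(v)})_v$ under $C_2$ exactly matches the requirement that the resulting embedding be $C_2$-equivariant with respect to the sign action, since reflection through the origin in $\mathbb{R}^\sigma$ reverses the linear order on embedded intervals. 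Composition in $\mathbf{\Assoc}_\sigma$ via lexicographic ordering corresponds to nesting of little disks, so this assignment respects composition and promotes to a map of simplicial $C_2$-operads.

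Having defined $\varphi$, to verify it is an equivalence of $C_2$-$\infty$-operads it suffices (cf.\ \cite[\S2.5]{NS22}) to check it is an equivalence of underlying $\infty$-operads at both orbits $C_2/e$ and $C_2/C_2$. At $C_2/e$, the fiber of $\EE_\sigma^\otimes$ is the classical $\EE_1^\otimes$ and the fiber of $\underline{\Assoc}_\sigma^\otimes$ is the usual combinatorial associative operad $\mathrm{Assoc}^\otimes$; the comparison is the standard one going back to Boardman--Vogt and May, obtained via contractibility of ordered configuration spaces of points on the line. At $C_2/C_2$, one must show that the space of $C_2$-equivariant rectilinear embeddings of $C_2$-finite collections of intervals into $\mathbb{R}^\sigma$ deformation retracts onto the discrete set of $C_2$-invariant orderings on the fibers. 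This follows by convex combination within the contractible space of equivariant embeddings realizing a fixed invariant ordering, together with the observation that an orbit $C_2 \subset f^{-1}(v)$ on which the $C_2$-action is free and where $v$ is fixed by $\sigma$ must straddle the origin, forcing precisely one of the two possible orderings per such orbit.

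The main obstacle is verifying composition compatibility in the presence of the general spans (\ref{diagram:C2_operad_generic_span}) used to define morphisms: one must check that the lexicographic composition of orderings specified in Definition \ref{defn:gen_alg_inv_simplicial_operad} corresponds precisely to nested composition of $C_2$-equivariant little disks through the pullback step of the span, and that this interacts correctly with orbits on which $C_2$ acts freely versus trivially (compare Remark \ref{rmk:t_order_underlying}). This is primarily a careful bookkeeping exercise about how orderings transport across fiber products, but it is where the bulk of the work lies; once these coherences are checked, the orbitwise equivalence of underlying $\infty$-operads upgrades to an equivalence of $C_2$-$\infty$-operads, completing the proof.
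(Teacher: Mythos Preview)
Your overall strategy is sound and matches the paper's in spirit: the equivalence comes down to the fact that connected components of the relevant $C_2$-equivariant embedding spaces are contractible and are indexed precisely by the $C_2$-invariant orderings. However, there is a genuine gap in your implementation, and the paper's approach sidesteps it neatly.

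The gap is in your construction of the map $\mathbf{\Assoc}_\sigma \to \mathrm{Sing}_\bullet(D_V)$. You propose sending an ordering to the equispaced embedding realizing it, and then assert that lexicographic composition of orderings corresponds to nested composition of little disks. But equispaced embeddings do not compose to equispaced embeddings: composing the standard embedding of two equispaced intervals with itself yields four intervals clustered in pairs, not four equispaced intervals. So your assignment does not define a strict map of simplicial $C_2$-operads, and you would need to upgrade to some kind of coherent homotopical map, which is considerably more work than the ``bookkeeping exercise'' you describe. This is exactly the classical obstruction to writing down a strict section of $\EE_1 \to \mathrm{Assoc}$.

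The paper avoids this entirely by going in the opposite direction: it constructs a map $D_V^\otimes \to \mathbf{Assoc}_\sigma^\otimes$ by taking $\pi_0$ of multimorphism spaces. An equivariant (equidiameter) embedding $\varphi_f$ over $f\colon U \to V$ determines, for each $v \in V$, a linear order on $f^{-1}(v)$ via the order in which the subintervals sit inside $D(\mathbb{R}^\sigma)$; equivariance forces $\leq_v \mapsto \geq_{\sigma(v)}$ under $\sigma$, matching the invariance condition in $\mathbf{Assoc}_\sigma$. This $\pi_0$ map manifestly respects operadic composition (nesting of embeddings induces lexicographic composition of orders). The paper then observes that each connected component of the embedding space is a nonempty convex subset of Euclidean space, hence contractible, so $\pi_0$ is a weak equivalence on multimorphism spaces. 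This immediately gives a weak equivalence of simplicial $C_2$-operads and hence of $C_2$-$\infty$-operads, with no need for a separate orbitwise check at $C_2/e$ and $C_2/C_2$.
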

\begin{proof} 
    Fix a $ C_2 $-equivariant map $ f \colon U \to V $ and consider $ \varphi_f \in \mathrm{Emb}^{C_2,\mathscr{S}}_f(U \times D(\mathbb{R}^\sigma), V \times D(\mathbb{R}^\sigma)) $ in the notation of \cite[\S2.1.1]{Stewart_Dunn}, where $ \mathscr{S} $ denotes the `equidiameter' embeddings of Remark 2.3 \emph{loc. cit.} 
    For any $ v \in V $, such $ \varphi_f $ determines a linear ordering $ \leq_v $ on $ \pi_0 \varphi_f^{-1}( \{v\} \times D(\mathbb{R}^\sigma)) \simeq f^{-1}(v) $. 
    Moreover, since $ \varphi_f $ is $ C_2 $-equivariant, under the isomorphism $ \pi_0 \varphi_f^{-1}( \{v\} \times D(\mathbb{R}^\sigma)) \simeq \pi_0 \varphi_f^{-1}( \{\sigma(v)\} \times D(\mathbb{R}^\sigma)) $, the linear ordering $ \leq_v $ is sent to the linear ordering $ \geq_{\sigma(v)} $. 
    We also note that each connected component of $ \mathrm{Emb}^{C_2,\mathscr{S}}_f(U \times D(\mathbb{R}^\sigma), V \times D(\mathbb{R}^\sigma)) $ is homeomorphic to a nonempty convex subset of Euclidean space and is therefore contractible. 
    Thus taking connected components on multimorphism spaces induces a weak equivalence $ D_V^\otimes \to \mathbf{Assoc}^\otimes_\sigma $ of simplicial $ C_2 $-operads and that there is an equivalence $ \EE_\sigma^\otimes \simeq \underline{\Assoc}_\sigma^\otimes $ of $ C_2 $-$ \infty $-operads. 
\end{proof}
\begin{remark}\label{rmk:E_sigma_operad_unital}
    The $ C_2 $-$ \infty $-operads $ \underline{\Assoc}_\sigma^\otimes $ and $ \EE_p^\otimes $ are unital in the sense of \cite[Definition 5.2.1]{NS22}. 
    Let $ \mathcal{C}^\otimes $ be a $ C_2 $-symmetric monoidal $ C_2 $-$ \infty $-category. 
    It follows from Proposition 5.2.8 and Theorem 5.2.11 \emph{ibid.} that the unit object $ 1 $ of $ \mathcal{C} $ has a canonical structure of an $ \EE_\sigma $ (resp. $ \EE_p $-)algebra, and moreover with this structure $ {1} $ is the initial object in $ \EE_\sigma\Alg\left(\mathcal{C}\right) $, (resp. $ \EE_p\Alg\left(\mathcal{C}\right) $). 
\end{remark}
\begin{observation}\label{obs:E_sigma_param_operad_to_calgp_param_operad}
    Suppose we are given a finite $ C_2 $-set $ Z $ and an ordering $ \leq_Z $ so that the $ C_2 $-action on $ Z $ sends $ \leq_Z $ to its reverse. 
    We may associate to this a canonical t-ordering on the map $ f \colon Z \to C_2/C_2 $: For each free orbit $ U \subseteq Z $, take the restriction of $ \leq_Z $ to $ U $. 
    This assignment induces a canonical forgetful map of $ C_2 $-$ \infty $-operads $ \underline{\Assoc}_\sigma^\otimes \to \EE_p^\otimes $.  
\end{observation}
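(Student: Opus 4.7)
The plan is to construct the forgetful map at the level of simplicial colored $C_2$-operads $\mathbf{Assoc}_\sigma \to \mathbf{E}_p$ and then apply the nerve construction of \cite[Construction 2.5.5 \& Proposition 2.5.6]{NS22}. On underlying colors both operads agree (they have the same classifying functor $\mathrm{Fin}_{C_2}^\op \to \mathrm{Set}$), so only the assignment on multimorphism sets needs specification. Given a cartesian square over $f \colon U \to V$ in $\mathrm{Fin}_{C_2}$ and a $C_2$-invariant family of orderings $(\leq_v)_{v \in V}$ on the fibers of $f$, I would associate the t-ordering that, for every $C_2$-fixed $v \in V$ and every free $C_2$-orbit $W \subseteq f^{-1}(v)$, records $\leq_v|_W$. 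Well-definedness is automatic: every linear ordering on a two-element set with a free $C_2$-action is reversed by that action, so no further compatibility need be imposed to land in a t-ordering.

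The next step is to verify that this assignment is compatible with identities, with pullback along cartesian squares, and with composition, thereby defining a morphism of simplicial $C_2$-operads. Identities are immediate, and naturality in cartesian squares reduces to the fact that restriction of a linear order commutes with pulling back fibers (compare Remark~\ref{rmk:t_order_underlying}). The substantive comparison is composition. For composable $f \colon U \to V$ and $g \colon V \to X$, the $\mathbf{Assoc}_\sigma$-composite carries the lexicographic ordering on $(g \circ f)^{-1}(x) = \bigsqcup_{v \in g^{-1}(x)} f^{-1}(v)$, sorted first by outer index $v$ (using $\leq_x$) and then by inner $\leq_v$. I would match this to the t-order composition of Notation~\ref{ntn:t_order_equivariant_map} by the two-case split in that notation: on a free orbit $Z \subseteq U$ with $f|_Z$ injective and $f(Z)$ a free orbit over a point of $X$, the outer piece of the lex ordering restricts to $Z$ as precisely the ordering pulled back along $f|_Z$ from $f(Z) \subseteq V$; while on a free orbit $Z \subseteq f^{-1}(v)$ with $v$ a $C_2$-fixed point of $V$, the inner ordering $\leq_v|_Z$ is by construction the ordering associated to $f$'s data. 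The two prescriptions therefore agree in every case.

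Applying the genuine operadic nerve then yields the required map of $C_2$-$\infty$-operads $\underline{\Assoc}_\sigma^\otimes \to \EE_p^\otimes$. The main obstacle is purely notational: carefully tracking how linear orderings transform under lexicographic composition and under pullback along cartesian squares. There is no genuine homotopical content, since by Remark~\ref{rmk:surprise_lower_category} both $C_2$-$\infty$-operads are nerves of 1-categorical structures, so producing the map amounts to a strict (rather than homotopy-coherent) check.
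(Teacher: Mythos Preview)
Your proposal is correct and follows exactly the approach the paper intends. The paper states this as an Observation and gives only the key assignment (on multimorphisms over a map to $C_2/C_2$) without spelling out the operad axioms; you have supplied the full verification that the assignment on multimorphism sets is natural in cartesian squares and compatible with lexicographic composition, which is precisely the implicit content of the observation.
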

The following observation will allow us to write down functors between the $ \infty $-operads we care about without worrying about higher coherences. 
\begin{observation}\label{obs:operad_maps_to_infty_operad_maps}
    Let $ \mathcal{T} $ be an atomic orbital category with a final object, and let $ T $ be a 1-categorical model for $ \mathcal{T} $ (\cite[Proposition 2.5.1]{NS22}). 
    Fix an object $ U \in \mathcal{T} $, and suppose we are given a simplicial colored $ T $-operad $ O $ in the sense of Definition 2.5.4 \emph{loc. cit.} 
    We may then associate to this data a colored operad $O_U$ with the following features:
    \begin{enumerate}[label=(\arabic*)]
        \item the set of colors is the value of $ \mathbf{F}^\op_T \to \mathrm{Set} $ on $ U $
        \item for each finite set $ I $ and $ I $-indexed collection of objects $ \{X_i\} $ and $ Y \in \mathrm{ob}O $, the multimorphisms of $O_U$ are those given by evaluating $ \mathrm{Mul}_O $ on $ (\nabla \colon U^I \to U, (X_i)_{i \in I}, Y) $, where $ \nabla $ denotes a fold map.  
        \item the composition law is defined to be the one compatible with $\mathbf{F}_T$ and with the composition on $\mathrm{Mul}_O$. Similarly the unit $\ast\to \mathrm{Mul}_{O_U}$ is determined by the map $\ast\to \mathrm{Mul}_O$,
    \end{enumerate}
    Now suppose we are given a simplicial colored operad $ P $ and a map of simplicial colored operads $ P \to O_U $. 
    Then there are maps of $ \infty $-categories 
    \begin{equation*}
        N^\otimes(P) \to N^\otimes\left(O_U\right) \to N^\otimes (O) 
    \end{equation*} 
    where the left two instances of $ N^\otimes $ denote the operadic nerve of \cite[Definition 2.1.1.23]{LurHA} and the latter denotes the genuine operadic nerve \cites[\S4.1]{MR4011810}. 
    Moreover, the composite $ N^\otimes(P) \to N^\otimes(O) $ sends inert maps in the $ \infty $-operad $ N^\otimes(P) $ to inert maps in the $ \mathcal{T} $-$ \infty $-operad $ N^\otimes(O) $ (cf. proof of Theorem I \emph{ibid.}). 
\end{observation}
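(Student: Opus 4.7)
The plan is to construct the functor $N^\otimes(O_U) \to N^\otimes(O)$ by hand, using the concrete description of both nerves in terms of multimorphism simplicial sets, and then precompose with the functor $N^\otimes(P) \to N^\otimes(O_U)$ produced by the standard functoriality of Lurie's operadic nerve \cite[Proposition 2.1.1.27]{LurHA}.

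First, I would introduce the canonical functor $\iota \colon \mathrm{Fin}_* \to \underline{\Fin}_{\mathcal{T},*}$ that sends a pointed finite set $\langle n \rangle$ to the object $(U^n \to U)$ of $\underline{\Fin}_{\mathcal{T},*}$ (using that $\mathcal{T}$ has a final object and hence $\mathbf{F}_T$ admits the relevant products). Under $\iota$, a summand inclusion $\langle m \rangle \hookrightarrow \langle n \rangle$ is sent to the summand inclusion $(U^n \to U) \leftarrow (U^m \to U)$ in $\underline{\Fin}_{\mathcal{T},*}$; this comparison of base categories is what makes the fiberwise multimorphism data match up.

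Next, I would define the map $N^\otimes(O_U) \to N^\otimes(O)$ level-by-level. An $n$-simplex of $N^\otimes(O_U)$ is, by definition, a chain of objects of $O_U$ together with a compatible family of multimorphisms in $O_U$; item (2) of the statement identifies each such multimorphism space with a multimorphism space in $O$ over the fold map $\nabla : U^I \to U$. Assigning to such data the corresponding simplex in the genuine operadic nerve $N^\otimes(O)$ (lying over the image under $\iota$ of the underlying pointed-set data) produces the required simplicial map; compatibility with faces and degeneracies is immediate from the definitions, and functoriality for composition is exactly the compatibility of the composition law on $O_U$ with composition in $O$ demanded in item (3).

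Finally, for the assertion that the composite preserves inert maps: inerts in Lurie's $\infty$-operad $N^\otimes(P)$ are by definition cocartesian edges over summand inclusions in $\mathrm{Fin}_*$, and inerts in the $\mathcal{T}$-$\infty$-operad $N^\otimes(O)$ are cocartesian edges over summand inclusions in $\underline{\Fin}_{\mathcal{T},*}$. Since $\iota$ visibly sends summand inclusions to summand inclusions, it remains to verify that the image of an inert edge is cocartesian; this follows because the map on relevant multimorphism spaces is an isomorphism at the simplicial level by the defining equation $\mathrm{Mul}_{O_U}(\{X_i\}; Y) = \mathrm{Mul}_O(\nabla; \{X_i\}; Y)$, so no further checks on cocartesian transport are needed. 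The main obstacle I anticipate is bookkeeping: matching the precise definition of the Bonventre--Pereira genuine operadic nerve (in its guise of \cite[\S4.1]{MR4011810}) with Lurie's operadic nerve along $\iota$ so that the two concrete combinatorial constructions align cleanly, particularly in handling the unit and the cocartesian-lift condition at inert edges. Once this matching is explicit, the remainder of the argument is formal.
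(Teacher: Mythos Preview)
The paper does not provide a separate proof for this Observation; it is stated as a construction with the relevant properties asserted and supported by citations (in particular the ``cf.\ proof of Theorem I \emph{ibid.}'' for the inert-preservation claim). Your proposal is a reasonable way to unpack what the authors leave implicit, and the overall strategy---build the comparison along a base functor $\iota \colon \mathrm{Fin}_* \to \underline{\Fin}_{\mathcal{T},*}$ and match multimorphism data fiberwise---is the natural one.

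One point deserves care. Your description of $\iota$ sends $\langle n \rangle$ to the object $(U^n \to U)$, but $U^n$ here should mean the $n$-fold \emph{coproduct} $\coprod_I U$ in $\mathbf{F}_T$ (i.e.\ the source of the fold map $\nabla$), not a product; the notation $U^I$ in the statement is for the disjoint union indexed by $I$. More substantively, a morphism $\alpha \colon \langle n \rangle \to \langle m \rangle$ in $\mathrm{Fin}_*$ must be sent to a span in $\underline{\Fin}_{\mathcal{T},*}$, not just a map: the backward leg is the summand inclusion $\coprod_{\alpha^{-1}(\langle m \rangle^\circ)} U \hookrightarrow \coprod_n U$ and the forward leg is the fold over the fibers of $\alpha$. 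Your sketch only mentions summand inclusions, which covers the inert case but not the active one needed to make $\iota$ a functor on all of $\mathrm{Fin}_*$. Once $\iota$ is correctly specified on general morphisms, the rest of your argument goes through as you describe.
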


\subsection{From Poincaré rings to Poincaré \texorpdfstring{$ \infty $}{∞}-categories} \label{subsection:Ep_alg_to_Catp}
This subsection is devoted to constructing a functor which takes an $\EE_\sigma$ algebra in genuine $C_2$ spectra (or an $\EE_p$ algebra) and produces the Poincar{\'e} $\infty$-category associated to it (with its symmetric monoidal structure, respectively). Before we can do that, we first recall why it is even possible to define $\EE_\sigma$ and $\EE_p$ algebras in genuine $C_2$-spectra. 
\begin{recollection}
    [$ C_2 $-symmetric monoidal structure on $ C_2 $-spectra] 
    The symmetric monoidal structure on genuine $C_2$-spectra extends naturally to a $C_2$-symmetric monoidal structure \cites[\S9]{Bachmann_Hoyois_norms}[Definition 2.2.3 \& Example 2.4.2]{NS22}. 
    This essentially amounts to the existence of the Hill-Hopkins-Ravenel norm functor $N^{C_2}:\mathrm{Sp}\to \mathrm{Sp}^{C_2}$ and the geometric fixed-point functor $(-)^{\phi C_2}:\mathrm{Sp}^{C_2}\to \mathrm{Sp}$ as symmetric monoidal functors together with coherences. 
    We will write $ \left(\underline{\Spectra}^{C_2}\right)^\otimes $ for this $ C_2 $-symmetric monoidal $ \infty $-category. 
    
    Since this will be our main example of interest, we will use the shorthand notation \[\EE_\sigma\Alg := \EE_\sigma\Alg(\mathrm{Sp}^{C_2 ,\otimes})\] and \[\EE_p\Alg := \EE_p\Alg(\mathrm{Sp}^{C_2,\otimes})\] when there is no danger of confusion.
\end{recollection}

\begin{construction}\label{cons:Einfty_norm_in_families}
    Choose a $ C_2 $-set $ U = \{u, v \} $ on which $ C_2 $ acts freely and transitively, and fix an ordering $ u \leq v $ on $ U $; we will write $ p \colon U \to C_2/C_2 $. 
    Using Observation \ref{obs:operad_maps_to_infty_operad_maps}, we define a functor 
    \begin{equation*}
        \begin{split}
            \iota \colon \Delta^1 \times \Fin_* & \to \left(\EE_p^\otimes\right)_{C_2/C_2} \\
            \left(0, \langle n \rangle \right) & \mapsto \left(\langle n \rangle^\circ \times U_+ \to C_2/C_2\right) \\
            \left(1, \langle n \rangle \right) & \mapsto \left(\langle n \rangle \to C_2/C_2\right) \\
            \left(\id_0,\alpha \colon \langle n \rangle \to \langle m \rangle \right) & \mapsto \left(\alpha \times \id_U\right)  \\
            \left(\id_1,\alpha \colon \langle n \rangle \to \langle m \rangle \right) & \mapsto \alpha \\
            \left(0<1,\id_{\langle n \rangle}\right) & \mapsto \left(\id_{\langle n \rangle} \times p \right) \,.
        \end{split}
    \end{equation*} 
    Composing $ \iota $ with the canonical maps $ \left(\EE_p^\otimes\right)_{C_2/C_2} \to \EE_p^\otimes $ and noting that the identity map gives a canonical section of $ Ar^{\mathrm{act}}(\underline{\Fin}_{C_2,*}) \to \underline{\Fin}_{C_2,*} $, we obtain a map $ \iota' \colon \Delta^1 \times \Fin_* \to \mathrm{Env}_{\underline{\Fin}_{C_2,*}}\left(\EE_p^\otimes\right) \simeq \EE_p^\otimes \times_{\underline{\Fin}_{C_2,*}} Ar^{\mathrm{act}}(\underline{\Fin}_{C_2,*})  $. 
    Now there is a commutative diagram
    \begin{equation*}
        \begin{tikzcd}
            \{0\} \times \Delta^1 \times \Fin_* \ar[d] \ar[r,"{\iota'}"] & \mathrm{Env}_{\underline{\Fin}_{C_2,*}}\left(\EE_p^\otimes\right) \ar[d,"{\pi}"] \\
            \Delta^1 \times \Delta^1 \times \Fin_* \ar[r,"{\overline{r}}"] & \underline{\Fin}_{C_2,*}
        \end{tikzcd}
    \end{equation*}
    where $ \left.\overline{r} \right|_{\{1\} \times \Delta^1 \times \Fin_*} $ factors as a composite of the projection $ \Delta^1 \times \Fin_* \to \Fin_* $ composed with the inclusion $ \Fin_* \to \underline{\Fin}_{C_2,*} $. 
    Since $ p $ is a cocartesian fibration \cite[Proposition 2.8.6]{NS22} and $ \{0\} \to \Delta^1 $ is left anodyne, there exists an essentially unique extension 
    \begin{equation*}
        r \colon \Delta^1 \times \Delta^1 \times \Fin_* \to \mathrm{Env}_{\underline{\Fin}_{C_2,*}}\left(\EE_p^\otimes\right)
    \end{equation*}
    making the above square commute (in particular, $ r|_{\{0\} \times \Delta^1 \times \Fin_*} $ agrees with $ \iota' $) and so that $ r $ sends $ (0<1,\id,\id) $ to $ p $-cocartesian arrows. 
    Now restriction along $ r_1 := r|_{\{1\} \times \Delta^1 \times \Fin_*} $ and $ r_{10} := r|_{\{1\} \times \{0\} \times \Fin_*} $ induce maps
    \[
    \begin{tikzcd}
        \EE_p\Alg\left(\underline{\Spectra}^{C_2,\otimes}\right) \simeq \Fun^\otimes_{C_2}\left(\mathrm{Env}_{\underline{\Fin}_{C_2,*}}\left(\EE_p^\otimes\right), \underline{\Spectra}^{C_2,\otimes}\right) \ar[r,dashed] \ar[d] & \Alg_{\Fin_*} \left(r_{10}^*\left(\underline{\Spectra}^{C_2,\otimes}\right)\right) \simeq\EE_\infty\Alg\left(\Spectra^{C_2}\right) \\ \Alg_{\Delta^1 \times \Fin_*} \left(r_1^*\left(\underline{\Spectra}^{C_2,\otimes}\right)\right) \ar[r] & \Alg_{\Fin_*} \left(r_{10}^*\left(\underline{\Spectra}^{C_2,\otimes}\right)\right)\ar[u] & \,.
    \end{tikzcd}
    \]
\end{construction}
\begin{construction}\label{cons:assoc_operad_to_E_sigma_operad}
    We will construct a map $ \Assoc^\otimes \to \underline{\Assoc}^\otimes_{\sigma, C_2/e} $. 
    Here the subscript $ C_2/e $ means we consider the (non-parametrized) fiber over $ C_2/e $. 
    
    Observe that if $ \nabla \colon (C_2/e)^{\sqcup n} \simeq C_2/e \times \langle n \rangle \to C_2/e $ is the fold map, then a lift of $ \nabla $ to a morphism in $ \underline{\Assoc}^\otimes_{\sigma, C_2/e} $ is equivalent to the choice of an ordering on $ \{1,2, \ldots, n\} \simeq \nabla^{-1}(\{e\}) $; the ordering on $ \nabla^{-1}(\{\sigma\}) $ is uniquely determined by the ordering on $ \nabla^{-1}(\{e\}) $. 
    Using Observation \ref{obs:operad_maps_to_infty_operad_maps}, we may define functors
    \begin{equation*}
        \begin{split}
            \iota_e, \iota_\sigma \colon \Assoc^\otimes &\to \underline{\Assoc}^\otimes_{\sigma, C_2/e} \\
            \langle n \rangle &\mapsto C_2/e \times \langle n \rangle \\
            \iota_e \colon \left(f \colon \langle n \rangle \to \langle m \rangle, (\leq_i)_{i \in \langle m \rangle^\circ}\right) & \mapsto \left(\id_{C_2} \times f \colon \langle n \rangle \to \langle m \rangle, (\leq_i \text{ ordering on }f^{-1}(i) \times \{e\})_{i \in \langle m \rangle^\circ}\right) \\
            \iota_\sigma \colon \left(f \colon \langle n \rangle \to \langle m \rangle, (\leq_i)_{i \in \langle m \rangle^\circ}\right) & \mapsto \left(\id_{C_2} \times f \colon \langle n \rangle \to \langle m \rangle, (\leq_i \text{ ordering on }f^{-1}(i) \times \{\sigma\})_{i \in \langle m \rangle^\circ}\right) \,.
        \end{split}
    \end{equation*} 
    Furthermore, there is a commutative diagram
    \begin{equation}
        \begin{tikzcd}
            \Assoc^\otimes \ar[d,"{\mathrm{rev}}"'] \ar[r,"{\iota_e}"] & \underline{\Assoc}^\otimes_{\sigma, C_2/e} \ar[d,"{\sigma^*}"] \\
            \Assoc^\otimes \ar[r,"{\iota_e}"] \ar[ru,"{\iota_\sigma}"] & \underline{\Assoc}^\otimes_{\sigma, C_2/e}
        \end{tikzcd}
    \end{equation} 
    where $ \mathrm{rev} $ is the automorphism considered in \cite[Remark 4.1.1.7]{LurHA} and $ \sigma^* $ is the cocartesian pushforward along the nontrivial automorphism of $ C_2/e $ in $ \mathcal{O}_{C_2}^\op $. 
    Now if $ \mathcal{C}^\otimes $ is any $ C_2 $-symmetric monoidal $ C_2 $-$ \infty $-category, let us write $ \mathcal{C}^{e, \otimes} $ for its underlying symmetric monoidal $ \infty $-category with naive $ C_2 $-action. 
    Then the aforementioned diagram induces a commutative diagram
    \begin{equation*}
        \begin{tikzcd}[row sep=tiny]
            & \EE_1 \Alg\left(\mathcal{C}^{e,\otimes}\right) \ar[dd,"{\sigma_{\mathcal{C}} \circ \mathrm{rev}}","\sim"'] \\
            \Alg_{\underline{\Assoc}^\otimes_{\sigma}}\left(\mathcal{C}^\otimes\right) \ar[ru] \ar[rd] & \\
            & \EE_1 \Alg\left(\mathcal{C}^{e,\otimes}\right) \,. 
        \end{tikzcd}
    \end{equation*}
    Informally, this implies that the underlying object of an $ \EE_\sigma $-algebra consists of an $ \EE_1 $-algebra $ A $ in $ \mathcal{C}^e $ equipped with an equivalence $ A \simeq \sigma_{\mathcal{C}}(A)^\op $, where $ (-)^\op $ denotes the opposite algebra and $ \sigma_\mathcal{C} $ denotes the automorphism on $ \mathcal{C}^e $. 
    If $ \sigma_{\mathcal{C}} $ is the identity (as is the case with $ C_2 $-spectra), then $ A $ has an involution. 
\end{construction}

\begin{construction}\label{cons:lmod_operad_to_E_sigma_operad}
    We will construct a map 
    \begin{equation*}
        q_m \colon \mathcal{LM}^\otimes \to \underline{\Assoc}^\otimes_{\sigma, C_2/C_2} 
    \end{equation*} 
    which is compatible with the map $\iota_e$ defined in Construction~\ref{cons:assoc_operad_to_E_sigma_operad}, where as before the subscript $(-)_{C_2/C_2}$ denotes the non-parameterized fiber. 
    By Observation \ref{obs:operad_maps_to_infty_operad_maps}, it suffices to define a map of simplicial operads. 
    On objects, this functor will be given by \[q_m(\langle n\rangle, S)=((\langle n\rangle^\circ \setminus S)\times C_2)
    \sqcup S\to C_2/C_2\] and on maps by sending $(\alpha: (\langle n\rangle, S)\to (\langle m\rangle, T),(\leq_i)_{i\in \langle m\rangle^\circ})$ to the map $(\alpha\mid_{\langle n\rangle^\circ\setminus S}\times C_2)\sqcup \alpha\mid_S$. 
    We must equip this map with the extra data to be a map in $\underline{\mathrm{Assoc}}^\otimes_{\sigma}$. 
    By definition of $ \underline{\Assoc}_{\sigma} $, for each element $ t $ in the codomain of $ q_m(\alpha) $, we require an ordering $ \preceq_t $ on $ q_m(\alpha)^{-1}(\{t\}) $ so that the $ C_2 $-action sends $ \leq_t $ to $ \geq_{\sigma(t)} $. 
    There are three cases: 
    \begin{itemize}
        \item If $ t = (i, e) $, then equip $ q_m(\alpha)^{-1}(\{t\}) \simeq \alpha^{-1}(\{i\}) \times \{e\} $ with the ordering $ \leq_i $
        \item If $ t = (i, \sigma) $, then equip $ q_m(\alpha)^{-1}(\{t\}) \simeq \alpha^{-1}(\{i\}) \times \{e\} $ with the ordering $ \geq_i $ 
        \item If $ t \in T $, then notice that $ q_m(\alpha)^{-1}(\{t\}) \simeq \left(\left(\alpha^{-1}(\{t\}) \cap \langle n \rangle^\circ \setminus S \right) \times C_2\right) \cup \left(\alpha^{-1}(\{t\}) \cap S \right) $ where $ \alpha^{-1}(\{t\}) \cap S $ is a singleton (see \cite[Notation 4.1.2.6]{LurHA}); let us write $q_m(\alpha)^{-1}(\{x\})=\{i_1,i_2,\ldots, i_k\}\times C_2 \sqcup\{s\}$ where $i_1\leq_t i_2\leq_t\ldots\leq_t i_k \leq_t s$.  
        Now define the linear order $ \preceq_t $ on $ q_m(\alpha)^{-1}(\{t\}) $ to be \[(i_1,e)\preceq_t (i_2,e)\preceq_t\ldots \preceq_t (i_k,e)\preceq_t s \preceq_t (i_k,\sigma)\preceq_t\ldots \preceq_t (i_2,\sigma)\preceq_t(i_1,\sigma)\,. \] 
    \end{itemize}
    
    We now explain what it means for the map $ q_m $ to be compatible with the map $\iota_e$ defined in Construction~\ref{cons:assoc_operad_to_E_sigma_operad}. 
    Note that there is a functor
    \begin{equation*}
        \Assoc^\otimes \xrightarrow{i_{\mathfrak{a}}} \mathcal{LM}^\otimes \xrightarrow{\widetilde{q}_m}  \mathrm{Env}_{\underline{\Fin}_{C_2,*}}\left(\underline{\Assoc}_\sigma^\otimes\right)
    \end{equation*}
    where $ \widetilde{q}_m $ is induced by $ q_m $ and observing that isomorphisms in $ \underline{\Fin}_{C_2,*} $ are active arrows. 
    Now note that there is a functor $ \Assoc^\otimes \times \Delta^1 \to \underline{\Fin}_{C_2,*} $ sending $ \left(\id_{\langle 1 \rangle}, 0 < 1\right) $ to the collapse map $ [C_2 = C_2] \to [C_2/C_2 = C_2/C_2] $ and making the square in the diagram 
    \begin{equation*}
        \begin{tikzcd}
            \Assoc^\otimes \times \{0\} \ar[r,"{\widetilde{i}_e}"] \ar[d] & \mathrm{Env}_{\underline{\Fin}_{C_2,*}}\left(\underline{\Assoc}_\sigma^\otimes\right) \ar[d,"p"] \\
            \Assoc^\otimes \times \Delta^1 \ar[r] \ar[ru,dashed] & \underline{\Fin}_{C_2,*}
        \end{tikzcd}
    \end{equation*} 
    commute. 
    Since $ \{0\} \to \Delta^1 $ is left anodyne and $ p $ is a cocartesian fibration, there is an essentially unique dotted map making the diagram commute; let us write $ n\widetilde{i}_e $ for the restriction of the diagonal map to $ \Assoc^\otimes \times \{ 1\} $. 
    Then by compatibility, we mean that there is an equivalence $ \widetilde{q}_m \circ i_{\mathfrak{a}} \simeq n\widetilde{i}_e $ of functors $ \Assoc^\otimes \to \mathrm{Env}_{\underline{\Fin}_{C_2,*}}\left(\underline{\Assoc}_\sigma^\otimes\right)  $.
\end{construction}
\begin{construction}\label{cons:E_sigma_algebra_to_underlying_bimod}
    Using Observation \ref{obs:operad_maps_to_infty_operad_maps}, define a functor
    \begin{equation*}
        \begin{split}
            q_m^e \colon \mathcal{LM}^\otimes & \to \left(\underline{\Assoc}_\sigma^\otimes\right)_{C_2/e} \\
            \left(\langle n \rangle, S \right) & \mapsto \left(\langle n + k \rangle \times C_2 \to C_2/e \right) \quad \text{ where } n-|S|=k \\
            \left(\alpha \colon \left(\langle n \rangle, S\right) \to \left(\langle m \rangle, T\right) \right) & \mapsto \begin{pmatrix} f(\alpha) \colon \langle n+k \rangle \times C_2 \to \langle m + \ell \rangle \times C_2 \\
                (p,\gamma) \mapsto \begin{cases}
                    (\alpha(p),\gamma) & \text{ if } p \leq n \\
                    (\alpha (i),\gamma) & \text{ if } p = n+i, \alpha(i) \in T \\
                    (m+i',\sigma\gamma) & \text{ if } p = n+i, \alpha(i) = i'_{j'}
            \end{cases}\end{pmatrix} 
        \end{split}
    \end{equation*} 
    where we write $ \gamma $ for a generic element of $ C_2 $ and $ \sigma \in C_2 $ for the generator. 
    To define $ f(\alpha) $, we must give, for each $ (j,e) \in \langle m + \ell \rangle \times C_2 $, an ordering $ \leq_{j,e} $ on $ f(\alpha)^{-1}(j,e) $ so that the $ C_2 $-action sends $ \leq_{j,e} $ to the reverse of $ \leq_{j,\sigma} $. 
    We endow $ f(\alpha)^{-1}(j,e) $ with an ordering via the equivalence $ f(\alpha)^{-1}(j,e) \subseteq \alpha^{-1}(j) \times C_2 \xrightarrow{\pi_1} \alpha^{-1}(j) $. 
    
    Restriction along $ q_m^e $ defines a functor $ \EE_\sigma\Alg\left(\Spectra^{C_2}\right) \to \LMod\left(\Spectra\right) $; informally, this sends an $ \EE_\sigma $-algebra $ A $ to its underlying spectrum $ A^e $, regarded as an $ A^e \otimes A^{e,\op} $-bimodule (compare \cite[Construction 4.6.3.7]{LurHA}). 
\end{construction}
\begin{lemma}\label{lemma:E_sigma_fixpt_to_underlying}
    Constructions \ref{cons:lmod_operad_to_E_sigma_operad} and \ref{cons:E_sigma_algebra_to_underlying_bimod} are compatible with each other in the sense that if $ \overline{q}_m $ is the (essentially) unique map making the diagram
    \begin{equation*}
        \begin{tikzcd}
            {\{0\} \times \mathcal{LM}^\otimes} \ar[r,"{q_m}"] \ar[d] & \underline{\Assoc}^\otimes_{\sigma, C_2/C_2} \ar[r,phantom,"\subseteq"] & { \underline{\Assoc}^\otimes_\sigma} \ar[d] \\
            {\Delta^1 \times \mathcal{LM}^\otimes} \ar[r,"{\pi_1}"'] \ar[rru,dashed,"{\overline{q}_m}", near start] & {\Delta^1} \ar[r,"{C_2/C_2 \leftarrow C_2/e}"'] & {\mathcal{O}^{\mathrm{op}}_{C_2}}
        \end{tikzcd}
    \end{equation*}
    commute, then there is a natural equivalence of functors $ \left.\overline{q}_m\right|_{\{1\} \times \mathcal{LM}^\otimes} \simeq q_m^e $. 
\end{lemma}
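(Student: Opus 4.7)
The plan is to reduce the claim to an explicit, $1$-categorical verification. By Remark \ref{rmk:surprise_lower_category}, $\underline{\Assoc}^\otimes_\sigma$ is equivalent to the nerve of a $1$-category, so natural equivalences between functors $\mathcal{LM}^\otimes \to \underline{\Assoc}^\otimes_\sigma$ amount to $1$-categorical natural isomorphisms. The essentially unique dashed extension $\overline{q}_m$ is obtained by lifting the left anodyne inclusion $\{0\} \subset \Delta^1$ along the cocartesian fibration $\underline{\Assoc}^\otimes_\sigma \to \mathcal{O}^{\op}_{C_2}$, choosing cocartesian lifts on each new edge $\{0<1\} \times \{(\langle n \rangle, S)\}$. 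Thus $\overline{q}_m|_{\{1\} \times \mathcal{LM}^\otimes}$ is computed by the cocartesian pushforward of $q_m$ along $C_2/e \to C_2/C_2$ in $\mathcal{O}^{\op}_{C_2}$, and it suffices to identify this pushforward with $q_m^e$ as $1$-categorical functors.

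I would first describe the cocartesian pushforward in $\underline{\Assoc}^\otimes_\sigma$ along $C_2/e \to C_2/C_2$ explicitly. In $\underline{\Fin}_{C_2,*}$ this pushforward is base change along $C_2/e \to C_2/C_2$, and on $\underline{\Assoc}^\otimes_\sigma$ the induced map pulls back the underlying $C_2$-equivariant map while transporting the ordering on each fiber along the canonical projection $\alpha^{-1}(u_2) \times C_2 \to \alpha^{-1}(u_2)$; the required $C_2$-equivariance of the transported orderings (on fibers over swap-related points of $C_2/e$) is automatic since $C_2/e$ acts freely on itself.

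Next, for $(\langle n \rangle, S) \in \mathcal{LM}^\otimes$ with $k = n - |S|$, I would identify the base change of
\[
q_m((\langle n \rangle, S)) = \bigl((\langle n \rangle^\circ \setminus S) \times C_2\bigr) \sqcup S \to C_2/C_2
\]
along $C_2/e \to C_2/C_2$ with $\langle n+k \rangle^\circ \times C_2 \to C_2/e$. Concretely, $S$ (trivial action) base changes to $S \times C_2$ (free action), contributing $n-|S|$ free orbits; and the diagonal $C_2$-action on $C_2 \times C_2$ splits into the two orbits $\{(g,g) : g \in C_2\}$ and $\{(g,\sigma g) : g \in C_2\}$, so $(\langle n \rangle^\circ \setminus S) \times C_2 \times C_2 \simeq (\langle n \rangle^\circ \setminus S) \times \{1,2\} \times C_2$, contributing $2k$ free orbits. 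Together this gives $n+k$ free orbits, matching $q_m^e((\langle n \rangle, S))$; functoriality in $(\langle n \rangle, S)$ follows from the parallel computation on morphisms $\alpha \colon (\langle n \rangle, S) \to (\langle m \rangle, T)$, comparing the resulting underlying map with $f(\alpha)$ of Construction \ref{cons:E_sigma_algebra_to_underlying_bimod}.

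The main obstacle is matching the ordering data. For $\alpha$ as above and $t \in T$, Construction \ref{cons:lmod_operad_to_E_sigma_operad} equips the fiber $q_m(\alpha)^{-1}(t)$ with the interleaved order
\[
(i_1,e) \preceq_t \cdots \preceq_t (i_k,e) \preceq_t s \preceq_t (i_k,\sigma) \preceq_t \cdots \preceq_t (i_1,\sigma),
\]
with $C_2$-equivariant compatibility across the fiber over $\sigma(t)$. Pushing forward to $C_2/e$ decouples this single interleaved fiber into its two pieces indexed by the two points of $C_2/e$: over $(t,e)$ one recovers the ordered set $(i_1,e) < \cdots < (i_k,e) < s$, and over $(t,\sigma)$ one recovers $s < (i_k,\sigma) < \cdots < (i_1,\sigma)$, each obtained by pulling back the ordering $\leq_t$ on $\alpha^{-1}(t)$ along the projection; the pivot element $s$ is distributed correctly to both sides because the base change replaces $S$ by $S \times C_2$. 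Comparing with the orderings $\leq_{j,e}$ and $\leq_{j,\sigma}$ specified in Construction \ref{cons:E_sigma_algebra_to_underlying_bimod} (which are also the pullbacks along $\pi_1 \colon \alpha^{-1}(j) \times C_2 \to \alpha^{-1}(j)$ of the orderings from $\alpha \in \mathcal{LM}^\otimes$) yields the required agreement on morphisms. Together with the identification on objects, this supplies the natural equivalence $\overline{q}_m|_{\{1\} \times \mathcal{LM}^\otimes} \simeq q_m^e$.
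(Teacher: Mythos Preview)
Your overall approach matches the paper's: reduce to a $1$-categorical verification via Remark \ref{rmk:surprise_lower_category}, identify the cocartesian transport of $q_m$ along $C_2/C_2 \leftarrow C_2/e$ as base change, and compare with $q_m^e$ explicitly. The object-level identification is correct.

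The error is in your fiber computation. For $t \in T$ with $\alpha^{-1}(t) \cap (\langle n \rangle^\circ \setminus S) = \{i_1, \ldots, i_k\}$ and $\alpha^{-1}(t) \cap S = \{s\}$, the fiber $q_m(\alpha)^{-1}(t)$ is the full $(2k+1)$-element set $\{(i_1,e),\ldots,(i_k,e), s, (i_k,\sigma),\ldots,(i_1,\sigma)\}$ with the interleaved order $\preceq_t$. Base change along $C_2/e \to C_2/C_2$ \emph{duplicates} this fiber rather than splitting it: over $(t,e)$ one obtains all $2k+1$ elements with ordering $\preceq_t$, and over $(t,\sigma)$ all $2k+1$ elements with the reverse ordering---not the $(k+1)$-element halves you describe. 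Likewise, unwinding the formula for $f(\alpha)$, the fiber $f(\alpha)^{-1}(j,e)$ for $j \in T$ also has $2k+1$ elements: the $k+1$ elements $(p,e)$ with $p \leq n$ and $\alpha(p)=j$, together with the $k$ extra copies $(n+i, e)$ coming from $i \in (\langle n \rangle^\circ \setminus S) \cap \alpha^{-1}(j)$. So the ``decoupling'' you describe does not occur on either side, and the ordering comparison you sketch is not checking the right thing.

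What actually needs to be done (and what the paper does) is to write down the explicit $C_2$-equivariant bijection $\zeta \colon q_m^e(\langle n \rangle, S) \simeq q_m(\langle n \rangle, S) \times C_2$, obtained from the identification $\langle n+k \rangle \simeq S \sqcup \bigl(\{\varepsilon,\tau\} \times (\langle n \rangle^\circ \setminus S)\bigr)$ together with $\{\varepsilon,\tau\} \simeq C_2$, and then verify that $\zeta$ carries the ordering on the $(2k+1)$-element fiber $f(\alpha)^{-1}(j,e)$ to the transported ordering $\preceq_t$ on $q_m(\alpha)^{-1}(t) \times \{e\}$.
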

\begin{proof}
    Since all $ \infty $-categories in the diagram are equivalent to nerves of 1-categories, it suffices to exhibit a natural equivalence of functors of 1-categories. 
    Note that there is a canonical isomorphism $ q_m^e \left(\langle n \rangle, S\right) \simeq \left(\langle n \rangle \sqcup (\langle n \rangle^\circ \setminus S) \right) \times C_2 \simeq \left(S \sqcup \left(\bigsqcup_{\{\varepsilon,\tau\}}\left(\langle n \rangle^\circ \setminus S\right)\right) \right) \times C_2 $ so that $ \{\varepsilon \} \to \{\varepsilon,\tau\} $ induces the canonical inclusion $ \langle n \rangle \to \langle n+k \rangle $. 
    Recall that $ q_m\left(\langle n \rangle, S\right) \times C_2 \simeq \left[(\langle n\rangle^\circ \setminus S)\times C_2) \sqcup S\right] \times C_2 $. 
    The isomorphism of sets $ \{\varepsilon, \tau\} \simeq C_2 $ sending $ \varepsilon \mapsto e $ induces $ C_2 $-equivariant isomorphisms $ \zeta_{\left(\langle n \rangle, S\right)} \colon q_m^e \left(\langle n \rangle, S\right) \simeq q_m \left(\langle n \rangle, S\right) \times C_2 = \overline{q}_m\left((\langle n \rangle, S), 1\right) $. 
    Let us make cocartesian transport of morphisms explicit. 
    Given $ \alpha \colon \left(\langle n \rangle, S\right) \to \left(\langle m \rangle, T\right) $, for each $ i \in q_m \left(\langle m \rangle, T\right) $, the ordering $ \leq_i $ on $ q_m(\alpha)^{-1}\left(\{i\}\right) $ induces orderings $ \leq_i $ and $ \geq_i $ on $ \left(q_m(\alpha) \times \id_{\{e\}}\right)^{-1} $ and $ \left(q_m(\alpha) \times \id_{\{\sigma\}}\right)^{-1} $, respectively. 
    It is now straightforward to see that $ \zeta $ refines to the desired natural isomorphism. 
\end{proof}
\begin{lemma}\label{lemma:E_p_E_sigma_bimod_compatible}
    There is a commutative square 
    \begin{equation*}
        \begin{tikzcd}[cramped]
            &\mathcal{LM}^\otimes \ar[r, "r"] \ar[d] & \underline{\Assoc}_\sigma^\otimes \ar[d] \\
            \Fin_* \times \Delta^1 \ar[r] & \mathrm{Assem}\left(\Fin_* \times \Delta^1\right) \ar[r] & \EE_p^\otimes         
        \end{tikzcd}     
    \end{equation*} 
    where $ r $ is induced by the map from Construction \ref{cons:lmod_operad_to_E_sigma_operad} and $ \mathrm{Assem} $ denotes \emph{assembly} in the sense of \cite[Definition 2.3.3.1]{LurHA}. 
    The map from the lower left to the lower right is that of Construction \ref{cons:norm_in_families}, and $ \Fin_* \times \Delta^1 \to \mathrm{Assem}\left(\Fin_* \times \Delta^1\right) $ is the unit of the adjunction. 
\end{lemma}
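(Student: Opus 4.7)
The plan is to reduce this to a check in ordinary 1-categories and then directly trace the two composites on objects and morphisms. By Remark \ref{rmk:surprise_lower_category}, both $\underline{\Assoc}_\sigma^\otimes$ and $\EE_p^\otimes$ are equivalent to nerves of $1$-categories; the same is true of $\mathcal{LM}^\otimes$ by \cite[Definition 4.2.1.7]{LurHA}, and the symmetric monoidal envelope $\mathrm{Assem}(\Fin_* \times \Delta^1)$ admits a manifest $1$-categorical description. Thus it will suffice to exhibit the square as strictly commutative in ordinary categories, or, at worst, to construct an explicit natural isomorphism between the two composites.

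Next, I will compute both composites on an arbitrary object $(\langle n\rangle,S)\in\mathcal{LM}^\otimes$ with $|S|=k$ ``module'' entries and $n-k$ ``algebra'' entries. Along the top--right, Construction \ref{cons:lmod_operad_to_E_sigma_operad} yields the object $((\langle n\rangle^\circ\setminus S)\times C_2)\sqcup S\to C_2/C_2$ of $\underline{\Assoc}_\sigma^\otimes$, and Observation \ref{obs:E_sigma_param_operad_to_calgp_param_operad} forgets down to $\EE_p^\otimes$ keeping only the induced t-ordering on the free $C_2$-orbits over $C_2/C_2$ (those coming from ``algebra'' entries). Along the bottom--left, $(\langle n\rangle,S)$ is sent to the formal sum in $\mathrm{Assem}(\Fin_*\times\Delta^1)$ of $n-k$ copies of the algebra color $0$ and $|S|$ copies of the module color $1$; applying the functor of Construction \ref{cons:Einfty_norm_in_families} replaces color $0$ by the free orbit $C_2\to C_2/C_2$ (carrying the t-ordering inherited from the fixed $U=\{u\le v\}$) and color $1$ by the fixed orbit $C_2/C_2\to C_2/C_2$, and then assembles. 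The two outputs visibly agree.

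The analogous check on morphisms proceeds by the same recipe. A morphism $\alpha\colon(\langle n\rangle,S)\to(\langle m\rangle,T)$ in $\mathcal{LM}^\otimes$ consists of a pointed map together with linear orderings $\le_i$ on $\alpha^{-1}(i)$ for each $i\in T$ (the ``module'' positions in the target); the top route plugs this into $q_m$ and then forgets everything except the underlying span and the t-ordering on the free orbits lying over $C_2/C_2$ and in the fiber over ``algebra'' targets, while the bottom route reads off the t-ordering via cocartesian transport in $\mathrm{Env}_{\underline{\Fin}_{C_2,*}}(\EE_p^\otimes)$, which is essentially determined by the fixed choice of $u\le v$ on $U$.

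The principal obstacle will be bookkeeping of these orderings and verifying that the two rules for producing t-orderings coincide. Concretely, in Construction \ref{cons:lmod_operad_to_E_sigma_operad} the ordering $\preceq_t$ over a target ``module'' point $t\in T$ is concentrated symmetrically around $t$, so it contributes no free orbit over a module target (hence no t-ordering there); the remaining free orbits lie over ``algebra'' targets in $(\langle m\rangle^\circ\setminus T)\times C_2$, and the t-ordering on them is precisely the pullback of the linear ordering $\le_i$ on $\alpha^{-1}(i)$. By the construction of the $p$-cocartesian lift in Construction \ref{cons:Einfty_norm_in_families}, the bottom composite attaches to each such orbit the t-ordering transported along $\le_i$ using the fixed ordering on $U$. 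Unwinding these two recipes will give a canonical identification, natural in $\alpha$, yielding the required natural isomorphism. Compatibility with composition is then automatic from the fact that the composition law on t-orderings in Notation \ref{ntn:t_order_equivariant_map} is built from the same lexicographic recipe as the composition law on $\underline{\Assoc}_\sigma^\otimes$-orderings in Definition \ref{defn:gen_alg_inv_simplicial_operad}.
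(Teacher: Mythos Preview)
Your overall strategy---reduce to 1-categories and trace both composites---is sound and matches the spirit of the paper's ``by inspection'' verification. However, two genuine issues arise.

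First, you take the left vertical map $\mathcal{LM}^\otimes \to \mathrm{Assem}(\Fin_* \times \Delta^1)$ for granted, describing it only as ``sent to the formal sum.'' But this map is not given anywhere prior to the lemma; constructing it is the main content of the paper's proof. The paper does this by exhibiting an explicit ordinary operad $\mathbf{M}$ with objects $(\langle n\rangle, S)$, $S\subseteq\langle n\rangle^\circ$, and morphisms given by pointed maps $f$ with $f(S)\subseteq T$ (no orderings), and then verifying via weak approximation (\cite[Theorem 2.3.3.23 and Proposition 2.3.3.11]{LurHA}) that $M^\otimes$ models the assembly. With that model, the map $\mathcal{LM}^\otimes \to M^\otimes$ is simply ``identity on objects, forget the orderings on morphisms,'' and commutativity becomes immediate.

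Second, your t-ordering analysis is backwards. By Notation \ref{ntn:t_order_equivariant_map}, a t-ordering on a map $f$ records orderings on those free orbits in the \emph{source} whose image under $f$ is a \emph{trivial} point. For $q_m(\alpha)$, the trivial points in the target are exactly the module positions $t\in T$; the algebra positions $(\langle m\rangle^\circ\setminus T)\times C_2$ are free orbits and carry no t-ordering data. So the t-orderings that survive in $\EE_p^\otimes$ are on orbits $\{j\}\times C_2$ with $j\in\langle n\rangle^\circ\setminus S$ and $\alpha(j)\in T$---precisely the case you dismissed. Moreover, restricting the ordering $\preceq_t$ of Construction \ref{cons:lmod_operad_to_E_sigma_operad} to such an orbit always gives $(j,e)\preceq(j,\sigma)$, \emph{independently} of the $\mathcal{LM}$-orderings $\le_i$. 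This matches the bottom route, which forgets all $\le_i$ data in passing through $M^\otimes$ and then assigns $u<v$ from the fixed choice on $U$. Your claim that the t-ordering is ``the pullback of the linear ordering $\le_i$'' over algebra targets is therefore incorrect on both counts, and the argument as written would not establish commutativity.
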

\begin{remark}
    By construction, restriction along $ \mathcal{LM}^\otimes \to \mathrm{Assem}\left(\Fin_* \times \Delta^1\right) $ sends a map of $ \EE_\infty $-rings $ A \to B $ to the object $ B $, regarded as a left $ A $-module. 
\end{remark}
\begin{proof}[Proof of Lemma \ref{lemma:E_p_E_sigma_bimod_compatible}] 
    We will exhibit an explicit model for the assembly. 
    Define an ordinary operad $ \mathbf{M} $ as follows: Let objects be given by pairs $ \left(\langle n \rangle, S\right) $ where $ S \subseteq \langle n \rangle^\circ $ and morphisms $ \left(\langle n\rangle, S\right) \to \left(\langle m \rangle,T\right) $ be given by maps of finite sets $ f \colon \langle n \rangle \to \langle m \rangle $ with the property that $ f(S) \subseteq T $. 
    There is a canonical map $ f \colon \Fin_* \times \Delta^1 \to \mathbf{M}^\otimes $ so that $ f \left(\langle n \rangle, 0\right) = \left(\langle n \rangle, \varnothing\right) $ and $ f \left(\langle n \rangle, 1\right) = \left(\langle n \rangle, \langle n \rangle^\circ \right) $.
    Take the $ \infty $-operad $ M^\otimes $ associated to this operad. 
    The map $ f $ induces a map (which we also denote by $ f $) of generalized $ \infty $-operads which exhibits $ M^\otimes $ as the assembly of $ \Fin_* \times \Delta^1 $. 
    To see this, note that by \cite[Theorem 2.3.3.23]{LurHA}, it suffices to show that $ f $ is a weak approximation which moreover induces a homotopy equivalence on fibers over $ \langle 1 \rangle $. 
    The latter holds by construction. 
    That $ f $ is a weak approximation follows from \cite[Proposition 2.3.3.11]{LurHA}. 
    It follows that $ M^\otimes $ is our desired $ \mathrm{Assem}\left(\Fin_* \times \Delta^1\right) $. 
    
    Now there is a canonical map $ \mathcal{LM}^\otimes \to M^\otimes $ given by the identity on objects and forgetting the ordering on morphisms. 
    The commutativity of the square now follows by inspection. 
\end{proof}

\begin{construction}\label{cons:norm_in_families}
    Let $ \mathcal{O}^\otimes $ be a $ C_2 $-$ \infty $-operad, and suppose we are given a map of ordinary $ \infty $-operads $ q_{-1} \colon \Assoc^\otimes \to \mathcal{O}^\otimes_{C_2/e} $ where $ (-)_{C_2/e} $ denotes the non-parametrized fiber.\footnote{Given a map $ \Assoc^\otimes \to \mathcal{O}^\otimes_{C_2/C_2} $ instead, we can use cocartesian transport along $ C_2/C_2 \to C_2/e $ to obtain a map $ \Assoc^\otimes \to \mathcal{O}^\otimes_{C_2/e} $.} 
    Recall that by \cite[Proposition 2.8.7(1)]{NS22}, there is an equivalence $ \underline{\Alg}_{\mathcal{O}}\left(\underline{\Spectra}^{C_2}\right) \simeq \Fun^\otimes_{C_2}\left(\mathrm{Env}_{\underline{\Fin}_{C_2,*}}\left(\mathcal{O}\right), \left(\underline{\Spectra}^{C_2}\right)^\otimes\right) $. 
    Now recall that $ \mathrm{Env}_{\underline{\Fin}_{C_2,*}}\left(\mathcal{O}\right) \simeq \mathcal{O}^\otimes \times_{\underline{\Fin}_{C_2,*}} Ar^{\mathrm{act}}(\underline{\Fin}_{C_2,*}) $ \cite[Definition 2.8.4]{NS22}. 
    Since the identity map on any object in $ \underline{\Fin}_{C_2,*} $ is an active arrow, $ q_{-1} $ induces a map
    \begin{equation}\label{eq:assoc_opd_to_envelope_param_assoc}
        q_{0} \colon \Assoc^\otimes \to \mathrm{Env}_{\underline{\Fin}_{C_2,*}}\left(\mathcal{O}\right) 
    \end{equation}
    Observe that $ \Assoc^\otimes \times \{0\} \to \Assoc^\otimes \times \Delta^1 $ is left anodyne by (the dual to) \cite[Corollary 2.1.2.7]{HTT}. 
    It follows from (the dual to) \cites[Corollary 2.4.2.5]{HTT}[\href{https://kerodon.net/tag/01VF}{Tag 01VF} Theorem 5.2.1.1(1')]{kerodon} that there exists an essentially unique extension 
    \begin{equation}\label{eq:assoc_opd_interval_to_envelope_param_assoc}
        q\colon \Delta^1 \times \Assoc^\otimes \to \mathrm{Env}_{\underline{\Fin}_{C_2,*}}\left(\mathcal{O}\right) 
    \end{equation}
    of (\ref{eq:assoc_opd_to_envelope_param_assoc}) which sends $ (0<1, \id_A) $ to a morphism which is cocartesian with respect to the structure map $ \mathrm{Env}_{\underline{\Fin}_{C_2,*}}\left(\mathcal{O}\right) \to \underline{\Fin}_{C_2,*} $. 
    Write $ p $ for the composite $ \Delta^1 \times \Assoc^\otimes \to \mathrm{Env}_{\underline{\Fin}_{C_2,*}}\left(\mathcal{O}\right) \to \underline{\Fin}_{C_2,*} $. 
    Restriction along the functor of (\ref{eq:assoc_opd_interval_to_envelope_param_assoc}) defines 
    \begin{equation}\label{eq:assoc_alg_norm_in_families}
        \Fun^\otimes_{C_2}\left(\mathrm{Env}_{\underline{\Fin}_{C_2,*}}\left(\mathcal{O}\right), \left(\underline{\Spectra}^{C_2}\right)^\otimes\right) \to \Fun_{\Delta^1 \times \Assoc^\otimes}\left(\Delta^1 \times \Assoc^\otimes, p^*\left(\underline{\Spectra}^{C_2}\right)\right) \,.
    \end{equation}
    Observe that $ p^*\left(\underline{\Spectra}^{C_2}\right) \to \Delta^1 \times \Assoc^\otimes $ is a $ \Delta^1 $-cocartesian family of monoidal $ \infty $-categories in the sense of \cite[Definition 4.8.3.1]{LurHA}, and the image of the aforementioned restriction functor consists of $ \Delta^1 $-cocartesian families of associative algebra objects in $ p^*\left(\underline{\Spectra}^{C_2}\right) $. 
    In particular, by \cite[Remark 4.8.3.8]{LurHA}, the aforementioned restriction functor refines to 
    \begin{equation}\label{eq:assoc_alg_norm_in_families_straightened}
        G_q \colon \Fun^\otimes_{C_2}\left(\mathrm{Env}_{\underline{\Fin}_{C_2,*}}\left(\mathcal{O}\right), \left(\underline{\Spectra}^{C_2}\right)^\otimes\right) \to \Fun\left(\Delta^1, \CAT[\Alg] \right) \,, 
    \end{equation} 
    where $ \CAT[\Alg] $ is \cite[Definition 4.8.3.7]{LurHA}. 
    By construction, its image in $ \Fun\left(\Delta^1,\Alg\left(\CAT\right) \right) $ consists of the single arrow $ N^{C_2} \colon \Spectra \to \Spectra^{C_2} $. 
\end{construction}
\begin{remark}
    In fact, we can take $ \CAT[\Alg]\left(\mathcal{K}^{\mathrm{sift}}\right) $ where $ \mathcal{K}^{\mathrm{sift}} $ denotes the collection of sifted simplicial sets. 
    However, we cannot take $ \mathcal{K} $ to be a collection containing not necessarily sifted diagrams, since the norm $ N^{C_2} $ does not take coproducts in $ \Spectra $ to coproducts in $ \Spectra^{C_2} $.  
\end{remark}
\begin{construction}\label{cons:module_with_gen_involution_in_families}   
    Let $ \mathcal{O}^\otimes $ be a $ C_2 $-$ \infty $-operad, and suppose we are given a map of $ \infty $-operads $ q \colon \mathcal{LM}^\otimes \to \mathcal{O}^\otimes_{C_2/e} $, where $ (-)_{C_2/C_2} $ denotes the non-parametrized fiber.  
    Recall that by \cite[Proposition 2.8.7(1)]{NS22}, there is an equivalence $ \Alg_{\mathcal{O}}\left(\underline{\Spectra}^{C_2}\right) \simeq \Fun^\otimes_{C_2}\left(\mathrm{Env}_{\underline{\Fin}_{C_2,*}}(\mathcal{O}^\otimes), \left(\underline{\Spectra}^{C_2}\right)^\otimes\right) $. 
    Now recall that $ \mathrm{Env}_{\underline{\Fin}_{C_2,*}}(\mathcal{O}^\otimes) \simeq \mathcal{O}^\otimes \times_{\underline{\Fin}_{C_2,*}} Ar^{\mathrm{act}}(\underline{\Fin}_{C_2,*}) $ \cite[Definition 2.8.4]{NS22}. 
    Since the identity map on any object in $ \underline{\Fin}_{C_2,*} $ is an active arrow, $ q_m $ admits a canonical lift to a map 
    \begin{equation}\label{lm_operad_to_genuine_fiber_hm_param_operad}
        r \colon \mathcal{LM}^\otimes \to \mathrm{Env}_{\underline{\Fin}_{C_2,*}}(\mathcal{O}^\otimes) \simeq \mathcal{O}^\otimes \times_{\underline{\Fin}_{C_2,*}} Ar^{\mathrm{act}}(\underline{\Fin}_{C_2,*}) \,.
    \end{equation}
    Restriction along the functor (\ref{lm_operad_to_genuine_fiber_hm_param_operad}) sends maps of $ C_2 $-$ \infty $-operads to maps of ordinary $ \infty $-operads, hence it defines 
    \begin{equation}
        \Fun^\otimes_{C_2}\left(\mathrm{Env}_{\underline{\Fin}_{C_2,*}}(\mathcal{O}^\otimes), \left(\underline{\Spectra}^{C_2}\right)^\otimes\right) \to \Alg_{\mathcal{LM}/\mathrm{Assoc}}\left(p_1^*\left(\underline{\Spectra}^{C_2,\otimes}\right)\right) \to \Alg_{/\mathrm{Assoc}}\left(p_1^*\left(\underline{\Spectra}^{C_2,\otimes}\right)\right)
    \end{equation}
    in the notation of \cite[Definition 2.1.3.1]{LurHA}. 
    Now observe that $ p_1^*\left(\underline{\Spectra}^{C_2,\otimes}\right) $ is the ordinary $ \infty $-category of $ C_2 $-spectra, regarded as a monoidal $ \infty $-category via smash product, and there is a commutative diagram
    \begin{equation}
        \begin{tikzcd}
            \Fun^\otimes_{C_2}\left(\mathrm{Env}_{\underline{\Fin}_{C_2,*}}(\mathcal{O}^\otimes), \left(\underline{\Spectra}^{C_2}\right)^\otimes\right) \ar[r] \ar[d] & \LMod\left(\Spectra^{C_2,\otimes}\right) \ar[r] \ar[d] & \widetilde{\CAT[\mathrm{ex}]} \ar[d] \\
            \Alg_{\mathbb{E}_1}\left(\Spectra^{C_2}\right)  \ar[r,equals] &  \Alg_{\mathbb{E}_1}\left(\Spectra^{C_2}\right) \ar[r] & \CAT[\mathrm{ex}] 
        \end{tikzcd}
    \end{equation}
    where $ \widetilde{\CAT[\mathrm{ex}]} \to \CAT[\mathrm{ex}] $ is the restriction of the universal cocartesian fibration to $ \CAT[\mathrm{ex}] \subseteq \CAT $, the left vertical arrow is restriction along the composite $ \mathrm{Assoc}^\otimes \to \mathcal{LM}^\otimes \xrightarrow{q_m} \mathcal{O}^\otimes_{C_2/C_2} $, and the left horizontal arrow is (\ref{eq:assoc_alg_norm_in_families}) composed with restriction along $ \Assoc^\otimes \times \{1\} \hookrightarrow \Assoc^\otimes \times \Delta^1 $. 
    The right hand square is a pullback by \cite[Corollary 4.2.3.7(3)]{LurHA} and the straightening-unstraightening equivalence.  
\end{construction}
			
\begin{observation}   
    There is a forgetful functor $ U \colon \CAT[\Mod] \to \CAT $, where $ \CAT[\Mod] = \mathrm{Mon}_{\mathcal{LM}}\left(\CAT\right) $ is defined in \cite[immediately preceding Remark 4.8.3.20]{LurHA}.  
    Combining Constructions \ref{cons:norm_in_families} (using $ \iota_e $ of Construction \ref{cons:assoc_operad_to_E_sigma_operad}) and \ref{cons:module_with_gen_involution_in_families} (using Construction \ref{cons:lmod_operad_to_E_sigma_operad}) and composing with the forgetful functor $ U $, we obtain a commutative diagram 
    \begin{equation}\label{diagram:calgp_to_functor_cat_with_pointed_target}
        \begin{tikzcd}[cramped]
            & \EE_\sigma\Alg \ar[rr] \ar[ld] \ar[d,"{U \circ \Theta \circ G}"] & & \widetilde{\CAT[\mathrm{ex}]} \ar[d] \\ 
            \EE_1\Alg\left(\Spectra\right) \ar[r] & \Fun\left(\Delta^1, \CAT \right) \ar[r,"{\ev_1}"] & \CAT & \CAT[\mathrm{ex}] \ar[l] 
        \end{tikzcd}
    \end{equation}
    where $ G $ is (\ref{eq:assoc_alg_norm_in_families_straightened}) from Construction \ref{cons:norm_in_families} and $ \Theta $ is from \cite[Construction 4.8.3.24]{LurHA} (using \cite[Variant 4.8.3.19]{LurHA}, since we consider left modules instead of right modules). 
    Commutativity of the diagram follows from compatibility of maps of $ \infty $-operads (see Lemma \ref{lemma:E_sigma_fixpt_to_underlying}). 
    Informally, this diagram can be regarded as sending an $ \EE_\sigma $-algebra $ A $ to the tuple $ \left(N^{C_2} \colon \Mod_{A^e}(\Spectra) \to \Mod_{N^{C_2}A}\left(\Spectra^{C_2}\right), A \in \Mod_{N^{C_2}A} \right) $. 
\end{observation}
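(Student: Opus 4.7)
The plan is to unwind the definitions of each map in (\ref{diagram:calgp_to_functor_cat_with_pointed_target}) and verify both the upper right square and the lower left triangle commute, reducing everything to the compatibility statements already established in the preceding constructions and in Lemma \ref{lemma:E_sigma_fixpt_to_underlying}. Informally, each arrow out of $\EE_\sigma\Alg$ can be identified as follows. The top horizontal map, via Construction \ref{cons:module_with_gen_involution_in_families} applied to $\mathcal{O}^\otimes = \underline{\Assoc}^\otimes_\sigma$ and $q_m$ of Construction \ref{cons:lmod_operad_to_E_sigma_operad}, sends $A$ to the pair consisting of $\Mod_{N^{C_2}A^e}(\Spectra^{C_2})$ (the $\EE_1$-algebra structure at the target comes from the compatibility $\widetilde{q}_m \circ i_\mathfrak{a} \simeq n\widetilde{i}_e$ at the end of Construction \ref{cons:lmod_operad_to_E_sigma_operad}) together with the underlying $C_2$-spectrum of $A$ regarded as a left module. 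The diagonal map pulls back along $\iota_e$ of Construction \ref{cons:assoc_operad_to_E_sigma_operad} to extract the underlying $\EE_1$-algebra $A^e$ in $\Spectra$. The vertical composite $U\circ\Theta\circ G$ yields the norm functor $\Mod_{A^e}(\Spectra) \xrightarrow{N^{C_2}} \Mod_{N^{C_2}A^e}(\Spectra^{C_2})$.

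For the upper right square, both composites land in $\Mod_{N^{C_2}A^e}(\Spectra^{C_2})$: the top-right path forgets the module datum, while the bottom-right path evaluates the $\Delta^1$-arrow of module categories at the vertex $1$. Their agreement is essentially the content of Lemma \ref{lemma:E_sigma_fixpt_to_underlying}, which identifies the cocartesian-transported restriction $\overline{q}_m|_{\{1\} \times \mathcal{LM}^\otimes}$ with $q_m^e$ of Construction \ref{cons:E_sigma_algebra_to_underlying_bimod}. Since the forgetful functors $\widetilde{\CAT[\mathrm{ex}]} \to \CAT$ and $U \colon \CAT[\Mod] \to \CAT$ both remember only the underlying $\infty$-category, the natural equivalence supplied by that lemma furnishes the required 2-simplex filler in the square.

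For the lower left triangle, commutativity reduces to showing that the source of the $\Delta^1$-family $G(A)$ in $\Alg(\CAT)$ agrees, after applying $\Theta$, with the image of $A^e$ under the bottom horizontal map $\EE_1\Alg(\Spectra) \to \Fun(\Delta^1,\CAT)$. This is immediate from the way Construction \ref{cons:norm_in_families} was built using $\iota_e$ as the initial datum: the essentially unique extension $q$ of (\ref{eq:assoc_opd_to_envelope_param_assoc}) restricts at $0 \in \Delta^1$ to $\iota_e$ itself, so the source of the $\Delta^1$-family is $(\Spectra, A^e)$ and $\Theta$ produces $\Mod_{A^e}(\Spectra)$ as the source of the arrow in $\CAT[\Mod]$. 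Lemma \ref{lemma:E_p_E_sigma_bimod_compatible}, applied via the map of operads factoring through $\underline{\Assoc}_\sigma^\otimes$, then supplies the necessary compatibility between the two ways of producing the norm arrow on module categories.

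The main obstacle is bookkeeping: each of the maps in the diagram is obtained via an essentially unique cocartesian lift along some cocartesian fibration (either in Construction \ref{cons:norm_in_families}, Construction \ref{cons:module_with_gen_involution_in_families}, or the definition of $\overline{q}_m$ in Lemma \ref{lemma:E_sigma_fixpt_to_underlying}), and one must ensure these lifts compose coherently. Fortunately, all uniqueness up to contractible choice means there is at most one way to assemble the 2-cells, so the verification is genuinely formal once the individual identifications are in hand; the substantive input is Lemma \ref{lemma:E_sigma_fixpt_to_underlying} together with the compatibility at the end of Construction \ref{cons:lmod_operad_to_E_sigma_operad}.
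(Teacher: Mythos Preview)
Your proposal is essentially correct and takes the same approach as the paper, which justifies the observation in a single sentence: ``Commutativity of the diagram follows from compatibility of maps of $\infty$-operads (see Lemma \ref{lemma:E_sigma_fixpt_to_underlying}).'' Your expansion of this into separate verifications for the two regions of the diagram, with Lemma \ref{lemma:E_sigma_fixpt_to_underlying} as the key input, is what the paper intends.

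One small point: your invocation of Lemma \ref{lemma:E_p_E_sigma_bimod_compatible} for the lower-left triangle is misplaced. That lemma concerns the compatibility between $\mathcal{LM}^\otimes \to \underline{\Assoc}_\sigma^\otimes$ and the map into $\EE_p^\otimes$, and is used later (in the proof of Theorem \ref{thm:Ep_alg_to_poincare_cat}\ref{thmitem:calgp_to_catp_with_tensor}) for the $\EE_p$-monoidal refinement, not here. The commutativity of the lower-left triangle is already implicit in how Construction \ref{cons:norm_in_families} is built from $\iota_e$: the restriction of $q$ to $\Assoc^\otimes \times \{0\}$ is $\iota_e$ by construction, so the source of the $\Delta^1$-family is automatically identified with $(\Spectra, A^e)$ without further input. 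The only nontrivial compatibility is the one at the target, and that is exactly Lemma \ref{lemma:E_sigma_fixpt_to_underlying}.
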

			
\begin{lemma}\label{lemma:stable_cat_functorial_Yoneda}
    Write $ \widetilde{\CAT[\mathrm{ex}]} \to \CAT[\mathrm{ex}] $ for the restriction of the universal cocartesian fibration to $ \CAT[\mathrm{ex}] \subseteq \CAT $, where $ \CAT[\mathrm{ex}] $ denotes stable $ \infty $-categories and exact functors between them. 
    Consider the cocartesian fibration $ C\textsc{at}^{\mathrm{ex}}_{\infty,(-)^\op/\Spectra} \to \CAT $ classified by the functor $ \Fun^{\mathrm{ex}}\left((-)^\op, \Spectra\right) $, with cocartesian transport given by left Kan extension. 
    Then there is a functor $ \widetilde{\CAT[\mathrm{ex}]} \to C\textsc{at}^{\mathrm{ex}}_{\infty,(-)^\op/\Spectra} $ over $ \CAT[\mathrm{ex}] $ which preserves cocartesian edges. 
\end{lemma}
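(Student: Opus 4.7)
By the straightening--unstraightening equivalence, a functor $\widetilde{\CAT[\mathrm{ex}]} \to C\textsc{at}^{\mathrm{ex}}_{\infty,(-)^\op/\Spectra}$ over $\CAT[\mathrm{ex}]$ which preserves cocartesian edges is the same data as a natural transformation
\[
\eta \colon \iota \Longrightarrow \Fun^{\mathrm{ex}}\bigl((-)^\op,\Spectra\bigr)
\]
of functors $\CAT[\mathrm{ex}] \to \CAT$, where $\iota \colon \CAT[\mathrm{ex}] \hookrightarrow \CAT$ is the inclusion. The component $\eta_{\mathcal{C}}$ at a small stable $\infty$-category $\mathcal{C}$ must be (canonically equivalent to) the stable Yoneda embedding $y_{\mathcal{C}} \colon \mathcal{C} \hookrightarrow \Fun^{\mathrm{ex}}(\mathcal{C}^\op, \Spectra)$, sending $x \mapsto \mathrm{map}_{\mathcal{C}}(-,x)$.

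First, I would invoke the functorial Ind-construction. By \cite[\S5.3.5]{HTT}, the assignment $\mathcal{C} \mapsto \Ind(\mathcal{C})$ refines to a functor $\Ind \colon \CAT[\mathrm{ex}] \to \mathrm{Pr}^{\mathrm{L},\mathrm{st}}$, and the canonical inclusions $\mathcal{C} \hookrightarrow \Ind(\mathcal{C})$ assemble into a natural transformation of functors valued in $\CAT$. The naturality square for an exact functor $F \colon \mathcal{C} \to \mathcal{D}$ is filled by the equivalence $\Ind(F) \circ j_{\mathcal{C}} \simeq j_{\mathcal{D}} \circ F$, where $\Ind(F)$ is (by construction) the unique colimit-preserving extension of $F$, which agrees with left Kan extension along $F^\op$. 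Then I would use the canonical identification $\Ind(\mathcal{C}) \simeq \Fun^{\mathrm{ex}}(\mathcal{C}^\op, \Spectra)$ valid for small stable $\mathcal{C}$; this identification is natural in $\mathcal{C}$, which transports the natural transformation $j \colon \iota \Rightarrow \Ind$ to the desired $\eta$.

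Finally, I would verify that the naturality square identifies with the formula $F_! y_{\mathcal{C}}(x) \simeq y_{\mathcal{D}}(F(x))$, which is the standard fact that left Kan extension of a representable is again representable (provable by the universal property of the Yoneda lemma: both sides corepresent evaluation at $F(x)$ on exact functors $\mathcal{D}^\op \to \Spectra$). This identification ensures that the unstraightened functor sends a cocartesian edge $(\mathcal{C}, x) \to (\mathcal{D}, F(x))$ in $\widetilde{\CAT[\mathrm{ex}]}$ to a cocartesian edge in $C\textsc{at}^{\mathrm{ex}}_{\infty,(-)^\op/\Spectra}$.

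The main obstacle is packaging the pointwise Yoneda embeddings into a coherent natural transformation of $\infty$-functors, as opposed to a mere collection of equivalences indexed by objects and morphisms. This coherence is precisely what the functoriality of $\Ind$ (cf.\ \cite[\S5.3.5]{HTT}, or alternatively the enriched Yoneda of \cite[\S8]{MR4567127}) provides; once one commits to a model for $\Ind$ as a functor, the rest of the argument is essentially formal.
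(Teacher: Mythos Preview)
Your approach is correct but differs from the paper's. You work on the straightened side, producing a natural transformation $\iota \Rightarrow \Fun^{\mathrm{ex}}((-)^\op,\Spectra)$ via the functorial $\mathrm{Ind}$-construction and the identification $\mathrm{Ind}(\mathcal{C}) \simeq \Fun^{\mathrm{ex}}(\mathcal{C}^\op,\Spectra)$. The paper instead works entirely on the unstraightened side: it introduces the category $\mathrm{RFib}$ of right fibrations over stable $\infty$-categories, and the full subcategories $\mathrm{RFib}_*$ (fibrations whose total space has a final object) and $\mathrm{RFib}^{\omega\text{-fil}}$ (fibrations whose total space is $\omega$-filtered). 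The inclusion $\mathrm{RFib}_* \hookrightarrow \mathrm{RFib}^{\omega\text{-fil}}$ is then identified, via \cite[Corollary 6.5]{MR4545922}, \cite[Corollary 5.3.5.4]{HTT}, and \cite[Corollary 1.4.2.23]{LurHA}, with the desired map of cocartesian fibrations. The paper's route makes the coherence issue you flag disappear entirely---the map is a literal inclusion of subcategories, so no natural transformation needs to be assembled---at the cost of invoking the somewhat specialized result that $\mathrm{RFib}_*$ models the universal cocartesian fibration. Your route is perhaps more transparent conceptually (``Yoneda is natural'') but, as you note, leans on the functoriality of $\mathrm{Ind}$ together with a \emph{natural} identification $\mathrm{Ind}(-) \simeq \Fun^{\mathrm{ex}}((-)^\op,\Spectra)$, each of which requires its own care to pin down coherently.
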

\begin{proof}
    Write $ \mathrm{RFib} \subset \mathrm{Ar}\left(\CAT\right) \times_{\CAT} \CAT[\mathrm{ex}] $ be the full subcategory of the arrow category spanned by the right fibrations with target a stable $ \infty $-category. 
    Write $ \mathrm{RFib}_* \subseteq \mathrm{RFib} $, resp. $ \mathrm{RFib}^{\omega\text{-fil}} $ for the full subcategory on those arrows $ \mathcal{C} \to \mathcal{D} $ so that $ \mathcal{C} $ has a final object, resp. is $ \omega $-filtered (i.e., for every diagram $ K \to \mathcal{C} $ where $ K $ is finite, there exists an extension $ K^{\vartriangleright} \to \mathcal{C} $). 
    Observe that there is a commutative diagram 
    \begin{equation}\label{diagram:stable_functorial_Yoneda_unstraightened}
        \begin{tikzcd}[column sep=small]
            \mathrm{RFib}_* \ar[rr,"i"] \ar[rd,"{t_0}"'] & & \mathrm{RFib}^{\omega\text{-fil}} \ar[ld,"{t_1}"] \\
            & \CAT[\mathrm{ex}] & 
        \end{tikzcd} 
    \end{equation} 
    of inner fibrations.  
    By \cite[Corollary 6.5]{MR4545922} (cf. Corollary A.32 of \cite{MR3690268}), the left hand arrow in (\ref{diagram:stable_functorial_Yoneda_unstraightened}) is equivalent to the \emph{universal} cocartesian fibration $ \mathrm{RFib}_* \simeq \widetilde{\CAT[\mathrm{ex}]} $.  
    By \cite[Corollary 5.3.5.4]{HTT}, the fiber of $ \mathrm{RFib}^{\omega\text{-fil}} \to \CAT[\mathrm{ex}] $ over $ \mathcal{D} $ may be identified with the $ \infty $-category $ \Fun^{\mathrm{lex}}\left(\mathcal{D}^\op,\Spaces\right) $ of functors which preserve finite limits. 
    By \cite[Corollary 1.4.2.23]{LurHA}, the functor $ \Omega^\infty $ induces an equivalence $ \Fun^{\mathrm{lex}}\left(\mathcal{D}^\op,\Spaces\right) \simeq \Fun^{\mathrm{lex}}\left(\mathcal{D}^\op,\Spectra\right) $.  
    It follows from \cite[Lemma 1.4.1(i)]{CDHHLMNNSI} that $ \mathrm{RFib}^{\omega\text{-fil}} \to \CAT[\mathrm{ex}] $ is a cocartesian fibration classified by the functor $ \Fun^{\mathrm{lex}}\left((-)^\op,\Spectra\right) $, and $ i $ takes $ t_0 $-cocartesian morphisms to $ t_1 $-cocartesian morphisms.     
    Thus, we have identified $ i $ of (\ref{diagram:stable_functorial_Yoneda_unstraightened}) with the desired functor.  
\end{proof}
We are almost ready to produce the functor which sends an $\mathbb{E}_\sigma$ ring to its associated Poincar{\'e} $\infty$-category. The above, as we will show, allows us to produce the associated quadratic functor $\Qoppa_A$ on the category of \textit{all} left modules $\mathrm{LMod}_{A^e}$. This will not be Poincar{\'e} since we need to restrict our attention to only the compact $A^e$-modules. The following observation lets us do that.
\begin{observation}\label{obs:cpct_modules_to_all_modules}
    Consider the cocartesian fibration $ p \colon \LMod\left(\Spectra\right) \to \EE_1\Alg\left(\Spectra\right) $ \cite[Corollary 4.2.3.7(3)]{LurHA}. 
    
    Write $ \LMod^\omega\left(\Spectra\right) $ for the full subcategory of $ \LMod\left(\Spectra\right) $ on those pairs $ (A, M) $ so that $ M $ is $ \omega $-compact as an $ A $-module. 
    Observe that given any $ (A, M) \in \LMod^\omega $ and any map $ f \colon A \to B $ in $ \EE_1 \Alg\left(\Spectra\right) $, then $ p $-cocartesian pushforward of $ (A,M) $ along $ f $ is equivalent to $ (B,B \otimes_A M) \in \LMod^\omega $. 
    In other words, $ p $-cocartesian pushforward preserves the subcategory $ \LMod^\omega\left(\Spectra\right) $ because the functor $ B \otimes_A (-) $ admits a right adjoint which preserves $ \omega $-filtered colimits. 
    It follows that the restriction $ \overline{p} $ of $ p $ to $ \LMod^\omega\left(\Spectra\right) $ exhibits $ \LMod^\omega\left(\Spectra\right) \to \EE_1\Alg\left(\Spectra\right) $ as a cocartesian fibration, and the inclusion $ \LMod^\omega\left(\Spectra\right) \to \LMod\left(\Spectra\right) $ takes $ \overline{p} $-cocartesian edges to $ p $-cocartesian edges. 
    Unstraightening the inclusion $ \LMod^\omega\left(\Spectra\right) \to \LMod\left(\Spectra\right) $, we obtain a functor 
    \begin{equation*}
        \iota^\omega \colon \EE_1\Alg\left(\Spectra\right) \to \Fun\left(\Delta^1,\CAT[\mathrm{ex}]\right) \qquad \text{ or equivalently } \qquad \EE_1\Alg\left(\Spectra\right) \times \Delta^1 \to \CAT[\mathrm{ex}]
    \end{equation*}
    so that the composite $ \ev_0 \circ \iota^\omega $ factors through $ \Catex \subseteq \CAT[\mathrm{ex}] $. 
    Informally, $ \iota^\omega $ sends an $ \EE_1 $-algebra $ A $ to the inclusion $ \LMod^\omega_A \to \LMod_A $. 
    
    If $ A $ is an $ \EE_\infty $-algebra, then $ \Mod^\omega_A \subseteq \Mod_A $ is symmetric monoidal. 
    A similar series of observations shows that there is a commutative diagram
    \begin{equation}\label{diagram:cpct_modules_to_all_modules_sym_mon}
        \begin{tikzcd}
            \EE_\infty\Alg \times \Delta^1 \ar[d] \ar[r] & \EE_\infty\Mon\left(\CAT[\mathrm{ex}]\right) \ar[d] \\
            \EE_1\Alg\left(\Spectra\right) \times \Delta^1 \ar[r,"{\iota^\omega}"] & \CAT[\mathrm{ex}] \,.
        \end{tikzcd}
    \end{equation}
\end{observation}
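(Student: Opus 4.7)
The plan is to reduce everything to two elementary facts about compact objects and module categories, then invoke general (un)straightening machinery so that all of the coherences are automatic. First, I would verify the claim that the subcategory $\LMod^\omega(\Spectra) \subseteq \LMod(\Spectra)$ is closed under $p$-cocartesian pushforward. Given a map $f : A \to B$ of $\EE_1$-ring spectra, $p$-cocartesian pushforward along $f$ is computed by the induction functor $B \otimes_A (-) \colon \LMod_A \to \LMod_B$ (this is essentially \cite[Corollary 4.6.2.17]{LurHA}). This functor is left adjoint to the restriction-of-scalars functor $f^* \colon \LMod_B \to \LMod_A$; since $f^*$ is computed on underlying spectra and the forgetful functor $\LMod_A \to \Spectra$ preserves all small colimits, $f^*$ preserves $\omega$-filtered colimits. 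Consequently its left adjoint preserves $\omega$-compact objects, which is the desired closure property.

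Given this closure property, I would then apply (the dual of) \cite[Proposition 2.4.1.3(2)]{HTT} to deduce that $\overline{p} \colon \LMod^\omega(\Spectra) \to \EE_1\Alg(\Spectra)$ is a cocartesian fibration and that the inclusion $\LMod^\omega \hookrightarrow \LMod$ sends $\overline{p}$-cocartesian edges to $p$-cocartesian edges. The fibers of $\overline{p}$ are by definition the small stable $\infty$-categories $\LMod_A^\omega$, so in fact $\overline{p}$ factors through the subcategory of $\Catex \subseteq \CAT[\mathrm{ex}]$ on small stable $\infty$-categories. Straightening the square
\begin{equation*}
\begin{tikzcd}
\LMod^\omega(\Spectra) \ar[r,hookrightarrow] \ar[d,"{\overline{p}}"'] & \LMod(\Spectra) \ar[d,"p"] \\
\EE_1\Alg(\Spectra) \ar[r,equals] & \EE_1\Alg(\Spectra)
\end{tikzcd}
\end{equation*}
of cocartesian fibrations via \cite[Theorem 3.2.0.1]{HTT} (or its improvements) produces the desired functor $\iota^\omega$ and the factorization claim for $\ev_0 \circ \iota^\omega$.

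For the symmetric monoidal upgrade (\ref{diagram:cpct_modules_to_all_modules_sym_mon}), I would bootstrap off the non-symmetric-monoidal case by working with the relative tensor product of modules over $\EE_\infty$-rings. Concretely, I would consider the cocartesian fibration of $\infty$-operads $\LMod(\Spectra)^\otimes \to \EE_\infty\Alg(\Spectra) \times \Fin_*$ of \cite[\S4.5.1 \& Theorem 4.5.3.1]{LurHA}, which straightens to the functor $A \mapsto \Mod_A^\otimes$. The key additional input is that for an $\EE_\infty$-ring $A$, the subcategory $\Mod_A^\omega \subseteq \Mod_A$ is closed under the tensor product $\otimes_A$ (this follows because $\Mod_A^\omega$ is generated under finite colimits and retracts by the tensor unit $A$, which is compact) and that induction along a map $A \to B$ of $\EE_\infty$-rings is symmetric monoidal. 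Applying the same parametrized (un)straightening argument as before yields the commutative square (\ref{diagram:cpct_modules_to_all_modules_sym_mon}).

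The main obstacle is the bookkeeping required to assemble the symmetric monoidal structures into a map of $\infty$-operads over $\Delta^1$ rather than a pointwise compatibility. I expect this to be handled cleanly by combining closure of compact modules under $\otimes_A$ with the general fact \cite[Proposition 2.4.1.3(2)]{HTT} applied to the ambient operadic cocartesian fibration $\LMod(\Spectra)^\otimes \to \EE_\infty\Alg(\Spectra) \times \Fin_*$, so that the full subcategory spanned by compact modules on each orbit of $\Fin_*$ inherits the structure of a cocartesian fibration of $\infty$-operads; straightening then produces the required square of functors valued in $\EE_\infty\Mon(\CAT[\mathrm{ex}])$.
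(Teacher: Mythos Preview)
Your proposal is correct and follows essentially the same approach as the paper. The paper embeds its justification directly in the observation statement rather than giving a separate proof: it notes that $B \otimes_A (-)$ preserves compact objects because its right adjoint preserves filtered colimits, deduces that $\overline{p}$ is a cocartesian fibration, and then says the symmetric monoidal upgrade follows from ``a similar series of observations''; your write-up fleshes out these same steps with more precise citations and a more careful treatment of the operadic case.
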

We now construct the main functor in question. 
\begin{construction}\label{cons:precompose_with_norm_in_families}
    Combining (\ref{diagram:calgp_to_functor_cat_with_pointed_target}) with Lemma \ref{lemma:stable_cat_functorial_Yoneda}, we obtain a commutative diagram 
    \begin{equation}\label{diagram:calgp_to_functor_cat_with_presheaf_on_target}
        \begin{tikzcd}
            & & \int \Fun^{\mathrm{ex}}( (-)^\op, \Spectra) \ar[d] \\
            \EE_\sigma\Alg \ar[r] \ar[rru,dashed,bend left=15] & \left(\CAT\right)^{\Delta^1} \times_{\CAT} \CAT[\mathrm{ex}] \ar[r] & \CAT[\mathrm{ex}] \,.
        \end{tikzcd}
    \end{equation}
    There is a functor $ \Fun((-)^\op, \Spectra ) \colon \CAT \to \CAT $. 
    Under the non-full subcategory inclusion $ \CAT[\mathrm{ex}] \to \CAT $, for any $ \mathcal{C} \in \CAT[\mathrm{ex}] $, there is an inclusion $ \Fun^{\mathrm{ex}}(\mathcal{C}^\op,\Spectra) \subseteq \Fun(\mathcal{C}^\op, \Spectra) $, where $ \Fun^{\mathrm{ex}}(\mathcal{C}^\op,\Spectra) $ denotes the full subcategory spanned by exact functors. 
    In particular, there is a map of cartesian fibrations 
    \begin{equation}\label{diagram:extended_fibration_Cat_epoly}
        \begin{tikzcd}
            \int \Fun^{\mathrm{ex}}( (-)^\op, \Spectra) \ar[r] \ar[d] & \int \Fun( (-)^\op, \Spectra) \ar[d] \\
            \CAT[\mathrm{ex}] \ar[r] & \CAT\,.
        \end{tikzcd}
    \end{equation}
    Composing (\ref{diagram:calgp_to_functor_cat_with_presheaf_on_target}) with (\ref{diagram:extended_fibration_Cat_epoly}) and noting that the map $ \left(\CAT\right)^{\Delta^1} \to \CAT $ defining the middle term in (\ref{diagram:calgp_to_functor_cat_with_presheaf_on_target}) is given by evaluation at the target, we obtain a diagram 
    \begin{equation}\label{diagram:E_sigma_alg_to_functor_cat_with_presheaf_on_target_extended}
        \begin{tikzcd}[row sep=small]
            & & \int \Fun( (-)^\op, \Spectra) \ar[dd] \\
            \EE_\sigma\Alg \times \{1\} \ar[r,"M"] \ar[rru,bend left=15,"{M_*}"] \ar[d] & \left(\CAT\right)^{\Delta^1} \times \{1\} \ar[d] &  \\
            \EE_\sigma\Alg \times \Delta^1 \ar[r,"{M \times \mathrm{id}_{\Delta^1}}"] & \left(\CAT\right)^{\Delta^1} \times \Delta^1 \ar[r,"{\mathrm{ev}}"] & \CAT  \,.
        \end{tikzcd}
    \end{equation}
    Observe that in the bottom row of (\ref{diagram:E_sigma_alg_to_functor_cat_with_presheaf_on_target_extended}), given an $ \EE_\sigma $-algebra $ A $, the morphism $ (\mathrm{id}_A, 0\to 1) $ from $ (A,0) $ to $ (A, 1) $ on the lower left is sent to the morphism $ N^{C_2} \colon \Mod_{A^e}(\Spectra) \to \Mod_{N^{C_2}A}\left(\Spectra^{C_2} \right) $ in the lower right. 
    Now observe that $ \{1\} \to \Delta^1 $ and $ \EE_\sigma\Alg \times \{1\} \to \EE_\sigma\Alg \times \Delta^1 $ are right anodyne by \cite[Corollary 2.1.2.7]{HTT}. 
    It follows from \cites[Corollary 2.4.2.5]{HTT}[\href{https://kerodon.net/tag/01VF}{Tag 01VF} Theorem 5.2.1.1(1)]{kerodon} that there exists a functor $ \overline{M}_* \colon \EE_\sigma\Alg \times \Delta^1 \to \int \Fun\left((-)^\op,\Spectra\right) $ in (\ref{diagram:E_sigma_alg_to_functor_cat_with_presheaf_on_target_extended}) extending $ M_* $. 
    Moreover, by \cite[\href{https://kerodon.net/tag/01VF}{Tag 01VF} Theorem 5.2.1.1(2)]{kerodon}, we see that $ \overline{M}(A,0) $ is the cartesian transport of $ M (A, 1) = \hom_{N^{C_2}A}(-,A) $ along the functor $$ \Fun\left(\Mod_{N^{C_2} A}\left(\Spectra^{C_2}\right) ^\op, \Spectra\right) \to \Fun\left(\Mod_{A^e}\left(\Spectra\right),\Spectra\right) $$ classified by $ \mathrm{ev} \circ \left(M \times \mathrm{id}_{\Delta^1}\right) $. 
    Consider the restriction of $ \overline{M}_* $ to $ \EE_\sigma\Alg \times \{0\} $. 
    Since the restriction of $ M \times \mathrm{id}_{\Delta^1} $ to $ \EE_\sigma\Alg \times \{0\} $ agrees with $ \Mod_{(-)^e} $ by Construction \ref{cons:norm_in_families}, we have that the image of $ \left.\left( M \times \mathrm{id}_{\Delta^1} \right)\right|_{\EE_\sigma\Alg \times \{0\}} $ factors through the inclusion $ \CAT[\mathrm{ex}] \subseteq \CAT$.  
    It follows that we may regard $ \overline{M}_*|_{\EE_\sigma\Alg \times \{0\}} $ as having codomain the total space of the cartesian fibration $ \int \Fun\left((-)^\op,\Spectra\right) \to \CAT[\mathrm{ex}] $. 
    
    We claim that $ \overline{M}_*|_{\EE_\sigma\Alg \times \{0\}} $ admits an essentially unique factorization through the cartesian fibration $ \int \Fun^q(-) \to \Catex $. 
    Since the natural transformation $ \Fun^q(-) \subseteq \Fun\left((-)^\op,\Spectra\right) $ of functors $ \CAT[\mathrm{ex}] \subseteq \CAT $ is given pointwise by full inclusions, the induced map $ \int \Fun^q(-) \to \int \Fun\left((-)^\op,\Spectra\right) $ of cartesian fibrations over $ \CAT[\mathrm{ex}] $ is a full inclusion by \cite[Proposition 2.4.4.2]{HTT}. 
    Therefore, to show that $ \overline{M}_*|_{\EE_\sigma\Alg \times \{0\}} $ factors through $ \int \Fun^q(-) $, it suffices to check that $ \overline{M}_*|_{\EE_\sigma\Alg \times \{0\}} $ sends objects of $ \EE_\sigma\Alg $ to objects of $ \int \Fun^q(-) $. 
    This is true because $ \overline{M}_*|_{\EE_\sigma\Alg \times \{0\}}(A,0) = \left(\Mod_{A^e}, \hom_{N^{C_2}A}\left(N^{C_2}(-), A\right)\right) $, $ N^{C_2} \colon \Mod_{A^e} \to \Mod_{N^{C_2}A} $ is quadratic, and a composite of a reduced quadratic functor and an exact functor is reduced and quadratic. 
    
    In sum, we have produced a commutative diagram like so: 
    \begin{equation}
        \begin{tikzcd}[row sep=small]
            & & \int \Fun^{q}(-) \ar[d] \ar[rd] & \\
            \EE_\sigma\Alg \times \{0\} \ar[rr,"{\mathrm{ev}_0 \circ M \simeq \Mod_{(-)^e}}"] \ar[rru,bend left=15,"{\left.\overline{M}_*\right|_{\CAlgp \times \{0\}}}"] \ar[d] & & \CAT[\mathrm{ex}] \ar[rd] & \int \Fun\left((-)^\op,\Spectra\right) \ar[d] \\
            \EE_\sigma\Alg \times \Delta^1 \ar[r,"{M \times \mathrm{id}_{\Delta^1}}"] & \left(\CAT\right)^{\Delta^1} \times \Delta^1 \ar[rr,"{\mathrm{ev}}"] & &\CAT  \,.
        \end{tikzcd}
    \end{equation}
    Let us write $ \mathrm{ev}_0 \circ M =: M_0 $ and $ \left.\overline{M}_*\right|_{\EE_\sigma\Alg \times \{0\}} =: \overline{M}_{*,0} $. 
    Now let us observe that there is an equivalence of functors $ M_0 \simeq \Mod_{(-)^e} \simeq \ev_1 \circ \iota^\omega $, where $ \iota^\omega $ is defined in Observation \ref{obs:cpct_modules_to_all_modules}.
    Now there is a commutative diagram
    \begin{equation*}
        \begin{tikzcd}
            \EE_\sigma\Alg \times \{1\} \simeq \EE_\sigma\Alg \ar[rr,"{\overline{M}_{*,0}}"] \ar[d] & & \int \Fun^{q}(-) \ar[d,"c"] \\
            \EE_\sigma\Alg \times \Delta^1 \ar[r] & \EE_1\Alg(\Spectra) \times \Delta^1 \ar[r,"{\iota^\omega}"] & \CAT[\mathrm{ex}] \,.
        \end{tikzcd}
    \end{equation*}
    By the same argument as before, there is a canonical diagonal map $ \widetilde{M}_{*} $ extending $ \overline{M}_{*,0} $ so that $ \widetilde{M}_* $ sends $ (\id_A, 0<1) $ to a $ c $-cartesian arrow. 
    Observe that the restriction $ \widetilde{M}_*^\omega $ of $ \widetilde{M}_* $ to $ \EE_\sigma\Alg \times \{0\} $ sits in a commutative diagram
    \begin{equation}\label{diagram:E_sigma_alg_to_hermitian_cat}
        \begin{tikzcd}
            \EE_\sigma\Alg \simeq \EE_\sigma\Alg \times \{0\} \ar[d] \ar[r,"{\widetilde{M}^\omega_*}"] & \int \Fun^{q}(-) \simeq \Cath \ar[d] \\
            \EE_1\Alg(\Spectra) \ar[r,"{\ev_0 \circ \iota^\omega}"] & \Catex \,.         
        \end{tikzcd}     
    \end{equation} 
    By definition of $ \widetilde{M}_* $ and $ \iota^\omega $, the functor $ \widetilde{M}_{*}^\omega $ sends an $ \EE_\sigma $-algebra $ A $ to the composite $ \Mod_{A^e}^\omega \subseteq \Mod_{A^e} \xrightarrow{N^{C_2}} \Mod_{N^{C_2}A} \xrightarrow{\hom_{N^{C_2}A}(-,A)} \Spectra $. 
\end{construction}

\begin{definition}\label{defn:E_sigma_alg_to_hermitian_cat}
    Let us write $ \Mod^\mathrm{p} $ for the functor $ \EE_\sigma\Alg \to \Cath $ of (\ref{diagram:E_sigma_alg_to_hermitian_cat}).
\end{definition}
\begin{theorem}\label{thm:Ep_alg_to_poincare_cat}
    \begin{enumerate}[label=(\arabic*)]
        \item \label{thmitem:E_sigma_alg_to_hermitian_factors} The functor of Definition \ref{defn:E_sigma_alg_to_hermitian_cat} factors through the subcategory $ \Catp \subseteq \Cath $. 
        \item \label{thmitem:borel_to_symmetric} The functor $ \Mod^\mathrm{p} $ sends $ \EE_\sigma $-algebras which are Borel to Poincaré $ \infty $-categories which are symmetric in the sense of \cite[Definition 1.2.11]{CDHHLMNNSI}. 
        \item \label{thmitem:unit_to_unit} Recall that $ \EE_\sigma\Alg $ has an initial object given by the sphere spectrum (Remark \ref{rmk:E_sigma_operad_unital}). 
        Then there is an equivalence $ \Mod^\mathrm{p}_{\sphere} \simeq \left(\Spectra^\omega,\Qoppa^u\right) $ where the latter is the universal Poincaré $ \infty $-category of \cite[\S4.1]{CDHHLMNNSI}. 
        \item \label{thmitem:calgp_to_catp_with_tensor} Recall that $ \Cath $ has a symmetric monoidal structure. 
        There is a functor $ \Mod^{p,\otimes} $ making the diagram 
        \begin{equation}\label{diagram:E_p_alg_to_sym_mon_cath}
            \begin{tikzcd}
                \EE_p \Alg \ar[r,"{\Mod^{p,\otimes}}"'] \ar[d] \ar[rr,bend left=15,"{R \mapsto \Mod_{R^e}^\omega\left(\Spectra\right)}"] & {\EE_\infty\Mon\left(\Cath\right)} \ar[d] \ar[r] & \EE_\infty\Mon\left(\Catex\right) \ar[d] \\
                \EE_\sigma \Alg \ar[r,"{\Mod^\mathrm{p}}"] & \Cath \ar[r] & \Catex 
            \end{tikzcd}
        \end{equation}
        commute, where the left vertical functor is induced by Observation \ref{obs:E_sigma_param_operad_to_calgp_param_operad}. 
        \item \label{thmitem:E_p_alg_to_sym_mon_hermitian_factors} The functor $ \Mod^{p,\otimes} $ of \ref{thmitem:calgp_to_catp_with_tensor} factors through $ \EE_\infty\Mon\left(\Catp\right) $.
        \item \label{thmitem:calgp_to_catp_preserves_tensor} The composite $$ \EE_p\Alg \xrightarrow{\Mod^{p,\otimes}} \EE_\infty \Mon\left(\Catp\right) \xrightarrow{(-)^\natural} \EE_\infty \Mon\left(\Catpidem\right) $$ is symmetric monoidal. 
        Here $ (-)^\natural $ denotes the localization functor of Corollary \ref{proposition:idempotent_completion_of_sym_mon_catp}. 
    \end{enumerate}
\end{theorem}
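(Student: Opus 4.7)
My plan is to dispatch the non-monoidal statements \ref{thmitem:E_sigma_alg_to_hermitian_factors}--\ref{thmitem:unit_to_unit} by unpacking Construction \ref{cons:precompose_with_norm_in_families} and invoking Theorem \ref{thm: genuine equivarient modules are hermitian structures}, and then to build the symmetric monoidal refinement in \ref{thmitem:calgp_to_catp_with_tensor}--\ref{thmitem:calgp_to_catp_preserves_tensor} by running the same construction operadically, using the unitality of $\EE_p^\otimes$ (Remark \ref{rmk:E_sigma_operad_unital}) and the cocartesian-monoidal structure on $\EE_\infty\Mon(-)$.

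For \ref{thmitem:E_sigma_alg_to_hermitian_factors}, recall that $\Mod^{\mathrm{p}}$ sends $A$ to the hermitian category whose underlying quadratic functor is $M\mapsto \hom_{N^{C_2}A}(N^{C_2}M,A)$; by Theorem \ref{thm: genuine equivarient modules are hermitian structures} this is Poincar\'e precisely when the underlying $A$-module of the classifying module-with-genuine-involution is invertible, and here that module is $A$ itself, which is tautologically invertible. Functoriality of the factorization through $\Catp$ follows from the fact that $\Catp\subseteq\Cath$ is a full subcategory closed under $p$-cartesian edges of the structural fibration (compare Proposition \ref{prop:relative_poincare_cats_basic_properties}). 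Part \ref{thmitem:borel_to_symmetric} follows by unraveling the recollement $\Spectra^{C_2}\simeq\Spectra^{BC_2}\times_{\Spectra}\Spectra^{\Delta^1}$: if $A$ is Borel then the structure map $A^{\varphi C_2}\to A^{\mathrm{t}C_2}$ is an equivalence, which by Theorem \ref{thm: characterization of quadratic functors on a cat} exhibits $\Qoppa_A$ as the symmetric Poincar\'e structure associated with the bilinear functor $B_{\Qoppa_A}$. Part \ref{thmitem:unit_to_unit} reduces to checking on a single object: for $A=\sphere$, Lemma \ref{lemma:little_disks_combinatorial_model} and the identification $N^{C_2}\sphere\simeq\underline{\sphere}$ give $\Qoppa_\sphere(M)=\hom_{\underline{\sphere}}(N^{C_2}M,\underline{\sphere})=(N^{C_2}M)^{C_2}$, which is the universal Poincar\'e structure $\Qoppa^u$ of \cite[\S4.1]{CDHHLMNNSI}; initiality in $\CAlgp$ comes from Remark \ref{rmk:E_sigma_operad_unital}.

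For \ref{thmitem:calgp_to_catp_with_tensor}, I will promote every step of Constructions \ref{cons:Einfty_norm_in_families}--\ref{cons:precompose_with_norm_in_families} to symmetric monoidal analogues by replacing $\EE_\sigma^\otimes\simeq\underline{\Assoc}_\sigma^\otimes$ with the ``commutative'' $\EE_p^\otimes$ and applying $\EE_\infty\Mon(-)$ throughout. Concretely: because $\EE_p$-algebras are $C_2$-$\EE_\infty$ algebras whose norm map is only specified up to (unordered) t-orderings, cocartesian pushforward along $C_2/e\to C_2/C_2$ gives a functorial lift of $N^{C_2}\colon\Spectra\to\Spectra^{C_2}$ to a map of $\EE_\infty$-monoidal $\infty$-categories, so the functor $\widetilde{M}_*^\omega$ of Construction \ref{cons:precompose_with_norm_in_families} may be built inside $\EE_\infty\Mon$. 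The composite with $\int\Fun^q(-)\to\CAT$ lands in $\EE_\infty\Mon(\Cath)$ by Theorem \ref{thm: genuine equivarient modules are hermitian structures} (which is symmetric monoidal in the $\EE_\infty$-case) together with \cite[Corollary 5.3.7]{CDHHLMNNSI}, and the commutativity of (\ref{diagram:E_p_alg_to_sym_mon_cath}) is automatic from the compatibility established in Observation \ref{obs:E_sigma_param_operad_to_calgp_param_operad}. Part \ref{thmitem:E_p_alg_to_sym_mon_hermitian_factors} is then immediate: the forgetful $\EE_\infty\Mon(\Cath)\to\Cath$ is conservative and detects the subcategory $\Catp\subseteq\Cath$ pointwise, and part \ref{thmitem:E_sigma_alg_to_hermitian_factors} shows the underlying hermitian category is already Poincar\'e.

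Part \ref{thmitem:calgp_to_catp_preserves_tensor} is the main hurdle. Because both source and target admit cocartesian symmetric monoidal structures only after idempotent completion (see Proposition \ref{proposition:idempotent_completion_of_sym_mon_catp}), it suffices by \cite[Proposition 2.4.3.8]{LurHA} to show that $\Mod^{p,\natural}\colon\EE_p\Alg\to\EE_\infty\Mon(\Catpidem)$ preserves finite coproducts; equivalently, for $R,S\in\EE_p\Alg$ the canonical map $\Mod^\mathrm{p}_R\otimes_{\Mod^\mathrm{p}_\sphere}\Mod^\mathrm{p}_S\to\Mod^\mathrm{p}_{R\otimes S}$ must be an equivalence after idempotent completion. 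The underlying $R^e$-linear/$S^e$-linear statement is the classical Morita identification $\Mod_{R^e}^\omega\otimes\Mod_{S^e}^\omega\simeq\Mod_{R^e\otimes S^e}^\omega$ (on idempotent completions), and the Poincar\'e refinement follows from Proposition \ref{prop:relative_poincare_cats_basic_properties}\ref{propitem:Rlin_Poincare_cats_tensor_mod_gen_inv} once we verify the compatibility of the norm with the monoidal product: the relative-tensor computation there identifies the bilinear and linear parts with $\hom_{N^{C_2}(R^e\otimes S^e)}(N^{C_2}(-),R\otimes S)$, matching the target Poincar\'e structure via the lax symmetric monoidality of $N^{C_2}$ (upgraded to the strictly monoidal statement on $\EE_p$-algebras by the $t$-ordering data). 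The anticipated difficulty is bookkeeping the coherence of the unit comparison $\Mod^\mathrm{p}_\sphere\simeq(\Spectra^\omega,\Qoppa^u)$ through the bar construction used to compute the relative tensor product, so that on both sides one truly reads off the same $N^{C_2}(R^e\otimes S^e)$-module structure rather than merely an equivalence of underlying spectra.
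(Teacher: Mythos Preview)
Your overall strategy aligns with the paper's, but there is a genuine gap in parts \ref{thmitem:E_sigma_alg_to_hermitian_factors} and \ref{thmitem:E_p_alg_to_sym_mon_hermitian_factors}: you assert that $\Catp\subseteq\Cath$ is a \emph{full} subcategory, and then deduce the factorization on morphisms for free. This is false. A morphism in $\Catp$ is a hermitian functor $(f,\eta)$ which is additionally \emph{duality-preserving}, i.e.\ the induced transformation $f\circ D_\Qoppa\Rightarrow D_\Phi\circ f^{\op}$ must be an equivalence (see \cite[Definition 1.2.8]{CDHHLMNNSI}). So checking that objects land in $\Catp$ is not enough; you must separately verify that $\Mod^\mathrm{p}(\varphi)$ is Poincar\'e for every $\varphi\colon A\to B$ in $\EE_\sigma\Alg$. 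The paper does this explicitly: unwinding Construction \ref{cons:precompose_with_norm_in_families}, the hermitian functor $\Mod^\mathrm{p}(\varphi)$ corresponds under \cite[Corollary 3.4.2]{CDHHLMNNSI} to the map $\varphi\colon A\to B$ regarded as a map of $N^{C_2}A$-modules (via $N^{C_2}\varphi$), and one checks directly that this satisfies the hypothesis of \cite[Lemma 3.4.3]{CDHHLMNNSI}, which is the criterion for such a hermitian functor to be Poincar\'e.

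The same issue recurs in your argument for \ref{thmitem:E_p_alg_to_sym_mon_hermitian_factors}: conservativity of $\EE_\infty\Mon(\Cath)\to\Cath$ does not help, because the problem is not detecting equivalences but detecting whether a morphism lies in a non-full subcategory. The paper handles this via Lemma \ref{lemma:monoidal_herm_func_repping_obj}, which identifies symmetric monoidal hermitian functors covering induction with maps of $\EE_\infty$-$N^{C_2}A$-algebras, and then observes that the Poincar\'e condition on the monoidal level reduces to the underlying (non-monoidal) Poincar\'e condition already established in \ref{thmitem:E_sigma_alg_to_hermitian_factors}. Your treatment of parts \ref{thmitem:borel_to_symmetric}, \ref{thmitem:unit_to_unit}, \ref{thmitem:calgp_to_catp_with_tensor}, and \ref{thmitem:calgp_to_catp_preserves_tensor} is otherwise in line with the paper, though the paper's construction for \ref{thmitem:calgp_to_catp_with_tensor} requires more infrastructure than you indicate (in particular the multiplicative Yoneda lemma of Lemma \ref{lemma:stable_cat_functorial_multiplicative_Yoneda} and the cartesian-fibration statement of Lemma \ref{lemma:cath_to_catex_cartesian_on_Einfty_alg}).
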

\begin{proof}
    Since $ \Catp $ is a subcategory of $ \Cath $, for part \ref{thmitem:E_sigma_alg_to_hermitian_factors} we must show that $ \Mod^\mathrm{p} $ sends objects of $ \EE_\sigma\Alg $ to Poincaré $ \infty $-categories and morphisms in $ \EE_\sigma \Alg $ to duality-preserving hermitian functors. 
    That $ \Mod^\mathrm{p} $ sends an $ \EE_\sigma $-algebra to a Poincaré $ \infty $-category follows from Lemma \ref{lemma:E_sigma_fixpt_to_underlying}, Construction \ref{cons:precompose_with_norm_in_families}, and Proposition 3.1.6 and Theorem 3.3.1 of \cite{CDHHLMNNSI} (also see Remark 3.3.4 \emph{loc. cit.}). It remains to see that the maps of $\EE_\sigma$ algebras induce Poincar\'e functors, for which we will follow the argument in \cite[Example 3.4.5]{CDHHLMNNSI},
    Unraveling Construction \ref{cons:precompose_with_norm_in_families} further, given a map $ \varphi \colon A \to B $ in $ \EE_\sigma\Alg $, the induced hermitian functor $ \Mod^\mathrm{p}(\varphi) $ covering the induction functor $ - \otimes_A B \colon \Mod^\omega_A \to \Mod^\omega_B $ corresponds under the identification of \cite[Corollary 3.4.2 and preceding discussion]{CDHHLMNNSI} to the map $ \varphi \colon A \to B $ \emph{of $ N^{C_2} A $-modules}, where $ B $ is regarded as a $ N^{C_2} A $-module via $ N^{C_2} A \xrightarrow{N^{C_2}(\varphi)} N^{C_2} B $. 
    This manifestly satisfies the assumptions of \cite[Lemma 3.4.3]{CDHHLMNNSI}, hence $ \Mod^\mathrm{p} $ takes maps in $ \EE_\sigma\Alg $ to duality-preserving hermitian functors.
    
    Part \ref{thmitem:borel_to_symmetric} follows from the characterization of symmetric Poincaré structures on module categories in \cite[Theorem 3.3.1]{CDHHLMNNSI}. 
    
    Part \ref{thmitem:unit_to_unit} follows from unraveling definitions. 
    
    Part \ref{thmitem:calgp_to_catp_with_tensor} follows from a modification of the constructions used to define $ \Mod^\mathrm{p} $ in the first place. 
    First, note that there exists a commutative diagram 
    \begin{equation*}
        \begin{tikzcd}[cramped, column sep=huge]
            \Assoc^\otimes \ar[rr] \ar[d] & & \underline{\Assoc}_{\sigma}^\otimes \ar[d] \\
            \Fin_* \ar[rr,"{\langle 1 \rangle \mapsto [C_2/C_2 = C_2/C_2]}"] & & \EE_p^\otimes         
        \end{tikzcd}
    \end{equation*} 
    where the upper row is from Construction \ref{cons:norm_in_families}. 
    By \cite[Proposition 4.5.1.4 \& Theorem 4.5.3.1]{LurHA}, this induces a commutative diagram 
    \begin{equation*}
        \begin{tikzcd}[cramped]
            \EE_p\Alg \ar[r,"{G^\otimes}"]\ar[d] & {\EE_\infty \Alg\Fun\left(\Delta^1,\CAT[\Mod]\right)} \ar[d] \\
            \EE_\sigma\Alg \ar[r,"{\Theta \circ G}"] & {\Fun\left(\Delta^1,\CAT[\Mod]\right)}
        \end{tikzcd}
    \end{equation*}
    where $ G $ is the functor of (\ref{eq:assoc_alg_norm_in_families_straightened}). 
    That $ \Theta $ is symmetric monoidal is \cite[Theorem 4.8.5.16]{LurHA}. 
    
    By \cite[Corollary 3.4.1.5]{LurHA}, the functor $ \Delta^1 \times \Fin_*  \to \mathrm{Env}_{\underline{\Fin}_{C_2,*}}\left(\EE_p^\otimes\right) $ of Construction \ref{cons:Einfty_norm_in_families} induces a functor $ \EE_p\Alg \to {\CAT[\EE_\infty \Alg]} $ lifting $ \ev_1 \circ G^\otimes $. 
    By Lemma \ref{lemma:E_p_E_sigma_bimod_compatible}, there is a commutative diagram
    \begin{equation*}
        \begin{tikzcd}[cramped]
            \EE_p\Alg \ar[r,"{A_*}"'] \ar[rr, bend left=15,"{\ev_1 \circ G^\otimes}"] \ar[d] & {\CAT[\EE_\infty \Alg, \mathrm{ex}]} \ar[d] \ar[r] & \Mon_{\EE_\infty}\left(\CAT[\mathrm{ex}]\right) \ar[d] \\
            \EE_\sigma\Alg \ar[r,] \ar[rr,bend right=15, "{\ev_1 \circ G}"'] & {\widetilde{\CAT[\mathrm{ex}]}} \ar[r] & {\CAT[\mathrm{ex}]} 
        \end{tikzcd}
    \end{equation*}
    where the lower row is from (\ref{diagram:calgp_to_functor_cat_with_pointed_target}). 
    Thus far, we have constructed a commutative diagram  
    \begin{equation*}
        \begin{tikzcd}[cramped]
            \EE_p\Alg \ar[rrr,"{A_*}"] \ar[d] & & & {\CAT[\EE_\infty\Alg,\mathrm{ex}]} \ar[d,"\phi"] \\ 
            \EE_\infty\Alg\left(\underline{\Spectra}^{C_2}\right) \ar[r,"{\Theta \circ G}"] & \EE_\infty \Alg \Fun\left(\Delta^1, \CAT[\Mod] \right) \ar[r,"{\ev_1 \circ U}"] & \EE_\infty\Alg\left(\CAT\right) & \EE_\infty\Alg\left(\CAT[\mathrm{ex}]\right) \ar[l] 
        \end{tikzcd}
    \end{equation*} 
    lifting the diagram (\ref{diagram:calgp_to_functor_cat_with_pointed_target}). 
    
    Composing $ A_* $ with the functor of Lemma \ref{lemma:stable_cat_functorial_multiplicative_Yoneda}\ref{lemmaitem:mult_yoneda_on_algebras}, we have a commutative diagram
    \begin{equation*}
        \begin{tikzcd}
            \EE_p\Alg \ar[r] \ar[d] & {\CAT[\EE_\infty\Alg,\mathrm{ex}]} \ar[d] \ar[r] & {\Mon_{\EE_\infty}\left(\CAT[\mathrm{ex}]\right)_{(-)^\op/\mathrm{lax}/\Spectra}} \ar[d] \\
            \EE_\sigma\Alg \ar[r] & {\widetilde{\CAT[\mathrm{ex}]}} \ar[r] & \int \Fun^{\mathrm{ex}}\left((-)^\op,\Spectra\right)     
        \end{tikzcd}
    \end{equation*}
    lifting the diagram (\ref{diagram:calgp_to_functor_cat_with_presheaf_on_target}). 
    Using the cartesian fibration $ \EE_\infty\Mon\left(\CAT\right)_{(-)^\op/\mathrm{lax}/\Spectra} \to \EE_\infty\Mon\left(\CAT\right) $ in place of $ \left(\CAT\right)_{(-)^\op/\Spectra} \to \CAT $, the argument of Construction \ref{cons:precompose_with_norm_in_families} shows that there is a commutative diagram 
    \begin{equation*}
        \begin{tikzcd}
            \EE_p \Alg \ar[r] \ar[d] \ar[rr,bend left=15,"{R \mapsto \Mod_{R^e}\left(\Spectra\right)}"] & \EE_\infty\Mon\left(\CAT\right)_{(-)^\op/\mathrm{lax}/\Spectra} \ar[d] \ar[r] & \EE_\infty\Mon\left(\CAT[\mathrm{ex}]\right) \ar[d] \\
            \EE_\sigma\Alg \ar[r] & \left(\CAT\right){(-)^\op/\Spectra} \ar[r] & \CAT[\mathrm{ex}]
        \end{tikzcd}
    \end{equation*}
    where the bottom row is from (\ref{diagram:E_sigma_alg_to_functor_cat_with_presheaf_on_target_extended}). 
    Now we observe that by construction, $ A_* $ factors through $ \int \Fun^{\mathrm{lax},q}\left((-)^\op,\Spectra\right) $ the sub-cartesian fibration of $ \EE_\infty\Mon\left(\CAT[\mathrm{ex}]\right)_{(-)^\op/\mathrm{lax}/\Spectra} $ spanned by those lax monoidal functors $ \mathcal{C}^\op \to \Spectra $ which are reduced and quadratic. 
    Now using (\ref{diagram:cpct_modules_to_all_modules_sym_mon}), we can construct in a similar fashion a functor $ A_*^\omega $ making the diagram 
    \begin{equation*}
        \begin{tikzcd}[cramped, column sep=large]
            \EE_p \Alg \ar[r,"{A_{*}^\omega}"'] \ar[rr,bend left=15,"{R \mapsto \Mod_{R^e}^\omega\left(\Spectra\right)}"] \ar[d] & {\int \Fun^{\mathrm{lax},q}\left((-)^\op,\Spectra\right)} \ar[d] \ar[r] & \EE_\infty\Mon\left(\Catex\right) \ar[d] \\
            \EE_\sigma \Alg \ar[r,"{\widetilde{M}_*^\omega}"] & \int \Fun^q\left((-)^\op,\Spectra\right) \ar[r] & \Catex 
        \end{tikzcd}
    \end{equation*}
    commute, where the bottom row is from (\ref{diagram:E_sigma_alg_to_hermitian_cat}) and the vertical arrows are forgetful functors. 
    Now Lemma \ref{lemma:cath_to_catex_cartesian_on_Einfty_alg}\ref{lemmaitem:cath_to_catex_cartesian_on_Einfty_alg} and \cite[Lemma 5.3.2 \& Corollary 5.3.7]{CDHHLMNNSI} give an identification $ \int \Fun^{\mathrm{lax},q}\left((-)^\op,\Spectra\right) \simeq \Mon_{\EE_\infty}\left(\Cath\right) $. 
    Writing $ \Mod^{p,\otimes} $ for the composite of $ A_*^\omega $ with the aforementioned equivalence, we obtain the desired commutative diagram (\ref{diagram:E_p_alg_to_sym_mon_cath}).  
    
    Next we prove \ref{thmitem:E_p_alg_to_sym_mon_hermitian_factors}; its proof is similar to the proof of part \ref{thmitem:E_sigma_alg_to_hermitian_factors}. 
    Under the identification of \cite[discussion preceding Lemma 5.4.6, also see Corollary 5.4.7]{CDHHLMNNSI} (and essentially by construction/unraveling definitions), given an $ \EE_p $-algebra $ A $, the symmetric monoidal refinement of the hermitian (in fact, Poincaré by \ref{thmitem:E_sigma_alg_to_hermitian_factors}) structure on $ \Mod^\mathrm{p}_A $ comes from regarding $ A $ as an $ \EE_\infty $-$ N^{C_2} A $-algebra. 
    It follows from Corollary 5.4.8 \emph{loc. cit.} that $ \Mod^{p,\otimes} $ sends an $ \EE_p $-algebra to a symmetric monoidal Poincaré $ \infty $-category. 
    Now suppose given a map $ \varphi \colon A \to B $ of $ \EE_p $-algebras. 
    Under the identification of Lemma \ref{lemma:monoidal_herm_func_repping_obj}\ref{lemma_item:monoidal_herm_func_repping_obj_data}, the symmetric monoidal hermitian functor $ \Mod^{p,\otimes}(\varphi) $ corresponds to $ \varphi $, regarded as a map \emph{of $ \EE_\infty $-$ N^{C_2}A $-algebras}. 
    It follows from Lemma \ref{lemma:monoidal_herm_func_repping_obj}\ref{lemma_item:monoidal_herm_func_repping_obj_Poincare_condition} that $ \Mod^{p,\otimes}(\varphi) $ is in fact a symmetric monoidal Poincaré functor. 
    
    Suppose given Poincaré rings $ A, B $. 
    By \ref{thmitem:calgp_to_catp_with_tensor}, the canonical maps $ A \to A \otimes B $, $ B \to A \otimes B $ induce symmetric monoidal hermitian functors $ \Mod^\mathrm{p}_A \to \Mod^\mathrm{p}_{A \otimes B} $ and $ \Mod^\mathrm{p}_B \to \Mod^\mathrm{p}_{A \otimes B} $. 
    These induce a canonical symmetric monoidal hermitian functor $ (F,\eta) \colon \Mod^\mathrm{p}_A \otimes \Mod^\mathrm{p}_B \to \Mod^\mathrm{p}_{A \otimes B} $; to prove \ref{thmitem:calgp_to_catp_preserves_tensor}, it suffices to show that $ (F,\eta)^\natural $ is an equivalence. 
    By \cites[Remark 4.8.5.19]{LurHA}[Proposition 5.3.4.17]{HTT}, $ F $ induces an equivalence $ \left(\Mod^\omega_A \otimes \Mod^\omega_B\right)^\natural \xrightarrow{\sim} \left(\Mod_{A\otimes B}^\omega\right)^\natural \simeq \Mod_{A\otimes B}^\omega $; thus it suffices to show that $ \eta $ induces an equivalence of $ \EE_\infty $-hermitian structures. 
    By \cite[Corollary 5.3.14 \& Corollary 5.4.7]{CDHHLMNNSI}, it suffices to show that $ \eta \colon \Qoppa_A \otimes \Qoppa_B \Rightarrow F^*\Qoppa_{A \otimes B} $ induces equivalences of the corresponding $ \EE_\infty $-$ N^{C_2}\left(A \otimes B\right) $-algebras. 
    The quadratic functor $ F^*\Qoppa_{A \otimes B} $ is associated to the $ \EE_\infty $-$ N^{C_2}(A \otimes B) $-algebra $ A \otimes B $. 
    It follows from \cite[Example 5.1.6]{CDHHLMNNSI} that the natural transformation $ \eta $ is an equivalence. 
\end{proof}
\begin{corollary}\label{cor:unit_poincare_object}
    There are canonical lifts 
    \begin{equation}
        \EE_\sigma\Alg \to \Catp_{\left(\Spectra^\omega,\Qoppa^u\right)/-} \qquad \qquad \EE_p\Alg \to \EE_\infty \Mon\left(\Catp\right)_{\left(\Spectra^\omega,\Qoppa^u\right)/-}
    \end{equation} 
    of the functors of Theorem \ref{thm:Ep_alg_to_poincare_cat}. 
    In particular, if $ A $ is an $ \EE_\sigma $-algebra, then $ A^e $ canonically promotes to a Poincaré object $ (A^e, q_A) $ of $ \Mod^\mathrm{p}_A $. 
\end{corollary}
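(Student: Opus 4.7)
The plan is to exhibit the desired lifts essentially formally from two ingredients already established: the sphere spectrum $\mathbb{S}$ is the initial object of both $\EE_\sigma\Alg$ and $\EE_p\Alg$ (Remark \ref{rmk:E_sigma_operad_unital}), and there is a canonical equivalence $\Mod^\mathrm{p}_{\mathbb{S}} \simeq (\Spectra^\omega, \Qoppa^u)$ of Poincaré $\infty$-categories (Theorem \ref{thm:Ep_alg_to_poincare_cat}\ref{thmitem:unit_to_unit}). Together these make $\Mod^\mathrm{p}$ into a functor that sends a specified initial object to a specified object in the target, which is exactly the data needed to factor through the coslice.

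More concretely, first I would observe that for any $\infty$-category $\mathcal{C}$ with initial object $c$, the natural functor $\mathcal{C} \to \mathcal{C}_{c/}$ (coherently picking, for each object $c'$, the unique morphism $c \to c'$) is an equivalence. Applying this to $\EE_\sigma\Alg$ with initial object $\mathbb{S}$ and composing with the functor on coslice categories induced by $\Mod^\mathrm{p}$ yields
\[
\EE_\sigma\Alg \xrightarrow{\sim} (\EE_\sigma\Alg)_{\mathbb{S}/-} \xrightarrow{\Mod^\mathrm{p}_*} \Catp_{\Mod^\mathrm{p}_{\mathbb{S}}/-} \xrightarrow{\sim} \Catp_{(\Spectra^\omega,\Qoppa^u)/-},
\]
where the last equivalence uses Theorem \ref{thm:Ep_alg_to_poincare_cat}\ref{thmitem:unit_to_unit}. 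For the $\EE_p$ version, I would repeat the construction using the symmetric monoidal refinement $\Mod^{p,\otimes} \colon \EE_p\Alg \to \EE_\infty\Mon(\Catp)$ from Theorem \ref{thm:Ep_alg_to_poincare_cat}\ref{thmitem:E_p_alg_to_sym_mon_hermitian_factors}, noting that $\mathbb{S}$ remains initial in $\EE_p\Alg$ by Remark \ref{rmk:E_sigma_operad_unital} and that $\EE_\infty\Mon(\Catp)$ being cocartesian means the symmetric monoidal enhancement does not disturb the identification of the initial object.

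For the ``in particular'' clause, I would invoke \cite[Proposition 4.1.3]{CDHHLMNNSI}, which gives an equivalence $\mathrm{Pn}(\mathcal{C},\Qoppa) \simeq \hom_{\Catp}((\Spectra^\omega,\Qoppa^u),(\mathcal{C},\Qoppa))$. The first lift above provides, for each $\EE_\sigma$-algebra $A$, a canonical Poincaré functor $(\Spectra^\omega,\Qoppa^u) \to \Mod^\mathrm{p}_A$; under the above equivalence this is precisely a Poincaré object of $\Mod^\mathrm{p}_A$, and unraveling the construction of $\Mod^\mathrm{p}$ in Construction \ref{cons:precompose_with_norm_in_families} (the underlying exact functor sends $\mathbb{S}$ to $R^e \otimes_{\mathbb{S}} \mathbb{S} \simeq A^e$) identifies its underlying object with $A^e$.

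No step here is a genuine obstacle; the only subtlety is checking that the symmetric monoidal refinement in the $\EE_p$ case interacts correctly with the coslice construction. This reduces to the fact that the forgetful functor $\EE_\infty\Mon(\Catp) \to \Catp$ preserves initial objects together with the compatibility diagram (\ref{diagram:E_p_alg_to_sym_mon_cath}), so the two lifts are coherent with one another and with the underlying unlifted functor from Theorem \ref{thm:Ep_alg_to_poincare_cat}.
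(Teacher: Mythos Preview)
Your proposal is correct and follows essentially the same approach as the paper: both use initiality of $\mathbb{S}$ (Remark \ref{rmk:E_sigma_operad_unital}) to pass to the coslice, the identification $\Mod^\mathrm{p}_{\mathbb{S}} \simeq (\Spectra^\omega,\Qoppa^u)$ from Theorem \ref{thm:Ep_alg_to_poincare_cat}\ref{thmitem:unit_to_unit}, and \cite[Proposition 4.1.3]{CDHHLMNNSI} for the ``in particular'' clause. The paper's version is terser and treats the $\EE_p$ case first, but the content is the same.
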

\begin{proof}
    We prove the result for $ \EE_p $-algebras; the result for $ \EE_\sigma $-algebras is nearly identical. 
    By Remark \ref{rmk:E_sigma_operad_unital}, there is an equivalence $ \EE_p\Alg \simeq \EE_p\Alg_{\sphere/-} $. 
    By Theorem \ref{thm:Ep_alg_to_poincare_cat}, we have a canonical map $ \Mod^\mathrm{p}_{\sphere} \simeq \left(\Spectra^\omega,\Qoppa^u\right) \to \Mod^\mathrm{p}_A $. 
    The result follows from noting that $ \left(\Spectra^\omega,\Qoppa^u\right) $ corepresents $ \mathrm{Pn} $ by \cite[Proposition 4.1.3]{CDHHLMNNSI}. 
\end{proof}

\begin{recollection}
    [{\cite[Definition 4.8.3.1]{LurHA}}] Let $ O^\otimes $ be an $ \infty $-operad and $ S $ a simplicial set. 
    A \emph{cocartesian $ S $-family of $ O $-monoidal $ \infty $-categories} is a cocartesian fibration $ q \colon \mathcal{C}^\otimes \to O^\otimes \times S $ so that for all $ s \in S $, the induced map of fibers $ \mathcal{C}^\otimes_s = \mathcal{C}^\otimes \times_{S} \{s\} \to O^\otimes $ is a cocartesian fibration of $ \infty $-operads. 
    
    When $ O = \EE_\infty $, we will refer to $ q $ as a \emph{cocartesian $ S $-family of symmetric monoidal $ \infty $-categories.}
\end{recollection}

\begin{example}
    Let $ \Mon_{\EE_\infty} \CAT $ denote the $ \infty $-category of symmetric monoidal $ \infty $-categories (i.e. $ \EE_\infty $-monoid objects of $ \CAT $). 
    There is a canonical map $ \Fin_* \times \Mon_{\EE_\infty} \left(\CAT\right) \to \CAT $ which classifies a cocartesian fibration $ q \colon \widetilde{\Mon}_{\EE_\infty}\left(\CAT\right) \to \Fin_* \times \Mon_{\EE_\infty}\left(\CAT\right) $. 
    The cocartesian fibration $ q $ exhibits $ \widetilde{\Mon}_{\EE_\infty}\left(\CAT\right) $ as the universal cocartesian $ \Mon_{\EE_\infty}\left(\CAT\right) $-family of symmetric monoidal $ \infty $-categories: For any simplicial set $ S $, the assignment 
    \begin{equation*}
        \left(\phi \colon S \to \Mon_{\EE_\infty}\left(\CAT\right)\right) \mapsto \left(S \times_{\Mon_{\EE_\infty}\left(\CAT\right)} \widetilde{\Mon}_{\EE_\infty}\left(\CAT\right)\right)
    \end{equation*}
    establishes an equivalence between diagrams $ S \to \Mon_{\EE_\infty}\left(\CAT\right) $ and cocartesian $ S $-families of symmetric monoidal $ \infty $-categories. 
    
    If $ \mathcal{K} $ is a collection of simplicial sets, there is a variant $ \Mon_{\EE_\infty} \CAT\left(\mathcal{K}\right) $ consisting of symmetric monoidal $ \infty $-categories which are compatible with $ \mathcal{K} $-indexed colimits; morphisms consist of symmetric monoidal functors which preserve $ \mathcal{K} $-indexed colimits. 
\end{example}
\begin{definition}
    Let $ \CAT[\EE_\infty\Alg] $ denote the full subcategory of the fiber product
    \begin{equation*}
        \Mon_{\EE_\infty}\left(\CAT\right) \fiberproduct_{\Fun_{\Fin_*}\left(\Fin_*, \Fin_* \times \Mon_{\EE_\infty}\left(\CAT\right)\right)} \Fun_{\Fin_*}\left(\Fin_*,\widetilde{\Mon}_{\EE_\infty}\left(\CAT\right)\right)       
    \end{equation*}   
    spanned by those pairs $ \left(\mathcal{C}^\otimes, A\right) $ where $ \mathcal{C}^\otimes $ is a symmetric monoidal $ \infty $-category and $ A $ is an $ \EE_\infty $-algebra object of the symmetric monoidal $ \infty $-category $ \widetilde{\Mon}_{\EE_\infty}\left(\CAT\right) \fiberproduct_{{\Mon}_{\EE_\infty}\left(\CAT\right)} \left\{\mathcal{C}^\otimes\right\} \simeq \mathcal{C}^\otimes $. 
\end{definition}
\begin{remark}
    The $ \infty $-category $ \CAT[\EE_\infty\Alg] $ is characterized by the universal property that for any simplicial set $ S $, there is a bijection between equivalence classes of diagrams $ S \to \CAT[\EE_\infty\Alg] $ and diagrams 
    \begin{equation*}
        \begin{tikzcd}
            & \mathcal{C}^\otimes \ar[d,"q"] \\
            \Fin_* \times S \ar[ru,"A"] \ar[r,equals] & \Fin_* \times S
        \end{tikzcd}
    \end{equation*}
    where $ q $ exhibits $ \mathcal{C}^\otimes $ as a cocartesian $ S $-family of symmetric monoidal $ \infty $-categories and $ A $ is an $ S $-family of $ \EE_\infty $-algebra objects of $ \mathcal{C}^\otimes $. 
\end{remark}
\begin{lemma}\label{lemma:stable_cat_functorial_multiplicative_Yoneda}
    \begin{enumerate}[label=(\arabic*)]
        \item \label{lemmaitem:pointed_symmon_cat_cocartesian} 
        The canonical map $ \phi \colon \CAT[\EE_\infty\Alg]\left(\mathcal{K}\right) \to \Mon_{\EE_\infty}^{\mathcal{K}}\left(\CAT\right) $ is a cocartesian fibration. 
        \item \label{lemmaitem:mult_Yoneda_operad} There is a commutative diagram of symmetric monoidal $ \infty $-categories
        \begin{equation*}
            \begin{tikzcd}[column sep=tiny]
                \left(\Cat_{\infty,*}\right)^\times \ar[rd,"{\varphi^\times}"'] \ar[rr,"{i^\times}"] & & \left(\Cat_{\infty}\right)^\times_{(-)^\op/\mathrm{lex}/\Spaces} \ar[ld,"{q^\times}"] \\
                & \Cat_\infty^\times & 
            \end{tikzcd}
        \end{equation*} 
        so that the pullback along the inclusion $ \Catex \subseteq \left(\Cat_\infty^\times\right)_{\langle 1 \rangle} $ recovers the commutative diagram of Lemma \ref{lemma:stable_cat_functorial_Yoneda}.  
        \item \label{lemmaitem:mult_yoneda_on_algebras} Let $ \phi \colon \CAT[\EE_\infty\Alg,\mathrm{ex}]\left(\mathcal{K}\right) \to \Mon_{\EE_\infty}^{\mathcal{K}}\left(\CAT[\mathrm{ex}]\right) $ denote the restriction of $ \phi $ to the subcategory of $ \Mon_{\EE_\infty}^{\mathcal{K}}\left(\CAT\right) $ on stable $ \infty $-categories and exact functors between them. 
        Consider the cocartesian fibration \[ \Mon_{\EE_\infty}^{\mathcal{K}}\left(\CAT[\mathrm{ex}]\right)_{(-)^\op / \mathrm{lax} / \Spectra} \to \Mon_{\EE_\infty}^{\mathcal{K}}\left(\CAT[\mathrm{ex}]\right) \] classified by the functor $ \Mon_{\EE_\infty}^{\mathcal{K}}\left(\CAT[\mathrm{ex}]\right) \to \CAT $ sending a symmetric monoidal $ \infty $-category $ \mathcal{C} $ to $ \Fun^{\mathrm{ex}}_{\mathrm{lax}}\left(\mathcal{C}^\op, \Spectra\right) $. 
        Then there is a commutative diagram
        \begin{equation*}
            \begin{tikzcd}[column sep=tiny]
                \CAT[\EE_\infty\Alg,\mathrm{ex}]\left(\mathcal{K}\right) \ar[rr,"i"] \ar[rd,"{\varphi}"'] & & \Mon_{\EE_\infty}^{\mathcal{K}}\left(\CAT[\mathrm{ex}]\right)_{(-)^\op / \mathrm{lax} / \Spectra}  \ar[ld,"q"] \\
                & \Mon_{\EE_\infty}^{\mathcal{K}}\left(\CAT[\mathrm{ex}]\right) &
            \end{tikzcd}    
        \end{equation*}
        so that $ i $ sends $ \varphi $-cocartesian edges to $ q $-cocartesian edges. 
        This is compatible with the functors of Lemma \ref{lemma:stable_cat_functorial_Yoneda} in the sense that the forgetful functor sending a symmetric monoidal stable $ \infty $-category to its underlying stable $ \infty $-category (and corresponding functors sending $ \left(\mathcal{C}^\otimes, A \in \EE_\infty\Alg(\mathcal{C})\right) \mapsto (\mathcal{C}, A \in \mathcal{C}) $) induce a $ \Delta^1 \times \Delta^2 $-shaped commutative diagram of $ \infty $-categories from the triangle here to the triangle found in Lemma \ref{lemma:stable_cat_functorial_Yoneda}. 
    \end{enumerate} 
\end{lemma}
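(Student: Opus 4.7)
My plan is to build up the three parts sequentially, leveraging the non-monoidal version (Lemma~\ref{lemma:stable_cat_functorial_Yoneda}) and standard machinery on cocartesian families of symmetric monoidal $\infty$-categories.

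For \ref{lemmaitem:pointed_symmon_cat_cocartesian}, I would argue directly from the universal property of $\CAT[\EE_\infty\Alg]$ and results on algebras in cocartesian families (HA \S4.8.3, in particular Theorem 4.8.3.18, or equivalently Proposition 3.2.3.1 applied fiberwise). The universal cocartesian $\Mon_{\EE_\infty}(\CAT)$-family $q \colon \widetilde{\Mon}_{\EE_\infty}(\CAT) \to \Fin_* \times \Mon_{\EE_\infty}(\CAT)$ is, by construction, a cocartesian family of symmetric monoidal $\infty$-categories; passing to algebra objects fiberwise produces a cocartesian fibration over $\Mon_{\EE_\infty}(\CAT)$ whose total space satisfies the defining universal property of $\CAT[\EE_\infty\Alg]$. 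Restriction to $\mathcal{K}$-compatible symmetric monoidal structures is a routine restriction since base change of symmetric monoidal functors preserves $\mathcal{K}$-indexed colimits.

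For \ref{lemmaitem:mult_Yoneda_operad}, I would upgrade the two cocartesian fibrations appearing in Lemma~\ref{lemma:stable_cat_functorial_Yoneda} to symmetric monoidal ones. The functor $(-)_* \colon \Cat_\infty \to \Cat_\infty$ of pointed categories is right adjoint to the identity inclusion from $\Cat_{\infty,*}$; since both sides carry cartesian symmetric monoidal structures, this adjunction promotes to a symmetric monoidal one (cf.\ HA 2.4.1.8). On the other side, $\mathcal{C} \mapsto \Fun^{\mathrm{lex}}(\mathcal{C}^{\mathrm{op}}, \Spaces)$ carries a canonical symmetric monoidal refinement via Day convolution, with lax symmetric monoidal functors appearing naturally as morphisms between fibers; this yields $q^\times$ as a symmetric monoidal cocartesian fibration. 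The map $i^\times$, sending a pointed $\infty$-category $(\mathcal{C}, x)$ to the representable $\hom_{\mathcal{C}}(-, x)$, is symmetric monoidal because the (enriched) Yoneda embedding is symmetric monoidal \cite[Proposition 8.4.3]{MR4567127} and the basepoint is preserved under products. Compatibility with Lemma~\ref{lemma:stable_cat_functorial_Yoneda} amounts to unwinding that the fiber over $\langle 1 \rangle \in \Fin_*$ recovers the non-monoidal construction, and that $i^\times$ restricts to the previously defined inclusion of the universal cocartesian fibration.

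For \ref{lemmaitem:mult_yoneda_on_algebras}, I would pull back \ref{lemmaitem:mult_Yoneda_operad} along $\Mon_{\EE_\infty}^{\mathcal{K}}(\Catex) \to \Cat_\infty^\times$ and take $\EE_\infty$-algebra objects, combining with \ref{lemmaitem:pointed_symmon_cat_cocartesian}. The identification $\Fun^{\mathrm{lex}}(\mathcal{C}^{\mathrm{op}}, \Spaces) \simeq \Fun^{\mathrm{ex}}(\mathcal{C}^{\mathrm{op}}, \Spectra)$ for stable $\mathcal{C}$ (via $\Omega^\infty$, HA 1.4.2.23) promotes to an equivalence of the lax monoidal refinements, since $\Omega^\infty$ is lax symmetric monoidal and its inverse on the lex/exact full subcategories is therefore oplax; verifying it is actually symmetric monoidal on these subcategories comes down to the fact that exact functors out of a stable $\infty$-category detect their values on connective objects. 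That $i$ preserves cocartesian edges reduces by Lemma~\ref{lemma:stable_cat_functorial_Yoneda} to checking it fiberwise, which follows from the observation that cocartesian transport of a representable functor along a symmetric monoidal exact functor $F \colon \mathcal{C} \to \mathcal{D}$ is the representable at $F(x)$.

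The main obstacle I anticipate is part \ref{lemmaitem:mult_Yoneda_operad}: exhibiting the Day-convolution symmetric monoidal structure on the presheaf fibration $(\Cat_\infty)_{(-)^\op/\mathrm{lex}/\Spaces} \to \Cat_\infty$ in a way that is visibly compatible with the pointed-category fibration. The most efficient route is probably to invoke the framework of \cite{MR4567127} (enriched Yoneda) or Gepner--Haugseng--Nikolaus on lax colimits, rather than constructing things by hand. Once this is in place, \ref{lemmaitem:mult_yoneda_on_algebras} is largely a bookkeeping exercise using the stability of $\Spectra$ and the compatibility of $\Omega^\infty$ with lax monoidal structures.
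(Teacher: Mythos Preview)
Your proposal is workable in outline, and for part \ref{lemmaitem:pointed_symmon_cat_cocartesian} it matches the paper, which simply refers to the proof of \cite[Proposition 4.8.5.1]{LurHA}. For part \ref{lemmaitem:mult_yoneda_on_algebras} you and the paper both take $\EE_\infty$-monoid objects in the diagram from \ref{lemmaitem:mult_Yoneda_operad} and then pass from $\Spaces$ to $\Spectra$ via $\Omega^\infty$; the paper cites \cite[Proposition 6.5]{Nik_16} for the lax-monoidal version of this identification, which is exactly the step you flag as needing care.

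The real divergence is in part \ref{lemmaitem:mult_Yoneda_operad}, and here the paper's route is substantially simpler than yours. You propose to build the symmetric monoidal structure on the right-hand fibration via Day convolution on presheaf categories and then invoke symmetric monoidality of the enriched Yoneda embedding. The paper instead stays in the \emph{unstraightened} model used in the proof of Lemma~\ref{lemma:stable_cat_functorial_Yoneda}: there the two fibrations are realized as the full subcategories $\mathrm{RFib}_*$ and $\mathrm{RFib}^{\omega\text{-fil}}$ of the arrow category of $\Cat_\infty$ on right fibrations whose source has a final object, respectively is $\omega$-filtered. The key observation is that a finite product of right fibrations is a right fibration, and both conditions (having a final object; being $\omega$-filtered) are closed under finite products. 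Hence the \emph{cartesian} symmetric monoidal structure on $\Cat_\infty$ restricts directly to give the diagram of $\infty$-operads
\begin{equation*}
\begin{tikzcd}[column sep=small]
\mathrm{RFib}_*^\times \ar[rr] \ar[rd] & & (\mathrm{RFib}^{\omega\text{-fil}})^\times \ar[ld] \\
& \Cat_\infty^\times &
\end{tikzcd}
\end{equation*}
with no Day convolution, no coherence issues, and compatibility with Lemma~\ref{lemma:stable_cat_functorial_Yoneda} immediate by inspection of fibers over $\langle 1\rangle$. Your approach would eventually arrive at the same place, but the Day-convolution and enriched-Yoneda machinery you anticipate needing is entirely bypassed by remembering that the non-monoidal lemma was already proved in a model where the monoidal upgrade is a one-line closure-under-products check.
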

\begin{proof}
    The proof of part \ref{lemmaitem:pointed_symmon_cat_cocartesian} is similar to the proof of \cite[Proposition 4.8.5.1]{LurHA}. 
    
    To prove \ref{lemmaitem:mult_Yoneda_operad}, suppose that $ p_i \colon \mathcal{C}_i \to \mathcal{D}_i $, $ i \in I $ ($ I $ a finite set) is a collection of right fibrations. 
    Then notice that $ \prod_i p_i \colon \prod_i \mathcal{C}_i \to \prod_i \mathcal{D}_i $ is a right fibration. 
    Furthermore, if each $ p_i $ has a final object (resp. is $ \omega $-filtered), then so is $ \prod_i p_i $. 
    Therefore, the cartesian symmetric monoidal structure on $ \infty $-categories induces a commutative diagram 
    \begin{equation*} 
        \begin{tikzcd}[column sep=small]
            \mathrm{RFib}_*^\times \ar[rr,"i"] \ar[rd,"{t_0}"'] & & \left(\mathrm{RFib}^{\omega\text{-fil}}\right)^\times \ar[ld,"{t_1}"] \\
            & \CAT[\times] & 
        \end{tikzcd} 
    \end{equation*} 
    of $ \infty $-operads. 
    That the pullback along the inclusion $ \Catex \subseteq \left(\Cat_\infty^\times\right)_{\langle 1 \rangle} $ recovers the commutative diagram of Lemma \ref{lemma:stable_cat_functorial_Yoneda} follows by inspection. 
    
    To prove \ref{lemmaitem:mult_yoneda_on_algebras}, consider the diagram obtained by taking $ \EE_\infty $-monoid objects in \ref{lemmaitem:mult_Yoneda_operad}: 
    \begin{equation}\label{diagram:unstable_monoidal_yoneda}
        \begin{tikzcd}[column sep=tiny]
            \EE_\infty\Mon \left((\CAT)_{*/}\right)^\times \ar[rd,"{\varphi}"'] \ar[rr,"{i}"] & & \EE_\infty\Mon \left(\CAT\right)^\times_{(-)^\op/\mathrm{lex}/\Spaces} \ar[ld,"{q}"] \\
            & \EE_\infty\Mon\left((\CAT)^\times\right) \,. & 
        \end{tikzcd}
    \end{equation} 
    By \cite[Corollary 3.2.2.3]{LurHA}, $ \varphi $ and $ q $ are cartesian fibrations with cartesian transport given by precomposition, and $ i $ takes $ \varphi $-cartesian arrows to $ q $-cartesian arrows. 
    Write $ (\CAT[\mathrm{ex}])^\otimes $ for the suboperad of $ (\CAT)^\times $ on those stable $ \infty $-categories and functors $ f \colon \left(\mathcal{C}_i\right)_{1 \leq i \leq n} \to \mathcal{C} $ so that the corresponding functor $ \mathcal{C}_1 \times \cdots \times \mathcal{C}_n \to \mathcal{C} $ is exact in each variable. 
    Now consider the diagram obtained by pulling back (\ref{diagram:unstable_monoidal_yoneda}) along the induced functor $ \EE_\infty\Mon\left((\CAT[\mathrm{ex}])^\otimes\right)\subseteq \EE_\infty\Mon\left((\CAT)^\times\right) $, and suppose that $ \mathcal{C}^\otimes $ is a stable symmetric monoidal $ \infty $-category. 
    Using \cite[Remark 6.15]{MR4545922}, an object of $ \EE_\infty\Mon \left(\Cat_{\infty}\right)^\times_{(-)^\op/\Spaces} $ lying over $ \mathcal{C}^\otimes $ is an $ \EE_\infty $-algebra of the presheaf category $ \Fun^{\mathrm{lex}}\left(\mathcal{C}^\op, \Spaces\right) $ with the Day convolution, or in other words a lax monoidal left-exact functor $ \mathcal{C}^\op \to \Spaces $. 
    By \cite[Proposition 6.5]{Nik_16}, $ \Omega^\infty $ induces an equivalence $ \Fun^{\mathrm{lex}, \mathrm{lax}}\left(\mathcal{C}^\op, \Spectra \right) \simeq \Fun^{\mathrm{lex},\mathrm{lax}}\left(\mathcal{C}^\op, \Spaces\right) $. 
\end{proof}

\begin{lemma}\label{lemma:cath_to_catex_cartesian_on_Einfty_alg}
    \begin{enumerate}[label=(\arabic*)]
        \item \label{lemmaitem:spectra_laxslice_to_catex_cartesian_on_Einfty_alg} The functor $ p^\otimes \colon \left(\Cat_\infty\right)^\otimes_{//\Spectra} \to \left(\Cat_\infty\right)^\otimes $ of \cite[\S5.2]{CDHHLMNNSI} induces a cartesian fibration $ \EE_\infty \Mon\left(\Cat_\infty\right)^\otimes_{(-)^\op/\mathrm{lax}/\Spectra} \to \EE_\infty \Mon \left(\Cat_\infty\right)^\otimes $, where cartesian transport is given by precomposition, where $ \left(\Cat_\infty\right)^\otimes_{//\Spectra} $ denotes the lax slice over $ \Spectra $ of \cite[\S5.2]{CDHHLMNNSI}. 
        \item \label{lemmaitem:cath_to_catex_cartesian_on_Einfty_alg} 
        The functor $ p^\otimes \colon \left(\Cath\right)^\otimes \to \left(\Catex\right)^\otimes $ of \cite[\S5.2]{CDHHLMNNSI} induces a cartesian fibration $ \EE_\infty\Mon\left(\Cath\right) \to \EE_\infty\Mon\left(\Catex\right) $, where cartesian transport is given by precomposition. 
    \end{enumerate}
\end{lemma}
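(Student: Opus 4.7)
The proof will deduce part (2) from part (1) by restriction, so the main work is in establishing part (1). My plan is as follows.

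First, I would verify that the underlying (non-monoidal) functor $p \colon (\Cat_\infty)_{//\Spectra} \to \Cat_\infty$ of \cite[\S5.2]{CDHHLMNNSI} is itself a cartesian fibration whose cartesian arrows are those $(f,\eta) \colon (\mathcal{C},F) \to (\mathcal{D},G)$ with $\eta \colon F \Rightarrow G\circ f^{\op}$ an equivalence, and whose cartesian transport along $f \colon \mathcal{C} \to \mathcal{D}$ is given by precomposition $G \mapsto G\circ f^{\op}$. This is standard for lax slice constructions and can also be deduced from a straightforward application of the Grothendieck construction to the functor $\mathcal{C} \mapsto \Fun(\mathcal{C}^{\op},\Spectra)$.

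Second, I would verify that cartesian arrows are closed under the symmetric monoidal structure on $(\Cat_\infty)^\otimes_{//\Spectra}$ recalled from \cite[\S5.2]{CDHHLMNNSI}: the tensor product of two cartesian arrows $(f_i,\eta_i)$ has underlying natural transformation (up to coherent identifications) essentially given by $\eta_1 \boxtimes \eta_2$, hence is an equivalence. Combined with Step 1, this says that $p^\otimes$ is a \emph{symmetric monoidal cartesian fibration} in the sense of Lurie (cf.\ \cite[\S7.3.2]{LurHA}).

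Third, I would invoke the dual of Lurie's operadic transport results (e.g.\ the cartesian version of \cite[Proposition 7.3.2.6]{LurHA}, or alternatively \cite[Proposition 4.3.1.10]{HTT} applied pointwise along $\mathrm{Comm}^\otimes$) to conclude that the induced functor on $\EE_\infty$-monoid objects is a cartesian fibration and that its cartesian transport is computed pointwise in $\Fin_*$, hence by precomposition. Concretely, given an $\EE_\infty$-algebra $A = (A_{\langle n\rangle})_{n\ge 0}$ over $B \in \EE_\infty\Mon(\Cat_\infty)$ and a symmetric monoidal functor $\beta \colon B' \to B$, the cartesian lift $\beta^*A$ is constructed by pointwise cartesian transport at each object of $\mathrm{Comm}^\otimes$; Step 2 ensures that the structure maps assemble to a functor of $\infty$-operads $\mathrm{Comm}^\otimes \to B'^{*}(\mathcal{C})^\otimes$.

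For part (2), I would use that $(\Cath)^\otimes \subseteq (\Cat_\infty)^\otimes_{//\Spectra}$ is a full symmetric monoidal subcategory (over the inclusion $\Catex \subseteq \Cat_\infty$) and observe that cartesian transport preserves the defining conditions: if $f \colon \mathcal{C} \to \mathcal{D}$ is exact and $\Qoppa \colon \mathcal{D}^{\op} \to \Spectra$ is reduced and $2$-excisive, then $\Qoppa \circ f^{\op}$ is again reduced and $2$-excisive, since composition with an exact functor preserves these properties \cite[Proposition 1.1.13]{CDHHLMNNSI}. Thus the cartesian fibration of part (1) restricts to the desired cartesian fibration $\EE_\infty\Mon(\Cath) \to \EE_\infty\Mon(\Catex)$.

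The main obstacle is Step 3: while the cartesian transport is morally pointwise, verifying it rigorously in the operadic setting requires either carefully unpacking Lurie's machinery for operadic (co)cartesian fibrations or, alternatively, building the cartesian lift by hand using the universal property of cartesian arrows and the compatibility of the tensor product with cartesian transport established in Step 2. The latter route uses the fact that inert-preserving sections assemble coherently precisely because cartesian transport commutes with tensor products, so the structure maps of the lift $\beta^*A$ are determined by those of $A$ via the universal property.
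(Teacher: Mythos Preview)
Your approach is correct and conceptually the same as the paper's: both establish that the cartesian fibration structure on $p^\otimes$ lifts to $\EE_\infty$-monoids with pointwise (i.e.\ precomposition) cartesian transport. The execution differs in a way worth noting.

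The paper's proof is a one-line application of \cite[Corollary 3.2.2.3]{LurHA} (together with the dual of \cite[Example 4.3.1.4]{HTT}): given a map of $\infty$-operads $p^\otimes \colon \mathcal{C}^\otimes \to \mathcal{D}^\otimes$ that is a categorical cartesian fibration, Corollary 3.2.2.3(1) immediately yields that $\Alg_{\mathcal{O}}(\mathcal{C}) \to \Alg_{\mathcal{O}}(\mathcal{D})$ is a cartesian fibration for any $\mathcal{O}^\otimes$, and part (2) of that corollary identifies the cartesian arrows as those detected in $\mathcal{C}^\otimes$. This bypasses your Step~2 entirely: no separate verification that cartesian arrows are closed under tensor product is needed, because 3.2.2.3 works at the level of the operad map itself rather than via the ``symmetric monoidal cartesian fibration'' framework of \S7.3.2. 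Your route through \cite[\S7.3.2]{LurHA} is valid but requires that extra compatibility check as input, so it is strictly longer. The paper also proves (2) directly and declares (1) similar, whereas you prove (1) and restrict to (2); this is only an organizational difference. The ``main obstacle'' you flag in Step~3 dissolves once you locate 3.2.2.3.
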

\begin{proof}
    We prove \ref{lemmaitem:cath_to_catex_cartesian_on_Einfty_alg}; the proof of \ref{lemmaitem:spectra_laxslice_to_catex_cartesian_on_Einfty_alg} is similar. 
    That the induced map on $ \EE_\infty $ monoids is a cartesian fibration follows from taking $ \mathcal{O}^\otimes = \Fin_* $, $ \mathcal{C}^\otimes = \left(\Cath\right)^\otimes $, and $ \mathcal{D}^\otimes = \left(\Catex\right)^\otimes $ in \cite[Corollary 3.2.2.3(1)]{LurHA}, in view of the dual to \cite[Example 4.3.1.4]{HTT} and the fact that $ \Cath \to \Catex $ is a cartesian fibration. 
    The description of cartesian transport follows from Corollary 3.2.2.3(2) \emph{ibid.}
\end{proof}
The following is a symmetric monoidal enhancement of \cite[Corollary 3.4.2, Lemma 3.4.3]{CDHHLMNNSI}. 
\begin{lemma}\label{lemma:monoidal_herm_func_repping_obj}
    Let $ A, B $ be $ \EE_\infty $-algebras and let $ \varphi \colon A \to B $ be a map of $ \EE_\infty $-algebras. 
    Let $ M $ (resp. $ N $) be an $ \EE_\infty $-$ N^{C_2} A $ (resp. $ N^{C_2} B$-)algebra, which corresponds to a symmetric monoidal hermitian structure on $ \Mod_A^\omega $ (resp. $ \Mod_B^\omega $) by \cite[Corollary 5.4.7]{CDHHLMNNSI}. 
    \begin{enumerate}[label=(\arabic*)]
        \item \label{lemma_item:monoidal_herm_func_repping_obj_data} Then the data of a symmetric monoidal hermitian functor $ \left(\Mod_A^\omega,\Qoppa_M\right) \to \left(\Mod_B^\omega,\Qoppa_N\right) $ covering the induction functor $ p_\varphi:= (-) \otimes_A B $ is equivalent to the data of a map $ M \to N $ of $ \EE_\infty $-$ N^{C_2} A $-algebras, where $ N $ is regarded as an $ \EE_\infty $-$ N^{C_2} A $-algebra via $ N^{C_2}(\varphi) \colon N^{C_2} A \to N^{C_2} B $.  
        \item \label{lemma_item:monoidal_herm_func_repping_obj_Poincare_condition} In the situation of part \ref{lemma_item:monoidal_herm_func_repping_obj_data}, the symmetric monoidal hermitian functor is symmetric monoidal duality-preserving (resp. Poincaré) if its underlying hermitian functor is duality-preserving (resp. Poincaré). 
    \end{enumerate}
\end{lemma}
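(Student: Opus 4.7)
My plan is to leverage the non-monoidal analogue (Corollary 3.4.2 and Lemma 3.4.3 of \cite{CDHHLMNNSI}) together with the cartesian fibration structure of Lemma \ref{lemma:cath_to_catex_cartesian_on_Einfty_alg}\ref{lemmaitem:cath_to_catex_cartesian_on_Einfty_alg}, using \cite[Corollary 5.4.7]{CDHHLMNNSI} as the dictionary between symmetric monoidal hermitian structures on $\Mod_A^\omega$ and $\EE_\infty$-$N^{C_2}A$-algebras.

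For part \ref{lemma_item:monoidal_herm_func_repping_obj_data}, I would begin by observing that by Lemma \ref{lemma:cath_to_catex_cartesian_on_Einfty_alg}\ref{lemmaitem:cath_to_catex_cartesian_on_Einfty_alg} the forgetful functor $\EE_\infty\Mon(\Cath) \to \EE_\infty\Mon(\Catex)$ is a cartesian fibration whose cartesian edges are precisely the symmetric monoidal refinements of $\EE_\infty$-hermitian functors induced by restriction/precomposition of the underlying symmetric bilinear functors. Consequently, for fixed $M$ and $N$ the mapping space of symmetric monoidal hermitian functors from $(\Mod_A^\omega, \Qoppa_M)$ to $(\Mod_B^\omega, \Qoppa_N)$ whose underlying symmetric monoidal exact functor is $p_\varphi$ may be computed as the fiber over $p_\varphi$ of the map $\hom_{\EE_\infty\Mon(\Cath)}\bigl((\Mod_A^\omega, \Qoppa_M), (\Mod_B^\omega, \Qoppa_N)\bigr) \to \hom_{\EE_\infty\Mon(\Catex)}(\Mod_A^\omega, \Mod_B^\omega)$, i.e. the space of symmetric monoidal natural transformations $\Qoppa_M \Rightarrow p_\varphi^* \Qoppa_N$ of lax-monoidal quadratic functors on $\Mod_A^\omega$.

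Next, under the equivalence of \cite[Corollary 5.4.7]{CDHHLMNNSI}, both $\Qoppa_M$ and $p_\varphi^* \Qoppa_N$ correspond to $\EE_\infty$-$N^{C_2}A$-algebras, namely $M$ and $N$ (the latter regarded as a $N^{C_2}A$-algebra via $N^{C_2}(\varphi) \colon N^{C_2}A \to N^{C_2}B$ since precomposition with $p_\varphi$ corresponds on the algebra side to restriction of scalars along $N^{C_2}(\varphi)$, by the same unraveling-of-definitions argument used in the proof of Theorem \ref{thm:Ep_alg_to_poincare_cat}\ref{thmitem:E_p_alg_to_sym_mon_hermitian_factors}). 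Symmetric monoidal natural transformations between them are exactly maps of $\EE_\infty$-$N^{C_2}A$-algebras $M \to N$, giving the desired identification.

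For part \ref{lemma_item:monoidal_herm_func_repping_obj_Poincare_condition}, I would simply note that being duality-preserving or being Poincaré is, by definition, a property of the underlying hermitian functor between hermitian $\infty$-categories (namely, that the induced map $F D_{\Qoppa_M} \to D_{\Qoppa_N} F$ of functors $\Mod_A^{\omega, \op} \to \Mod_B^\omega$ be an equivalence, together with equivalence of dualities). Since the forgetful functor $\EE_\infty\Mon(\Catp) \to \Catp$ is conservative on morphisms in the sense that symmetric monoidal hermitian functors are Poincaré iff their underlying hermitian functors are, there is nothing additional to check. The main obstacle, such as it is, lies in justifying the identification of the fiber with $\EE_\infty$-$N^{C_2}A$-algebra maps; this amounts to checking that the symmetric monoidal structure on $\Qoppa_M$ constructed via Day convolution in \cite[\S5.3--5.4]{CDHHLMNNSI} is compatible with the $\EE_\infty$-algebra structure on $M$ in a way that identifies lax-monoidal natural transformations with algebra maps, which is handled in the proof of \cite[Corollary 5.4.7]{CDHHLMNNSI}.
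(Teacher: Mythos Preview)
Your proposal is correct and follows essentially the same approach as the paper: use the cartesian fibration of Lemma \ref{lemma:cath_to_catex_cartesian_on_Einfty_alg}\ref{lemmaitem:cath_to_catex_cartesian_on_Einfty_alg} to identify the fiber over $p_\varphi$ with maps $\Qoppa_M \to p_\varphi^*\Qoppa_N$ of $\EE_\infty$-algebras in $\Fun^q(\Mod_A^\omega)$, then invoke \cite[Corollary 5.4.7]{CDHHLMNNSI} (and the identification $\Fun^q(\Mod_A^\omega)\simeq \Mod_{N^{C_2}A}$) to translate this into a map $M \to N$ of $\EE_\infty$-$N^{C_2}A$-algebras; part \ref{lemma_item:monoidal_herm_func_repping_obj_Poincare_condition} the paper likewise dispatches with ``follows from the definitions.''
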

\begin{proof}
    Note that $ p_\varphi $ is a morphism in $ \EE_\infty\Mon\left(\Catex\right) $. 
    By Lemma \ref{lemma:cath_to_catex_cartesian_on_Einfty_alg}\ref{lemmaitem:cath_to_catex_cartesian_on_Einfty_alg}, the data of a symmetric monoidal hermitian functor $ (p_{\varphi},\eta) \colon \left(\Mod_A^\omega,\Qoppa_M\right) \to \left(\Mod_B^\omega,\Qoppa_N\right) $ covering the induction functor $ p_\varphi:= (-) \otimes_A B $ is equivalent to the data of a map $ \eta' \colon \Qoppa_M \to p_\varphi^* \Qoppa_N $ of $ \EE_\infty $-algebras in $ \Fun^q\left(\Mod_A^\omega\right) \simeq \Mod_{N^{C_2}A} $, where $ p_\varphi^* \Qoppa_N $ denotes cartesian transport of $ \Qoppa_N $ along $ p_{\varphi} $. 
    Using the description of cartesian transport in Lemma \ref{lemma:cath_to_catex_cartesian_on_Einfty_alg}\ref{lemmaitem:cath_to_catex_cartesian_on_Einfty_alg}, under the identification of \cite[discussion immediately preceding Lemma 5.4.6, also see Corollary 5.4.7]{CDHHLMNNSI}, $ p_\varphi^* \Qoppa_N $ corresponds to $ N $, regarded as an $ \EE_\infty $-$ N^{C_2}A $-algebra via $ N^{C_2}(\varphi) $. 
    Part \ref{lemma_item:monoidal_herm_func_repping_obj_data} follows from these observations. 
    
    Part \ref{lemma_item:monoidal_herm_func_repping_obj_Poincare_condition} follows from the definitions. 
\end{proof}

\subsection{An operadic description of Poincaré rings}\label{subsection:calgp_operadic}
In this subsection, we show that algebras over $ \mathbb{E}_p $ are equivalent to Poincaré rings defined diagrammatically. 
The constructions and proofs are very similar to those of \cite{LYang_normedrings}, with the $ C_2 $-$ \infty $-operad $ \EE_p^\otimes $ in place of the commutative/terminal $ C_2 $-$ \infty $-operad. 
We give a terse outline of the argument, including proofs of modified versions of key auxiliary results of \emph{loc. cit.}, but otherwise refer to \emph{loc. cit.} for details.   

Let $ K $ denote the $ \infty $-categorical nerve of the $ 1 $-category
\begin{equation*}
    \begin{tikzcd}[cramped, row sep=tiny]
        & 3 \ar[dd] & \\
        & & 2 \ar[dd] \ar[lu] \\
        & 5 & \\
        1  \ar[rr] \ar[ru] \ar[ruuu, bend left=15] & & 4 \ar[lu]
    \end{tikzcd}
\end{equation*}
in which all triangles and squares commute. 
			
\begin{definition}\label{defn:calgp_alt_diagram}
    Consider the diagram $ \mathcal{P} \colon K \to \Cat_\infty $ 
    \begin{equation*}
        \begin{tikzcd}[column sep=small]
            & \Fun\left(\Delta^1,\EE_\infty\Alg\left(\Spectra\right)^{BC_2}\right) \ar[dd,"{\mathrm{ev}_{0},\mathrm{ev}_{1}}"]  & \\
            & & \Fun\left(\Delta^1,\EE_\infty\Alg\left(\Spectra^{C_2}\right)\right) \ar[dd,"{\mathrm{ev}_{0}, \mathrm{ev}_{1}}"] \ar[lu,"{(-)^e}"'] \\
            &\EE_\infty\Alg(\Spectra)^{BC_2} \times \EE_\infty\Alg(\Spectra)^{BC_2} & \\
            \EE_\infty\Alg\left(\Spectra^{C_2}\right)  \ar[rr,"{N^{C_2}(-^e) \times \id}"'] \ar[ru] \ar[ruuu,"{m \circ (-^e)}", bend left=30] & &\EE_\infty\Alg\left(\Spectra^{C_2}\right) \times \EE_\infty\Alg\left(\Spectra^{C_2}\right) \ar[lu,"{(-)^e \times (-)^e}"]
        \end{tikzcd}
    \end{equation*}
    where $ m $ is the functor of \cite[Construction 3.1]{LYang_normedrings} precomposed with the canonical map $ \Delta^1 \to \mathcal{O}_{C_2} $. 
    Observe that the right-hand trapezoid commutes essentially by definition, and the leftmost triangle commutes because $ \left(N^{C_2} A \right)^e \simeq (A^e)^{\otimes 2} $. 
    Consider the limit of the diagram 
    \begin{equation}
        \CAlgp'\left(\Spectra^{C_2}\right) := \lim_K \mathcal{P} .
    \end{equation} 
    
    There is a canonical forgetful functor $ G' \colon \CAlgp'\left(\Spectra^{C_2}\right) \to \EE_\infty\Alg\left(\Spectra^{C_2}\right)  $ given by the canonical projection to the lower left corner of the diagram. 
\end{definition}
\begin{observation}\label{obs:limits_of_limits}
    Let $ F \colon K \to \mathcal{C} $ be any diagram and let $ \mathcal{C} $ be any $ \infty $-category with finite limits. 
    Then there is a canonical equivalence $ \displaystyle \lim_K F \simeq F(1) \times_{\left(F(3) \times_{F(5)} F(4)\right)} F(2) $. 
\end{observation}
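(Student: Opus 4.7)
The plan is to verify the claim by decomposing $K$ as a pushout of simpler indexing categories and converting this colimit into a pullback via the contravariance of $\Fun(-,\mathcal{C})$. I would first introduce the full subcategories $K_1 \subseteq K$ on $\{1, 3, 4, 5\}$ and $K_2 \subseteq K$ on $\{2, 3, 4, 5\}$, whose intersection $L$ is the cospan $3 \to 5 \leftarrow 4$. A direct inspection of the seven non-identity arrows in $K$ shows that every morphism lies in $K_1$ or $K_2$, with overlap exactly $L$, so that $K$ realizes the strict pushout $K_1 \sqcup_L K_2$ of 1-categories. Moreover, $L$ is a cosieve in each $K_i$ (every arrow out of an object of $L$ lands in $L$), so the inclusions $L \hookrightarrow K_i$ are monomorphisms of simplicial sets, hence Joyal cofibrations, and the 1-categorical pushout simultaneously presents the pushout in $\Cat_\infty$.

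Next, I would apply $\Fun(-, \mathcal{C})$ to obtain the pullback decomposition $\Fun(K, \mathcal{C}) \simeq \Fun(K_1, \mathcal{C}) \times_{\Fun(L, \mathcal{C})} \Fun(K_2, \mathcal{C})$. Restricting to mapping spaces out of constant diagrams and invoking the Yoneda lemma gives
\[
\lim_K F \simeq \lim_{K_1} F|_{K_1} \times_{\lim_L F|_L} \lim_{K_2} F|_{K_2}.
\]
It then remains to compute the three factors. By definition $\lim_L F|_L = F(3) \times_{F(5)} F(4)$. The object $1$ is initial in $K_1$, having a unique morphism to each of $3, 4, 5$ (this uses the triangle commutativity at $1$), so $\lim_{K_1} F|_{K_1} \simeq F(1)$. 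Similarly $2$ is initial in $K_2$: it admits direct arrows to $3$ and $4$, and the common composite $2 \to 3 \to 5 = 2 \to 4 \to 5$ is the unique morphism $2 \to 5$ by the square commutativity, so $\lim_{K_2} F|_{K_2} \simeq F(2)$. Combining these identifications yields the claim.

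The main obstacle I anticipate is the verification that the 1-categorical pushout of $K_1 \leftarrow L \to K_2$ agrees with the $\Cat_\infty$-pushout; this reduces to the standard fact that inclusions of cosieves between nerves of 1-categories are Joyal cofibrations, but should the reader prefer to avoid any model-categorical input, one can instead verify the universal property of $\lim_K F$ by hand. Setting $P := F(3) \times_{F(5)} F(4)$ and using the triangle (respectively square) commutativity to produce canonical maps $F(1) \to P$ and $F(2) \to P$, a cone over $F$ from $X \in \mathcal{C}$ is equivalent to the combined data of cones over $F|_{K_1}$ and $F|_{K_2}$ agreeing on $L$; by the initiality observations above this is equivalent to a pair of maps $X \to F(1)$ and $X \to F(2)$ inducing the same map $X \to P$, which is precisely a map $X \to F(1) \times_P F(2)$.
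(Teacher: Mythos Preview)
Your argument is correct. The paper does not supply a proof for this observation at all --- it is stated without justification and then immediately applied --- so there is no ``same approach'' to compare against. Your pushout decomposition $K = K_1 \sqcup_L K_2$ along the cospan $L$ is the natural way to verify the claim, and the initiality of $1$ in $K_1$ and $2$ in $K_2$ is exactly what makes the iterated pullback formula work. The alternative direct verification you sketch at the end is equally valid and arguably more elementary, since it avoids any appeal to the Joyal model structure.

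One trivial slip: $K$ has eight non-identity morphisms, not seven, once you count the composite $2 \to 5$ (which you correctly identify later). This does not affect the argument.
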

\begin{observation}
    By the recollement decomposition of $ \Spectra^{C_2} $, $ \lim_K \mathcal{P} $ is equivalent to the limit of the diagram\footnote{`res' means restriction.} 
    \begin{equation*}
        \begin{tikzcd}[column sep=small]
            & \Fun\left(\Delta^1,\EE_\infty\Alg\left(\Spectra\right)\right) \ar[dd,"{\mathrm{ev}_{0},\mathrm{ev}_{1}}"]  & \\
            & & \Fun\left(\Delta^1 \times \Delta^1,\EE_\infty\Alg\left(\Spectra\right)\right) \ar[dd,"{(\mathrm{res}_{\Delta^1 \times \{0\}},\mathrm{res}_{\Delta^1 \times \{1\}})}"] \ar[lu,"{\mathrm{res}_{\{1\} \times \Delta^1}}"'] \\
            &\EE_\infty\Alg(\Spectra) \times \EE_\infty\Alg(\Spectra) & \\
            \EE_\infty\Alg\left(\Spectra^{C_2}\right)  \ar[rr,"{N^{C_2}(-^e) \times \id}"'] \ar[ru] \ar[ruuu,"{m^{\mathrm{t}C_2} \circ (-^e)}", bend left=30] & &\EE_\infty\Alg\left(\Spectra^{C_2}\right)^{\Delta^1} \times \EE_\infty\Alg\left(\Spectra^{C_2}\right)^{\Delta^1} \ar[lu,"{\ev_1 \times \ev_1}"]
        \end{tikzcd}
    \end{equation*} 
    By Observation \ref{obs:limits_of_limits} and the fact that all commutative trapezoids below \emph{except} the upper left are pullbacks: 
    \begin{equation}
        \begin{tikzcd}[column sep=small]
            \EE\Alg(\Spectra)^{\Delta^1 \times \Delta^1} \ar[rr,"{\ev_{00\to10\to11}}"] \ar[d,"{(\mathrm{res}_{\Delta^1 \times \{0\}},\mathrm{res}_{\Delta^1 \times \{1\}})}"'] & & \EE\Alg(\Spectra)^{\Delta^2} \ar[d,"{(d_2,\ev_2)}"] \ar[rr,"{d_0}"] & & \EE\Alg(\Spectra)^{\Delta^1} \ar[d,"{(\ev_0,\ev_1)}"] & \\
            \left(\EE\Alg(\Spectra)^{\Delta^1}\right)^{\times 2} \ar[rr,"{(\id,\ev_1)}"] \ar[d,"{\pi_2}"] & &\EE\Alg(\Spectra)^{\Delta^1} \times \EE\Alg(\Spectra) \ar[rd,"{\pi_1}"] \ar[d,"{\pi_2}"] \ar[rr,"{(\ev_1, \id)}"] & & \EE\Alg(\Spectra)^{\times 2} \ar[rd,"{\pi_1}"] & \\
            \EE\Alg(\Spectra)^{\Delta^1} \ar[rr,"{\ev_1}"] & & \EE\Alg(\Spectra) & \EE\Alg(\Spectra)^{\Delta^1} \ar[rr,"{\ev_1}"] & & \EE\Alg(\Spectra)\,,
        \end{tikzcd}
    \end{equation} 
    (Here $ \pi_1, \pi_2 $ denote projections onto the first and second factors, resp.) 
    it follows that 
    \begin{equation*}
        \lim_K \mathcal{P} \simeq \left(\EE_\infty \Alg\left(\Spectra^{C_2}\right)\right) \times_{\left(\EE_\infty\Alg(\Spectra)^{\Delta^2} \times_{\EE_\infty \Alg(\Spectra)} \EE_\infty\Alg(\Spectra)^{\Delta^1}\right)} \EE_\infty\Alg(\Spectra)^{\Delta^1 \times \Delta^1} \,.
    \end{equation*}
    Thus $ \lim_K \mathcal{P} $ is equivalent to $ \CAlgp $ as defined in Definition~\ref{definition:poincare_ring_spectrum}
\end{observation}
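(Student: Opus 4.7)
The plan is to exhibit the equivalence $\lim_K \mathcal{P} \simeq \CAlgp$ via a sequence of formal pullback manipulations that convert the $K$-indexed limit into the pullback presentation of $\CAlgp$ given in Definition~\ref{definition:poincare_ring_spectrum}. The strategy is essentially dictated by the structure of the diagram $\mathcal{P}$: vertex $1$ captures the genuine $C_2$-$\EE_\infty$-algebra, vertex $2$ captures the factorization data, and the remaining vertices enforce compatibility with the Tate-valued norm on underlying Borel objects.

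First, I would apply Observation~\ref{obs:limits_of_limits} to rewrite $\lim_K \mathcal{P}$ as the single iterated pullback $\mathcal{P}(1) \fiberproduct_{\mathcal{P}(3) \fiberproduct_{\mathcal{P}(5)} \mathcal{P}(4)} \mathcal{P}(2)$. This reduces the problem from a general $K$-indexed limit to a tractable composite pullback in $\Cat_\infty$. Next, I would use the symmetric monoidal recollement $\EE_\infty\Alg(\Spectra^{C_2}) \simeq \EE_\infty\Alg(\Spectra)^{BC_2} \fiberproduct_{\EE_\infty\Alg(\Spectra),(-)^{\mathrm{t}C_2}} \EE_\infty\Alg(\Spectra)^{\Delta^1}$ to replace every occurrence of $\EE_\infty\Alg(\Spectra^{C_2})$ in the diagram (including those hidden inside the functor categories $\Fun(\Delta^1, -)$). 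After this substitution, every vertex becomes a category of ordinary $\EE_\infty$-algebras in $\Spectra$, equipped with some combination of Borel $C_2$-action, factorization arrows, and evaluation data.

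At this point I would use the presentation of $\Delta^2$ as the pushout obtained from $\Delta^1 \times \Delta^1$ by collapsing one edge, to collapse the nested $\Delta^1 \times \Delta^1$-indexed subdiagrams together with their outer trapezoids (each of which is a pullback square for formal reasons, e.g. by the dual of \cite[Example 4.3.1.3]{HTT} and cartesian-ness of evaluation) into the single $\Delta^2$-indexed functor category $\Fun(\Delta^2, \EE_\infty\Alg(\Spectra))$ appearing in Definition~\ref{definition:poincare_ring_spectrum}. The remaining factor is $\EE_\infty\Alg(\Spectra)^{BC_2}$ equipped with the Tate-valued norm arrow as its image in $\Fun(\Delta^1, \EE_\infty\Alg(\Spectra))$, completing the identification.

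The main obstacle will be carefully bookkeeping the various maps under the recollement — specifically, checking that the long arrow $m \circ (-^e)$ out of vertex $1$ of $\mathcal{P}$ matches, after the recollement decomposition of $\mathcal{P}(1)$, the lower horizontal arrow $U(-) \to (-)^{\mathrm{t}C_2}$ in the pullback square defining $\CAlgp$, and that the edge $1 \to 4 \to 5$ via $N^{C_2}(-^e) \times \id$ correctly encodes the compatibility between $N^{C_2}$, the underlying Borel structure, and the factorization. Once these compatibilities are verified, the identification of the resulting iterated pullback with Definition~\ref{definition:poincare_ring_spectrum} is a matter of direct inspection.
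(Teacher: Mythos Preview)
Your proposal is correct and follows essentially the same approach as the paper: apply the recollement decomposition of $\Spectra^{C_2}$ to rewrite all vertices in terms of $\EE_\infty\Alg(\Spectra)$, use Observation~\ref{obs:limits_of_limits} to express $\lim_K\mathcal{P}$ as an iterated pullback, and then collapse the resulting diagram via the pullback trapezoids (which encode exactly the $\Delta^1\times\Delta^1$-to-$\Delta^2$ reduction you describe) to arrive at the pullback defining $\CAlgp$. The paper applies recollement before invoking Observation~\ref{obs:limits_of_limits} whereas you do the reverse, but this is immaterial.
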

\begin{remark}\label{rmk:operadic_Poincare_rings_to_E_infty_alg_in_genuine_spectra}
    There is a map of $ C_2 $-$ \infty $-operads $ \mathrm{Com}_{\mathcal{O}_{C_2}^\simeq} \to \EE_p $. 
    By \cite[Lemma 4.24]{LYang_normedrings}, this induces a functor $ G'\colon \EE_p\Alg\left(\Spectra^{C_2}\right) \to \EE_\infty\Alg\left(\Spectra^{C_2}\right) $, which by an argument similar to Proposition 4.23 \emph{loc.cit.} is monadic. 
\end{remark}
\begin{construction}\label{cons:calgp_operadic_to_diagrammatic}
    There is a canonical functor $ \gamma \colon \EE_p \Alg(\Spectra^{C_2}) \to \CAlgp $. 
    This functor is induced by restricting along different subcategories of $\underline{\mathrm{Fin}}_{C_2,*}$ corresponding to the categories at postions 1, 2, 3, and 4 of $K$ in Definition~\ref{defn:calgp_alt_diagram}. See \cite[Notation 4.1]{LYang_normedrings} for the definition of these subcategories and (after slight modification) the functors to these positions in the diagram. The analogous results of \cite[Proposition 4.6]{LYang_normedrings} continue to hold, and thus we get a comparison functor as claimed. 
    Note that the higher coherences required to show that the individual restriction functors assemble to a functor valued in the limit are handled by Remark \ref{rmk:surprise_lower_category} (cf. Remark 4.8 of \cite{LYang_normedrings}). 
\end{construction}
We will use the Barr-Beck-Lurie theorem to deduce that this map is in fact an equivalence. For this, we will need to understand the free Poincar{\'e} ring spectra, which turn out to be useful independently in the construction of $\mathbb{G}_m^\Qoppa$.
\begin{theorem}\label{thm:free_operadic_calgp_formula}
    Let $ A \in \EE_\infty\Alg\left(\Spectra^{C_2}\right) $ and consider the adjunction $ F \dashv G $ between $ \EE_p $ and $ \EE_\infty $-algebras in $ \Spectra^{C_2} $.  
    \begin{enumerate}[label=(\arabic*)]
        \item The underlying $ C_2 $-spectrum of the free $ \EE_p $ algebra $ F(A) $ on $ A $ is given by 
        \begin{equation}\label{eq:free_operadic_calgp_formula}
            F(A) \simeq
            \begin{tikzcd}
                &  A^{\varphi C_2} \otimes A^e \ar[d,"{s_A \otimes \nu_A}"] \\
                A^e \ar[r,mapsto] & A^{\mathrm{t}C_2}
            \end{tikzcd}
        \end{equation}
        where $ u $ is the unit, $ s_A : A^{\varphi C_2} \to A^{\mathrm{t}C_2} $ is the structure map, and $ \nu_A $ is the twisted Tate-valued Frobenius. 
        \item There is a canonical $ \EE_\infty $ ring map $ \eta_A \colon A \to GF(A) $ given by $ \id_{A^{\varphi C_2}} \otimes (\eta_{A^e}:\mathbb{S}^0 \to A^e) $ on geometric fixed points and the identity on underlying.
    \end{enumerate}
\end{theorem}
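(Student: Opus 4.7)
The plan is to prove both parts by constructing a candidate free Poincaré ring explicitly and verifying its universal property via the diagrammatic description $\CAlgp$, leveraging the equivalence with $\EE_p$-algebras established (separately) in Theorem \ref{thm:operadic_diagrammatic_calgp_agree}. Concretely, given $A \in \CAlg(\Spectra^{C_2})$, I would define a candidate Poincaré ring $F'(A) \in \CAlgp$ by specifying:
\begin{itemize}
    \item Underlying Borel $\EE_\infty$-algebra: $F'(A)^e := A^e$ with its given $C_2$-action.
    \item The $\varphi C_2$-part: $F'(A)^{\varphi C_2} := A^{\varphi C_2} \otimes A^e$, regarded as an $\EE_\infty$-$A^e$-algebra via the right unit $A^e \simeq \mathbb{S} \otimes A^e \to A^{\varphi C_2} \otimes A^e$.
    \item Structure map: the unique $\EE_\infty$-$A^e$-algebra map $A^{\varphi C_2} \otimes A^e \to (A^e)^{\mathrm{t}C_2}$ corepresented on the $A^{\varphi C_2}$ factor by $s_A$ and on the $A^e$ factor by $\nu_{A^e}$; this is well-defined since $(A^e)^{\mathrm{t}C_2}$ acquires its canonical $A^e$-algebra structure precisely via $\nu_{A^e}$, and $A^{\varphi C_2} \otimes A^e$ is the coproduct of $A^{\varphi C_2}$ and $A^e$ in $\EE_\infty\Alg$.
\end{itemize}
The unit $\eta_A \colon A \to G' F'(A)$ is then defined to be the identity on the underlying Borel piece and the natural inclusion $A^{\varphi C_2} \xrightarrow{\mathrm{id} \otimes \eta_{A^e}} A^{\varphi C_2} \otimes A^e$ on geometric fixed points; compatibility with Tate-valued Frobenius is immediate from the construction of the structure map of $F'(A)$.

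Next I would verify the universal property. Using the pullback description (Theorem \ref{thm:poincare_rings_cat_formal_properties}\ref{thmitem:defining_diagram_homotopy_pullback}), a map $F'(A) \to R$ in $\CAlgp$ is the data of a Borel $\EE_\infty$-map $A^e \to R^e$, an $\EE_\infty$-$A^e$-algebra map $A^{\varphi C_2} \otimes A^e \to R^{\varphi C_2}$, and coherent compatibility with the Tate-valued Frobenius squares. By the coproduct universal property of $A^{\varphi C_2} \otimes A^e$ together with the prescribed $A^e$-linearity, the middle datum reduces to a single $\EE_\infty$-algebra map $A^{\varphi C_2} \to R^{\varphi C_2}$. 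Combining these and invoking the symmetric monoidal récollement of $\Spectra^{C_2}$ (cf.\ Remark \ref{rmk:Poincare_ring_has_underlying_C2_spectrum_alg}), the total data is seen to correspond precisely to a morphism $A \to R^L$ in $\CAlg(\Spectra^{C_2})$, naturally in $R$. This exhibits $F' \dashv G'$ as an adjunction with the advertised unit.

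Finally, transporting the adjunction across the equivalence $\mathbb{E}_p\Alg \simeq \CAlgp$ of Theorem \ref{thm:operadic_diagrammatic_calgp_agree} and invoking uniqueness of left adjoints yields $F(A) \simeq F'(A)$ together with the claimed description of the unit, establishing both (1) and (2). The main obstacle will be step three's coherence-tracking: matching the compatibility between $s_A \otimes \nu_{A^e}$ and the Tate-valued Frobenius on $R$ against the structure of a map of $\EE_\infty$-rings in $\Spectra^{C_2}$ under the récollement, which requires careful bookkeeping of the lax symmetric monoidality of $(-)^{\mathrm{t}C_2}$ and the naturality of $\nu$. Once this coherence is checked at the level of 1-morphisms, the higher coherences follow automatically from the pullback universal property of $\CAlgp$.
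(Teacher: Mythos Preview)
Your argument is circular with respect to the paper's logical structure. You invoke Theorem \ref{thm:operadic_diagrammatic_calgp_agree} to transport your diagrammatic free functor $F'$ back to the operadic side, but in the paper that equivalence is \emph{proved using} Theorem \ref{thm:free_operadic_calgp_formula}: the proof of Theorem \ref{thm:operadic_diagrammatic_calgp_agree} applies Barr--Beck--Lurie to the triangle $G = G' \circ \gamma$, and the step ``the unit $A \to \gamma F(A)$ induces an equivalence $F'(A) \simeq \gamma F(A)$'' (Corollary \ref{cor:free_algebras_agree}) requires already knowing the explicit form of $F(A)$ from the present theorem. So you cannot assume the comparison $\gamma$ is an equivalence at this point.

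The paper's proof is entirely operadic and does not touch $\CAlgp$: it realizes $F(A)$ as a colimit indexed by (a fiber of) the envelope $\mathrm{Env}_{\EE_p,\underline{\Fin}_{C_2,*}}(\mathrm{Com}_{\mathcal{O}_{C_2}^{\simeq,\op}})$, following the template of \cite[Theorem 4.15]{LYang_normedrings}, and then uses the cofinality Lemma \ref{lemma:freeoperadic_calgp_colimdiagram} to collapse that colimit to the single term $A^{\varphi C_2} \otimes A^e$ with the indicated structure map. Your verification that $F'$ is left adjoint to $G'$ in $\CAlgp$ is essentially the content of Proposition \ref{prop:unit_in_diagrammatic_calgp}, which in the paper comes \emph{after} the present theorem and is used as an input to the Barr--Beck comparison, not as a substitute for the operadic computation.
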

\begin{proof}
    The proof is similar to the proof of \cite[Theorem 4.15]{LYang_normedrings}, substituting $ \mathrm{Env}_{\EE_p, \underline{\Fin}_{C_2,*}}\left(\mathrm{Com}_{\mathcal{O}_{C_2}^{\simeq,\op}}\right) $ for $ \mathrm{Com}_{\mathcal{T}^\simeq,act}^\otimes = \mathrm{Env}_{\underline{\Fin}_{C_2,*}, \underline{\Fin}_{C_2,*}}\left(\mathrm{Com}_{\mathcal{O}_{C_2}^{\simeq,\op}}\right) $. 
    Since the indexing diagram is different, one uses Lemma \ref{lemma:freeoperadic_calgp_colimdiagram} instead of Lemma 4.19 \emph{loc. cit}. 
\end{proof}

\begin{lemma}\label{lemma:freeoperadic_calgp_colimdiagram}
    Consider the (non-parametrized) fiber $ I $ of $ \mathrm{Env}_{\EE_p, \underline{\Fin}_{C_2,*}}\left(\mathrm{Com}_{\mathcal{O}_{C_2}^{\simeq,\op}}\right) $ over $ \mathrm{id}_{C_2/C_2} $. 
    An object of $ I $ can be thought of as a pair $ \left([U \to V], \alpha \colon i([U \to V]) \to [\mathrm{id}_{C_2/C_2}]\right)$ where $ [U \to V ] $ is an object in the fiber of $ \mathrm{Com}_{\mathcal{O}_{C_2}^{\simeq,\op}} $ over $ C_2/C_2 $, $ i $ denotes the inclusion $ \mathrm{Com}_{\mathcal{O}_{C_2}^{\simeq,\op}} \to \EE_p $, and $ \alpha $ is an active arrow in $ \EE_p $. 
    
    Choose a $ C_2 $-set $ U $ with free and transitive $ C_2 $-action, and fix an ordering $ \leq $ on $ U $. 
    This determines a t-ordering on the collapse map $ C_2/C_2 \sqcup U \to C_2/C_2 $, hence an active arrow $ \beta $ in $ \EE_p^\otimes $. 
    The inclusion
    \begin{equation*}
        \{*\} \xrightarrow{* \mapsto \beta} I
    \end{equation*}
    is cofinal. 
\end{lemma}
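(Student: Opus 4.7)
The plan is to apply the $\infty$-categorical form of Quillen's Theorem A \cite[Theorem 4.1.3.1]{HTT}: the inclusion $\{*\} \to I$ with $* \mapsto \beta$ is cofinal if and only if, for every object $X \in I$, the $\infty$-category $\{*\} \times_I I_{X/}$ is weakly contractible. Unraveling, this $\infty$-category can be identified with the mapping space $\mathrm{Map}_I(X,\beta)$. A key simplification comes from Remark \ref{rmk:surprise_lower_category}: since $\EE_p^\otimes$ is the nerve of a 1-category, so is its envelope, and hence so is $I$. Mapping spaces in $I$ are therefore discrete, and the lemma reduces to showing that the set $\mathrm{Hom}_I(X,\beta)$ is a singleton for every $X \in I$.

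Next I would unpack this mapping set concretely. A morphism $(X_1, \alpha_1) \to (X_2, \alpha_2)$ in the non-parametrized fiber $I$ consists of a morphism $\phi \colon X_1 \to X_2$ in $\mathrm{Com}_{\mathcal{O}_{C_2}^{\simeq,\op}}$ such that $\alpha_2 \circ i(\phi) = \alpha_1$ in $\EE_p^\otimes$. For the target $\beta$, the underlying object is $[C_2/C_2 \sqcup U \to C_2/C_2]$ and the active arrow is the collapse map equipped with our fixed t-ordering on $U$. Writing $X = ([W \to V], \alpha)$, I would decompose $\alpha$ into its underlying span (\ref{diagram:C2_operad_generic_span}) in $\underline{\Fin}_{C_2,*}$ together with the t-ordering on its active map $f$. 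The datum of $\phi$ then amounts to a partition of the source of $f$ into a fixed-point contribution (mapping to the $C_2/C_2$ summand of $\beta$) and free $C_2$-orbits (each mapping to $U$), plus a choice of $C_2$-equivariant identifications of these orbits with $U$.

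Then I would verify existence and uniqueness of such $\phi$. For existence, the canonical decomposition of the source of $f$ into its fixed and free $C_2$-orbit components gives a natural candidate; each free orbit is identified with $U$ via the unique order-preserving $C_2$-equivariant isomorphism determined by the t-orderings of $\alpha$ and $\beta$. For uniqueness, any alternative $\phi'$ could differ only in the choice of such identifications, and the compatibility $\beta \circ i(\phi') = \alpha$ under the lexicographic composition rule of Notation \ref{ntn:t_order_equivariant_map} forces those choices to agree with the canonical ones. Remark \ref{rmk:t_order_underlying} ensures that potential ambiguities arising from pullbacks of free orbits do not introduce extra choices of orderings on fixed components.

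The main obstacle will be carefully tracking the lexicographic composition of t-orderings through the equation $\beta \circ i(\phi) = \alpha$ and verifying that the constructed $\phi$ does indeed produce the prescribed t-ordering on the composite, strictly at the level of the 1-categorical envelope. Once this bookkeeping is organized, the argument proceeds in parallel to \cite[Lemma 4.19]{LYang_normedrings}, with t-ordering data threaded through every step.
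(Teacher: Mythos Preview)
Your proposal is correct and follows essentially the same approach as the paper: both apply Quillen's Theorem A \cite[Theorem 4.1.3.1]{HTT}, identify $\{*\} \times_I I_{X/}$ with the discrete mapping set $\mathrm{Hom}_I(X,\beta)$, and exhibit the unique morphism by decomposing the source into its fixed and free $C_2$-orbit components and using the t-ordering data to single out the unique order-preserving $C_2$-equivariant identification of each free orbit with $U$. One small terminological slip: the ``lexicographic'' rule belongs to $\underline{\Assoc}_\sigma$, whereas composition of t-orderings in $\EE_p$ is governed directly by Notation \ref{ntn:t_order_equivariant_map}; this does not affect your argument.
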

\begin{proof}
    By \cite[Theorem 4.1.3.1]{HTT}, it suffices to show that for all $ X \in I $, the category $ \{*\} \times_{I} I_{X/} $ has an initial object. 
    Informally, an object $ X $ of $ I $ is given by a tuple $ (S, \alpha \colon S \to C_2/C_2) $ where $ S $ is a finite $ C_2 $-set and $ \alpha $ is an active arrow in $ \left(\EE_p^\otimes\right)_{C_2/C_2} $. 
    We observe that $ \alpha $ is given by a $ C_2 $-equivariant map $ f $ (in fact, $ f $ is unique) and a t-ordering on $ f $ in the sense of Notation \ref{ntn:t_order_equivariant_map}. 
    Morphisms are given by tuples; note that a morphism $ \psi \colon (S,\alpha) \to (T,\gamma) $ determines a map $ S \to T $ in $ \left(\mathrm{Com}_{\mathcal{O}_{C_2}^\simeq}\right)_{C_2/C_2} $; in particular, the map $ S \to T $ must be equivalent to a disjoint union of fold maps. 
    Now the t-ordering on $ f $ is equivalent to giving, for each free orbit $ Z \subseteq S $, $ Z \simeq C_2 $, an ordering $ \leq_Z $ on $ Z $. 
    For each such $ Z $, there is a unique order-preserving, $ C_2 $-equivariant isomorphism $ j_Z \colon Z \simeq U $. 
    Write $ S \simeq S^f \sqcup S^t $ for the canonical decomposition where $ C_2 $ acts freely on $ S^f $ and trivially on $ S^t $. 
    Then $ \displaystyle S \simeq S^t \sqcup \left(\bigsqcup_{Z \in \mathrm{Orbit}(S^f)} Z\right) \xrightarrow{\nabla \sqcup \left(\bigsqcup j_Z\right)} C_2/C_2 \sqcup U $ defines a morphism in $ \left(\mathrm{Com}_{\mathcal{O}_{C_2}^{\simeq,\op}}\right)_{C_2/C_2} $ so that the diagram
    \begin{equation*}
        \begin{tikzcd}
            S \ar[rr,"{\nabla \sqcup \left(\bigsqcup j_Z\right)}"] \ar[rd,"\alpha"'] & & C_2/C_2 \sqcup U\ar[ld,"{\beta}"] \\
            & C_2/C_2 &         
        \end{tikzcd}     
    \end{equation*} 
    commutes in $ \EE_p $; this is the desired initial object of $ \{*\} \times_{I} I_{X/} $.  
\end{proof}

Finally, before proving that the map of Construction~\ref{cons:calgp_operadic_to_diagrammatic} is an equivalence, we will need to know how the unit of the adjoint of $G$ behaves.
\begin{proposition}\label{prop:calgp_monadic}
    The forgetful functor $ G' \colon \CAlgp\left(\Spectra^{C_2}\right) \to \EE_\infty\Alg\left(\Spectra^{C_2}\right) $ of Definition \ref{defn:calgp_alt_diagram} is monadic. 
\end{proposition}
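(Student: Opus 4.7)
The plan is to apply the Barr--Beck--Lurie monadicity theorem \cite[Theorem 4.7.3.5]{LurHA}. Three conditions must be checked: $G'$ admits a left adjoint, $G'$ is conservative, and $G'$ preserves $G'$-split geometric realizations.

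The existence of a left adjoint follows from the presentability of $\CAlgp$ (Theorem \ref{thm:poincare_rings_cat_formal_properties}\ref{thmitem:poincare_rings_presentable_accessible}) together with the fact that $G'$ preserves all small limits (Theorem \ref{thm:poincare_rings_cat_formal_properties}\ref{thmitem:poincare_ring_to_ring_preserves_lims}), via the adjoint functor theorem \cite[Corollary 5.5.2.9]{HTT}. The preservation of $G'$-split geometric realizations is immediate from the stronger statement that $G'$ preserves all sifted colimits (Theorem \ref{thm:poincare_rings_cat_formal_properties}\ref{thmitem:poincare_ring_to_ring_preserves_sift_colims}).

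The remaining content is conservativity, which is where I would spend the most effort. Suppose $f \colon R \to S$ is a morphism in $\CAlgp$ whose image $G'(f)$ in $\EE_\infty\Alg(\Spectra^{C_2})$ is an equivalence. Unwinding the symmetric monoidal r\'ecollement of $\Spectra^{C_2}$ (see Remark \ref{rmk:Poincare_ring_has_underlying_C2_spectrum_alg}), this hypothesis gives equivalences $R^e \xrightarrow{\sim} S^e$ in $\CAlg(\Spectra^{BC_2})$, $R^{\varphi C_2} \xrightarrow{\sim} S^{\varphi C_2}$, and $R^{\mathrm{t}C_2} \xrightarrow{\sim} S^{\mathrm{t}C_2}$, compatible with the structure maps $R^{\varphi C_2} \to R^{\mathrm{t}C_2}$ and their counterparts for $S$. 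By the pullback description (\ref{diagram:poincare_ring_defn}) of $\CAlgp$, a morphism is an equivalence iff both of its projections to $\CAlg(\Spectra^{BC_2})$ and to $\Fun(\Delta^2, \CAlg(\Spectra))$ are equivalences. The first projection is an equivalence by assumption; the second can be checked pointwise on the three vertices of $\Delta^2$, which by Lemma \ref{lemma:Poincare_ring_geom_fixpt} correspond precisely to $R^e$, $R^{\varphi C_2}$, and $R^{\mathrm{t}C_2}$, all of which are equivalences under $f$.

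The principal obstacle is the notational bookkeeping required to match the pullback description of $\CAlgp$ with the r\'ecollement description of $\EE_\infty\Alg(\Spectra^{C_2})$; once this is aligned, the underlying mathematical content is essentially immediate from the formal properties already established in Theorem \ref{thm:poincare_rings_cat_formal_properties}.
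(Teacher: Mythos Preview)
Your proposal is correct and follows essentially the same Barr--Beck--Lurie strategy as the paper, which likewise obtains the left adjoint via the adjoint functor theorem and Theorem \ref{thm:poincare_rings_cat_formal_properties}; the paper then defers conservativity and the split-simplicial condition to \cite[Proposition 3.23]{LYang_normedrings}, whereas you spell out conservativity directly from the pullback description. One small nit: applying \cite[Corollary 5.5.2.9]{HTT} also requires $G'$ to be accessible, which follows from the sifted-colimit preservation you already cite for the third condition---the paper invokes \ref{thmitem:poincare_ring_to_ring_preserves_sift_colims} for exactly this reason.
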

\begin{proof}
    By the Barr--Beck--Lurie theorem, it suffices to show that $ G' $ is a right adjoint, is conservative, and any $ G' $-split simplicial object $ V $ of $ \CAlgp $ admits a colimit in $ \CAlgp $ which is preserved by $ G' $ \cite[Theorem 4.7.3.5]{LurHA}. 
    Since $ \CAlgp $ and $ \EE_\infty\Alg\left(\Spectra^{C_2}\right) $ are presentable, by the adjoint functor theorem \cite[Corollary 5.5.2.9]{HTT} to show that $ G' $ admits a left adjoint, it suffices to show that $ G' $ is accessible and preserves small limits. 
    This follows from Theorem \ref{thm:poincare_rings_cat_formal_properties}\ref{thmitem:poincare_ring_to_ring_preserves_lims} \& \ref{thmitem:poincare_ring_to_ring_preserves_sift_colims}. 
    The rest of the proof is similar to that of \cite[Proposition 3.23]{LYang_normedrings} and has been omitted. 
\end{proof}

\begin{proposition}\label{prop:unit_in_diagrammatic_calgp}
    Let $ A $ be an $ \EE_\infty $-ring in $ \Spectra^{C_2} $ and let $ (B, n_B \colon N^{C_2}B \to B) $ be a Poincaré ring in $ \Spectra^{C_2} $. 
    Then precomposition with the $ \EE_\infty $-map $ \eta_A : A \to GF(A) $ of Theorem \ref{thm:free_operadic_calgp_formula} induces an equivalence of morphism spaces
    \begin{equation}\label{eq:unit_in_normedalg}
        \begin{tikzcd}
            {\hom_{\CAlgp}\left(\left(\gamma F(A), n_{F(A)} \colon N^{C_2}\gamma F(A) \to \gamma F(A)\right),\left(B, n_B \colon N^{C_2} B \to B \right)\right)} \ar[d,"{G'}"] \ar[dd, bend left=75,looseness=2,"{\rotatebox{90}{$\sim$}}"] \\
            \hom_{\EE_\infty}(G'\gamma F(A), G'(B)) \ar[d,"{\eta^*}"] \\
            \hom_{\EE_\infty}(A, G'(B)).
        \end{tikzcd}
    \end{equation}
    where $ G' $ is the forgetful functor and $ \gamma $ is the functor of Construction \ref{cons:calgp_operadic_to_diagrammatic}. 
    That is, $ \eta_{(-)} $ is a unit for the functors $ (\gamma \circ F, G') $ in the sense of \cite[Definition 5.2.2.7]{HTT}. 
\end{proposition}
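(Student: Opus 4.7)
The plan is to directly unwind both sides of (\ref{eq:unit_in_normedalg}) and exhibit the precomposition as an equivalence. The argument will proceed in three steps: first, parameterize morphism spaces in $\CAlgp$ via its defining pullback; second, substitute the explicit formula for $F(A)$ from Theorem \ref{thm:free_operadic_calgp_formula}; and third, match the result to morphism spaces in $\EE_\infty\Alg(\Spectra^{C_2})$ via the symmetric monoidal récollement, verifying that the correspondence is implemented by precomposition with $\eta_A$.

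For the first step, I would invoke Theorem \ref{thm:poincare_rings_cat_formal_properties}\ref{thmitem:defining_diagram_homotopy_pullback} to present $\hom_{\CAlgp}(X, Y)$ as a homotopy pullback. Unwinding this, a morphism $X \to Y$ in $\CAlgp$ corresponds to an $\EE_\infty$-algebra map $f^e \colon X^e \to Y^e$ in $\Spectra^{BC_2}$, an $\EE_\infty$-algebra map $f^{\varphi C_2} \colon X^{\varphi C_2} \to Y^{\varphi C_2}$, and a choice of homotopies witnessing that the diagram
\[
\begin{tikzcd}
X^e \ar[r] \ar[d, "f^e"'] & X^{\varphi C_2} \ar[r, "s_X"] \ar[d, "f^{\varphi C_2}"] & X^{\mathrm{t}C_2} \ar[d, "(f^e)^{\mathrm{t}C_2}"] \\
Y^e \ar[r] & Y^{\varphi C_2} \ar[r, "s_Y"'] & Y^{\mathrm{t}C_2}
\end{tikzcd}
\]
commutes, with the outer rectangle agreeing with naturality of the Tate-valued norm of Recollection \ref{recollection:Tate_valued_norm}.

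For the second step, specializing to $X = \gamma F(A)$ and $Y = B$, Theorem \ref{thm:free_operadic_calgp_formula} identifies $F(A)^e \simeq A^e$ and $F(A)^{\varphi C_2} \simeq A^{\varphi C_2} \otimes A^e$, with unit map $A^e \to A^{\varphi C_2} \otimes A^e$ given by $a \mapsto 1 \otimes a$ and structure map $s_{F(A)}$ induced by $s_A \otimes \nu_A$ followed by multiplication in $A^{\mathrm{t}C_2}$. Since the tensor product is the coproduct in $\EE_\infty$-algebras, $f^{\varphi C_2}$ decomposes uniquely as a pair $(g_1 \colon A^{\varphi C_2} \to B^{\varphi C_2},\ g_2 \colon A^e \to B^{\varphi C_2})$. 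The commutativity of the left square forces $g_2 \simeq u_B \circ f^e$, where $u_B \colon B^e \to B^{\varphi C_2}$ is the algebra unit, so $g_2$ is determined by $f^e$. Unpacking the right square under the decomposition $f^{\varphi C_2}(x \otimes a) = g_1(x) \cdot g_2(a)$ and separating the $g_1$- and $g_2$-factors yields two conditions: $s_B \circ g_1 \simeq (f^e)^{\mathrm{t}C_2} \circ s_A$ and $s_B \circ g_2 \simeq (f^e)^{\mathrm{t}C_2} \circ \nu_A$. The latter is automatic because $s_B \circ u_B \simeq \nu_B$ by definition of a Poincaré ring, combined with naturality $\nu_B \circ f^e \simeq (f^e)^{\mathrm{t}C_2} \circ \nu_A$ of the Tate-valued norm.

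Finally, by the symmetric monoidal récollement $\Spectra^{C_2} \simeq \Spectra^{BC_2} \times_{\Spectra} \Spectra^{\Delta^1}$ appearing in Remark \ref{rmk:Poincare_ring_has_underlying_C2_spectrum_alg}, the reduced datum $(f^e, g_1)$ satisfying $s_B \circ g_1 \simeq (f^e)^{\mathrm{t}C_2} \circ s_A$ is exactly an $\EE_\infty$-algebra map $A \to G'(B)$ in $\Spectra^{C_2}$. Combined with the explicit description of $\eta_A$ (identity on underlying and $\id_{A^{\varphi C_2}} \otimes \eta_{A^e}$ on geometric fixed points), one checks that precomposition by $\eta_A$ sends $(f^e, f^{\varphi C_2})$ to $(f^e, g_1)$, realizing the correspondence above. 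The main obstacle will be controlling the higher coherences implicit in the unwinding of the pullback — in particular, producing the canonical filler identifying $g_2$ with $u_B \circ f^e$ and the canonical trivialization of its contribution to the right square. These coherences are ultimately statements about the $\infty$-functoriality of the Tate-valued norm as developed in \cite{LYang_normedrings}, and executing them rigorously requires careful tracking of the 2-simplices encoded in the pullback of Definition \ref{definition:poincare_ring_spectrum}.
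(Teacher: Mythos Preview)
Your proposal is correct and follows essentially the same strategy as the paper: unwind the pullback defining $\CAlgp$, substitute the formula for $F(A)$, and show the extra data beyond an $\EE_\infty$-map $A \to B$ is uniquely determined. The paper organizes the endgame slightly differently: rather than decomposing $f^{\varphi C_2}$ via the coproduct property and arguing that $g_2$ is forced, it identifies the fiber of $\eta^* \circ G'$ over a fixed $f \in \hom_{\EE_\infty}(A,G'(B))$ with the space of fillings of a cube $(\Delta^1)^{\times 3} \to \EE_\infty\Alg(\Spectra)$ in which all but two adjacent faces are already specified, and observes this is contractible. This packaging absorbs exactly the coherence issues you flag at the end---the ``canonical filler'' for $g_2 \simeq u_B \circ f^e$ and the trivialization of its contribution to the right square become the two missing faces of the cube, whose fillings are unique up to contractible choice.
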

\begin{corollary}\label{cor:free_algebras_agree}
    The natural transformation $ \eta_{(-)} $ exhibits $ \gamma \circ F $ as a left adjoint to $ G' $. 
\end{corollary}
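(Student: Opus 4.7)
The plan is to invoke the standard unit-based characterization of adjoint functors, namely \cite[Proposition 5.2.2.8]{HTT}: a natural transformation $\eta \colon \id_{\mathcal{C}} \Rightarrow U \circ L$ exhibits $L$ as a left adjoint to $U$ if and only if, for every $A \in \mathcal{C}$ and $B \in \mathcal{D}$, precomposition with $\eta_A$ induces an equivalence $\hom_{\mathcal{D}}(L(A), B) \xrightarrow{\sim} \hom_{\mathcal{C}}(A, U(B))$ of mapping spaces. We specialize to $\mathcal{C} = \EE_\infty\Alg(\Spectra^{C_2})$, $\mathcal{D} = \CAlgp$, $L = \gamma \circ F$, and $U = G'$.

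To produce the requisite natural transformation, we first observe that the composite $G' \circ \gamma$ is naturally equivalent to the forgetful functor $G \colon \EE_p\Alg(\Spectra^{C_2}) \to \EE_\infty\Alg(\Spectra^{C_2})$ of Remark \ref{rmk:operadic_Poincare_rings_to_E_infty_alg_in_genuine_spectra}: by inspection of Construction \ref{cons:calgp_operadic_to_diagrammatic}, both functors amount to restricting along the subcategory of $\underline{\Fin}_{C_2,*}$ corresponding to position $1$ in the diagram $K$, which records the underlying $C_2$-equivariant $\EE_\infty$-algebra. With this identification, the unit $\eta_{(-)} \colon \id \Rightarrow G \circ F = G' \circ (\gamma \circ F)$ of the monadic adjunction $F \dashv G$ (whose existence follows from Remark \ref{rmk:operadic_Poincare_rings_to_E_infty_alg_in_genuine_spectra} and whose value on the underlying $C_2$-spectrum is computed in Theorem \ref{thm:free_operadic_calgp_formula}) supplies the required natural transformation.

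The mapping space criterion is then precisely the conclusion of Proposition \ref{prop:unit_in_diagrammatic_calgp}, which asserts that composition with $\eta_A$ induces an equivalence $\hom_{\CAlgp}(\gamma F(A), B) \xrightarrow{\sim} \hom_{\EE_\infty\Alg(\Spectra^{C_2})}(A, G'(B))$. Since the substantive content has already been verified in that proposition, there is no serious obstacle: the corollary is a direct repackaging of its conclusion as an adjunction statement via \cite[Proposition 5.2.2.8]{HTT}.
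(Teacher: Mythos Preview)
Your proposal is correct and matches the paper's intended approach: the paper states the corollary immediately after Proposition~\ref{prop:unit_in_diagrammatic_calgp} without a separate proof, treating it as an immediate consequence of that proposition via the unit-based characterization of adjunctions in \cite[Proposition 5.2.2.8]{HTT}. Your explicit identification of $G' \circ \gamma \simeq G$ (so that $\eta_{(-)}$ from Theorem~\ref{thm:free_operadic_calgp_formula} can be read as a transformation $\id \Rightarrow G' \circ (\gamma \circ F)$) is a helpful elaboration that the paper leaves implicit.
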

\begin{proof}[Proof of Proposition \ref{prop:unit_in_diagrammatic_calgp}]
    Notice that 
    \begin{equation*}
        \hom_{\EE_\infty\Alg\left(\Spectra^{C_2}\right)}(F(A),B) \simeq \hom_{\EE_\infty\Alg\left(\Spectra^{C_2}\right)}(A,B) \fiberproduct_{\hom\left(A^{\mathrm{t}C_2},B^{\mathrm{t}C_2}\right) } \hom \left( \morphism{A}{\nu_A}{A^{\mathrm{t}C_2}}, \morphism{B^{\varphi C_2}}{\Delta}{B^{\mathrm{t}C_2}} \right)
    \end{equation*}
    and moreover the composite
    \begin{equation*}
        \hom_{\CAlgp}(F(A), B) \xrightarrow{G'} \hom_{\EE_\infty\Alg\left(\Spectra^{C_2}\right)}(F(A),B) \xrightarrow{\pi_1} \hom_{\EE_\infty\Alg\left(\Spectra^{C_2}\right)}(A,B)
    \end{equation*}
    is equivalent to $ \eta^* \circ G' $. 
    Unravelling definitions, we see that given a point $ f \in \hom_{\EE_\infty\Alg\left(\Spectra^{C_2}\right)}(A,B) $, the fiber of $ \eta^* \circ G' $ over $ f $ is given by the space of fillings of the below diagram to a commutative diagram $ (\Delta^1)^{\times 3} \to \EE_\infty\Alg(\Spectra) $:  
    \begin{equation*}
        \begin{tikzcd}[row sep=small] 
            A^e \ar[rr,"{f^e}"] \ar[dd] \ar[rd,equals] & & B^e \ar[dd] \ar[rd,"{n_B}"] & \\
            & A^e  \ar[rr, dotted,crossing over] & & B^{\varphi C_2} \ar[dd] \\
            {(A^{\otimes 2})^{\mathrm{t}C_2}} \ar[rr,"{(f^{\otimes 2})^{\mathrm{t}C_2}}",very near end] \ar[rd] & & {( B^{\otimes 2})^{\mathrm{t}C_2}} \ar[rd] & \\
            & A^{\mathrm{t}C_2} \ar[rr,"{f^{\mathrm{t}C_2}}"] & & B^{\mathrm{t}C_2} \ar[from=2-2,to=4-2,crossing over,"{\nu_{A^e}}", near end]
        \end{tikzcd}
    \end{equation*} 
    wherein all but the top and front face are given. 
    This space is contractible. 
\end{proof}

We are now ready to show that the comparison map between the operadic and diagrammatic descriptions of Poincar{\'e} rings is an equivalence.

\begin{theorem}\label{thm:operadic_diagrammatic_calgp_agree}
    The functor of Construction \ref{cons:calgp_operadic_to_diagrammatic} is an equivalence. 
\end{theorem}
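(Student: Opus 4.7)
The plan is to identify both sides of the comparison as algebras for the same monad over $\EE_\infty\Alg\left(\Spectra^{C_2}\right)$, then invoke the Barr--Beck--Lurie theorem to conclude. By Remark \ref{rmk:operadic_Poincare_rings_to_E_infty_alg_in_genuine_spectra}, the forgetful functor $G \colon \EE_p\Alg\left(\Spectra^{C_2}\right) \to \EE_\infty\Alg\left(\Spectra^{C_2}\right)$ is monadic with left adjoint $F$. By Proposition \ref{prop:calgp_monadic}, the forgetful functor $G' \colon \CAlgp \to \EE_\infty\Alg\left(\Spectra^{C_2}\right)$ is monadic; write $F'$ for its left adjoint. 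By construction of $\gamma$ in Construction \ref{cons:calgp_operadic_to_diagrammatic}, there is a canonical equivalence $G' \circ \gamma \simeq G$.

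Next, I would invoke Corollary \ref{cor:free_algebras_agree} (a consequence of Proposition \ref{prop:unit_in_diagrammatic_calgp} together with the unit map $\eta_A \colon A \to GF(A)$ of Theorem \ref{thm:free_operadic_calgp_formula}) to exhibit $\gamma \circ F$ as a left adjoint to $G'$. By uniqueness of adjoints, the natural transformation $\gamma \circ F \to F'$ adjoint to $\id_{G'}$ under $G' \circ \gamma \simeq G$ is an equivalence. Composing with $G'$ then identifies the two monads: $G \circ F \simeq G' \circ \gamma \circ F \simeq G' \circ F'$ as endofunctors of $\EE_\infty\Alg\left(\Spectra^{C_2}\right)$, and one must verify that this equivalence is compatible with the multiplication and unit structures, so that it refines to an equivalence of monads. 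Compatibility of units is immediate from how $\eta_{(-)}$ was exhibited as the unit in Proposition \ref{prop:unit_in_diagrammatic_calgp}; compatibility of multiplications follows by checking that both multiplications are determined by the co-unit of $G'$ evaluated on the free algebra $\gamma F(A)$, which is forced by adjunction.

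With the monads identified, both $G$ and $G'$ exhibit their domains as the $\infty$-category of modules over the same monad $T := G \circ F \simeq G' \circ F'$ on $\EE_\infty\Alg\left(\Spectra^{C_2}\right)$. In other words, the monadicity statements produce canonical equivalences $\EE_p\Alg\left(\Spectra^{C_2}\right) \simeq \LMod_T\left(\EE_\infty\Alg\left(\Spectra^{C_2}\right)\right) \simeq \CAlgp$, and by construction the functor $\gamma$ corresponds to the identity functor between these two descriptions. This produces the desired equivalence.

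The main obstacle I anticipate is the verification that the identification of monads $G \circ F \simeq G' \circ F'$ is compatible with monad structure, not merely an equivalence of underlying endofunctors. Concretely, one must exhibit an equivalence of the relevant simplicial resolutions (or bar constructions) witnessing the multiplication on each monad. This reduces to a coherence check that every step in Construction \ref{cons:calgp_operadic_to_diagrammatic} is compatible with forming composites of free functors, which follows formally from the naturality statements baked into the construction but requires some care to spell out at the $\infty$-categorical level; a clean way to package it is to observe that the comparison $\gamma F \to F'$ is natural in the input $\EE_\infty$-algebra and therefore induces a map of the associated monads as algebras in endofunctors, which is then an equivalence by what we have already shown pointwise.
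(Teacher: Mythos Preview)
Your proposal is correct and follows essentially the same approach as the paper: set up the commutative triangle $G' \circ \gamma \simeq G$ over $\EE_\infty\Alg\left(\Spectra^{C_2}\right)$, use monadicity of both $G$ and $G'$, and identify the free functors via Corollary~\ref{cor:free_algebras_agree}.

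The one difference worth noting is that the paper bypasses your final paragraph of concern entirely by invoking \cite[Proposition 4.7.3.16]{LurHA}, which directly gives that $\gamma$ is an equivalence from the inputs you have assembled (the commuting triangle, monadicity of $G$ and $G'$, and the equivalence $\gamma F \simeq F'$). The monad-coherence verification you anticipate as an obstacle is absorbed into that proposition: once Corollary~\ref{cor:free_algebras_agree} exhibits $\eta_{(-)}$ as a unit for $(\gamma F, G')$, the identification $\gamma F \simeq F'$ is an equivalence \emph{of left adjoints to $G'$}, and the compatibility of monad structures follows automatically from the adjunction data rather than requiring a separate check. So your argument is sound, but you can shorten it by citing that result instead of unpacking the monad comparison by hand.
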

\begin{proof}
    The proof is nearly identical to that of \cite[Theorem 4.21]{LYang_normedrings}, but we include it here for the reader's convenience. 
    Consider the commutative diagram of forgetful functors 
    \begin{equation}\label{diagram:forgetforget_is_forget}
        \begin{tikzcd}[column sep=tiny]
            \EE_p\Alg\left(\Spectra^{C_2}\right) \ar[rr,"\gamma"] \ar[rd,"G"'] & & \CAlgp\left(\Spectra^{C_2}\right) \ar[ld,"G'"] \\
            & \EE_\infty\Alg\left(\Spectra^{C_2}\right) & 
        \end{tikzcd}
    \end{equation} 
    where $ G $ is from Remark \ref{rmk:operadic_Poincare_rings_to_E_infty_alg_in_genuine_spectra} and $ G' $ is from Definition \ref{defn:calgp_alt_diagram}. 
    
    The functor $ G $ is monadic by Remark \ref{rmk:operadic_Poincare_rings_to_E_infty_alg_in_genuine_spectra}. 
    The functor $ G' $ is monadic by Proposition \ref{prop:calgp_monadic}.
    Now for any $ A \in \EE_\infty\Alg\left(\Spectra^{C_2}\right) $, the unit $ A \to \gamma F(A) $ of Theorem \ref{thm:free_operadic_calgp_formula} induces an equivalence $ F'(A) \simeq \gamma F(A) $ by Corollary \ref{cor:free_algebras_agree}.  
    The result follows from \cite[Proposition 4.7.3.16]{LurHA}. 
\end{proof}

\begin{proposition}\label{prop:calgp_modules_poincare_monoidal}
    There is a canonical functor $ \CAlgp \to \EE_p\Mon\left(C_2\Cat\right) $. 
    In other words, for any Poincaré ring $ R $, the $ C_2 $-$ \infty $-category $ \Mod_R\left(\Spectra^{C_2}\right) \to \Mod_{R^e}\left(\Spectra\right) $ admits a canonical $ \EE_p $-monoidal refinement, and for any map of Poincaré rings $ R \to S $, the base change functor $ - \otimes_R S $ admits a canonical $ \EE_p $-monoidal refinement. 
\end{proposition}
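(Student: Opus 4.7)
The plan is to deduce this from our identification of Poincaré rings with $\EE_p$-algebras in $C_2$-spectra. By Theorem \ref{thm:operadic_diagrammatic_calgp_agree}, there is a canonical equivalence $\CAlgp \simeq \EE_p\Alg\left(\underline{\Spectra}^{C_2}\right)$; it therefore suffices to produce a functor
\[
\EE_p\Alg\left(\underline{\Spectra}^{C_2}\right) \to \EE_p\Mon\left(C_2\Cat\right)
\]
sending an $\EE_p$-algebra $R$ to the $C_2$-$\infty$-category of $R$-modules $\underline{\Mod}_R\left(\underline{\Spectra}^{C_2}\right)$ equipped with the canonical $\EE_p$-monoidal refinement coming from the $\EE_p$-algebra structure on $R$. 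The second assertion about base change is then immediate from the functoriality of this construction applied to a morphism $R \to S$ of $\EE_p$-algebras.

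The first step is to invoke a parametrized version of the operadic module theory developed by Lurie in \cite[\S3.3 \& \S4.5]{LurHA}: given a coherent $C_2$-$\infty$-operad $\mathcal{O}^\otimes$ and an $\mathcal{O}^\otimes$-monoidal presentable stable $C_2$-$\infty$-category $\mathcal{C}^\otimes$, the assignment $A \mapsto \underline{\Mod}_A^{\mathcal{O}}(\mathcal{C})$ promotes to a functor $\Alg_\mathcal{O}(\mathcal{C}) \to \EE_p\Mon\left(C_2\Cat\right)$ whose composite with the forgetful functor to $C_2\Cat$ recovers the underlying $C_2$-$\infty$-category of modules. The non-equivariant version is a well-known consequence of Lurie's framework, and the parametrized analogue is essentially developed in \cite{NS22}.

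The second step is to verify that this machinery applies to $\mathcal{O} = \EE_p$ and $\mathcal{C} = \underline{\Spectra}^{C_2}$. Coherence of the commutative $C_2$-$\infty$-operad $\underline{\mathrm{Com}}_{C_2}^\otimes$ is standard. Since $\EE_p^\otimes$ arises from $\underline{\mathrm{Com}}_{C_2}^\otimes$ only by adjoining t-orderings on certain active morphisms (see Definition \ref{defn:Poincare_ring_param_operad} and Notation \ref{ntn:t_order_equivariant_map}) and its multimorphism sets are discrete (Remark \ref{rmk:surprise_lower_category}), the coherence passes to $\EE_p^\otimes$ without difficulty. That $\underline{\Spectra}^{C_2}$ is $\EE_p^\otimes$-monoidal follows from its $\underline{\mathrm{Com}}_{C_2}^\otimes$-monoidal structure together with the canonical map of $C_2$-$\infty$-operads $\underline{\mathrm{Com}}_{C_2}^\otimes \to \EE_p^\otimes$ (see Remark \ref{rmk:operadic_Poincare_rings_to_E_infty_alg_in_genuine_spectra}).

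The main obstacle will be the parametrized generalization of Lurie's operadic module theorem: while the ingredients are largely in place, a fully general statement for an arbitrary coherent $C_2$-$\infty$-operad may require verification beyond what is readily accessible in the literature. A more hands-on alternative, which circumvents this, is to construct the desired functor directly from the explicit combinatorial presentation of $\EE_p^\otimes$ by adapting the arguments of \S\ref{subsection:calgp_operadic}: one builds $\underline{\Mod}_R^\otimes$ fiberwise over objects of $\EE_p^\otimes$ using bar constructions involving the Hill--Hopkins--Ravenel norm (Construction \ref{cons:Einfty_norm_in_families}), verifies $\EE_p$-monoidality using Theorem \ref{thm:free_operadic_calgp_formula} to describe the required structure on free algebras, and establishes functoriality in $R$ via a Barr--Beck--Lurie argument analogous to the proof of Theorem \ref{thm:operadic_diagrammatic_calgp_agree}.
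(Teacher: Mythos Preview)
Your approach is essentially the same as the paper's: both invoke Theorem \ref{thm:operadic_diagrammatic_calgp_agree} to pass to $\EE_p$-algebras and then appeal to parametrized operadic module theory. The paper is terser, simply citing \cite[Proposition A.8]{LYang_normedrings} for the analogous argument in the $C_2$-$\EE_\infty$ case, whereas you sketch what such an argument would involve.

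One correction: to obtain an $\EE_p$-monoidal structure on $\underline{\Spectra}^{C_2}$ from its $C_2$-symmetric monoidal structure, you need a map of $C_2$-$\infty$-operads $\EE_p^\otimes \to \underline{\mathrm{Com}}_{C_2}^\otimes \simeq \underline{\Fin}_{C_2,*}$, not the other way around. This map exists and is the forgetful map which discards t-orderings (see Definition \ref{defn:Poincare_ring_param_operad}). The reference you cite, Remark \ref{rmk:operadic_Poincare_rings_to_E_infty_alg_in_genuine_spectra}, concerns a different map $\mathrm{Com}_{\mathcal{O}_{C_2}^\simeq} \to \EE_p$ from the underlying-object operad, which goes in the direction you wrote but serves a different purpose.
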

\begin{proof}
    In view of Theorem \ref{thm:operadic_diagrammatic_calgp_agree}, $ \CAlgp $ admits a description as a category of algebras over the $ C_2 $-$ \infty $-operad $ \EE_p $. 
    The result follows from the same argument as in \cite[Proposition A.8]{LYang_normedrings}.  
\end{proof}

\addcontentsline{toc}{section}{\protect\numberline{\thesection}References}
\renewcommand*{\bibfont}{\small}
{\printbibliography}
\end{document}